\newcommand{\forget}{\mathrm{forget}}
\newcommand{\Ind}{\mathrm{Ind}}
\DeclareMathOperator{\Symp}{Symp}
\def\P{\mathcal{P}}
\def\bC{\mathbb{C}}
\def\bD{\mathbb{D}}
\def\bE{\mathbb{E}}
\def\bF{\mathbb{F}}
\def\bM{\mathbb{M}}
\def\bP{\mathbb{P}}
\def\bQ{\mathbb{Q}}
\def\bR{\mathbb{R}}
\def\bS{\mathbb{S}}
\def\bT{\mathbb{T}}
\def\bZ{\mathbb{Z}}
\newcommand{\eps}{\varepsilon}
\newcommand{\cA}{\mathcal{A}}
\newcommand{\cB}{\mathcal{B}}
\newcommand{\cC}{\mathcal{C}}
\newcommand{\cD}{\mathcal{D}}
\newcommand{\cE}{\mathcal{E}}
\newcommand{\cF}{\mathcal{F}}
\newcommand{\cG}{\mathcal{G}}
\newcommand{\cH}{\mathcal{H}}
\newcommand{\cI}{\mathcal{I}}
\newcommand{\cJ}{\mathcal{J}}
\newcommand{\cK}{\mathcal{K}}
\newcommand{\cL}{\mathcal{L}}
\newcommand{\cM}{\mathcal{M}}
\newcommand{\cN}{\mathcal{N}}
\newcommand{\cO}{\mathcal{O}}
\newcommand{\cP}{\mathcal{P}}
\newcommand{\cQ}{\mathcal{Q}}
\newcommand{\cR}{\mathcal{R}}
\newcommand{\cS}{\mathcal{S}}
\newcommand{\cT}{\mathcal{T}}
\newcommand{\cU}{\mathcal{U}}
\newcommand{\cV}{\mathcal{V}}
\newcommand{\cW}{\mathcal{W}}
\newcommand{\cX}{\mathcal{X}}
\newcommand{\cY}{\mathcal{Y}}
\newcommand{\cZ}{\mathcal{Z}}
\newcommand{\x}{\times}
\def\bZ{\mathbb{Z}}
\renewcommand{\cong}{\simeq}
\begin{document}
\title{Bordism of flow modules and exact Lagrangians}

\author[]{Noah Porcelli\footnote{Imperial College Department of Mathematics, Huxley Building, 180 Queen's Gate, London SW7 2RH, U.K.\\
E-mail: \textsc{n.porcelli@imperial.ac.uk}\\
Supported by the Engineering and Physical Sciences Research Council through Standard Grant EP/W015889/1 and PhD Studentship 2261120}, Ivan Smith\footnote{Centre for Mathematical Sciences, University of Cambridge, Wilberforce Road, Cambridge CB3 0WB, U.K.\\
E-mail: \textsc{is200@cam.ac.uk}\\
Supported by the Clay Foundation and the Engineering and Physical Sciences Research Council through Frontier Reseach Grantnt EP/X030660/1}}
\date{}

\address{Noah Porcelli,  Imperial College Department of Mathematics, Huxley Building, 180 Queen's Gate, London SW7 2RH, U.K.}
\address{Ivan Smith, Centre for Mathematical Sciences, University of Cambridge, Wilberforce Road, Cambridge CB3 0WB, U.K.}

\begin{abstract} {\sc Abstract:} 
For a stably framed Liouville manifold $X$, we construct a `Donaldson-Fukaya category over the sphere spectrum' $\scrF(X;\bS)$. The objects are closed exact Lagrangians whose stable Gauss maps are nullhomotopic compatibly with the ambient stable framing, and the morphisms are bordism classes of framed flow modules over Lagrangian Floer flow categories; this is enriched in modules over the framed bordism ring. We develop an obstruction theory for lifting quasi-isomorphisms in the usual Fukaya category to quasi-isomorphisms in appropriate truncations or quotients of $\scrF(X;\bS)$.  Applications include constraints on the smooth structure of exact Lagrangians in certain cotangent bundles and plumbings, constructing of non-trivial symplectic mapping classes  which act trivially on the integral Fukaya category for a wide class of affine varieties, and ruling out some autoequivalences of the Fukaya categories of certain plumbings from being induced by Liouville automorphisms.  
\end{abstract}

\maketitle
\thispagestyle{empty}
\setlength{\cftbeforesubsecskip}{-2pt}
\tableofcontents
\section{Introduction}

\subsection{Context}  
This paper belongs to the developing subject of `Floer homotopy theory' in the sense of \cite{CJS,AB}. 
Constructing homotopical refinements of Floer theory has been a longstanding goal in the subject. 
The  recent advances of Abouzaid-Blumberg \cite{AB}, Bai-Xu \cite{BX}, Rezchikov \cite{Rezchikov} and others have all focussed on the Hamiltonian case: building on work of Large \cite{Large}, here we use Floer homotopy theory in the Lagrangian setting, away from the case of cotangent bundles where methods of generating functions are available. It would be interesting to develop an alternative approach to these questions using microlocal sheaves \cite{nadler-shende,Jin}.

Our purpose is two-fold:
\begin{enumerate}
    \item We give a  self-contained construction of an associative and unital  `spectral Donaldson-Fukaya category' $\scrF(X;\bS)$ for a Liouville manifold $X$ with $TX$ stably trivial, whose objects are Lagrangians $L$ whose stable Gauss map is trivial compatibly with the ambient stable framing (`spectral Lagrangian branes'). The morphism groups of $\scrF(X;\bS)$ are graded modules over the framed bordism ring $\Omega_*^{fr}(\ast)$ of a point, and quasi-isomorphism in $\scrF(X;\bS)$ retains information about the framed bordism class of $L$ in $\Omega_n^{fr}(X)$.
\item We develop an obstruction theory for lifting quasi-isomorphisms in $\scrF(X;\bZ)$ of the  graded Lagrangians underlying spectral Lagrangian branes, to quasi-isomorphisms in $\scrF(X;\bS)$, or more generally in certain `Baas-Sullivan truncations' of $\scrF(X;\bS)$.  This has a number of concrete applications to symplectic topology, detailed in Section \ref{Sec:results}.
    \end{enumerate}

Our work has at least two very substantial intellectual debts.

\begin{enumerate}
    \item On the philosophical side, we follow the mantra of Abouzaid and Blumberg to do `Floer homotopy without homotopy' and define the morphism groups of $\scrF(X;\bS)$ via bordism classes of `flow modules', close cousins of the  flow categories defined by  compactified moduli spaces of holomorphic strips. 
    \item On the technical side, we appeal to work of Fukaya, Oh, Ohta, Ono \cite{FO3:smoothness}, cf. also \cite{Li-Sheng},  and Large\footnote{Large's thesis also constructs a spectral Donaldson-Fukaya category, not using the language of flow modules but  staying closer to spectra. We have been heavily influenced by his work, but have found it easier to pin down some details in our chosen setting.} \cite{Large} asserting  that compactified moduli spaces of Floer strips (and some more general holomorphic curves) in an exact manifold are smooth manifolds with corners; moreover, for spectral branes, those manifolds with corners are themselves stably framed compatibly with breaking. We summarise these results in Section \ref{Sec:technical}. 
\end{enumerate}

The first point above means that the  interpretation of our `spectral Fukaya category' in terms of spectra is strictly conjectural (but is expected to follow from ongoing work of Abouzaid and Blumberg).

Theorem \ref{thm:main} illustrates how one can apply this technology to concrete problems.  It seems reasonable to expect further development in these directions.

\begin{rmk}
    Because of our intended applications, we focus on constructing a Donaldson-Fukaya category ``over the sphere spectrum" assuming a stable framing on $X$. For any Liouville symplectic manifold with $2c_1(X)=0$, our technology also yields a category $\scrF(X;MO)$ whose objects are arbitrary compact exact Lagrangians, and which is enriched in $\bZ$-graded $\bZ/2$-vector spaces which are modules over the unoriented bordism ring $\Omega_*(\ast)$.
\end{rmk}

Abouzaid and Blumberg are developing a quasi-category (stable $\infty$-category) of Flow categories and flow bimodules. A key point of this paper is that it is already interesting to construct and investigate the spectral Donaldson-Fukaya category (i.e. without the $A_{\infty}$-structure). Crucially, it is possible to perform at least some computations, by leveraging information from the `ordinary' Fukaya category. That leveraging relies on the fact that spectra, or flow modules, admit truncation functors, in which one retains the information from moduli spaces of flow lines only up to a certain dimension $k$.  When $k=1$ the theory is equivalent to the classical  theory of Floer cochain complexes, and there is an obstruction theory which governs when one can pass information `up the chain' between successive truncations. Under suitable degree hypotheses, the obstructions vanish for elementary reasons.  

In our applications, most obstructions vanish for degree reasons, but not all: one must be killed `geometrically'. For this, we develop an analogue in flow modules of working over a quotient of the sphere spectrum, namely we consider bordism classes of `flow modules with Baas-Sullivan singularities', inspired by the classical Baas-Sullivan theory \cite{Baas, Perlmutter} of bordism for manifolds allowed to have singularities $\mathrm{Cone}(P)$ modelled on cones over some specified collection of manifolds $\{P\}$.

Concretely,  our applications focus on Lagrangian homology spheres, where the combination of degree arguments and working with Baas-Sullivan singularities together suffice to kill all obstructions.  

\subsection{Results\label{Sec:results}} 
Let $\Omega_n^{fr}$ denote the $n$-th framed cobordism group, which is also the $n$-th stable stem $\pi_n^{st} = \pi_n(\bS)$. More generally we write $\Omega_n^{fr}(Y)$ for the framed bordism group of a topological space $Y$.

Let $X$ be a Liouville manifold with $TX$ stably trivial as a complex vector bundle. A `spectral Lagrangian brane' (Definition \ref{defn:spectral_brane})  is a closed exact Lagrangian submanifold $L\subset X$ equipped with a nullhomotopy of its stable Lagrangian Gauss map which is compatible with the given stable framing of $X$, and a choice of grading in the sense of \cite{Seidel:graded} (the nullhomotopy of the Gauss map implies existence of a grading, distinguished up to even shift).

\begin{thm}\label{thm:exists}
    There is a unital associative category $\scrF(X;\bS)$ with objects spectral Lagrangian branes, whose morphism groups are $\bZ$-graded modules over $\Omega^{fr}_*$, and with the property that isomorphic objects in $\scrF(X;\bS)$ represent the same class in $\Omega_n^{fr}(X)$.
\end{thm}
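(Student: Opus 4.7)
The plan is to mimic the construction of the classical Donaldson-Fukaya category, systematically replacing Floer chain complexes by stably framed flow modules over Floer flow categories. The technical inputs from Section \ref{Sec:technical}—that compactified moduli spaces of Floer strips and their higher cousins are stably framed manifolds with corners compatibly with breaking—are what allow the bordism-theoretic construction to go through.

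First, I would define the morphism spaces. For spectral branes $L_0, L_1$, choose a generic Hamiltonian perturbation and an almost complex structure so that the relevant moduli of pseudoholomorphic strips from $L_0$ to $L_1$ are cut out transversally. These moduli assemble into a flow category $\cF(L_0,L_1)$ whose morphism spaces are stably framed compact manifolds with corners. Define
\[
\hom_{\scrF(X;\bS)}(L_0,L_1) \;:=\; \{\text{framed flow modules over } \cF(L_0,L_1)\}/\text{framed bordism},
\]
which is naturally a $\bZ$-graded $\Omega^{fr}_*$-module. Independence from the chosen Hamiltonian and almost complex structure follows via continuation framed flow bimodules.

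For composition, use the compactified moduli of holomorphic triangles as a framed flow bimodule and define the pairing $\hom(L_1,L_2) \otimes \hom(L_0,L_1) \to \hom(L_0,L_2)$ by flow module composition through this bimodule. Associativity holds strictly after passage to bordism classes: a moduli of holomorphic quadrilaterals yields a one-parameter framed family whose boundary realises the difference between the two bracketings, so this difference is bordant to zero. The unit $e_L \in \hom(L,L)_0$ comes from a PSS-type framed flow module built from half-planes with an interior marked point, and unitality reduces similarly to a bordism argument.

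For the invariance statement, I would construct a trace/evaluation map $\tau\colon \hom(L,L) \to \Omega^{fr}_*(X)$ sending $e_L \mapsto [L]_{\mathrm{fr}}$ and satisfying the cyclic identity $\tau(g \circ f) = \tau(f \circ g)$, realised geometrically by a moduli of discs with two boundary marked points whose boundary is mapped into $X$. An isomorphism $L_0 \simeq L_1$ witnessed by mutually inverse $f\colon L_0 \to L_1$, $g\colon L_1 \to L_0$ with $gf=e_{L_0}$ and $fg=e_{L_1}$ then yields
\[
[L_0]_{\mathrm{fr}} = \tau(e_{L_0}) = \tau(gf) = \tau(fg) = \tau(e_{L_1}) = [L_1]_{\mathrm{fr}}.
\]

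The main obstacle is the bookkeeping of framings: ensuring that the stable framings produced in Section \ref{Sec:technical} are genuinely compatible with all the gluings, breakings, and parametric families needed for composition, associativity, unitality, and the cyclic property of the trace. In particular, descending composition to bordism classes requires constructing parametric families of framed flow bimodules whose codimension-one boundary strata reproduce the expected breaking/composition identities, and this is where the technical results of Large and FOOO cited in Section \ref{Sec:technical} really bear weight.
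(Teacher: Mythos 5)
Your overall architecture coincides with the paper's: morphisms are framed bordism classes of framed flow modules over the Floer flow category $\overline\cM^{LK}$, composition comes from the triangle moduli spaces as a framed bilinear map, associativity from the one-parameter family of four-punctured discs (the framed associator), and the bordism-invariance of isomorphism classes is proved exactly as you describe, by a trace with a cyclic property: the paper's $\Phi^L\colon \overline\cM^{LL}\to *[-n]$ living over $X$ plays the role of your $\tau$, Lemma \ref{lem:bordant_to_inclusion} gives $\tau(e_L)=[L]$, and the cyclic identity is the homotopy over $X$ between the two bilinear maps obtained by putting the boundary marked point on either side of the strip.

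The one genuine gap is unitality. Your claim that it ``reduces similarly to a bordism argument'' does not go through directly: gluing only shows that post-composition with the candidate unit $[\overline\cM^L]$ agrees with post-composition with a \emph{continuation} morphism of flow categories, and since $\Flow^{fr}$ is a non-unital category (there is no identity morphism of flow categories, and the paper explicitly does not prove unitality of $\Flow^{fr}$), one cannot conclude from this alone that $a\circ[\overline\cM^L]=a$. The paper's workaround is: (i) prove $[\overline\cM^L]$ is \emph{idempotent} by gluing (Lemma \ref{idempotent}); (ii) prove post-composition with $[\overline\cM^L]$ is \emph{surjective} on each $\scrF_*(K,L)$ by observing that the relevant continuation morphism induces a quasi-isomorphism on integral Morse complexes and invoking the obstruction-theoretic Whitehead theorem (Theorem \ref{thm:onto}); then for any $a$ write $a=b\circ[\overline\cM^L]$ and compute $a\circ[\overline\cM^L]=b\circ[\overline\cM^L]\circ[\overline\cM^L]=b\circ[\overline\cM^L]=a$. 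Without step (ii) — which is where the lifting/obstruction machinery of Section 4, not just the moduli-space geometry of Section \ref{Sec:technical}, enters the proof of Theorem \ref{thm:exists} — your argument for unitality is incomplete. (A minor additional imprecision: the unit comes from discs with a single boundary output puncture, not from a PSS-type moduli with an interior marked point.)
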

In Proposition \ref{prop: pss}, we identify the endomorphisms of a spectral Lagrangian brane $L$ with the framed bordism groups $\Omega_*^{fr}(L)$.
\begin{rmk}\label{rmk:forget}
    There is a forgetful (`truncation') functor $\tau_{\leq0}: \scrF(X;\bS) \to H\scrF(X;\bZ)$ with image the full subcategory of the integral Donaldson-Fukaya category of those exact Lagrangians which admit spectral brane structures. 
    \end{rmk}

\begin{rmk}
    In this paper we work with homological grading: under the truncation functor $\tau_{\leq 0}$ of Remark \ref{rmk:forget}, degree $i$ morphisms in $\tau_{\leq 0}\scrF(X;\bS)$ correspond to $CF^{-i}(L,K;\bZ)$.
\end{rmk}

A constraint in applying Theorem \ref{thm:exists} is the difficulty in verifying isomorphism in $\scrF(X;\bS)$.

The multiplication map 
\[
\pi_1(\bS) \times \pi_{n-1}(\bS) \to \pi_n(\bS) \qquad \mathrm{equivalently} \qquad \Omega_1^{fr} \otimes \Omega_{n-1}^{fr} \to \Omega_n^{fr}
\]
define a subgroup $\mathrm{image}(\eta) \subset \Omega_n^{fr}$ (here $\eta$ is the non-trivial element in $\pi_1(\bS) = \bZ/2$). Since $\eta$ is $2$-torsion, so is $\mathrm{image}(\eta)$.

\begin{thm} \label{thm:main}
    Let $X$ be a Liouville manifold with $TX$ stably trivial (as a complex vector bundle) of dimension $2n > 6$, and fix a stable trivialisation $\phi$. Let $L, K\subset X$ be closed exact Lagrangian integer homology spheres  whose stable Gauss maps are stably trivial compatibly with $\phi$. If $L$ and $K$ represent quasi-isomorphic objects of $\scrF(X;\bZ)$ up to shift, then they represent the same class in the quotient $\Omega_n^{fr}(X)/(\Omega_1^{fr}(X) \cdot \Omega^{fr}_{n-1})$. In particular, $[L]$ and $[K]$ agree in $\Omega_n^{fr} / \mathrm{image}(\eta)$.
\end{thm}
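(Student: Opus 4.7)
My plan is to lift the given quasi-isomorphism $f \colon L \to K$ (after the appropriate shift) from $\scrF(X;\bZ)$ to a quasi-isomorphism in a Baas-Sullivan truncation of $\scrF(X;\bS)$ in which $\eta$ is killed. Once such a lift exists, the analogue of Theorem~\ref{thm:exists} in this truncation --- saying that isomorphic objects have the same (modified) framed bordism class --- will yield the desired equality.

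The first step is to apply the obstruction theory developed earlier in the paper. Obstructions to lifting $f$ from $\tau_{\leq k-1}\scrF(X;\bS)$ to $\tau_{\leq k}\scrF(X;\bS)$ are computed from groups built out of $HF^*(L,K;\bZ) \otimes_\bZ \pi_k^s$. Since $L$ and $K$ are integer homology $n$-spheres quasi-isomorphic up to shift, $HF^*(L,K;\bZ) \cong H^*(S^n;\bZ)$ is supported in two degrees differing by $n$. Using $n > 3$, a degree count eliminates all obstructions along the tower except the one at $k=1$ whose target is the $\bZ/2$ generated by $\eta$.

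This residual $\eta$-obstruction is the main obstacle. I would kill it by passing to the Baas-Sullivan truncation $\scrF(X;\bS/\eta)$, in which flow modules are permitted to have cone-on-$\eta$ singularities; morphism groups are then enriched over $\Omega_*^{fr}/(\eta)$, and any obstruction class that is a multiple of $\eta$ vanishes by construction. Re-running the obstruction analysis in this enriched category shows that all obstructions now vanish (the degree arguments already disposed of the rest), producing a genuine quasi-isomorphism between $L$ and $K$ in $\scrF(X;\bS/\eta)$.

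Finally, the Baas-Sullivan version of the bordism-class invariant --- the analogue of the second assertion of Theorem~\ref{thm:exists} --- gives $[L] = [K]$ in $\Omega_n^{fr}(X)/(\Omega_1^{fr}(X) \cdot \Omega_{n-1}^{fr})$, the denominator being exactly the image of the $\eta$-Baas-Sullivan singularities. Pushing along $X \to \mathrm{pt}$ then recovers the claim in $\Omega_n^{fr}/\mathrm{image}(\eta)$. The hard part will be pinning down the obstruction groups precisely and verifying that the $\eta$-Baas-Sullivan cone absorbs exactly this single remaining obstruction without creating new ones elsewhere in the tower.
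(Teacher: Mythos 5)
Your overall strategy (lift via obstruction theory, absorb the surviving obstruction with Baas--Sullivan singularities, then invoke bordism-invariance of isomorphism classes) is the paper's strategy, but you have located the surviving obstruction in the wrong place, and this error propagates into choosing the wrong Baas--Sullivan quotient. In the obstruction theory of Theorem \ref{Obstruction Theorem}, the obstruction to extending a $\tau_{\leq k}$-morphism $\ast[n]\to\overline\cM^{LK}$ to a $\tau_{\leq k+1}$-morphism lies in $\Hom(CM_{*+k+2}(\ast[n]),CM_*(\overline\cM^{LK};\Omega^{fr}_{k+1}))$: the coefficient group grows with $k$, it is not a fixed tensor factor $\pi_k^s$ paired against $HF^*$ in varying degrees. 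Since the domain forces $*=n-k-2$ and $HM_*(\overline\cM^{LK})$ is concentrated in degrees $0$ and $n$, the unique surviving obstruction occurs at $k=n-2$ and is valued in $HM_0(\overline\cM^{LK};\Omega^{fr}_{n-1})$ --- i.e.\ its coefficients are $\pi_{n-1}^s$, not the $\bZ/2$ generated by $\eta$. Killing $\eta\in\pi_1^s$ therefore does not make this class vanish; one must instead allow Baas--Sullivan singularities modelled on cones over \emph{all} stably framed $(n-1)$-manifolds, so that $\Omega^{fr;(n-1)\mathrm{-BS}}_{n-1}=0$ (Proposition \ref{prop:fram kBS k}). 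It is this choice that produces the stated denominator: by Proposition \ref{prop:kBS-framed-bordism} the kernel of $\Omega_n^{fr}(X)\to\Omega_n^{fr;(n-1)\mathrm{-BS}}(X)$ is exactly $\Omega_{n-1}^{fr}\cdot\Omega_1^{fr}(X)$, whereas "$\bS/\eta$" would a priori give a different quotient.

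Two further points. First, your $1$-dimensional singularities are incompatible with the framework: the $\kBS$-formalism only composes when $2k-1$ exceeds the truncation level (Remark \ref{rmk:dimension_constraint_2}), and to retain degree-$n$ bordism information one must truncate at level $n$; with $k=1$ this fails for every $n\geq 2$, while with $k=n-1$ it holds precisely when $n>3$, which is where the hypothesis $2n>6$ enters. Second, even after lifting $\alpha$ and $\beta$ separately, their lifts need not be mutually inverse; the paper closes this gap using Corollary \ref{cor:projecting_units} (invertibility in $\scrF(X;\bS)$ is detected in $\tau_{\leq 0}$) together with the replacement $\tilde\beta'=\tilde\beta(\tilde\alpha\tilde\beta)^{-1}$ and an idempotent argument. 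Your proposal should address both of these before the final appeal to the Baas--Sullivan analogue of Corollary \ref{cor:fundamental class}.
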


We consider $\Omega_n^{fr; (n-1)\mathrm{-BS}}(X) \cong \Omega_n^{fr}(X)/(\Omega_1^{fr}(X) \cdot \Omega^{fr}_{n-1})$ the group of bordism classes of stably framed manifolds with Baas-Sullivan singularities of dimension $n-1$ mapping to $X$; see Section \ref{Sec:BS} for details. 
To prove the theorem, we prove that $L$ and $K$ are isomorphic in $\tau_{\leq n}\scrF(X;\bS)_{(n-1)\mathrm{-BS}}$, a category which has the same objects as $\scrF(X;\bS)$, but which only considers manifolds of dimension $\leq n+1$ and allowing Baas-Sullivan singularities of dimension $n-1$.

\begin{rmk}
    The first constraints on the topology of Lagrangian submanifolds going beyond homotopy type were obtained by Abouzaid \cite{Abouzaid:framed}, who showed that an exact Lagrangian homotopy sphere  $L \subset T^*S^{4k+1}$ framed bounds.  Then  \cite{ES,ES2,EKS} showed that if homotopy $(2k+1)$-spheres $\Sigma, \Sigma'$ have $T^*\Sigma \cong T^*\Sigma'$ then $[\Sigma] = \pm[\Sigma'] \in \Theta_{2k+1}/bP_{2k}$. Using the Hopf correspondence \cite{Torricelli} Torricelli showed there is a manifold $P \simeq \bC\bP^4$ which does not embed as  a Lagrangian in $T^*\bC\bP^4$, and a `circle bundle construction' shows that if $\bR\bP^{4k-1} \# \Sigma \subset \bC\bP^{4k-1}$ is Lagrangian, then $\Sigma \# \Sigma$ framed bounds. Crucially, the results in \cite{Abouzaid:framed, ES} rely on a displacing Hamiltonian isotopy for a suitable Lagrangian ($S^{2k+1} \subset \bC\bP^k \times \bC^{k+1}$ respectively an immersed $S^n \hookrightarrow \bC^n$), and build an explicit framed bounding manifold from the union of a moduli space of Floer holomorphic curves and an `artificial cap' which closes off a boundary component of broken curves. The known generalisations have been  largely tied to settings umbilically close to the cotangent bundle of an odd-dimensional sphere.  The point of Theorem \ref{thm:main} is to provide a general statement for Weinstein manifolds in which a displacing isotopy is not required. 
\end{rmk}

The following are examples of some of the applications proved in Section \ref{Sec:Applications}.

\begin{ex}[\cite{AAGCK}] \label{ex:1}
If $X = T^*S^n$ and $L$ is the zero-section then any $K$ satisfies the hypotheses, and we conclude that $K\# K$ framed bounds.
\end{ex}

\begin{ex} \label{ex:2}
If $X$ is the $A_2$-plumbing $T^*S^n \#_{pt} T^*S^n$ with $n=8$, we conclude that any exact $K$ with vanishing Maslov class is necessarily diffeomorphic to $S^8$.
\end{ex}

\begin{ex} \label{ex:3}
Let $X$ be the plumbing of $T^*S^n$ and $T^*\Sigma$ for a homotopy sphere $\Sigma$. Suppose $n$ is even. If $\Sigma$ does not have order 2 in the quotient $\Theta_n / bP_{n+1}$ of the group of homotopy spheres by those that framed bound, then the spherical twist autoequivalence $T_{\Sigma}$ of $\scrF(X;\bZ)$ is not realised by any symplectomorphism.
\end{ex}

\begin{ex} \label{ex:4}
Attach a Weinstein four-handle to $T^*S^7$ to obtain a Stein manifold $X$ homotopy equivalent to $S^7 \vee S^4$. A Lagrangian sphere $L\subset X$ has a class in $\pi_7(S^4)$ whose image in $\pi_3^{st} = \bZ/24$ is two-torsion.
\end{ex}

\begin{ex} \label{ex:5}
Let $d\geq 3$ and let $n$ satisfy the constraints (a) $n \in \{0,4,6,7\}$  mod $8$ and (b) there is some element not of order 2 in $\Theta_{n+1}/bP_{n+2}$. If $X_{n,d} \subset \bC^{n+1}$ is the general affine hypersurface of degree $d$ and complex dimension $n$ then there is a non-trivial element of the symplectic mapping class group of $X$ which acts trivially on the compact and wrapped Fukaya categories $\scrF(X;\bZ)$ and $\scrW(X;\bZ)$.
\end{ex}

Example \ref{ex:1} was independently established by Abouzaid, Alvarez-Gavela, Courte and Kragh \cite{AAGCK} using methods more specifically tailored to working in cotangent bundles (generating functions and tube spaces). As far as we know Example \ref{ex:2} is new, and one of the first constraints on the smooth structure of an exact Lagrangian away from the case of cotangent bundles. Example \ref{ex:3} resolves a question raised by Seidel in \cite{Seidel:exotic}. Quasi-isomorphic exact Lagrangians are known to be homologous (the argument is briefly recapped below), but the question of when quasi-isomorphic exact Lagrangian spheres are homotopic is more subtle; Example \ref{ex:4} gives a constraint in that direction.
Example \ref{ex:5} extends work of \cite{ER14}, and shows that `with positive probability' (with respect to natural density) the symplectomorphism group of an affine hypersurface does not act faithfully on the Fukaya category. 

\begin{figure}[h]
\begin{center}
\begin{tikzpicture}[framed,background rectangle/.style={double, thick,draw=gray}, loose background]
\begin{scope}[scale=0.8,every tqft/.style={transform shape}]
\node[tqft/pair of pants,draw] (a) {};
\node[tqft/reverse pair of pants,draw,anchor=incoming boundary 1, tqft/boundary lower style = {draw,dashed}] (b) at (a.outgoing boundary 2) {};
\node[tqft/cylinder to next,draw,anchor=incoming boundary 1,tqft/boundary lower style = {draw,dashed}] (c) at (a.outgoing boundary 1) {};
\node[tqft/cylinder to next,draw,anchor=outgoing boundary 1] (e) at (b.incoming boundary 2) {};
\end{scope}
\end{tikzpicture}
\caption{Framed bordism.}
\end{center}
\end{figure}
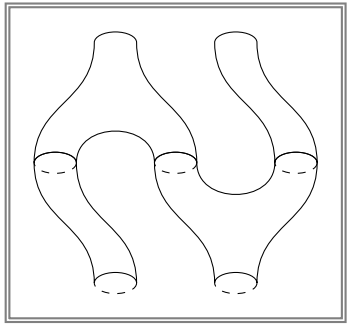

\subsection{Motivation, techniques and a sketch of the argument} 
It is classical that if $L$ and $K$ are exact Lagrangian branes in an exact symplectic manifold $X$, and if $L$ and $K$ are quasi-isomorphic in $\scrF(X;R)$ (for a commutative ring $R$, e.g. $R = \bZ/2$ or $R = \bZ$ if $L, K$ are spin) then $L$ and $K$ represent the same homology class in $H_n(X;R)$. \par   A proof considers a moduli space of strips depicted in Figure \ref{Fig:CO_homology_relation}, where the morphisms $\alpha \in CF(L,K;R)$ and $\beta \in CF(K,L;R)$ are inverse quasi-isomorphisms i.e. $\alpha \cdot \beta = \id_K$ and $\beta \cdot \alpha = \id_L$. Evaluation at the variable marked point along the dotted line defines a homology relation witnessing the equality $[L] = [K] \in H_n(X;R)$.  The breakings coming from the evaluation marked point escaping to the boundary of the strip give boundary strata which, by exactness, involve moduli spaces of constant discs which (algebraically) sweep the fundamental class of the boundary component.

\begin{figure}[ht]
\begin{center} 
\begin{tikzpicture}[scale=0.8]

\draw[semithick] (-8,0) -- (-2,0);
\draw[semithick] (-8,-2) -- (-2,-2);
\draw (-1.75,-1) node {$\alpha$};
\draw (-8.25,-1) node {$\beta$};
\draw (-5,0.5) node {$K$};
\draw (-5,-2.5) node {$L$};
\draw[semithick,dashed] (-5,0) -- (-5,-2);
\draw[fill,gray] (-5,-1.5) circle (0.1);
\draw (-5.5,-1.5) node {$\mathrm{ev}$};

	\draw[dashed,color=gray] [ ->] (-0.5,-0.5) -- (1.5,0.5);
	\draw[dashed,color=gray] [ ->] (-0.5,-1.5) -- (1.5,-2.5);

\draw[semithick] (2,1)--(8,1);
\draw[semithick] (2,-0.5) -- (8,-0.5);
\draw[semithick] (5,1.75) circle (0.75cm);
\draw[semithick,dashed] (5,-0.5) -- (5,1);

\draw (5,1.75) node {\small{$CO(K)$}};
\draw (5,-3.75) node {\small{$CO(L)$}};

\draw[semithick] (2,-3)--(8,-3);
\draw[semithick] (2,-1.5) -- (8,-1.5);
\draw[semithick] (5,-3.75) circle (0.75cm);
\draw[semithick,dashed] (5,-1.5) -- (5,-3);

\end{tikzpicture}
\end{center}
\caption{Homology relation from quasi-isomorphism and the length two open-closed map\label{Fig:CO_homology_relation}}

\end{figure}
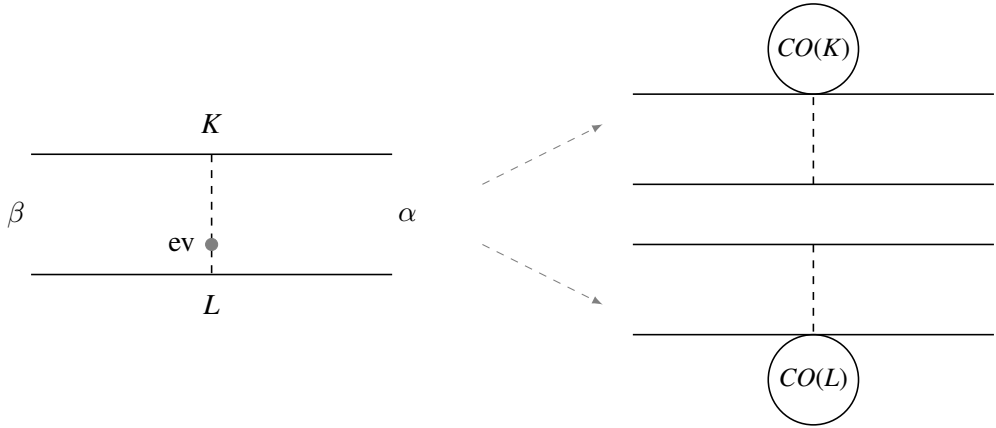

More algebraically, the boundary strata on the right of the figure can be understood in terms of the open-closed map
\[
\mathrm{OC}: CF^*(L,L) \to C_*(X)
\]
which sends the unit to the fundamental class $[L] \in H_n(M;\bZ)$. Whilst the full open-closed map involves the $A_{\infty}$-structure on the Hochschild complex, the `length two' version defines a map
\[
\mathrm{Cone}\left( CF(L,K) \otimes CF(K,L) \stackrel{\mu}{\longrightarrow} CF(L,L) \oplus CF(K,K) \right) \ \longrightarrow C^*(X),
\]
where $\mu(\alpha,\beta) = (\alpha\cdot \beta, \beta \cdot \alpha)$, which uses only homological  information from the Donaldson-Fukaya category. 

\begin{rmk}
    The previous argument is folklore and well-known to experts; a sketch is given for instance in \cite[Section 3]{YangLi}. There the homology relation defined by the universal family of holomorphic strips is interpreted as the existence of a `bordism current' between the two fundamental classes.
\end{rmk}
A corollary of this fact is the following, which tells us that that the Fukaya category detects some bordism-theoretic information.
\begin{cor}\label{cor:pontrjagin}
    In the situation outlined above with $R = \bZ$ or $\bQ$, then $L$ and $K$ have the same Pontrjagin numbers.
\end{cor}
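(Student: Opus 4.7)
The strategy is to reduce the statement to the homology equality $\iota_*[L] = \iota_*[K] \in H_n(X;R)$ already recalled above, by showing that Pontrjagin numbers of a Lagrangian are determined by this ambient homology class.

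Pick any compatible almost complex structure $J$ on $X$; since the space of such is contractible, the Chern classes $c_i(TX) \in H^{2i}(X;\bZ)$ are independent of this choice. For any Lagrangian $L \subset X$ with inclusion $\iota \colon L \hookrightarrow X$, the standard isomorphism $TX|_L \cong TL \otimes_{\bR} \bC$ of complex vector bundles, combined with the defining relation $p_i(E) = (-1)^i c_{2i}(E \otimes_{\bR} \bC)$ for a real bundle $E$, yields $\iota^* c_{2i}(TX) = (-1)^i p_i(TL) \in H^{4i}(L;\bZ)$ (odd Chern classes of a complexified real bundle vanish). Hence for every partition $I = (i_1,\ldots,i_k)$ with $4(i_1 + \cdots + i_k) = n$, the projection formula gives
\[
p_I[L] \;:=\; \langle p_{i_1}(TL) \cdots p_{i_k}(TL),\, [L]\rangle \;=\; (-1)^{|I|} \langle c_{2i_1}(TX) \cdots c_{2i_k}(TX),\, \iota_*[L]\rangle,
\]
so the Pontrjagin numbers of $L$ depend only on the class $\iota_*[L] \in H_n(X;R)$.

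By the homological consequence of quasi-isomorphism recalled immediately before the corollary, one has $\iota_*[L] = \iota_*[K]$ in $H_n(X;R)$ for $R = \bZ$ or $\bQ$. Substituting into the displayed formula gives $p_I[L] = p_I[K]$ for every admissible $I$, which is the claim.

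There is essentially no technical obstacle: the argument is a formal combination of (i) the Lagrangian identification $TX|_L \cong TL \otimes_{\bR} \bC$, and (ii) the already-established homology equality. The observation worth emphasising is the converse: characteristic numbers that are \emph{not} pulled back from the ambient manifold (Stiefel-Whitney numbers, or the Eells-Kuiper-type invariants distinguishing exotic spheres) are invisible to this purely homological approach, and detecting them is precisely the motivation for the spectral refinement $\scrF(X;\bS)$ developed below.
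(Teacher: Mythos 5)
Your proof is correct and is essentially the paper's own argument: Pontrjagin classes of $L$ are pullbacks of ambient Chern classes via $TX|_L \cong TL \otimes_{\bR}\bC$, so the Pontrjagin numbers are determined by $\iota_*[L] \in H_n(X;R)$, which agrees with $\iota_*[K]$. You are in fact slightly more careful than the paper about the precise index/sign relation $p_i(E) = (-1)^i c_{2i}(E\otimes_{\bR}\bC)$, but the route is the same.
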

\begin{proof}
    Since $TL \otimes \bC$ is isomorphic to $TX|_L$, the Pontrjagin numbers of $L$ are integrals of the form $\int_L p = \int_L c|_L$, where $p = p_{i_1}(L) \ldots p_{i_k}(L) \in H_n(L)$ is a product of Pontrjagin classes of $L$ and $c=c_{i_1}(X) \ldots c_{i_k}(X) \in H_n(X)$ is the corresponding product of Chern classes. But this latter expression is equal to $[L] \cdot c$ where $[L] \in H_n(X)$ is the fundamental class; since $[L]=[K]$ the result follows.
\end{proof}
\begin{rmk}
    We recall that it is a classical fact that two closed manifolds are bordant if and only if they have the same Stiefel-Whitney numbers, and two oriented closed manifolds are oriented bordant if and only if they have the same Stiefel-Whitney and Pontrjagin numbers.
\end{rmk}
\begin{rmk}
    Dualising the argument arising from Figure \ref{Fig:CO_homology_relation} at chain level, and using the whole complex (rather than just focusing on the fundamental class), yields a homotopy commutative diagram 
    \[
    \xymatrix{
    C^*(X) \ar[r] \ar@{=}[d] & C^*(L) \ar[d]^-{x \mapsto \alpha x \beta} \\
    C^*(X) \ar[r] & C^*(K)
    }
    \]
    with the horizontal maps being the classical restrictions (by exactness of $L$ and $K$). This shows that the distinguished isomorphism $\lambda: H^*(L;\bZ) \to H^*(K;\bZ)$ arising from the Floer quasi-isomorphism identifies the Pontrjagin classes of $L$ and $K$, $\lambda (p_j(L)) = p_j(K)$. This strengthens Corollary \ref{cor:pontrjagin}.
\end{rmk}

If $L$ carries some further tangential structure, e.g. a stable framing, it is natural to wonder if Floer theory detects its class in $\Omega_n^{fr}(X)$.

Suppose now $X$ is a stably framed symplectic manifold and one has defined a spectral Donaldson-Fukaya category over the sphere spectrum $\bS$ with morphism groups $CF(L,K;\bS)$ for spectral Lagrangian branes $L$ and $K$. If $L$ and $K$ are integer homology spheres which are quasi-isomorphic in the usual integral Fukaya category, a version of the Whitehead theorem shows there is $\theta \in \pi_{n-1}(\bS)$ with
\[
CF(L,K;\bS) = \mathrm{Cone}(\Sigma^{n-1}\bS \stackrel{\theta}{\longrightarrow} \bS).
\]
For instance, 
if  $L \pitchfork K = \{p,q\}$ (transversely), the moduli space of holomorphic strips asymptotic to $p,q$, modulo the re-parametrization action of $\bR$, is a compact $(n-1)$-manifold $\overline \cM_{pq}$. Vanishing of the stable Gauss maps of $L,K$ imply that $\overline \cM_{pq}$ is stably framed. The Pontrjagin-Thom construction then associates to $\overline \cM_{pq}$ an element in the stable stem $\pi_{n-1}^s$, which is exactly $\theta$. 

If $\theta = 0$ then $L$ and $K$ would be quasi-isomorphic over $\bS$, and one would expect to infer equality of their framed bordism classes in $\pi_n(\bS)$. In particular, if one could pass to working over a `quotient spectrum' $R = \bS / \pi_{n-1}(\bS)$ then one would kill the obstruction, without necessarily killing all the information we're trying to access (on the bordism classes of $L$ and $K$, which live in a different degree).  The main theorem in this paper realises this idea, working in a formalism of a spectral Donaldson-Fukaya category defined through bordism classes of flow modules, and with the role of the quotient spectrum $R$ being played by bordism with Baas-Sullivan singularities.

\begin{rmk} \label{rmk:dimension_constraint}
    There are structured categories of ring spectra which contain all colimits, and in which one can form a quotient such as $\bS/\theta$ for $\theta \in \pi_{n-1}(\bS)$. Since killing a class in stable homotopy will also kill its Whitehead self-products, the homotopy groups of this spectrum differ from those of $\bS$ not just in degree $n-1$ but also in larger degrees, starting from degree $2(n-1)$.  Since $2(n-1)>n$ (the degree in which the bordism information we seek to constrain lies), this is not problematic for the heuristic sketch above. The same phenomenon  appears in the setting of flow modules with Baas-Sullivan singularities modelled over cones on $k$-manifolds, where we require products of manifolds with such singularities to canonically carry the same structure. This relies on working with truncations of flow modules,   keeping track of moduli spaces only up to some given dimension $n$,  chosen so that $2k-1>n$, cf. Remark \ref{rmk:dimension_constraint_2}.
\end{rmk}

\begin{rmk}
    Part of the ongoing work of Abouzaid and Blumberg, constructing foundations of Floer homotopy theory ``without spectra'', has now appeared in \cite{Abouzaid-Blumberg:I}, after this paper was released. In particular, this includes a proof of a version of Conjecture \ref{conj:AB}.\par 
    Our treatment of stable framings has also been influenced directly by their paper.
\end{rmk}

    \subsection{Acknowledgements}

Thanks to Mohammed Abouzaid, Dani Alvarez-Gavela, Sylvain Courte, Amanda Hirschi, Thomas Kragh and Tim Large for helpful discussions, and to Hiro Lee Tanaka for pointing out a formatting error in a previous version of the paper.

This paper is based upon work supported by the National Science Foundation under Grant No.~1440140, while the authors were in residence at the Mathematical Sciences Research Institute in Berkeley, California, during the Fall semester 2022.   I.S.~is grateful to the Clay Foundation for support at MSRI as a Clay Senior Scholar, and to EPSRC  for support through Frontier Research grant EP/X030660/1 (in lieu of an ERC Advanced Grant). N.P.~is grateful to the EPSRC for support through PhD studentship 2261120 and standard grant EP/W015889/1.

\section{Applications}\label{Sec:Applications}

In this section, we provide several concrete applications of Theorem \ref{thm:main} to symplectic topology. In this section we only require the statement of Theorem \ref{thm:main}, and apart from that will not require any results from the rest of the paper, and in particular will not refer directly to any flow-categorical technology.

By a homology sphere, we always mean an integral homology sphere. Our conventions for the `ordinary'  Fukaya category follow those from \cite{Seidel:book}.

\subsection{Smooth structures on Lagrangian submanifolds\label{Sec:first_application}}

\begin{lem}\label{lem:framed}
    If $\Sigma$ is a homology sphere, then $T\Sigma$ is stably trivial.
\end{lem}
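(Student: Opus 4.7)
The plan is to apply obstruction theory to the classifying map $\tau:\Sigma\to BO$ of the stable tangent bundle. Since $H^1(\Sigma;\bZ/2)=0$ we have $w_1(T\Sigma)=0$, so $\Sigma$ is orientable and $\tau$ lifts to $BSO$; as $BSO$ is simply connected (indeed $BO$ is an infinite loop space, so simple in any case), obstruction theory applies with untwisted coefficients. The obstructions to a null-homotopy of $\tau$ sit in $H^k(\Sigma;\pi_k BSO)$ for $k>0$. Because $\Sigma$ is a homology $n$-sphere, these groups vanish identically in the range $0<k<n$, and the only potentially nonzero obstruction lies in $H^n(\Sigma;\pi_n BSO)\cong\pi_n BSO$.

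To handle the remaining top obstruction, I would first show that $T\Sigma$ is already trivial as an unstable $SO(n)$-bundle on $\Sigma^\circ:=\Sigma\setminus\mathrm{int}(D^n)$ for a small disk $D^n\subset\Sigma$. A Mayer--Vietoris computation for $\Sigma=\Sigma^\circ\cup D^n$ shows that $\Sigma^\circ$ is acyclic, so rerunning the same obstruction argument for the unstable map $\Sigma^\circ\to BSO(n)$ produces a trivialization. Consequently $T\Sigma$ is recovered from a clutching element in $\pi_{n-1}(SO(n))$, and the top obstruction we want to kill is precisely its image in $\pi_{n-1}(SO)\cong \pi_n(BSO)$.

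To show this stable clutching class vanishes, I would compute all characteristic numbers of $T\Sigma$ and invoke them to constrain the class. The Stiefel--Whitney numbers vanish: $w_i(T\Sigma)=0$ for $i<n$ by cohomological degree, and $w_n(T\Sigma)[\Sigma]=\chi(\Sigma)\bmod 2=0$ since $\chi(\Sigma)$ equals $0$ or $2$ according to the parity of $n$. The Pontryagin numbers vanish too: again lower ones for degree reasons, and when $n=4k$ the Hirzebruch signature theorem gives $\sigma(\Sigma)=c\cdot p_k(T\Sigma)[\Sigma]$ with $c\neq 0$, while $\sigma(\Sigma)=0$ as $H^{n/2}(\Sigma;\bQ)=0$; torsion-freeness of $H^n(\Sigma;\bZ)=\bZ$ promotes this to integral vanishing.

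The main obstacle is converting vanishing of characteristic numbers into vanishing of the class itself in $\pi_n BSO$, since this group can carry torsion that is invisible to rational characteristic numbers. For this, I would run the Atiyah--Hirzebruch spectral sequence for $\widetilde{KO}^0(\Sigma)$: because $\Sigma$ has reduced cohomology concentrated in degree $n$, the spectral sequence collapses to an isomorphism $\widetilde{KO}^0(\Sigma)\cong KO^{-n}(\mathrm{pt})\cong\pi_n(BO)$, under which $[T\Sigma]$ is identified with our clutching class. Combining this identification with the characteristic number vanishing above (and, if necessary, the Kervaire-type fact that a closed manifold $M$ is stably parallelizable as soon as $M\setminus\{\mathrm{pt}\}$ is parallelizable and its characteristic numbers vanish) yields $[T\Sigma]=0$ and hence the lemma.
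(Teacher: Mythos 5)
The paper does not prove this lemma; it simply cites Kervaire. Your proposal attempts an actual proof, and most of it is sound: the reduction to a single top obstruction, the triviality of $T\Sigma$ over the punctured manifold, the identification of the remaining obstruction with a stable clutching class in $\pi_{n-1}(SO)\cong\pi_n(BO)$, and the signature-theorem argument killing it when $\pi_{n-1}(SO)\cong\bZ$ (i.e.\ $n\equiv 0\bmod 4$) are all correct, and the AHSS identification $\widetilde{KO}^0(\Sigma)\cong\pi_n(BO)$ is fine for a homology sphere.

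The genuine gap is the final step in the dimensions $n\equiv 1,2\pmod 8$ (with $n\geq 9$), where $\pi_{n-1}(SO)\cong\bZ/2$. There, vanishing of all characteristic numbers gives you nothing: the nontrivial element of $\pi_n(BO)\cong\bZ/2$ is $\eta\cdot\beta$ or $\eta^2\cdot\beta$ for $\beta$ the Bott generator of $\pi_{8k}(BO)$, so the corresponding stable bundle on $S^n$ is pulled back along a map $S^n\to S^{n-1}$ (or $S^{n-2}$), and hence has \emph{all} Stiefel--Whitney and Pontryagin classes zero. So your ``characteristic numbers vanish $\Rightarrow$ class vanishes'' step is false exactly in the one range where it is needed, and the parenthetical ``Kervaire-type fact'' you invoke as a fallback is essentially the statement being proved. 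The standard way to close this gap (Kervaire, Kervaire--Milnor) is genuinely deeper: one shows that the image of the top obstruction under the stable $J$-homomorphism $J:\pi_{n-1}(SO)\to\pi_{n-1}^s$ vanishes for any closed manifold (via reducibility of the Thom space of the stable normal bundle / triviality of the Spivak fibration of a homology sphere), and then invokes Adams' theorem that $J$ is injective on $\pi_{n-1}(SO)$ for $n\equiv 1,2\pmod 8$. Without some input of this kind the torsion obstruction cannot be ruled out.
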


\begin{proof}
    See \cite[p.70]{Kervaire}.
\end{proof}

It follows that $T^*\Sigma$ admits a stable framing, so we can talk about the category $\scrF(T^*\Sigma;\bS)$. The zero-section $\Sigma$ certainly forms one object of this category.

\begin{lem}
    If $\Sigma$ is a homotopy sphere and $L \subset T^*\Sigma$ is a closed exact Lagrangian submanifold, then $L$ admits the structure of a spectral Lagrangian brane.
\end{lem}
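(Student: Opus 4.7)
The plan is to furnish $L$ with three pieces of data: (i) a stable trivialization of $TL$, (ii) a nullhomotopy of the stable Lagrangian Gauss map $\gamma_L \colon L \to U/O$, and (iii) compatibility of that nullhomotopy with the ambient stable framing $\phi$ of $T(T^*\Sigma)$.

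For (i), I would invoke the theorem of Abouzaid--Kragh on nearby Lagrangians, which tells us that the projection $\pi\colon L \to \Sigma$ is a (simple) homotopy equivalence. In particular $L$ is itself a homotopy $n$-sphere, so Lemma \ref{lem:framed} supplies the required stable parallelization of $TL$.

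Next, I would set up the ambient framing concretely. Fixing a Riemannian metric on $\Sigma$ yields a canonical isomorphism of complex vector bundles $T(T^*\Sigma) \simeq \pi_\Sigma^*(T\Sigma \otimes_{\bR} \bC)$, where the almost complex structure interchanges the horizontal and vertical distributions. A stable real framing of $T\Sigma$ (again supplied by Lemma \ref{lem:framed}) therefore induces a stable complex framing $\phi$ of $T(T^*\Sigma)$, and with respect to this $\phi$ the stable Lagrangian Gauss map $\gamma_\Sigma \colon \Sigma \to U/O$ of the zero section carries a tautological nullhomotopy (the real part in $T\Sigma \otimes \bC$).

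The main step is to transport this nullhomotopy from $\Sigma$ to $L$. Here the key input is the result of Abouzaid--Kragh (and its refinements by Guillermou and Jin) that for any closed exact Lagrangian $L \subset T^*\Sigma$ the stable Gauss map $\gamma_L$ is homotopic to the composition $L \xrightarrow{\pi} \Sigma \xrightarrow{\gamma_\Sigma} U/O$. Precomposing the canonical nullhomotopy of $\gamma_\Sigma$ constructed above with $\pi$ then yields a nullhomotopy of $\gamma_L$ compatible with $\phi$ by construction. A compatible grading on $L$ then exists and is canonical up to even shift by the remark following Definition \ref{defn:spectral_brane}.

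The genuine obstacle is the appeal to the Abouzaid--Kragh/Jin factorization result: the naive obstruction to nullhomotoping $\gamma_L$ lives in $\pi_n(U/O)$, which by Bott periodicity is nonzero for many $n$ (e.g.\ $\bZ$ for $n = 2, 6$ and $\bZ/2$ for $n = 3, 4$), so stable parallelizability of $L$ alone is not enough --- one really needs the cotangent-bundle geometry to identify $\gamma_L$ with the pullback of $\gamma_\Sigma$ along $\pi$.
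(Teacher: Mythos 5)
Your overall strategy is close in spirit to the paper's, but the key step rests on a citation that does not exist in the strength you need. The factorization $\gamma_L \simeq \gamma_\Sigma \circ \pi$ is, once you observe that $\gamma_\Sigma$ is constant for the framing you chose, literally the assertion that the stable Gauss map of an arbitrary closed exact Lagrangian in a cotangent bundle is nullhomotopic. That is the full nearby-Lagrangian Gauss map statement, which is open for a general base: what Abouzaid--Kragh, Jin, and finally \cite{ACGK} actually prove (Theorem A of the latter) is the strictly weaker statement that $\gamma_L$ \emph{vanishes on all homotopy groups}. For a finite complex this does not by itself imply nullhomotopy. Note also that since $\pi\colon L \to \Sigma$ is a homotopy equivalence, ``factoring through $\pi$'' is vacuous --- every map out of $L$ does; all of the content is in identifying the other factor with $\gamma_\Sigma$, i.e.\ with the constant map, so your ``key input'' is exactly the conclusion you are trying to reach.

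The missing step is precisely where the homotopy-sphere hypothesis must be deployed: because $L$ is a homotopy $n$-sphere (by \cite{Abouzaid:homotopy}, which you correctly invoke), and $U/O$ is an H-space, one has $[L, U/O] \cong \pi_n(U/O)$, so a map out of $L$ is nullhomotopic if and only if it induces zero on $\pi_n$. Combining this with the ACGK vanishing on homotopy groups yields the required nullhomotopy of $\Psi_L$; this is the paper's proof. In your write-up the fact that $L$ is a homotopy sphere is used only to stably parallelize $TL$, which is not part of the data of a spectral brane (a stable trivialization of $TL$ is a consequence of the nullhomotopy $h$, not an input to it), while the step that genuinely needs the homotopy-sphere property is justified by an overstated theorem. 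The fix is short, but as written the argument has a real gap.
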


\begin{proof}
    The stable Gauss map of $L$ is zero on homotopy groups by \cite[Theorem A]{ACGK}. $L$ is a homotopy sphere by \cite{Abouzaid:homotopy} so this implies the stable Gauss map of $L$ is nullhomotopic.
\end{proof}

Now assume that $L \subset T^* \Sigma$ is a closed exact Lagrangian, and either $\Sigma$ is a homotopy sphere, or $\Sigma$ is a homology sphere and $L$ admits the structure of a Lagrangian brane.
\begin{cor}The class $[L]-[\Sigma]$ in $\Omega_n^{fr}(\ast)$ lies in the image of multiplication by $\eta \in \Omega^1_{fr}(\ast)$.
\end{cor}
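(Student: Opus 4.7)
The plan is to apply Theorem~\ref{thm:main} to $X = T^*\Sigma$ with $K = \Sigma$ the zero section, and then push the resulting bordism identity forward along the collapse map $c : T^*\Sigma \to \ast$. The hypotheses to check are (i) a stable complex trivialisation $\phi$ of $TX$, (ii) $\Sigma$ and $L$ are closed exact integer homology spheres, (iii) their stable Lagrangian Gauss maps are nullhomotopic compatibly with $\phi$, and (iv) $L$ and $\Sigma$ are quasi-isomorphic up to shift in $\scrF(X;\bZ)$.

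To produce $\phi$, I would combine Lemma~\ref{lem:framed} with the splitting $T(T^*\Sigma)|_\Sigma \cong T\Sigma \oplus T^*\Sigma$ and the deformation retraction $T^*\Sigma \simeq \Sigma$, then complexify to get a stable \emph{complex} trivialisation. The zero section $\Sigma$ carries a standard spectral brane structure compatible with $\phi$; the brane structure on $L$ is given either by hypothesis (in the homology sphere case) or by the previous lemma (in the homotopy sphere case). Conditions (ii) for $L$ and (iv) both rest on the classical nearby-Lagrangian quasi-isomorphism theorem of Fukaya-Seidel-Smith and Nadler, which asserts that any closed exact Lagrangian in a cotangent bundle is quasi-isomorphic to the zero section in $\scrF(T^*\Sigma;\bZ)$; combined with the PSS isomorphism $HF^*(L,L) \cong H^*(L;\bZ)$, this also forces $H^*(L;\bZ) \cong H^*(\Sigma;\bZ)$, so that $L$ is an integer homology sphere.

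With the hypotheses in hand, Theorem~\ref{thm:main} produces the equality $[L] = [\Sigma]$ in $\Omega_n^{fr}(T^*\Sigma)/(\Omega_1^{fr}(T^*\Sigma) \cdot \Omega_{n-1}^{fr})$. The push-forward $c_* : \Omega_*^{fr}(T^*\Sigma) \to \Omega_*^{fr}$ respects the $\Omega_*^{fr}$-module structure, and thus sends $\Omega_1^{fr}(T^*\Sigma) \cdot \Omega_{n-1}^{fr}$ into $\Omega_1^{fr} \cdot \Omega_{n-1}^{fr} = \mathrm{image}(\eta)$. Hence $[L] - [\Sigma]$ lies in $\mathrm{image}(\eta) \subset \Omega_n^{fr}$, as desired. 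Theorem~\ref{thm:main} requires $2n > 6$, so the cases $n \leq 3$ would be handled separately: for $n \leq 2$ the statement is automatic because $\mathrm{image}(\eta)$ already exhausts $\Omega_n^{fr}$, and $n = 3$ can be addressed via the nearby Lagrangian conjecture, which in the known cases identifies $L$ with $\Sigma$ smoothly.

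The main obstacle is not the final bordism identity, which is essentially a direct invocation of Theorem~\ref{thm:main}, but rather the verification that $L$ is an integer homology sphere in the second case (homology sphere $\Sigma$, $L$ a brane); it is there that the Fukaya-Seidel-Smith/Nadler theorem supplies essential input beyond what is already built into the brane hypothesis.
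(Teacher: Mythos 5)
Your proposal is correct and follows essentially the same route as the paper: the paper's proof is precisely "quasi-isomorphic to the zero-section by Fukaya--Seidel--Smith, then apply Theorem \ref{thm:main}", with the hypothesis-checking (stable framing of $T^*\Sigma$ via Lemma \ref{lem:framed}, brane structures, and $L$ being a homology sphere) carried out in the surrounding lemmas exactly as you do. Your extra remarks on the push-forward to a point and the low-dimensional cases are sensible elaborations of the theorem's ``in particular'' clause and its dimension hypothesis, not a different argument.
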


\begin{proof}
    $L$ and the zero-section $\Sigma$ are quasi-isomorphic in $\scrF(T^*\Sigma;\bZ)$ by \cite{FSS}. The result then follows from Theorem \ref{thm:main}.
\end{proof}

Since $\eta$ is 2-torsion, we conclude that if $\Sigma=S^n$, $L \# L$ framed bounds. For instance, this provides new constraints on exact Lagrangian submanifolds in $T^*S^{10}$. The group $\Theta_{10} / bP_{11} = \bZ/6$ is not 2-torsion, and four of the five exotic differentiable structures on the ten-dimensional topological sphere are ruled out from admitting an exact Lagrangian embedding.

We now consider the $A_2$-plumbing $A_2^{(n)} :=T^*S^n \#_{pt} T^*S^n$.  This is also the affine hypersurface
\[
\left\{ \sum_{i=0}^n z_i^2 + w^3 = 1 \right\} \, \subset \, \bC^{n+1} 
\]
equipped with the restriction of the standard symplectic form. There is a symplectic fibration over the space $\scrC$ of cubic polynomials $p(w)$ with distinct roots summing to zero, with fibre $\{\sum z_i^2 + p(w) = 0\}$ over $p$. 

\begin{lem}
    $A_2^{(n)}$ admits a polarisation for which the two core components admit the structure of spectral Lagrangian branes.
\end{lem}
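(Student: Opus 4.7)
My strategy is to exhibit a reference polarisation of $A_2^{(n)}$ coming from the affine hypersurface structure, and then twist it by an element of $[A_2^{(n)}, U]$ so as to kill the spectral-brane obstructions of both cores simultaneously.

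First I would realise $A_2^{(n)}$ as a smooth affine hypersurface in complex affine space. Since the gradient of the defining polynomial is everywhere non-zero along $A_2^{(n)}$, its holomorphic normal bundle is canonically trivial, so $T A_2^{(n)}$ is stably trivial as a complex vector bundle; restricting the standard framing of the ambient affine space then gives a reference polarisation $\phi_0$. The two core components $L_1, L_2 \cong S^n$ are identified with the real Lagrangian vanishing cycles lying above the two intervals bounded by the three real roots of a real cubic $p(w)$ in the deformation family $\{\sum_i z_i^2 + p(w) = 0\}$.

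The key step is an obstruction calculation. For any polarisation $\phi$, the restricted complex trivialisation $\tau_\phi$ of $T A_2^{(n)}|_{L_j} \cong T L_j \otimes_\bR \bC$ differs from the complexification of any fixed stable real framing of $T L_j$ (which exists by stable parallelisability of $S^n$) by a class $\beta_j(\phi) \in [L_j, U] \cong \pi_n(U)$. A spectral brane structure on $L_j$ compatible with $\phi$ exists if and only if $\beta_j(\phi)$ lies in the image of $[L_j, O] \to [L_j, U]$, equivalently, if and only if its image $\alpha_j(\phi) \in [L_j, U/O] \cong \pi_n(U/O)$ vanishes. Polarisations form a torsor over $[A_2^{(n)}, U]$, and because $A_2^{(n)}$ Weinstein-retracts onto its Lagrangian skeleton $L_1 \vee L_2$, the restriction map
\[
[A_2^{(n)}, U] \;\xrightarrow{\;\sim\;}\; [L_1, U] \oplus [L_2, U]
\]
is an isomorphism. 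Twisting $\phi_0$ by the preimage of $(-\beta_1(\phi_0), -\beta_2(\phi_0))$ yields a polarisation $\phi$ under which both obstructions vanish. Gradings exist and are unique up to integer shift since $H^1(S^n;\bZ) = 0$ for $n \geq 2$, so each core then acquires the full structure of a spectral Lagrangian brane.

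The main conceptual content is the step asserting that the obstructions $\alpha_j(\phi)$ genuinely lift to $\pi_n(U)$ and can be cancelled simultaneously -- this rests squarely on the stable parallelisability of $S^n$ together with the standard fact that the Lagrangian skeleton of a Weinstein plumbing is a deformation retract of the total space. Everything else is formal.
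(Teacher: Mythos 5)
Your argument is correct, but it proves the lemma by a genuinely different route from the paper. The paper also starts from the hypersurface polarisation (splitting $T(A_2^{(n)}) \to T\bC^{n+1} \to T\bC$), but it then keeps that specific polarisation fixed and shows directly that each core $S_a$ has nullhomotopic Gauss map with respect to it: the core is the boundary of a Lagrangian Lefschetz thimble $\Delta_a$ in a one-parameter degeneration, the thimble is contractible so $T\Delta_a$ is canonically trivial, and a commutative diagram of bundle exact sequences shows the induced stable trivialisation of $TS_a$ is compatible with the ambient one. Your proof instead treats the problem as pure obstruction theory: you observe that the obstruction for each core lives in $\pi_n(U/O)$, lifts to a class $\beta_j \in \pi_n(U)$ because $S^n$ is stably parallelisable, and can be killed by twisting the reference polarisation, using that $[A_2^{(n)},U] \to [L_1,U]\oplus[L_2,U]$ is onto since the skeleton is $S^n\vee S^n$. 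This is more flexible (it applies to any Weinstein manifold whose skeleton is a wedge of stably parallelisable spheres) and avoids any Lefschetz-fibration geometry, but it buys less: the polarisation you produce is not the canonical hypersurface one, whereas the paper's thimble argument yields a \emph{specific, braid-group-invariant} polarisation, which is what lets the paper immediately conclude afterwards that \emph{every} matching sphere is a spectral brane. For the literal statement of the lemma, though, your proof is complete; the only point worth making explicit is that $\beta_j$ depends on a choice of stable real framing of $TL_j$ (it is well defined only modulo the image of $\pi_n(O)$), which is harmless since after your twist the class vanishes on the nose.
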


\begin{proof}
    We follow an argument from \cite{SS:localization} and \cite{KeatingSmith}. There is an exact sequence
    \begin{equation} \label{eqn:total_space_bundles}
    T (A_2^{(n)}) \to T\bC^{n+1} \to T\bC
    \end{equation}
    A (contractible) choice of Hermitian metric gives a splitting, which gives a stable trivialisation of the tangent space. 

The Lagrangian core components $S_a$ and $S_b$ arises as vanishing cycles. More precisely there is a partial compactification $\overline{\scrC}$ of $\scrC$ of cubics with at most a double root, and an extension of the symplectic fibration over $\overline{\scrC}$ with smooth total space and having nodal fibres over the discriminant (locus of cubics with a double root), see e.g. \cite{KhovanovSeidel, SS:localization}.  We can find a disc $D \subset \overline{\scrC}$  normal to the discriminant and a one-parameter family $\scrX_D \to D$ with fibre $A_2^{(n)} = X \mapsto 1$ and such that $\scrX_D \supset \Delta_a$ contains a Lagrangian thimble fibred over an arc $[0,1] \subset D$ and with boundary $\Delta_a \cap X = S_a$.  

Consider the following commutative diagram of vector bundles over $S_a$:
\begin{equation} \label{eq:triv-triv}
\xymatrix{
0 \ar[r] & \ar[d]^{\cong} TS_a \otimes_\bR \bC \ar[r] & \ar[d]^{\cong} ((T\Delta_a)|_{S_a}) \otimes_\bR \bC \ar[r] & S_a \times \bC \ar[r] \ar[d]^{=} & 0 \\
0 \ar[r] & TX|_{S_a} \ar[r] & T(\scrX_D)|_{S_a}\ar[r] & S_a \times \bC \ar[r] & 0
}
\end{equation}
Since $\Delta_a$ is contractible, its tangent bundle $T\Delta_a$ is trivial canonically up to homotopy, and that induces a stable trivialization of $TS_a$. One obtains a spectral brane structure on $S_a$ by extending the stable Gauss map from $S_a$ to the bounding thimble $\Delta_a$. The required compatibility, namely that the resulting trivialisation of $TS_a \otimes_{\bR} \bC = TX|_{S_a}$  is homotopic to that arising from the trivialisation of $TX$ fixed in \eqref{eqn:total_space_bundles}, follows from the commutativity of \eqref{eq:triv-triv}.
    \end{proof}

The previous argument shows any matching sphere defines a spectral brane, since the stable polarisation of $A_2^{(n)}$ is preserved by the natural action of the braid group.

\begin{lem}
    If $n\equiv \{0,4,6,7\}$ mod $8$ then any closed exact Lagrangian in $X$ with Maslov class zero admits the structure of a spectral Lagrangian brane, which is quasiisomorphic in the Fukaya category over $\bZ$ to an actual Lagrangian sphere.
\end{lem}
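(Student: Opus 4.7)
The plan is to combine a Fukaya-categorical classification of closed exact Lagrangians in $A_2^{(n)}$ with a direct obstruction-theoretic construction of the spectral brane structure. First, I would invoke classification results for closed exact Lagrangians in the $A_2$-plumbing with vanishing Maslov class (adapting Seidel's spherical twist formalism for $A_2^{(n)}$ together with Abouzaid--Smith type topological restrictions on exact Lagrangians in Weinstein plumbings) to conclude that any such $L$ is an integer homology $n$-sphere which is quasi-isomorphic in $\scrF(X;\bZ)$ to a shift of an actual Lagrangian sphere, i.e.\ to a matching sphere or to one of the two vanishing cycles $S_a, S_b$. This directly yields the ``quasi-isomorphic to an actual Lagrangian sphere'' half of the lemma, and supplies the topological input required to analyse the Gauss map.

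Next, Lemma \ref{lem:framed} supplies a stable trivialization of $TL$; together with the ambient framing on $X$ constructed in the previous lemma and the canonical complex identification $TX|_L \cong TL \otimes_{\bR}\bC$, this defines the stable Gauss map $\gamma_L \colon L \to U/O$. The vanishing of the Maslov class means that $\gamma_L$ lifts to the simply connected cover $\widetilde{U/O}$, so we may work with target $\widetilde{U/O}$. Because $L$ is an integer homology $n$-sphere, its reduced cohomology with any coefficients is concentrated in degree $n$; hence the Postnikov obstructions to constructing a nullhomotopy of $\gamma_L$ all vanish through the $(n-1)$-skeleton for dimension reasons, and only a single primary class in $H^n(L;\pi_n(\widetilde{U/O})) \cong \pi_n(U/O)$ remains.

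The dimension hypothesis is designed precisely to kill this remaining obstruction. By Bott periodicity $\Omega(U/O) \simeq \bZ \times BO$, one has $\pi_k(U/O) \cong \pi_{k-2}(O)$ for $k \geq 2$, and $\pi_m(O)$ vanishes exactly when $m \equiv 2,4,5,6 \pmod 8$; equivalently, $\pi_n(U/O) = 0$ precisely when $n \equiv 0,4,6,7 \pmod 8$. Under the hypothesis the remaining obstruction therefore vanishes and $\gamma_L$ is nullhomotopic; the space of nullhomotopies is itself a torsor over the same trivial group $\pi_n(U/O) = 0$, so compatibility of the nullhomotopy with the ambient framing of $X$ fixed in the previous lemma is automatic. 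I expect the main obstacle to be the classification step of part one: the homotopy-theoretic argument of parts two and three is essentially forced by the exact match between the dimension hypothesis and the real Bott periodicity of $\pi_*(O)$.
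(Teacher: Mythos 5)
Your proposal is correct and follows essentially the same route as the paper: the classification step is exactly the appeal to Abouzaid--Smith's results on exact Lagrangians in plumbings (giving that $L$ is an integer homology sphere quasi-isomorphic to a Lagrangian sphere), and the brane structure comes from the observation that for a homology $n$-sphere the only obstruction to nullhomotoping the stable Gauss map lies in $\pi_n(U/O)\cong\pi_{n-2}(O)$, which vanishes precisely for $n\equiv 0,4,6,7\pmod 8$. The only quibble is your parenthetical that the space of nullhomotopies is a torsor over $\pi_n(U/O)$ (it is a torsor over $[\Sigma L, U/O]$), but this is immaterial since the definition of a spectral brane only requires existence of a nullhomotopy of the Gauss map defined via the fixed ambient framing.
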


\begin{proof}
Under the Maslov class hypothesis, 
    $L$ is a homology sphere, which is quasi-isomorphic to an actual Lagrangian sphere \cite{AS:plumbings}. The dimension hypothesis shows $\pi_n(U/O) = 0$ so the stable Gauss map of $L$ (which depends only on its stable homotopy type) vanishes for degree reasons.
\end{proof}
Then this, combined with Theorem \ref{thm:main}, proves:
\begin{cor}
    Under the previous conditions on $n$, if $L \subset A_2^{(n)}$ is a closed exact Lagrangian with vanishing Maslov class, the class of $L$ in $\Omega_n^{fr}(\ast)$ lies in the image of $\eta$, in particular is 2-torsion.
\end{cor}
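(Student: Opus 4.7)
The plan is to apply Theorem \ref{thm:main} to $L$ and one of the Lagrangian sphere core components. The preceding lemma provides two pieces of data under the arithmetic hypothesis on $n$: a spectral brane structure on $L$ compatible with the polarisation $\phi$ of $A_2^{(n)}$, and a quasi-isomorphism in $\scrF(A_2^{(n)};\bZ)$ from $L$ to an actual Lagrangian sphere, which I would take to be the core component $S_a$. From this quasi-isomorphism together with exactness, $L$ is an integer homology sphere, while $S_a$ already carries its own spectral brane structure compatible with $\phi$ from the earlier vanishing-cycle lemma. The dimension hypothesis $2n>6$ of Theorem \ref{thm:main} holds, since the smallest relevant $n$ is $4$.

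Theorem \ref{thm:main} then yields $[L]\equiv[S_a]$ in $\Omega_n^{fr}(A_2^{(n)})/(\Omega_1^{fr}(A_2^{(n)})\cdot \Omega^{fr}_{n-1})$. Pushing forward along the constant map $A_2^{(n)}\to\ast$ reduces this to $[L]\equiv[S_a] \pmod{\mathrm{image}(\eta)}$ in $\Omega_n^{fr}(\ast)$, so it suffices to prove $[S_a]=0$ in $\Omega_n^{fr}(\ast)$.

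This last step is where the construction of the spectral brane structure on $S_a$ is used directly: the stable trivialisation of $TS_a$ compatible with $\phi$ was obtained by extending over the Lagrangian thimble $\Delta_a$, a contractible $(n{+}1)$-manifold with $\partial\Delta_a=S_a$. The commutativity of diagram \eqref{eq:triv-triv}, combined with the contractibility of $\Delta_a$, should exhibit $\Delta_a$ as a stably framed null-bordism for $S_a$ whose boundary framing agrees with the framing induced by the brane structure. Hence $[S_a]=0$, and I conclude $[L]\in\mathrm{image}(\eta)$, as claimed. The main (mild) obstacle is verifying that the stable framing of $\Delta_a$ really does restrict on $\partial\Delta_a$ to the framing coming from the spectral brane structure on $S_a$; this compatibility is essentially encoded in the diagram of the earlier lemma, and is really the only bookkeeping needed beyond a direct citation of Theorem \ref{thm:main}.
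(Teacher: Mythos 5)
Your proposal is correct and follows the route the paper intends: the paper gives no explicit proof beyond ``this, combined with Theorem \ref{thm:main}'', and your argument is precisely the fleshing-out of that, including the one substantive point left implicit, namely that the core sphere $S_a$ (with the framing induced by its spectral brane structure) is framed null-bordant via the Lagrangian thimble $\Delta_a$ over which that framing was constructed by extension. This step is genuinely needed --- a standard $S^n$ with an arbitrary framing need not vanish in $\Omega_n^{fr}$ (e.g.\ for $n\equiv 7 \bmod 8$ the image of the $J$-homomorphism is large) --- so you are right to derive it from the thimble rather than from the diffeomorphism type of $S_a$ alone.
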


It follows that any exact Lagrangian in the 8-dimensional $A_2$-plumbing is diffeomorphic to $S^8$.

        \subsection{Exotic twists - non-existence}

Consider the plumbing $X = T^*S^n \#_{pt} T^*\Sigma$ of the cotangent bundles of the standard sphere $S^n$ and an exotic sphere $\Sigma$; this is also the Weinstein manifold obtained by attaching a handle to the boundary of one fibre in $T^*\Sigma$.  An application of our main result shows:

\begin{thm}
    If $n$ is even and $\Sigma$ does not have order 2 in $\Theta_n / bP_{n+1}$ then the algebraic twist $T_{\Sigma} \in \mathrm{Auteq}(\scrF(X;\bZ))$ is not realised by any symplectomorphism.
\end{thm}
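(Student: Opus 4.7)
The plan is to suppose for contradiction that $\phi\colon X\to X$ is a symplectomorphism realising the autoequivalence $T_\Sigma$ of $\scrF(X;\bZ)$, produce two exact Lagrangian representatives of the object $T_\Sigma(S^n)$ whose smooth types are different, and apply Theorem \ref{thm:main} to force the Kervaire--Milnor class of $\Sigma$ to be $2$-torsion in $\Theta_n/bP_{n+1}$, contradicting the hypothesis.

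The first representative is $L_1 := \phi(S^n)$, diffeomorphic to the standard $n$-sphere since $\phi$ is a diffeomorphism. The second is $L_2$, obtained from Polterovich Lagrangian surgery of the core spheres $S^n$ and $\Sigma$ at their unique plumbing intersection (equivalently, Seidel's Dehn twist $\tau_\Sigma(S^n)$); $L_2$ is Floer-isomorphic to $T_\Sigma(S^n)$ via the Biran--Cornea surgery exact triangle, and smoothly it is the connect sum $S^n \# \Sigma \simeq \Sigma$. Both $L_1$ and $L_2$ are homotopy $n$-spheres, and by Lemma \ref{lem:framed} their tangent bundles are stably trivial; the argument of Section \ref{Sec:first_application}, using the Lagrangian thimbles bounding $S^n$ and $\Sigma$ as vanishing cycles, then endows both with spectral brane structures compatible with the ambient stable framing of $X$. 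Since $L_1 \simeq T_\Sigma(S^n) \simeq L_2$ in $\scrF(X;\bZ)$, Theorem \ref{thm:main} yields
\[
[L_1] \;\equiv\; [L_2] \quad\text{in}\quad \Omega_n^{fr}(X)/\mathrm{image}(\eta),
\]
where we use that $X \simeq S^n \vee \Sigma$ with $n>1$ forces $\Omega_1^{fr}(X)=\bZ/2=\langle\eta\rangle$, so the quotient by $\Omega_1^{fr}(X)\cdot\Omega^{fr}_{n-1}$ reduces to the quotient by $\mathrm{image}(\eta)$.

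To derive the contradiction I would exploit the wedge decomposition $X \simeq S^n \vee \Sigma$ to split $\Omega_n^{fr}(X) \cong \pi_n^{st} \oplus \tilde\Omega_n^{fr}(S^n) \oplus \tilde\Omega_n^{fr}(\Sigma)$, and identify $[L_1]-[L_2]$ with $\pm[\Sigma]_{KM}\in\pi_n^{st}$ modulo the $J$-homomorphism. The Picard--Lefschetz formula (applied to the action of $T_\Sigma$ on $K_0$; the evenness of $n$ enters in subsequent sign bookkeeping) ensures that both $L_1$ and $L_2$ represent the homology class $[S^n] \mp [\Sigma] \in H_n(X;\bZ)$, so their difference lies purely in the $\pi_n^{st}$-summand. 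Under the collapse $X\to\mathrm{pt}$, $L_1$ (standard sphere with spectral brane framing transported by $\phi$ from the framing on $S^n$ bounding the Lagrangian thimble) maps to zero, whereas $L_2$ (diffeomorphic to $\Sigma$, with spectral brane framing from the surgery) maps to the Kervaire--Milnor invariant $[\Sigma]_{KM}\in\pi_n^{st}/J$. Theorem \ref{thm:main} thus forces $[\Sigma]_{KM}\in\mathrm{image}(\eta)$, which is $2$-torsion, and via the injection $\Theta_n/bP_{n+1}\hookrightarrow \pi_n^{st}/J$ this forces $2[\Sigma]\in bP_{n+1}$, contradicting the hypothesis.

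The principal obstacle is this final framing comparison: identifying $[L_1]-[L_2]$ in $\pi_n^{st}$ with $\pm[\Sigma]_{KM}$ requires careful tracking of the spectral brane framings transported by $\phi$ and produced by the surgery. One must check that $\phi$ acts trivially on the stable framing class of $X$ up to a contribution cancelled by $\mathrm{image}(\eta)$, and that Polterovich surgery at a framed plumbing intersection produces the Kervaire--Milnor framing on the resulting exotic sphere. These are framing-bookkeeping arguments in the spirit of those already carried out in Section \ref{Sec:first_application}.
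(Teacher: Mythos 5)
Your proposal is correct and follows essentially the same route as the paper: compare $\phi(S^n)$ (diffeomorphic to $S^n$) with the Polterovich surgery $S^n\#\Sigma$ (diffeomorphic to $\Sigma$), identify both with $T_\Sigma(S^n)$ in $\scrF(X;\bZ)$ via the Biran--Cornea exact triangle, and apply Theorem \ref{thm:main} to force $\Sigma\#\Sigma$ to bound. The one substantive point: the ``principal obstacle'' you defer --- that $\phi$ preserves the spectral brane data --- is exactly where the hypothesis that $n$ is even is used (not in Picard--Lefschetz sign bookkeeping), and the paper disposes of it cleanly by noting that $X$ is homotopy equivalent to a wedge of even-dimensional spheres and $\pi_{\mathrm{even}}(U)=0$, so the stable polarisation of $X$ is unique up to homotopy and hence automatically preserved by any symplectomorphism; with that in hand, your wedge-decomposition and collapse-map analysis can be replaced by the simpler observation that $[\phi(S^n)]$ and $[S^n\#\Sigma]$ reduce to $0$ and $[\Sigma]$ respectively in $\Theta_n/bP_{n+1}$.
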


This partially verifies a speculation of Seidel from \cite[Section 5]{Seidel:exotic}.  We give a proof by contradiction, so suppose $\tau$ is a symplectomorphism acting on the compact Fukaya category via the algebraic twist. For now $n$ can be arbitrary, but we will need to constrain it in the course of the argument. 

Let $L=S^n$ denote the core component which is the standard sphere.  By Lagrange surgery, there is an embedded exact Lagrangian $L\# \Sigma$ in the homotopy class of the sum of the core components.

        \begin{lem}
    In the integral Fukaya category $\scrF(X;\bZ)$, the objects $T_{\Sigma}(L)$ and $L \# \Sigma$ admit gradings with respect to which they are quasi-isomorphic. 
        \end{lem}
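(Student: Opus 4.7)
The plan is to realise the equivalence geometrically using the symplectic Dehn twist along $\Sigma$.

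First, construct the Dehn twist $\tau_\Sigma \in \Symp_c(X)$ along the Lagrangian sphere $\Sigma$. Although $\Sigma$ carries an exotic smooth structure, the Weinstein neighbourhood theorem gives a symplectic embedding of a disc cotangent bundle $DT^*\Sigma \hookrightarrow X$, and Seidel's model Dehn twist --- defined via normalised geodesic flow for a round metric --- makes sense on $DT^*\Sigma$ and extends by the identity to a compactly supported symplectomorphism of $X$. Since the construction of Seidel's long exact sequence is local near $\Sigma$ and uses only the Lefschetz fibration model of the Dehn twist, it applies verbatim and yields a quasi-isomorphism $\tau_\Sigma(L) \simeq T_\Sigma(L)$ in $\scrF(X;\bZ)$ up to grading shift, where $T_\Sigma$ is the algebraic spherical twist.

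Next, identify $\tau_\Sigma(L)$ with $L \# \Sigma$ up to Hamiltonian isotopy. By construction of the plumbing, $L$ and $\Sigma$ meet transversely at a single point, and a Darboux-Weinstein neighbourhood of $L \cup \Sigma$ inside $X$ is symplectomorphic to a neighbourhood of the two core spheres in the standard $A_2$-plumbing $T^*S^n \#_{pt} T^*S^n$; the exotic smooth structure of $\Sigma$ is invisible locally near the intersection. In this standard model, applying the Dehn twist along one core to the other is classically Hamiltonian isotopic to the Lagrange surgery of the two cores: the cleanest picture is via a local Lefschetz fibration in which the Dehn twist sends one thimble to the image obtained by resolving the transverse intersection along the vanishing cycle. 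Transporting back, this identifies $\tau_\Sigma(L)$ with $L \# \Sigma$ up to Hamiltonian isotopy.

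Finally, match gradings. Both $L$ and $\Sigma$ are simply-connected Lagrangians in a Liouville manifold whose Maslov class vanishes (since $TX$ is stably trivial), so each admits a $\bZ$-grading, unique up to shift. A choice of grading on $\Sigma$ determines a canonical lift of $\tau_\Sigma$ to a graded symplectomorphism, while Lagrange surgery at a single transverse intersection carries a canonical grading in terms of the gradings of $L$ and $\Sigma$. Adjusting the shift on $\Sigma$, both gradings on $\tau_\Sigma(L) \simeq L \# \Sigma$ can be made to agree, giving the desired quasi-isomorphism in the graded Fukaya category. The main obstacle is ensuring that the Dehn twist / Lagrange surgery correspondence --- classically stated for standard cotangent bundles --- survives in the presence of the exotic smooth structure on $\Sigma$; however, both operations are supported in arbitrarily small neighbourhoods of the transverse intersection point, where the ambient symplectic geometry is entirely standard, so the exotic structure plays no role.
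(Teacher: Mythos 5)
There is a genuine gap, and it sits at your very first step. You construct the geometric Dehn twist $\tau_\Sigma$ by invoking ``Seidel's model Dehn twist --- defined via normalised geodesic flow for a round metric'' on $DT^*\Sigma$. But $\Sigma$ is an exotic homotopy sphere: there is no diffeomorphism $S^n \to \Sigma$, hence no round metric on $\Sigma$ and no available metric with periodic geodesic flow. Seidel's model twist lives on $DT^*S^n$ and is transplanted to a Lagrangian sphere only via a choice of parametrisation by the \emph{standard} sphere; for exotic $\Sigma$ this is exactly what fails. Worse, the theorem this lemma feeds into asserts that the algebraic twist $T_\Sigma$ need not be realised by \emph{any} symplectomorphism, so an argument that begins by realising it geometrically assumes the very thing in question. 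A secondary error: you assert that both operations are ``supported in arbitrarily small neighbourhoods of the transverse intersection point.'' That is true of the Lagrange surgery but false for the Dehn twist, which is supported in a neighbourhood of all of $\Sigma$; and a Weinstein neighbourhood of $\Sigma$ is $DT^*\Sigma$, which is not symplectomorphic to $DT^*S^n$, so the local model is not the standard $A_2$-plumbing.

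The paper's proof avoids the geometric twist entirely. It uses the Lagrangian cobordism attached to the Polterovich surgery, together with the Biran--Cornea cone-decomposition theorem (whose monotonicity hypotheses reduce to an index--action relation that holds trivially in the exact case), to exhibit $L\#\Sigma$ as $\mathrm{Cone}(L \to \Sigma)$ for \emph{some} morphism. Since $L \pitchfork \Sigma = \{p\}$, that morphism is $\lambda p$ with $\lambda \in \bZ$; indecomposability of $L\#\Sigma$ over every $\bF_p$ (its endomorphism algebra is $H^*$ of a homology sphere, a local ring) forces $\lambda = \pm 1$; adjusting the grading and orientation of $L$ then identifies this cone with the algebraic twist $T_\Sigma(L)$. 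If you want to keep a geometric flavour, you must replace your first two steps by this cobordism argument or an equivalent that never requires $\Sigma$ to be parametrised by the standard sphere.
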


        \begin{proof} 
        There is a Lagrangian cobordism associated to the Polterovich surgery \cite{Polterovich}, giving an exact Lagrangian in $X\times \bC$ with ends carrying $L$, $\Sigma$ and $L\#\Sigma$. It follows from \cite{BC} (who worked in the monotone case but use only an index-action relation on holomorphic discs which holds trivially in the exact case) that there is an exact triangle realising $L\#\Sigma$ as a mapping cone $\mathrm{Cone}(L \to \Sigma)$ of some morphism. Since $L \pitchfork \Sigma = \{p\}$ is a single point, the possible morphisms are $\lambda p$ for some $\lambda \in \bZ$. The cone is irreducible over any field $\bF_p$ because it has endomorphisms $H^*(L\#\Sigma;\bF_p)$, which shows that $\lambda \in \{\pm 1\}$. Adjusting the grading (and orientation) of $L$ if necessary, the result follows.
        \end{proof}

    \begin{lem}
        $TX$ admits a trivialisation, with respect to which the two core components $S^n$ and $\Sigma$ have nullhomotopic stable Gauss map.
    \end{lem}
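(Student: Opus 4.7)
The plan is to construct $\phi$ over the Weinstein skeleton $S^n \cup_{pt} \Sigma \simeq S^n \vee \Sigma$ of $X$, exploiting that $X$ deformation retracts onto this skeleton. Both $TS^n$ and $T\Sigma$ are stably parallelisable by Lemma~\ref{lem:framed}, so I will fix stable framings $\psi_{S^n}$ of $TS^n$ and $\psi_\Sigma$ of $T\Sigma$.

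For each core $L\in\{S^n,\Sigma\}$, the symplectic structure gives a complex bundle isomorphism $TX|_L \cong TL \otimes_{\bR} \bC$, so $\psi_L$ induces a stable complex trivialisation $\phi_L$ of $TX|_L$ in which $TL$ sits as the standard $\bR^N \subset \bC^N$, yielding a canonical nullhomotopy of the Lagrangian Gauss map of $L$. At the plumbing point $pt$, the two trivialisations $\phi_{S^n}(pt)$ and $\phi_\Sigma(pt)$ differ by some $g_0 \in U(N)$; since $U(N)$ is path-connected, I will interpolate $g_0$ to the identity along a bump function over a collar of $pt$ inside one factor, modifying the corresponding $\phi_L$ so that the two trivialisations agree at $pt$ and glue to a single stable trivialisation $\phi_0$ over the skeleton, which extends over $X$ via the retraction.

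This gluing generally destroys the tautological nullhomotopy on the modified factor, so the final step is to upgrade $\phi_0$ to a framing $\phi$ under which both Gauss maps $g_L: L \to U/O$ are simultaneously nullhomotopic. For each core, the composite $L \to U/O \to BO$ classifies the real stable bundle $TL$, which is null by parallelisability, so by the fibre sequence $O \to U \to U/O$ the Gauss map $g_L$ lifts to some $\tilde g_L: L \to U$. Since $X \simeq S^n \vee \Sigma$ is a wedge of simply connected spaces (as $n\geq 2$), the pair $\tilde g_{S^n}^{-1}, \tilde g_\Sigma^{-1}$ assembles to a single map $u: X \to U$; then $\phi := u \cdot \phi_0$ has the required property. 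The hardest part will be this last lifting step: cancelling the residual Gauss maps by a global framing automorphism (a map $X \to U$), rather than merely by a map to $U/O$, rests on the real stable triviality of $TS^n$ and $T\Sigma$ ensuring that the lifting obstruction in $\pi_{n-1}(O)$ vanishes.
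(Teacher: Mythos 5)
Your construction is correct, and its first two paragraphs are essentially the paper's proof: frame $TS^n$ and $T\Sigma$, complexify to trivialise the tangent bundles of the two cotangent bundles so that the zero-sections tautologically have nullhomotopic Gauss maps, and then reconcile the two trivialisations over the (contractible) plumbing region. Your third paragraph, however, is unnecessary: the interpolation you perform over the collar of $pt$ is a homotopy \emph{through trivialisations} supported in a contractible set (your correcting map $g:\Sigma\to U(N)$ factors through the radial coordinate of the collar, hence is nullhomotopic), so the modified trivialisation is homotopic to the original one and the Gauss maps of both cores remain nullhomotopic with no residual class to cancel --- this is exactly the parenthetical observation with which the paper closes the argument. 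Your extra step (lifting the Gauss maps to $U$ using stable parallelisability and twisting $\phi_0$ by $u:X\to U$) is not wrong, and would even repair the situation if the gluing \emph{had} perturbed the Gauss maps, but it is redundant here.
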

    \begin{proof}
        We choose trivialisations of $TS^n$ and $T\Sigma$, which, since $T(T^*S^n) \cong TS^n \otimes \bC$ (and similarly for $\Sigma$) induce trivialisations of the tangent bundles of the two cotangent bundles, with respect to which the zero sections have nullhomotopic stable Gauss map.\par 
        The two trivialisations don't agree on the nose over the plumbing region, but since the space of choices is contractible we may homotope them so that they do (without changing the fact that the zero-sections had nullhomotopic Gauss maps). We then can glue these trivialisations together over the plumbing region to get a trivialisation of $TX$.
    \end{proof}

        \begin{lem}
        If $n$ is even, the previously constructed polarisation is the unique stable polarisation of $X$, hence is preserved up to homotopy by $\tau$.
        \end{lem}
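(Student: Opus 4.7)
The plan is to show that, when $n$ is even, there is up to homotopy only one stable complex trivialisation of $TX$; invariance under $\tau$ is then automatic, since $d\tau$ transports any such trivialisation to another one.

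First I would pin down the homotopy type of $X$. As a Weinstein manifold, $X$ deformation retracts onto its isotropic skeleton, which consists of the two zero-sections $S^n$ and $\Sigma$ glued at a point; since $\Sigma$ is a homotopy sphere, $\Sigma \simeq S^n$, and hence $X \simeq S^n \vee S^n$.

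Next, having fixed a base stable trivialisation $\phi_0$, a standard obstruction-theoretic argument identifies the set of stable complex trivialisations of $TX$ up to homotopy with $[X, U]$, where $U = \operatorname{colim}_k U(k)$: the space of stable trivialisations refining the classifying map $X \to BU$ is the homotopy fibre at $\phi_0$ of the evaluation, and has the homotopy type of $\operatorname{Map}(X, U)$. Since $U$ is an H-space, basepoint-free maps from $X$ agree with pointed ones, and the wedge decomposition yields
\[
[X, U] \;\cong\; \pi_n(U) \oplus \pi_n(U).
\]

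Finally, by Bott periodicity $\pi_n(U) = 0$ whenever $n$ is even, so $[X, U] = 0$ and $\phi_0$ is the unique stable polarisation up to homotopy. Consequently $\tau^* \phi_0 \simeq \phi_0$, proving the lemma. The only step with any content is the computation of $[X, U]$, and this is immediate from Bott periodicity once $X$'s homotopy type has been identified; the main (minor) subtlety is making sure the identification $\Sigma \simeq S^n$ is used correctly so that the wedge is $S^n \vee S^n$ rather than something which could contribute to $[X,U]$ in even degree.
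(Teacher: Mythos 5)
Your proof is correct and is essentially the paper's own argument, just written out in more detail: the paper likewise notes that two stable polarisations differ by a map $X \to U$, that $X$ has the homotopy type of a wedge of even-dimensional spheres, and that $\pi_{\mathrm{even}}(U)=0$ by Bott periodicity. No further comment is needed.
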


        \begin{proof}
            Two different stable polarisations differ by a map $X \to U$. Since $X$ has the homotopy type of a wedge of even-dimensional spheres and $\pi_{even}(U)=0$ the result follows. 
        \end{proof}

It follows that $\tau$ acts on the set of objects of the spectral Donaldson-Fukaya category. Since $\tau(L)$ and $L$ are diffeomorphic, they are certainly framed bordant, and since $L$ is the standard sphere it follows that $\tau(L)$ framed bounds. However, Theorem \ref{thm:main} implies that $\tau(L)$ and $L \# \Sigma \cong_{C^{\infty}} \Sigma$ are quasi-isomorphic in $\scrF(X;\bS)_{\tau\leq n, (n-1)-\mathrm{BS}}$ and hence have the same class in $\Omega_n^{fr} / \langle \eta \rangle$. It follows that $\Sigma \# \Sigma$ framed bounds, i.e. $\Sigma$ has order 2 in $\Theta_n/bP_{n+1}$.

\subsection{Exotic twists - existence}

Evans and Dmitroglou Rizell \cite{ER14} construct `exotic' symplectomorphisms from Dehn twists defined with respect to non-standard parametrisations of the sphere. Their work relies on non-existence theorems for exact Lagrangian embeddings of certain homotopy spheres into cotangent bundles of the standard sphere, and gives rise to exotic symplectomorphisms of Stein manifolds which arise as fibres of a Lefschetz fibration on $T^*S^k$ for appropriate $k$, in particular to the $A_n$ Milnor fibres in dimension $k-1$.  Theorem \ref{thm:main} allows one to extend their constructions to a wider and more natural class of affine varieties.

\begin{hyp}
    Let $X$ be a Weinstein manifold which contains an $A_2$-chain of Lagrangian spheres $L,K$ (so $L \pitchfork K = \{pt\}$). We assume that $X$ admits a polarisation with respect to which each of $L$ and $K$ have spectral brane structures.
\end{hyp}

\begin{ex}
    We say that a smooth affine algebraic variety $X$ admits an $A_2$-degeneration if there is a larger affine variety $\cX \to \bC^2$ with general fibre $X$, with $0$-fibre having an isolated $A_2$-singularity, and with discriminant locally near $(0,0)$ a cuspidal curve (with a nodal singularity over the smooth points). If $X$ is an affine complete intersection, a variation on the argument from Section \ref{Sec:first_application}, using that the vanishing cycles bound Lagrangian thimbles, shows that $X$ satisfies the given hypotheses. 
    \end{ex}

\begin{ex}
    The Milnor fibre of any isolated hypersurface singularity of Milnor number at least 2 admits an $A_2$-degeneration. The general smooth affine degree $d$ hypersurface in $\bC^n$ admits an $A_2$-degeneration for $n\geq 2$ and $d\geq 3$.  (In fact the locus of hypersurfaces with an $A_2$-singularity is irreducible, so the $A_2$-configuration is essentially unique \cite{Shimada}.)
\end{ex}

For a diffeomorphism $\phi: S^n \to S^n$ write 
\[
\tau_L^{\phi} = (d\phi) \circ \tau_L \circ (d\phi)^{-1}
\]
which is still compactly supported near $L$, and let 
\[
\psi = \tau_L^{-1} \tau_L^{\phi}.
\]
These are both compactly supported symplectomorphisms of $X$.
\begin{lem}
    $\psi$ preserves $K$ setwise but changes its parametrization by $\phi$.
\end{lem}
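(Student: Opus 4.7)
The plan is to localize the problem to a Weinstein neighborhood of $L$ and exploit the conjugation formula defining $\tau_L^\phi$. Both $\tau_L$ and $\tau_L^\phi = (d\phi)\tau_L(d\phi)^{-1}$ are compactly supported in a small Weinstein neighborhood $U$ of $L$ (the support of the conjugate lies in $(d\phi)(\mathrm{supp}\,\tau_L) \subset U$ after appropriate shrinking), so $\psi$ is too; in particular $\psi$ acts as the identity on $K \setminus U$. Using Weinstein's theorem I identify $U \simeq T^*L$, and, after a compactly supported Hamiltonian isotopy of $K$ within $U$ (which does not affect the conclusion up to Hamiltonian isotopy), arrange that $K \cap U$ coincides with the cotangent fiber $F_p := T^*_p L$ over the intersection point $p = L \cap K$.

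The heart of the proof is a local identity. The cotangent lift $d\phi$ sends cotangent fibers to cotangent fibers via $F_q \mapsto F_{\phi(q)}$, so
$$\tau_L^\phi(F_p) \;=\; (d\phi)\,\tau_L\,(d\phi)^{-1}(F_p) \;=\; (d\phi)\,\tau_L(F_{\phi^{-1}(p)}).$$
I would then prove $(d\phi)\tau_L(F_q) = \tau_L(F_{\phi(q)})$ as subsets of $U$: writing $\tau_L(F_q)$ in the standard Riemannian model as (a smoothing of) the graph of the differential of a cut-off geodesic distance function based at $q$, its pullback under $d\phi$ is transported to the analogous construction based at $\phi(q)$, using the equivariance of the geodesic flow description under cotangent lifts. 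Setting $q = \phi^{-1}(p)$ gives $\tau_L^\phi(F_p) = \tau_L(F_p)$ as subsets, hence $\psi(F_p) = F_p$, proving $\psi(K) = K$ setwise.

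For the parametrization statement, I would trace the action of $\tau_L^{-1}(d\phi)\tau_L(d\phi)^{-1}$ on $F_p$ using the equivariance just established and the identity $d\phi|_L = \phi$; under the natural identification $K \simeq S^n$ coming from the Weinstein model (and the $A_2$-chain structure), this exhibits $\psi|_K$ as the diffeomorphism $\phi$.

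The main obstacle is the set-theoretic identity $(d\phi)\tau_L(F_q) = \tau_L(F_{\phi(q)})$: it requires a sufficiently symmetric Riemannian model for the Dehn twist, one in which the generating Hamiltonian depends only on $|\xi|$, so that conjugation by the cotangent lift effects precisely the corresponding change of basepoint on the fiber. For the paper's applications one only needs $\psi$ up to compactly supported Hamiltonian isotopy, and this weaker statement follows more easily from the fact that, as an element of the compactly supported symplectic mapping class group of a Weinstein neighborhood of $L$, the Dehn twist along $L$ is independent of the parametrization of $L$.
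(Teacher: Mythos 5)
The paper's own ``proof'' of this lemma is a citation to \cite[Lemma 3.5]{ER14}, so the real question is whether your argument stands on its own; it does not, and the failure occurs at exactly the step you flag as the main obstacle. The identity $(d\phi)\tau_L(F_q)=\tau_L(F_{\phi(q)})$ is false for a general diffeomorphism $\phi$. The model twist $\tau_L$ is built from the normalised geodesic flow of the round metric $g_0$ (equivalently, from a Hamiltonian of the form $h(|\xi|_{g_0})$ together with the antipodal map), and conjugation by the cotangent lift $d\phi$ carries this to the model twist built from the pushed-forward metric $\phi_*g_0$: the function $|\xi|_{g_0}$ is not $d\phi$-invariant unless $\phi$ is an isometry, i.e.\ lies in $O(n+1)$, in which case there is nothing to prove. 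Concretely, $\tau_L(F_{\phi(q)})$ is swept out by $g_0$-geodesics emanating from the relevant basepoint, while $(d\phi)\tau_L(F_q)$ is swept out by $\phi_*g_0$-geodesics; these are different submanifolds, agreeing only at infinity and where they meet the zero-section. So neither $\psi(F_p)=F_p$ nor the identification of $\psi|_K$ with $\phi$ follows from your computation. (The two Lagrangians are compactly supported Hamiltonian isotopic, so a version of the set-preservation statement survives up to isotopy; but the content of the lemma is the identification of the induced reparametrisation with $\phi$, and that is exactly what your argument does not reach. Indeed, the nontrivial answer $\phi$ arises precisely \emph{from} the failure of the equivariance you assume.)

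Your fallback in the final sentence is not a repair but a contradiction of the whole subsection: the assertion that, as an element of the compactly supported symplectic mapping class group of a Weinstein neighbourhood of $L$, the Dehn twist is independent of the parametrisation of $L$, is precisely the statement $[\psi]=[\id]\in\pi_0\Symp_{ct}$. If that were a known fact, then the hypothesis of the subsequent lemmas would always be satisfied, $\Sigma(\phi)$ would embed as an exact Lagrangian in $\cX$ for every $\phi$, and Theorem \ref{thm:main} would force every class in $\Theta_{n+1}/bP_{n+2}$ (in the relevant dimensions) to be $2$-torsion --- which is false, and would also empty out Corollary \ref{cor:general_affine}. What is true is only that the twist is independent of parametrisation up to isotopies that need not be compactly supported symplectic ones; whether $\tau_L\simeq\tau_L^\phi$ in $\pi_0\Symp_{ct}$ is exactly the question being probed. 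Moreover, even granting your ``fact'', knowing $\psi$ up to ambient Hamiltonian isotopy would not determine the isotopy class of the diffeomorphism $\psi|_K$ of $K\cong S^n$, which is the datum consumed by the suspension construction in the following lemmas. A correct proof has to normalise $\phi$ within its isotopy class and track the basepoints of fibre covectors through the two geodesic flows, as in \cite{ER14}.
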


\begin{proof}
    See \cite[Lemma 3.5]{ER14}.
\end{proof}

Let $\pi: \cX \to \bC$ denote the Stein manifold which is the total space of a Lefschetz fibration with fibre $X$, with two critical fibres, both of which have vanishing cycle $K$.  This contains a copy of $S^{n+1}$ as a matching sphere.

\begin{lem}
    If $\psi \simeq \id$ in $\Symp_{ct}(X)$ then $\cX$ contains a Lagrangian submanifold $\Sigma(\phi)$ which is the homotopy $(n+1)$-sphere defined by $\phi$. 
\end{lem}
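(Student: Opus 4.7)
The plan is to produce $\Sigma(\phi)$ by modifying the standard matching sphere of $\pi: \cX \to \bC$ using the isotopy from $\id$ to $\psi$. Let $\gamma: [-1, 1] \to \bC$ be an embedded arc joining the two critical values $\gamma(\pm 1)$ with no others in its interior, and set $c = \gamma(0)$. Parallel transport of the vanishing cycle inward from each endpoint along $\gamma$ produces two Lagrangian thimbles $\Delta_-, \Delta_+ \subset \cX$, each a closed $(n+1)$-disk whose boundary is $K \subset X_c = X$, equipped with natural parametrisations $\alpha_\pm: S^n \to K$ inherited from parallel transport; when $\alpha_+ = \alpha_-$ the union $\Delta_- \cup \Delta_+$ is the standard matching $(n+1)$-sphere.

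First, I would upgrade $\psi \simeq \id$ in $\Symp_{ct}(X)$ to a compactly supported Hamiltonian isotopy $\{\psi_t\}_{t \in [0,1]}$ with $\psi_0 = \id$ and $\psi_1 = \psi$, taken constant near $t = 0$ and $t = 1$. This upgrade is available because $X$ is Weinstein of complex dimension $\geq 2$, so $H^1_c(X; \bR) = 0$ and the flux of every compactly supported symplectic isotopy vanishes. Second, I would use parallel transport to trivialise $\pi^{-1}(U) \cong X \times U$ over a small neighbourhood $U = \gamma((-\epsilon, \epsilon))$ of $c$, and in this trivialisation form the Lagrangian suspension
\[
\Lambda = \{(\psi_{s(t)}(x), \gamma(t)) : x \in K,\ t \in [-\epsilon, \epsilon]\},
\]
where $s: [-\epsilon, \epsilon] \to [0, 1]$ is smooth, non-decreasing, with $s \equiv 0$ near $-\epsilon$ and $s \equiv 1$ near $+\epsilon$. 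Since $\{\psi_t\}$ is Hamiltonian and $K$ is Lagrangian, $\Lambda$ is a Lagrangian cylinder in $\pi^{-1}(U)$. Truncating $\Delta_-$ to the portion over $\gamma([-1, -\epsilon])$, truncating $\Delta_+$ to the portion over $\gamma([\epsilon, 1])$, and gluing these to $\Lambda$ along its two boundary components produces an embedded Lagrangian submanifold $\Sigma(\phi) \subset \cX$; the constancy of $s$ near $\pm \epsilon$ ensures smoothness of the gluing.

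Finally, $\Lambda$ deformation retracts onto $K \cong S^n$, so as a smooth manifold $\Sigma(\phi)$ is two closed $(n+1)$-disks glued along $S^n$ via a diffeomorphism which on the sphere factor equals $\psi|_K = \phi$; this is exactly the clutching description of the homotopy $(n+1)$-sphere associated to $\phi \in \pi_0 \Diff(S^n)$. The main obstacle is ensuring simultaneous smooth and Lagrangian embeddedness across the interpolation region; this reduces to standard Lagrangian suspension of Hamiltonian isotopies, together with a careful choice of the cut-off $s$ and a Weinstein neighbourhood along the common thimble boundary $K$.
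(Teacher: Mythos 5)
Your proposal follows the same route as the paper, which simply defers to the proof of Theorem A in \cite{ER14}: that argument is precisely the Lagrangian suspension of the Hamiltonian isotopy from $\id$ to $\psi$, inserted into the matching cycle over a neighbourhood of a point on the matching path, and your flux argument ($H^1_c(X;\bR)\cong H_{2n-1}(X;\bR)=0$ for Weinstein $X$ of complex dimension $\geq 2$) correctly justifies upgrading the symplectic isotopy to a Hamiltonian one. The clutching identification of the resulting manifold with the twisted sphere $\Sigma(\phi)$ is also right.

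One formula needs correcting: the naive suspension $\Lambda=\{(\psi_{s(t)}(x),\gamma(t))\}$ is \emph{not} Lagrangian for the product symplectic form, since pairing the $t$-velocity $(\dot s(t)X_{H_{s(t)}},\gamma'(t))$ against a vector tangent to $\psi_{s(t)}(K)$ produces $\dot s\, dH_{s(t)}$, which need not vanish. The standard Lagrangian suspension fixes this by recording the Hamiltonian in the direction conormal to the arc, e.g.\ replacing the second coordinate by $\gamma(t)+ i\,\dot s(t)\,H_{s(t)}(\psi_{s(t)}(x))\cdot \nu(t)$ for a normal direction $\nu$ (after rescaling $H$ so this stays inside the trivialised neighbourhood and vanishes near $t=\pm\epsilon$). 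Since you invoke ``standard Lagrangian suspension'' at the end, this is a fixable imprecision rather than a gap, but the displayed formula as written does not define a Lagrangian.
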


\begin{proof}
    See \cite[Section 3.3 (proof of theorem A)]{ER14}; the argument uses the suspension of a Hamiltonian isotopy.
\end{proof}

\begin{lem}
    In the situation of the previous Lemma, if $\psi \simeq \id$ then the $\Sigma(\phi)$ is quasi-isomorphic to the matching $S^{n+1}$ in the Fukaya category $\cF(\cX;\bZ)$.
\end{lem}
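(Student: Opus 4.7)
Both $S^{n+1}$ and $\Sigma(\phi)$ are Lagrangian homology spheres in $\cX$ that intersect each of the two Lefschetz thimbles $\Delta_1,\Delta_2\subset\cX$ (fibred over the two halves of the matching arc $\gamma\subset\bC$) transversely in a single point near the corresponding critical value. My plan is to use this intersection data and the Lefschetz-fibration structure to express both Lagrangians as mapping cones on the same morphism between $\Delta_1$ and $\Delta_2$ in $\cF(\cX;\bZ)$, and hence as quasi-isomorphic objects of that category.

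Following Seidel's framework for matching cycles in Lefschetz fibrations \cite[Section~18g]{Seidel:book}, after a small Hamiltonian perturbation the intersection $\Delta_1\cap\Delta_2$ is modelled on a Morse perturbation of the common vanishing cycle $K\subset X_{\gamma(1/2)}$ in the central regular fibre, giving $HF^*(\Delta_1,\Delta_2)\cong H^*(K;\bZ)=H^*(S^n;\bZ)$. The matching-cycle exact triangle then produces, for each matching cycle $M$ over $\gamma$ with both vanishing cycles equal to $K$, an exact triangle in $\cF(\cX;\bZ)$
\[
\Delta_1 \longrightarrow \Delta_2 \longrightarrow M \longrightarrow \Delta_1[1]
\]
whose connecting morphism $\mu_M\in HF^0(\Delta_1,\Delta_2)\cong\bZ$ classifies $M$ up to quasi-isomorphism.

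To conclude I would identify the two connecting morphisms: $\mu_{S^{n+1}}=1_K$, the unit arising from the identity identification of the two vanishing cycles, and $\mu_{\Sigma(\phi)}=\phi^*(1_K)$, reflecting the fact that $\Sigma(\phi)$ is built from the same two thimbles but with the identification of $K$ on one side twisted by $\phi$. Because $\phi$ is a self-diffeomorphism of the connected manifold $K=S^n$, $\phi^*$ acts as the identity on $H^0(K;\bZ)=\bZ$, so $\mu_{\Sigma(\phi)}=\mu_{S^{n+1}}$, and the two exact triangles yield the desired quasi-isomorphism $S^{n+1}\simeq\Sigma(\phi)$ (with any grading shift absorbed as in the proof of the earlier lemma identifying $T_\Sigma(L)$ with $L\#\Sigma$).

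The main obstacle is this last identification $\mu_{\Sigma(\phi)}=\phi^*(1_K)$. It requires tracing the suspension construction from \cite[Section~3.3]{ER14} through the matching-cycle formalism, verifying that the effect of the Hamiltonian isotopy realising $\psi\simeq\id$ is precisely to twist the clutching data of the matching cycle by $\phi$ at the level of Floer cochains. This should reduce to a local computation in a neighbourhood of the central regular fibre using standard normal forms for Lefschetz thimbles, but some care is required to keep track of gradings, orientations and signs consistently with the convention used to define $\phi^*$ on Floer cochains.
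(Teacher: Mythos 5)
Your overall skeleton agrees with the paper's: both Lagrangians should be exhibited as cones on a morphism $\Delta_1 \to \Delta_2$, and the content is in pinning down that morphism. But your route to the second step has a gap you yourself flag, and it is worth noting that the paper closes it by a much softer argument than the one you propose.

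First, a structural issue: $\Sigma(\phi)$ is not a matching cycle in Seidel's sense, because the identification of the two vanishing cycles is twisted by $\phi$; so the matching-cycle exact triangle of \cite[Section 18g]{Seidel:book} does not apply to it off the shelf, and invoking ``the connecting morphism $\mu_{\Sigma(\phi)}$'' presupposes exactly the triangle you are trying to establish. The paper instead works in the Fukaya--Seidel category $\cF\cS(\pi)$ (into which the compact category embeds), uses that the thimbles generate over $\bZ$ (\cite{GPS}), and checks directly from the positioning of $\Sigma(\phi)$ over a fattened arc meeting $S^{n+1}$ only at the endpoints that $CF(\Sigma(\phi),\Delta_i)$ has rank one for $i=1,2$. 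Generation then forces $\Sigma(\phi)$ to be a twisted complex on $\Delta_1,\Delta_2$ with rank-one entries, i.e.\ a cone on some morphism $\Delta_1\to\Delta_2$, without ever producing a matching-cycle triangle for it.

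Second, the step you identify as the main obstacle --- computing $\mu_{\Sigma(\phi)}=\phi^*(1_K)$ by tracing the suspension construction through the clutching data --- is genuinely delicate and is not carried out; as written your proof is incomplete at precisely the point where all the work lies. The paper avoids this computation entirely: since $\mathrm{Hom}(\Delta_1,\Delta_2)\cong H^*(S^n;\bZ)$ and the cone $\Sigma(\phi)$ is a homology sphere, the defining morphism must be primitive and of the correct degree, hence is the degree-zero generator up to sign, and the canonical gradings fix the sign. The same argument applies verbatim to $S^{n+1}$, so the two cones are taken on the same morphism. Your observation that $\phi^*$ acts trivially on $H^0(K;\bZ)$ is morally the reason the result holds, but the rigorous shortcut is to let the homology-sphere property of the cone do the work, rather than to compute the effect of $\phi$ on Floer cochains.
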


\begin{proof}
    From one of the main results of \cite{Seidel:book}, the compact Fukaya category embeds in the Fukaya category of the Lefschetz fibration $\cF\cS(\pi)$ so it is sufficient to prove quasi-isomorphism there.  The category $\cF\cS(\pi)$ is generated by the thimbles (for fields of characteristic not equal to two this is proved in \cite{Seidel:book}, and over $\bZ$ it is proved in \cite{GPS}). We can assume that $S^{n+1}$ is fibred over the upper half-circle and $\Sigma(\phi)$ is fibred over a fattened arc which agrees with the lower half-circle near its end-points and is disjiont from $S^{n+1}$ except at the end-points. This shows that both Lagrangians have $CF$ of rank 1 with each of the two Lefschetz thimbles, so both are cones on a morphism $\Delta_1 \to \Delta_2$. But this is a morphism from $K$ to $K$ in the Fukaya category of the fibre $X$, i.e. an element of $H^*(S^n;\bZ)$. The morphism must be primitive because the cone is a homology sphere, and there is a unique morphism of the correct degree up to sign.  Both Lagrangians admit canonical gradings which pins down the orientation and the sign.
\end{proof}

The space $\cX$ is obtained from $X \times \bC$ by adding two $(n+1)$-handles along spheres with trivial Gauss map. If $\pi_{n+1}(U/O) = 0$, which happens when $n \in \{3,5,6,7\}$ modulo $8$, the stable polarisation extends, and the spheres $S^{n+1}$ and $\Sigma(\phi)$ are also automatically then polarised branes. The main theorem then implies that they define the same class in $\Omega_{n+1}^{fr}(\cX)/(\Omega^{fr}_1(\cX) \cdot \Omega^{fr}_n)$.

\begin{cor}
    Under the previous conditions, if $\phi$ does not have order 2, then $\psi$ is not symplectically trivial.
\end{cor}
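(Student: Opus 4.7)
The plan is to argue by contradiction: assume $\psi \simeq \mathrm{id}$ in $\Symp_{ct}(X)$ and aim to deduce that $\phi$ has order $2$. The three preceding lemmas then chain together immediately. The existence lemma gives an exact Lagrangian $\Sigma(\phi) \subset \cX$ which is smoothly the twisted sphere clutched by $\phi$; the subsequent lemma shows that $\Sigma(\phi)$ is quasi-isomorphic in $\cF(\cX;\bZ)$ to the matching sphere $S^{n+1} \subset \cX$; and the hypothesis $n \in \{3,5,6,7\} \pmod 8$ gives $\pi_{n+1}(U/O) = 0$, so the stable polarisation on $X \times \bC$ extends uniquely across the two $(n+1)$-handles and endows both $S^{n+1}$ and $\Sigma(\phi)$ with compatible spectral brane structures.

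Applying Theorem \ref{thm:main} to the pair $(\Sigma(\phi), S^{n+1})$ then yields
\[
[\Sigma(\phi)] \;=\; [S^{n+1}] \qquad \text{in} \qquad \Omega_{n+1}^{fr}(\cX)\big/\bigl(\Omega_1^{fr}(\cX) \cdot \Omega_n^{fr}\bigr).
\]
I next push this identity forward along the collapse $p \colon \cX \to \ast$. Since $p_*$ is a homomorphism of $\Omega_*^{fr}$-modules and $p_*\Omega_1^{fr}(\cX) \subseteq \Omega_1^{fr}(\ast) = \bZ/2\langle\eta\rangle$, the image of the relation subgroup lies in $\mathrm{image}(\eta) \subset \Omega_{n+1}^{fr}(\ast)$, which is $2$-torsion. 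Because $S^{n+1}$ is the standard sphere with its bounding framing we have $p_*[S^{n+1}] = 0$, so $p_*[\Sigma(\phi)] \in \mathrm{image}(\eta)$ and in particular $2 \cdot p_*[\Sigma(\phi)] = 0$ in $\Omega^{fr}_{n+1}(\ast)$.

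The final step uses the Gromoll--Meyer clutching identity $\Sigma(\phi) \# \Sigma(\phi) \cong \Sigma(\phi^2)$ of twisted spheres: the $2$-torsion conclusion rephrases as $\Sigma(\phi^2)$ being framed null-bordant, which by the standard relationship between clutching and diffeomorphisms of $S^n$ modulo those extending smoothly across the disc (exactly as exploited in \cite{ER14}) is equivalent to $\phi$ having order $2$ in that group. This contradicts the hypothesis.

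The main obstacle I anticipate is the tangential bookkeeping in the second paragraph: one must verify that the spectral brane structures on $\Sigma(\phi)$ and $S^{n+1}$ induced by restriction of the ambient polarisation on $\cX$ agree, up to the framed cobordism identifying $S^{n+1}$ with $\Sigma(\phi)$ via the clutching, so that the difference class $p_*\bigl([\Sigma(\phi)] - [S^{n+1}]\bigr) \in \Omega_{n+1}^{fr}(\ast)$ is precisely the framed bordism class of the twisted sphere. The uniqueness statement $\pi_{n+1}(U/O) = 0$ and the fact that $S^{n+1}$ is fibred as a matching sphere over an arc (so that the ambient framing pulls back to the standard, bounding framing) are what make this identification feasible, but carrying it through carefully requires care with the grading and framing data along the suspension isotopy that produced $\Sigma(\phi)$.
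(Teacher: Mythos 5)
Your argument is correct and is essentially the paper's own, which leaves this corollary as an immediate consequence of the preceding paragraph: chain the three lemmas, apply Theorem \ref{thm:main} to $\Sigma(\phi)$ and the matching $S^{n+1}$ inside $\cX$, push forward to a point, and use $\Sigma(\phi)\#\Sigma(\phi)\cong\Sigma(\phi^2)$. The ``obstacle'' you flag at the end is not actually one: the class of a homotopy sphere in $\Theta_{n+1}/bP_{n+2}$ is detected by its framed bordism class modulo $\mathrm{im}(J)$ \emph{independently} of the chosen framing (Kervaire--Milnor), and any framing on the standard $S^{n+1}$ lies in $\mathrm{im}(J)$, so no careful comparison of the induced brane framings along the suspension isotopy is needed.
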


\begin{cor} \label{cor:general_affine}
    Let $X$ be a generic affine hypersurface of degree $d\geq 3$ and of dimension $n \in \{3,5,6,7\}$ mod $8$. Assume that there is an element $\phi \in \Theta_n/bP_{n+1}$ which is not 2-torsion. Then there is a compactly supported graded symplectomorphism which acts trivially on the integral Fukaya category.
\end{cor}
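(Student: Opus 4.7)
The plan is to combine the preceding constructions of the subsection with Theorem \ref{thm:main} applied to the Lefschetz total space $\cX$, running exactly the argument of \cite{ER14} but with the bordism-theoretic non-embeddability criterion from Theorem \ref{thm:main} replacing the older non-existence results for exotic Lagrangian spheres in cotangent bundles. By the example preceding the construction, a generic affine hypersurface $X$ of degree $d \geq 3$ admits an $A_2$-degeneration, so it contains a pair $L, K$ of Lagrangian spheres meeting transversely in a point, each carrying a spectral brane structure with respect to some stable polarisation of $X$. Pick a representative $\phi$ of a non-$2$-torsion class in $\Theta_n / bP_{n+1}$ (the hypothesis supplies such a $\phi$) and define $\psi = \tau_L^{-1} \tau_L^\phi \in \Symp_{ct}(X)$ as in the previous discussion; both $\tau_L$ and $\tau_L^\phi$ carry canonical gradings, so $\psi$ is a graded compactly supported symplectomorphism.

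Next I would show that $\psi$ acts trivially on $\scrF(X;\bZ)$. The algebraic Dehn twist is defined as the cone on the evaluation morphism $\hom(L, -) \otimes L \to (-)$, so as a functor it depends only on the object $L$ of the Fukaya category, not on the diffeomorphism $S^n \to L$ chosen to define the geometric twist. Consequently $\tau_L$ and $\tau_L^\phi$ induce naturally isomorphic autoequivalences, and their composition $\psi$ is naturally isomorphic to the identity.

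The remaining task is to show $\psi$ is nontrivial in the compactly supported symplectic mapping class group, for which I argue by contradiction. Assume $\psi \simeq \id$; form the Lefschetz fibration $\pi: \cX \to \bC$ with fibre $X$ and two critical values both having vanishing cycle $K$. The first lemma of this subsection then yields an exact Lagrangian $\Sigma(\phi) \subset \cX$ realising the homotopy sphere determined by $\phi$, and the second identifies $\Sigma(\phi)$ with the matching sphere $S^{n+1} \subset \cX$ (up to grading/sign) in $\scrF(\cX;\bZ)$. The dimension hypothesis $n \in \{3,5,6,7\} \pmod 8$ gives $\pi_{n+1}(U/O) = 0$, so the stable polarisation of $X \times \bC$ extends across the two $(n{+}1)$-handles to a stable polarisation of $\cX$; relative to this both $S^{n+1}$ and $\Sigma(\phi)$ acquire compatible spectral brane structures, exactly as noted just before the statement. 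Theorem \ref{thm:main} therefore forces $[\Sigma(\phi)] = [S^{n+1}]$ in $\Omega_{n+1}^{fr}(\cX)/(\Omega_1^{fr}(\cX) \cdot \Omega_n^{fr})$. Since the standard sphere $S^{n+1}$ framed bounds, this implies $\Sigma(\phi) \# \Sigma(\phi)$ framed bounds, i.e.\ $\phi$ has order at most $2$ in $\Theta_n / bP_{n+1}$, contradicting the hypothesis.

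The main obstacle is the compatibility of framings needed for Theorem \ref{thm:main}: one must check that a single stable polarisation of $\cX$ makes both $S^{n+1}$ and $\Sigma(\phi)$ into spectral branes simultaneously. The restriction $n \in \{3,5,6,7\} \pmod 8$ is precisely what makes the relevant obstruction in $\pi_{n+1}(U/O)$ vanish, so that the polarisation on $X \times \bC$ extends to $\cX$ and the induced brane structures on the two matching-type $(n{+}1)$-spheres are automatically compatible with the ambient framing. Everything else is a routine packaging of the lemmas already established in this subsection together with Theorem \ref{thm:main}.
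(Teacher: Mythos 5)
Your proposal is correct and follows essentially the same route as the paper: the corollary is precisely the assembly of the preceding material in this subsection (the $A_2$-degeneration of a generic degree~$d\geq 3$ hypersurface producing spectral branes $L,K$, the suspension construction of $\Sigma(\phi)\subset\cX$ when $\psi\simeq\id$, its quasi-isomorphism with the matching sphere $S^{n+1}$, the extension of the stable polarisation using $\pi_{n+1}(U/O)=0$ for $n\in\{3,5,6,7\}$ mod $8$, and then Theorem \ref{thm:main} forcing $\Sigma(\phi)\#\Sigma(\phi)$ to framed bound). The one step you spell out that the paper leaves implicit --- that $\psi=\tau_L^{-1}\tau_L^{\phi}$ acts trivially on $\scrF(X;\bZ)$ because both geometric twists realise the parametrisation-independent algebraic twist $T_L$ --- is the correct and standard justification from \cite{ER14} and \cite{Seidel:book}.
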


\begin{rmk}
    For the $A_k$-Milnor fibre of complex dimension $n\geq 2$, it is known that the composite map \[
    \pi_1\mathrm{Conf}_{k+1}(\bC) = Br_k\longrightarrow \pi_0\Symp_{ct}(A_k^{(n)}) \longrightarrow \mathrm{Auteq}(\scrF(A_k^{(n)};\bZ)) 
    \]
    is injective (working with the $\bZ$-graded integral Fukaya category), and one infers from the existence of `exotically framed' Dehn twists  that in certain dimensions $n$ the classical monodromy map
    \[
    \pi_1\mathrm{Conf}_{k+1}(\bC) \longrightarrow \pi_0\Symp_{ct}(A_k^{(n)})
    \]
    is not surjective. We similarly conjecture that, in the setting of Corollary \ref{cor:general_affine}, the monodromy homomorphism is not onto the symplectic mapping class group. Note that the conditions on the degree and dimension suggest that this is a `typical' phenomenon (happens with positive probability relative to natural density on the set of pairs $(d,n)$). 
\end{rmk}

\subsection{Homotopy classes of Lagrangian spheres}

Quasi-isomorphic Lagrangian spheres are homologous. Are they homotopic? In general little is known. 

Add a (subcritical!) $k$-handle to $T^*S^n$ where $1<k<n$. The resulting Stein manifold $X$ has homotopy type $S^n \vee S^k$.  Assume that $k$ and $n$ both have modulo 8 reduction in $\{0,4,6,7\}$. Then the hypothesis on $k$ implies that the stable polarisation of $T^*S^n$ extends over the $k$-handle; the hypothesis on $n$ implies that an exact Lagrangian sphere $L \subset X$ also admits a stable polarisation.  

\begin{lem}
    $L$ is quasi-isomorphic to the zero-section over $\bZ$.
\end{lem}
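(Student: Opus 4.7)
\emph{Proof plan.} The strategy is to displace $L$ into the Weinstein subdomain $T^*S^n \subset X$ obtained before attaching the subcritical handle, and then invoke the theorem of Fukaya--Seidel--Smith in the cotangent bundle.

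The first step is a displacement argument exploiting subcriticality. The core of the attached $k$-handle is a properly embedded Lagrangian disk $\Delta \subset X$ of dimension $k$, while $L$ has dimension $n$. Since $\dim L + \dim \Delta = n + k < 2n = \dim X$, a small generic Lagrangian perturbation makes $L$ and $\Delta$ disjoint; because $L$ is simply-connected and exact, this perturbation may be arranged to be an exact Lagrangian isotopy, and hence extends to an ambient Hamiltonian isotopy of $X$. Once $L$ is disjoint from $\Delta$, the handle region minus its core deformation retracts onto its attaching locus in $\partial T^*S^n$, and a compactly supported Hamiltonian modeled on this retraction pushes the remainder of $L \cap (\text{handle})$ into $T^*S^n$. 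After this isotopy, $L$ lies entirely inside the Weinstein subdomain $T^*S^n$.

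Once $L \subset T^*S^n$, the Fukaya--Seidel--Smith theorem (as invoked earlier in Section~\ref{Sec:first_application}) gives a quasi-isomorphism $L \simeq S^n$ in $\scrF(T^*S^n; \bZ)$. To promote this to a quasi-isomorphism in $\scrF(X; \bZ)$, choose an $\omega$-compatible almost complex structure on $X$ which is cylindrical along the contact hypersurface $\partial T^*S^n \subset X$ (so that $T^*S^n$ is holomorphically convex inside $X$). A standard maximum-principle argument on the Liouville action then confines any holomorphic strip with boundary on $L \cup S^n$ to $T^*S^n$, so the Floer complex $CF^*(L, S^n)$ computed in $X$ coincides with that computed in $T^*S^n$, along with its product structures. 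Hence the Fukaya--Seidel--Smith quasi-isomorphism persists in $\scrF(X; \bZ)$.

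The main obstacle is the first step: ensuring a Hamiltonian isotopy genuinely carrying $L$ into $T^*S^n$. Disjoining from the core $\Delta$ is elementary by transversality, but the subsequent compression of $L$ off the handle region requires care in constructing an appropriate compactly supported Hamiltonian that respects the Weinstein structure. Should one wish to avoid this geometric reduction, an alternative is to work directly in $X$: choose $J$ as above so that Floer strips between $L$ and $S^n$ stay in $T^*S^n$, deduce $HF^*(L, S^n; \bZ) \cong H^*(S^n; \bZ)$ from the cotangent bundle computation, and identify the resulting degree-$0$ generator as a two-sided inverse using the unital structure of the Donaldson--Fukaya category.
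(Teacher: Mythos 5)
There is a genuine gap in the first (and essential) step. To compress a Lagrangian out of a Weinstein $k$-handle $H \cong D^k \times D^{2n-k}$ and into the subdomain $T^*S^n$, one must disjoin it from the \emph{co-core} $\{0\}\times D^{2n-k}$, not the core: it is $H \setminus (\{0\}\times D^{2n-k})$ that deformation retracts (via the backwards Liouville flow) onto the attaching region $\partial D^k \times D^{2n-k} \subset \partial T^*S^n$, whereas $H$ minus the core $D^k \times \{0\}$ retracts onto $D^k \times \partial D^{2n-k}$, which lies in the \emph{new} boundary of $X$ — your proposed retraction pushes $L$ the wrong way. The dimension count for the co-core is $\dim L + \dim(\text{co-core}) = n + (2n-k) = 3n-k > 2n$ precisely because the handle is subcritical ($k<n$), so generic transversality does not remove the intersection, and whether a closed exact Lagrangian can always be displaced off a subcritical handle is exactly the kind of geometric question that is not known and that the categorical machinery is designed to circumvent. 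Your fallback ("work directly in $X$ with $J$ cylindrical near $\partial T^*S^n$") presupposes the same containment $L \subset T^*S^n$, so it does not repair the argument.

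The paper avoids the geometric reduction entirely: since $L$ is a sphere it has vanishing Maslov class and defines a $\bZ$-graded proper module over $\scrW(X)$; subcritical handle attachment does not change the wrapped category (by \cite{GPS}), so $\scrW(X)\simeq\scrW(T^*S^n)$ embeds in modules over $\bZ[x]$ with $|x|=1-n$, and the homological algebra of proper modules over this ring (as in \cite{AS:plumbings}) forces $L$ to be a twisted complex on the zero-section, hence quasi-isomorphic to it. If you want to salvage a more geometric route, you would need to replace your isotopy step by this categorical invariance (or by a Viterbo-restriction argument), at which point your second step becomes unnecessary.
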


\begin{proof}
    $L$ is a sphere so has vanishing Maslov class, hence defines a $\bZ$-graded proper module over the wrapped category $\scrW(X)$. Since subcritical handle addition doesn't change the wrapped category \cite{GPS},  the wrapped category of the cotangent bundle is generated by the fibre, and $HW(T_x^*, T_x^*) = \bZ[x]$ with $|x| = 1-n$, we have that 
    \[
\scrW(X) \simeq \scrW(T^*S^n) \hookrightarrow \mathrm{mod}_{\bZ}(\bZ[x])
    \]
    A compact exact Lagrangian defines a proper module; the ascending degree filtration is a filtration by $A_{\infty}$-submodules, from which it follows that a Lagrangian sphere $L$ in $X$ is a twisted complex on the (image under $T^*S^n \subset X$ of the) zero-section. The result then follows by (an easier version of) the homological algebra of \cite[Section 2]{AS:plumbings}, see also \cite{Abouzaid-Diogo}.
\end{proof}

Theorem \ref{thm:main} then implies that the classes of $L$ and the zero-section agree in $\Omega_n^{fr}(X)/(\Omega_1^{fr}\cdot \Omega_{n-1}^{fr})$. 

Comparing the $(n+1)$-skeleton of $X$ and the $(n+1)$-skeleton of $S^k \times S^n$ shows that $\pi_n(X) = \bZ \oplus \pi_n(S^k)$. Projecting the fundamental class of $L$ in $\Omega^{fr}_n(X) = \pi^{st}_n(X)$ to the final factor gives a class in $\pi^{st}_{n-k}$; this is the same as the (stabilisation of the) projection of the homotopy class of $L$ to a class in $\pi_n(S^k)$. The previous conclusion then constrains the class of $[L] \in \pi_n(X)$ in the image of the second factor in $\pi_{n-k}^{st}$:  

\begin{example}
    Adding a 4-handle to $T^*S^7$ yields a manifold $X$ for which any Lagrangian sphere $L$ has a class in 
\[
\bZ \oplus \bZ/12 = \pi_7(S^4) \to \pi_3^{st} = \bZ/24
\]

and the image of the class on the RHS must be two-torsion.
\end{example}

\section{Manifolds with faces}

There is an extensive literature on manifolds with corners, with a number of different conventions and definitions, surveyed in \cite[Remark 2.11]{Joyce}, cf. also \cite{Joyce2}. We will largely deal with the special class typically called `manifolds with faces'. 

    \subsection{Corners}
        \begin{defn}
            A \emph{manifold with corners} $M$ is a second-countable Hausdorff topological space equipped with an atlas of charts giving local homeomorphisms to open subsets of $(\bR_{\geq 0})^n$ such that the transition functions are all smooth.\par 
            For a point $p$ in $M$, write $\Gamma(p)=\Gamma^M(p)$ for its \emph{codimension}, meaning the codimension of the corner strata it lives in. This defines an upper semicontinuous function $\Gamma: M \rightarrow \bR$. We write $M^\circ$ for the interior of $M$, i.e. $M \setminus \partial M$. Equivalently this is the set of codimension 0 points in $M$.\par 
            A \emph{manifold with faces} is a manifold with corners $M$ such that the closure of any connected component of the set of points $p$ in $M$ of codimension $i$ (for any $i$) is an embedded submanifold with corners.\par 
            A \emph{face} of codimension $i$ is a union of disjoint closures of connected components of the set of points $p$ in $M$ of codimension $i$. A \emph{boundary face} will refer to a face of codimension 1. \par 
            Often we will say $F$ is a face of $M$ but $F$ will not be defined to be a subset of $M$, in this case this means that we equip $F$ with an embedding into $M$, so that its image is a face of $M$.
        \end{defn}
        \begin{defn}
            An embedding of manifolds with faces $f: M \rightarrow N$ of codimension $i$ \emph{strictly respects corner strata} if $\Gamma^M(p) = \Gamma^N(p) + i$ for all $p$ in $N$.
        \end{defn}
        \begin{ex}
            The inclusion of any face $F$ into $M$ strictly respects corner strata.
        \end{ex}
        \begin{rmk}
            Let $M^n$ be a manifold with corners, and $p$ a point in $M$. Then there is a chart near $p$ $\phi: U \subseteq M \rightarrow V \subseteq \bR^{n-i} \times \bR_{\geq 0}^{i}$ sending $p$ to $0$, where $i = \Gamma(p)$ is the codimension of $p$ in $M$.
        \end{rmk}
        The following characterisation is often taken as the definition of a manifold with faces, see e.g. \cite{Janich}.
        \begin{lem}
            Let $M$ be a manifold with corners. Then $M$ is a manifold with faces if and only if for each point $p$ in $M$, $p$ lies in the closure of exactly $\Gamma(p)$ boundary faces.
        \end{lem}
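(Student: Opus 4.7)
\emph{Reducing to a local statement.} The plan is to work locally at a codimension-$i$ point $p\in M$ via a chart $\phi:U\to V\subseteq\bR^{n-i}\times\bR_{\geq 0}^i$ with $\phi(p)=0$. Inside $V$ there are exactly $i$ \emph{local} boundary hypersurfaces $H_\alpha=\{x_{n-i+\alpha}=0\}\cap V$ ($\alpha=1,\dots,i$) passing through $p$. Each $H_\alpha$, away from higher corner strata, lies in a unique connected component $C_\alpha$ of the codimension-$1$ stratum of $M$, so $p\in\overline{C_\alpha}$ for every $\alpha$. The number of boundary faces whose closure contains $p$ therefore equals the number of \emph{distinct} components among $C_1,\dots,C_i$, and the lemma is equivalent to the following assertion: $M$ is a manifold with faces if and only if at every $p$ these $C_\alpha$ are pairwise distinct.

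\emph{Forward direction.} Suppose $M$ is a manifold with faces and $C_\alpha=C_\beta=:C$ for some $\alpha\ne\beta$. Then $\overline{C}$ contains $H_\alpha\cup H_\beta$ near $p$, i.e.\ two distinct smooth $(n-1)$-dimensional pieces meeting along the codimension-$2$ stratum $\{x_{n-i+\alpha}=x_{n-i+\beta}=0\}$. The tangent cone to $\overline{C}$ at $p$ would then contain two different $(n-1)$-dimensional orthants, and so cannot be of the form $\bR^{n-1-k}\times\bR_{\geq 0}^k$ for any $k$; this contradicts $\overline{C}$ being an embedded submanifold with corners.

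\emph{Reverse direction.} Assume the codimension-$1$ condition, and fix a connected component $C$ of the codimension-$j$ stratum. The key observation is that every connected component $D$ of the codimension-$j$ stratum carries a canonical $j$-element set of codimension-$1$ components: at any $q\in D$ the hypothesis supplies $j$ such components whose closures contain $q$, and this labelling is determined by the $j$ local hypersurfaces through $q$ in a chart, hence locally constant on $D$, and therefore constant. Now fix $p\in\overline{C}$ of codimension $i\ge j$; in the chart at $p$ the local codimension-$j$ pieces are indexed by $j$-subsets $A\subseteq\{1,\dots,i\}$, and the piece labelled by $A$ is tagged by $\{C_\alpha:\alpha\in A\}$. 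Pairwise distinctness of the $C_\alpha$ forces distinct $A$'s to carry distinct tags, so at most one local piece belongs to $C$; its closure in the chart is the local face $\{x_{n-i+\alpha}=0,\ \alpha\in A;\ x_{n-i+\beta}\ge 0,\ \beta\notin A\}$, an embedded $(n-j)$-dimensional manifold-with-corners neighborhood of $p$ in $\overline{C}$. Patching these local models gives $\overline{C}$ the desired embedded structure.

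\emph{Where the work lies.} The main subtle point is the reverse direction: the hypothesis constrains only codimension-$1$ data, and one must propagate it to codimension-$j$ components via the locally-constant labelling by sets of codimension-$1$ components. Everything else is routine bookkeeping inside the coordinate model.
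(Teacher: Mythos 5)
Your proof is correct, and it is in fact more complete than the argument the paper itself records. The forward direction is the same as the paper's: in a corner chart at $p$ the $\Gamma(p)$ local walls determine the components $C_1,\dots,C_{\Gamma(p)}$ of the codimension-$1$ stratum whose closures contain $p$, so the count is always at most $\Gamma(p)$, and if two walls land in the same component $C$ then $\overline{C}$ is ``creased'' at $p$ (your tangent-cone observation — the two walls span an $n$-dimensional subspace, whereas an embedded $(n-1)$-dimensional submanifold with corners has tangent cone inside a hyperplane — is a clean way to make the paper's one-line assertion precise). The genuine added value is your reverse direction: the paper's proof does not address why a hypothesis purely about codimension-$1$ components forces embeddedness of closures of components of \emph{every} codimension-$j$ stratum, deferring instead to the cited literature. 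Your mechanism — tagging each codimension-$j$ component $D$ by the $j$-element set of codimension-$1$ components whose closures contain it, checking this is locally constant hence constant on $D$, and then using pairwise distinctness of $C_1,\dots,C_i$ at a codimension-$i$ point $p$ to conclude that at most one local codimension-$j$ piece in the chart at $p$ can lie in a given component $C$ — is exactly what is needed, and it correctly yields $\overline{C}\cap U=\overline{C\cap U}$ equal to a single standard local face. Two small points worth making explicit if you write this up: the hypothesis must be invoked both at $p$ (distinctness of $C_1,\dots,C_i$) and at points $q$ of codimension $j$ (so that the tag genuinely has $j$ elements), which you do use; and one should fix the convention, consistent with the paper's own proof, that ``boundary faces'' here means closures of individual connected components of the codimension-$1$ stratum rather than arbitrary disjoint unions thereof.
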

        \begin{proof}
            Note that $p$ necessarily belongs to the closure of at most $\Gamma(p)$ boundary faces, since it belongs to exactly $\Gamma(p)$ components of the union of the boundary faces intersected with a sufficiently small local chart $p \in U \subset (\bR_{\geq 0})^n$. If it belongs to strictly fewer than $\Gamma(p)$ boundary faces then there must be some face whose closure is not embedded.
        \end{proof}
        \begin{ex}
            The $n$-cube $[0, 1]^n$ is a manifold with faces for all $n \geq 0$.\par 
            The teardrop (two-dimensional disc with one corner) is a manifold with corners but not a manifold with faces.
        \end{ex}

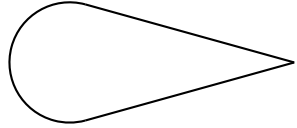
\begin{figure}[ht]
\begin{center}
    \begin{tikzpicture}[scale=0.4]
      \pgfmathsetmacro{\b}{75}
      \pgfmathsetmacro{\R}{2}
      \pgfmathsetmacro{\P}{\R*tan(\b)}
   \begin{scope}[thick]
        \draw (\b:\R) arc (\b:360-\b:\R) ;
        \draw (\b:\R) -- ( \P, 0 ); 
        \draw (-\b:\R) -- ( \P, 0 );
      \end{scope}
\end{tikzpicture}
\caption{The teardrop}
\end{center}
\end{figure}

        \begin{lem}
            Let $M$ be a manifold with faces. Then all of its faces are themselves manifolds with faces.
        \end{lem}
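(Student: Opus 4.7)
The plan is to verify the characterisation from the preceding lemma: each point $p$ of $F$ lies in the closure of exactly $\Gamma^F(p)$ boundary faces of $F$. Since a face of $M$ is by definition a disjoint union of closures of connected components of codimension-$k$ points, and each such closure is an embedded submanifold with corners of $M$, $F$ is automatically a manifold with corners; the content is the counting of its boundary faces.

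First I would pass to local coordinates. Let $p \in F$, set $m = \Gamma^M(p)$, and choose a chart $\phi \colon U \to V \subseteq \bR^{n-m} \times \bR_{\geq 0}^m$ of $M$ at $p$ with $\phi(p) = 0$. Since $M$ is a manifold with faces, the $m$ boundary faces of $M$ through $p$ correspond in the chart to the coordinate hyperplanes $\{y_i = 0\}$. Because $F$ is an embedded codimension-$k$ submanifold with corners of $M$ whose codimension-$0$ points lie in the codimension-$k$ stratum of $M$, one obtains the local normal form $F \cap U = \{y_{i_1} = \cdots = y_{i_k} = 0\}$ for some $I = \{i_1, \ldots, i_k\} \subseteq \{1, \ldots, m\}$. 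This gives $\Gamma^F(p) = m - k$ and exhibits the codimension-$1$ stratum of $F$ near $p$ as a disjoint union of $m-k$ local pieces, one for each $l \in \{1, \ldots, m\} \setminus I$.

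The main step is to show that these $m-k$ local pieces lie in $m-k$ \emph{distinct} connected components of the global codimension-$1$ stratum of $F$. The preceding lemma already yields that $p$ lies in the closure of \emph{at most} $\Gamma^F(p)$ such components, so only the lower bound remains. Suppose, for contradiction, that two local pieces indexed by $l \neq l'$ in the complement of $I$ lay in a common connected component $\tilde C$ of the codimension-$1$ stratum of $F$. Since $\tilde C$ is connected and contained in the codimension-$(k+1)$ stratum of $M$, it would be contained in a single connected component $C$ of that stratum. But the two local pieces lie in two \emph{distinct} local components of the codimension-$(k+1)$ stratum of $M$ at $p$, labelled by the $(k+1)$-subsets $I \cup \{l\}$ and $I \cup \{l'\}$; applying the preceding lemma to $M$ itself forces these local components to belong to distinct global components of that stratum, contradicting $\tilde C \subseteq C$.

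The substantive point is this last step: although elementary, it is where the manifold-with-faces hypothesis on $M$ is used essentially, not at the codimension $k$ of $F$ itself but at the codimension $k+1$ one level deeper. Combining the two bounds, $p$ lies in the closure of exactly $\Gamma^F(p)$ boundary faces of $F$, and the preceding lemma then yields that $F$ is a manifold with faces.
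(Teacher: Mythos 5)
Your argument is correct, but it takes a different route from the paper's. The paper, after establishing the local normal form $\phi(F\cap U) = U \cap \{0\}^k\times\bR^{n-k}_{\geq 0}$ exactly as you do, verifies the definition of a manifold with faces \emph{directly}: a connected component of the set of codimension-$j$ points of $F$ is (being open and closed in it) a connected component of the set of codimension-$(k+j)$ points of $M$, so its closure is embedded in $M$ and hence in $F$. That observation disposes of all codimensions $j$ at once in one line. You instead route everything through the counting characterisation of the preceding lemma, which reduces the problem to codimension-$1$ faces of $F$ but forces you to pay for it with the by-contradiction step identifying the $m-k$ local pieces with $m-k$ distinct global components. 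That step is sound, but note one imprecision: the contradiction you need is that two distinct local components of the codimension-$(k+1)$ stratum of $M$ at $p$ cannot lie in a single global component $C$ (else the closure of $C$ would contain two transversally meeting coordinate subspaces near $p$ and fail to be embedded). This is the manifold-with-faces condition for $M$ applied to its codimension-$(k+1)$ faces, i.e.\ the definition itself, rather than the preceding lemma, which as stated only counts codimension-$1$ boundary faces. With that attribution corrected your proof is complete; the paper's version is shorter because it never leaves the definition.
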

        \begin{proof}
            We first argue that any face $F$ of codimension $i$ of $M$ is a manifold with corners.\par 
            Pick a point $p$ in $F$. Pick a chart $\phi: U \subseteq M \rightarrow V \subseteq \bR^n_{\geq 0}$, such that $U$ and all its faces are connected, and $U$ contains $p$. Then after reordering the coordinates $\phi(F)$ must be of the form $U \cap \{0\}^i \times \bR^{n-i}_{\geq 0}$. Identifying $\{0\}^i \times \bR^{n-i}_{\geq 0}$ with $\bR^{n-i}_{\geq 0}$ gives a chart on $F$ as a manifold with corners. Allowing $p$ to vary gives an atlas for $F$ as a manifold with corners.\par 
            Let $G^\circ$ be a component of the set of codimension $j$ points in $F$. Then it is in particular a component of the set of codimension $i + j$ points in $M$, so its closure is embedded in $M$ and so also embedded in $F$.
        \end{proof}
        \begin{lem}
            Let $M$ be a manifold with faces. If $F$ is a face of $G$ and $G$ is a face of $M$, then $F$ is a face of $M$.
        \end{lem}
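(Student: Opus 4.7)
The plan is to show that every connected component of the codimension-$j$ stratum $G^{(j)} \subseteq G$ is also a connected component of the codimension-$(i+j)$ stratum $M^{(i+j)} \subseteq M$, where $i = \mathrm{codim}(G \subseteq M)$ and $j = \mathrm{codim}(F \subseteq G)$. Since face inclusions strictly respect corner strata, the composite $F \hookrightarrow G \hookrightarrow M$ adds $i+j$ to the codimension of any point, and in particular $G^{(j)} = G \cap M^{(i+j)}$. Granted the claim, the defining presentation $F = \bigcup_\alpha \overline{C_\alpha}^G$ with the $C_\alpha$ disjoint components of $G^{(j)}$ rewrites $F$ as a union of closures in $M$ of components of $M^{(i+j)}$, exhibiting it as a face of $M$ of codimension $i+j$.

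The main step is a local identification. Fix $p \in G^{(j)}$ and a chart $\phi : U \subseteq M \to V \subseteq \bR^{n-i-j} \times \bR^{i+j}_{\geq 0}$ with $\phi(p) = 0$. By the preceding lemma characterising manifolds with faces, $p$ meets exactly $i+j$ boundary faces of $M$, which in the chart are the coordinate half-hyperplanes $\{x_k = 0\}$. Since $p$ has codimension $j$ in $G$ and $G$ is a manifold with faces, near $p$ we have $G \cong \bR^{n-i-j} \times \bR^j_{\geq 0}$, so $G^\circ \cap U$ is connected. Each point of $G^\circ \cap U$ is a codimension-$i$ point of $M$, and so has exactly $i$ of the $x_k$ vanishing; the assignment of this vanishing subset is locally constant on $G^\circ \cap U$ (a positive coordinate stays positive under small perturbation, and a zero one stays zero on $G$ provided the vanishing subset has size $i$), hence constant by connectedness. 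After relabeling, $G \cap U = \{x_1 = \cdots = x_i = 0\} \cap U$, and setting the remaining $j$ boundary coordinates to zero gives $G^{(j)} \cap U = M^{(i+j)} \cap U$.

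From this, $G^{(j)}$ is open in $M^{(i+j)}$; it is also closed, as $G$ is closed in $M$ (being a union of closures of components of $M^{(i)}$, each an embedded closed submanifold by the manifold-with-faces hypothesis on $M$) and $G^{(j)} = G \cap M^{(i+j)}$. Hence each connected component of $G^{(j)}$ is a connected component of $M^{(i+j)}$, and the local inclusion $M^{(i+j)} \cap U \subseteq G \cap U$ ensures the closure of such a component taken in $G$ agrees with its closure in $M$. Assembling the components in the defining presentation of $F$ over $G$ then exhibits $F$ as a face of $M$. The main obstacle is the local chart identification of $G$ with a coordinate subspace, which rests essentially on the manifold-with-faces characterisation in terms of numbers of boundary faces through a point.
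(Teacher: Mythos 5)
Your proposal is correct and follows essentially the same route as the paper: the key point in both is that the codimension-$j$ stratum of $G$ is a union of connected components of the codimension-$(i+j)$ stratum of $M$, so closures of its components are embedded in $M$ by the manifold-with-faces hypothesis. The paper states this in one line (leaning on the chart analysis from the preceding lemma), whereas you spell out the local identification $G\cap U=\{x_1=\cdots=x_i=0\}\cap U$ and the open-and-closed argument explicitly; the content is the same.
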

        \begin{proof}
            Given a codimension $i$ face $F$ of $M$, the set of codimension $j$ points in $F$ is a union of components of the set of codimension $i + j$ points in $M$, so the closure of any connected component is embedded in $M$ and therefore also is embedded in $F$.
        \end{proof}
        From this proof, we see that if $F$ is a codimension $i$ face of $M$, then the set of codimension $j$ faces in $F$ is the same as the set of codimension $i + j$ faces in $M$ which touch $F$.
        \begin{lem}
            Let $M^n$ be a manifold with faces, and let $F$ and $G$ be faces of $M$. Then their intersection is a union of faces of $M$.
        \end{lem}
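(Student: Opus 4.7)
The plan is to prove that $F \cap G$ equals the union of those faces $H$ of $M$ satisfying $H \subseteq F$ and $H \subseteq G$; the inclusion $\supseteq$ is clear, so I would focus on the other direction. Fix $p \in F \cap G$ with $\Gamma(p) = k$ and work in a chart at $p$ modelled on an open set of $\bR^{n-k} \times \bR_{\geq 0}^{k}$. By the characterisation lemma proved just above, there are exactly $k$ boundary faces $B_1, \ldots, B_k$ of $M$ whose closures contain $p$, corresponding locally to the coordinate hyperplanes $\{x_s = 0\}$ for $s \in \{1, \ldots, k\}$.

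The first key claim is that the closures of connected components of the codimension-$\ell$ stratum of $M$ passing through $p$ are in bijection with the size-$\ell$ subsets $S \subseteq \{1, \ldots, k\}$, the one associated to $S$ being locally realised as $\bigcap_{s \in S} B_s$. In the chart there are exactly $\binom{k}{\ell}$ local sheets of the codimension-$\ell$ stratum through $p$; the manifold-with-faces hypothesis forces distinct local sheets to lie in distinct global components, for otherwise the closure of some global component would have two sheets meeting at $p$ and so fail to be embedded there.

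Since $F$ is by definition a disjoint union of such closures in codimension $i$, there exists a face $F_0 \subseteq F$ of codimension $i$ containing $p$ that is locally $\bigcap_{s \in S} B_s$ for some $|S| = i$; analogously pick a face $G_0 \subseteq G$ containing $p$ corresponding to some $|T| = j$. Letting $H$ be the face of $M$ associated to $S \cup T$, one has $H \subseteq F_0$ and $H \subseteq G_0$ near $p$ by direct inspection of coordinates.

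Finally I would promote these local inclusions to global ones by a connectedness argument. The stratum $H^{\circ}$ (the connected component of the codimension-$|S \cup T|$ stratum whose closure is $H$) is connected by construction, and $H^{\circ} \cap F_0$ is closed in $H^{\circ}$ because $F_0$ is closed in $M$. Applying the local picture at each point of $H^{\circ} \cap F_0$ shows this intersection is also open in $H^{\circ}$; since it is nonempty, it equals all of $H^{\circ}$, and passing to closures gives $H \subseteq F_0 \subseteq F$. The symmetric argument gives $H \subseteq G$, so $p \in H \subseteq F \cap G$ realises $p$ as lying in a face contained in $F \cap G$. The main technical obstacle is the global-to-local correspondence in the first key claim, since this is precisely where the manifold-with-faces hypothesis enters essentially rather than the weaker manifold-with-corners condition.
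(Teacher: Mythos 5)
Your argument is correct and rests on the same core idea as the paper's proof: work in a corner chart at a point of $F\cap G$, where the faces through that point become coordinate subspaces of $\bR^{n-k}\times\bR_{\geq 0}^k$, so that the intersection is locally again a coordinate subspace. The paper precedes this local computation with a downward induction on the codimension difference (reducing to the case of nested faces) and then asserts the conclusion from the local model; you skip the induction and instead supply explicitly the two points the paper leaves implicit, namely the bijection between local strata at a point and subsets of the boundary faces through it (which is exactly where the manifold-with-faces hypothesis is used, via embeddedness of closures), and the open-closed connectedness argument promoting the local containment $H\subseteq F_0$ to a global one. This globalization step is a genuine improvement in completeness, since the lemma's conclusion is a statement about global faces rather than local charts.
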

        \begin{proof}
            We first show $F \cap G$ is a manifold with corners. Let $F$ be a codimension  $i$ face and $G$ a codimension $j$ face with $i\leq j$. To prove the lemma it suffices to treat the case in which $F$ and $G$ are connected.  If $G \subset F$ the result is obvious, so suppose $G \not\subset F$, and let $p$ be a point of $F \cap G$.  Then $p$ is not in the interior of $G$, so we can replace $G$ by a component of $\partial G$ which replaces $(i,j)$ by $(i, j+1)$.  The proof is completed by downwards induction on $j-i$, where the maximal base case is where $G$ is a point and $F$ is (a connected component of) the whole space, and the result is clear.
            
            More explicitly, pick a point $p$ in $F \cap G$, and let $i = \Gamma^M(p)$. Pick a chart $\phi: U \subseteq M \rightarrow V \subseteq \bR^{n-i} \times \bR^i_{\geq 0}$ near $p$, such that $U$ and all its faces are connected, and $\phi(p) = 0$. Then after reordering the co-ordinates, we have that 
            $$\phi(F \cap U) = \{(x, y) \in \bR^{n-i} \times \bR_{\geq 0}^i\,|\, y_1 = \ldots = y_a = 0$$
            and 
            $$\phi(G \cap U) = \left\{(x, y) \in \bR^{n-i} \times \bR_{\geq 0}^i\,|\, y_{a-c + 1} = \ldots = y_{a-c+b}\right\} = 0$$
            where $a$ is the codimension of $F$, $b$ is the codimension of $G$ and $c$ is some other number. Therefore
            $$\phi(F \cap G \cap U) = \left\{ (x, y) \in \bR^{n-i} \times \bR_{\geq 0}^i\,|\, y_{a-c+1} = \ldots = y_a = 0\right\}.$$
            This provides a chart for $F \cap G$ as a manifold with corners of dimension $c$. Here $p$ has codimension $i-c$ in $F \cap G$. We see further from this local model that the closure of the smooth locus of $F \cap G$ is an embedded submanifold with corners in $M$ and therefore also in $F \cap G$.
        \end{proof}

        \begin{defn}
            A \emph{system of boundary faces} for a manifold with corners $M$ is a (finite) collection of embedded closed (as subsets) manifolds with corners $F_i \subseteq M$, lying in $\partial M$, all of codimension 1, such that each point $p$ in $M$ of codimension 1 lies in exactly one of the $F_i$.\par 
            A \emph{system of faces} for a manifold with faces $M$ is a collection of faces $F_i^j$ of $M$ such that each $F_i^j$ has codimension $j$, and all $F_i^j$ have disjoint smooth loci, and any (positive codimension) face of $M$ is a disjoint union of components of the $F_i^j$. Equivalently, any point $p$ in $M$ of codimension $j > 0$ lives in the interior of exactly one of the $F_i^j$.\par 
            We will often say that $\{F_i^j\}_{i,j}$ is a system of faces for $M$ without the $F_i^j$ being submanifolds of $M$; by this we mean that they are also equipped with embeddings $F_i^j \hookrightarrow M$ making their images into a system of faces. Often some of the $F_i^j$ will naturally be faces of each other, in which case the embeddings $F_i^j \hookrightarrow M$ are assumed to be compatible with this structure. 
        \end{defn}
        \begin{lem}
            Let $M$ be a manifold with corners, and $\{F_i\}_{i=1,\ldots, k}$ a system of boundary faces, which are all assumed to be manifolds with faces. Then $M$ is a manifold with faces, with a system of faces given by the intersections of the $F_i$, i.e. the 
            $$F_I := \bigcap\limits_{i\in I} F_i$$
        \end{lem}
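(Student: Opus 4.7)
The strategy is to apply the preceding characterization of manifolds with faces: it suffices to show that every point $p \in M$ of codimension $j$ lies in the closure of exactly $j$ intrinsic boundary faces. Pick a chart $\phi \colon U \subseteq M \to V \subseteq \bR^{n-j} \times \bR_{\geq 0}^{j}$ with $\phi(p) = 0$, so that locally the codimension-one stratum of $M$ near $p$ splits into the $j$ sheets $S_k := \phi^{-1}(\{y_k = 0\})$ for $k = 1, \dots, j$. The relative interior of each $S_k$ is connected and consists of codimension-one points of $M$, so by the system hypothesis each such interior lies in a single $F_{i(k)}$, and by closedness of $F_{i(k)}$ the entire sheet $S_k$ does as well.

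The key step, and the main technical obstacle, is to show that the indices $i(1), \dots, i(j)$ are pairwise distinct. Suppose for contradiction that $i(k_1) = i(k_2) =: i$ for some $k_1 \neq k_2$. Then the closed embedded set $F_i$ contains, near $p$, both local codimension-one hypersurfaces $\phi^{-1}(\{y_{k_1} = 0\})$ and $\phi^{-1}(\{y_{k_2} = 0\})$. Since $F_i$ is an $(n-1)$-dimensional manifold with corners, any sufficiently small neighbourhood of $p$ in $F_i$ must be homeomorphic to an open subset of $\bR^{n-1-r} \times \bR_{\geq 0}^{r}$ for some $r$. This is incompatible with $F_i$ containing two transverse hypersurfaces meeting along the codimension-two set $\{y_{k_1} = y_{k_2} = 0\}$, since their union fails to be locally Euclidean-with-corners along that intersection. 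Note that this argument uses only that each $F_i$ is an embedded manifold with corners, not the stronger hypothesis that the $F_i$ are manifolds with faces.

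Consequently $p$ lies in at least $j$ distinct $F_i$; conversely any $F_i$ containing $p$ must contain at least one of the $j$ local sheets at $p$ (since $F_i$ is $(n-1)$-dimensional and sits inside $\partial M$), so $p$ lies in exactly $j$ of the $F_i$. Any intrinsic boundary face of $M$ through $p$ is the closure of a connected component $C$ of the codimension-one stratum accumulating to $p$; this $C$ is contained in a unique $F_i$, and different local sheets at $p$ belong to distinct components since we have shown they belong to distinct $F_i$. Hence $p$ lies in the closure of exactly $j$ intrinsic boundary faces, and the preceding characterization lemma shows $M$ is a manifold with faces. For the face structure, at any codimension-$j$ point $p$ with index set $I(p) = \{i(1), \dots, i(j)\}$, the local codimension-$j$ stratum is $\phi^{-1}(\{y_1 = \cdots = y_j = 0\}) = \bigcap_{k} S_k = U \cap F_{I(p)}$, so the intrinsic face of $M$ through $p$ is a connected component of $F_{I(p)}$; conversely each $F_I$ decomposes as a disjoint union of such components, yielding the claimed system of faces.
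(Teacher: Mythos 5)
Your overall strategy --- verifying via the preceding characterization lemma that every codimension-$j$ point lies in the closure of exactly $j$ boundary faces, by showing that the $j$ local boundary sheets at $p$ lie in pairwise distinct $F_i$ --- is sound, and is essentially a fleshed-out version of the paper's terse argument, which simply asserts that ``locally, $p$ lies in the intersection of $l$ of the boundary faces.'' However, your justification of the key distinctness step is incorrect. The union of two local sheets $\phi^{-1}(\{y_{k_1}=0\}) \cup \phi^{-1}(\{y_{k_2}=0\})$ inside the corner model $\bR^{n-j}\times\bR_{\geq 0}^{j}$ \emph{is} locally Euclidean with corners: the two sheets are half-hypersurfaces meeting along their common boundary $\{y_{k_1}=y_{k_2}=0\}$ rather than crossing transversally, and their union is homeomorphic to $\bR^{n-j+1}\times\bR_{\geq 0}^{j-2}$ (in the simplest case, the two edges of the quadrant $\bR_{\geq 0}^2$ form a topological arc). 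You appear to have in mind the picture of two transverse hypersurfaces crossing in the interior, whose union is a non-manifold ``cross''; that picture does not apply at a corner. Your explicit remark that the argument ``uses only that each $F_i$ is an embedded manifold with corners,'' i.e.\ that the obstruction is purely topological, is therefore a genuine error: no such topological obstruction exists.

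The distinctness claim is still true, but it must be extracted from smoothness of the embedding (or from the manifolds-with-faces hypothesis on the $F_i$, which you explicitly discard). One correct route: if a smooth embedding $\iota\colon F_i \hookrightarrow M$ of an $(n-1)$-dimensional manifold with corners had image containing both sheets near $p$, then the tangent cone of $\iota(F_i)$ at $p$ would contain the inward directions $e_{k_1}$ and $e_{k_2}$ together with the $(n-2)$-plane $\{y_{k_1}=y_{k_2}=0\}$, hence would span all of $T_pM$; this contradicts the fact that $d\iota$ at the (unique) preimage of $p$ has rank $n-1$, so that the image is tangent to an $(n-1)$-plane to first order at $p$. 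Separately, note that the paper uses the hypothesis that the $F_i$ are manifolds with faces to establish that each $F_I$ is embedded in $M$, as a system of faces requires; your route obtains embeddedness of the intrinsic faces from the characterization lemma, which is acceptable, but the final identification of each $F_I$ with a union of intrinsic faces of codimension $|I|$ again leans on the distinctness step, so the gap propagates to your description of the face system as well.
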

        \begin{proof}
            Each $F_I$ is a closed (as a subset) submanifold; it lies in some $F_i$ which is a manfold with faces so $F_I$ must be embedded in $M$. Therefore it suffices to show that each $p \in \partial M$ lies in the interior of exactly one $F_I$.\par 
            Let $l = \Gamma(p)$. Then locally, $p$ lies in the intersection of $l$ of the boundary faces, say $F_{i_1}, \ldots, F_{i_l}$. Letting $I = \{i_1, \ldots, i_l\}$, $p$ lies in the interior of $F_I$, and this is uniquely determined. 
        \end{proof}
        \begin{lem}
            Let $M$ and $N$ be manifolds with faces. Then $M \times N$ is a manifold with faces, and its connected faces are exactly products of connected faces of $M$ and $N$.
        \end{lem}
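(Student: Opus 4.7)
The plan is to verify the definition of \emph{manifold with faces} directly for $M\times N$, while reading off the claimed description of its connected faces along the way.

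First, I would take a product of local corner charts near a point $(p,q)\in M\times N$: if $m=\dim M$, $n=\dim N$, $\Gamma^M(p)=a$ and $\Gamma^N(q)=b$, then the product chart is valued in $\bR^{m-a}\times\bR_{\geq 0}^{a}\times\bR^{n-b}\times\bR_{\geq 0}^{b}$, which after permuting coordinates identifies a neighbourhood of $(p,q)$ in $M\times N$ with an open subset of $\bR^{m+n-a-b}\times\bR_{\geq 0}^{a+b}$. This exhibits $M\times N$ as a manifold with corners and shows $\Gamma^{M\times N}(p,q) = \Gamma^M(p)+\Gamma^N(q)$.

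Next, let $Z_k\subset M\times N$ denote the set of codimension-$k$ points and write $S_i = (\Gamma^M)^{-1}(i)$, $T_j = (\Gamma^N)^{-1}(j)$, so $Z_k = \bigcup_{i+j=k}(S_i\times T_j)$. I would check that this is a finite clopen decomposition of $Z_k$. Indeed, if a sequence in $S_i\times T_j$ converges inside $Z_k$ to $(p,q)$, then upper semicontinuity of $\Gamma^M$ and $\Gamma^N$ gives $\Gamma^M(p)\geq i$ and $\Gamma^N(q)\geq j$, while $\Gamma^M(p)+\Gamma^N(q) = k = i+j$ forces both to be equalities and hence $(p,q)\in S_i\times T_j$. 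So each $S_i\times T_j$ is closed in $Z_k$, and being the complement of a finite union of other closed pieces, also open.

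From this clopen decomposition, the connected components of $Z_k$ are exactly the connected components of each piece $S_i\times T_j$. Since a connected component of a product of topological spaces is the product of connected components of the factors, every such component is of the form $E^\circ\times F^\circ$ where $E^\circ$ is a connected component of $S_i$ and $F^\circ$ is a connected component of $T_j$ for some $i+j=k$. Taking closures (which commute with products), the closure is $E\times F$ where $E$ and $F$ are connected faces of $M$ and $N$ of codimensions $i$ and $j$. Since $E\hookrightarrow M$ and $F\hookrightarrow N$ are embeddings by the manifold-with-faces hypothesis, so is $E\times F\hookrightarrow M\times N$, and in particular its image is an embedded submanifold with corners. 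This simultaneously verifies that $M\times N$ is a manifold with faces and that its connected faces are exactly the products of connected faces of $M$ with connected faces of $N$.

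I do not anticipate a serious obstacle. The only real content is the verification that the pieces $S_i\times T_j$ assemble into a clopen decomposition of $Z_k$; everything else then reduces to the standard fact that connected components of a product are products of connected components, together with the hypothesis that faces of $M$ and $N$ are embedded.
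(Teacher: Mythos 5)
Your proof is correct and follows essentially the same route as the paper's: identify a connected component of the codimension-$k$ locus of $M\times N$ as a product of components of codimension-$i$ and codimension-$j$ loci with $i+j=k$, then observe that its closure is the product of the corresponding closures and hence embedded. You supply the clopen-decomposition detail via upper semicontinuity of $\Gamma$, which the paper leaves implicit, but the argument is the same.
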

        \begin{proof}
            A connected component of the set of points $p$ in $M \times N$ of codimension $i$ must be a product of such things in $M$ and $N$ (of codimensions $j$ and $k$, such that $j + k = i$) respectively. Its closure is the product of the closures of the faces in $M$ and $N$ and since these are embedded this must therefore also be embedded. 
        \end{proof}
    
    \subsection{Collars}
        Let $M$ be a manifold with faces of dimension $n$. We write $D(M)$ for the set of codimension 1 connected faces of $M$, and $D({F \subseteq M})$ for the set of codimension 1 (connected) faces of $M$ which touch $F$. This is a set with $\#D({F \subseteq M})=\Gamma(F)$.\par 
        If $F \subseteq M$ is a boundary face, there is a natural map $D(F) \to D(M)\setminus \{F\}$, sending $F \cap G$ to $G$ for $G \in D(M)$.\par 
        In practice, when we list systems of faces, we will allow these sets to contain empty faces, and can therefore assume the map $D(F) \to D(M)\setminus \{F\}$ is a bijection, for the sake of combinatorial sanity.
        \begin{defn}
            Let $M$ be a manifold with faces. A \emph{system of normals} $\nu$ on $M$ consists of non-zero inwards pointing sections $\nu^F$ in $\gamma(TM|_F)$ for $F$ in $D(M)$, such that for all $G$ in $D({F \subseteq M})$, $\nu^F|_{F \cap G}$ lies in the subspace $\gamma(TG|_{F \cap G})$ of $\gamma(TM|_{F \cap G})$.
        \end{defn}
        Since the space of systems of normals on a fixed $M$ is convex, we have that
        \begin{lem}
            The space of systems of normals on $M$ is non-empty and convex, and hence contractible.
        \end{lem}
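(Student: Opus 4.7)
The plan is to treat convexity and existence separately, then read off contractibility.

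For convexity, suppose $\nu_0$ and $\nu_1$ are two systems of normals on $M$ and consider $\nu_t := t\nu_1 + (1-t)\nu_0$ for $t \in [0,1]$. On each boundary face $F$, the set of inward-pointing vectors in $T_pM$ at a point $p \in F$ is an open half-space (in a local chart, the vectors with positive $y_1$-component), in particular a convex cone, so $\nu_t^F$ is still an inward-pointing non-vanishing section. The tangency condition is also preserved: at a point $p \in F \cap G$, the subspace $T_pG \subset T_pM$ is linear, so any convex combination of vectors in $T_pG$ remains in $T_pG$. Hence $\nu_t$ is a system of normals for every $t$, and more generally the space of systems of normals is convex inside the (affine) space of tuples of sections.

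For existence, I would build $\nu$ from local models and a partition of unity. Fix a point $p \in M$ and a chart $\phi: U \to V \subseteq \bR^{n-k} \times \bR_{\geq 0}^k$ at $p$, with $U$ small enough that every codimension-one face of $U$ lies in a unique codimension-one face $F_{i(p,\ell)}$ of $M$ (here $\ell = 1,\ldots,k$ indexes the locally visible boundary faces). On $U$, the section $\partial/\partial y_\ell$ pulled back to $M$ is inward-pointing along the locally visible face $\{y_\ell=0\}$ and is tangent to each other face $\{y_{\ell'}=0\}$; this gives local candidates $\nu^{F,U}$ for each $F \in D(U)$. Choose a locally finite open cover $\{U_\alpha\}$ by such charts together with a subordinate partition of unity $\{\rho_\alpha\}$. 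For each $F \in D(M)$, define
\[
\nu^F := \sum_{\alpha : U_\alpha \cap F \neq \emptyset} \rho_\alpha \cdot \nu^{F,U_\alpha},
\]
where $\nu^{F,U_\alpha}$ is extended by zero outside $U_\alpha$. Exactly as in the convexity argument, each $\nu^F$ is a non-negative linear combination (with sum of coefficients equal to $1$ along $F$) of inward-pointing vectors, hence inward-pointing; and it is a linear combination of vectors in $T_pG$ along $F\cap G$, hence tangent to $G$. Non-vanishing follows from the inward-pointing property.

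Contractibility is then automatic: a non-empty convex subset of an affine space is contractible via the straight-line homotopy to any chosen basepoint. The only subtle point I anticipate is verifying the tangency compatibility in the partition-of-unity construction, but this is forced once one observes that in the standard chart the coordinate vector fields $\partial/\partial y_\ell$ simultaneously satisfy inward-pointing-ness on one face and tangency on all the others, and that both properties are preserved under convex combinations.
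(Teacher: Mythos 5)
Your proof is correct and is exactly the standard argument the paper has in mind: the paper gives no proof beyond the remark that the space of systems of normals is convex, and your partition-of-unity construction of a normal system from the coordinate fields $\partial/\partial y_\ell$ in local charts (together with the observation that both the inward-pointing and tangency conditions are preserved under convex combinations) correctly fills in the non-emptiness that the paper leaves implicit.
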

        \begin{lem}\label{products and normals}
            Let $M$ and $N$ be manifolds with faces, equipped with systems of normals $\nu$ and $\nu'$ respectively. Then there is a natural system of normals on $M \times N$.
        \end{lem}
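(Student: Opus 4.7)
The plan is to define the system of normals on $M \times N$ by using the canonical splitting $T(M \times N) = \pi_M^* TM \oplus \pi_N^* TN$ coming from the product structure, together with the identification of codimension 1 faces of $M \times N$ from the preceding lemma.

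First I will unpack the faces. By the previous lemma, every connected codimension 1 face of $M \times N$ has the form $F \times N$ with $F \in D(M)$ or $M \times F'$ with $F' \in D(N)$. On each such face the restricted tangent bundle splits canonically as
\[
T(M \times N)|_{F \times N} \;=\; (TM|_F) \oplus \pi_N^* TN, \qquad T(M \times N)|_{M \times F'} \;=\; \pi_M^* TM \oplus (TN|_{F'}).
\]
I define $\nu_\times^{F \times N} := \nu^F \oplus 0$ and $\nu_\times^{M \times F'} := 0 \oplus \nu'^{F'}$. These are nowhere zero (because $\nu^F$ and $\nu'^{F'}$ are), and they are inward-pointing: a tangent vector at a boundary point of a product is inward-pointing iff both of its components are inward-pointing or zero in their respective factors, which is the case here.

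It remains to verify the compatibility condition, namely that for each codimension 1 face $H$ of $M \times N$ and each codimension 1 face $H' \neq H$ in $D(H' \subseteq M \times N)$ meeting $H$, the restriction $\nu_\times^H|_{H \cap H'}$ lies in $\Gamma(TH'|_{H \cap H'})$. Take $H = F \times N$ (the other case is symmetric). Either $H' = F'' \times N$ for some other $F'' \in D(M)$, in which case $H \cap H' = (F \cap F'') \times N$ and
\[
\nu_\times^H|_{H \cap H'} \;=\; (\nu^F|_{F \cap F''}) \oplus 0,
\]
which lies in $TF''|_{F \cap F''} \oplus \pi_N^* TN = T(F'' \times N)|_{H \cap H'}$ by the compatibility of $\nu$ on $M$; or $H' = M \times F'$ for some $F' \in D(N)$, in which case $H \cap H' = F \times F'$ and the restriction is $(\nu^F|_F) \oplus 0$, which automatically lies in $TM|_F \oplus TF'|_{F'} = T(M \times F')|_{H \cap H'}$ since the $N$-component vanishes. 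Naturality with respect to the choices is manifest from the construction, since only the canonical splitting of $T(M \times N)$ is used.

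The only mildly delicate point is purely bookkeeping: making sure that the two possible types of codimension 1 faces and all four cases of intersection $H \cap H'$ are correctly enumerated and that each case matches the correct summand of the split tangent bundle. Once this is set up, there is nothing left to check.
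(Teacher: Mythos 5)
Your construction is exactly the one the paper uses: assign $(\nu^F,0)$ to faces of the form $F\times N$ and $(0,\nu'^{F'})$ to faces of the form $M\times F'$ via the canonical splitting of $T(M\times N)$. The compatibility checks you spell out are left implicit in the paper but are correct, so the proof is fine and follows the same route.
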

        \begin{proof}
            Boundary one faces of $M \times N$ are of the form $F \times N$ or $M \times G$, for $F$ and $G$ boundary faces of $M$ and $N$ respectively. To each face $F \times N$ we assign the normal vector field $(\nu^F, 0)$ in $\Gamma(TM \oplus TN|_{F \times N})$, and similarly for the case $M \times G$.
        \end{proof}
        \begin{defn}
            Let $M$ be a manifold with faces. A \emph{system of collars} $\cC$ on $M$ consists of collar neighbourhoods
            $$\cC=\cC^{FG}: F \times [0, \varepsilon)^{D({F \subseteq G})} \hookrightarrow G$$
            meaning an embedding onto a neighbourhood of $F \subseteq G$, whenever $F$ and $G$ are faces of $M$ of codimension $i$ and $j$ respectively, such that $F \subseteq G$. We require that the following diagram commutes.
            \[ \xymatrix{
                F \times [0, \eps)^{D({F\subseteq M})} \ar[r]_\cC \ar[d]_= &
                M \ar[dd]_=\\
                F \times [0, \eps)^{D({F \subseteq G})} \times [0, \eps)^{D({G \subseteq M})} \ar[d]_\cC & \\
                G \times [0, \eps)^{D({G\subseteq M})} \ar[r]_\cC &
                M
            }
            \]
            There is an equivalent inductive definition. We could define a \emph{system of collars} on $M$ to consist of systems of collars on all positive codimension faces in $M$, which restrict to each other on subfaces, along with maps 
            $$\cC: F \times [0, \varepsilon) \hookrightarrow M$$ 
            for all boundary faces $F$ such that for all distinct pairs of boundary faces $F_i, F_j$, the following diagram commutes,
            \begin{equation}\label{star}
            \xymatrix{
                F_i \cap F_j \times [0, \varepsilon)^2 \ar[rrr]^{\cC^{F_i} \times \id_{[0, \varepsilon)}} \ar[d]_{\left(\cC^{F_j} \times \id_{[0, \varepsilon)}\right) \circ \tau}  &&& F_i \times [0, \varepsilon) \ar[d]^{\cC} \\
                F_j \times[0, \varepsilon) \ar[rrr]_{\cC} &&& M
            }
            \end{equation}
            where $\tau$ swaps the two factors in $[0, \varepsilon)^2$.
        \end{defn}
        \begin{defn}
            By taking the derivative of $\cC^F$ in the inwards normal direction for each boundary face $F$, a system of collars on $M$ determines a system of normals on $M$, $\nu(\cC)$. We say a system of collars $\cC$ \emph{extends} a system of normals $\nu$ if $\nu(\cC) = \nu$. If this holds, we can extend $\nu^F$ to a section of $TM$ over the whole of $\im \cC^F$ by taking the derivative of $\cC^F$ in the inwards normal direction.
        \end{defn}
        \begin{lem}\label{products and collars}
            Let $M$ and $N$ be manifolds with faces, equipped with systems of collars $\cC$ and $\cD$ respectively. Then there is a natural system of collars on $M \times N$.
        \end{lem}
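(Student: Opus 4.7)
The plan is to build the collars on $M \times N$ as products of the given collars on $M$ and $N$, and then verify the required compatibility diagrams by reducing them to the analogous diagrams for $\cC$ and $\cD$ separately.

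First I would use the earlier observation that the connected faces of $M \times N$ are exactly products $F \times G$ of connected faces $F \subseteq M$ and $G \subseteq N$, with codimension $\Gamma(F) + \Gamma(G)$. In particular, if $F \subseteq G$ is a containment of faces of $M \times N$, writing $F = F_1 \times F_2$ and $G = G_1 \times G_2$, we have $F_1 \subseteq G_1$ and $F_2 \subseteq G_2$, and moreover there is a canonical bijection
\[
D(F \subseteq G) \;=\; D(F_1 \subseteq G_1) \,\sqcup\, D(F_2 \subseteq G_2).
\]
Using this identification, I would define the collar $(\cC \times \cD)^{F \subseteq G}$ on $M \times N$ as the composition
\[
(F_1 \times F_2) \times [0,\eps)^{D(F_1 \subseteq G_1)} \times [0,\eps)^{D(F_2 \subseteq G_2)} \xrightarrow{\;\cC^{F_1 \subseteq G_1} \times \cD^{F_2 \subseteq G_2}\;} G_1 \times G_2,
\]
after permuting the factors to match the chosen ordering of $D(F \subseteq G)$.

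Next I would check the defining compatibility diagram for a system of collars. Given faces $F \subseteq G \subseteq M \times N$ with $F = F_1 \times F_2$, $G = G_1 \times G_2$, the commutative square that must be verified factors as the product of the corresponding commutative squares for $\cC$ applied to $F_1 \subseteq G_1 \subseteq M$ and for $\cD$ applied to $F_2 \subseteq G_2 \subseteq N$; here one uses that the splitting of $D(F \subseteq M \times N)$ into its $M$- and $N$-parts is compatible with the inclusions $F \subseteq G \subseteq M \times N$. Equivalently, using the inductive formulation, for two distinct boundary faces of $M \times N$ the diagram \eqref{star} splits into two cases: either both boundary faces come from the same factor (so \eqref{star} follows immediately from \eqref{star} for that factor), or they come from different factors (in which case the diagram commutes tautologically since the two collar maps act on disjoint coordinates).

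The only genuinely content-bearing step is bookkeeping the identification of the collar parameters with $D(F \subseteq G)$, which is harmless provided we fix, once and for all, an ordering convention (e.g.\ always list $M$-normals before $N$-normals). I would also note, for later use, that the induced system of normals $\nu(\cC \times \cD)$ agrees with the product system of normals $\nu(\cC) \times \nu(\cD)$ constructed in Lemma \ref{products and normals}, so that ``extending a system of normals'' behaves functorially under products.
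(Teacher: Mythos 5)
Your proof is correct and takes essentially the same approach as the paper: the collar of $F_1\times F_2$ in $G_1\times G_2$ is defined as the product $\cC^{F_1G_1}\times\cD^{F_2G_2}$ under the identification $D(F\subseteq G)=D(F_1\subseteq G_1)\sqcup D(F_2\subseteq G_2)$. Your explicit verification of the compatibility square (splitting into the same-factor and different-factor cases) is left implicit in the paper but is the right check.
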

        \begin{proof}
            We define a system of corners $\cE$ on $M \times N$. Let $F \subseteq F'$ be faces of $M$ of codimension $i$ and $i'$ respectively, and $G \subseteq G'$ be faces of $N$ of codimensions $j$ and $j'$ respectively. Note that $F \times G$ is a face in $F' \times G'$ of codimension $i-i'+j-j'$. Then we define
            $$\cE: F \times G \times [0, \varepsilon)^{i-i' + j-j'} \hookrightarrow F' \times G'$$
            to be $\cC_{FF'} \times \cD_{GG'}$.
        \end{proof}
        We will use extensively the following version of the collar neighbourhood theorem for manifolds with faces.
        \begin{lem}\label{systems of collars exist}
            Let $M$ be a manifold with faces, with a system of boundary faces $F_1, \ldots, F_k$. Assume all the boundary faces $F_i$ are equipped with systems of faces which agree on their overlaps. Then there exists a system of collars on $M$ extending those on the $F_i$. Furthermore this is unique up to isotopy of such systems of collars, and in particular is unique up to an ambient diffeomorphism of $M$ which is the identity on $\partial M$.\par 
            For a fixed system of normals $\nu$, existence and uniqueness still holds if we require that the systems of collars (and isotopy) all extend $\nu$.
        \end{lem}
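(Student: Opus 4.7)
The plan is to induct on the depth of $M$, by which I mean the maximum codimension of a corner stratum. The base case of depth $\leq 1$ is a manifold with boundary, and the statement reduces to the classical collar neighborhood theorem together with its parametric uniqueness. For the inductive step, each boundary face $F_i$ has strictly smaller depth, so by the inductive hypothesis carries a system of collars; by the assumption on the $F_i$ these agree on overlaps $F_i \cap F_j$, giving the collared boundary datum to be extended.

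First I would use convexity of systems of normals to extend the given first-order data to a system of normals $\nu$ on all of $M$. Then the collar $\cC^{F_i}$ for each boundary face will be produced by choosing an extension of $\nu^{F_i}$ to a vector field $V_i$ defined on a neighborhood of $F_i$ in $M$, and setting $\cC^{F_i}(x,t)$ to be the time-$t$ flow of $V_i$ starting at $x \in F_i$. The nontrivial point is arranging that, on each nonempty overlap $F_i \cap F_j$, the resulting flows commute so that the compatibility square in the definition of a system of collars holds on the nose.

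To achieve this compatibility I would work locally. Near a codimension-$k$ point $p$ lying in faces $F_{i_1},\ldots,F_{i_k}$, pick a chart $\phi : U \to V \subseteq \bR^{n-k} \times \bR_{\geq 0}^k$ adapted to the stratification so that $F_{i_\alpha} = \{y_\alpha = 0\}$. Set $V_{i_\alpha}$ locally equal to $\partial/\partial y_\alpha$; these vector fields commute pairwise, so their flows commute, which is exactly the compatibility square. Globally, patch these local models with a partition of unity subordinate to an open cover of $M$ compatible with the face stratification and with the already-chosen collars on the faces $F_i$ (which exist by induction). The set of vector fields that simultaneously extend $\nu^{F_i}$ and have the prescribed commuting form near each corner is convex in local charts, so partitions of unity preserve the requisite commutativity.

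For uniqueness up to isotopy, given two systems $\cC^0, \cC^1$ extending the same boundary data and inducing the same system of normals $\nu$, I would build an isotopy between them face-by-face in increasing codimension, interpolating in the convex space of valid collars at each stage using the inductive hypothesis to handle restrictions to faces compatibly. The isotopy extension theorem for manifolds with corners then upgrades the isotopy to an ambient diffeomorphism of $M$ equal to the identity on $\partial M$. The refined statement extending a fixed $\nu$ follows by restricting throughout to collars inducing $\nu$, which again forms a convex subspace. \textbf{The main obstacle} is precisely the global compatibility of the commuting local collars at corners: without commutativity of the chosen vector fields, one gets only independent collars for each face, and making the corner-commutativity global while also matching the inductively given collars on $F_i$ is what forces the use of adapted local charts together with a stratification-compatible partition of unity.
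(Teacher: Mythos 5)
Your overall architecture (induct on depth, reduce to the classical collar theorem, identify corner-compatibility as the crux) is reasonable, but the step you use to resolve the crux is wrong: commutativity is not a convex condition on vector fields, so the partition-of-unity patching does not produce collars satisfying the compatibility square. If $(V_1,V_2)$ and $(W_1,W_2)$ are two pairs of commuting local extensions of the same normals and $\rho+\rho'=1$, then
\[
[\rho V_1+\rho' W_1,\ \rho V_2+\rho' W_2]=\rho^2[V_1,V_2]+\rho\rho'\bigl([V_1,W_2]+[W_1,V_2]\bigr)+\rho'^2[W_1,W_2]+(\mathrm{terms\ involving\ }d\rho),
\]
and neither the cross brackets nor the terms in which the fields differentiate $\rho,\rho'$ vanish in general. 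Two charts adapted to the same corner already give coordinate fields $\partial/\partial y_\alpha$ that disagree and whose convex combinations need not commute, so ``the set of vector fields \dots\ is convex'' is false as stated. The same problem infects your uniqueness argument: the space of collars is a space of embeddings and is not convex; it is contractible only via the (relative, parametric) uniqueness-of-collars theorem, not by linear interpolation. (Convexity genuinely holds only for systems of \emph{normals}, which is why the paper extends those by convexity but cannot do the same for collars.)

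The paper's proof replaces your convexity claim with a double induction that never tries to achieve all compatibilities simultaneously. Having fixed collars $\cC_1,\dots,\cC_{l-1}$ satisfying the compatibility square pairwise, it chooses an arbitrary collar $\cC'_l$ of $F_l$ and corrects it one face at a time: the square for the pair $(F_m,F_l)$ commutes up to an isotopy of collars of $F_m\cap F_l$ (by uniqueness of collars), and since the square already commutes near the triple intersections $F_i\cap F_m\cap F_l$ for $i\le m-1$, that isotopy can be taken constant there and then extended to an ambient isotopy of $\cC'_l$, preserving the compatibilities already achieved. Running the same relative argument in one-parameter families gives uniqueness. If you want to retain a flow picture, you must either build the vector fields by this same face-by-face induction or construct a Riemannian metric that is an honest product near every corner; a single partition of unity will not do it.
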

        \begin{proof}
            We use the second definition of a system of collars, and proceed inductively. We first prove existence.\par 
            We build the collar neighbourhoods we need by induction. We first assume we have collar neighbourhoods $\cC_i: F_i \times [0, \varepsilon) \hookrightarrow M$ such that Square \ref{star} commutes for $i, j \leq l-1$. We will build a collar neighbourhood $\cC_{l}: F_l \times [0, \varepsilon) \hookrightarrow M$ so that Square \ref{star} commutes for $i, j \leq l$. Then by induction on $l$ this will complete the proof.\par
            We first choose any collar neighbourhood $\cC'_l: F_l \times [0, \varepsilon) \hookrightarrow M$. We induct again, and assume Square \ref{star} commutes for $i \leq m-1, j=l$, and we will isotope $\cC'_l$ without breaking this, so that Square \ref{star} commutes for $i = m, j=l$ too. Then by induction on $m$ this will complete the proof.\par 
            We consider Square \ref{star} for $i=m, j=l$. This doesn't necessarily commute, but by uniqueness of collar neighbourhoods it does up to some isotopy $\Phi$, which we can extend to an isotopy of the whole collar $\cC'_l$. However it does commute in a neighbourhood of $F_i \cap F_m \cap F_l$ since we know that Square \ref{star} commutes, for all $i \leq m-1$. Therefore we can choose the isotopy $\Phi$ to be constant in a neighbourhood of all $F_i \cap F_m \cap F_l$ for all $i \leq m-1$. Then the other end of the isotopy provides a collar $\cC_l$ such that Square \ref{star} commutes for $i \leq m, j \leq l$.\par 
            We next prove uniqueness. Let $\cC$ and $\cD$ be two systems of collars on $M$ extending the given ones on the $F_i$. We wish to build smooth 1-parameter families of embeddings 
            $$\{\cI_i^t\}_{t \in [0, 1]}: F_i \times [0, \varepsilon) \hookrightarrow M$$
            onto a neighbourhood of $F_i$ for all $i$, such that $\cI^0_i = \cC_i$, $\cI^1_i = \cD_i$ and such that the following analogue of Square \ref{star} commutes for all $t$, $i$:
            \begin{equation}\label{star2}
            \xymatrix{
                F_i \cap F_j \times [0, \varepsilon)^2 \ar[rrr]^{\cC^{F_i} \times \id_{[0, \varepsilon)}} \ar[d]_{\left(\cC^{F_j} \times \id_{[0, \varepsilon)}\right) \circ \tau}  &&& F_i \times [0, \varepsilon) \ar[d]^{\cI_i^t} \\
                F_j \times[0, \varepsilon) \ar[rrr]_{\cI_j^t} &&& M
            }
            \end{equation}
            We proceed inductively as before. Assume we've build $\cI_i^t$ such that Square \ref{star2} commutes for $i, j \leq l-1$. We then build an isotopy $\cI_i^t$ so that Square \ref{star2} commutes for $i, j \leq l$, then conclude by induction on $l$.\par 
            We first choose any isotopy $\cI'^t_l: F_l \times [0, \varepsilon) \hookrightarrow M$ between $\cC_l$ and $\cD_l$, using the uniqueness of collar neighbourhoods. We induct again, and assume Square \ref{star2} commutes for $i \leq m-1$, $j = l$, and we will isotope $\cI'^t_l$ (through isotopies from $\cC_i$ to $\cD_i$) without breaking this, so that Square \ref{star2} commutes for $i=m$, $j=l$ too. Then conclude by induction on $m$.\par 
            We consider Square \ref{star2} for $i=m$, $j=l$. This commutes for $t = 0$ and $t=1$ but not arbitrary $t$, but by uniqueness of collar neighbourhoods in families it does up to some $t$-dependent isotopy $\Phi^t$, which is the identity for $t=0$ and $t=1$. We extend this to an isotopy of the whole isotopy $\cI_l^t$. However since Square \ref{star2} commuted in a neighbourhood of all $F_i \cap F_m \cap F_l$ for $i \leq m-1$, we can choose $\Phi^t$ to be constant in a neighbourhood of all $F_i \cap F_m \cap F_l$ for $i \leq m-1$. Then the other end of the isotopy $\Phi$ provides the isotopy $\cI_l^t$ such that Square \ref{star2} commutes for $i \leq m$, $j \leq l$.\par 
            The entirety of the above proof can be performed the same with a fixed system of normals $\nu$ if we require that all collar neighbourhoods (and isotopies) extend $\nu$.
        \end{proof}
        \begin{cor}
            Let $M$ be a manifold with corners, and assume some of its faces are equipped with systems of collars. Then $M$ admits a system of collars extending them, and this is unique up to isotopy of such things, and in particular is unique up to diffeomorphisms of $M$ which are the identity on the faces which were already equipped with systems of collars.
        \end{cor}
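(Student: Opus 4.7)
The plan is to reduce the corollary to Lemma \ref{systems of collars exist} by first extending the given partial data to every boundary face of $M$, and then applying the lemma one final time to $M$ itself. Let $\mathcal{G}$ denote the given collection of faces of $M$ already equipped with systems of collars; these are implicitly assumed to be compatible in the sense that whenever $F \subseteq G$ are both in $\mathcal{G}$, the system of collars on $G$ restricts on $F$ to the given system of collars on $F$.

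First I would construct, by downward induction on codimension (equivalently, upward induction on dimension), a system of collars on every positive-codimension face of $M$ extending the given data. The base case of zero-dimensional faces is vacuous. For the inductive step, let $F$ be a face of $M$, and suppose by induction that every codimension-$1$ face of $F$ has been equipped with a system of collars compatible with the data from $\mathcal{G}$. If $F \in \mathcal{G}$, we keep the prescribed system of collars on $F$ (which by the compatibility hypothesis extends the inductively constructed ones). Otherwise, Lemma \ref{systems of collars exist} applied to $F$ (viewed as a manifold with faces), with system of boundary faces given by the codimension-$1$ faces of $F$ now all carrying systems of collars, produces a system of collars on $F$ extending these. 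Once this induction is complete, every face in $D(M)$ carries a system of collars, all pairwise compatible on overlaps, and a final application of Lemma \ref{systems of collars exist} to $M$ itself yields the desired system of collars on $M$.

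Uniqueness is proved by the same inductive scheme. Given two systems of collars $\mathcal{C}$ and $\mathcal{D}$ on $M$ extending the prescribed data, I would construct by induction on dimension an isotopy between their restrictions to each face of $M$, which is constant on the sub-faces already in $\mathcal{G}$. On a face $F$ with $F \in \mathcal{G}$ one takes the constant isotopy; on a face $F \notin \mathcal{G}$ one uses the relative isotopy-uniqueness of Lemma \ref{systems of collars exist} to join $\mathcal{C}|_F$ to $\mathcal{D}|_F$ through an isotopy extending those already built on $\partial F$. A final application of the same relative uniqueness then produces an isotopy of systems of collars on $M$ connecting $\mathcal{C}$ to $\mathcal{D}$, which is constant on each face in $\mathcal{G}$. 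Integrating the time-derivative of this isotopy (cut off appropriately so as to be supported near $\partial M$) gives the ambient diffeomorphism of $M$ which is the identity on the faces originally equipped with systems of collars.

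The main obstacle is purely bookkeeping: one must verify at each stage of the induction that the collars being glued together on different faces remain compatible on their intersections. This is arranged so as to match the hypotheses of Lemma \ref{systems of collars exist}, which is formulated precisely to produce collars extending a prescribed compatible system on a system of boundary faces; the required compatibility in our setting follows immediately from the compatibility of $\mathcal{G}$ and from the inductive construction. No genuinely new geometric input beyond Lemma \ref{systems of collars exist} is needed.
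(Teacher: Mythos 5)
The paper gives no proof of this corollary, treating it as an immediate consequence of Lemma \ref{systems of collars exist}; your reduction --- extend the collar data to \emph{all} faces by induction on dimension, applying the lemma face by face, then apply it once more to $M$ --- is exactly the intended argument, and the uniqueness step via the relative isotopy statement of the lemma plus isotopy extension is also standard and correct.

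One bookkeeping point in your induction needs tightening. A system of collars on a face $G \in \mathcal{G}$ already determines, by restriction, systems of collars on \emph{all} faces of $G$, not just those faces that happen to lie in $\mathcal{G}$. Your inductive step, as written, says that for $F \notin \mathcal{G}$ you construct a fresh system of collars on $F$ via the lemma; but if such an $F$ is a face of some $G \in \mathcal{G}$, the freshly constructed collars on $F$ need not agree with those induced from $G$, and then at the stage of $G$ you cannot ``keep the prescribed system of collars'' compatibly. The fix is to first saturate $\mathcal{G}$ under passage to faces (equipping each new face with the induced system of collars, which are consistent by your compatibility hypothesis), and only then run your induction. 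With that adjustment the argument goes through as you describe.
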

    
    \subsection{Abstract gluing}
      
            Let $M$ be a manifold with faces, equipped with a system of collars. We can describe a neighbourhood of $\partial M$, as a manifold with corners, in terms of the corner strata of $M$. Let $F_1, \ldots, F_k$ be a system of boundary faces for $M$. Define
            $$\tilde{M} := \left(\bigsqcup\limits_{i} F_i \times [0, \varepsilon)\right)/\sim$$
            where we define $\sim$ by $(\cC^{F_i \cap F_j, F_i}(x, s), t) \sim (\cC^{F_i \cap F_j, F_j}(x, t), s)$ for $x$ in $F_i \cap F_j$ ($j \neq i$ here), and $s, t$ in $[0, \varepsilon)$. This equivalence relation is defined so that as a topological space, $M$ is the coequaliser of the two maps defined above
            $$\bigsqcup\limits_{i < j} F_i \cap F_j \times [0, \varepsilon)^2 \rightrightarrows \bigsqcup\limits_i F_i \times [0, \varepsilon).$$
            Then the maps $\cC$ define an embedding $\tilde{M} \hookrightarrow M$ onto a neighbourhood of the boundary.\par 
            It follows from the fact that $\tilde{M}$ is an open subset in $M$ that $\tilde{M}$ is a manifold with faces, but as we see in Lemma \ref{lem:abstract gluing}, we can prove this without reference to the ambient manifold $M$.

        In Section \ref{sec:flow cat}, there will be situations where we would like (but don't yet have) $M$, but we do have the data of its boundary faces along with the data of how they overlap; we then form $\tilde{M}$ using Lemma \ref{lem:abstract gluing} and then use this as a first step towards bulding $M$.
        \begin{lem}\label{lem:abstract gluing}
            Let $\{M_i\}_{i\in I}$ be a finite collection of manifolds with faces.\par 
            Let $\{N_w\}_{w \in W}$ be a finite collection of manifolds with faces, equipped with inclusion maps
            \[ \xymatrix{
                N_w \ar[r]^{\alpha_w}\ar[d]^{\beta_w} &
                M_{i_w} \\
                M_{j_w} &
            }
            \]
            each of which is a diffeomorphism onto a boundary face, and such that no boundary face is the image of two of these maps.\par 
            Define
            $$Z := \left(\bigsqcup\limits_i M_i \times [0, \eps)\right)/\sim$$
            where $\sim$ is the equivalence relation defined by
            $$(\cC(\alpha_w(x), u), v) \sim (\cC(\beta_w(x), v), u)$$
            for $x$ in any $N_w$ and $(u,v)$ in $[0, \eps)^2_{u,v}$.\par 
            Equivalently, $Z$ is the coequaliser of the following diagram.
            $$\bigsqcup\limits_w N_w \times [0, \eps)^2_{u,v} \rightrightarrows \bigsqcup_i M_i \times [0, \eps)_s$$
            where the two maps $A$ and $B$ send $(x, u,v)$ to $(\cC(\alpha_w(x), u), v)$ and $(\cC(\beta_w(x), v), u)$ respectively.\par 
            Then $Z$ is a manifold with corners.
        \end{lem}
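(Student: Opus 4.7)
The plan is to equip $Z$ with local charts modeled on open subsets of $\bR^a \times \bR_{\geq 0}^b$, by combining the multi-collars on the individual $M_i$ with the identifications provided by $\sim$. I would first dispatch the topological side: $Z$ is second countable as a quotient of a finite disjoint union of second countable spaces, and Hausdorff because each $\alpha_w, \beta_w$ is a closed embedding onto a face, so the equivalence relation identifies only finitely many points in each equivalence class, and distinct representatives can be separated inside the individual $M_i \times [0, \eps)$.

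The heart of the proof is a chart construction at each $z \in Z$. Choose a representative $(p_0, v_0) \in M_i \times [0, \eps)$ of $z$, and let $G_1, \dots, G_c$ be the codimension-one faces of $M_i$ whose closures contain $p_0$; partition these into \emph{glued} faces (those occurring as $\alpha_w(N_w)$ or $\beta_w(N_w)$ for some $w$) and \emph{free} faces. Using the multi-collar of the corner stratum $G_1 \cap \dots \cap G_c$ in $M_i$, provided by \cref{systems of collars exist}, we obtain a local parameterization
\[
(x, u_1, \dots, u_c, v) \;\longmapsto\; (\cC(x, u_1, \dots, u_c), v)
\]
of a neighborhood of $(p_0, v_0)$ in $M_i \times [0, \eps)$, where $u_j$ is the collar coordinate normal to $G_j$. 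For each glued $G_j$, the relation $(\cC(\alpha_w(x), u_j), v) \sim (\cC(\beta_w(x), v), u_j)$ splices this chart onto a corresponding one in $M_{j_w} \times [0, \eps)$ via the swap $(u_j, v) \leftrightarrow (v, u_j)$. Iterating this splicing across all glued $G_j$ incident at $p_0$ assembles a neighborhood of $z$ in $Z$ diffeomorphic to an open subset of $\bR^{\dim M_i - c} \times \bR_{\geq 0}^{b}$, where $b$ counts the boundary directions (among the free $u_j$ and the $v=0$ direction) that remain unabsorbed by gluings.

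The main obstacle is verifying the consistency of this assembly at multi-gluings, where several glued faces meet at $p_0$: one must check that the identifications induced by different glued $G_j$ commute with one another, and that no two distinct points of a single $M_i \times [0, \eps)$ are erroneously identified. Commutativity reduces to the iterative compatibility encoded in the defining square \cref{star} for systems of collars, which is guaranteed after arranging the collars compatibly on the $M_i$ via \cref{systems of collars exist}; the hypothesis that no boundary face is the image of two gluing maps rules out self-identifications within a single chart. With these compatibilities in place, smoothness of transition functions between overlapping charts is inherited from smoothness of the various $\cC$, so $Z$ acquires the structure of a manifold with corners.
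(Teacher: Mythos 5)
Your proposal has a genuine gap precisely where the lemma has its real content: the Hausdorff property of $Z$. You justify Hausdorffness by noting that each gluing map is a closed embedding onto a face, that equivalence classes are finite, and that distinct representatives can be separated inside a single piece $M_i\times[0,\eps)$. None of this suffices. The identifications are made not along the closed faces themselves but along the \emph{open} collar regions $\cC(\alpha_w(N_w)\times[0,\eps))\times[0,\eps)$, and gluing Hausdorff spaces along open subsets can fail to be Hausdorff even when every fibre of the quotient map is finite and each piece embeds --- the line with two origins arises in exactly this way. The case your argument does not address is that of two non-identified limit points lying in \emph{different} pieces $M_i\times[0,\eps)$ and $M_{i'}\times[0,\eps)$: a sequence converging to both eventually lies in the overlap of the two pieces, hence (after passing to a subsequence) in the image of some $N_w\times[0,\eps)^2_{u,v}$, and one must show it cannot converge simultaneously to a point of $M_i\times[0,\eps)$ and a point of $M_{i'}\times[0,\eps)$ that are not identified. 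The paper's proof is devoted almost entirely to this step: for both limits to escape the image of $N_w\times[0,\eps)^2$, both collar parameters $u_n$ and $v_n$ would be forced to tend to $\eps$, and such a sequence has no limit in $Z$ at all. Some argument of this kind, controlling sequences near the frontier of the gluing region, is indispensable and is missing from your proposal.

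By contrast, your chart construction does more work than is needed and obscures the simpler structure the paper exploits. Because no boundary face is the image of two gluing maps, the quotient map is injective on each $M_i\times[0,\eps)$; and because $A$ and $B$ are open embeddings, the saturation of any open subset of a piece is open, so each $M_i\times[0,\eps)$ maps homeomorphically onto an \emph{open} subset of $Z$, and these open sets cover. Once Hausdorffness is established, the atlas of $Z$ is simply the union of the atlases of the pieces, with transition maps $B\circ A^{-1}$, which are smooth embeddings strictly respecting corner strata. No splicing of multi-collars across glued faces is required: a neighbourhood of any point of $Z$ is already contained in the image of a single piece. The consistency checks you defer to the collar-compatibility square are precisely the checks that this one-piece-at-a-time description renders unnecessary, so while your splicing is not incorrect, it is not a substitute for the missing Hausdorff argument.
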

        In the cases where we apply this, $Z$ will generally be a manifold with faces, but we will verify this separately (and describe a system of faces) in each case.
        \par 
        For a manifold $Z$ constructed as above, we write $C(Z)$ for the subspace $\cup_i M_i \times \{0\}$. This is itself not a manifold, but is a homotopy-equivalent subspace of $Z$. We call this the \emph{carapace} of $Z$.
        \begin{proof}
            First note that since no boundary face of any $M_i$ is the image of more than one of the $\alpha_w$ or $\beta_w$, the quotient map 
            $$\bigsqcup\limits_i M_i \times [0, \eps) \to Z$$
            has fibres which are all discrete and are either one or two points, and the restriction to each $M_i \times [0, \eps)$ is an embedding of topological spaces.\par 
            We first show that $Z$ is Hausdorff. Indeed suppose $\{p_n\}_n$ is a sequence which has two limits $(x, s)$ and $(x', s')$ in $Z$. Here $s$ and $s'$ lie in $[0, \eps)$, $x$ lies in $M_i$ and $x'$ lies in $M_{i'}$.\par 
            If $i=i'$, i.e. $x$ and $x'$ both lie in $M_i \times [0, \eps)$, then the sequence $p_n$ would also eventually lie in the image of $M_i \times [0, \eps)$ in $Z$, but because $M_i \times [0, \eps)$ is itself Hausdorff, and no two points in $M_i \times [0, \eps)$ are identified with each other under $\sim$, the image of $M_i \times [0, \eps)$ in $Z$ is Hausdorff.\par 
            We now assume $i \neq i'$. Then for sufficiently large $n$, $p_n$ must lie in both $M_i \times [0, \eps)$ and $M_{i'} \times [0, \eps)$ (since they're open neighbourhoods of $x$ and $x'$ respectively) (and can live in no other $M_j$ with $j \neq i,i'$), and so must be of the form $(a_n, s_n)$ for $a_n$ in $M_i$ and $s_n$ in $[0, \eps)$, but $p_n$ must also be of the form $(b_n, t_n)$ for $b_n$ in $M_{i'}$ and $t_n$ in $[0, \eps)$. \par 
            Since there are only finitely many $N_w$, restricting to a subsequence if necessary, $p_n$ must therefore be of the form $(c_n, u_n, v_n)$, where $c_n$ lies in some $N_w$, where $i_w = i$ and $j_w=i'$. $x$ and $x'$ can't lie in the image of $N_w \times [0, \eps)^2$ in $Z$ as otherwise they would both lie in $M_i$. For these to converge to points in $M_i \times [0, \eps)$ and $M_{i'} \times [0, \eps)$ not in $N_w \times [0, \eps)^2$, we must have that $u_n$ and $v_n$ both converge to $\eps$. But such a sequence has no limit in $Z$.\par 
            Then since $A$ and $B$ are smooth embeddings of manifolds with faces which strictly respect corner strata, since it is Hausdorff, $Z$ will be a manifold with corners. 
        \end{proof}
        We will need conditions under which this $Z$ is a manifold with faces, and a description of a system of faces in such a case. We work in the setting of Lemma \ref{lem:abstract gluing}. 
        \begin{lem}\label{lem:abstract gluing with faces}
            Assume we are given a finite set $J$ and that $W = \{(i,i') \in I\times I \,|\, i \neq i'\}$, $i_{(i,i')} = i$ and $j_{(i,i')} = i'$. \par 
            Assume we are also given bijections $\lambda_i:I\setminus\{i\} \sqcup J \to D(M_i)$ for all $i$, and bijections $\lambda_{ii'}: I\setminus\{i,i'\} \sqcup J \to D(N_{(i,i')})$ such that the following diagrams commute.
            \[\xymatrix{
                I\setminus\{i,i'\}\sqcup J \ar[r] \ar[d]_{\lambda_{ii'}} &
                I\setminus\{i\}\sqcup J\ar[d]_{\lambda_i} \\
                D(N_{(i,i')}) \ar[r] &
                D(M_i)
            }
            \]
            and
            \[\xymatrix{
                I\setminus\{i,i'\}\sqcup J \ar[r] \ar[d]_{\lambda_{ii'}} &
                I\setminus\{i'\}\sqcup J\ar[d]_{\lambda_{i'}} \\
                D(N_{(i,i')}) \ar[r] &
                D(M_{i'})
            }
            \]
            where the top horizontal maps are the natural inclusions, and the bottom horizonal maps are those induced by the inclusions $\alpha_{(i,i')}$ and $\beta_{(i,i')}$ respectively. \par 
            For $K \subseteq J$, write $M_i^K,N^K_{(i,i')}$ for the faces in $M_i$ and $N_{(i,i')}$ respectively, which are the intersections of the boundary faces corresponding to the subsets $\lambda_i(K)$ and $\lambda_{ii'}(K)$ of $D(M_i)$ and $D(N_{(i,i')})$ respectively. Define $Z^K$ to be the coequaliser of the following maps
            $$\bigsqcup\limits_{(i,i')\in W} N_{(i,i')}^K \times [0, \eps)^2 \rightrightarrows \bigsqcup_i M_i^K \times [0, \eps)$$
            where the two maps are defined by the same formulas as in Lemma \ref{lem:abstract gluing}.

            Then each $Z^K$ is a manifold with corners by Lemma \ref{lem:abstract gluing}, and in fact is a manifold with faces, with a system of faces given by
            $$Z^{K'}$$
            for $K \subseteq K' \subseteq J$, and
            $$M_{I'}^{K'} \times \{0\}$$
            for for $K \subseteq K' \subseteq J$ and $I' \subseteq I$, where $M_{I'}\times \{0\}$ is defined to be the intersection of all $M_i \times\{0\}$ for $i \in I'$.\par 
            In particular, $Z^K$ has a system of boundary faces given by
            $$Z^{\{j\}\cup K}$$
            for $i \in I$ and $j \in J\setminus K$, and
            $$M_i^K \times \{0\}$$
        \end{lem}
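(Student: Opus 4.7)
The plan is to proceed by downward induction on $|J \setminus K|$, simultaneously showing that $Z^K$ is a manifold with faces and that the proposed list is a system of faces. By Lemma \ref{lem:abstract gluing}, each $Z^K$ is already a manifold with corners, so it suffices to verify the system-of-boundary-faces condition; the full system of faces then follows by intersecting, using the earlier lemma that intersections of faces of a manifold with faces decompose as unions of faces.

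First I would carry out the local analysis at an arbitrary point $p \in Z^K$. Choose a representative $(x, s) \in M_i^K \times [0, \eps)$. Under $\lambda_i$, the codimension-$1$ faces of $M_i^K$ passing through $x$ correspond to a subset of $(I \setminus \{i\}) \sqcup (J \setminus K)$; call a face of the first type an $I$-type adjacent face and of the second type a $J$-type one. For each $I$-type adjacency with label $i' \in I \setminus \{i\}$, one has $x = \cC(\alpha_{(i, i')}(y), u)$ for some $y \in N_{(i, i')}^K$ and $u \in [0, \eps)$, and the defining equivalence identifies a whole neighbourhood $N_{(i, i')}^K \times [0, \eps)^2_{u, s}$ inside $M_i^K \times [0, \eps)$ with a corresponding neighbourhood in $M_{i'}^K \times [0, \eps)$ via the diffeomorphism swapping the two $[0, \eps)$-factors. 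Since this is a diffeomorphism of $[0, \eps)^2$ rather than a corner-creating identification, $I$-type adjacencies contribute nothing to the codimension of $p$ in $Z^K$, whereas $J$-type faces and the condition $s = 0$ each contribute one. Thus $\Gamma^{Z^K}(p)$ equals the number of $J$-type adjacencies at $x$, plus one if $s = 0$.

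For the inductive step, assume the lemma for all $Z^{K'}$ with $K \subsetneq K' \subseteq J$; the inductive hypothesis applies because the bijections $\lambda_i, \lambda_{i i'}$ restrict compatibly to the face data of $M_i^{K \cup \{j\}}$ and $N_{(i, i')}^{K \cup \{j\}}$. I then verify that each candidate codimension-$1$ face is a closed embedded submanifold with faces of $Z^K$. For $Z^{K \cup \{j\}}$ the inductive hypothesis supplies the manifold-with-faces structure, and the embeddings $M_i^{K \cup \{j\}} \hookrightarrow M_i^K$ assemble, via coequaliser compatibility, into an embedding $Z^{K \cup \{j\}} \hookrightarrow Z^K$ that strictly respects corner strata by the local codimension formula above. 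For $M_i^K \times \{0\}$, the only identifications made at $s = 0$ are along the faces $\alpha_{(i, i')}(N_{(i, i')}^K)$ of $M_i^K$, so the natural map to $Z^K$ is a closed embedding. The local codimension formula then shows that each codimension-$1$ point of $Z^K$ lies in exactly one candidate face: if $s > 0$ a unique $J$-type adjacency applies so $p \in Z^{K \cup \{j\}}$ for exactly one $j$, while if $s = 0$ then $x$ is forced to lie in the interior of $M_i^K$, the representative of $p$ is unique, and $p$ lies in a single $M_i^K \times \{0\}$. This verifies the system-of-boundary-faces condition, so $Z^K$ is a manifold with faces.

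Finally, faces of $Z^K$ are unions of components of intersections of boundary faces, and direct computation gives
\[
\bigcap_{j \in K' \setminus K} Z^{K \cup \{j\}} = Z^{K'}, \qquad \bigcap_{i \in I'} \bigl( M_i^K \times \{0\} \bigr) = M_{I'}^K \times \{0\},
\]
with mixed intersections yielding $M_{I'}^{K'} \times \{0\}$, matching the proposed system of faces. The main obstacle is the local model calculation showing that $I$-type identifications do not create new corners in $Z^K$: one must use the explicit swap in the defining equivalence together with the collar structure on $M_i$ to see that two local copies of $N_{(i, i')}^K \times [0, \eps)^2$ glue via a diffeomorphism to a single such copy. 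The remainder of the argument is bookkeeping, supported by the preceding lemmas on faces of intersections and on faces of manifolds with faces.
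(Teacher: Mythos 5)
Your overall architecture — reduce to verifying the system-of-boundary-faces condition and then obtain the full system of faces by intersecting, with the intersections $\bigcap_{j \in K'\setminus K} Z^{K\cup\{j\}} = Z^{K'}$ and $\bigcap_{i\in I'}(M_i^K\times\{0\}) = M_{I'}^K\times\{0\}$ — is the same as the paper's (which in fact reduces immediately to $K=\emptyset$). But the local codimension formula at the heart of your argument is false, and the case analysis built on it breaks. The equivalence relation does not ``smooth out'' the $I$-type faces: it identifies the entire corner chart $N_{(i,i')}^K\times[0,\eps)^2_{u,v}\subset M_i^K\times[0,\eps)$ with the corresponding corner chart in $M_{i'}^K\times[0,\eps)$ via the swap, which is a diffeomorphism of manifolds with corners from one quadrant to another; identifying two quadrants by a diffeomorphism yields a single quadrant, not a half-space. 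Equivalently (and this is established in the proof of Lemma \ref{lem:abstract gluing}), the quotient map restricted to each $M_i\times[0,\eps)$ is an open embedding, so codimensions are preserved: $\Gamma^{Z^K}(x,s)=\Gamma^{M_i^K}(x)+[s=0]$, with $I$-type adjacencies counting fully. Concretely, take $p=(\cC(\alpha_{(i,i')}(y),0),s)$ with $y$ in the interior of $N_{(i,i')}^K$ and $s>0$: your formula declares $p$ interior in $Z^K$, but $p\sim(\cC(\beta_{(i,i')}(y),s),0)$, which lies in the interior of the boundary face $M_{i'}^K\times\{0\}$. (With $I=\{1,2\}$, $J=\emptyset$ and $M_1=M_2=[0,1]$ glued at both pairs of endpoints, $Z$ is a collar of a bigon and these points make up most of its boundary.)

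The correct dichotomy, which is what the paper's short proof runs on, is this: a codimension-$1$ point $p$ of $Z^K$ either admits a representative $(x,0)$ with $x$ in the interior of some $M_i^K$, in which case $p$ lies in the interior of exactly one $M_i^K\times\{0\}$; or every representative has $s>0$, in which case $x$ lies in the interior of a unique boundary face of $M_i^K$, and that face must be $J$-type — an $I$-type adjacency always produces a second representative with $s=0$ on the other side, as in the computation above — so $p$ lies in the interior of exactly one $Z^{K\cup\{j\}}$. Your statement ``if $s=0$ then the representative of $p$ is unique'' is also not quite right as written (points of $\alpha_{(i,i')}(N^K_{(i,i')})\times\{0\}$ are identified with points of $M_{i'}^K\times\{0\}$), though this does not affect the codimension-$1$ count. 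With the corrected local analysis your inductive framework and the closing intersection computation go through, but as written the proposal misclassifies a large portion of $\partial Z^K$ as interior.
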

        Note that in particular, the faces of $Z^K$ of the form $Z^{K'}$ are non-compact, whereas those of the form $M^{K'}_{I'}\times \{0\}$ are compact.
        \begin{proof}
            It suffices to prove this in the case $K=\emptyset$, and since each of the list of faces given is indeed the intersection of some subset of the given list of boundary faces, it suffices to show the given list of boundary faces is indeed a system of boundary faces.\par 
            Let $p \in Z^\emptyset$ be a point of codimension 1. Then $p = (x, t)$ lies in some $M_i \times [0, \eps)$. If $t=0$, $x$ must lie in the interior of $M_i$ (since $\Gamma(p)=1$), so $p$ lies in the interior of $M_i \times \{0\}$.\par 
            If $t \neq 0$, $p$ lies in the interior of some boundary face $F$ of $M_i$. Then $p$ lies in the interior of $Z^{\{F\}}$.
        \end{proof}
       
    \subsection{Smooth functions on manifolds with faces}
        \begin{defn}
            Let $f: M \rightarrow \bR$ be a smooth map, where $M$ is a manifold with faces. We define a regular value of $f$ inductively in the dimension. We say $p \in \bR$ is a \emph{regular value} of $f$ if $df_x: T_x M \rightarrow T_p \bR$ is surjective for all $x \in f^{-1}\{p\}$, and the restrictions of $f$ to all faces of $M$ have $p$ as a regular value.
        \end{defn}
        Sard's theorem implies that for a fixed $f$, this condition is satisfied for generic $p$. We also have a version of the preimage theorem for manifolds with faces:
        \begin{lem}\label{preimage theorem}
            Let $f: M \rightarrow \bR$ be a smooth map, where $M$ is a manifold with faces and $p \in \bR$ a regular value. Then $f^{-1}\{p\}$ and $f^{-1} (-\infty, p]$ are naturally smooth manifolds with faces.\par 
            Let $F$ be a face of $M$. Then $F \cap f^{-1}\{p\}$ is a face of $f^{-1}\{p\}$, and any face of $f^{-1}\{p\}$ is a union of components of such things.\par 
            Similarly, $F \cap f^{-1}\{p\}$ and $F \cap f^{-1}(-\infty, p]$ are faces of $f^{-1}(-\infty, p]$, and any face of $f^{-1}(-\infty, p]$ is a union of components of such things.
        \end{lem}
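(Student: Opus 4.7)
The plan is to verify the statement locally by reducing $f$ to a coordinate function via the implicit function theorem applied in a \emph{smooth} direction, and then to read off the face structure from that of $M$. Near a point $x \in f^{-1}\{p\}$ of codimension $i$ in $M$, pick a chart $\phi: U \to V \subseteq \bR^{n-i} \times \bR^i_{\geq 0}$ with $\phi(x) = 0$, and set $\tilde f = f \circ \phi^{-1}$. Applying the regular-value hypothesis to the minimal face $\{t_1 = \cdots = t_i = 0\}$ through $x$ (which is itself a face of $M$) gives that the restriction of $d\tilde f$ to $T_0(\bR^{n-i} \times \{0\})$ is surjective, so $\partial \tilde f/\partial y_k(0) \neq 0$ for some $k \in \{1,\ldots,n-i\}$. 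Extending $\tilde f$ smoothly across the corners into a neighbourhood of $0$ in $\bR^{n-i} \times \bR^i$ and applying the inverse function theorem in the $y_k$-direction alone produces a local diffeomorphism that fixes every corner coordinate $t_j$ and transports $\tilde f$ to $y_k + p$.

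In this straightened chart, $\tilde f^{-1}\{p\}$ is the slice $\{y_k = 0\}$, a neighbourhood in $\bR^{n-i-1} \times \bR^i_{\geq 0}$; while $\tilde f^{-1}(-\infty, p]$ is the half-space $\{y_k \leq 0\}$, which after replacing $y_k$ by $-y_k$ is a neighbourhood in $\bR^{n-i-1} \times \bR_{\geq 0} \times \bR^i_{\geq 0}$. Transition maps between such straightened charts are automatically smooth because the straightening only reshuffles the smooth coordinates and leaves the $t_j$ untouched, so patching gives $f^{-1}\{p\}$ and $f^{-1}(-\infty, p]$ the structure of smooth manifolds with corners. Crucially, the local models show that the codimension-$k$ stratum of $f^{-1}\{p\}$ coincides, inside the chart, with the intersection of $f^{-1}\{p\}$ with the codimension-$k$ stratum of $M$ (obtained by setting $k$ of the $t_j$ equal to zero).

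It remains to check the face assertions. For any face $F \subseteq M$ of codimension $k$, the intersection $F \cap f^{-1}\{p\}$ is closed (as the intersection of a closed subset of $M$ with $F$) and is locally cut out in the straightened chart by setting the appropriate $t_j$ to zero, so it is an embedded submanifold with corners. The previous paragraph shows that every connected component of the codimension-$k$ stratum of $f^{-1}\{p\}$ lies in exactly one such $F$, so $F \cap f^{-1}\{p\}$ is a union of connected components of that stratum's closure and every face of $f^{-1}\{p\}$ arises this way. The statement for $f^{-1}(-\infty, p]$ follows by the same argument, with an additional boundary face introduced by $\{y_k = 0\}$ which globally equals $f^{-1}\{p\}$ itself (already known to be a manifold with faces), and the manifold-with-faces condition at $x$ then follows from the local product structure $\bR^{n-i-1} \times \bR_{\geq 0} \times \bR^i_{\geq 0}$. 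No step is substantive; the only subtlety is arranging the straightening diffeomorphism to respect the corner stratification, which is handled by straightening exclusively in a smooth coordinate direction $y_k$ picked out by the regular-value condition on the deepest face.
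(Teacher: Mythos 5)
Your argument is correct, and it reaches the same local normal form as the paper by a genuinely different mechanism. You straighten $f$ to a coordinate function via the inverse function theorem applied in a direction tangent to the minimal face through $x$ — which is exactly where the hypothesis ``$p$ is a regular value for the restriction of $f$ to every face'' bites, since at a codimension-$i$ point that hypothesis is equivalent to $d(f|_F)\neq 0$ on the minimal face $F$, i.e.\ to the existence of the single tangential partial derivative you invert. The paper instead inducts on the codimension of the corner strata: it extends $f$ to an ambient function $g$, takes the hypersurface $W=g^{-1}(0)$, and verifies by a dimension count (using regularity on the codimension-$i$ locus) that $0$ is a regular value of each coordinate projection restricted to $W$. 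Your route avoids the induction entirely and makes the role of the face-wise regularity hypothesis more transparent; the paper's route stays within the standard transversality formalism and is the more flexible template when the constraint is not a graph over a single coordinate. One phrase worth tightening: the straightening does not merely ``reshuffle the smooth coordinates'' — the new $k$-th coordinate $\tilde f - p$ depends on the corner coordinates $t_j$ as well — but since the diffeomorphism fixes each $t_j$ it carries the corner stratification to itself, which is all you actually use. Both proofs treat the upgrade from ``manifold with corners'' to ``manifold with faces'' (embeddedness of closures of strata) at the same level of detail, by inheriting it from $M$, so no gap there relative to the paper.
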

        \begin{proof}
        Write $M = \cup_i M_i$ where $M_i$ denotes the set of points of codimension at most $i$, so $M_0 = M^{\circ}$. We proceed by induction on $i$, with the case $i=1$ being the usual preimage theorem for manifolds with boundary. Let $p$ be a regular value of $f$; we assume that $f^{-1}(p) \cap M_{i-1}$ is a smooth manifold with corners, whose corner strata are given by the complements $f^{-1}(p) \cap M_j \backslash M_{j-1}$ for $j<i$. Now let $x\in f^{-1}(p) \cap M_i$, and pick a chart
        \[
        \bR^{n-i} \times [0,\infty)^i \supset U \stackrel{\iota}{\longrightarrow} M
        \]
        near $x$, and translate in the range so $p=0$. By smoothness of $f$, we can find $g$ an extension of $f\circ\iota$ to a neighbourhood of $x \in \bR^n$, so that $g^{-1}(0) = W \subset U$ is locally a smooth hypersurface (without boundary) near $x$. 

        By the inductive hypothesis, $W \cap \bR^{n-i+1} \times [0,\infty)^{i+1}$ is a smooth manifold with the expected corner strata. Write $pr_j$ for the $j$-th co-ordinate projection on $\bR^n$.  Note 
        \[
        W \cap \bR^{n-i} \times [0,\infty)^i = \left\{ w \in W \cap \bR^{n-i+1} \times [0,\infty)^{i+1} \, | \, pr_j(w) \geq 0 \, \forall n-i+1 \leq j \leq n \right\}.
        \]
        Pick a point $w$ where the projection $pr_j$ vanishes for at most $i-1$ indices $j\in \{n-i+1,\ldots,n\}$ and let $j_0$ denote the exceptional index. It suffices to prove that $0$ is a regular value of $pr_{j_0}$. But if not, then $T_wW \subset \bR^{n-1} = \{j_0=0\}$ and then a dimension count shows that $\ker(dg|_w)$ has dimension at most $n-2-i$, which violates the hypothesis of regularity of $f$ on the codimension $i$ locus.

        It follows by induction that $f^{-1}(p)$ is a manifold with corners, whose corners are given by the level sets of $f$ on the corner strata of $M$.  Since embeddedness of closures of components is inherited from $M$, it follows that $f^{-1}(p)$ is a manifold with faces, and its faces are as claimed. The result for $f^{-1}(-\infty,p]$ is analogous.
        \end{proof}
        \begin{lem}\label{extending smooth maps}
            Let $M$ be a manifold with faces and $\{F_i\}$ a collection of faces of any codimension in $M$ (we do not assume they form a system of faces of $M$ or even that they cover the whole of $\partial M$). Assume we have smooth maps $f_i: F_i \rightarrow \bR$ which agree on the overlaps of the $F_i$ in $M$. Then there is a smooth map $f: M \rightarrow \bR$ which restricts to $f_i$ on each $F_i$. \par 
            If all the $f_i$ are proper over $(-\infty, 1+2\delta]$ for some $\delta >0$, then an extension $f$ can be chosen which is proper over $(-\infty, 1+\delta]$; similarly if the $F_i$ are proper over $(-\infty, 1+2\delta]$ and also bounded below.
        \end{lem}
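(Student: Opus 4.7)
The approach is a local-to-global construction: extend locally in coordinate charts, glue via a partition of unity, and then modify the resulting function to achieve properness.

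For existence, I would work in charts. Around any point $p$ of $M$ with $k = \Gamma(p)$, a chart identifies a neighborhood with an open subset of $\bR^{n-k} \times [0,\infty)^k$, and each face $F_i$ of $M$ through $p$ appears as a coordinate subspace $L_i$ obtained by setting some subset of the corner coordinates to zero. In such a chart I would construct a smooth local extension by inclusion--exclusion,
\[
\tilde f_U(y) \;=\; \sum_{\emptyset \neq I \subseteq \{i : p \in F_i\}} (-1)^{|I|+1}\, f_I\bigl(\pi_I(y)\bigr),
\]
where $f_I$ is the common restriction to $F_I := \bigcap_{i \in I} F_i$ (well-defined by the overlap hypothesis) and $\pi_I$ is the coordinate projection of the chart onto $L_I$. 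The verification that $\tilde f_U|_{L_i} = f_i$ is a telescoping check: for $y \in L_i$, pair each $I \not\ni i$ with $I \cup \{i\}$; one has $\pi_I(y) = \pi_{I \cup \{i\}}(y)$ whenever $y \in L_i$, so by compatibility $f_I(\pi_I(y)) = f_{I \cup \{i\}}(\pi_{I \cup \{i\}}(y))$ and the paired terms cancel, leaving only $(-1)^{|\{i\}|+1} f_i(\pi_{\{i\}}(y)) = f_i(y)$.

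Taking a locally finite chart cover $\{U_\alpha\}$ of $M$ with a subordinate smooth partition of unity $\{\rho_\alpha\}$ (existing since a manifold with faces is paracompact), the function $\tilde f := \sum_\alpha \rho_\alpha \tilde f_\alpha$ is smooth on $M$, and restricts correctly on each $F_i$ because each local extension $\tilde f_\alpha$ does so on $F_i \cap U_\alpha$ and $\sum_\alpha \rho_\alpha = 1$.

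For the properness statement I would take a proper smooth exhaustion $h : M \to [0,\infty)$ and a smooth cutoff $\chi : M \to [0,1]$ vanishing on $\bigcup_i F_i$ and equal to $1$ outside a chosen neighborhood of that union, and set $f := \tilde f + N \chi h$ for $N$ large. This preserves $f|_{F_i} = f_i$. Under the assumption that each $f_i$ is proper over $(-\infty, 1+2\delta]$ and bounded below, the set $K := \bigcup_i f_i^{-1}(-\infty, 1+2\delta]$ is compact; on a relatively compact neighborhood of $K$, $\tilde f$ is automatically bounded by continuity, and on the rest of $\bigcup_i F_i$ one has $f = f_i > 1+2\delta$. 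Away from $\bigcup_i F_i$ the $N \chi h$ term dominates for $N$ large, so $f > 1+\delta$ outside a relatively compact set and $f^{-1}(-\infty,1+\delta]$ is compact. The case of properness without the lower bound follows the same scheme by tracking each preimage $f^{-1}[a,1+\delta]$ separately.

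The main subtlety is arranging a local smooth extension that is simultaneously compatible with all faces through a given chart point --- this is what the inclusion--exclusion formula handles cleanly, avoiding the failure of a naive collar-invariant extension to agree on overlaps. The partition-of-unity step and the cutoff modification for properness are then standard, though some care is needed in the latter to arrange compactness of the sublevel sets without disturbing the boundary values.
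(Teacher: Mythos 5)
Your existence argument coincides with the paper's: inclusion--exclusion over the faces through a chart point, the telescoping cancellation to verify the boundary values, and a partition of unity to globalise. That part is fine.

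The properness argument, however, takes a different route from the paper and has a genuine gap. You set $f = \tilde f + N\chi h$ with $\chi$ vanishing on $\bigcup_i F_i$ and $h$ a proper exhaustion, and assert that ``away from $\bigcup_i F_i$ the $N\chi h$ term dominates, so $f > 1+\delta$ outside a relatively compact set.'' Two things go wrong. First, in the transition region --- points close to $\bigcup_i F_i$ but outside the compact set $K$ --- the cutoff $\chi$ is close to $0$, so the correction term gives you nothing, and all you know about $\tilde f$ there is that it exceeds $1+2\delta$ \emph{on} the faces themselves. Without a uniform modulus of continuity (which you do not have on a non-compact manifold), the sublevel set $f^{-1}(-\infty,1+\delta]$ can still accumulate at infinity along the faces: a sequence $x_n$ escaping every compact set with $d(x_n,\bigcup_i F_i)\to 0$ and $\tilde f(x_n)\leq 1+\delta$ is not excluded. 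Second, in the region where $\chi = 1$ you need $\tilde f + Nh > 1+\delta$, which requires a lower bound on $\tilde f$ relative to $h$; the glued extension $\tilde f$ is an infinite locally finite sum of alternating-sign terms and is not bounded below on a non-compact $M$ without further arrangement, so ``$N\chi h$ dominates for $N$ large'' is not justified either.

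The paper's proof is designed around exactly these issues. It fixes a metric and a Lipschitz retraction $r$ of a collar onto $\partial M$, and restricts attention to the open set $U' = \{x : d(f(x), f(r(x))) < \delta/2\}$, on which the extension is \emph{quantitatively} close to a boundary value; it then interpolates $G = \lambda f + \lambda'(1+2\delta)$ with a partition of unity subordinate to $\{U', M^\circ\}$. Outside $\overline{U'}$ one has $G = 1+2\delta > 1+\delta$ identically (no exhaustion function and hence no lower bound on $\tilde f$ is needed), and inside $U'$ any sublevel sequence is pushed via $r$ into a compact subset of $\partial M$ using the assumed properness of $f|_{\partial M}$. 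If you want to salvage your construction, you would need to replace the arbitrary cutoff neighbourhood by something playing the role of $U'$, and replace the additive exhaustion term by an interpolation with a constant above the threshold (or else prove a lower bound for $\tilde f$ in terms of $h$).
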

        \begin{proof}
            By induction, we may assume $M$ is equipped with a system of faces and we are given smooth maps $F \rightarrow \bR$ for all faces $F$, which agree on their overlaps. By using a partition of unity, it then suffices to construct the extension $f$ in the case when $M = \bR^{n-j} \times \bR_{\geq 0}^{j}$.\par
            
            The existence of some smooth extension can be given directly, for instance,  one can take
            $$f(x, y) := \sum\limits_{1 \leq k_1 < \ldots < k_l \leq j} (-1)^{1+l} f_{k_1}(x, \bar{y})$$
            where in each term, $\bar{y}$ is $y$ with the $k_1^{\mathrm{th}}, \ldots, k_l^\mathrm{th}$ entries set to 0. 
            
            We now establish the claim on properness. 
            Let $f: M \to \bR$ be a global extension of the $\{f_i\}$, so $f: \partial M \to \bR$ is proper over the given interval $(-\infty, 1+2\delta]$.  Pick a metric on $M$, with distance function $d$, and let 
            \[
            U = \{x \in M \, | \, d(x,\partial M) < \delta \} 
            \]
            which is an open neighbourhood of the boundary $\partial M$. We further pick a retraction
            \[
            r: Cl(U) \to \partial M
            \]
            with the property that $d(x,r(x)) < \delta'$ for $x\in Cl(U)$, where $\delta' > \delta$ is determined by the Lipschitz constant of the retraction $r$. We now let
            \[
            U \supset U' = \{ x\in U \, | \, d(f(x),f(rx)) < \delta/2\}
            \]
            which is a smaller open neighbourhood of $\partial M$. 
            Take a partition of unity $\lambda, \lambda'$ subordinate to the open cover $U', M^\circ$ of $M$, where $M^{\circ}$ denotes the interior (complement of all boundary faces). We consider the function
            \[
            G(x) = \lambda(x) f(x) + \lambda'(x) (1+2\delta)
            \]
            which also restricts to $f$ on $\partial M$. 
            For $[a,b] \subset G^{-1}(-\infty, 1+\delta])$ consider $G^{-1}[a,b]$. If $x \not\in Cl(U')$ then $\lambda'(x)=1$ and $\lambda(x)=0$ so $G(x) > 1+\delta$, so $G^{-1}[a,b] \subset U' \subset U$.

            Now suppose we have a sequence $x_n \in G^{-1}[a,b]$ which does not converge. The condition that $d(f(x_n),f(r(x_n))) < \delta/2$ shows that $r(x_n) \in f^{-1}[a-\delta/2, b+\delta/2]$. This is a compact subset of $\partial M$ by properness of $f|_{\partial M}$ over $(-\infty, 1+2\delta]$. Therefore $r(x_n)$ have some limit $w$. If $d(x_n,x_0) \to \infty$ then $d(x_n, r(x_n))$ will also diverge to infinity, since $r(x_n)$ converge in $\partial M$, a contradiction. It follows that eventually the $x_n$ lie in a closed ball inside $U'$ and hence have a convergent subsequence. This establishes properness of the extension $G$ over $(-\infty, 1+\delta]$.  If the $F_i$ are in addition bounded below then so is the constructed $G$.
            \end{proof}

            \begin{rmk}
                In the situation of the previous Lemma, if $1$ is a regular value of all the $F_i$, then we may construct the extension $f$ (proper over an interval and bounded below as appropriate) so that $1$ is still a regular value. 
            \end{rmk}
            
    \subsection{Framed functions}
        In this section, we consider smooth maps $f:M \to \bR$ equipped with some extra data, that will be useful when we consider stable framings.
        \begin{defn}
            Let $M$ be a (possibly non-compact) manifold with faces and a system of collars. A \emph{framed function}  is a pair $(f, s)$, where $f: M \to \bR$ is a smooth map with 1 as a regular value and $s: \bR \to TM|_{f^{-1}\{1\}}$ is a map of vector bundles over $f^{-1}\{1\}$, satisfying
            \begin{enumerate}
                \item $f$ is negative on compact faces of $M$.
                \item $f^{-1}(-\infty, 2]$ is compact.
                \item $df$ vanishes on each inwards-pointing normal vector along each codimension 1 face.
                \item Over each codimension 1 face $F$, $s$ lives inside this face, i.e. $s|_F$ factors through $TF \subseteq TM|_F$. 
                \item The composition 
                $$\bR \xrightarrow{s} TM|_{f^{-1}\{1\}} \xrightarrow{df} \bR$$
                is the identity.
            \end{enumerate}
        \end{defn}
        The final condition implies that a framed function determines an isomorphism of vector bundles, which we call $S=di+s: Tf^{-1}\{1\} \oplus \bR \to TM|_{f^{-1}\{1\}}$, where $i:f^{-1}\{1\} \to M$ is the inclusion. 
        \begin{prop}\label{prop: res of fram fn is fram}
            Let $M$ be a manifold with faces and a system of collars, and $(f, s)$ a framed function on $M$. Then the restriction of $(f, s)$ to a face $F$ of $M$ is a framed function.\par 
            Furthermore $f^{-1}\{1\}$ is a manifold with faces, with a system of faces given by the set of $f|_F^{-1}\{1\}$ for $F$ a non-compact face of $M$, and $f^{-1}\{1\}$ inherits a natural system of collars.\par 
            The isomorphisms $S$ are compatible with restriction to the boundary, meaning for each boundary face $F$ of $M$, there is a commutative diagram of isomorphisms of vector bundles over $\im \cC^F$:
            \[\xymatrix{
                Tf|_F^{-1}\{1\} \oplus \bR \oplus \bR\nu^F \ar[r]_{S} \ar[d] &
                TF \oplus \bR \nu^F \ar[d] \\
                Tf^{-1}\{1\} \oplus \bR \ar[r]_S &
                TM
            }
            \]
        \end{prop}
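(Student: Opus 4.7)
The plan is to verify the five framed-function axioms for $(f|_F, s|_F)$ directly, then apply the preimage theorem (Lemma \ref{preimage theorem}) to extract the manifold-with-faces structure on $f^{-1}\{1\}$, and finally read off its system of collars together with the compatibility of $S$.

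Since any face of $F$ is itself a face of $M$ (by the earlier faces-of-faces lemma), axioms (1) and (2) for $f|_F$ follow immediately: compact faces of $F$ are compact faces of $M$, so $f|_F$ is negative on them, and $f|_F^{-1}(-\infty, 2]$ is closed inside the compact $f^{-1}(-\infty, 2]$. For (3), a codimension-$1$ face $G$ of $F$ is a codimension-$(\Gamma(F)+1)$ face of $M$ obtained as $F \cap F'$ for a unique codimension-$1$ face $F'$ of $M$. The system-of-normals compatibility forces $\nu^{F'}|_{F' \cap F}$ to lie in $TF$, so that $\nu^{F'}|_G$ is an inward normal to $G$ inside $F$; condition (3) on $M$ then gives $df|_F(\nu^{F'}|_G) = 0$. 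Axiom (4) is inherited by iterating (4) on $M$ along the chain $G \subset F \subset M$, and (5) is immediate from restriction along $f|_F^{-1}\{1\} \hookrightarrow f^{-1}\{1\}$.

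For the second claim, the preceding verification applied inductively shows $1$ is a regular value of every $f|_F$, so Lemma \ref{preimage theorem} identifies $f^{-1}\{1\}$ as a manifold with faces whose connected faces are components of $f|_F^{-1}\{1\}$; axiom (1) on $f$ rules out compact $F$. To construct the collars on $f^{-1}\{1\}$, I observe that at any $p \in f|_F^{-1}\{1\}$, axiom (3) forces $\nu^F_p \in \ker df_p = T_p(f^{-1}\{1\})$, so the system of normals of $M$ is tangent to $f^{-1}\{1\}$ along the intersection. Extending $\nu^F$ to a vector field on a neighbourhood of $f|_F^{-1}\{1\}$ inside $f^{-1}\{1\}$ and flowing produces a collar of $f|_F^{-1}\{1\}$ in $f^{-1}\{1\}$; coherence on overlaps $F \cap F'$ is inherited from the coherence of the collars on $M$, and Lemma \ref{systems of collars exist} then assembles these into a system of collars (unique up to isotopy). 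The third claim is a diagram chase from $S = di + s$: both $di$ and $s$ respect the inclusion $F \hookrightarrow M$, and the splitting $TM|_F \cong TF \oplus \bR\nu^F$ supplied by the collar makes the square commute.

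The main subtlety lies in the collar construction: $df$ vanishes on $\nu^F$ only along $F$ itself, not throughout the collar, so the level set is not literally a product in the normal direction. However, the exact vanishing of $df(\nu^F)$ at points of $f|_F^{-1}\{1\}$ is precisely what is needed to conclude tangency of $\nu^F$ to the level set there, and that tangency is the only input required to run the standard flow argument and obtain the collar; global coherence on the various overlaps is then a formal consequence of Lemma \ref{systems of collars exist}.
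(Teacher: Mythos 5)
Your proof is correct and follows essentially the same route as the paper: verify the five axioms for $f|_F$ face by face (identifying a codimension-one face $G$ of $F$ as $F\cap\hat G$ for a boundary face $\hat G$ of $M$ and using conditions (3)–(4) along $\hat G$), then invoke Lemma \ref{preimage theorem} for the structure of $f^{-1}\{1\}$, with the diagram for $S$ following from the construction. Your extra discussion of the collar on $f^{-1}\{1\}$ — noting that $\nu^F$ is tangent to the level set only along $F$ and that Lemma \ref{systems of collars exist} supplies the coherent collars — is a legitimate filling-in of a point the paper's proof leaves implicit, not a different argument.
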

        \begin{proof}
        We first show that $(f,s)$ restricts to a face $F$ as a framed function. Since every compact face of $F$ is a compact face of $M$, the first two conditions in the definition are immediate. 
        Let $G\subset F$ be a codimension one face of $F$ with inwards-pointing normal $\nu$. Then there is some codimension one face $\hat{G} \subset M$ containing $F$ with inwards normal $\nu$. It follows that $d(f|_F)(\nu) = 0$, which establishes the third condition. The fourth condition similarly follows from the fact that any codimension one face $G\subset F$ is an intersection $F\cap \hat{G}$ of $F$ with another codimension one face $\hat{G}\subset M$, and that $s|_{\hat{G}}$ factors through $T\hat{G}$. The final condition $df \circ s = \id$ restricts to give the same identity and hence splitting on $TF|_{f^{-1}(1)}$.

        The second paragraph holds by Lemma \ref{preimage theorem}, and the diagram is then immediate from the construction.
        \end{proof}
        \begin{prop}\label{prop: ext fram fn}
            Let $M$ be a manifold with faces and a system of collars. Assume each boundary face $F$ of $M$ is equipped with a framed function $(f^F, s^F)$, so that these agree on the overlaps between different boundary faces.\par 
            Then there exists a framed function $(f, s)$ on $M$ restricting to $(f^F, s^F)$ over each boundary face $F$ of $M$.
        \end{prop}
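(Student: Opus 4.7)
My plan is to construct $f$ and then $s$ separately, handling their respective conditions in turn.

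\textbf{Construction of $f$.} Using the given system of collars $\cC$, on each boundary face $F$ I first pull back $f^F$ along the collar to define a function $\tilde f$ on a tubular neighborhood $\mathrm{im}(\cC^F) \cong F \times [0,\eps)$ by $\tilde f(\cC^F(x,t)) := f^F(x)$. The compatibility square in the definition of a system of collars, combined with the agreement of the $f^F$ on overlaps, ensures these local definitions patch to a smooth function on a neighborhood $U$ of $\partial M$. By construction $d\tilde f$ annihilates the inward-pointing normal $\nu^F$ along each codimension 1 face $F$, which gives condition (3) of the definition of a framed function. Now apply Lemma \ref{extending smooth maps} to extend $\tilde f$ (viewed as a smooth function defined on the faces of $M$) to a smooth function $f_0$ on all of $M$, proper over $(-\infty, 1+\delta]$, and bounded below. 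To preserve the normal-vanishing condition I then choose a cutoff $\rho: M \to [0,1]$ equal to $1$ on a smaller collar neighborhood $U' \subset U$ of $\partial M$ and supported in $U$, and set $f_1 := \rho \tilde f + (1-\rho) f_0$; on $U'$ this agrees with $\tilde f$ so conditions (1)--(3) hold, and properness of $f_1^{-1}(-\infty,2]$ is inherited from $f_0$.

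\textbf{Making $1$ regular.} By hypothesis $1$ is already a regular value of each $f^F$, hence of the restrictions of $f_1$ to $\partial M$. To make $1$ a regular value of $f_1$ on the interior as well, apply a Sard-type argument: the set of critical values of $f_1|_{M^\circ}$ has measure zero, so by a small compactly-supported perturbation of $f_1$ inside $M^\circ \setminus U'$ (which does not affect the boundary values, the normal condition, or properness) one obtains a smooth function $f$ for which $1$ is a regular value globally. By Lemma \ref{preimage theorem}, $f^{-1}\{1\}$ is then a manifold with faces whose codimension 1 faces are exactly the $F \cap f^{-1}\{1\}$ for $F$ a non-compact codimension 1 face of $M$, equipped with the induced system of collars.

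\textbf{Construction of $s$.} I need to extend the given $s^F: \bR \to TF|_{F \cap f^{-1}\{1\}}$ to a bundle map $s: \bR \to TM|_{f^{-1}\{1\}}$ satisfying $df \circ s = \id$ (condition (5)) and $s|_F$ factoring through $TF$ for each codimension 1 face $F$ (condition (4)). For each point $x \in f^{-1}\{1\}$, the admissible values of $s(x)$ form an affine subspace $A_x \subset T_xM$ cut out by $df_x(v)=1$ together with the linear inclusions $v \in T_xF'$ for all codimension 1 faces $F'$ of $M$ containing $x$; this is always non-empty since $df_x$ restricted to any such $T_xF'$ is still surjective onto $\bR$ (because $1$ is a regular value on each face). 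The subspaces $A_x$ define an affine ``bundle'' over $f^{-1}\{1\}$ (in the sense that locally on charts near corner strata the constraint is linear-affine with model $T$ of the appropriate face), and the given $s^F$ provide a partial section over the codimension 1 faces which agree on overlaps. Using the system of faces and collars on $f^{-1}\{1\}$, a partition-of-unity argument in the affine bundle extends this to a global section $s$. Convex combinations of admissible values remain admissible, so all the boundary compatibility conditions are preserved.

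\textbf{Main obstacle.} The step requiring the most care is the regular value issue for $f$: an arbitrary extension may have $1$ as a critical value in the interior, and the perturbation repairing this must be small enough and supported far enough from $\partial M$ to preserve the normal-vanishing condition, the negativity on compact faces, and the properness of $f^{-1}(-\infty,2]$ simultaneously. The section extension step, by contrast, is a routine partition-of-unity argument once the affine structure is identified.
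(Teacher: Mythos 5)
Your proposal is correct and follows essentially the same route as the paper's (much terser) proof: extend $f$ using Lemma \ref{extending smooth maps} together with a properness adjustment and a generic interior perturbation to make $1$ a regular value, then extend $s$ by observing that the admissible values form a non-empty convex (affine) set at each point and applying a partition of unity. Your additional details — the collar pullback plus cutoff to secure the normal-vanishing condition, and the verification that the affine constraint set is non-empty because $1$ is regular on each face — are exactly the points the paper leaves implicit.
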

        \begin{proof}
            We first choose an extension $f$ of each $f^F$, which we can do by Lemma \ref{extending smooth maps}; postcomposing with an appropriate automorphism of $\bR$ if necessary to make it proper over $f^{-1}(-\infty, 2]$. Generically perturbing if necessary we may assume $f$ has 1 as a regular value.\par 
            For fixed $f$, the space of bundle maps $s: \bR \to TM|_{f^{-1}\{1\}}$ satisfying the appropriate conditions is defined by a local condition, which is also a convex condition, so there exists an appropriate extension by a standard partition of unity argument.
        \end{proof}
    \subsection{Stable isomorphisms}
        In this section, we fix some conventions regarding stable vector bundles and isomorphism classes between them; in particular we do not work up to homotopy here. We implicitly assume all vector spaces and bundles we encounter are equipped with fibrewise inner products, compatible with restriction and direct sum decompositions; such choices always exist.
        \begin{defn}
            Let $E=E^+-E^-$ and $F=F^+-F^-$ be stable vector bundles over a compact base $B$ (so $E^\pm, F^\pm$ are both actual vector bundles). A \emph{stable isomorphism} between $E$ and $F$ is a pair $(S, \psi)$, where $S \to B$ is a vector bundle, and $\psi$ is an isomorphism of vector bundles
            $$\psi: E^+ \oplus F^- \oplus S \to F^+ \oplus E^-\oplus S$$
            We often write this as
            $$\psi: E \to F$$
            We call $S$ the \emph{stabilising bundle}.\par 
            Given a stable isomorphism $(S, \psi)$ and an embedding $\iota: S \hookrightarrow T$, we obtain another stable isomorphism $(T, \psi')$, by setting $\psi'$ to be the identity on the orthogonal complement of $S$ inside $T$, and to be $\psi$ on the rest; we call the resulting stable isomorphism the \emph{extension} of $(S, \psi)$ \emph{to} $T$ or \emph{along} $\iota$.\par 
            Given stable isomorphisms $(S, \psi): E \to F$ and $(T, \phi): F \to G$, there is a stable isomorphism $(S \oplus F^- \oplus T, \theta)$, where $\theta$ is given by $\phi$ composed with $\psi$, acting on the appropriate summands. Composition of stable isomorphisms is then associative, though see Remark \ref{rmk:isbell}.
        \end{defn}
        \begin{rmk}\label{rmk:isbell}
            In the usual model for the category of vector spaces it is not true that the direct sum is strictly associative (meaning $A \oplus (B \oplus C)$ and $(A \oplus B) \oplus C$ aren't equal, they are instead related by a canonical associator isomorphism). There are two ways around this issue; we briefly sketch them both here, and the reader may choose which to apply. 
            \begin{enumerate}
                \item Following \cite{Isbell} (see also \cite[Section 2.8]{Lawson-Lipshitz-Sarkar}), one may work with a category equivalent to the category of vector spaces which is equipped with a strictly associative direct sum operation.
                \item One may keep track of all these canonical associator isomorphisms; in many cases when we compare two stable isomorphisms, we do not require that they are equal on the nose, but instead require that they agree after applying appropriate associator isomorphisms.
            \end{enumerate}
        \end{rmk}
        We may apply the same definition to stable vector spaces, which we think of as stable vector bundles over a point.
        \begin{rmk}\label{rmk:fram sgn}
           Given stable isomorphisms $(S, \psi): E \oplus V \to F$ and $(T, \phi): F \oplus W \to G$, there are now two natural stable isomorphisms $E \oplus V \oplus W \to G$.\par 
           The first is the following composition of stable isomorphisms:
           $$E \oplus V \oplus W \xrightarrow{\psi} F \oplus W \xrightarrow{\phi} G$$
           constructed as above, and in particular has stabilising bundle $(S \oplus F^- \oplus T)$.\par  
           The second is the vector bundle isomorphism $\psi \oplus \phi$ with stabilising bundle $S \oplus F^+ \oplus F^- \oplus T$.\par
           When working with flow categories we will use the first construction, but when equipping the product of stably framed manifolds with a stable framing (as is needed to equip the framed bordism groups with the structure of a ring) we use the second.\par 
           If we fix stable isomorphisms from each of $E,F,G,V,W$ with $\bR^i$ for some $i$, these two constructions give the same homotopy class of stable framing up to the usual Koszul sign which comes from swapping factors of $\bR$ past each other.
        \end{rmk}
\section{A category of flow categories}\label{sec:flow cat}
    \subsection{Flow categories}\label{sec:unor flow cat}
        \begin{defn}
            A \emph{flow category} $\cF$ consists of a finite set $ob \cF$ (often abbreviated just to $\cF$), a function (called the \emph{grading}) $|\cdot |: \cF \rightarrow \bZ$, and compact smooth manifolds with faces $\cF_{xy}$ of dimension $|x| - |y| - 1$ for all $x, y \in \cF$. There are also \emph{concatenation maps} 
            $$c = c^\cF: \cF_{xz} \times \cF_{zy} \hookrightarrow \cF_{xz}$$
            which are the inclusion of the boundary face, such that the collection of these (allowing $z$ to vary) forms a system of boundary faces for $\cF_{xy}$. We further assume that the concatenation maps are associative, meaning the following diagram commutes:
            \[
                \xymatrix{
                    \cF_{xz} \times \cF_{zw} \times \cF_{wy} \ar[rr]^{c} \ar[d]_{ c}  && \cF_{xz} \times \cF_{zy} \ar[d]^{c} \\
                    \cF_{xw} \times \cF_{wy} \ar[rr]_{c} && \cF_{xy}
               }
            \]
            We will often write $\cF_{x_0 \ldots x_i}$ as shorthand for $\cF_{x_0 x_1} \times \ldots \times \cF_{x_{i-1} x_i}$. These are faces of $\cF_{x_0 x_i}$, and any face of $\cF_{x_0 x_i}$ is a union of components of these. In other words, there is a system of faces for $\cF_{xy}$ given by
            $$\cF_{x_0 \ldots x_i}$$
            for $x_0 = x, \ldots, x_i = y$ in $\cF$; similarly each $\cF_{xx''}$ has a system of boundary faces given by the $\cF_{xx'x''}$ as $x'$ varies over $\cF$.\par 
            A \emph{system of collars} on a flow category $\cF$ consists of a system of collars $\cC_{xy}$ on all $\cF_{xy}$, restricting to the product system of collars on $\cF_{xz} \times \cF_{zy}$ induced by Lemma \ref{products and collars} from $\cC_{xz}$ and $\cC_{zy}$.\par 
            A \emph{collared flow category} is a flow category $\cF$ equipped with a system of collars.
        \end{defn}
        Note that in particular, a flow category forms a (non-unital) topological category.\par 
        It follows from Lemmas \ref{products and collars} and \ref{systems of collars exist} that
        \begin{lem}
            Any flow category admits a system of collars, which is unique up to isotopy.
        \end{lem}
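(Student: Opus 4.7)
My plan is to prove both existence and uniqueness by induction on the dimension $d = |x|-|y|$ of the morphism spaces. The base cases $d \le 1$ are trivial: the spaces $\cF_{xy}$ are either empty or closed $0$-dimensional manifolds with no boundary, so there is a unique (empty) system of collars. For the inductive step, I assume that for all pairs $(x',y')$ with $|x'|-|y'| < d$, the spaces $\cF_{x'y'}$ have been equipped with systems of collars, and that any two such systems differ by a canonical isotopy (in particular that on each lower-dimensional $\cF_{x'y'}$, restricting to the product collar on each concatenation face).

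Given a pair $(x,y)$ with $|x|-|y|=d$, the codimension-one faces of $\cF_{xy}$ are the concatenation faces $\cF_{xz}\times\cF_{zy}$ for intermediate $z$. By the inductive hypothesis each of $\cF_{xz}$ and $\cF_{zy}$ carries a system of collars, and Lemma \ref{products and collars} equips $\cF_{xz}\times\cF_{zy}$ with a product system of collars. The key compatibility check is that on the codimension-two overlap of two such boundary faces $\cF_{xz}\times\cF_{zy}$ and $\cF_{xw}\times\cF_{wy}$, which by associativity of concatenation is a face of the form $\cF_{xz}\times\cF_{zw}\times\cF_{wy}$ (when the ordering is compatible) or an empty intersection otherwise, the two product collars agree. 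This follows because both restrict to the triple-product collar on $\cF_{xz}\times\cF_{zw}\times\cF_{wy}$ induced iteratively from Lemma \ref{products and collars}, and the inductive hypothesis guarantees the relevant lower-dimensional systems of collars restrict compatibly. With this verification in hand, I apply Lemma \ref{systems of collars exist} to the manifold with faces $\cF_{xy}$, whose codimension-one faces are now equipped with mutually compatible systems of collars; this lemma produces the desired collar $\cC_{xy}$ extending them.

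For uniqueness, I run the same induction. Suppose $\cC$ and $\cC'$ are two systems of collars on $\cF$. By the inductive hypothesis, on all lower-dimensional $\cF_{x'y'}$ they are related by an isotopy through systems of collars. Taking products via Lemma \ref{products and collars} converts these into isotopies between the induced collars on each concatenation face $\cF_{xz}\times\cF_{zy}\subset\cF_{xy}$, and these isotopies are compatible on the codimension-two overlaps for the same reason as in the existence argument. The uniqueness-up-to-isotopy clause of Lemma \ref{systems of collars exist} then produces an ambient isotopy of $\cF_{xy}$ between $\cC_{xy}$ and $\cC'_{xy}$ extending the given one on the boundary, completing the inductive step.

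The main obstacle I anticipate is purely bookkeeping: verifying that at each inductive stage the boundary collars really are compatible on their overlaps. This reduces to checking that the two different iterated product collars on a triple concatenation face $\cF_{xz}\times\cF_{zw}\times\cF_{wy}$, obtained by first combining two adjacent factors and then the third, coincide. This is immediate from the componentwise definition of the product system of collars in Lemma \ref{products and collars}, but one must be careful to perform the induction on $d$ simultaneously with the induction on codimension inside Lemma \ref{systems of collars exist}, so that at every stage the compatibility conditions have already been established for strictly smaller data.
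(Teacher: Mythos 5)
Your proof is correct and is exactly the argument the paper intends: the paper gives no explicit proof, simply asserting that the lemma "follows from" the product-collar lemma and the collar extension/uniqueness lemma, and your induction on $|x|-|y|$ with the codimension-two compatibility check on triple concatenation faces is the standard way to fill in that assertion.
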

        Given this, we will often choose systems of collars on flow categories and denote them $\cC$, without explicitly stating that we do so, and we will generally omit the word collared.
        \begin{ex}\label{ex:mor}
            Given a Morse function $f$ on a closed manifold $M$ and a suitably generic metric $g$, Wehrheim \cite{Wehrheim} constructs a flow category $\overline\cM^f$ with objects critical points of $f$, gradings given by the Morse indices, and $\overline\cM^f_{xy}$ given by compactified moduli spaces of gradient flow lines between any two given critical points.
        \end{ex}
        \begin{ex}
            Let $M$ be a closed manifold of dimension $n$. There is a flow category $\cF$ with two objects $x$ and $y$, with $|x| = n+1$, $|y| = 0$, $\cF_{xy} = M$ and no other data.
        \end{ex}
        \begin{example}
            Let $\cF$ be a flow category with three objects $x,y,z$, with $|x|>|y|>|z|$. Let $m=|x|-|y|$ and $n=|y|-|z|$. Then the manifold data of $\cF$ consists of a closed $(m-1)$-manifold $\cF_{xy}$, a closed $(n-1)$-manifold $\cF_{yz}$, and a nullbordism $\cF_{xz}$ of the product of these closed manifolds.
        \end{example}
        \begin{defn}
            
            $*$ is the flow category with one object (which we also call $*$), graded in degree 0. This admits a canonical system of collars.\par 
            For a flow category $\cF$, we write $\cF[i]$ for the flow category which is the same as $\cF$, except all the gradings are increased by $i$. Explicitly, this means $|\cdot|^{\cF[i]} = |\cdot|^\cF + i$.\par 
            A \emph{morphism} of flow categories $\mathcal{W}: \mathcal{F} \rightarrow \mathcal{G}$ consists of compact smooth manifolds with faces $\mathcal{W}_{xy}$ of dimension $|x|-|y|$ for $x$ in $\mathcal{F}$ and $y$ in $\mathcal{G}$, along with maps
            $$c = c^\cW: \cF_{xz} \times \cW_{zy} \rightarrow \cW_{xy}$$
            and
            $$c = c^\cW: \cW_{xz} \times \cG_{zy} \rightarrow \cW_{xy}$$
            which are suitably associative and compatible with the $c^\cF$ and $c^\cG$ (in the sense that the following diagrams commute:
            \[\xymatrix{
                \cF_{xx'} \x \cF_{x'x''} \x \cW_{x''y} \ar[r]_-{c^\cW} \ar[d]_{c^\cF}  &
                \cF_{xx'} \x \cW_{x'y}\ar[d]_{c^\cW} \\
                \cF_{xx''} \x \cW_{x''y} \ar[r]_-{c^\cW} &
                \cW_{xy}
            }
            \]
            \[\xymatrix{
                \cF_{xx'} \x \cW_{x'y'} \x \cG_{y'y} \ar[r]_-{c^\cW} \ar[d]_{c^\cW} &
                \cF_{xx'} \x \cW_{x'y} \ar[d]_{c^\cW}\\
                \cW_{xy'} \x \cG_{y'y} \ar[r]_-{c^\cW} &
                \cW_{xy}
            }
            \]
            \[\xymatrix{
                \cW_{xy''} \x \cG_{y''y'} \x \cG_{y'y} \ar[r]_-{c^\cG} \ar[d]_{c^\cW} &
                \cW_{xy''}\x \cG_{y''y}\ar[d]_{c^\cW} \\
                \cW_{xy'} \x\cG_{y'y} \ar[r]_-{c^\cW} &
                \cW_{xy}
            }
            \]
            which define a system of faces
            $$\cF_{x_0 \ldots x_i} \times \cW_{x_i y_0} \times \cG_{y_0 \ldots y_j}$$
            for $x = 0, \ldots x_i$ in $\cF$ and $y_0, \ldots, y_j = y$ in $\cG$.\par

            A \emph{bordism} $\mathcal{R}$ between two such morphisms $\mathcal{W}$ and $\mathcal{V}$ consists of compact smooth manifolds with faces $\mathcal{R}_{xy}$ (of dimension $|x|-|y|$ + 1) for $x$ in $\mathcal{F}$ and $y$ in $\mathcal{G}$, along with maps
            $$c=c^\cR: \cW_{xy}, \cV_{xy} \rightarrow \cR_{xy}$$
            and 
            $$c = c^\cR: \cF_{xz} \times \cR_{zy} \rightarrow \cR_{xy}$$
            and
            $$c = c^\cR: \cR_{xz} \times \cG_{zy} \rightarrow \cR_{xy}$$
            compatible with $c^\cF$, $c^\cG$, $c^\cW$, $c^\cV$ (meaning that similar diagrams to those above over each codimension two face commute), such that the images of the faces in the domains defines a system of faces for $\cR_{xy}$ (after deleting repeats). In other words, there is a system of faces for $\cR_{xy}$ given by the following:
            $$\cW_{x_0 \ldots x_i; y_0 \ldots y_j}$$
            and
            $$\cV_{x_0 \ldots x_i; y_0 \ldots y_j}$$
            and
            $$\cR_{x_0 \ldots x_i; y_0 \ldots y_j}$$
            for $x_0=x, \ldots, x_i$ in $\cF$ and $y_0, \ldots, y_j$ in $\cG$. We write $\cW_{x_0 \ldots x_i; y_0 \ldots y_j}$ as shorthand for $\cF_{x_0 \ldots x_i} \times \cW_{x_i y_0} \times \cG_{y_0 \ldots y_j}$, and similarly for $\cV$, $\cR$ and any similar construction we encounter later. \par
            A \emph{system of collars} on a morphism $\cW$ is a system of faces on $\cF$, $\cG$ and all $\cW_{xy}$, which are compatible with the product systems of collars on all faces. A \emph{system of collars} on a bordism $\cR$ between $\cW$ and $\cV$ is a system of collars on $\cF$, $\cG$, $\cW$, $\cV$ and all $\cR_{xy}$, which are compatible with the product system of collars on all faces.
        \end{defn}
        \begin{defn}
            A morphism from $\ast[i]$ to $\cF$ is called a (right) \emph{flow module} over $\cF$ of \emph{degree} $i$. Similarly, a morphism from $\cF$ to $\ast[j]$ is a \emph{left flow module} of degree $j$.
        \end{defn}
        \begin{rmk}
            Right (respectively left) flow modules over $\cF$ are in particular right (left) modules over $\cF$ (viewed as a (non-unital) topological category). Similarly a morphism $\cW: \cF \to \cG$ in particular forms a bimodule over these two topological categories. Similarly a bilinear map (see Section \ref{sec:bilin}) of flow categories forms a multimodule over the relevant flow categories.
        \end{rmk}
        It follows from Lemma \ref{systems of collars exist} that 
        \begin{lem}
            Every morphism $\cW: \cF \rightarrow \cG$ of flow categories admits a system of collars, and if $\cF$ and/or $\cG$ were already equipped with systems of collars, this can be chosen to be compatible with those.\par 
            Every bordism $\cR$ between morphisms $\cW, \cV: \cF \rightarrow \cG$ admits a system of collars, and if some of $\cF$, $\cG$, $\cW$ and $\cV$ are already equipped with systems of collars (compatibly with each other), this can be chosen to be compatible with these.\par 
            These are all unique up to isotopy.
        \end{lem}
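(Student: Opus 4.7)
The plan is to reduce the lemma to Lemma \ref{systems of collars exist} via an induction on the grading difference, handling all three statements (morphisms, bordisms, uniqueness) in parallel.

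First, for the existence statement for a morphism $\cW: \cF \to \cG$, I would induct on the quantity $|x|-|y|$ (for pairs relevant to the three objects one is equipping). The base case is when this difference is $0$, where $\cW_{xy}$ is a closed $0$-manifold and carries a trivial system of collars. For the inductive step, suppose systems of collars have been chosen on all $\cF_{x'x''}$, $\cG_{y'y''}$, and $\cW_{x'y'}$ of strictly smaller grading difference than $|x|-|y|$, compatibly with the concatenation maps. The boundary faces of $\cW_{xy}$ are, by hypothesis, the images of the concatenation maps $\cF_{xz}\times \cW_{zy} \hookrightarrow \cW_{xy}$ and $\cW_{xz}\times \cG_{zy}\hookrightarrow \cW_{xy}$; each of these carries a product system of collars from Lemma \ref{products and collars}. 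I need to check these agree on the overlaps, which are the codimension-two faces $\cF_{xz}\times \cF_{zz'}\times \cW_{z'y}$, $\cF_{xz}\times \cW_{zy'}\times\cG_{y'y}$, and $\cW_{xz}\times \cG_{zz'}\times \cG_{z'y}$. In each case compatibility is exactly the associativity of the concatenation maps (together with associativity of the product of systems of collars, which is manifest from Lemma \ref{products and collars}). Then Lemma \ref{systems of collars exist} produces a system of collars on $\cW_{xy}$ extending all of these, completing the induction.

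For a bordism $\cR: \cW \Rightarrow \cV$, the induction runs in parallel, at each stage extending over $\cR_{xy}$ given its system of boundary faces $\cW_{x_0\ldots x_i;y_0\ldots y_j}$, $\cV_{x_0\ldots x_i;y_0\ldots y_j}$, and $\cR_{x_0\ldots x_i;y_0\ldots y_j}$. The overlap compatibilities again all reduce to the associativity diagrams in the definition of a bordism, plus the product construction of collars. Once the inductive hypothesis furnishes compatible systems on all these faces, Lemma \ref{systems of collars exist} again produces an extension.

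For uniqueness up to isotopy, the same induction applies: given two systems of collars $\cC, \cC'$ on $\cW$ (or $\cR$) agreeing on the input structures, one inductively builds isotopies between them on each $\cW_{xy}$ by applying the uniqueness (isotopy) clause of Lemma \ref{systems of collars exist} relative to the compatible isotopies already produced on all boundary faces by the inductive hypothesis; here the compatibility required on overlaps is, once more, a consequence of associativity together with the product-of-isotopies construction.

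The genuinely substantive step, and the main place where care is required, is verifying the overlap compatibilities at each inductive stage. This is not difficult conceptually, but the combinatorics for bordisms are heavier because three families of faces ($\cW$-type, $\cV$-type, and $\cR$-type) must simultaneously glue along their triple intersections; each instance is handled by reading off the appropriate associativity diagram, but one must be careful to use Lemma \ref{systems of collars exist} in its relative form (existence and uniqueness of systems of collars extending specified systems on a collection of faces that already agree on overlaps) rather than its absolute form, and to invoke Lemma \ref{products and collars} to identify the inherited collars on products with the product collars. Everything else is routine bookkeeping.
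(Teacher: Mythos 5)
Your proof is correct and takes essentially the same route as the paper, which simply asserts that the lemma follows from Lemma \ref{systems of collars exist} (together with Lemma \ref{products and collars}); your induction on the grading difference, with overlap compatibilities checked via associativity of the concatenation maps, is exactly the intended elaboration of that citation.
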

        Given this, we will often choose systems of collars on morphisms or homotopies and denote them $\cC$, without explicitly stating that we do so. If we have already chosen systems of collars on the domain or target flow categories or morphisms, we choose these systems of collars to extend them. We will similarly implicitly do this on future manifolds we counter.
        \begin{ex}
            Setting $\cW_{xy} = \emptyset$ for all $x \in \cF$, $y \in \cG$ defines a map of flow categories $\cF \rightarrow \cG$, which we call the \emph{empty morphism}.\par 
            If $\cW, \cV: \cF \rightarrow \cG$ are morphisms, setting
            $$\left(\cW \sqcup \cV\right)_{xy} := \cW_{xy} \sqcup \cV_{xy}$$
            defines a morphism $\cW \sqcup \cV: \cF \rightarrow \cG$.
        \end{ex}
        \begin{ex}
            Let $\cW: \cF \rightarrow \cG$ be a morphism of flow categories. There is a flow category $\Cone(\cW)$ with objects $\cF[1] \sqcup \cG$, with morphisms given by
            $$\Cone(\cW)_{xy} := \begin{cases}
                \cF_{xy} & \,\mathrm{if}\,x,y\in\cF\\
                \cG_{xy} & \,\mathrm{if}\,x,y\in\cG\\
                \cW_{xy} & \,\mathrm{if}\,x\in\cF,\,y\in\cG\\
                \emptyset & \,\mathrm{if}\,x\in\cG,\,y\in\cF
            \end{cases}$$
            Then $\Cone(\cW)$ contains $\cF[1]$ and $\cG$ as full subcategories.
        \end{ex}
        \begin{defn}\label{flow morphism def}
            Let $\mathcal{F}$ and $\mathcal{G}$ be flow categories. Define $[\cF, \cG]$ to be the set of flow morphisms $\cF \rightarrow \cG$, modulo the equivalence relation generated by
            \begin{enumerate}
                \item $\cW \sim \cV$ if $\cW$ and $\cV$ are bordant.
                \item $\cW \sim \cV$ if $\cW$ and $\cV$ are diffeomorphic (meaning there are diffeomorphisms $\cW_{xy} \cong \cV_{xy}$ for all $x$ in $\cF$ and $y$ in $\cG$, compatible with all concatenation maps).
            \end{enumerate}
            This is a commutative monoid under disjoint union, with unit given by the empty morphism.
        \end{defn}
        \begin{lem}\label{lem:unor inv}
            $[\cF, \cG]$ contains inverses, and hence is a group.
        \end{lem}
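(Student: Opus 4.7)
The plan is to show that every element of $[\cF, \cG]$ is its own inverse, i.e.\ that for every morphism $\cW: \cF \to \cG$ of flow categories the disjoint union $\cW \sqcup \cW$ is bordant to the empty morphism. The monoid $[\cF, \cG]$ is then a group (of exponent $2$), since inverses exist and disjoint union is commutative with identity the empty morphism.

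The construction is the familiar cylinder argument from unoriented bordism, adapted to the face structure. For each pair $x \in \cF$, $y \in \cG$ I set
$$\cR_{xy} := \cW_{xy} \times [0,1].$$
Since $\cW_{xy}$ is a compact manifold with faces of dimension $|x|-|y|$, the product $\cR_{xy}$ is a compact manifold with faces of dimension $|x|-|y|+1$, as required. A system of boundary faces for $\cR_{xy}$ is built from (i) the two ``caps'' $\cW_{xy} \times \{0\}$ and $\cW_{xy} \times \{1\}$, which together assemble to $(\cW \sqcup \cW)_{xy}$, and (ii) the ``lateral'' faces $\cF_{xz} \times \cW_{zy} \times [0,1]$ and $\cW_{xz} \times \cG_{zy} \times [0,1]$, obtained by taking the product of each codimension-one face of $\cW_{xy}$ (coming from the concatenation maps $c^\cW$) with the interval $[0,1]$. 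Lemma on products of manifolds with faces ensures this is indeed a manifold with faces with the expected system of faces, and the inclusions are boundary face embeddings.

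Next I define the structural maps of a bordism. The map $c^\cR: (\cW \sqcup \cW)_{xy} \to \cR_{xy}$ is the obvious inclusion at $\{0,1\} \subset [0,1]$; the map $c^\cR: \cF_{xz} \times \cR_{zy} \to \cR_{xy}$ is $c^\cW \times \id_{[0,1]}$; and $c^\cR: \cR_{xz} \times \cG_{zy} \to \cR_{xy}$ is the composition of the factor-swap $\cW_{xz} \times [0,1] \times \cG_{zy} \cong \cW_{xz} \times \cG_{zy} \times [0,1]$ with $c^\cW \times \id_{[0,1]}$. All the compatibility diagrams required of a bordism reduce, after cancelling the free $[0,1]$-factor, to the corresponding associativity and compatibility diagrams for $c^\cW$, $c^\cF$, $c^\cG$, which hold by assumption. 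The other morphism $\cV$ in the definition is taken to be the empty morphism, so the conditions involving $\cV$ are vacuous.

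The main (and only) technical step is the bookkeeping of (ii) above: one must check that the union of the caps and lateral faces really exhausts the codimension-one corner stratum of $\cR_{xy}$ and meets the faces of $(\cW \sqcup \cW)_{xy}$ correctly, which amounts to the product-of-manifolds-with-faces computation together with the fact that $\cW_{xy}$ already has a system of faces indexed by the composable strings $\cF_{x_0 \ldots x_i} \times \cW_{x_i y_0} \times \cG_{y_0 \ldots y_j}$. Since both factors in $\cW_{xy} \times [0,1]$ are manifolds with faces, their product inherits one, and the faces meet along the expected intersections. This produces a bordism $\cR: \cW \sqcup \cW \rightsquigarrow \emptyset$, showing $[\cW] + [\cW] = 0$ in $[\cF, \cG]$ and completing the proof.
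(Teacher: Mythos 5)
Your proposal is correct and is essentially identical to the paper's proof, which also takes $\cR_{xy} = \cW_{xy}\times[0,1]$ as a nullbordism of $\cW \sqcup \cW$; you have simply spelled out the face-structure bookkeeping that the paper leaves implicit.
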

        \begin{proof}
            Let $\cW:\cF \to \cG$ be a morphism. Then letting $\cR_{xy}=\cW_{xy}\times[0,1]$ for $x$ in $\cF$ and $y$ in $\cG$ provides a nullbordism of $\cW \sqcup \cW$; this implies that $\cW$ is its own inverse.
        \end{proof}
        \begin{example}\label{ex:pt unor}
                A morphism $*[i] \to *$ consists of a closed $i$-manifold; a bordism between such morphisms consists of a  bordism between the relevant closed manifolds in the usual sense. From this, we see that
                \[
                [*[i],*] = \Omega_i(\ast) = \Omega_i
                \]
                reconstructs the usual (unoriented) bordism groups of a point $\ast$.
            \end{example}
        Given morphisms of flow categories $\mathcal{W}: \mathcal{F} \rightarrow \mathcal{G}$ and $\mathcal{V}: \mathcal{G} \rightarrow \mathcal{H}$, we construct their composition as follows.\par 
        We first define (non-compact) manifolds with faces $\tilde{\cQ}_{xz}$ for all $x \in \cF$, $z \in \cH$, by
        \begin{equation}\label{tilde Q}
            \tilde{\cQ}_{xz} = \tilde{\cQ}_{xz}(\cW, \cV) := \left(\bigsqcup_{y \in \cG} \cW_{xy} \times \cV_{yz} \times [0, \varepsilon) \right) / \sim
        \end{equation}
        where $\sim$ is the equivalence relation such that for all $a \in \cW_{xy}$, $b \in \cG_{yy'}$, $c \in \cV_{y'z}$, $(u,v) \in [0, \varepsilon)^2_{u,v}$, 
        \begin{equation}\label{equiv}
        (a, \cC(b, c, u), v) \sim (\cC(a, b, v), c, u)
        \end{equation}
        As a topological space, we can write this as the coequaliser of the two maps above:
        $$\bigsqcup_{y,y' \in \cG} \cW_{xy} \times \cG_{yy'} \times \cV_{yz} \times [0, \varepsilon)^2 \rightrightarrows \bigsqcup_{y \in \cG} \cW_{xy} \times \cV_{yz} \times [0, \varepsilon)$$
        By Lemma \ref{lem:abstract gluing}, $\tilde{\cQ}_{xz}$ is a manifold with corners. There are natural embeddings into it from the following manifolds with faces.
        $$ \cW_{x_0 \ldots x_i; y_0} \times \cG_{y_0 \ldots y_j} \times \cV_{y_j; z_0 \ldots z_k} \times \{0\}$$
        and 
        $$ \cF_{x_0 \ldots x_i} \times \tilde{\cQ}_{x_i z_0} \times \cH_{z_0 \ldots z_j}$$
        for $x_0 = x, \ldots, x_i$ in $\cF$, $y_0, \ldots, y_j$ in $\cG$ and $z_0, \ldots, z_k = z$ in $\cH$. 
        \begin{lem}
            These form a system of faces for $\tilde{\cQ}_{xz}$. In particular, it is a manifold with faces.
        \end{lem}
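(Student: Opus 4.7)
The plan is to recognise $\tilde{\cQ}_{xz}$ as an instance of the construction of Lemma~\ref{lem:abstract gluing with faces} and read off the claimed system of faces from that lemma.

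I would first set up the abstract gluing with indexing sets $I := \cG$ and $J := (\cF\setminus\{x\}) \sqcup (\cH\setminus\{z\})$, taking the building blocks to be $M_y := \cW_{xy}\times \cV_{yz}$ for $y\in I$ and, for each ordered pair $(y,y')\in I\times I$ with $y\neq y'$, $N_{(y,y')}:=\cW_{xy}\times\cG_{yy'}\times\cV_{y'z}$ (with the convention that this is empty unless $|y|>|y'|$). The embedding $\alpha_{(y,y')}: N_{(y,y')}\hookrightarrow M_y$ is $(a,b,c)\mapsto (a, c^{\cV}(b,c))$ and $\beta_{(y,y')}: N_{(y,y')}\hookrightarrow M_{y'}$ is $(a,b,c)\mapsto (c^{\cW}(a,b),c)$; these land in distinct boundary faces of their respective targets, so the no-repeat hypothesis of Lemma~\ref{lem:abstract gluing} holds. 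Since $M_y$ is a product of manifolds with faces, its codimension-one faces split into $\cG$-factorisation faces (indexed by $y'\in I\setminus\{y\}$, with half of these empty) and faces arising from $\cF$-factorisations in $\cW_{xy}$ (indexed by $\cF\setminus\{x\}$) and $\cH$-factorisations in $\cV_{yz}$ (indexed by $\cH\setminus\{z\}$), yielding the required bijection $\lambda_y : I\setminus\{y\}\sqcup J \to D(M_y)$. Compatibility of these bijections with $\alpha_{(y,y')},\beta_{(y,y')}$ is immediate from the associativity of concatenation in $\cF,\cG,\cH,\cW,\cV$, and a direct inspection shows that the relation~\eqref{equiv} defining $\tilde{\cQ}_{xz}$ is exactly the gluing relation of Lemma~\ref{lem:abstract gluing} for this data.

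With this setup in place, Lemma~\ref{lem:abstract gluing with faces} (applied with $K=\emptyset$) produces a system of faces for $\tilde{\cQ}_{xz}$ given by the $Z^{K'}$ for $K'\subseteq J$ together with the $M_{I'}^{K'}\times\{0\}$ for $I'\subseteq I$, $K'\subseteq J$. Ordering the elements of $I'=\{y_0,\ldots,y_j\}$ by decreasing grading, the iterated intersection $M_{I'}$ unfolds via the concatenation maps of $\cW$ and $\cV$ into $\cW_{xy_0}\times\cG_{y_0\ldots y_j}\times\cV_{y_j z}$; imposing the further constraints labelled by a decomposition $K'=A\sqcup B$ with $A\subseteq\cF\setminus\{x\}$ and $B\subseteq\cH\setminus\{z\}$ (each ordered by grading, giving sequences $x=x_0,x_1,\ldots,x_i$ and $z_0,\ldots,z_k=z$) multiplies in the factors $\cF_{x_0\ldots x_i}$ and $\cH_{z_0\ldots z_k}$. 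This reproduces exactly the claimed compact faces $\cW_{x_0\ldots x_i;y_0}\times\cG_{y_0\ldots y_j}\times\cV_{y_j;z_0\ldots z_k}\times\{0\}$. Similarly, $Z^{K'}$ for such a $K'=A\sqcup B$ is itself the coequaliser over $y\in\cG$ of the data $\cW_{x_i y}\times\cV_{y z_0}$, which is by definition $\tilde{\cQ}_{x_i z_0}$, so $Z^{K'}=\cF_{x_0\ldots x_i}\times\tilde{\cQ}_{x_i z_0}\times\cH_{z_0\ldots z_k}$ as claimed.

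The main obstacle in this argument is purely combinatorial bookkeeping: one needs to verify that the canonical ordering by decreasing grading of a subset $A\subseteq\cF\setminus\{x\}$ really does give a composable $\cF$-chain whose image equals (rather than merely contains) the intersection of the corresponding boundary faces, and that the product collar structures on the various factors assemble coherently under iterated application of the associativity diagrams for $c^\cF,c^\cW,c^\cG,c^\cV,c^\cH$. No essential difficulty arises beyond this; once the indexing correspondence is matched, both conclusions (that $\tilde{\cQ}_{xz}$ is a manifold with faces, and that the listed collection is a system of faces) are immediate consequences of Lemma~\ref{lem:abstract gluing with faces}.
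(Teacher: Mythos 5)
Your proposal is correct and follows essentially the same route as the paper: the paper's proof likewise invokes Lemmas \ref{lem:abstract gluing} and \ref{lem:abstract gluing with faces} with $I = ob\,\cG$, $J = ob\,\cF \sqcup ob\,\cH$ (the inclusion or exclusion of $x$ and $z$ from $J$ is immaterial since those entries index empty faces), $M_y = \cW_{xy}\times\cV_{yz}$, $N_{yy'} = \cW_{xy}\times\cG_{yy'}\times\cV_{y'z}$, and the same concatenation maps and indexing bijections $\lambda$. The only difference is that you spell out the unfolding of the resulting faces in more detail than the paper does.
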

        \begin{proof}
            This follows from Lemmas \ref{lem:abstract gluing} and \ref{lem:abstract gluing with faces}; we spell this out in this case. We use $I=ob \cG$, $J = ob\cF \sqcup ob\cH$, set $M_z = \cW_{xz} \times \cV_{yz}$, $N_{yy'} = \cW_{xy} \times \cG_{yy'} \times \cV_{y'z}$, $i_{yy'} := y$ and $j_{yy'} := y'$. We take each $\alpha_{yy'}, \beta_{yy'}$ to be given by $c$.\par 
            For $y' \in I \setminus \{y\}$, we let $\lambda_y(y')$ be $\cW_{xy} \times \cG_{yy'} \times \cV_{y'z}$ (swapping $y$ and $y'$ in this expression if $|y'|>|y|$), and for $x' \in \cF$ and $z' \in \cH$, we set $\lambda(x')$ to be $\cW_{xx';y} \times \cV_{yz}$ and $\lambda_y(z')$ to be $\cW_{xy} \times \cV_{y;z'z}$; we similarly set $\lambda_{yy'}(y'')$ to be $\cW_{xy} \times \cG_{yy'} \times \cG_{y'y''} \times \cV_{y''z}$ (swapping the order of the elements of $\cG$ if necessary), and setting $\lambda_{yy'}(x')$ to be $\cW_{xx';y} \times \cG_{yy'} \times \cV_{yz}$ and $\lambda_{yy'}(z')$ to be $\cW_{xy} \times \cG_{yy'} \times \cV_{y';z'z}$.
        \end{proof}
        
        We now choose framed functions $(f_{xz}: \tilde{\cQ}_{xz} \rightarrow \bR, s_{xz})$ 
        such that $(f_{xz}, s_{xz})$ restricts to $(f_{x'z}, s_{x'z})$ and $(f_{xz'}, s_{xz'})$ on the boundary faces $\tilde{\cQ}_{xx';z}$ and $\tilde{\cQ}_{x;z'z}$ respectively. This can be done, by Lemma \ref{extending smooth maps}. Note that we won't use the sections $s_{xz}$ until we deal with framings in Section \ref{Sec:framings}.\par 
        Let $\cQ_{xz}^{\leq 1} = f_{xz}^{-1} (-\infty, 1]$ and $\cU_{xz} = f_{xz}^{-1}\{1\}$. Noting that $\cU_{xz}$ is always disjoint from $\cW_{xy} \times \cV_{yz}$ for all $y$, Lemma \ref{preimage theorem} implies that $\cQ_{xz}^{\leq 1}$ is a compact manifold with faces, with a system of faces given by
        $$ \cW_{x_0 \ldots x_i; y_0} \times \cG_{y_0 \ldots y_j} \times \cV_{y_j; z_0 \ldots z_k}$$
        and 
        $$ \cF_{x_0 \ldots x_i} \times \cQ^{\leq 1}_{x_i z_0} \times \cH_{z_0 \ldots z_j}$$
        and
        $$ \cF_{x_0 \ldots x_i} \times \cU_{x_i z_0} \times \cH_{z_0 \ldots z_j}$$
        for $x_0 = x, \ldots, x_i$ in $\cF$, $y_0, \ldots, y_j$ in $\cG$ and $z_0, \ldots, z_k = z$ in $\cH$.\par 
        Lemma \ref{preimage theorem} also implies that $\cU_{xz}$ is a manifold with faces, with a system of faces given by 
        $$\cF_{x_0 \ldots x_i} \times \cU_{x_i z_0} \times \cH_{z_0 \ldots z_j}$$
        for $x_0 = x, \ldots, x_i$ in $\cF$ and $z_0, \ldots, z_j = z$ in $\cH$.
        This implies the collection $\cU_{xz}$ form a morphism of flow categories $\cU: \cF \rightarrow \cH$.
        \begin{defn}
            We define the \emph{composition} of $\cW$ and $\cV$ to be $\cU$.
        \end{defn}
        \begin{lem} \label{composition well-defined}
            The equivalence class of $\cU$ in $[\cF, \cH]$ is independent of the intermediate choices.
        \end{lem}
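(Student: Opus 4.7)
The plan is to construct, given two choices of intermediate data, a bordism of flow morphisms between the resulting compositions $\cU$ and $\cU'$. There are two kinds of intermediate choice: the systems of collars on $\cF, \cG, \cH, \cW, \cV$, and the framed functions $(f_{xz}, s_{xz})$. I handle them in turn, dealing with the framed functions first and reducing the question of collars to that case.

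First I fix systems of collars on all the data and assume I have two collections of framed functions $(f, s), (f', s')$ on the $\tilde\cQ_{xz}$ of \eqref{tilde Q}, yielding compositions $\cU$ and $\cU'$. I will construct compact manifolds with faces $\cR_{xz}$ assembling into a bordism $\cU \Rightarrow \cU'$, by induction on $|x|-|z|$. For the inductive step, view $\tilde\cQ_{xz} \times [0,1]_t$ as a manifold with faces equipped with the product system of collars (Lemma \ref{products and collars}). Combining the list of faces of $\tilde\cQ_{xz}$ proved earlier with Lemma \ref{products and collars}, its codimension-one faces are: the endpoint slices $\tilde\cQ_{xz} \times \{0,1\}$; the compact faces $\cW_{xy} \times \cV_{yz} \times \{0\} \times [0,1]$ for $y\in\cG$; and the non-compact faces $\cF_{xx'}\times \tilde\cQ_{x'z} \times [0,1]$ and $\tilde\cQ_{xz'}\times\cH_{z'z}\times[0,1]$ for $x'\neq x$ and $z'\neq z$. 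On each of these faces I already have a smooth function — either $f_{xz}$ or $f'_{xz}$ at the endpoints (these are negative on compact faces by assumption), or the previously-built $F_{x'z}$ and $F_{xz'}$ on the smaller pieces by the inductive hypothesis — and these functions agree on all higher-codimension overlaps by the same inductive control. Lemma \ref{extending smooth maps} then produces a simultaneous extension $F_{xz}: \tilde\cQ_{xz}\times[0,1]\to\bR$ which is proper over $(-\infty, 1+\delta]$ and negative on compact faces. A generic perturbation rel boundary (using the remark after Lemma \ref{extending smooth maps}) makes $1$ a regular value, and Lemma \ref{preimage theorem} identifies $\cR_{xz} := F_{xz}^{-1}\{1\}$ as a compact manifold with faces whose system of faces reads off as $\cU_{xz}$, $\cU'_{xz}$, $\cF_{xx'}\times\cR_{x'z}$ and $\cR_{xz'}\times\cH_{z'z}$, exactly as required for a bordism of flow morphisms. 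The framed sections $s_{xz}$ extend simultaneously by the convex partition-of-unity argument of Proposition \ref{prop: ext fram fn}.

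For independence of the collars, given systems $\cC^0, \cC^1$ on the shared data, Lemma \ref{systems of collars exist} supplies an ambient diffeomorphism of each morphism space, fixing the higher-codimension faces and assembled compatibly across compositions, which carries $\cC^0$ to $\cC^1$. This diffeomorphism induces diffeomorphisms $\Phi_{xz}: \tilde\cQ_{xz}(\cC^0) \to \tilde\cQ_{xz}(\cC^1)$ compatible with the systems of faces described in the text. Pulling back a framed function chosen for $\cC^1$ through $\Phi_{xz}$ gives a framed function on $\tilde\cQ_{xz}(\cC^0)$ producing a composition diffeomorphic to $\cU(\cC^1)$ (and hence equivalent in $[\cF,\cH]$ via the second relation in Definition \ref{flow morphism def}), and the first step relates this to the composition built from the original framed function on $\tilde\cQ_{xz}(\cC^0)$.

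The main obstacle is the combinatorial bookkeeping in the first step: verifying that the functions constructed on $\tilde\cQ_{x'z}\times[0,1]$ and $\tilde\cQ_{xz'}\times[0,1]$ agree on all common codimension-$\geq 2$ faces so that the hypotheses of Lemma \ref{extending smooth maps} are met with consistent boundary data, and that the resulting system of faces on $\cR_{xz}$ matches the one required in the definition of a bordism of flow morphisms. This reduces to a careful unwinding of the systems of faces of the $\tilde\cQ_{xz}$ together with Lemma \ref{products and collars}, and requires no new geometric input beyond the material of Sections 3.1--3.4.
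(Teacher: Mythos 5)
Your argument is correct, but the core construction differs from the paper's. For independence of the framed functions, the paper works entirely inside the original $\tilde{\cQ}_{xz}$: it first observes that for a single framed function $f_{xz}$ and two regular values $p<q$, the slab $f_{xz}^{-1}[p,q]$ is already a bordism between the two level sets, and then, for two different choices $f_{xz}$ and $f'_{xz}$, splices them into a single function $g_{xz}$ (equal to $f_{xz}$ below a level $p$, flattened to a constant in a middle region, and equal to $f'_{xz}$ above a level $q$, with $q$ chosen close enough to $1$ that the case distinctions are consistent) and takes $g_{xz}^{-1}[p,q]$ as the bordism. You instead pass to the cylinder $\tilde{\cQ}_{xz}\times[0,1]$, feed the two framed functions in at the ends and the inductively constructed $F_{x'z}$, $F_{xz'}$ along the broken faces, extend properly via Lemma \ref{extending smooth maps}, and take a level set. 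Both yield a bordism with the required system of faces; your route is the more standard ``cylinder'' argument and avoids the delicate consistency condition $f'_{xz}(f_{xz}^{-1}(-\infty,\mu])<\mu$ needed to make the paper's spliced function well defined, at the cost of one extra dimension of bookkeeping (checking regularity of $1$ rel the endpoint slices, and properness of the extension, which you do address). Your separate treatment of the collar choices via uniqueness of collars up to ambient diffeomorphism and pullback of framed functions is not spelled out in the paper's proof of this lemma, but it matches how the paper handles the analogous diffeomorphism issue in the subsequent lemma, so it is a legitimate (and slightly more complete) addition rather than a deviation.
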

        \begin{proof}
            The choices made in constructing $\cU$ were the choice of framed functions $(f_{xz}, s_{xz})$. \par  
            Let $1\geq q > p > 0$ be regular values of all the $f_{xz}$, and let $\cU^{f,p}_{xz}:= f^{-1}_{xz}\{p\}$ and $\cU^{f, q}_{xz} := f^{-1}_{xz}\{q\}$.\par 
            Let $\cR_{xz} = f_{xz}^{-1}[p, q]$. These form a bordism $\cR$ between $\cU^{f, p}$ and $\cU^{f, q}$. In particular, $\cU^{f, p}$ and $\cU^{f,q}$ are both (bordant to) allowable constructions of representatives of $\cV \circ \cW$. \par 
            Suppose we are given two choices of framed functions: say $(f_{xz},s_{xz})$ and $(f'_{xz}, s'_{xz})$. Let $1 > p > 0$ be a regular value of all $f_{xz}$, and let $1 \geq q > p$ be a regular value of all $f'_{xz}$ close enough to 1 such that $f'_{xz}(f_{xz}^{-1}(-\infty, \mu]) < \mu$ for all $x, z$, where $\mu:= \frac{p+q}2$. We choose smooth functions $\alpha, \beta: \bR \rightarrow [0, 1]$ such that 
            $$\alpha(t) = \begin{cases} 
                1 & \, \mathrm{ if }\, t \leq p\\
                0 & \, \mathrm{ if }\, \mu \leq t
            \end{cases}$$
            and
            $$\beta(t) = \begin{cases} 
                0 & \, \mathrm{ if }\, t \leq \mu\\
                1 & \, \mathrm{ if }\, q \leq t
            \end{cases}$$
            We then define $g_{xz}: \tilde{\cQ}_{xz} \rightarrow \bR$:
            $$
            g_{xz}(r) :=
            \begin{cases}
                f_{xz}(r) & \, \mathrm{if}\, f_{xz}(r) \leq p\\
                
                \alpha(f_{xz}(r)) f_{xz}(r) + (1-\alpha(f_{xz}(r))) \left(\mu\right) & \, \mathrm{if}\, p \leq f_{xz}(r) \leq \mu\\
                
                \mu & \, \mathrm{if}\, \mu \leq f_{xz}(r) \, \mathrm{and}\, f'_{xz}(r) \leq \mu\\
                
                \beta(f'_{xz}(r)) f'_{xz}(r) + (1-\beta(f'_{xz}(r))) \left(\mu\right) & \, \mathrm{if}\, \mu \leq f'_{xz}(r) \leq q\\

                f'_{xz}(r) & \, \mathrm{if}\, q \leq f'_{xz}(r)
            \end{cases}$$
            Then taking $\cR_{xz} := g_{xz}^{-1}[p,q]$ forms a bordism $\cR$ between $\cU^{f,p}$ and $\cU^{f',q}$.
        \end{proof}
        \begin{lem}\label{composition is well-defined}
            Composition is compatible with the equivalence relation defined in Definition \ref{flow morphism def}, and induces a well-defined bilinear map
            $$\circ: [\mathcal{G}, \mathcal{H}] \otimes [\mathcal{F}, \mathcal{G}] \rightarrow [\mathcal{F}, \mathcal{H}].$$
        \end{lem}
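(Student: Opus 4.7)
The plan is to verify the two compatibility properties separately (diffeomorphism and bordism) for each of the two slots, and then note bilinearity. By symmetry it suffices to fix $\cV : \cG \to \cH$ and check that the class of $\cV \circ \cW$ only depends on the equivalence class of $\cW : \cF \to \cG$; the argument in the other slot is symmetric.

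For diffeomorphism invariance, suppose $\Phi: \cW \xrightarrow{\cong} \cW'$ is a diffeomorphism compatible with all concatenation maps. Then the construction \eqref{tilde Q} is manifestly functorial: applying $\Phi$ and the identity on the $\cV_{yz}$ and $[0,\eps)$ factors yields a diffeomorphism $\tilde\cQ_{xz}(\cW,\cV) \cong \tilde\cQ_{xz}(\cW',\cV)$ that intertwines the equivalence relation \eqref{equiv}. Transporting the framed function $(f_{xz},s_{xz})$ chosen for $\cW$ along this diffeomorphism gives a valid choice for $\cW'$, and the resulting level sets $\cU_{xz}$ are diffeomorphic, compatibly with concatenation. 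Hence $\cV \circ \cW \cong \cV \circ \cW'$ as flow morphisms, so they are equivalent.

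For bordism invariance, suppose $\cR$ is a bordism between $\cW$ and $\cW'$. The key idea is to run the $\tilde\cQ$ construction with $\cR$ in place of $\cW$. Define
\[
\tilde\cQ_{xz}(\cR,\cV) := \left(\bigsqcup_{y \in \cG} \cR_{xy} \times \cV_{yz} \times [0,\eps)\right)/\sim,
\]
using collar neighbourhoods for $\cR$ extending those for $\cW$ and $\cW'$ (which exist by Lemma \ref{systems of collars exist}). By Lemmas \ref{lem:abstract gluing} and \ref{lem:abstract gluing with faces}, this is a manifold with faces whose faces are indexed by the same combinatorial data as before, augmented with the two distinguished boundary components $\tilde\cQ_{xz}(\cW,\cV)$ and $\tilde\cQ_{xz}(\cW',\cV)$ coming from the $\cW$- and $\cW'$-ends of $\cR$. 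Now choose framed functions $(F_{xz},S_{xz})$ on these $\tilde\cQ_{xz}(\cR,\cV)$ inductively over $|x|-|z|$, restricting on the two distinguished ends to the framed functions used to construct $\cV\circ \cW$ and $\cV\circ\cW'$ respectively, and restricting compatibly on the $\cF$- and $\cH$-faces; existence follows from Proposition \ref{prop: ext fram fn} together with Lemma \ref{extending smooth maps}. The preimage $F_{xz}^{-1}\{1\}$ is then, by Lemma \ref{preimage theorem}, a manifold with faces giving the $(xz)$-piece of a bordism in the sense of flow morphisms between $\cV \circ \cW$ and $\cV \circ \cW'$. A subtlety worth highlighting is that the level sets on the two ends may have been constructed with different choices of framed function; but the interpolation trick used in the proof of Lemma \ref{composition well-defined} shows this causes no harm, since any two choices of framed function define bordant level sets.

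The hardest part of this argument is the inductive choice of $(F_{xz},S_{xz})$: one must ensure that at each step the extension agrees with previously made choices on the entire face structure, including the already-chosen framed functions on the $\cW$- and $\cW'$-ends, on products $\cF_{xx'} \times \tilde\cQ_{x'z}(\cR,\cV)$ and $\tilde\cQ_{xz'}(\cR,\cV) \times \cH_{z'z}$, and on the stratum $\cR_{xy} \times \cV_{yz} \times \{0\}$. Inducting on $|x|-|z|$, these prescribed data agree on overlaps by the inductive hypothesis, so Lemma \ref{extending smooth maps} applies.

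Finally, bilinearity follows immediately: the construction \eqref{tilde Q} distributes over disjoint unions, in both the $\cW$ and $\cV$ slots, because the indexing sum in \eqref{tilde Q} respects disjoint union, and because a framed function on a disjoint union can be chosen componentwise. Therefore $\cV \circ (\cW \sqcup \cW')$ and $(\cV \sqcup \cV') \circ \cW$ represent $\cV\circ\cW \sqcup \cV\circ\cW'$ and $\cV\circ\cW \sqcup \cV'\circ\cW$ respectively in $[\cF,\cH]$, giving bilinearity of $\circ$ over the monoid (and hence group, by Lemma \ref{lem:unor inv}) structure on morphism sets.
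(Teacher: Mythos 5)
Your proposal is correct and follows essentially the same route as the paper: diffeomorphism invariance by transporting framed functions, bordism invariance by running the $\tilde\cQ$-construction with the bordism $\cR$ in place of $\cW$ (the paper calls this $\tilde\cS(\cR,\cV)$) and taking a level set of a compatibly chosen framed function, and bilinearity from compatibility of the construction with disjoint unions. The "subtlety" you flag about differing choices of framed function on the two ends is handled the same way in the paper, by appealing to the independence-of-choices statement in Lemma \ref{composition well-defined}.
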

        \begin{proof}
            Let $\cW, \cW': \cF \rightarrow \cG$ and $\cV, \cV': \cG \rightarrow \cH$ be morphisms. Clearly if $\cW$ and $\cW'$ are diffeomorphic and $\cV$ and $\cV'$ are diffeomorphic then $\cV \circ \cW$ and $\cV' \circ \cW'$ are diffeomorphic (note all $\tilde{\cQ}(\cW,\cV)_{xy}$ and $\tilde{\cQ}({\cW',\cV'})$ are diffeomorphic, and we can pull back framed functions along such a diffeomorphism).\par 
            Let $\cR$ be a bordism between $\cW$ and $\cW'$. We construct a bordism $\cS$ between $\cV \circ \cW$ and $\cV \circ \cW'$ as follows. The strategy is to perform the same construction as that of $\cV \circ \cW$, but with $\cW$ replaced with $\cR$.\par 
            First define 
            $$\tilde{\cS}_{xz} = \tilde{\cS}_{xz}(\cR, \cV) = \left(\bigsqcup_{y \in \cG} \cR_{xy} \times \cV_{yz} \times [0, \varepsilon)\right)/\sim$$
            where $\sim$ is defined the same as in the definition of $\tilde{\cQ}$- more explicitly, we say that for all $a \in \cR_{xy}$, $b \in \cG_{yy'}$, $c \in \cV_{y'z}$, $(u,v) \in [0, \varepsilon)^2$, 
            $$(a, \cC(b, c, u), v) \sim (\cC(a, b, v), c, u).$$
            Lemmas \ref{lem:abstract gluing} and \ref{lem:abstract gluing with faces} show that each $\tilde{\cS}_{xz}$ is a manifold with faces, with a system of faces given by
            $$\cR_{x_0 \ldots x_i; y_0} \times \cG_{y_0 \ldots y_j} \times \cV_{y_j; z_0 \ldots z_k} \times \{0\}$$
            and
            $$\cW_{x_0 \ldots x_i; y_0} \times \cG_{y_0 \ldots y_j} \times \cV_{y_j; z_0 \ldots z_k} \times \{0\}$$
            and
            $$\cW'_{x_0 \ldots x_i; y_0} \times \cG_{y_0 \ldots y_j} \times \cV_{y_j; z_0 \ldots z_k} \times \{0\}$$
            and
            $$\cF_{x_0 \ldots x_i} \times \tilde{\cQ}_{x_i z_0} \times \cH_{z_0 \ldots z_j}$$
            and
            $$\cF_{x_0 \ldots x_i} \times \tilde{\cQ}'_{x_i z_0} \times \cH_{z_0 \ldots z_j}$$
            and
            $$\cF_{x_0 \ldots x_i} \times \tilde{\cS}_{x_i z_0} \times \cH_{z_0 \ldots z_j}$$
            for $x_0 = x, \ldots, x_i$ in $\cF$, $y_0, \ldots, y_j$ in $\cG$ and $z_0, \ldots, z_k = z$ in $\cH$, and where $\tilde{Q} := \tilde{Q}(\cW, \cV)$ and $\tilde{Q}' := \tilde{Q}(\cW', \cV)$.\par
            Pick framed functions $(f_{xz}, s_{xz})$ on all $\tilde{\cQ}_{xz}, \tilde{\cQ}'_{xz}$ and $\tilde{\cS}_{xz}$, restricting to each other on overlaps.\par 
            Let $\cS_{xz}^{\leq 1} = f_{xz}^{-1}(-\infty, 1]$. Similarly to before, Lemma \ref{preimage theorem} implies that $\cS_{xz}^{\leq 1}$ is a compact manifold with faces, with a system of faces given by the above list with all $\tilde{\cS}$s replaces with $\cS^{\leq 1}$s, along with
            $$\cF_{x_0 \ldots x_i} \times \cT_{x_i z_0} \times \cH_{z_0 \ldots z_j}$$
            where $\cT_{xz} := f_{xz}^{-1}\{1\}$. $\cT_{xy}$ is a compact manifold with faces, with a system of faces given by 
            $$\cF_{x_0 \ldots x_i} \times \cU^{f, p}_{x_i z_0} \times \cH_{z_0 \ldots z_j}$$
            and
            $$\cF_{x_0 \ldots x_i} \times \cU'^{f, p}_{x_i z_0} \times \cH_{z_0 \ldots z_j}$$
            and
            $$\cF_{x_0 \ldots x_i} \times \cT_{x_i z_0} \times \cH_{z_0 \ldots z_j}$$
            for $x_0 = x, \ldots, x_i$ in $\cF$ and $z_0, \ldots, z_k = z$ in $\cH$, and where $\cU := \cU^{f, p}(\cW, \cV)$ and $\cU' := \cU^{f, p}(\cW', \cV)$ are representatives of $\cV \circ \cW$ and $\cV \circ \cW'$ respectively. Therefore $\cT$ is a bordism between $\cU^{f, p}$ and $\cU'^{f, p}$, as required.\par 
            The same argument shows that if $\cV$ and $\cV'$ are bordant, so are $\cV \circ \cW$ and $\cV' \circ \cW$.\par 
            It is clear from construction that
            $$(\cV \sqcup \cV') \circ (\cW \sqcup \cW') = (\cV \circ \cW) \sqcup (\cV \circ \cW') \sqcup (\cV' \circ \cW) \sqcup (\cV' \circ \cW')$$
            which implies that composition induces a bilinear map of groups
            $$\circ: [\mathcal{G}, \mathcal{H}] \otimes [\mathcal{F}, \mathcal{G}] \rightarrow [\mathcal{F}, \mathcal{H}]$$
        \end{proof}
        \begin{lem} \label{composition is associative}
            The composition map is associative, i.e. the following diagram commutes:
            \begin{equation}
                \xymatrix{
                    \left[\cH, \cI\right] \otimes \left[\cG, \cH\right] \otimes \left[\cF, \cG\right] \ar[rr]_-{\circ} \ar[d]_{\circ} && \left[\cG, \cI\right] \otimes \left[\cF, \cG\right] \ar[d]^{\circ} \\
                    \left[\cH, \cI\right] \otimes \left[\cF, \cH\right] \ar[rr]_-{\circ} && \left[\cF, \cI\right]
                }
            \end{equation}
        \end{lem}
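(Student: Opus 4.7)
Name the three morphisms $\cW_1 \in [\cF,\cG]$, $\cW_2 \in [\cG,\cH]$, $\cW_3 \in [\cH,\cI]$. The plan is to construct a single two-parameter ``triple gluing region'' and a framed function on it whose unit level set is a bordism exhibiting representatives of the two iterated compositions $(\cW_3\circ \cW_2)\circ \cW_1$ and $\cW_3\circ(\cW_2\circ \cW_1)$ simultaneously as its non-collared boundary faces.

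\emph{Step 1 (fix representatives).} Fix systems of collars throughout. Pick framed functions $f_{12}$ on $\tilde{\cQ}_{\bullet}(\cW_1,\cW_2)$ and $f_{23}$ on $\tilde{\cQ}_{\bullet}(\cW_2,\cW_3)$ and set $\cU_{12}:=f_{12}^{-1}\{1\}$, $\cU_{23}:=f_{23}^{-1}\{1\}$; these are the chosen intermediate composition morphisms. Pick further framed functions $h$ on $\tilde{\cQ}_{\bullet}(\cU_{12},\cW_3)$ and $h'$ on $\tilde{\cQ}_{\bullet}(\cW_1,\cU_{23})$, yielding specific representatives $\cX:=h^{-1}\{1\}$ of $(\cW_3\circ \cW_2)\circ \cW_1$ and $\cX':=h'^{-1}\{1\}$ of $\cW_3\circ(\cW_2\circ \cW_1)$.

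\emph{Step 2 (build the triple gluing region).} For each $x\in\cF$, $w\in\cI$, define
$$
\tilde{\cP}_{xw}\;:=\;\Bigl(\bigsqcup_{y\in\cG,\,z\in\cH}(\cW_1)_{xy}\times(\cW_2)_{yz}\times(\cW_3)_{zw}\times[0,\eps)^2_{s,t}\Bigr)\Big/\!\sim,
$$
where $\sim$ is generated by the $s$-concatenation relations identifying consecutive $\cG$-factors and the $t$-concatenation relations identifying consecutive $\cH$-factors, i.e.\ two independent copies of the relation \eqref{equiv}. An iterated application of Lemmas~\ref{lem:abstract gluing} and~\ref{lem:abstract gluing with faces} shows that $\tilde{\cP}_{xw}$ is a manifold with faces, with a system of faces indexed by words on $\cF\cup\cG\cup\cH\cup\cI$ analogous to that of $\tilde{\cQ}$.

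\emph{Step 3 (level set gives the bordism).} The one-parameter regions $\tilde{\cQ}_{\bullet}(\cU_{12},\cW_3)$ and $\tilde{\cQ}_{\bullet}(\cW_1,\cU_{23})$ embed canonically as closed submanifolds with faces of $\tilde{\cP}_{xw}$: the former as the locus $\{f_{12}=1\}$ with $t$ serving as the outer collar, the latter as $\{f_{23}=1\}$ with $s$ serving as the outer collar. Apply Proposition~\ref{prop: ext fram fn} to extend the compatible quadruple $(f_{12},f_{23},h,h')$ to a framed function $F\colon \tilde{\cP}_{xw}\to\bR$, perturbing slightly if needed so that $1$ remains a regular value. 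By Lemma~\ref{preimage theorem}, $\cY_{xw}:=F^{-1}\{1\}$ is then a compact manifold with faces whose compact non-concatenation boundary faces are precisely $\cX_{xw}$ and $\cX'_{xw}$, and whose remaining faces have the shape $\cF_{x_0\dots x_i}\times \cY_{x_iw_0}\times \cI_{w_0\dots w_l}$. Hence $\cY$ assembles into a bordism $\cF\to\cI$ between $\cX$ and $\cX'$, proving their equality in $[\cF,\cI]$.

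\emph{Main obstacle.} The genuinely new technical step is Step~2: upgrading the one-parameter abstract-gluing Lemmas~\ref{lem:abstract gluing}--\ref{lem:abstract gluing with faces} to a two-parameter version with two independent families of concatenation relations. The key point is that the two concatenation relations commute on their common overlaps (words carrying both a $\cG$- and an $\cH$-insertion), which follows from the product-compatibility of the chosen collars on $\cW_2$ at its $\cG$- and $\cH$-facing boundaries. Given this, the extension of framed functions (Proposition~\ref{prop: ext fram fn}) and level-set analysis (Lemma~\ref{preimage theorem}) go through in direct analogy with the proofs of Lemmas~\ref{composition well-defined} and~\ref{composition is well-defined}.
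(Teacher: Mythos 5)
Your Steps 1 and 2 track the paper's proof closely: the two-parameter region you call $\tilde{\cP}_{xw}$ is exactly the paper's $\tilde{\cZ}_{xw}$ (built there as a coequaliser gluing $\tilde{\cQ}(\cW_1,\cW_2)\times(\cW_3)\times[0,\eps)$ and $(\cW_1)\times\tilde{\cQ}(\cW_2,\cW_3)\times[0,\eps)$ along the corner pieces $(\cW_1)_{xy}\times(\cW_2)_{yz}\times(\cW_3)_{zw}\times[0,\eps)^2$), and the "main obstacle" you identify is handled there by iterating Lemmas \ref{lem:abstract gluing} and \ref{lem:abstract gluing with faces}. (As a minor point, your labels are swapped: $h^{-1}\{1\}$ on $\tilde{\cQ}(\cU_{12},\cW_3)$ represents $\cW_3\circ(\cW_2\circ\cW_1)$, not $(\cW_3\circ\cW_2)\circ\cW_1$.)

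Step 3, however, has a genuine gap: the level set $F^{-1}\{1\}$ of a single framed function on the two-parameter region cannot have $\cX$ and $\cX'$ as boundary faces. By Lemma \ref{preimage theorem}/Proposition \ref{prop: res of fram fn is fram}, the faces of $F^{-1}\{1\}$ are the loci $F|_G^{-1}\{1\}$ for $G$ a (non-compact) face of the ambient manifold; for your region these are the concatenation faces together with $F|^{-1}\{1\}$ restricted to the carapace faces $\tilde{\cQ}(\cW_1,\cW_2)_{xz}\times(\cW_3)_{zw}\times\{0\}$ and $(\cW_1)_{xy}\times\tilde{\cQ}(\cW_2,\cW_3)_{yw}\times\{0\}$, i.e.\ (with $F$ restricting to $f_{12}$, $f_{23}$ there) the faces $(\cU_{12})_{xz}\times(\cW_3)_{zw}$ and $(\cW_1)_{xy}\times(\cU_{23})_{yw}$ — the \emph{once}-composed pieces, not the twice-composed representatives. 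Indeed $\cX=h^{-1}\{1\}$ is disjoint from the carapace of $\tilde{\cQ}(\cU_{12},\cW_3)$ (framed functions are negative on compact faces), so under your embedding it sits in the interior of the two-parameter region and can never arise as a face of a level set. This is precisely why the paper needs a further excision step: having formed $\cY_{xw}=F^{-1}\{1\}$, it identifies collar neighbourhoods of the faces $\bigcup_z\cU_{xz}\times(\cW_3)_{zw}$ and $\bigcup_y(\cW_1)_{xy}\times\cU'_{yw}$ inside $\cY_{xw}$ with the second-stage gluing regions, removes the sublevel sets $h^{-1}(-\infty,1)$ and $h'^{-1}(-\infty,1)$ from them, and the resulting $\cL_{xw}$ — with newly created boundary faces $h^{-1}\{1\}$ and $h'^{-1}\{1\}$ — is the desired bordism. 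You should add this excision (or an equivalent two-stage cutting) to complete the argument; as written, Proposition \ref{prop: ext fram fn} also does not apply to extend $h,h'$ from the loci $\{f_{12}=1\},\{f_{23}=1\}$, since these are not faces of the ambient manifold.
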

        \begin{proof}
            Choose morphisms $\cW: \cF \to \cG$, $\cV: \cG \to \cH$ and $\cX: \cH \to \cI$.\par 
            Let $\tilde{\cQ}_{xz} = \tilde{\cQ}(\cW, \cV)_{xz}$ and $\tilde{\cQ}'_{yw} = \tilde{\cQ}_{yw}(\cV, \cX)$ for all $x \in \cF$, $y \in \cG$, $z \in \cH$ and $w \in \cI$.\par
            Choose framed functions $(f_{xz}, s_{xz})$ on each $\tilde{\cQ}_{xz}$ and $(f'_{yw}, s'_{yw})$ on each $\tilde{\cQ}'_{yw}$, suitably compatible on overlaps.\par 
            For $x \in \cF$ and $w \in \cI$, define $\tilde{\cP}_{xw}$ by
            $$\tilde{\cP}_{xw} := \left(\bigsqcup\limits_{z \in \cH} \tilde{\cQ}_{xz} \times \cX_{zw} \times [0, \eps)_s\right)/\sim$$
            where $\sim$ is defined so that $\tilde{\cP}_{xw}$ is the coequaliser of the following diagram:
            \[\xymatrix{
                \bigsqcup\limits_{z,z' \in \cH} \tilde{\cQ}_{xz} \times \cH_{zz'} \times \cX_{z'w} \times [0,\eps)^2_{u,v} \ar@<+.5ex>[r] \ar@<-.5ex>[r] &
                \bigsqcup\limits_{z \in \cH} \tilde{\cQ}_{xz} \times \cX_{zw} \times [0, \eps)_s
            }\]
            where the two maps send $(a, b, c, u, v)$ to $(a, \cC(b,c,u),v)$ and $(\cC(a,b,v),c,u)$ respectively.\par 
            Similarly we define $\tilde{\cP}'_{xw}$ to be the coequaliser of the following diagram:
            \[\xymatrix{
                \bigsqcup\limits_{y,y' \in \cG} \cW_{xy} \times \cG_{yy'} \times \tilde{\cQ}'_{y'w} \times [0,\eps)^2_{u,v} \ar@<+.5ex>[r] \ar@<-.5ex>[r] &
                \bigsqcup\limits_{y \in \cG} \cW_{xy} \times \tilde{\cQ}'_{yw} \times [0, \eps)_s
            }\]
            with the maps defined similarly.\par
            Note $\tilde{\cP}_{xw}$ and $\tilde{\cP}'_{xw}$ both have dimension $|x|-|w|+2$.\par 
            By Lemma \ref{lem:abstract gluing with faces}, $\tilde{\cP}_{xw}$ has a system of boundary faces given by $\tilde{\cP}_{xx';w}$, $\tilde{\cP}_{x;w'w}$ and $\tilde{\cQ}_{xz} \times \cX_{zw}$, for $x' \in \cF$, $w' \in \cI$ and $z \in \cH$. Choose framed functions $(h_{xw}, r_{xw})$ on each $\tilde{\cP}_{xw}$, compatibly on boundary faces.\par 
            Similarly, $\tilde{\cP}'_{xw}$ has a system of boundary faces given by $\tilde{\cP}'_{xx';w}$, $\tilde{\cP}'_{x;w'w}$ and $\cW_{xy} \times \tilde{\cQ}'_{xw}$, for $x' \in \cF$, $w' \in \cI$ and $y \in \cG$. Choose framed functions $(h'_{xw}, r'_{xw})$ on each $\tilde{\cP}'_{xw}$, compatibly on boundary faces.\par 
            Let $\cM_{xw} = h_{xw}^{-1}\{1\}$ and $\cM'_{xw} = h'^{-1}_{xw} \{1\}$; then $\cM$ and $\cM'$ form representatives of $\cX \circ (\cV \circ \cW)$ and $(\cX \circ \cV) \circ \cW$ respectively.\par 
            Now define $\tilde{\cZ}_{xw}$ to be the coequaliser of the following diagram:
            \[\xymatrix{
                \bigsqcup\limits_{\substack{y \in \cG\\z \in \cH}} \cW_{xy} \times \cV_{yz} \times \cX_{zw} \times [0, \eps)^2_{u,v} \ar[r] \ar[dr] &
                \bigsqcup\limits_{y \in \cG} \cW_{xy} \times \tilde{\cQ}_{yw} \times [0, \eps)_s \\
                & \bigsqcup \limits_{z \in \cH} \tilde{\cQ}_{xz} \times \cX_{zw} \times [0, \eps)_s
            }\]
            where the maps are defined similarly to above. This has a system of boundary faces given by $\tilde{\cZ}_{xx';w}$, $\tilde{\cZ}_{x;w'w}$, $\tilde{\cQ}_{xz} \times \cX_{zw} \times \{0\}$, $\cW_{xy} \times \tilde{\cQ}_{yw}' \times \{0\}$, for $x' \in \cF$, $w' \in \cI$, $y \in \cG$ and $z \in \cH$. Choose framed functions $(g_{xw}, t_{xw})$ on the $\tilde{\cZ}_{xw}$ extending the $(f_{xz}, s_{xz})$ on the $\tilde{\cQ}_{xz}$ and the $(f'_{yw}, s'_{yw})$ on the $\tilde{\cQ}'_{xz}$ respectively. Note that faces of $\tilde{\cZ}_{xw}$ of the form $\tilde{\cQ}_{xz} \times \cX_{zw} \times \{0\}$ and $\cW_{xy} \times \tilde{\cQ}_{yw}' \times \{0\}$ can only overlap in compact codimension 2 faces, so when considering $g_{xw}^{-1}\{1\}$ the corresponding faces won't overlap at all, since $g_{xw}$ takes negative values on compact faces.\par 
            Let $\cY_{xw} = g_{xw}^{-1}\{1\}$; this is a compact manifold with faces of dimension $|x|-|w|+1$. Let $\cU_{xz} = f^{-1}_{xz}\{1\}$ and $\cU'_{yw} = f'^{-1}_{yw}\{1\}$; then $\cU$ and $\cU'$ are representatives of $\cV \circ \cW$ and $\cX \circ \cV$ respectively.\par 
            $\cY_{xw}$ has a system of boundary faces given by $\cY_{xx';w}$, $\cY_{x;w'w}$, $\cU_{xz} \times \cX_{zw}$ and $\cW_{xy} \times \cU'_{yw}$, for $x' \in \cF$, $w' \in \cI$, $y \in \cG$ and $z \in \cH$. As noted before, the last two types of face here do not intersect each other: $\cU_{xz} \times \cX_{zw}$ and $\cW_{xy} \times \cU'_{yw}$ are disjoint, for all $y \in \cG$ and $z \in \cH$. \par 
            By the collar neighbourhood theorem, there are disjoint embeddings $\tilde{\cP}_{xw}, \tilde{\cP}'_{xw} \hookrightarrow \cY_{xw}$ onto neighbourhoods of $\cup_{z \in \cH} \cU_{xz} \times \cX_{zw}$ and $\cup_{y \in \cG} \cW_{xy} \times \cU'_{yw}$ respectively; furthermore these can be chosen to be compatible with each other under the inclusions $c$.\par 
            We now remove (the images under the above embeddings) of each $h^{-1}_{xw}(-\infty, 1) \subseteq \tilde{\cP}_{xw}$ and $h'^{-1}_{xw}(-\infty, 1) \subseteq \tilde{\cP}'_{xw}$ from each $\cY_{xw}$ to get a manifold $\cL_{xw}$; explicitly
            $$\cL_{xw} = \cY_{xw} \setminus \left(h_{xw}^{-1}(-\infty, 1) \sqcup h'^{-1}_{xw}(-\infty, 1)\right)$$
            Now each $\cL_{xw}$ has a system of boundary faces given by $\cL_{xx';w}$, $\cL_{x;w'w}$, $\cM_{xw}$ and $\cM'_{xw}$; therefore $\cL$ forms a bordism between $\cM_{xw}$ and $\cM'_{xw}$.
        \end{proof}
      
        \begin{lem}\label{composition recognition}
            Let $\cW: \cF \to \cG$, $\cV: \cG \to \cH$ and $\cT: \cF \to \cH$ be morphisms. Suppose we are given compact manifolds with faces $\cR_{xz}$ for all $x$ in $\cF$ and $z$ in $\cH$, of dimension $|x|-|z|$, with systems of faces given by
            $$\cW_{x_0 \ldots x_i; y_0} \times \cG_{y_0 \ldots y_j} \times \cH_{y_j; z_0 \ldots z_k}$$
            and
            $$\cT_{x_0 \ldots x_i; z_0 \ldots z_k}$$
            and
            $$\cR_{x_0 \ldots x_i; z_0 \ldots z_k}$$
            for $x_0 = x, \ldots, x_i$ in $\cF$, $y_0, \ldots, y_j$ in $\cG$ and $z_0, \ldots, z_k=z$ in $\cH$.\par 
            Then $\cT$ is a representative of $\cV \circ \cW$.
            
        \end{lem}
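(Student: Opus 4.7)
The plan is to exhibit $\cT$ as bordant (as a morphism of flow categories) to the explicit representative $\cU = f^{-1}\{1\}$ of $\cV \circ \cW$ produced by the composition construction, by gluing the given manifolds $\cR_{xz}$ onto the corresponding compact sublevel sets of the framed functions built from $\cW$ and $\cV$.

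First, I would run the composition construction of $\cV \circ \cW$ to produce, for all $x \in \cF$ and $z \in \cH$, the non-compact glued space $\tilde{\cQ}_{xz}$ and a compatible system of framed functions $(f_{xz}, s_{xz})$; set $\cU_{xz} := f_{xz}^{-1}\{1\}$ and $\cQ^{\leq 1}_{xz} := f_{xz}^{-1}(-\infty, 1]$. By Lemma~\ref{preimage theorem}, $\cQ^{\leq 1}_{xz}$ is a compact manifold with faces whose codimension-one compact faces include $\cU_{xz}$ (at the level $f = 1$), the ``product face'' $B_{xz} := \bigsqcup_{y \in \cG} \cW_{xy} \times \cV_{yz}$ (at the $\{0\}$ slice of the collar factor), and the inductive ``side'' faces $\cF_{x_0 x_1} \times \cQ^{\leq 1}_{x_1 z}$ and $\cQ^{\leq 1}_{x z_0} \times \cH_{z_0 z}$. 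The same manifold $B_{xz}$ is a codimension-one face of $\cR_{xz}$ (namely the listed first face family with $i = j = k = 0$).

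Next, by induction on $d = |x| - |z|$, I would glue $\cR_{xz}$ to $\cQ^{\leq 1}_{xz}$ along $B_{xz}$ to form $\cL_{xz} := \cR_{xz} \cup_{B_{xz}} \cQ^{\leq 1}_{xz}$, using Lemma~\ref{systems of collars exist} to select mutually compatible collars of $B_{xz}$ in each summand and Lemma~\ref{lem:abstract gluing} to perform the gluing. The induction allows us to identify the smaller-grading pieces $\cR_{x_i z_0}$ and $\cQ^{\leq 1}_{x_i z_0}$ appearing in the ``side'' faces with the previously constructed $\cL_{x_i z_0}$. I would then apply Lemma~\ref{lem:abstract gluing with faces} to read off the face system of $\cL_{xz}$: the codimension-one faces are precisely $\cT_{xz}$ (surviving from $\cR$), $\cU_{xz}$ (surviving from $\cQ^{\leq 1}$), and the paired faces $\cF_{x_0 x_1} \times \cL_{x_1 z}$ and $\cL_{x z_0} \times \cH_{z_0 z}$ obtained via the inductive identification; the face $B_{xz}$ passes into the interior of $\cL_{xz}$ after gluing, while higher-codimension faces match by the same bookkeeping. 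This exhibits the collection $\{\cL_{xz}\}$ as a bordism of morphisms $\cF \to \cH$ between $\cT$ and $\cU$, so $[\cT] = [\cU] = \cV \circ \cW$ in $[\cF, \cH]$.

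The main obstacle is the inductive coherence of the collar choices: one must arrange, stratum by stratum, that the collars chosen on $\cR_{xz}$ and on $\cQ^{\leq 1}_{xz}$ around $B_{xz}$ restrict on each boundary face of the form $\cF_{\ldots} \times (-) \times \cH_{\ldots}$ to the collars used when building $\cL$ at earlier inductive stages, so that the gluings commute with the concatenation maps. This is resolved exactly as in the proof of associativity (Lemma~\ref{composition is associative}) by invoking the boundary-extension version of the collar neighbourhood theorem (Lemma~\ref{systems of collars exist}) iteratively over the corner stratification. The handling of framings plays no role in this recognition statement (it only asserts equality in $[\cF,\cH]$), so the argument uses only the manifold-with-faces and collar content of the earlier sections.
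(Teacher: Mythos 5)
Your construction is correct, but it is not the route the paper takes, so a comparison is in order. The paper's proof goes ``inward'': it observes that the collar neighbourhoods of the faces $\cW_{x_0\ldots x_i;y_0}\times\cG_{y_0\ldots y_j}\times\cV_{y_j;z_0\ldots z_k}$ inside $\cR_{xz}$ assemble, by the very definition of $\tilde\cQ_{xz}(\cW,\cV)$ as an abstract glued collar, into compatible embeddings $\tilde\cQ_{xz}\hookrightarrow\cR_{xz}$; one then extends the framed functions $f_{xz}$ on $\tilde\cQ_{xz}$ to functions $g_{xz}$ on all of $\cR_{xz}$ that are $>1$ outside the embedded collar, and the superlevel sets $g_{xz}^{-1}[p,\infty)$ are already the desired bordism between $\cT$ and $\cU=f^{-1}\{1\}$ --- no external gluing is needed, and the coherence is inherited for free from the system of collars on $\cR$. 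Your construction goes ``outward'': you attach the compact piece $\cQ^{\leq 1}_{xz}$ to $\cR_{xz}$ along the common broken face $B_{xz}$, which produces a diffeomorphic bordism but at the cost of the extra step of gluing two manifolds with corners along a codimension-one face, together with the inductive collar bookkeeping you correctly flag (the glued side faces must come out as $\cF_{x_0\ldots x_i}\times\cL_{x_i z_0}\times\cH_{z_0\ldots z_k}$ for the previously constructed $\cL$'s). That step is legitimate and has a precedent in the paper --- it is exactly the move ``glue $\tilde\cL^r$ to $\tilde\cM^r$ along a common boundary face'' in the proof of Lemma~\ref{composition is associative} --- but note that Lemma~\ref{lem:abstract gluing}, which you cite for it, is really a tool for assembling open collar neighbourhoods from face data, not for doubling along a closed face; the correct reference is the collar neighbourhood theorem (Lemma~\ref{systems of collars exist}) applied to matched collars of $B_{xz}$ on both sides. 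What the paper's approach buys is economy (the embedding and the coherence come packaged with the hypotheses); what yours buys is a more transparent ``concatenation of bordisms'' picture, $\cR$ from $\cT$ to the broken configuration and $\cQ^{\leq 1}$ from the broken configuration to $\cU$. You are also right that framings play no role here; the framed analogue is stated separately as Lemma~\ref{lem: fram comp detection}.
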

        \begin{proof}
            Compatible collar neighbourhoods define compatible embeddings 
            $$\tilde{\cQ}_{xz}(\cW, \cV) \hookrightarrow \cR_{xz}$$
            for all $x$ in $\cF$ and $z$ in $\cH$.
            We choose framed functions $(f_{xz}, s_{xz})$ on each $\tilde{\cQ}_{xz}(\cW,\cV)$, compatible with each other, along with compatible extensions $g_{xz}$ to functions $\cR_{xz} \to \bR$ which are $>1$ outside each $\tilde{\cQ}_{xz}(\cW,\cV)$.\par 
            Then the $g_{xz}^{-1}[p, \infty)$ forms a bordism of morphisms between $\cV \circ \cW$ and $\cT$.
        \end{proof}
        We have constructed (and showed well-definedness of):
        \begin{defn}
            The \emph{(homotopy) category of flow categories}, $\Flow$, has objects flow categories, morphisms $[\cdot, \cdot]$, and composition given as above. This is a (non-unital) category enriched in abelian groups.\par 
            By Example \ref{ex:pt unor}, $\Flow$ is in fact enriched in modules over $\Omega_*$.
        \end{defn}

        \begin{rmk}
            We do not require $\Flow$ to be unital and so we do not prove that it is, but we expect it to be.
        \end{rmk}
        \begin{conj}[Abouzaid-Blumberg] \label{Perf MO Conj}
            $\Flow$ is equivalent to the homotopy category of perfect $MO$-modules. 
        \end{conj}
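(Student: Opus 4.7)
The plan is to construct a functor $\Phi: \Flow \to \mathrm{Ho}(MO\text{-}\mathrm{Mod}^{\mathrm{perf}})$ and show it is an equivalence by essentially surjective and fully faithful. The key geometric input is the unoriented Pontrjagin--Thom isomorphism $\Omega_n(X) \cong \pi_n(MO \wedge X_+)$, which identifies bordism classes of closed manifolds mapping to $X$ with homotopy classes of maps of $MO$-modules; a flow-categorical morphism is assembled from exactly such Pontrjagin--Thom data, so the matching of abelian-group-valued morphisms is built in.

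First I would construct $\Phi$ on objects by induction on the grading of the objects of $\cF$. Suppose we have realised the full subflow-subcategory $\cF_{<k}$ on objects of grading $<k$ as an $MO$-module $\Phi(\cF_{<k})$, cellular with one cell of degree $|x|$ for each $x$ of grading $<k$. For each object $x$ with $|x| = k$, the moduli spaces $\{\cF_{xy}\}_{|y|<k}$ (with their face structure of broken flowlines) define a compatible system whose Pontrjagin--Thom class gives a map $S^{k-1} \to \Phi(\cF_{<k})$ in the $MO$-module category; use it to attach an $MO$-cell of dimension $k$. The compatibility conditions (associativity of concatenation, the face structure inherited from $\cF_{x_0 \ldots x_i}$) are exactly what is needed to ensure these attaching maps are well-defined up to contractible choice.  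The same philosophy extends $\Phi$ to morphisms: a morphism $\cW: \cF \to \cG$ yields a filtered chain map between cellular $MO$-modules, constructed cell by cell using the Pontrjagin--Thom classes of $\cW_{xy}$, and a bordism $\cR$ between $\cW$ and $\cV$ produces a homotopy via the Pontrjagin--Thom classes of $\cR_{xy}$.  That disjoint union on morphisms corresponds to addition of $MO$-module maps, and that the composition described in the excerpt corresponds to composition in $\mathrm{Ho}(MO\text{-}\mathrm{Mod})$, is the content of checking that the intermediate construction involving framed functions on $\tilde{\cQ}_{xz}$ models the smash product pairing underlying Pontrjagin--Thom multiplication.

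To show $\Phi$ is fully faithful, I would argue level by level up the cellular filtration.  Given two flow categories $\cF$ and $\cG$ and a morphism $f: \Phi(\cF) \to \Phi(\cG)$ in $\mathrm{Ho}(MO\text{-}\mathrm{Mod})$, the obstructions to realising $f$ by a flow morphism and to uniqueness up to bordism live in relative bordism groups of the form $\Omega_*(\cF_{x_0 \ldots x_i}; \cG_{y_0 \ldots y_j})$, which (by the Pontrjagin--Thom isomorphism again) are precisely the groups into which the primary and secondary obstructions coming from $MO$-module maps land. Matching them up requires a slightly delicate induction because the cellular filtration is not canonical, but the ambiguity is always absorbed by the bordism equivalence relation defining $[\cF, \cG]$.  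Essential surjectivity reduces to showing that every perfect $MO$-module admits a cell structure with cells in $MO \wedge S^n$, which is standard since such modules are built from iterated cofibres of maps between shifted copies of $MO$; one then reads off a flow category whose generating moduli spaces Pontrjagin--Thom represent the attaching maps.

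The main obstacle I anticipate is making the inductive construction of $\Phi$ genuinely functorial (as opposed to only well-defined on homotopy categories).  All higher coherences of the cell-attaching procedure must be compatible with concatenation and with the abstract gluing/framed-function constructions used to define composition in $\Flow$; this is precisely where the ongoing Abouzaid--Blumberg framework \cite{Abouzaid-Blumberg:I} is needed to control the $\infty$-categorical structure. Related to this, proving essential surjectivity on the nose (not just up to weak equivalence of $MO$-modules) depends on a convenient model for perfect $MO$-modules in which cell structures can be strictified; the homotopy-category formulation of the conjecture is what allows one to avoid these coherence issues in the statement but not in the proof.
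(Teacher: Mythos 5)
The first thing to say is that the paper does not prove this statement, and you should not expect to be able to compare your argument against one: it is stated as a conjecture, attributed to Abouzaid--Blumberg, and the authors are explicit elsewhere that any interpretation of $\Flow$ in terms of (module) spectra is ``strictly conjectural'' and deferred to the Abouzaid--Blumberg programme (part of which has since appeared as \cite{Abouzaid-Blumberg:I}, proving a version of the framed analogue, Conjecture \ref{conj:AB}). The most the paper offers in this direction is the remark that a functor from framed flow categories to finite spectra is ``essentially constructed'' in \cite{Large}, following \cite{CJS}. So the honest assessment is that you have been asked to prove an open problem, and what you have written is the expected strategy rather than a proof.

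As a strategy outline, your sketch is the standard one (Pontrjagin--Thom plus cellular induction on the grading), but the places where you wave your hands are exactly where the content of the conjecture lives, so the sketch cannot be credited as a proof. Concretely: (i) converting the strictly associative concatenation maps of a flow category into a coherent diagram of Thom collapse maps assembling into an $MO$-module is the Cohen--Jones--Segal construction, and its well-definedness ``up to contractible choice'' is a theorem requiring careful control of collars and the cubical/associahedral combinatorics, not a consequence of the face axioms alone; (ii) full faithfulness requires a relative Pontrjagin--Thom theorem for the cellular filtration in both directions --- in particular that homotopic $MO$-module maps arise from bordant flow morphisms --- and your appeal to ``obstructions living in relative bordism groups'' does not identify the comparison map, let alone show it is an isomorphism; (iii) you do not address that $\Flow$ as defined in the paper is only known to be non-unital (the authors explicitly decline to prove unitality), whereas the homotopy category of perfect $MO$-modules is unital, so even the statement of an equivalence presupposes a fact the paper leaves open. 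Your own closing paragraph concedes that the coherence issues require the Abouzaid--Blumberg framework; that concession is an accurate description of why this remains a conjecture rather than a theorem.
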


        \subsection{Abstract index bundles}\label{sec:abstr ind bun}
           Before defining stable framings on flow categories, we must first introduce abstract index bundles. In this subsection, we associate abstract index bundles to all of the manifolds we encountered in the previous section, along with isomorphisms between them compatible with the inclusions of boundary strata. These will be required to satisfy appropriate associativity conditions.
            \begin{defn}
                Let $\cM_{xy}$ be a manifold arising in the previous section (for example, $\cM$ could be a flow category and $x, y$ objects in it, or $\cM: \cF \to \cG$ could be a morphism, with $x \in \cF$ or $y \in \cG$). Its \emph{abstract index bundle} is the vector bundle:
                $$I^{\cM}_{xy} := T\cM_{xy} \oplus \bR \tau_y^{\cM} $$
                over $\cM$. Here $\tau^\cM_y$ is a formal generator of the real line $\bR \tau^\cM_y$.\par 
                When it is clear, we will sometimes drop the superscript $\cM$ from the notation. \par 
                We call a boundary face of $\cM_{xy}$ equipped with a product decomposition  a \emph{broken boundary face} (for example this includes all boundary faces of all manifolds associated to a flow category or a morphism of flow categories). However not all boundary faces of the previous section are of this form (for example, if $\cR$ is a bordism between two morphisms $\cW, \cW': \cF \to \cG$, then the boundary faces $\cW_{xy}$ and $\cW'_{xy}$ of $\cR_{xy}$ aren't broken boundary faces). The other boundary faces we label as either \emph{incoming} or \emph{outgoing} boundary faces.
            \end{defn}

                \begin{rmk}
                    Note that the notions of broken/unbroken and incoming/outgoing boundary faces are entirely a case of labelling and are not an intrinsic invariant of the manifolds in question.
                \end{rmk}
                
            Note that in all cases, if two boundary faces intersect, at least one of them is broken.\par 
            There were only two families of unbroken boundary faces we encountered in Section \ref{sec:unor flow cat}. The first was in the boundary faces of a bordism $\cR$ between morphisms $\cW$ and $\cW'$; we declare the boundary faces $\cW_{xy}$ of $\cR_{xy}$ to be incoming and the boundary faces $\cW'_{xy}$ of $\cR_{xy}$ to be outgoing. The second was in the boundary faces of the glued manifold $\tilde{\cS}(\cR, \cV)_{xz}$, considered in Lemma \ref{composition is well-defined} for $\cV: \cG \to \cH$ another morphism; in this case, the boundary faces $\tilde{\cQ}(\cW, \cV)_{xz}$ are declared to be incoming and the boundary faces $\tilde{\cQ}(\cW', \cV)_{xz}$ are declared to be outgoing.
            \begin{defn}\label{def:abstr ind}
                Let $\cA_{xy}$, $\cB_{yz}$ and $\cM_{xz}$ be manifolds arising in the previous section along with an inclusion of a boundary face $c: \cA_{xy} \times \cB_{yz} \to \cM_{xz}$ (for example, $\cA$, $\cB$ and $\cM$ could all be morphisms in some fixed flow category, with objects $x,y,z$, with $c$ the composition map) (for the purposes of this definition, $x$, $y$ and $z$ are entirely decorative). Assume they are equipped with systems of collars with which $c$ is compatible, along with compatible Riemannian metrics respecting the splittings on all collar neighbourhoods. Let $\nu_y^{\cA\times \cB}$ be the inwards pointing normal along this boundary face which is orthogonal to it. We define an isomorphism of vector bundles
                $$\psi=\psi_{xyz}^\cM: I^\cA_{xy} \oplus I^\cB_{yz} \to I^\cM_{xz}$$
                over the boundary face $\cA_{xy} \times \cB_{yz}$, to send $\tau_y^\cA$ to $\nu_y$ and $\tau_z^\cB$ to $\tau_z^\cM$, and to be given by $dc$ on the other factors.\par 
                Now let $c: \cD_{xy} \to \cM_{xy}$ be the inclusion of an unbroken boundary face. Again assume they are equipped with compatible systems of collars and Riemannian metrics, and let $\nu^\cD$ be the inwards pointing normal along this boundary face which is orthogonal to it. We define isomorphisms of vector bundles
                $$\psi=\psi^\cD_{xy}: I^\cD_{xy} \oplus \bR \sigma  \to I^\cM_{xy}$$
                where $\sigma = \sigma^\cD_{xy}$ is a formal generator of the real line $\bR \sigma$, to send $\sigma$ to $+\nu^\cD$ if $\cD$ is an incoming boundary face, and $-\nu^\cD$ if $\cD$ is an outgoing boundary face, and given by $dc$ on the other factors.
                We call all of the above maps $\psi$ \emph{abstract gluing isomorphisms}.
            \end{defn}
            The subscripts are not a necessary part of this definition, but they are present in all cases of interest, so we include them for clarity.
            \begin{prop}
                The $\psi$ are associative over codimension two faces, in the following sense.\par 
                First we consider a codimension two face which is the intersection of two broken boundary faces. This means there are inclusions (all called $c$) of boundary faces $\cA_{xy} \times \cB_{yz} \to \cM_{xz}$, $\cB_{yz} \times \cC_{zw} \to \cN_{yw}$, $\cA_{xy} \times \cN_{yw} \to \cO_{xw}$, and $\cM_{xz} \times \cC_{zw} \to \cO_{xw}$, such that the following diagram commutes:
                $$\xymatrix{
                    \cA_{xy} \times \cB_{yz} \times \cC_{zw} \ar[r] \ar[d] & 
                    \cA_{xy} \times \cN_{yw} \ar[d] \\
                    \cM_{xz} \times \cC_{zw} \ar[r] &
                    \cO_{xw}
                }
                $$
                Then the following diagram commutes:
                $$\xymatrix{
                    I^{\cA}_{xy} \oplus I^\cB_{yz} \oplus I^\cC_{zw} \ar[r]_-\psi \ar[d]_\psi &
                    I^\cA_{xy} \oplus I^\cN_{yw} \ar[d]_\psi \\
                    I^\cM_{xz} \oplus I^\cC_{zw} \ar[r]_-\psi &
                    I^\cO_{xw}
                }
                $$
                over $\cA_{xy} \times \cB_{yz} \times \cC_{zw}$.\par 
                Secondly, we consider a codimension two face which is the intersection of one broken and one unbroken boundary face. Let $\cA_{xy} \times \cB_{yz} \to \cM_{xz}$ be an inclusion of a broken boundary face, and $\cD_{xz} \to \cM_{xz}$ an unbroken boundary face. Assume that the codimension two face $\cD_{xz} \cap (\cA_{xy} \times \cB_{yz})$ is given by $\cA_{xy} \times \cD'_{yz}$, where $\cD'_{yz} \to \cB_{yz}$ is an unbroken boundary face. Assume also that $\cD$ and $\cD'$ are either both incoming or outgoing boundary faces. Then the following diagram commutes:
                $$\xymatrix{
                    I^\cA_{xy} \oplus I^{\cD'}_{yz} \oplus \bR \sigma \ar[r]_-\psi \ar[d]_\psi &
                    I^\cA_{xy} \oplus I^\cB_{yz} \ar[d]_\psi \\
                    I^\cD_{xz} \oplus \bR \sigma \ar[r]_-\psi &
                    I^\cM_{xz}
                }$$
                There is a similar commutative diagram in the case that $\cD_{xz} \cap (\cA_{xy} \times \cB_{yz})$ is given by $\cA_{xy} \times \cD_{yz}''$, where $\cD_{yz}'' \to \cB_{yz}$ is an unbroken boundary face.
            \end{prop}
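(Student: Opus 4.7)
The plan is to verify each of the two diagrams fibrewise, by chasing generators. The two essential inputs are the compatibility of the chosen systems of collars with the inclusion maps $c$, and the fact that the Riemannian metrics were required to respect the product splittings on collar neighbourhoods. Together these provide, in a neighbourhood of each point of the relevant codimension-two face, a product chart in which the two inward normals are standard basis vectors of $[0, \eps)^2$ and $dc$ restricts to the identity on these factors. Consequently every $\psi$ in sight is a block-diagonal isomorphism in the product frame, and checking commutativity reduces to tracking where each named generator lands.

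First I treat the purely broken case. Fix a point $p \in \cA_{xy} \times \cB_{yz} \times \cC_{zw}$ and pass to a compatible product collar chart near $p$ in $\cO_{xw}$ which factors through compatible charts in $\cM_{xz} \times \cC_{zw}$ and $\cA_{xy} \times \cN_{yw}$. The generator $\tau_w^\cC$ maps to $\tau_w^\cO$ along both routes, since each intermediate $\psi$ identifies the terminal $\tau$-factor with the one upstairs. Vectors in $T\cA \oplus T\cB \oplus T\cC$ are pushed forward by $dc$, and the two iterated pushforwards agree by associativity of the composition maps (which is already built into the definition of a flow morphism). The remaining generators are $\tau_y^\cA$ and $\tau_z^\cB$. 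Along one path $\tau_y^\cA$ is sent to $\nu_y^{\cA \times \cN}$ in $I^\cO$, while along the other $\tau_y^\cA$ is sent to $\nu_y^{\cA \times \cB}$ in $I^\cM$ and then pushed forward by $dc$; these agree because $dc$ carries the inward normal to $\cA \times \cB \subset \cM$ to the inward normal to $\cA \times \cN \subset \cO$, by the collar compatibility. The case of $\tau_z^\cB$ is symmetric, giving $\nu_z^{\cM \times \cC}$ on one side and $dc(\nu_z^{\cB \times \cC})$ on the other.

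The mixed broken-unbroken case is identical in spirit. The formal generator $\sigma$ is sent along one path to $\pm \nu^{\cD'}$ inside $I^\cB$ and then pushed forward by $dc$ into $I^\cM$, and along the other directly to $\pm \nu^\cD$ inside $I^\cM$. These agree because $\cD'$ sits inside $\cD$ compatibly with collars, so $dc$ carries the inward normal to $\cD' \subset \cB$ to the inward normal to $\cD \subset \cM$ at points of the intersection. The hypothesis that $\cD$ and $\cD'$ are either both incoming or both outgoing makes the two signs match. The remaining generators $\tau_y^\cA$, $\tau_z^\cB$ and the tangential factors are treated exactly as in the purely broken case, and the variant in which the unbroken face instead sits on the $\cA$ side of the decomposition is strictly analogous, with the roles of $\tau_y^\cA$ and the normal swapped.

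The main potential obstacle is not conceptual but rather the consistent identification of inward normals under nested collar inclusions, together with the need to invoke the product structure carefully so that the orthogonality of the two corner normals is available. Working throughout in explicit product collar coordinates, in which each collar contributes a factor $[0, \eps)$ and the Riemannian splitting trivialises its orthogonal complement, makes these identifications tautological: both diagrams then commute factor by factor in the chosen local frame, and the result follows.
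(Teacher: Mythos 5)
Your proposal is correct and follows essentially the same route as the paper's own (much terser) proof: both arguments chase the formal generators $\tau$, $\nu$, $\sigma$ through each composite and observe that the two routes agree generator by generator, with the tangential factors handled by associativity of the concatenation maps $c$. Your additional care in identifying inward normals under nested collar inclusions via the product charts and the metric splitting is exactly the point the paper leaves implicit, so the proposal is a valid (slightly more detailed) version of the same argument.
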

            \begin{proof}
                In the first case, both ways around the diagram send $\tau_y$ to $\nu_y$, $\tau_z$ to $\nu_y$ and $\nu_z$ to $\tau_w$, and are given by $dc$ on the other factors.\par 
                In the second case, both ways around the diagram send $\tau_y$ to $\nu_y$, $\tau_z$ to $\tau_z$ and $\sigma$ to $\pm \nu^\cD$, where the $\pm$ depends only on whether $\cD$ and $\cD'$ are incoming or outgoing boundary faces.
            \end{proof}
        \subsection{Stable framings}\label{Sec:framings}
            We now incorporate stable framings into the category of flow categories, to obtain a category of framed flow categories.
            \begin{defn}
                A \emph{(stable) framing} on a flow category $\cF$ consists of stable vector spaces $V_x$ for each $x \in \cF$, along with stable isomorphisms
                $$st=st^\cF_{xx'}: V_x \oplus I^\cF_{xx'} \to V_{x'}$$
                with stabilising bundle $\bT_{xx'}^\cF$ for all $x, x' \in \cF$, along with embeddings
                $$\iota_{xx'x''}^\cF: \bT_{xx'} \oplus V_{x'}^- \oplus \bT_{x'x''} \hookrightarrow \bT_{xx''}$$
                over each $\cF_{xx'} \times \cF_{x'x''}$, which are associative in the sense that the following diagram commutes:
                \begin{equation}\label{eq: emb comp}
                    \xymatrix{
                        \bT_{xx'} \oplus V^-_{x'} \oplus \bT_{x'x''} \oplus V^-_{x''} \oplus \bT_{x''x'''} \ar[r] \ar[d] &
                        \bT_{xx''} \oplus V^-_{x''} \oplus \bT_{x''x'''} \ar[d] \\
                        \bT_{xx'} \oplus V^-_{x'} \oplus \bT_{x'x'''} \ar[r] &
                        \bT_{xx'''}
                    }
                \end{equation}
                
                For all $x,x',x'' \in \cF$, consider the following diagram of stable isomorphisms:
                \begin{equation} \label{eq: fram comp flow} 
                    \xymatrix{
                        V_x \oplus I^\cF_{xx'} \oplus I^\cF_{x'x''}  \ar[r]_-{st^\cF} \ar[d]_{\psi} &
                        V_{x'} \oplus I^\cF_{x'x''} \ar[d]_{st^\cF} \\
                        V_{x} \oplus I^\cF_{xx''} \ar[r]_-{st^\cF} &
                        V_{x''}
                    }
                \end{equation}
                Composition both ways round do not have the same stabilising bundle: the composition right then down has stabilising bundle $\bT_{xx'} \oplus V_{x'}^- \oplus \bT_{x'x''}$ whereas the composition down then right has stabilising bundle $\bT_{xx''}$. However we are given an embedding $\bT_{xx'} \oplus V^-_{x'} \oplus \bT_{x'x''} \to \bT_{xx'}$. We require that after extending the stable isomorphism given by composition right then down along this embedding, that this diagram commutes. 
            \end{defn}  
            We will often drop the sub- and/or super-scripts from $st$, $\bT$ and $\iota$ to avoid clutter, if unambiguous from context. Similarly, we will write, for example, $\bT_{xx'x''}$ as shorthand for $\bT_{xx'} \oplus V^-_{x'} \oplus \bT_{x'x''}$, when necessary providing superscripts to indicate which stabilising bundles this refers to.
            \begin{example}
                Given manifolds $\cA_{xy}$ and $\cB_{yz}$, stable vector spaces $V_x,V_y,V_z$, integers $a, b \geq 0$, and stable isomorphisms $st^\cA: V_x \oplus I^\cA_{xy}  \to \bR^a \oplus V_y $ and $st^\cB: V_y \oplus I^\cB_{yz} \to \bR^b \oplus  V_z $, there is an induced stable isomorphism
                $$st^\cB \circ st^\cA: V_x \oplus I^\cA_{xy} \oplus I^\cB_{yz} \to  \bR^a \oplus \bR^b \oplus V_z \to  \bR^{a+b} \oplus V_z$$
                where the final map identifies $\bR^a \oplus \bR^b$ with $\bR^{a+b}$ preserving the orders of all entries.\par 
                Similarly, if $\cA$ is an unbroken boundary face of $\cD$ and given a stable isomorphism 
                $$dt^\cD_{xy}: V_x \oplus I^\cD_{xy} \to \bR^a \oplus V_y$$
                then we may consider the stable isomorphism with stabilising bundle $\bT^\cA_{xy} \oplus \bR \sigma_{xy}^\cA$:
                $$st^\cA_{xy}:  V_x \oplus I_{xy}^\cA  \to \bR^a\oplus V_y $$
                obtained from $\psi$ and $st^\cD$, sending $\sigma$ to $\sigma$.\par 
                Note that in either case, we do not switch the order of the factors of $\bR$. 
            \end{example}
            \begin{defn}
                Let $\cM_{xz}$ be a manifold with faces, equipped with a stable isomorphism with stabilising bundle $\bT^\cA_{xz}$ 
                $$st^\cM_{xz}: V_x \oplus I^\cM_{xz}  \to \bR^m \oplus V_z $$
                Assume that for each broken boundary face $\cA_{xy} \times \cB_{yz} \to \cM$, we are given stable isomorphisms with stabilising bundles $\bT^\cA_{xy}$ and $\bT^\cB_{yz}$ respectively
                $$st^\cA_{xy}: V_x \oplus I^\cA_{xy}  \to \bR^a \oplus V_y  $$
                and
                $$st^\cB_{yz}: V_y \oplus I^\cB_{yz} \to  \bR^b \oplus V_z  $$
                such that $a+b=m$, and also that for each unbroken boundary face $\cD_{xz} \to \cM_{xz}$, we are given a stable isomorphism with stabilising bundle $\bT^\cD_{xz}$
                $$st^\cD_{xz}: V_x \oplus I^\cD_{xz}  \to \bR^d \oplus V_z $$
                with $d=m-1$. Assume we are also given embeddings 
                $$\iota_{xyz}: \bT^\cA_{xy} \oplus V_y^- \oplus \bT^\cB_{yz} \hookrightarrow \bT^\cM_{xz}$$
                over each each broken boundary face $\cA_{xy} \times \cB_{yz}$, and
                $$\iota^\bD_{xz}: \bT_{xz}^\cD \hookrightarrow \bT_{xz}^\cM$$
                over each unbroken boundary face $\cD_{xz}$.
                We say that these form a \emph{system of framings} for $\cM_{xz}$ if over each boundary face, these stable framings and embeddings are compatible with each other via the abstract gluing isomorphisms, in the following sense.\par 
                Consider the following diagrams: over each broken boundary face $\cA_{xy} \times \cB_{yz} \to \cM_{xz}$:
                \begin{equation} \tag{$\star$}\label{eq:broken comp}
                \xymatrix{
                    V_x \oplus I^\cA_{xy} \oplus I^\cB_{yz}\ar[d]_\psi \ar[dr]^{st^\cB \circ st^\cA} & 
                    \\
                    V_x \oplus I^\cM_{xz}  \ar[r]_{st^\cM} &
                    \bR^m \oplus V_z 
                }
                \end{equation}
                and over each unbroken boundary face $\cD_{xz} \to \cM_{xz}$, viewing $\bR \sigma^\cD_{xz} = $ the final copy of $\bR$ in $\bR^m$ as the stabilising bundle:
                \[\tag{$\dag$}\label{eq:unbroken comp}\xymatrix{
                    V_x \oplus I^\cD_{xz} \oplus \bR \sigma^\cD_{xz}
                    \ar[dr]^{st^\cD} \ar[d]_\psi & 
                    \\
                    V_x \oplus I^\cM_{xz}\ar[r]_{st^\cM} &
                    \bR^m \oplus V_z  
                }
                \]
                We require that (\ref{eq:broken comp}) and (\ref{eq:unbroken comp}) both commute, after extending the stable isomorphism along the diagonal along the appropriate embedding $\iota$.
            \end{defn}
            \begin{example}
                Let $\cF$ be a framed flow category. Then each $st^\cF_{xx''}$, along with the induced framings $st^\cF_{xx'} \circ st^\cF_{x'x''}$ on each boundary face and the embeddings $\iota_{xx'x''}$, form a system of framings for each $\cF_{xx''}$.
            \end{example} 
            \begin{defn}\label{def:farm}
                    We define framings for every type of object introduced in Section \ref{sec:unor flow cat}.
                     \begin{enumerate} 
                    \item Let $\cW: \cF \to \cG$ be a morphism of flow categories, and assume that $\cF$ and $\cG$ are framed. A \emph{framing} on $\cW$ consists of stable isomorphisms 
                    $$st^\cW_{xy}: V_x \oplus I^\cW_{xy}  \to \bR \oplus V_y $$
                    with stabilising bundles $\bT^\cW_{xy}$ for each $x \in \cF$ and $y \in \cG$, along with, for each boundary face of each $\cW_{xy}$, an embedding of the stabilising bundle of the induced framing on that face into $\bT^\cW_{xy}$.\par 
                    We require that this data, combined with the framings for $\cF$, $\cG$ and each lower dimensional $\cW_{x'y'}$, form systems of framings for each $\cW_{xy}$.
                    \item Let $\cR$ be a bordism from $\cW$ to $\cW'$, where $\cW, \cW': \cF \to \cG$ are framed morphisms between framed flow categories. A \emph{framing} on $\cR$ consists of stable isomorphisms
                    $$st^\cR_{xy}:V_x \oplus I^\cR_{xy}  \to \bR^2 \oplus V_y  $$
                    with stabilising bundles $\bT^\cR_{xy}$ along with embeddings of the stabilising bundle over each boundary face of each $\cR_{xy}$ into $\bT^\cR_{xy}$, such that these, combined with the framings for $\cF, \cG, \cW$, $\cW'$ and each lower dimensional $\cR_{x'y'}$, form systems of framings for each $\cR_{xy}$.
                    \item Let $\cW: \cF \to \cG$ and $\cV: \cG \to \cH$ be framed morphisms between framed flow categories. We form the manifolds $\tilde{\cQ}_{xz} := \tilde{\cQ}_{xz}(\cW, \cV)$ as in Section \ref{sec:unor flow cat}. A \emph{framing} on the collection of these manifolds $\tilde{\cQ}$ consists of stable isomorphisms 
                    $$st^{\tilde{\cQ}}_{xz}: V_x \oplus I^{\tilde{\cQ}}_{xz} \to \bR^2 \oplus V_z  $$
                    with stabilising bundles $\bT^{\tilde \cQ}_{xz}$ along with embeddings of the stabilising bundle over each boundary face of $\tilde{\cQ}_{xz}$ into $\bT^{\tilde \cQ}_{xz}$ such that, combined with the framings for $\cF$, $\cG$, $\cW$, $\cV$ and each lower dimensional $\tilde{\cQ}_{x'y'}$, form systems of framings for each $\tilde{\cQ}_{xz}$.
                    \item Let $\cW, \cW': \cF \to \cG$ and $\cV: \cG \to \cH$ be framed morphisms between framed flow categories. Assume that we are given framings on $\tilde{\cQ} := \tilde{\cQ}(\cW, \cV)$ and $\tilde{\cQ}':= \tilde{\cQ}(\cW', \cV)$. We form the manifolds $\tilde{\cS}_{xz} := \tilde{\cS}_{xz}(\cR, \cV)$ as in Lemma \ref{composition is well-defined}. A \emph{framing} on the collection of these manifolds $\tilde{\cS}$ consists of stable isomorphisms
                    $$st^{\tilde{\cS}}_{xz}: V_x \oplus I^{\tilde{\cS}}_{xz} \to \bR^3 \oplus V_z  $$
                    with stabilising bundles $\bT^{\tilde \cS}_{xz}$ along with embeddings of the stabilising bundle over each boundary face of $\tilde{\cS}_{xz}$ into $\bT^{\tilde \cS}_{xz}$ such that, when combined with all other framings in sight, form systems of framings for each manifold in sight.
                \end{enumerate}
            \end{defn}
            For expository purposes, we spell out what this means more explicitly in the simplest cases.
            \begin{example}\label{ex:fram mor}
                Let $\cW: \cF \to \cG$ be a morphism between two framed flow categories. Note all boundary faces of each $\cW_{xy}$ are broken. Given stable isomorphisms 
                $$st^\cW_{xy}: V_x \oplus I^\cW_{xy}  \to \bR \oplus V_y $$
                with stabilising bundles $\bT^\cW_{xy}$, the compatibility requirements for them to form a framing on $\cW$ is that the following.\par 
                We require diagrams analagous to (\ref{eq: emb comp}) to commute over each $\cW_{xx'x'';y}$, $\cW_{xx';y'y}$ and $\cW_{x;y''y'y}$. Similarly to (\ref{eq: fram comp flow}) over each $\cW_{xx';y}$ and $\cW_{x;y'y}$ respectively, where we extend the compositions going right then down along the appropriate embedding of stabilising bundles:
                $$\xymatrix{
                    V_x \oplus I^\cF_{xx'} \oplus I^\cW_{x'y} \ar[r]_-{st^\cF} \ar[d]_\psi &
                    V_{x'} \oplus I^\cW_{x'y} 
                    \ar[d]_{st^\cW} \\
                    V_x \oplus I^\cW_{xy} \ar[r]_{st^\cW} &
                    \bR \oplus V_y 
                }\xymatrix{
                    V_x \oplus I^\cW_{xy'} \oplus I^\cG_{y'y} \ar[r]_{st^\cW} \ar[d]_{\psi} & \bR \oplus V_{y'} \oplus 
                    I^\cG_{y'y} \ar[d]_{st^\cG} \\
                    V_x \oplus I^\cW_{xy}  \ar[r]_{st^\cW} &
                    \bR \oplus V_y
                }
                $$
            \end{example}
            \begin{defn}
                Let $\cW: \cF \to \cG$ be a framed morphism of framed flow categories. We define $\overline{\cW}$ to be the morphism $\cW$, equipped with the framing 
                $$st^{\overline{\cW}}: V_x \oplus I^\cW_{xy}  \to \bR \oplus V_y$$
                with stabilising bundles $\bT^{\overline \cW}_{xy} := \bT^\cW_{xy} \oplus \bR$, given by $st^\cW$ postcomposed with the map $-1: \bR \to \bR$.
            \end{defn}
            \begin{example}
                Let $\cR$ be a bordism between two framed morphisms $\cW, \cW': \cF \to \cG$ of framed flow categories. Given stable isomorphisms
                $$st^\cR_{xy}: V_x \oplus I^\cR_{xy} \to \bR \oplus V_y$$
                the compatibility requirement for them to form a framing on $\cR$ is that the following diagrams commute. \par 
                Over each $\cR_{xx';y}$ and $\cR_{x;y'y}$, diagrams similar to those in Example \ref{ex:fram mor} commute (note that some factors of $\bR$ must be replaced with factors of $\bR^2$ though). \par 
                Furthermore, over each $\cW_{xy}$:
                $$\xymatrix{
                    V_x \oplus I^\cW_{xy} \oplus \bR \sigma^\cW
                    \ar[dr]^{st^\cW} \ar[d]_\psi &
                    \\
                    V_x \oplus I^\cR_{xy}  \ar[r]_{st^\cR} &
                    \bR^2 \oplus V_y
                }
                $$
                Note that the diagonal arrow sends the line $\bR \sigma^\cW$ to the first factor in $\bR^2$.\par 
                Over each $\cV_{xy}$:
                $$\xymatrix{
                    V_x \oplus I^\cV_{xy} \oplus \bR \sigma^\cV\ar[dr]^{st^\cV} \ar[d]_\psi &
                    \\
                    V_x \oplus I^\cR_{xy} \ar[r]_{st^\cR} &
                    \bR^2 \oplus V_y
                }
                $$
                The boundary faces $\cW_{xy}$ and $\cV_{xy}$ are the unbroken boundary faces of $\cR_{xy}$.
            \end{example}
            \begin{example}\label{ex:fram inv}
                Let $\cW: \cF \to \cG$ be a framed morphism between framed flow categories, and $\cR$ the bordism from $\cW$ to itself constructed in Lemma \ref{lem:unor inv}. We equip $\cR$ with a framing as follows.\par 
                We write $\cR_{xy} = \cW_{xy} \times [0,1]_s$, with $s$ the coordinate on the interval. Then define
                $$st^\cR_{xy}: V_x \oplus I^\cR_{xy}  = V_x \oplus I^\cW_{xy} \oplus \bR \partial_s \xrightarrow{st^{\cW}} \bR^2 \oplus V_y$$
                where $\partial_s$ is sent to $(0, 1) \in \bR^2$. \par 
                It is straightforward to check this is a stable framing; we spell out the details over the unbroken outgoing boundary face $\cW_{xy} \times \{1\}$. Note that the inwards normal along this face is $-\partial_s$. \par 
                We must check that the following diagram commutes over $\cW_{xy} \times \{1\}$:
                $$\xymatrix{
                    V_x \oplus  I^\cW_{xy} \oplus \bR \sigma \ar[d]_\psi \ar[dr]^{st^\cW} &
                    \\
                    V_x \oplus I^\cR_{xy} \ar[r]_{st^\cR} &
                    \bR^2 \oplus V_y
                }
                $$
                Both ways around the diagram send $\sigma$ to $(0, 1) \in \bR^2$, and agree on the other factors by construction of $st^\cR$.
            \end{example}
            \begin{example}\label{ex:hom fram bord}
                Homotopic framings on the same morphism are framed bordant.
            \end{example}
            \begin{example}\label{ex:hom stab fram bord}
                Let $\cW: \cF \to \cG$ be a framed morphism, and let $E$ be some vector space. We define a framed morphism $\cW^E: \cF \to \cG$ as follows: the underlying unframed morphism of $\cW^E$ is defined to be the same as that of $\cW$. We define stabilising bundles $\bT^{\cW^E}_{xy} := \bT^\cW_{xy} \oplus E$, and we define the stable framings of $\cW^E$ to be those of $\cW$ extended under the natural inclusions $\bT^\cW_{xy} \to \bT^{\cW^E}_{xy}$. Then $\cW$ and $\cW^E$ are framed bordant.\par 
                More generally, if $E_{xy} \to \cW_{xy}$ are vector bundles and we are given embeddings $E_{x'y} \to E_{xy}$ and $E_{xy'} \to E_{xy}$ over each $\cW_{xx';y}$ and $\cW_{x;y'y}$ respectively which are suitably associative, we obtain another framed morphism $\cW^E$ with stabilising bundles $\bT^{\cW^E}_{xy} := \cW_{xy} \oplus E_{xy}$, given by extending the framings for $\cW$ along the natural inclusions $\bT^\cW_{xy} \to \bT^{\cW^E}_{xy}$. Once again, $\cW^E$ is framed bordant to $\cW$.\par 
                We call the data of the $E_{xy}$ above a \emph{system of vector bundles} on $\cW$, and a framed morphism of the form $\cW^E$ a \emph{stabilisation} of $\cW$ along $E$. One can similarly define systems of vector bundles, and stabilisations along them, for the other objects defined in Definition \ref{def:farm}.
            \end{example}
            \begin{defn}\label{def:fram mor equiv}
                Let $\cF$ and $\cG$ be framed flow categories. We define $[\cF, \cG]^{fr}$ to be the set of framed morphisms $\cF \to \cG$, modulo the equivalence relation generated by
                \begin{enumerate}
                    \item $\cW \sim \cV$ if $\cW$, $\cV$ are framed bordant.
                    \item $\cW \sim \cV$ if $\cW$ and $\cV$ are diffeomorphic, via a diffeomorphism compatible with the framings.
                \end{enumerate}
            \end{defn}
            Similarly to $[\cF, \cG]$, it is clear that $[\cF, \cG]^{fr}$ forms a commutative monoid under disjoint union, with unit given by the empty morphism.
            \begin{lem}
                Inverses exist in $[\cF, \cG]^{fr}$, i.e. it's a group.
            \end{lem}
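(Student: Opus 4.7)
The plan is to show that $[\overline{\cW}]$ is an inverse of $[\cW]$ in $[\cF,\cG]^{fr}$. This will amount to exhibiting a framed bordism $\cR$ whose incoming boundary is $\cW \sqcup \overline{\cW}$ and whose outgoing boundary is the empty morphism. The underlying smooth structure will be exactly the cylinder used in Lemma \ref{lem:unor inv}; the new content is choosing the right framing and the right labeling of the boundary faces.

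Concretely, I take $\cR_{xy} := \cW_{xy} \times [0,1]$ and equip it with the product framing
\[
st^\cR_{xy} \colon V_x \oplus I^\cR_{xy} = V_x \oplus I^\cW_{xy} \oplus \bR\partial_s \longrightarrow \bR^2 \oplus V_y
\]
which acts as $st^\cW_{xy}$ on $V_x \oplus I^\cW_{xy}$ (landing in the first factor of $\bR^2$) and sends $\partial_s$ to $(0,1)$, exactly as in Example \ref{ex:fram inv}. Unlike that example, however, I declare \emph{both} unbroken boundary faces $\cW_{xy} \times \{0\}$ and $\cW_{xy} \times \{1\}$ to be incoming, with empty outgoing face, so that $\cR$ is presented as a framed bordism from $\cW \sqcup \cW'$ to the empty morphism, where $\cW'$ denotes the framing induced at $\{1\}$. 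Using Definition \ref{def:abstr ind}, the sign convention $\sigma \mapsto +\nu$ at incoming faces combined with the inward normals $+\partial_s$ at $\{0\}$ and $-\partial_s$ at $\{1\}$ forces the induced framings at the two ends to differ precisely by a sign in the second $\bR$-factor of $\bR^2$.

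The main obstacle, and the step I expect to require the most care, is the identification of the face $\cW_{xy}\times\{0\}$ with $\cW$ and of the face $\cW_{xy}\times\{1\}$ with $\overline{\cW}$ as framed morphisms, not just up to homotopy but after the appropriate sequence of stabilisations and rearrangements of stabilising bundles. Because $\overline{\cW}$ is defined by post-composing $st^\cW$ with $-1\colon \bR\to\bR$ and enlarging the stabilising bundle by an extra $\bR$-summand, the required equality of stable isomorphisms will involve precisely such a swap of $\bR$-factors on the target, which one handles via the associator discussion in Remark \ref{rmk:isbell} (either by working in the Isbell model or by tracking the canonical associator isomorphisms explicitly). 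Once this sign bookkeeping is carried out at each boundary face compatibly with the system of collars, the data assembles into a genuine framing on $\cR$ in the sense of Definition \ref{def:farm}. This yields $[\cW] + [\overline{\cW}] = 0$ in $[\cF,\cG]^{fr}$, completing the proof.
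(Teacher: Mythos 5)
Your proposal is correct and is essentially identical to the paper's proof: the paper also takes the product cylinder of Example \ref{ex:fram inv}, relabels both ends as incoming so that the normal-vector sign convention produces $\cW$ (up to a stabilisation, i.e.\ $\cW^{\bR}$) at one end and $\overline{\cW}$ at the other, and concludes that the cylinder is a framed nullbordism of $\cW \sqcup \overline{\cW}$. The bookkeeping you flag (stabilising-bundle enlargement and the sign/factor swap, handled via Examples \ref{ex:hom fram bord} and \ref{ex:hom stab fram bord}) is exactly the content the paper's one-line proof leaves implicit.
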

            \begin{proof}
                Let $\cW^\bR$ be as constructed in Example \ref{ex:hom stab fram bord}; as noted before, this is framed bordant to $\cW$. Then the framed bordism constructed in Example \ref{ex:fram inv} can alternatively be viewed as a framed nullbordism of $\cW^\bR \sqcup \overline{\cW}$, implying that the inverse of $\cW$ is $\overline{\cW}$.
            \end{proof}
            \begin{lem}\label{lem:fram Q}
                Let $\cW, \cW': \cF \to \cG$ and $\cV: \cG \to \cH$ be framed morphisms of framed flow categories. Let $\tilde{\cQ} := \tilde{\cQ}(\cW, \cV)$, let $\tilde{\cQ}' := \tilde{\cQ}(\cW', \cV)$ and let $\tilde{\cS} := \tilde{\cS}(\cR, \cV)$.\par 
                Then $\tilde{\cQ}, \tilde{\cQ}'$ and $\tilde{\cS}$ admit framings, canonical up to stabilisation and isomorphism of framings.
            \end{lem}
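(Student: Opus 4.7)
The plan is to build the framings by induction on the degree difference $|x|-|z|$: first construct them on a neighbourhood of the compact carapace using natural compositions of the given framings, then extend across the interior by a convexity argument. I treat $\tilde{\cQ}=\tilde{\cQ}(\cW,\cV)$ first. On each slab $\cW_{xy} \times \cV_{yz} \times [0,\eps)_s$, I form the composite stable isomorphism
\[
st^\cV\circ st^\cW: V_x \oplus I^\cW_{xy} \oplus I^\cV_{yz} \xrightarrow{st^\cW} \bR \oplus V_y \oplus I^\cV_{yz} \xrightarrow{st^\cV} \bR^2 \oplus V_z,
\]
with stabilising bundle $\bT^\cW_{xy} \oplus V_y^- \oplus \bT^\cV_{yz}$, in the sense of Remark \ref{rmk:fram sgn}. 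Under the abstract gluing isomorphism $\psi$ (which sends $\tau^\cW_y$ to the inward normal $\partial_s$ of $\cW_{xy} \times \cV_{yz} \times \{0\}$ in $\tilde{\cQ}_{xz}$ and $\tau^\cV_z$ to $\tau^{\tilde{\cQ}}_z$), this amounts to a stable framing of $V_x \oplus I^{\tilde{\cQ}}_{xz}$ over $\cW_{xy} \times \cV_{yz} \times \{0\}$; extending translation-invariantly in $s$ produces a framing of the full slab.

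Next I will check that these slabwise framings descend under the equivalence relation \eqref{equiv}. For $a \in \cW_{xy}$, $b \in \cG_{yy'}$, $c \in \cV_{y'z}$, the identification $(a,\cC(b,c,u),v) \sim (\cC(a,b,v),c,u)$ relates two candidate framings that differ by an insertion of $st^\cG_{yy'}$ in the middle $V_y$ factor; the compatibility squares of Example \ref{ex:fram mor} say that this discrepancy is absorbed after extending stabilising bundles along the embeddings $\iota^\cG$ coming from the framing of $\cG$. Enlarging once to a single stabilising bundle $\bT^{\tilde{\cQ}}_{xz}$ over a neighbourhood of the carapace, into which every slabwise stabilising bundle embeds compatibly, therefore yields a well-defined stable framing near $C(\tilde{\cQ}_{xz})$. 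The remaining non-compact boundary faces $\cF_{x_0\ldots x_i} \times \tilde{\cQ}_{x_i z_0} \times \cH_{z_0\ldots z_k}$ already carry framings by the inductive hypothesis, and they agree with the carapace framing on overlaps by the compatibility squares of $\cW$ and $\cV$ with $\cF$ and $\cH$. Extending the combined boundary framing to all of $\tilde{\cQ}_{xz}$ is then a convex problem after a further enlargement of $\bT^{\tilde{\cQ}}_{xz}$, in direct analogy with Proposition \ref{prop: ext fram fn}: an extension exists, and any two are related by a homotopy, hence by a framed bordism via Examples \ref{ex:hom fram bord} and \ref{ex:hom stab fram bord}. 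This proves both existence and canonicity for $\tilde{\cQ}$.

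The argument for $\tilde{\cQ}'=\tilde{\cQ}(\cW',\cV)$ is verbatim the same. For $\tilde{\cS}=\tilde{\cS}(\cR,\cV)$, the same inductive strategy applies, with the single additional compact stratum $\cR_{\ldots} \times \cG \times \cV_{\ldots}$ framed by $st^\cV \circ st^\cR$; its compatibility on the codimension-two strata $\cW_{\ldots} \times \cG \times \cV_{\ldots}$ and $\cW'_{\ldots} \times \cG \times \cV_{\ldots}$ with the previously built framings on $\tilde{\cQ}$ and $\tilde{\cQ}'$ follows from the unbroken-face compatibility squares for $\cR$ with respect to $\cW$ and $\cW'$. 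The main obstacle is the bookkeeping: arranging that the three flavours of compatibility --- the $V_y \oplus V_y^-$ cancellation used in composing $st^\cW$ and $st^\cV$, the $st^\cG$-compatibility used on gluing overlaps, and the unbroken-face $\psi$-compatibility used at the codimension-two strata of $\tilde{\cS}$ --- fit inside a single stabilising bundle on each total space, so that the slabwise framings agree rather than merely after stabilisation. Once this is set up, existence and canonicity up to stabilisation and framed bordism both follow from convexity of the extension problem.
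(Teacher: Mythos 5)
Your core construction is the paper's: on each slab $\cW_{xy}\times\cV_{yz}\times[0,\eps)_s$ one takes $st^\cV\circ st^\cW$ transported by $\psi^{-1}$ (with $\tau_y^\cW\mapsto\partial_s$), checks agreement on overlaps via the $st^\cG$-compatibility squares and associativity of $\psi$, and houses everything in stabilising bundles $\bT^{\tilde\cQ}_{xz}$ chosen inductively with compatible embeddings. The treatment of $\tilde\cS$ via $st^\cV\circ st^\cR$ and the unbroken-face compatibility with $\tilde\cQ,\tilde\cQ'$ also matches.

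However, there is a structural misunderstanding in your "then extend across the interior by a convexity argument" step. By definition $\tilde{\cQ}_{xz}$ \emph{is} the quotient of $\bigsqcup_y \cW_{xy}\times\cV_{yz}\times[0,\eps)$, so the slabs form an open cover of the entire space, not merely of a neighbourhood of the carapace; once the slabwise framings are shown to agree on overlaps (and the non-compact broken faces $\tilde\cQ_{xx';z}$, $\tilde\cQ_{x;z'z}$ are checked to be compatible, which is the content of diagram (\ref{eq:broken comp}) rather than an input to an extension problem), the framing is already defined everywhere. There is no interior left to extend over, so the analogue of Proposition \ref{prop: ext fram fn} is never invoked. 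This matters for the conclusion you draw: your extension step introduces a choice controlled only up to homotopy, so your argument as written yields canonicity up to homotopy of framings (hence framed bordism), whereas the lemma asserts --- and Lemma \ref{lem:fram comp well-def} later uses --- canonicity up to \emph{stabilisation and isomorphism}. In the paper's proof the only genuine choice is the inductive choice of the bundles $\bT^{\tilde\cQ}_{xz}$ and embeddings $\iota$, which is unique up to stabilisation and isomorphism; the stable isomorphisms themselves are then determined by the gluing formula. Removing the spurious extension step recovers the stronger canonicity statement.
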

            \begin{proof}
                We first frame $\tilde{\cQ}$; framing $\tilde{\cQ}'$ is identical. We must first construct the stabilising bundle.\par
                Inductively over the index difference $|x|-|z|$, we inductively choose vector bundles $\bT^{\tilde Q}_{xz}$ over each $\tilde\cQ_{xz}$, along with embeddings $\iota: \bT^{\cW\cV}_{xyz}, \bT^{\cF\tilde\cQ}_{xx'z}, \bT_{xz'z}^{\tilde\cQ \cH} \to \bT^{\tilde Q}_{xz}$, which are compatible on overlaps in the sense that the following diagram commutes for each $x,x' \in \cF$, $y,y' \in \cG$ and $z,z' \in \cH$:
                \begin{equation*}
                    \xymatrix{
                        \bT^{\cW\cG\cV}_{xyy'z} \ar[r] \ar[d] &
                        \bT^{\cW\cV}_{xyz} \ar[d] \\
                        \bT^{\cW\cV}_{xy'z} \ar[r] &
                        \bT^{\tilde Q}_{xz}
                    }
                \end{equation*}
                Note that the choice of these bundles and embeddings is unique up to stabilisation and isomorphism.\par 
                We define stable isomorphisms
                $$st^{\tilde{\cQ}}_{xz}: V_x \oplus I^{\tilde{\cQ}}_{xz} \to \bR^2 \oplus V_z$$
                with stabilising bundles $\bT^{\tilde \cQ}_{xz}$ as follows. On each $\cW_{xy} \times \cV_{yz} \times [0, \varepsilon)_s \subseteq \tilde{\cQ}_{xz}$, we define it as the composition
                $$st^{\tilde{\cQ}}_{xz}: V_x \oplus I^{\tilde{\cQ}}_{xz} \xrightarrow{\psi^{-1}} V_x \oplus I^\cW_{xy} \oplus I^\cV_{yz} \xrightarrow{st^\cV \circ st^\cW} \bR^2 \oplus V_z$$
                extended along the embedding $\iota: \bT^\cW_{xy} \oplus V_y^- \oplus \bT^\cV_{yz} \to \bT^{\tilde \cQ}_{xz}$, noting that the first copy of $\bR$ comes from $st^\cW$ and the second from $st^\cV$. Note that $\psi$ isn't a priori defined over all of this open subset but we can extend it from the boundary to the rest in a canonical way, since $\partial_s$ is naturally identified with $\nu_y$. \par 
                We must check these agree on overlaps for this to define a stable framing over the entirety of $\tilde{\cQ}_{xz}$. Noting that on this subset, we have that $T\tilde{\cQ} = \bR \partial_s \oplus T\cW_{xy} \oplus T\cV_{yz}$, we can define an isomorphism of vector bundles
                $$\bar{\psi}: \bR \partial_s \oplus \bR \tau_z \oplus T\cW_{xy} \oplus T\cV_{yz} \to I^\cW_{xy} \oplus I^\cV_{yz}$$
                by sending $\partial_s$ to $\tau_y$ and $\tau_z$ to $\tau_z$. Then noting that we can identify the domain with $I^{\tilde{\cQ}}$, the following diagram commutes: 
                $$\xymatrix{
                    I^{\tilde{\cQ}}_{xz} \ar[rr]_-= \ar[drr]_{\psi^{-1}} &&
                    \bR \partial_s \oplus \bR \tau_z \oplus T\cW_{xy} \oplus T\cV_{yz} \ar[d]_{\bar{\psi}} \\
                    && I^\cW_{xy} \oplus I^\cV_{yz}
                }
                $$
                Now let $y, y' \in \cG$, and consider the maps
                $$\alpha: \cW_{xy} \times \cG_{yy'} \times \cV_{y'z} \times [0, \varepsilon)^2_{u,v} \to \cW_{xy} \times \cV_{yz} \times [0, \varepsilon)_s$$
                and $$\beta: \cW_{xy} \times \cG_{yy'} \times \cV_{y'z} \times [0, \varepsilon)^2_{u,v} \to \cW_{xy'} \times \cV_{y'z} \times [0, \varepsilon)_s$$
                which we used as the gluing maps in the construction of $\tilde{\cQ}$. These were defined to send $(a, b, c, u, v)$ to $(a, \cC(b, c; u), v)$ and $(\cC(a, b; v), c, u)$ respectively. The compatibility we need is that the outside part of the following diagram commutes. Note that the top entry is $V_x \oplus \textrm{(the tangent space of the overlap)}\oplus \bR \tau_z $.
                \[
                \centerline{\xymatrix{
                    &V_x \oplus \bR \partial_u \oplus \bR\partial_v \oplus T\cW_{xy} \oplus T\cG_{yy'} \oplus T\cV_{y'z} \oplus \bR \tau_z  \ar[dl]_{d\alpha} \ar[dr]_{d\beta} \ar[d]_= & \\
                    V_x \oplus T\cW_{xy} \oplus \bR \partial_s \oplus T\cV_{yz} \oplus \bR \tau_z  \ar[d]_{\bar{\psi}} &
                    V_x \oplus I^{\tilde{\cQ}}_{xz} &
                    V_x \oplus T\cW_{xy'} \oplus \bR \partial_s \oplus T\cV_{y'z} \oplus \bR \tau_z \ar[d]_{\bar{\psi}}\\
                    V_x \oplus I^\cW_{xy} \oplus I^\cV_{yz} \ar[dr]_{st^\cV \circ st^\cW} \ar[ur]_-\psi &
                    V_x \oplus I^\cW_{xy} \oplus I^\cG_{yy'} \oplus I^\cV_{y'z} \ar[l]_\psi \ar[r]_-\psi \ar[d]_{st^\cV \circ st^\cG \circ st^\cW} &
                    V_x \oplus I^\cW_{xy'} \oplus I^\cV_{y'z} \ar[lu]_\psi \ar[ld]_{st^\cV \circ st^\cW}\\
                    & \bR^2 \oplus V_z &
                }
                }
                \]
                where we extend the three stable framings in the two bottom triangles along the embeddings from their original stabilising bundles into $\bT^{\tilde \cQ}_{xz}$. The two bottom triangles commute by the compatibility of the $st^\cW$, $st^\cG$, $\psi$ and $\iota$. The middle diamond commutes by associativity of the maps $\psi$. The top two squares together we can check commutativity of directly: all three ways from the top entry to the entry in the diagram below it send $\partial_u$ to $\nu_y$, $\partial_v$ to $\nu_{y'}$, $\tau_z$ to $\tau_z$, don't do anything to $V_x$ and are given by $dc$ on the other factors.\par 
                This implies that the $st^{\tilde{\cQ}}$, which we defined on an open cover of $\tilde{\cQ}_{xz}$, actually glue to give a well-defined stable isomorphism over the entirety of $\tilde{\cQ}_{xz}$.\par 
                We next must check that the $st^{\tilde{\cQ}}$ form an actual framing of $\tilde{\cQ}$.  Commutativity of \ref{eq:broken comp} over the boundary face $\tilde{\cQ}_{xx';z}$ follows on from commutativity of the following diagram; noting that going right and then down corresponds to applying $st^\cF$ then $st^{\tilde{\cQ}}$ whereas going down then right corresponds to applying $\psi$ then $st^{\tilde{\cQ}}$:
                \[\xymatrix{
                    V_x \oplus I^\cF_{xx'} \oplus I^{\tilde{\cQ}}_{x'z}\ar[d]_\psi &
                    V_x \oplus I^\cF_{xx'} \oplus I^\cW_{x'y} \oplus I^\cV_{yz} \ar[l]_\psi \ar[ddl]_\psi \ar[dd]^{st^\cV \circ st^\cW \circ st^\cF} \\
                    V_x \oplus I^{\tilde{\cQ}}_{xz} & \\
                    V_x \oplus I^\cW_{xy} \oplus I^\cV_{yz} \ar[r]_{st^\cV \circ st^\cW} \ar[u]_\psi &
                    \bR^2 \oplus V_z
                }
                \]
                over the intersection of this face with each open subset of the form $\cW_{xy} \times \cV_{yz} \times [0, \varepsilon)$. As usual, maps of the form $st \circ st$, the first (i.e. rightmost) copy of $st$ applied provides the first copy of $\bR$ and the second (i.e. leftmost) $st$ provides the second copy of $\bR$, and we extend the relevant stable framings here under the natural embeddings of stable framings in order that we can compare them.\par 
                The outer square is \ref{eq:broken comp} over this face; the upper left triangle commutes by compatibility of the maps $\psi$ and the lower right triangle commutes by compatibility of the $\psi$ and $st$.\par 
                Commutativity of \ref{eq:broken comp} over the boundary face $\tilde{\cQ}_{x;z'z}$ is essentially identical; commutativity of \ref{eq:broken comp} over the boundary face $\cW_{xy} \times \cV_{yz} \times \{0\}$ follows by construction of $st^{\tilde{\cQ}}$.\par 
                We frame $\tilde{\cS}$ in exactly the same way: over each $\cR_{xy} \times \cV_{yz} \times [0, \varepsilon)_s$, we first choose stabilising bundles $\bT^{\tilde \cS}_{xz}$ along with appropriate embeddings $\iota$, and define $st^{\tilde{\cS}}_{xz}$ to be the composition
                $$st^{\tilde{\cS}}_{xz}: V_x \oplus I^{\tilde{\cS}}_{xz} \xrightarrow{\psi^{-1}} V_x \oplus I^\cR_{xy} \oplus I^\cV_{yz} \xrightarrow{st^\cV \circ st^\cR} \bR^3 \oplus V_z$$
                where the first two entries of $\bR^3$ are from $st^\cR$ and the final one is from $st^\cV$, similarly extended so the stabilising bundle is $\bT^{\tilde \cS}_{xz}$.\par 
                Similarly to with $\tilde{\cQ}$, this glues together to define $st^{\tilde{\cS}}$ over the the entirety of $\tilde{\cS}_{xz}$. For this to be a framing, we have a compatibility condition over each boundary face; over the boundary faces $\tilde{\cS}_{xx';z}$, $\tilde{\cS}_{x;z'z}$ and $\cR_{xy} \times \cV_{yz} \times \{0\}$ this follows from the same argument as for $\tilde{\cQ}$, over the boundary faces $\tilde{\cQ}_{xz}$ and $\tilde{\cQ}'_{xz}$, \ref{eq:unbroken comp} commutes by compatibility between the framings of $\cR$ and $\tilde{\cW}$, $\tilde{\cW}'$.
            \end{proof}
            \begin{lem}\label{lem:fram comp well-def}
                Let $\cW: \cF \to \cG$ and $\cV: \cG \to \cH$ be framed morphisms of framed flow categories. Then their composition can be framed, and the equivalence class $[\cV \circ \cW]$ defines in $[\cF, \cH]^{fr}$ is well-defined.
            \end{lem}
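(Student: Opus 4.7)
The plan is to define the framing on the composition $\cU = \cV \circ \cW$ by restricting the framing of the intermediate glued manifolds $\tilde{\cQ}_{xz}$ provided by Lemma \ref{lem:fram Q}, using the framed function structure to canonically split off the normal direction to the level set $\cU_{xz} = f_{xz}^{-1}\{1\}$. The well-definedness of the framed bordism class will then be proved by re-running the arguments of Lemmas \ref{composition well-defined} and \ref{composition is well-defined}, but with framings inserted at each step via the same level-set construction.

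For the construction: by Lemma \ref{lem:fram Q}, we may frame each $\tilde{\cQ}_{xz}$ with a stable isomorphism $st^{\tilde\cQ}_{xz}: V_x \oplus I^{\tilde\cQ}_{xz} \to \bR^2 \oplus V_z$ with stabilising bundle $\bT^{\tilde\cQ}_{xz}$, compatibly with the framings of $\cF$, $\cG$, $\cH$, $\cW$ and $\cV$. Choose framed functions $(f_{xz}, s_{xz})$ on each $\tilde{\cQ}_{xz}$ extending those on boundary faces, which exist by Proposition \ref{prop: ext fram fn}, and restrict to $\cU_{xz} := f_{xz}^{-1}\{1\}$ (a manifold with faces by Proposition \ref{prop: res of fram fn is fram}). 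The section $s_{xz}$ induces an isomorphism $S_{xz} = di + s_{xz}: T\cU_{xz} \oplus \bR \to T\tilde\cQ_{xz}|_{\cU_{xz}}$, and hence an identification $I^{\tilde\cQ}_{xz}|_{\cU_{xz}} \cong I^\cU_{xz} \oplus \bR$. Composing with $st^{\tilde\cQ}_{xz}|_{\cU_{xz}}$ and absorbing the matched $\bR$-summands into the stabilising bundle yields
\[
st^\cU_{xz}: V_x \oplus I^\cU_{xz} \to \bR \oplus V_z
\]
with stabilising bundle $\bT^\cU_{xz} := \bT^{\tilde\cQ}_{xz}|_{\cU_{xz}} \oplus \bR$. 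The required compatibility diagrams over the boundary faces $\cU_{xx';z}$ and $\cU_{x;z'z}$ follow from the corresponding compatibility of $st^{\tilde\cQ}$ on $\tilde\cQ_{xx';z}$ and $\tilde\cQ_{x;z'z}$, together with the fact (Proposition \ref{prop: res of fram fn is fram}) that the splitting $S$ is compatible with restriction to faces.

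For well-definedness in $[\cF, \cH]^{fr}$, we treat the possible choices in turn. Independence of the choice of framed function is established by framing the interpolating function $g_{xz}$ from the proof of Lemma \ref{composition well-defined}: choose a framed function extension of the given $(f_{xz}, s_{xz})$ and $(f'_{xz}, s'_{xz})$ (again via Proposition \ref{prop: ext fram fn}), then frame $g_{xz}^{-1}[p,q]$ by the same level-set procedure to obtain a framed bordism. Independence of the choice of framing of $\tilde{\cQ}$ follows from Example \ref{ex:hom fram bord}, since the choice in Lemma \ref{lem:fram Q} is canonical up to stabilisation and homotopy. Finally, if $\cR$ is a framed bordism from $\cW$ to $\cW'$, Lemma \ref{lem:fram Q} frames the glued manifolds $\tilde{\cS}_{xz}(\cR, \cV)$, and restricting to the level set of a framed function on $\tilde{\cS}$ extending the chosen ones on $\tilde{\cQ}$ and $\tilde{\cQ}'$ yields a framed bordism $\cT$ between the two compositions, exactly mirroring the construction in Lemma \ref{composition is well-defined}; an analogous argument handles bordisms in the $\cV$ variable.

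The main technical obstacle is bookkeeping: verifying that the framing-compatibility squares \eqref{eq:broken comp} and \eqref{eq:unbroken comp} commute on every boundary face of $\cU$ and of the various framed bordisms. Each such square will reduce, via the splitting $S$ supplied by the framed function, to the corresponding square for the ambient $\tilde\cQ$ (or $\tilde\cS$), where it holds by Lemma \ref{lem:fram Q}. The delicate point is to keep track of which of the two $\bR$-factors appearing in the target of $st^{\tilde\cQ}$ corresponds to the direction of $s$ (and is absorbed into the stabilising bundle) versus which remains as the outgoing $\bR$ in $st^\cU$; one must ensure that this assignment is the same on all boundary faces. Provided the framed functions are chosen compatibly on boundary faces — which Proposition \ref{prop: ext fram fn} allows — this assignment propagates automatically, and the commutativity then follows formally from Lemma \ref{lem:fram Q}.
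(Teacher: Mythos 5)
Your construction is essentially identical to the paper's: the framing on $\cU_{xz}=f_{xz}^{-1}\{1\}$ is defined by composing $di_{xz}+s_{xz}$ with $st^{\tilde\cQ}_{xz}$ and declaring one of the two $\bR$-factors in the target to be part of the stabilising bundle, and well-definedness is obtained by re-running Lemma \ref{composition well-defined} with framings inserted. The "delicate point" you flag about which $\bR$-factor is absorbed is precisely the point the paper addresses by its identification $\bR^2=\bR\oplus\bR\xi$, so the proposal matches the paper's proof in both substance and emphasis.
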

            \begin{proof}
                Compatibly choose framed functions $(f_{xz}, s_{xz})$ on each $\tilde{\cQ}_{xz}$, let $\tilde{\cQ}^{\leq 1} = f_{xz}^{-1}(-\infty, 1]$ and let $\cU_{xz} = f_{xz}^{-1}\{1\}$. Then $\cU$ is a representative of $\cV \circ \cW$ as an unframed morphism; we use the framing we've constructed on $\tilde{\cQ}$ to frame $\cU$.\par 
                We define stable isomorphisms $st^\cU_{xz}: I^\cU_{xz} \oplus V_z \to \bR \oplus V_x$ with stabilising bundle $\bT^{\tilde \cQ}_{xz} \oplus \bR \xi$ (where $\xi$ is some formal generator) to be the composition:
                $$st^\cU_{xz}:  V_x \oplus I^\cU_{xz}\oplus \bR\xi \xrightarrow{di_{xz} + s_{xz}} V_x \oplus I^{\tilde{\cQ}}_{xz} \xrightarrow{st^{\tilde{\cQ}}} \bR^2 \oplus V_z = \bR \oplus \bR\xi \oplus V_z$$
                Here $i_{xz}: \cU_{xz} \to \tilde{\cQ}_{xz}$ is the natural inclusion map, $s_{xz}$ takes $\bR \xi$ as input, the first map sends $\tau_z$ to $\tau_z$, and the final identification is to make it clear that it is the second copy of $\bR$ in $\bR^2$ that is the stabilising bundle.\par 
                Commutativity of \ref{eq:broken comp} over the appropriate boundary faces follows from similar considerations to before, though now we must use the compatibility of the framed functions with those on the boundary faces. This implies $\cU$ is a framed morphism.\par
                Since the framing on $\tilde\cQ$ was canonical up to stabilisation and isomorphism, it follows from construction that (for fixed choice of framed functions $(f_{xz}, s_{xz})$) the induced framing on $\cU$ is also canonical up to stabilisation and isomorphism.\par 
                Similarly we can frame $\tilde{\cQ}^{\leq 1}$ in an appropriate sense; note commutativity of \ref{eq:unbroken comp} uses the fact that we identify $\bR^2 = \bR \oplus \bR\xi$ rather than the other way around. Proceeding as in the proof of Lemma \ref{composition well-defined} (but now with framings in hand) shows that the class of $\cU$ in $[\cF, \cH]^{fr}$ is independent of choices.
            \end{proof}
            \begin{lem}\label{lem:comp fram}
                Composition of framed morphisms is compatible with the equivalence relation in Definition \ref{def:fram mor equiv}, and induces a well-defined bilinear map
                \[
                \circ: [\cG, \cH]^{fr} \otimes [\cF, \cG]^{fr} \to [\cF, \cH]^{fr}
                \]
            \end{lem}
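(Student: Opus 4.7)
The plan is to mirror the proof of Lemma \ref{composition is well-defined} step by step, carrying framings through the construction using Lemma \ref{lem:fram Q} as the main input. First, I would verify that framed diffeomorphisms interact well with composition: if $\cW \cong \cW'$ and $\cV \cong \cV'$ via framing-compatible diffeomorphisms, then the induced diffeomorphism on $\tilde{\cQ}$ can be used to pull back any framed function used in constructing $\cV \circ \cW$ to one on $\tilde{\cQ}(\cW',\cV')$, and by Lemma \ref{lem:fram Q} the framings on the two $\tilde{\cQ}$'s correspond under this diffeomorphism (up to stabilisation and isomorphism). Hence the resulting level-set representatives of $\cV \circ \cW$ and $\cV' \circ \cW'$ are framed diffeomorphic.

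Next, I would treat framed bordism invariance in the first slot. Let $\cR$ be a framed bordism from $\cW$ to $\cW'$ and $\cV: \cG \to \cH$ a fixed framed morphism. By Lemma \ref{lem:fram Q}, the glued manifolds $\tilde{\cS}_{xz}(\cR, \cV)$ admit canonical framings extending those on $\tilde{\cQ}(\cW,\cV)$ and $\tilde{\cQ}(\cW',\cV)$ on their unbroken boundary faces. Choose framed functions $(f_{xz}, s_{xz})$ compatibly on $\tilde{\cQ}(\cW,\cV)$ and $\tilde{\cQ}(\cW',\cV)$, then extend to framed functions on $\tilde{\cS}(\cR,\cV)$ using Proposition \ref{prop: ext fram fn}. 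The level set $\cT_{xz} := f_{xz}^{-1}\{1\}$ is, by Proposition \ref{prop: res of fram fn is fram} and the preimage theorem, a compact manifold with faces of the correct dimension, and its system of faces (as described in Lemma \ref{composition is well-defined}) contains the two representatives $\cU(\cW,\cV)$ and $\cU(\cW',\cV)$ as its unbroken boundary faces; the constructed framing on $\cT$ (obtained exactly as in Lemma \ref{lem:fram comp well-def} from the framing on $\tilde{\cS}$) restricts on those faces to the framings built by Lemma \ref{lem:fram comp well-def}. This yields a framed bordism witnessing $[\cV \circ \cW] = [\cV \circ \cW']$ in $[\cF, \cH]^{fr}$. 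The argument for varying $\cV$ through a framed bordism is identical after reversing the roles.

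Finally, bilinearity: the identity
\[
(\cV \sqcup \cV') \circ (\cW \sqcup \cW') = (\cV \circ \cW) \sqcup (\cV \circ \cW') \sqcup (\cV' \circ \cW) \sqcup (\cV' \circ \cW')
\]
already proven in Lemma \ref{composition is well-defined} at the level of underlying manifolds lifts directly: both the glued manifolds $\tilde{\cQ}$ and the framed functions split as disjoint unions over the four choices, and the framing built in Lemma \ref{lem:fram Q} on each summand agrees with the restriction of the framing on the full disjoint union. Combined with the previous paragraph, this shows $\circ$ descends to a well-defined bilinear map $[\cG,\cH]^{fr} \otimes [\cF,\cG]^{fr} \to [\cF,\cH]^{fr}$.

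The main obstacle is ensuring that the framing on $\cT_{xz}$, obtained by composing $di + s$ with the framing on $\tilde{\cS}$, restricts on each unbroken boundary face $\cU(\cW,\cV)$ or $\cU(\cW',\cV)$ to \emph{exactly} the framing produced independently by Lemma \ref{lem:fram comp well-def}, rather than merely to an equivalent framing up to stabilisation. This is handled by the compatibility of the framed functions with the boundary framed functions (which already holds in Proposition \ref{prop: ext fram fn}) together with the commutativity of diagram \ref{eq:unbroken comp} for $\tilde{\cS}$ at its unbroken boundary faces, which is precisely what the proof of Lemma \ref{lem:fram Q} establishes.
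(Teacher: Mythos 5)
Your proposal is correct and follows essentially the same route as the paper: the paper's proof likewise chooses compatible framed functions on the $\tilde{\cQ}_{xz}$ and $\tilde{\cS}_{xz}$, frames the bordism $\cT$ exactly as in Lemma \ref{lem:fram comp well-def} using the framing on $\tilde{\cS}$ from Lemma \ref{lem:fram Q}, and deduces bilinearity from the disjoint-union identity of Lemma \ref{composition is well-defined}. Your closing remark about the boundary restriction of the framing on $\cT$ is the right point to flag, and it is indeed discharged by the commutativity of (\ref{eq:unbroken comp}) over the unbroken faces $\tilde{\cQ}_{xz}$, $\tilde{\cQ}'_{xz}$ established at the end of the proof of Lemma \ref{lem:fram Q} (with any residual discrepancy being a stabilisation, which is absorbed by the equivalence relation via Example \ref{ex:hom stab fram bord}).
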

            \begin{proof}
                Similarly to the proof of Lemma \ref{lem:fram comp well-def}, we choose compatible framed functions on each $\tilde{\cQ}_{xz}$ and $\tilde{\cS}_{xz}$, and proceed as in the proof of Lemma \ref{composition well-defined}, framing the bordism $\cT$ similarly to above, to conclude that composition induces the desired well-defined map; the same argument as in the proof of Lemma \ref{composition well-defined} also shows it is a bilinear map.
            \end{proof}
            Similarly to Lemma \ref{composition is associative}, we have:
            \begin{lem} \label{framed composition is associative}
                The composition map is associative, i.e. the following diagram commutes:
                \[
                    \xymatrix{
                        \left[\cH, \cI\right]^{fr} \otimes \left[\cG, \cH\right]^{fr} \otimes \left[\cF, \cG\right]^{fr} \ar[rr]_-{\circ} \ar[d]_{\circ} && \left[\cG, \cI\right]^{fr} \otimes \left[\cF, \cG\right]^{fr} \ar[d]^{\circ} \\
                        \left[\cH, \cI\right]^{fr} \otimes \left[\cF, \cH\right]^{fr} \ar[rr]_-{\circ} && \left[\cF, \cI\right]^{fr}
                    }
                \]
            \end{lem}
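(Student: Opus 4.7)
The plan is to frame the associativity bordism $\cL$ constructed in the proof of Lemma~\ref{composition is associative}, by propagating the framings on $\cW$, $\cV$, $\cX$ through each intermediate glued manifold using a framed analogue of Lemma~\ref{lem:fram Q} at each stage.

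First, fix framed morphisms $\cW: \cF \to \cG$, $\cV: \cG \to \cH$, $\cX: \cH \to \cI$. By Lemma~\ref{lem:fram Q}, canonical (up to stabilisation and isomorphism of framings) stable framings exist on $\tilde{\cQ}_{xz}(\cW,\cV)$ and $\tilde{\cQ}'_{yw}(\cV,\cX)$ with stabilising bundles $\bT^{\tilde\cQ}_{xz}$ and $\bT^{\tilde\cQ'}_{yw}$. Next, I would establish the direct analogue of Lemma~\ref{lem:fram Q} for the doubly-glued manifolds $\tilde{\cP}_{xw}$ and $\tilde{\cP}'_{xw}$ from the proof of Lemma~\ref{composition is associative}: on each open piece of the form $\tilde{\cQ}_{xz} \times \cX_{zw} \times [0,\eps)_s$, define the framing as the composition $st^\cX \circ st^{\tilde\cQ}$ after identifying $\partial_s$ with $\nu_z$ via $\psi^{-1}$, and check that these glue on overlaps by the same hexagonal diagram chase used in Lemma~\ref{lem:fram Q}, now one level deeper.

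Then I would frame $\tilde{\cZ}_{xw}$, which simultaneously contains $\tilde{\cQ}\times\cX$ and $\cW\times\tilde{\cQ}'$ as codimension-one faces, by the same procedure: on the open neighbourhood $\cW_{xy} \times \cV_{yz} \times \cX_{zw} \times [0,\eps)^2_{u,v}$ the framing is forced to be $st^\cX \circ st^\cV \circ st^\cW$ composed with $\psi^{-1}$, extended along the appropriate embedding of stabilising bundles, and by the same kind of outer-square/inner-diamond diagram one checks it agrees on overlaps with the framings pulled back from $\tilde{\cP}$ and $\tilde{\cP}'$. Choosing compatible framed functions $(g_{xw}, t_{xw})$ on $\tilde{\cZ}_{xw}$ extending those on $\tilde{\cQ}$ and $\tilde{\cQ}'$, and slicing at level $1$ and then excising the interiors of $\tilde{\cP}^{\leq 1}$ and $\tilde{\cP}'^{\leq 1}$ as in Lemma~\ref{composition is associative}, produces $\cL_{xw}$ with an induced framing via the section $t_{xw}$, exactly as in the construction of $\cU$ in Lemma~\ref{lem:fram comp well-def}.

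Finally, I would verify that the induced framing on $\cL_{xw}$ restricts on its two unbroken boundary faces $\cM_{xw}$ and $\cM'_{xw}$ to the framings these representatives of $\cX \circ (\cV \circ \cW)$ and $(\cX \circ \cV) \circ \cW$ inherit from the framings on $\tilde{\cP}$ and $\tilde{\cP}'$; this is precisely the analogue of the compatibility \eqref{eq:unbroken comp} along unbroken faces, and follows because the framed functions on $\tilde\cZ$ were chosen to restrict to those on $\tilde\cP, \tilde\cP'$. The main obstacle throughout is bookkeeping of stabilising bundles and the Koszul sign conventions of Remark~\ref{rmk:isbell}/Remark~\ref{rmk:fram sgn}: one must check that all embeddings $\iota$ of stabilising bundles are compatible with the associators so that the two compositions in the associativity square truly agree after extension, rather than up to an unwanted sign. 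Once this is verified at codimension two, everything higher follows formally by the same inductive gluing arguments used in Lemma~\ref{lem:fram Q} and Lemma~\ref{lem:comp fram}.
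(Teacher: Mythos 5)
Your overall strategy is the paper's: frame all the intermediate glued manifolds $\tilde{\cQ},\tilde{\cQ}',\tilde{\cP},\tilde{\cP}',\tilde{\cZ}$ as in Lemma~\ref{lem:fram comp well-def}, choose compatible framed functions, and cut to obtain a candidate framed bordism $\cL$ between $\cM$ and $\cM'$. The constructions in your first three paragraphs match what the paper does.

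However, there is a genuine gap in your final step. You assert that the framing on $\cL_{xw}$ restricts on the unbroken faces $\cM_{xw}$, $\cM'_{xw}$ to the framings these carry as representatives of the two compositions, and that this ``follows because the framed functions on $\tilde\cZ$ were chosen to restrict to those on $\tilde\cP,\tilde\cP'$.'' That is not where the difficulty lies. The manifolds $\tilde{\cP}_{xw}$ and $\tilde{\cP}'_{xw}$ are embedded into $\cY_{xw}=g_{xw}^{-1}\{1\}$ only via the collar neighbourhood theorem, i.e.\ up to isotopy, and these embeddings need not intertwine the framing constructed directly on $\tilde{\cP}_{xw}$ (from $st^{\cX}\circ st^{\tilde\cQ}$) with the framing $\cY_{xw}$ inherits from $\tilde{\cZ}_{xw}$. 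So a priori $\cL$ is a bordism between $\cM$ and $\cM'$ equipped with the \emph{wrong} framings, and the compatibility you need along the faces $h_{xw}^{-1}\{1\}$ does not hold on the nose. The paper repairs this by observing that the two framings on $\tilde{\cP}_{xw}$ \emph{do} agree on the carapace $C(\tilde{\cP}_{xw})$, that $\tilde{\cP}_{xw}$ deformation retracts onto its carapace, and hence (inductively, relative to the boundary) the two framings are homotopic; since the slicing construction works in one-parameter families, the induced framings on $\cM_{xw}$ are homotopic, and homotopic framings are framed bordant by Example~\ref{ex:hom fram bord}. Your proof needs this ``deformation to the carapace'' argument (or an equivalent rigidification of the collar embeddings); the issue is not the Koszul signs or the stabilising-bundle bookkeeping you flag as the main obstacle.
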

            \begin{proof}
                We prove this by equipping all the manifolds encountered in the proof of Lemma \ref{composition is associative} with (compatible) stable framings; we furthermore assume all of the same notation as in Lemma \ref{composition is associative}. \par 
                Using the framings on $\cF$, $\cG$, $\cH$, $\cI$, $\cW$, $\cV$ and $\cX$, we equip $\tilde{\cP}$, $\tilde{\cP}'$, $\tilde{\cQ}$, $\tilde{\cQ}'$ and $\tilde{\cZ}$ with stable framings, just as in Lemma \ref{lem:fram comp well-def}. \par 
                Using the given choices of framed functions, we equip $\cM$, $\cM'$, $\cU$, $\cU'$, $\cY$, $\cL$ all with stable framings.\par 
                Unfortunately the embeddings from the collar neighbourhood theorem $\cP_{xw}, \cP'_{xw} \hookrightarrow \cY_{xw}$ may not be compatible with the stable framings, so whilst $\cL$ is a bordism between $\cM$ and $\cM'$, the framing we have equipped $\cL$ with might not be compatible with the framings on $\cM$ and $\cM'$, so it might not be a framed bordism between them.\par 
                However note that the framings on each $\tilde{\cP}_{xw}$ agree with those on $\cY_{xw}$ when restricted to the carapace of $\tilde{\cP}_{xw}$. Since each $\tilde{\cP}_{xw}$ deformation retracts to its carapace (which also includes all its boundary), the framings on $\tilde{\cP}_{xw}$ pulled back from $\cY_{xw}$ are homotopic to those constructed on the $\tilde{\cP}_{xw}$ directly; furthermore by inducting on the dimension and working relative to the boundary we may assume that these homotopies are compatible with each other for all $x \in \cF$, $w \in \cI$.\par 
                Since homotopic framings on $\tilde{\cP}_{xw}$ induce homotopic framings on $\cM_{xw}$ (since the construction of this framing works in 1-parameter families of framings on $\tilde{\cP}$), and noting that this discussion applies equally to $\tilde{\cP}'$ and $\cM'$, it follows that $\cL$ is a framed bordism between $\cM$ and $\cM'$, where $\cM$ and $\cM'$ are equipped with framings homotopic to the ones we originally equipped them with.\par 
                Combined with Example \ref{ex:hom fram bord}, it follows that $\cM$ and $\cM'$ are framed bordant.
            \end{proof}
            Similarly to Lemma \ref{composition recognition} (and by the same proof, but incorporating the `deformation to the carapace' argument from above), we have:
            \begin{lem}\label{lem: fram comp detection}
                In the situation of Lemma \ref{composition recognition}, if $\cF$, $\cG$, $\cH$, $\cW$, $\cV$, $\cT$ and $\cR$ are all compatibly framed, then $\cT$ is a representative of $\cV \circ \cW$ in $[\cF, \cH]^{fr}$.
            \end{lem}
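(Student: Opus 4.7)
The plan is to run the unframed construction from Lemma \ref{composition recognition} while keeping track of framings, with the only subtlety being that the canonical framing on $\tilde\cQ_{xz}(\cW,\cV)$ produced by Lemma \ref{lem:fram Q} and the framing obtained by restricting the given framing of $\cR_{xz}$ along the embedding $\tilde\cQ_{xz}(\cW,\cV)\hookrightarrow \cR_{xz}$ need not agree on the nose, only up to a homotopy which is standard on the carapace. This is exactly the difficulty addressed by the ``deformation to the carapace'' step in the proof of Lemma \ref{framed composition is associative}, which we will import.

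First, I would choose compatible systems of collars on all of $\cF$, $\cG$, $\cH$, $\cW$, $\cV$, $\cT$, $\cR$ and build, as in Lemma \ref{composition recognition}, embeddings $\tilde\cQ_{xz}(\cW,\cV)\hookrightarrow \cR_{xz}$ onto open neighbourhoods of $\bigsqcup_{y\in\cG}\cW_{xy}\times \cV_{yz}$. By Lemma \ref{extending smooth maps} and Proposition \ref{prop: ext fram fn}, I would then choose compatible framed functions $(f_{xz},s_{xz})$ on each $\tilde\cQ_{xz}(\cW,\cV)$, and extend them to framed functions $(g_{xz},t_{xz})$ on each $\cR_{xz}$, arranged so that $g_{xz}>1$ outside the image of $\tilde\cQ_{xz}(\cW,\cV)$ and $1$ is a regular value. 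Setting $\cU_{xz}:=g_{xz}^{-1}\{1\}\subset \tilde\cQ_{xz}(\cW,\cV)$, the collection $\cU$ is an unframed morphism $\cF\to \cH$, and $\cR_{xz}\setminus g_{xz}^{-1}(-\infty,1)$ is an unframed bordism from $\cU$ to $\cT$.

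By Lemma \ref{lem:fram Q}, $\tilde\cQ(\cW,\cV)$ carries a canonical framing $st^{\mathrm{int}}$ assembled from the framings of $\cF$, $\cG$, $\cH$, $\cW$, $\cV$, and the induced framing on $\cU_{xz}$ represents $\cV\circ \cW$ in $[\cF,\cH]^{fr}$ by Lemma \ref{lem:fram comp well-def}. On the other hand, pulling back the given framing of $\cR_{xz}$ along the embedding yields a second framing $st^{\mathrm{amb}}$ on $\tilde\cQ_{xz}(\cW,\cV)$. On the carapace $C(\tilde\cQ_{xz})=\bigsqcup_y \cW_{xy}\times \cV_{yz}$ both framings coincide with $st^\cV\circ st^\cW$: this is the defining property of $st^{\mathrm{int}}$ in the proof of Lemma \ref{lem:fram Q}, and for $st^{\mathrm{amb}}$ it follows from the compatibility condition \ref{eq:broken comp} built into the framing of $\cR_{xz}$ at the broken boundary faces $\cW_{xy}\times \cV_{yz}$. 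Since $\tilde\cQ_{xz}$ deformation retracts onto $C(\tilde\cQ_{xz})$, the two framings are homotopic; as in Lemma \ref{framed composition is associative}, I would induct on $|x|-|z|$, working relative to previously constructed data, to choose such homotopies compatibly across all broken boundary faces.

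The framed bordism from $\cT$ to $\cV\circ \cW$ is then assembled in two pieces: the manifold $\cR_{xz}\setminus g_{xz}^{-1}(-\infty,1)$ with the framing inherited from $\cR_{xz}$ provides a framed bordism between $\cT$ and $(\cU, st^{\mathrm{amb}}|_\cU)$, and concatenating with the framed bordism between $(\cU, st^{\mathrm{amb}}|_\cU)$ and $(\cU, st^{\mathrm{int}}|_\cU)$ produced from the homotopy of framings via Example \ref{ex:hom fram bord} yields the desired framed bordism from $\cT$ to the standard representative of $\cV\circ \cW$. I expect the main obstacle to be arranging the carapace-homotopies compatibly across codimensions and across all pairs $(x,z)$; this is handled precisely as in Lemma \ref{framed composition is associative}, by an induction on the index difference working relative to boundary faces that were dealt with at earlier stages.
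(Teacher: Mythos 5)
Your proposal is correct and follows essentially the same route as the paper, which proves this lemma by rerunning the unframed argument of Lemma \ref{composition recognition} and incorporating the ``deformation to the carapace'' step from the proof of Lemma \ref{framed composition is associative} to reconcile the intrinsic framing on $\tilde\cQ(\cW,\cV)$ with the one restricted from $\cR$. Your write-up simply spells out in more detail what the paper leaves implicit.
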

            \begin{defn}
                We define the (homotopy) \emph{category of framed flow categories}, $\Flow^{fr}$, to have objects framed flow categories, and morphisms framed morphisms of flow categories. This is a (non-unital) category enriched in abelian groups.
            \end{defn}
            \begin{example}
                Given a framed flow category $\cF$, we can form $\cF[n]$ as a framed flow category for any $n \in \bZ$, by setting $V^{\cF[n]}_x := V^\cF_x \oplus \bR^n$ and $st^{\cF[n]} := st^\cF \oplus \id_{\bR^n}$.
            \end{example}
            \begin{example}
                A morphism $*[i] \to *$ consists of a closed (stably) framed $i$-manifold; a framed bordism between morphisms consists of a (stably) framed bordism. From this, we see that
                \[
                [*[i],*]^{fr} = \Omega^{fr}_i(\ast) = \Omega^{fr}_i
                \]
                In particular, it follows that $\Flow^{fr}$ is enriched over graded modules over $\Omega^{fr}_*$.
            \end{example}
            \begin{conj}[Abouzaid-Blumberg]\label{conj:AB}
                $\Flow^{fr}$ is equivalent to the homotopy category of finite spectra.
            \end{conj}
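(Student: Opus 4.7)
The plan is to construct a functor $\Phi: \Flow^{fr} \to \mathrm{Sp}^{\omega}$ to the homotopy category of finite spectra via a Pontryagin-Thom style geometric realisation, and then show it is both essentially surjective and fully faithful. This is a variant of the Cohen-Jones-Segal realisation of flow categories, adapted to the framed, bimodule setting.

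First I would define $\Phi$ on objects. Given a framed flow category $\cF$, the stable isomorphisms $st_{xx'}: V_x \oplus I^\cF_{xx'} \to V_{x'}$ with stabilising bundles $\bT^\cF_{xx'}$ make each $\cF_{xx'}$ into a stably framed manifold with faces, and the associativity conditions (\ref{eq: emb comp}) and (\ref{eq: fram comp flow}) ensure these framings are coherent across broken boundary strata. One then assembles a CW spectrum $\Phi(\cF)$ with one cell per object $x\in\cF$ of dimension determined by $|x|$ and $\dim V_x$, with attaching maps given by iterated Pontryagin-Thom collapses along the framed moduli spaces. A framed morphism $\cW: \cF\to\cG$ yields a map of spectra $\Phi(\cW)$ via the same construction applied to the $\cW_{xy}$, and a framed bordism yields a homotopy; the composition law of Lemma \ref{framed composition is associative} translates, under Pontryagin-Thom, into strict-up-to-homotopy composition of maps. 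The basic sanity check is immediate: $[\ast[i],\ast]^{fr} = \Omega^{fr}_i = \pi_i(\bS) = [\Sigma^i\bS, \bS]_{\mathrm{Sp}^\omega}$.

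For essential surjectivity I would build an inverse assignment on cells. Any finite CW spectrum $E$ can be written as an iterated mapping cone of sphere-spectrum maps; I would inductively construct a framed flow category whose objects are indexed by the cells of $E$, gradings by their dimensions, and moduli spaces $\cF_{xx'}$ by choosing, at each inductive step, a stably framed manifold with faces that realises the Pontryagin-Thom dual of the relevant attaching map. The coherence of framings required by Definition \ref{def:farm} corresponds exactly to the fact that attaching maps assemble into a genuine CW filtration. Fully faithfulness reduces to a Pontryagin-Thom type bijection between framed flow morphisms up to bordism and homotopy classes of maps of the corresponding spectra, which on each skeletal filtration step is the classical Pontryagin-Thom theorem.

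The hard part, and the reason this remains a conjecture, is rigidifying the ambient coherence. The category $\Flow^{fr}$ is assembled from strict geometric data (smooth manifolds with corners, on-the-nose products and abstract gluings) but the target $\mathrm{Sp}^\omega$ encodes homotopy-coherent structure; promoting the construction above to a functor between homotopy categories requires a systematic handling of the higher associators controlling iterated compositions $st^{\cV}\circ st^{\cW}\circ st^{\cX}$ and the interaction with stabilising bundles $\bT_{xx'}$. This is precisely where the stable $\infty$-categorical framework of Abouzaid-Blumberg \cite{Abouzaid-Blumberg:I} intervenes: one would first show that $\Flow^{fr}$ is the homotopy category of an $\infty$-category of framed flow bimodules equivalent to theirs, and then invoke their forthcoming identification of that $\infty$-category with finite spectra, passing to $\mathrm{h}$-categories at the end.
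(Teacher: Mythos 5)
The statement you are asked about is not proved in the paper: it is stated as Conjecture \ref{conj:AB}, attributed to Abouzaid--Blumberg, and the authors explicitly defer its proof to external work (they remark that a version is proved in \cite{Abouzaid-Blumberg:I}, which appeared after this paper, and that a functor from framed flow categories to finite spectra is ``essentially constructed'' in \cite{Large} following \cite{CJS}). So there is no proof in the paper to compare yours against, and your proposal should be judged as a proof attempt of an open-in-this-paper statement.

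As such an attempt, your outline is the standard expected strategy (Cohen--Jones--Segal realisation via iterated Pontryagin--Thom collapses, with the framing data supplying the stable trivialisations needed for the collapse maps), and your sanity check $[\ast[i],\ast]^{fr}=\Omega^{fr}_i=\pi_i(\bS)$ is the right first test. But it is not a proof, and the gaps are exactly where you locate them. Concretely: (1) assembling the attaching maps of $\Phi(\cF)$ from the framed manifolds $\cF_{xx'}$ requires coherent compatible collapse maps over all iterated boundary strata $\cF_{x_0\ldots x_i}$, and showing that the result is independent of the choices (collars, stabilising bundles $\bT_{xx'}$, the embeddings $\iota$) up to canonical homotopy is the entire content of the theorem, not a preliminary; (2) your full-faithfulness argument says the claim ``reduces to'' classical Pontryagin--Thom ``on each skeletal filtration step,'' but passing from the single-cell case to a general finite flow category requires an inductive obstruction-theoretic comparison of two towers (the bordism-theoretic one in $\Flow^{fr}$ and the cellular one in spectra), and surjectivity of the comparison at each stage is precisely the kind of statement the paper itself only establishes in special cases (Theorem \ref{thm:onto} requires one of $\cF,\cG$ to be a point, and even there injectivity is left conjectural); (3) essential surjectivity requires realising arbitrary composites and secondary composition data (Toda brackets) of stable maps by \emph{smooth} framed manifolds with corners satisfying the strict associativity of concatenation maps demanded by the definition of a flow category --- this is a nontrivial rigidification in the opposite direction and you give no argument for it. Your final paragraph concedes all of this and defers to the Abouzaid--Blumberg $\infty$-categorical framework, which is an honest assessment but means the proposal is a plan plus a citation of forthcoming work, not a proof.
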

       
            Though not phrased like this, a functor from the category of framed flow categories to the homotopy category of finite spectra is essentially constructed in \cite{Large}, following \cite{CJS}.
    \section{Algebra over flow categories}\label{sec: flow alg}
        
        \subsection{Bilinear maps of flow categories}\label{sec:bilin}
        \begin{defn}
            Let $\cF$, $\cG$ and $\cH$ be flow categories. A \emph{bilinear map} of flow categories $\cW: \cF \times \cG \rightarrow \cH$ consists of compact smooth manifolds with faces $\cW_{xy;z}$ of dimension $|x|+|y|-|z|$ for each $x$ in $\cF$, $y$ in $\cG$ and $z$ in $\cH$, along with maps
            $$c = c^\cW: \cF_{xu} \times \cW_{uy;z} \rightarrow \cW_{xy;z}$$
            and
            $$c = c^\cW: \cG_{yv} \times \cW_{xv;z} \rightarrow \cH_{xy;z}$$
            and 
            $$c = c^\cW: \cW_{xy;w} \times \cH_{wz} \rightarrow \cW_{xy;z}$$
            which are compatible with each other and $c^\cF$, $c^\cG$ and $c^\cH$, and which define a system of faces
            $$\cF_{x_0 \ldots x_i} \times \cG_{y_0 \ldots y_j} \times \cW_{x_i y_j; z_0} \times \cH_{z_0 \ldots z_k}$$
            for $x_0 = x, \ldots, x_i$ in $\cF$, $y_0 = y, \ldots, y_j$ in $\cG$ and $z_0, \ldots, z_k = z$ in $\cH$. In particular, $\cW_{xy;z}$ has a system of boundary faces $\cF_{xx'} \times \cW_{x'y;z}$, $\cG_{yy'} \times \cW_{xy';z}$ and $\cW_{xy;z'} \times \cH_{z'z}$ for $x' \in \cF$, $y' \in \cG$ and $z' \in \cH$.
        \end{defn}
        \begin{ex}\label{ex: bilin point}
            A bilinear map $*[i] \times *[j] \rightarrow \cH$ consists of the same data as a morphism $*[i+j] \rightarrow \cH$.
        \end{ex}
        \begin{ex}\label{ex: bilin point 2}
            A bilinear map $*[i] \times \cG \to \cH$ consists of the same data as a morphism $\cG[i] \to \cH$.
        \end{ex}
        Let $\cF$, $\cG$ and $\cH$ be flow categories, $\cW: \cF \times \cG \rightarrow \cH$ a bilinear map and $\cA: *[i] \rightarrow \cF$ and $\cB: *[j] \rightarrow \cG$ morphisms.  
        We define a morphism of flow categories 
        $$\cW \circ \cA: \cG[i] \rightarrow \cH$$
        as follows.\par 
        For $y$ in $\cG$ and $z$ in $\cH$, define
        $$\tilde{\cP}_{yz} := \tilde{\cP}_{yz}(\cA,\cW) =  \left(\bigsqcup_{x \in \cF} \cA_{*x} \times \cW_{xy;z} \times [0, \eps)\right)/\sim $$
        where $\sim$ is defined so that $\tilde{\cP}_{yz}$ is the coequaliser of
        $$\bigsqcup_{x, x' \in \cF} \cA_{*x} \times \cF_{xx'} \times \cW_{x'y;z} \times [0, \eps)^2_{u,v} \rightrightarrows \bigsqcup_{x \in \cF} \cA_{*x} \times \cW_{xy;z} \times [0, \eps)_s$$
        where the two maps send $(a, b, c, u, v)$ to $(a, \cC(b, c, u), v)$ and $(\cC(a, b, v), c, u)$ respectively, as in the construction of the composition of morphisms.\par 
        By Lemmas \ref{lem:abstract gluing} and \ref{lem:abstract gluing with faces} (applying these lemmas with $I = ob \cF$ and $J = ob \cG \sqcup ob \cH$), this is a manifold with faces, with a system of faces given by 
        $$\cA_{*;x_0 \ldots x_i} \times \cG_{y_0 \ldots y_j} \times \cW_{x_i y_j; z_0} \times \cH_{z_0 \ldots z_k} \times \{0\}$$
        and
        $$\cG_{y_0 \ldots y_j} \times \tilde{\cP}_{y_j z_0} \times \cH_{z_0 \ldots z_k}$$
        for $x, x_0, \ldots, x_i$ in $\cF$, $y_0 = y, \ldots, y_j$ in $\cG$ and $z_0, \ldots, z_k = z$ in $\cH$. Pick framed functions $(f_{yz}, s_{yz})$ on each $\tilde{\cP}_{yz}$ which are compatible with each other on overlaps, and let $\cV_{yz} := f^{-1}_{yz}\{1\}$ for all $y \in \cG$, $z \in \cH$. These form a morphism of flow categories $\cG[i] \rightarrow \cH$ which we define $\cW \circ \cA$ to be. By the same argument as in the proof of Lemma \ref{composition well-defined}, this defines a well-defined morphism in $\Flow$.\par 
        Given instead $\cB: *[j] \rightarrow \cG$, we can similarly define $\tilde{\cP}_{xz}=\tilde{\cP}_{xz}(\cB, \cW)$ for $x \in \cF$ and $z \in \cH$, and define a morphism of flow categories 
        $$\cW \circ \cB: \cF[j] \rightarrow \cH$$
        \begin{lem}\label{lem:two bilin}
            For all morphisms of flow categories $\cA: *[i] \to \cF$ and $\cB: *[j] \to \cG$, we have that
            $$[(\cW \circ \cA) \circ \cB] = [(\cW \circ \cB) \circ \cA] \in [*[i+j], \cH]$$
        \end{lem}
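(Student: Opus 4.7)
The plan is to mimic the proof of associativity of composition of morphisms (Lemma \ref{composition is associative}), constructing a single ``doubly-glued'' manifold that exhibits both $(\cW \circ \cA) \circ \cB$ and $(\cW \circ \cB) \circ \cA$ as boundary components of a common bordism. This is the natural analogue: composition of morphisms is ``linear'' associativity ($\cX \circ (\cV \circ \cW) = (\cX \circ \cV) \circ \cW$), whereas here we need the ``bilinear'' statement that plugging $\cA$ and $\cB$ into $\cW$ in the two possible orders yields the same class.

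Concretely, for each $z$ in $\cH$ I would construct a manifold with faces
\[
\tilde{\cR}_z := \left(\bigsqcup_{x \in \cF, y \in \cG} \cA_{*x} \times \cB_{*y} \times \cW_{xy;z} \times [0,\eps)_s \times [0,\eps)_t\right)/\sim
\]
where $\sim$ implements, independently in the $s$- and $t$-directions, the $\cF$-concatenation gluing $\cA_{*x'} \times \cF_{x'x} \times \cW_{xy;z} \rightrightarrows \cA_{*x} \times \cW_{xy;z}$ and the $\cG$-concatenation gluing $\cB_{*y'} \times \cG_{y'y} \times \cW_{xy;z} \rightrightarrows \cB_{*y} \times \cW_{xy;z}$, together with the usual $\cH$-concatenation on the right. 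Using Lemmas \ref{lem:abstract gluing} and \ref{lem:abstract gluing with faces} (applied now with $I = \mathrm{ob}\,\cF \times \mathrm{ob}\,\cG$ and the evident $J$), $\tilde{\cR}_z$ is a manifold with faces. Its ``outer'' (unbroken) faces at $t=0$ and $s=0$ naturally contain copies of $\tilde{\cP}_z(\cB, \cW \circ \cA)$ and $\tilde{\cP}_z(\cA, \cW \circ \cB)$ respectively, via collar-neighbourhood embeddings compatible with the already-chosen collars on the constituent pieces.

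Now choose compatible framed functions $(f_{yz}, s_{yz})$, $(f'_{xz}, s'_{xz})$, and intermediate framed functions on the $\tilde{\cP}$ built from these, together with a framed function $(g_z, r_z)$ on $\tilde{\cR}_z$ that extends all of them along the collar embeddings above. Let $\cY_z := g_z^{-1}\{1\}$; this is a compact manifold with faces of dimension $i+j+1$ (appropriately indexed by $z \in \cH$) whose boundary contains the $\cH$-concatenation faces $\cY_{z; z' z}$ together with two unbroken faces inherited from $\tilde{\cP}_z(\cA, \cW \circ \cB)$ and $\tilde{\cP}_z(\cB, \cW \circ \cA)$. Using the ``carapace removal'' trick from the proof of Lemma \ref{composition is associative}, I would delete the open subsets $h_z^{-1}(-\infty,1)$ and $h'^{-1}_z(-\infty,1)$ lying inside these two collar neighbourhoods; what remains is a compact manifold with faces that exhibits an honest bordism of morphisms $*[i+j] \to \cH$ between representatives of $(\cW \circ \cA) \circ \cB$ and $(\cW \circ \cB) \circ \cA$. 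This is precisely what is required to conclude equality in $[*[i+j], \cH]$.

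The main obstacle, as in Lemma \ref{composition is associative}, is the combinatorial bookkeeping: one must verify that $\tilde{\cR}_z$ genuinely has the claimed system of faces, that the images of $\tilde{\cP}_z(\cA, \cW \circ \cB)$ and $\tilde{\cP}_z(\cB, \cW \circ \cA)$ inside $\tilde{\cR}_z$ are disjoint (away from shared compact strata), and that the framed functions can be extended compatibly across all overlaps. The asymmetric two-parameter gluing in the $(s,t)$-direction adds one layer beyond the proof of Lemma \ref{composition is associative}, but the mixed partial faces at $s = t = 0$ are just the compact products $\cA_{*;x_0 \ldots x_i} \times \cB_{*;y_0 \ldots y_j} \times \cW_{x_i y_j; z_0} \times \cH_{z_0 \ldots z_k}$, and the framed functions are negative on these, so they cause no trouble for the preimage of $\{1\}$. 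Once the framings are in place, Lemmas \ref{preimage theorem} and \ref{extending smooth maps} dispatch the remaining analytical content exactly as before.
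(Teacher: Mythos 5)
Your proposal is correct and follows essentially the same route as the paper: the paper's space $\tilde{\cQ}_{*z}(\cB,\tilde{\cP}(\cA,\cW))$, obtained by gluing $\cB$ onto the already-collared $\tilde{\cP}(\cA,\cW)$, is exactly your two-parameter coequaliser $\tilde{\cR}_z$ (with the same boundary faces, including the compact codimension-two overlap $\cA_{*x}\times\cB_{*y}\times\cW_{xy;z}$ at $s=t=0$), and the paper likewise finishes by extending framed functions, taking the level set, and invoking the recognition/carapace argument of Lemma \ref{composition recognition} to read off the bordism between $(\cW\circ\cA)\circ\cB$ and $(\cW\circ\cB)\circ\cA$.
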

        \begin{proof}
            For $z \in \cH$, define $\tilde{\cQ}_{*z}(\cB, \tilde{\cP}(\cA, \cW))$ to be the coequaliser of the following diagram:
            $$
                \bigsqcup\limits_{y,y' \in \cG} \cB_{*y} \times \cG_{yy'} \times \tilde{\cP}_{y'z} \times [0, \eps)^2_{u,v} \rightrightarrows \bigsqcup\limits_{y \in \cG} \cB_{*y} \times \tilde{\cP}_{yz} \times [0, \eps)_s
            $$
            where the two maps send $(a, b, c, u, v)$ to $(a, \cC(b, c, u), v)$ and $(\cC(a, b, v), c, u)$ respectively, as before.\par 
            By Lemmas \ref{lem:abstract gluing} and \ref{lem:abstract gluing with faces}, $\tilde{\cQ}_{*z}(\cB, \tilde{\cP}(\cA, \cW))$ has a system of boundary faces given by $\tilde{\cQ}_{*z'} \times \cH_{z'z}$, $\cA_{*x} \times \{0\} \times \tilde{\cP}_{xz}$ and ${\cB}_{*y} \times \tilde{\cP}_{yz} \times \{0\}$ for $x \in \cF$, $y \in \cG$ and $z' \in \cH$; we note that the final two types of boundary faces overlap in codimension two faces of the form $\cA_{*x} \times \cB_{*y} \times \cW_{xy;z} \times \{(0,0)\}$, which is a compact face.\par 
            
        Choose framed functions $(f_{xy}, s_{xy})$ and $(f_{yz}, s_{yz})$ on each $\tilde{\cP}_{xz}$ and $\tilde{\cP}_{yz}$ respectively, and let $\cU_{xz} = f_{xz}^{-1}\{1\}$ and $\cU_{yz} = f_{yz}^{-1}\{1\}$ respectively; these are representatives of $\cW \circ \cB$ and $\cW \circ \cA$ respectively.\par 
        Now choose framed functions $(g_{*z}, t_{*z})$ on each $\tilde{\cQ}_{*z}$, extending the $(f_{xy}, s_{xy})$ and $(f_{yz}, s_{yz})$ on appropriate boundary faces. Then considering $g_{*z}^{-1}\{1\}$ and using the argument in Lemma \ref{composition recognition}, the result follows.
        \end{proof}
        \begin{defn}
            We define a map
            $$\cW_*: [*[i], \cF] \otimes [*[j], \cG] \rightarrow [*[i+j], \cH]$$
            sending $[\cA] \otimes [\cB]$ to $[(\cW \circ \cA) \circ \cB] = [(\cW \circ \cB) \circ \cA] \in [*[i+j], \cH]$. This is a well-defined map, by Lemmas \ref{lem:two bilin} and \ref{composition is well-defined}.
        \end{defn}
        \begin{lem}\label{lem:bilin 2}
            $\cW_*$ is a bilinear map.
        \end{lem}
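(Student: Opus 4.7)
The plan is to verify bilinearity of $\cW_*$ by checking additivity in each slot separately. Additivity in the second slot is essentially immediate from Lemma \ref{composition is well-defined}: given $\cA : *[i] \to \cF$ and $\cB_1, \cB_2 : *[j] \to \cG$,
\[
\cW_*([\cA] \otimes [\cB_1 \sqcup \cB_2]) = [(\cW \circ \cA) \circ (\cB_1 \sqcup \cB_2)] = [(\cW \circ \cA) \circ \cB_1] + [(\cW \circ \cA) \circ \cB_2],
\]
since $\cW \circ \cA$ is a fixed morphism and ordinary composition is already known to be bilinear.

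The nontrivial content lies in additivity in the first slot, which I would reduce to the intermediate identity $\cW \circ (\cA_1 \sqcup \cA_2) = (\cW \circ \cA_1) \sqcup (\cW \circ \cA_2)$ in $[\cG[i], \cH]$. To prove this, I would unpack the construction of $\cW \circ \cA$ through the glued spaces $\tilde{\cP}_{yz}(\cA, \cW)$. Since $(\cA_1 \sqcup \cA_2)_{*x} = \cA_{1,*x} \sqcup \cA_{2,*x}$, both the union indexed by $x \in \cF$ in the definition of $\tilde\cP$ and the concatenation maps driving the equivalence relation $\sim$ (namely $\cA_{*x} \times \cF_{xx'} \to \cA_{*x'}$ and $\cF_{xx'} \times \cW_{x'y;z} \to \cW_{xy;z}$) are individually compatible with this decomposition. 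Consequently there is a canonical diffeomorphism
\[
\tilde{\cP}_{yz}(\cA_1 \sqcup \cA_2, \cW) \cong \tilde{\cP}_{yz}(\cA_1, \cW) \sqcup \tilde{\cP}_{yz}(\cA_2, \cW)
\]
respecting the systems of faces. Choosing framed functions on the left that restrict to framed functions previously chosen on each summand (possible by Lemma \ref{extending smooth maps}), the $1$-level sets split accordingly, producing the desired equality of representative morphisms; independence of this class from the intermediate choices is Lemma \ref{composition well-defined}.

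Having established this identity, postcomposing with $\cB$ and applying the bilinearity of $\circ$ once more yields
\[
\cW_*([\cA_1 \sqcup \cA_2] \otimes [\cB]) = [((\cW \circ \cA_1) \sqcup (\cW \circ \cA_2)) \circ \cB] = \cW_*([\cA_1] \otimes [\cB]) + \cW_*([\cA_2] \otimes [\cB]),
\]
as required. The step that requires most care, though still routine, is verifying that the $\sim$-relation respects disjoint unions in $\cA$: it rests on the observation that concatenation in $\cF$ and the face maps of $\cW$ are each separately linear in $\cA$-factors, and so no genuine obstacle arises beyond bookkeeping of the type already implicit in the proofs of Lemmas \ref{lem:abstract gluing with faces} and \ref{composition is well-defined}.
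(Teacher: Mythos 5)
Your proposal is correct, but it diverges from the paper's argument in how it handles additivity in the first slot. The paper's proof is a two-line symmetry argument: since $\cW_*([\cA]\otimes[\cB])$ has the two equivalent descriptions $[(\cW\circ\cA)\circ\cB]$ and $[(\cW\circ\cB)\circ\cA]$ (Lemma \ref{lem:two bilin}), linearity in $[\cB]$ follows from the first description and linearity in $[\cA]$ from the second, in both cases by citing the already-established bilinearity of $\circ$ (Lemma \ref{composition is well-defined}). You instead work only with the description $[(\cW\circ\cA)\circ\cB]$ and prove directly that $\cA\mapsto\cW\circ\cA$ takes disjoint unions to disjoint unions, by observing that the coequaliser defining $\tilde{\cP}_{yz}(\cA_1\sqcup\cA_2,\cW)$ splits, since the identifications $(a,\cC(b,c,u),v)\sim(\cC(a,b,v),c,u)$ never mix points of $\cA_1$ with points of $\cA_2$, and the concatenation maps of $\cA_1\sqcup\cA_2$ are defined componentwise. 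This is a valid argument — it is the same style of computation the paper itself performs at the end of the proof of Lemma \ref{composition is well-defined} for $(\cV\sqcup\cV')\circ(\cW\sqcup\cW')$ — and it has the mild advantage of not relying on the commutativity statement of Lemma \ref{lem:two bilin}. The cost is that you redo a gluing/splitting verification that the paper's route avoids entirely; given that Lemma \ref{lem:two bilin} is established immediately beforehand, the symmetry argument is the more economical one.
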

        \begin{proof}
            Since composition is bilinear, $[(\cW \circ \cA) \circ \cB]$ is linear in $[\cB]$. Similarly $[(\cW \circ \cB) \circ \cA]$ is linear in $[\cA]$.
        \end{proof}
       
        \begin{defn}\label{def:bilin bor}
            Let $\cW, \cV: \cF \times \cG \to \cH$ be bilinear maps. A \emph{bordism} between $\cW$ and $\cV$ consists of compact manifolds with faces $\cR_{xy;z}$ of dimension $|x|+|y|-|z|+1$ for each $x \in \cF$, $y \in \cG$ and $z \in \cH$, with systems of boundary faces given by $\cF_{xx'} \times \cR_{x'y;z}$, $\cG_{yy'} \times \cR_{xy';z}$, $\cR_{xz;z'} \times \cH_{z'z}$, $\cW_{xy;z}$ and $\cV_{xy;z}$. Note all of these are broken except the last two which are unbroken; we label them as incoming and outgoing respectively.\par 
            In both cases, as usual we that the inclusions $c$ of each boundary face satisfy appropriate associativity relations.
        \end{defn}
        If $\cW: \cF \times \cG \to \cH$ is a bilinear map and $\cA: \cF' \to \cF$ is a morphism, we can form another bilinear map $\cW \circ \cA: \cF' \times \cG \to \cH$ in the same way as  in the previously treated case $\cF = *[i]$: for $w \in \cF'$, $y \in \cG$ and $z \in \cH$, we let $\tilde{\cP}_{wy;z}=\tilde{\cP}_{wy;z}(\cA, \cW)$ be the coequaliser of
        $$\bigsqcup_{x, x' \in \cF} \cA_{wx} \times \cF_{xx'} \times \cW_{x'y;z} \times [0, \eps)^2_{u,v} \rightrightarrows \bigsqcup_{x \in \cF} \cA_{wx} \times \cW_{xy;z} \times [0, \eps)_s$$
        Then choosing appropriate compatible framed functions and taking the preimage of 1 gives a bilinear map $\cF' \times \cG \to \cH$; the usual arguments show this is well-defined up to bordism. Note this generalises the previous construction by Example \ref{ex: bilin point 2}.\par 
        If instead we are given a morphism $\cB: \cG' \to \cG$, we can similarly form a bilinear map $\cW \circ \cB: \cF \times \cG' \to \cH$.
        \begin{lem}\label{lem:bordant bilin}
            Let $\cW, \cV: \cF \times \cG \to \cH$ be bilinear maps, $\cA: \cF' \to \cF$ a morphism and $\cR$ a bordism from $\cW$ to $\cV$. Then $\cW \circ \cA$ and $\cV \circ \cA$ are bordant bilinear maps $\cF' \times \cG \to \cH$.\par 
            In particular, $\cW_*=\cV_*$.
        \end{lem}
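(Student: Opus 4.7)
The plan is to build the bordism $\cT: \cF' \times \cG \to \cH$ between $\cW \circ \cA$ and $\cV \circ \cA$ by running the construction of $\cW \circ \cA$ with $\cW$ replaced by the bordism $\cR$. Concretely, for $w \in \cF'$, $y \in \cG$, $z \in \cH$, define
\[
\tilde{\cS}_{wy;z} := \tilde{\cS}_{wy;z}(\cA, \cR) = \left(\bigsqcup_{x \in \cF} \cA_{wx} \times \cR_{xy;z} \times [0, \eps)\right)/\sim,
\]
the coequaliser of the diagram
\[
\bigsqcup_{x, x' \in \cF} \cA_{wx} \times \cF_{xx'} \times \cR_{x'y;z} \times [0, \eps)^2_{u,v} \rightrightarrows \bigsqcup_{x \in \cF} \cA_{wx} \times \cR_{xy;z} \times [0, \eps)_s
\]
with the two maps defined by $(a,b,c,u,v) \mapsto (a, \cC(b,c,u), v)$ and $(\cC(a,b,v), c, u)$, exactly as in the construction of $\tilde{\cP}(\cA, \cW)$. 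Lemmas \ref{lem:abstract gluing} and \ref{lem:abstract gluing with faces} (applied with $I = \mathrm{ob}\,\cF$ and $J = \mathrm{ob}\,\cF' \sqcup \mathrm{ob}\,\cG \sqcup \mathrm{ob}\,\cH \sqcup \{\mathrm{in}, \mathrm{out}\}$, where the last two labels record the unbroken incoming/outgoing boundary faces $\cW_{xy;z}, \cV_{xy;z}$ of $\cR_{xy;z}$) will produce on $\tilde{\cS}_{wy;z}$ a system of faces consisting of broken faces of the form $\cA_{w;w_0\ldots w_i;x_0\ldots x_j} \times \cG_{y_0\ldots y_k} \times \cR_{x_j y_k; z_0} \times \cH_{z_0\ldots z_l} \times \{0\}$ together with the non-compact pieces $\cF'_{w w_0\ldots w_i} \times \tilde{\cS}_{w_i y_0; z_0} \times \cG_{y_0 \ldots y_k} \times \cH_{z_0 \ldots z_l}$ and the unbroken faces $\tilde{\cP}(\cA,\cW)_{wy;z}$, $\tilde{\cP}(\cA, \cV)_{wy;z}$ inherited from the incoming/outgoing boundary of $\cR$.

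Next I would choose framed functions $(f_{wy;z}, s_{wy;z})$ on each $\tilde{\cS}_{wy;z}$, extending the framed functions already fixed on $\tilde{\cP}(\cA,\cW)$ and $\tilde{\cP}(\cA,\cV)$ (used in the definition of $\cW\circ \cA$ and $\cV\circ\cA$) along the unbroken faces, and compatibly with the framed functions on lower-dimensional $\tilde\cS_{w'y';z'}$ already constructed. This is possible by Lemma \ref{extending smooth maps}, performing the choices inductively over $|w|+|y|-|z|$. Set
\[
\cT_{wy;z} := f_{wy;z}^{-1}\{1\}.
\]
By Lemma \ref{preimage theorem}, $\cT_{wy;z}$ is a compact manifold with faces whose system of boundary faces is exactly $\cF'_{w w'} \times \cT_{w' y;z}$, $\cG_{y y'} \times \cT_{wy';z}$, $\cT_{wy;z'} \times \cH_{z'z}$, $(\cW\circ\cA)_{wy;z}$ and $(\cV\circ\cA)_{wy;z}$, where the last two are unbroken. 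Matching Definition \ref{def:bilin bor}, the collection $\cT$ is therefore a bordism from $\cW\circ\cA$ to $\cV\circ\cA$ as required.

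For the final assertion, given $[\cA] \in [*[i], \cF]$ and $[\cB] \in [*[j], \cG]$, the above shows $[(\cW\circ\cA)] = [(\cV\circ\cA)] \in [\cG[i], \cH]$, and then by bilinearity of composition (Lemma \ref{composition is well-defined}) we get $[(\cW\circ\cA)\circ\cB] = [(\cV\circ\cA)\circ\cB]$, i.e.\ $\cW_*([\cA]\otimes[\cB]) = \cV_*([\cA]\otimes[\cB])$.

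The only real obstacle is the bookkeeping for the system of faces of $\tilde{\cS}_{wy;z}$: one must verify that the unbroken faces $\tilde\cP(\cA,\cW)_{wy;z}$ and $\tilde\cP(\cA,\cV)_{wy;z}$ are disjoint inside $\tilde\cS_{wy;z}$ and meet the compact (broken) faces $\cA_{w;\ldots}\times\cG_{\ldots}\times \cR_{\ldots;z_0}\times\cH_{\ldots}\times\{0\}$ only in the expected codimension-two strata $\cA_{w;\ldots}\times\cG_{\ldots}\times \cW_{\ldots;z_0}\times\cH_{\ldots}\times\{0\}$ and the $\cV$-analogue. But this is immediate from the fact that the labels $\mathrm{in}, \mathrm{out}$ are disjoint in $\cR$ and that the coequaliser construction only glues along broken faces in the $\cG$-direction; the argument is then entirely parallel to the verification of systems of faces in Lemmas \ref{composition is well-defined} and \ref{composition recognition}.
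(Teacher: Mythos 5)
Your proposal is correct and follows essentially the same route as the paper: the paper likewise forms the coequaliser $\tilde{\cP}_{wy;z}(\cA,\cR)$ of exactly the diagram you write down, chooses framed functions extending those on the boundary faces $\tilde{\cP}(\cA,\cW)$ and $\tilde{\cP}(\cA,\cV)$, and takes the preimage of $1$ to obtain the bordism. Your extra remarks on the face bookkeeping and the deduction of $\cW_*=\cV_*$ are consistent with (if slightly more detailed than) the paper's treatment.
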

        \begin{proof}
            For $w \in \cF'$, $y \in \cG$ and $z \in \cH$, let $\tilde{\cP}_{wy;z} = \tilde{\cP}_{wy;z}(\cA, \cR)$ be the coequaliser of 
            $$\bigsqcup_{x, x' \in \cF} \cA_{wx} \times \cF_{xx'} \times \cR_{x'y;z} \times [0, \eps)^2_{u,v} \rightrightarrows \bigsqcup_{x \in \cF} \cA_{wx} \times \cR_{xy;z} \times [0, \eps)_s$$
            Then choosing suitable compatible framed functions $(f_{wy;z}, s_{wy;z})$ on each $\tilde{\cP}_{wy;z}$ extending compatible framed functions on the boundary faces $\tilde{\cP}_{wy;z}(\cA, \cW)$ and $\tilde{\cP}_{wy;z}(\cA, \cV)$ and taking the preimage of 1 provides the desired bordism.
        \end{proof}
        Now let $\cF$, $\cG$, $\cH$, $\cI$, $\cJ$ and $\cK$ be flow categories, and let $\cW: \cF \times \cG \rightarrow \cI$, $\cV: \cG \times \cH \rightarrow \cJ$, $\cX: \cI \times \cH \rightarrow \cK$ and $\cY: \cF \times \cJ \rightarrow \cK$ be bilinear maps. These induce trilinear maps of abelian groups
        $$\cX_* \circ \cW_*, \cY_* \circ \cV_*: [*[i], \cF] \otimes [*[j], \cG] \otimes [*[k], \cH] \rightarrow [*[i+j+k], \cK]$$
        We will give conditions for these two maps to agree.
        \begin{defn}
            An \emph{associator} $\cZ$ for the decuplet $(\cF, \cG, \cH, \cI, \cJ, \cK, \cW, \cV, \cX, \cY)$ consists of compact manifolds with faces $\cZ_{xyz;w}$ for $x$ in $\cF$, $y$ in $\cG$, $z$ in $\cH$ and $w$ in $\cI$, of dimension $|x|+|y|+|z|-|w|+1$, with a system of faces given by
            $$\cF_{x_0 \ldots x_i} \times \cG_{y_0 \ldots y_j} \times \cH_{z_0 \ldots z_k} \times \cZ_{x_i y_j z_k; w_0} \times \cK_{w_0 \ldots w_l}$$
            and
            $$\cF_{x_0 \ldots x_i} \times \cG_{y_0 \ldots y_j} \times \cH_{z_0 \ldots z_k} \times \cW_{x_i y_j; u_0} \times \cI_{u_0 \ldots u_r} \times \cX_{u_r z_k; w_0} \times \cK_{w_0 \ldots w_l}$$
            and
            $$\cF_{x_0 \ldots x_i} \times \cG_{y_0 \ldots y_j} \times \cH_{z_0 \ldots z_k} \times \cV_{y_j z_k; v_0} \times \cJ_{v_0 \ldots v_s} \times \cY_{x_i v_j; w_0} \times \cK_{w_0 \ldots w_l}$$
            for $x_0 = x, \ldots, x_i$ in $\cF$, $y_0 = y, \ldots, y_j$ in $\cG$, $z_0 = z, \ldots, z_k$ in $\cH$, $u_0, \ldots, u_r$ in $\cI$, $v_0, \ldots, v_s$ in $\cJ$ and $w_0, \ldots, w_l = w$ in $\cK$.
        \end{defn}
        Note that in particular for an associator $\cZ$, each $\cZ_{xyz; w}$ has a system of boundary faces
        $$\cF_{xx'} \times \cZ_{x'yz; w}$$
        and
        $$\cG_{yy'} \times \cZ_{xy'z;w}$$
        and
        $$\cH_{zz'} \times \cZ_{xyz'; w}$$
        and
        $$\cW_{xy;u} \times \cX_{uz; w}$$
        and
        $$\cV_{yz; v} \times \cY_{xv; w}$$
        and
        $$\cZ_{xyz;w'} \times \cK_{w'w}$$
        for $x,x'$ in $\cF$, $y,y'$ in $\cG$, $z,z'$ in $\cH$, $u$ in $\cI$, $v$ in $\cJ$ and $w,w'$ in $\cK$.
        \begin{lem}\label{associators}
            If there exists an associator $\cZ$ for $(\cF, \cG, \cH, \cI, \cJ, \cK, \cW, \cV, \cX, \cY)$, then the two induced trilinear maps are equal:
            $$\cX_* \circ \cW_*= \cY_* \circ \cV_*: [*[i], \cF] \otimes [*[j], \cG] \otimes [*[k], \cH] \rightarrow [*[i+j+k], \cK]$$
        \end{lem}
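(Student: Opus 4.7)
The plan is to mimic the construction used for $\cW_*$, but with the associator $\cZ$ playing the role of the ``gluing witness'' for the three-fold composition. Fix morphisms $\cA: *[i] \to \cF$, $\cB: *[j] \to \cG$, $\cC: *[k] \to \cH$, representing classes in the respective bordism groups. For each $w \in \cK$, I would construct a (non-compact) manifold with faces $\tilde{\cR}_{*w}$ as an iterated abstract gluing, starting from the pieces
\[
\cA_{*x} \times \cB_{*y} \times \cC_{*z} \times \cZ_{xyz;w} \times [0,\varepsilon)
\]
for $x \in \cF$, $y \in \cG$, $z \in \cH$, collar-glued along faces of the form $\cF_{xx'}$, $\cG_{yy'}$, $\cH_{zz'}$ and $\cK_{w'w}$ using the collar systems on each of these factors, exactly as in the definition of $\tilde{\cQ}$ and $\tilde{\cP}$ but now with three source parameters. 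By Lemmas \ref{lem:abstract gluing} and \ref{lem:abstract gluing with faces} (applied inductively), this is a manifold with faces of dimension $i+j+k+|w|-|w|+1 = i+j+k+1$ (more precisely, $|x|+|y|+|z|-|w|+1+i-|x|+j-|y|+k-|z| = i+j+k-|w|+1$), with an explicit system of faces coming from the faces of $\cZ$ together with the collar directions.

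Next, I would identify the two types of compact boundary faces of $\tilde{\cR}_{*w}$. These come from the unbroken boundary faces of $\cZ_{xyz;w}$, namely those of the form $\cW_{xy;u} \times \cX_{uz;w}$ and those of the form $\cV_{yz;v} \times \cY_{xv;w}$. After collar-gluing by $\cA,\cB,\cC$, each of these assembles into the carapace of precisely the manifold built from $\cA,\cB,\cC$ and $\cW,\cX$ (respectively $\cV,\cY$) that one uses to compute $(\cX_* \circ \cW_*)([\cA]\otimes[\cB]\otimes[\cC])$ (respectively $(\cY_*\circ \cV_*)([\cA]\otimes[\cB]\otimes[\cC])$). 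Concretely, the first collection is the carapace of an iterated glued manifold of the form $\tilde{\cP}(\cA,\tilde{\cP}(\cB,\cW) \circ \tilde{\cP}(\cC,\cX))$, and dually for the second, so that after passing to preimages of $1$ under compatible framed functions these carapaces produce the desired representatives.

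Then, using Lemma \ref{extending smooth maps}, I would choose framed functions $(f_{*w}, s_{*w})$ on each $\tilde{\cR}_{*w}$ which restrict compatibly to framed functions previously chosen on the $\tilde{\cP}$-type gluings appearing on the two families of unbroken boundary faces of $\cZ$; by Lemma \ref{composition well-defined} (its proof) the resulting preimages of $1$ inside these boundary faces are representatives of the two trilinear compositions $\cX_*\circ \cW_*$ and $\cY_*\circ \cV_*$ applied to $[\cA]\otimes[\cB]\otimes[\cC]$. Taking $\cR_{*w} := f_{*w}^{-1}\{1\}$ then gives, by Lemma \ref{preimage theorem}, a compact manifold with faces whose system of boundary faces splits (as in Lemma \ref{composition recognition}) into three types: the broken faces $\cR_{*w'} \times \cK_{w'w}$, and the two unbroken faces giving the two claimed representatives. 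This exhibits a bordism between them, proving the identity in $[*[i+j+k],\cK]$.

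The main obstacle is bookkeeping: one must verify that the abstract gluing $\tilde{\cR}_{*w}$ really is a manifold with faces with the expected system of faces — in particular that the codimension-two intersections between the two families of unbroken boundary faces (which occur over higher codimension corners of $\cZ$) are consistent with the collar-gluings by $\cA, \cB, \cC$ — and that the framed functions on the various $\tilde{\cP}$'s appearing on the boundary can be chosen compatibly with a single framed function on $\tilde{\cR}$. Both of these are local combinatorial checks formally identical to those performed in Lemmas \ref{composition is well-defined} and \ref{composition is associative}, with the associator $\cZ$ providing the extra geometric data that would otherwise be missing.
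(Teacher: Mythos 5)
Your proposal is sound in outline but takes a more direct route than the paper. The paper's proof is a two-line reduction: it composes the associator with just \emph{one} of the three flow modules, forming $\cB \circ \cZ$, observes (via the proof of Lemma \ref{composition recognition}) that this yields a \emph{bordism of bilinear maps} $\cF \times \cH \to \cK$ between $\cX \circ (\cW \circ \cB)$ and $\cY \circ (\cV \circ \cB)$, and then invokes Lemma \ref{lem:bordant bilin}, which already says that bordant bilinear maps induce equal maps on $[*[i],\cF]\otimes[*[k],\cH]$. You instead contract all three variables $\cA,\cB,\cC$ at once and build the bordism of morphisms $*[i+j+k]\to\cK$ directly; this is the same geometric idea (the associator as a bordism witness after gluing in the flow modules) but it forces you to inline the content of Lemma \ref{lem:bordant bilin} and of the composition machinery, whereas the paper's modular route reuses lemmas already proved. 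What the paper's approach buys is precisely the avoidance of the bookkeeping you flag at the end.

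Two points in your sketch deserve attention. First, the dimension count: the piece $\cA_{*x}\times\cB_{*y}\times\cC_{*z}\times\cZ_{xyz;w}$ already has dimension $i+j+k-|w|+1$, so after adjoining the collar coordinate(s) the glued space $\tilde{\cR}_{*w}$ has dimension at least $i+j+k-|w|+2$, and it is $f_{*w}^{-1}\{1\}$ that has the bordism dimension $i+j+k-|w|+1$; your parenthetical omits the collar contribution. Second, and more substantively: restricting a framed function to the two families of unbroken boundary faces and taking preimages of $1$ produces manifolds that are only \emph{bordant} to, not literally equal to, chosen representatives of $\cX_*\circ\cW_*$ and $\cY_*\circ\cV_*$ (each of which is itself built by an iterated sequence of framed-function cuts). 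To make the boundary faces of your $\cR_{*w}$ genuinely of the required form one needs the excision step from the proof of Lemma \ref{composition is associative} — embedding the intermediate glued spaces via collar neighbourhoods and removing $h^{-1}(-\infty,1)$ before gluing the remainders — rather than merely "choosing compatible framed functions." This is fixable along the lines you indicate, but it is the genuine content of the argument, not just bookkeeping.
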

        \begin{proof}
            $\cB \circ {\cZ}$ along with the proof of Lemma \ref{composition recognition} provides a bordism between the bilinear maps $\cX \circ(\cW \circ \cB)$ and $\cY \circ (\cV \circ \cB)$; the result then follows from Lemma \ref{lem:bordant bilin}.
        \end{proof}
\subsection{Bilinear maps: framings}
    We incorporate stable framings into Section \ref{sec:bilin} in the same way as in Section \ref{Sec:framings}. Similarly to Definition \ref{def:farm}, we have:
    \begin{defn}\label{def:bil fram}
        Let $\cF$, $\cG$ and $\cH$ be framed flow categories. A \emph{stable framing} on a bilinear map $\cW: \cF \times \cG \to \cH$ consists of stable isomorphisms
        $$st=st^\cW_{xyz}: V_x \oplus V_y \oplus I^\cW_{xy;z} \to \bR \oplus V_z $$
        over each $\cW_{xy,z}$ with stabilising bundles $\bT^\cW_{xyz}$, along with, for each boundary face of each $\cW_{xy;z}$, an embedding of the stabilising bundle of the induced framing on that face into $\bT^\cW_{xyz}$. \par 
        Here $I^\cW_{xy;z} \tau_z$ is defined to be $T\cW_{xy;z} \oplus \bR \tau_z$ (the same as in Definition \ref{def:abstr ind}), with abstract gluing isomorphisms $\psi$ defined similarly (noting that all boundary faces here are broken).\par 
        We require that these stable isomorphisms and embeddings, combined with those for $\cF$, $\cG$, $\cH$ and each lower-dimensional $\cW_{x'y';z'}$, form systems of framings for each $\cW_{xy;z}$. 
    \end{defn}
    \begin{ex}
        The analogues of Examples \ref{ex: bilin point} and \ref{ex: bilin point 2} hold for framed bilinear maps and framed morphisms.
    \end{ex}
    
    Assume $\cW: \cF \times \cG \to \cH$ be a framed bilinear map, and let $\cA: \cF' \to \cF$ and $\cB: \cG' \to \cG$ be framed morphisms. We equip each $\tilde{\cP}(\cA, \cW)$ and $\tilde{\cP}(\cB, \cW)$ (constructed as in Section \ref{sec:bilin}) with framings, in exactly the same way we framed each $\tilde{\cQ}(\cdot, \cdot)$ in Lemma \ref{lem:fram Q}, and from this we frame $\cW \circ \cA$ and $\cW \circ \cB$ in exactly the same way as in Lemma \ref{lem:fram comp well-def}. From this plus (the argument in) Lemma \ref{lem:comp fram}, we have:
    \begin{lem}
        The bilinear maps $\cW \circ \cA: \cF' \times \cG \to \cH$ and $\cW \circ \cB: \cF \times \cG' \to \cH$ admit framings, and define well-defined framed morphisms up to framed bordism.
    \end{lem}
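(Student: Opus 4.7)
The plan is to repeat, essentially verbatim, the strategy used in Lemmas \ref{lem:fram Q} and \ref{lem:fram comp well-def}, but now in the bilinear setting. I will focus on $\cW \circ \cA$; the argument for $\cW \circ \cB$ is entirely symmetric.

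First I would frame the glued manifolds $\tilde{\cP}_{wy;z} := \tilde{\cP}_{wy;z}(\cA,\cW)$. Working inductively on the index difference $|w|+|y|-|z|$, I would choose vector bundles $\bT^{\tilde{\cP}}_{wy;z}$ equipped with compatible embeddings from the stabilising bundles $\bT^\cA_{wx}\oplus V_x^-\oplus\bT^\cW_{xy;z}$, from $\bT^{\cF'}_{ww'}\oplus V_{w'}^-\oplus\bT^{\tilde{\cP}}_{w'y;z}$, and the analogous bundles coming from the $\cG$- and $\cH$-faces; associativity over codimension two strata is guaranteed by choosing them as direct sums and using the arguments already carried out in Lemma \ref{lem:fram Q}. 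On each chart $\cA_{wx}\times\cW_{xy;z}\times[0,\eps)_s$ of $\tilde{\cP}_{wy;z}$ I define the stable framing by the composition
\[
V_w\oplus I^{\tilde{\cP}}_{wy;z}\xrightarrow{\psi^{-1}}V_w\oplus I^\cA_{wx}\oplus V_y\oplus I^\cW_{xy;z}\xrightarrow{st^\cW\circ st^\cA}\bR^2\oplus V_z,
\]
extended along the chosen embedding of stabilising bundles. The identification $\partial_s\leftrightarrow\nu_x$ extends $\psi$ canonically over the collar, exactly as in Lemma \ref{lem:fram Q}.

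The key step is to check that these local framings glue on double overlaps $\cA_{wx}\times\cF_{xx'}\times\cW_{x'y;z}\times[0,\eps)^2$. This reduces to the same big commutative diagram as in Lemma \ref{lem:fram Q}: the outer hexagon commutes because (i) the middle diamond commutes by associativity of the abstract gluing isomorphisms $\psi$ (the broken-broken associativity proved in Section \ref{sec:abstr ind bun}), (ii) the bottom two triangles commute by compatibility of $st^\cA$, $st^\cF$, $st^\cW$ with the embeddings $\iota$ (the fact that $\cA$ and $\cW$ are framed morphisms), and (iii) the top two squares commute by a direct computation on the generators $\partial_u, \partial_v, \tau_z$. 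Verifying the system-of-framings conditions \eqref{eq:broken comp} on the remaining broken faces $\tilde{\cP}_{ww';y;z}$, $\tilde{\cP}_{w;yy';z}$, $\tilde{\cP}_{wy;zz'}$ and on the compact face $\cA_{wx}\times\cW_{xy;z}\times\{0\}$ reduces to exactly the diagram chases already carried out in Lemma \ref{lem:fram Q}.

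Next I choose compatible framed functions $(f_{wy;z}, s_{wy;z})$ on all $\tilde{\cP}_{wy;z}$ (using Proposition \ref{prop: ext fram fn} inductively), and set $(\cW\circ\cA)_{wy;z}:=f_{wy;z}^{-1}\{1\}$. Proposition \ref{prop: res of fram fn is fram} supplies the stable framing on $(\cW\circ\cA)_{wy;z}$ as the composition
\[
V_w\oplus I^{\cW\circ\cA}_{wy;z}\oplus\bR\xi\xrightarrow{di+s}V_w\oplus I^{\tilde{\cP}}_{wy;z}\xrightarrow{st^{\tilde{\cP}}}\bR^2\oplus V_z=\bR\oplus\bR\xi\oplus V_z,
\]
where $\xi$ is absorbed into the stabilising bundle. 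The compatibility conditions making this a framed bilinear map follow from the compatibility conditions on $(f_{wy;z},s_{wy;z})$ and the fact that these framed functions agree on boundary faces.

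Finally, well-definedness up to framed bordism is handled exactly as in Lemmas \ref{lem:fram comp well-def} and \ref{lem:comp fram}: two choices of framed functions are interpolated by a suitable function $g_{wy;z}$ on $\tilde{\cP}_{wy;z}$ (the explicit piecewise formula from Lemma \ref{composition well-defined}), whose preimage of an interval provides a bordism; framing this bordism proceeds by the same framed-function extension argument. The hardest bookkeeping step is really the one already completed in Lemma \ref{lem:fram Q}, namely verifying that the locally-defined framings glue coherently; here that diagram must be re-drawn with an extra factor of $V_y$ carried passively throughout, which is routine.
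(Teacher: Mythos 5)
Your proposal is correct and follows essentially the same route as the paper, which itself simply frames the glued manifolds $\tilde{\cP}(\cA,\cW)$ and $\tilde{\cP}(\cB,\cW)$ exactly as in Lemma \ref{lem:fram Q}, frames the preimage $f^{-1}\{1\}$ as in Lemma \ref{lem:fram comp well-def}, and invokes the argument of Lemma \ref{lem:comp fram} for well-definedness. The only slip is notational: the source of your stable isomorphism should read $V_w\oplus V_y\oplus I^{\tilde{\cP}}_{wy;z}$ (the passive $V_y$ factor must appear on both sides, as you yourself note at the end), but this does not affect the argument.
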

    Similar to Lemma \ref{lem:bilin 2} (and by the same proof, with the usual incorporation of framings), we have:
    \begin{lem}
        For all framed morphisms of flow categories $\cA: *[i] \to \cF$ and $\cB: *[j] \to \cG$, we have that
            $$[(\cW \circ \cA) \circ \cB] = [(\cW \circ \cB) \circ \cA] \in [*[i+j], \cH]^{fr}$$
    \end{lem}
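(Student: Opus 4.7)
The plan is to mimic the proof of Lemma \ref{lem:two bilin}, carrying stable framings through every step in the manner of Lemmas \ref{lem:fram Q}--\ref{lem:fram comp well-def}. The unframed proof glued $\cA$ and $\cB$ into $\cW$ simultaneously, forming a doubly-glued manifold $\tilde{\cQ}_{\ast z}(\cB,\tilde{\cP}(\cA,\cW))$ whose system of boundary faces contained both $\cA_{\ast x}\times\tilde{\cP}_{xz}(\cB,\cW)\times\{0\}$ and $\cB_{\ast y}\times \tilde{\cP}_{yz}(\cA,\cW)\times\{0\}$, met transversely in the compact codimension $2$ corner $\cA_{\ast x}\times\cB_{\ast y}\times\cW_{xy;z}\times\{(0,0)\}$. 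Choosing a framed function extending compatible framed functions on these boundary faces and taking the preimage of $1$ produces the required bordism.

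First I would equip each $\tilde{\cP}_{xz}(\cB,\cW)$ and $\tilde{\cP}_{yz}(\cA,\cW)$ with the canonical stable framings supplied (by the framed analogue of Lemma \ref{lem:fram Q} recorded in the preceding lemma), so that $\cW\circ\cA$ and $\cW\circ\cB$ are framed morphisms, and the framings on $(\cW\circ\cA)\circ\cB$ and $(\cW\circ\cB)\circ\cA$ are the ones obtained by a further iteration of that construction. Next I would frame $\tilde{\cQ}_{\ast z}(\cB,\tilde{\cP}(\cA,\cW))$ exactly as in the proof of Lemma \ref{lem:fram Q}: over each open piece $\cA_{\ast x}\times\cB_{\ast y}\times\cW_{xy;z}\times[0,\eps)^2_{u,v}$, define
\[
st: V_{\ast}\oplus I^{\tilde{\cQ}}_{\ast z}\;\xrightarrow{\psi^{-1}}\;V_{\ast}\oplus I^{\cA}_{\ast x}\oplus I^{\cB}_{\ast y}\oplus I^{\cW}_{xy;z}\;\xrightarrow{st^{\cW}\circ st^{\cB}\circ st^{\cA}}\;\bR^{3}\oplus V_{z},
\]
extended to the appropriate stabilising bundle obtained by compatibly choosing embeddings of the $\bT^{\cA}\oplus V^{-}\oplus\bT^{\cB}\oplus V^{-}\oplus\bT^{\cW}$ into a single $\bT^{\tilde{\cQ}}_{\ast z}$. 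The symmetric gluing on the two overlap regions has to agree, and this is the main place where one must work: both ways of contracting the two normal directions into the boundary-face normals of $\tilde{\cP}(\cA,\cW)$ and $\tilde{\cP}(\cB,\cW)$ must induce identical stable isomorphisms after extending along the chosen embeddings of stabilising bundles. This is a direct analogue of the hexagonal compatibility diagram in the proof of Lemma \ref{lem:fram Q}, and commutes for the same formal reason: associativity of the abstract gluing isomorphisms $\psi$, associativity of the embeddings $\iota$ of stabilising bundles, and the framing compatibility squares of $\cA$, $\cB$, $\cW$ (and $\cF$, $\cG$) over the intervening strata.

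Having produced a framing on $\tilde{\cQ}_{\ast z}(\cB,\tilde{\cP}(\cA,\cW))$ restricting to the framings on its faces $\cA_{\ast x}\times\tilde{\cP}_{xz}(\cB,\cW)\times\{0\}$ and $\cB_{\ast y}\times\tilde{\cP}_{yz}(\cA,\cW)\times\{0\}$ built into the construction of $(\cW\circ\cB)\circ\cA$ and $(\cW\circ\cA)\circ\cB$ respectively, I would pick compatible framed functions $(g_{\ast z},t_{\ast z})$ extending those used to cut out $(\cW\circ\cB)\circ\cA$ and $(\cW\circ\cA)\circ\cB$ from the respective $\tilde{\cP}$, and take $g_{\ast z}^{-1}\{1\}$. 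By the framed analogues of Lemmas \ref{preimage theorem}, \ref{prop: res of fram fn is fram} and \ref{lem: fram comp detection}, this is a compact framed manifold with faces whose unbroken incoming/outgoing boundary faces give exactly the two representatives in question, proving the framed bordism $[(\cW\circ\cA)\circ\cB]=[(\cW\circ\cB)\circ\cA]$ in $[\ast[i+j],\cH]^{fr}$.

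The main obstacle, as in Lemma \ref{lem:fram Q}, is the hexagonal compatibility diagram over the codimension $2$ overlap of the two gluing regions: the two framings built from gluing $\cA$ first and $\cB$ first must agree after extension along the compatibly chosen stabilising bundle. Once this is checked, everything else follows the template already established in Section \ref{Sec:framings}, with Example \ref{ex:hom fram bord} invoked to absorb the homotopy of framings arising from the collar-identification ambiguity, as in the proof of Lemma \ref{framed composition is associative}.
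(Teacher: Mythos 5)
Your proposal is correct and follows essentially the same route as the paper, which proves this lemma by rerunning the unframed argument of Lemma \ref{lem:two bilin} (the doubly-glued $\tilde{\cQ}_{*z}(\cB,\tilde{\cP}(\cA,\cW))$ with its compact codimension-two corner $\cA_{*x}\times\cB_{*y}\times\cW_{xy;z}\times\{(0,0)\}$, compatible framed functions, and the preimage of $1$) "with the usual incorporation of framings" via the template of Lemmas \ref{lem:fram Q} and \ref{lem:fram comp well-def}. You have simply made explicit the overlap-compatibility check that the paper leaves implicit, and your identification of that check as the only substantive point is accurate.
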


        The same arguments as in Lemmas \ref{lem:comp fram} and \ref{lem:bilin 2} prove
        \begin{lem}
            This construction defines a well-defined bilinear map induced by $\cW$:
            $$\cW_*: [*[i],\cF]^{fr} \otimes [*[j],\cG]^{fr} \to [*[i+j],\cH]^{fr}$$
        \end{lem}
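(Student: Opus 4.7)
The plan is to assemble this result directly from the framed well-definedness statements established immediately above, mirroring the unframed argument in Lemma \ref{lem:bilin 2}. First, I would fix framed representatives $\cA: *[i] \to \cF$ and $\cB: *[j] \to \cG$ of classes in $[*[i],\cF]^{fr}$ and $[*[j],\cG]^{fr}$. By the discussion preceding the statement (the framed analogue of the construction in Section \ref{sec:bilin}), the bilinear map $\cW$ produces a framed morphism $\cW \circ \cA: \cG[i] \to \cH$, canonically determined up to framed bordism by the framed bordism class of $\cA$. Composing with $\cB$ using framed composition (Lemma \ref{lem:comp fram}) then yields a framed morphism $(\cW \circ \cA) \circ \cB: *[i+j] \to \cH$, whose class in $[*[i+j],\cH]^{fr}$ depends only on the framed bordism classes of $\cA$ and $\cB$.

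To define $\cW_*([\cA]\otimes[\cB])$ to be this class, I need the framed analogue of Lemma \ref{lem:two bilin}, which is precisely the preceding lemma: $[(\cW \circ \cA) \circ \cB] = [(\cW \circ \cB) \circ \cA]$ in $[*[i+j],\cH]^{fr}$. Granting this, the set map is well-defined. For bilinearity, fix $[\cA]$; then the map $[\cB] \mapsto [(\cW \circ \cA) \circ \cB]$ is framed post-composition with the fixed class $[\cW \circ \cA] \in [\cG[i],\cH]^{fr}$, which is bilinear by Lemma \ref{lem:comp fram}. Symmetrically, fixing $[\cB]$ and using the other formula $[(\cW \circ \cB) \circ \cA]$ gives linearity in $[\cA]$. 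Together these give bilinearity of $\cW_*$.

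The main obstacle, as usual, lies not in the above bookkeeping but in the framing construction underlying the identification of the two bracketings. Concretely, one forms the manifold $\tilde{\cQ}_{*z}(\cB, \tilde{\cP}(\cA,\cW))$ from the proof of Lemma \ref{lem:two bilin} and frames it following the template of Lemma \ref{lem:fram Q}: inductively on $|x|-|z|$ choose stabilising bundles $\bT_{*z}$ together with embeddings from the stabilising bundles of the boundary pieces $\bT^\cB \oplus V^- \oplus \bT^{\tilde\cP}$, $\bT^\cA \oplus V^- \oplus \bT^{\tilde\cP}$ and $\bT \oplus V^- \oplus \bT^\cH$; define $st$ locally on each $\cB_{*y} \times \tilde\cP_{yz} \times [0,\eps)$ via $\psi^{-1}$ followed by the appropriate composite framing; and verify that these local definitions glue along the coequaliser relation using the now-familiar large commutative diagram encoding associativity of $\psi$ and compatibility of the $st$'s. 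Choosing compatible framed functions and taking the level set $\{1\}$ then produces a framed bordism between the two iterated framed compositions, modulo the `deformation to the carapace' step from Lemma \ref{framed composition is associative} to reconcile the framings on the carapace pieces with the framings induced by the collar embeddings. This last reconciliation is the only genuinely delicate point; everything else is a direct combination of the framed well-definedness and bilinearity already proved for composition.
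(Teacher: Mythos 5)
Your proposal is correct and follows essentially the same route as the paper, whose proof is simply a pointer to "the same arguments as in Lemmas \ref{lem:comp fram} and \ref{lem:bilin 2}": well-definedness comes from the framed well-definedness of composition and of $\cW\circ\cA$, symmetry of the two bracketings from the immediately preceding framed lemma, and bilinearity from bilinearity of framed composition. Your closing paragraph on framing $\tilde{\cQ}_{*z}(\cB,\tilde{\cP}(\cA,\cW))$ and the carapace reconciliation correctly identifies the only delicate point, which the paper likewise delegates to the templates of Lemmas \ref{lem:fram Q} and \ref{framed composition is associative}.
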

        Next is bordisms of bilinear maps. Let $\cR$ be a bordism from $\cW$ to $\cV$, where $\cW, \cV: \cF \times \cG \to \cH$ are bilinear maps, and assume $\cW,\cV,\cF,\cG,\cH$ are all compatibly framed. By labelling the boundary faces of each $\cR_{xy;z}$ as broken or incoming/outgoing unbroken as in Definition \ref{def:bilin bor}, from Section \ref{sec:abstr ind bun} we obtain abstract index bundles $I^\cR_{xy;z}$ along with coherent abstract gluing isomorphisms over each boundary face.
    \begin{defn}
            A \emph{framing} on the bordism $\cR$ consists of stable isomorphisms 
            $$st_{xy;z}: V_x \oplus V_y \oplus I^\cR_{xy;z} \to \bR^2 \oplus V_z$$
            over each $\cR_{xy,z}$ with stabilising bundle $\bT^\cR_{xy;z}$, along with, for each boundary face of $\cR_{xy;z}$, an embedding of the stabilising bundle of the induced framing on that face into $\bT^\cR_{xyz}$.\par
            We require that these stable isomorphisms and embeddings, combined with those of $\cW,\cV,\cF,\cG,\cH$ and the framings for each lower-dimensional $\cR_{x'y';z'}$, form systems of framings for each $\cR_{xy;z}$.
        \end{defn}
        Incorporating framings into the proof of Lemma \ref{lem:bordant bilin} just as in Lemma \ref{lem:fram comp well-def}, we have that:
        \begin{lem}
            Let $\cW, \cV: \cF \times \cG \to \cH$ be framed bilinear maps, $\cA: \cF' \to \cF$ a framed morphism and $\cR$ a framed bordism from $\cW$ to $\cV$. Then $\cW \circ \cA$ and $\cV \circ \cA$ are framed bordant bilinear maps $\cF' \times \cG \to \cH$.\par 
            In particular, $\cW_*=\cV_*$.
        \end{lem}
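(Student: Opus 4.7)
The plan is to upgrade the proof of Lemma \ref{lem:bordant bilin} by carrying framings through every step, following exactly the template established in Lemma \ref{lem:fram Q} and Lemma \ref{lem:fram comp well-def}, with the only novelty being that the gluing now involves a bordism $\cR$ rather than a morphism, so that the manifolds $\tilde{\cP}_{wy;z}(\cA,\cR)$ acquire unbroken boundary faces $\tilde{\cP}_{wy;z}(\cA,\cW)$ and $\tilde{\cP}_{wy;z}(\cA,\cV)$ coming from the unbroken boundary of $\cR$.

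First, for $w\in\cF'$, $y\in\cG$, $z\in\cH$, form the manifolds $\tilde{\cP}_{wy;z}(\cA,\cR)$ exactly as in the unframed Lemma \ref{lem:bordant bilin}. Applying Lemmas \ref{lem:abstract gluing} and \ref{lem:abstract gluing with faces} (with $I=ob\,\cF$ and $J=ob\,\cF'\sqcup ob\,\cG\sqcup ob\,\cH\sqcup\{\cW,\cV\}$), these are manifolds with faces whose systems of faces include the broken faces with $\cR$ in the middle, plus the unbroken faces $\tilde{\cP}_{wy;z}(\cA,\cW)$ and $\tilde{\cP}_{wy;z}(\cA,\cV)$, one incoming and one outgoing.

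Second, inductively choose stabilising bundles $\bT^{\tilde\cP}_{wy;z}$ together with embeddings from the stabilising bundles on each boundary face, just as in the proof of Lemma \ref{lem:fram Q}. Then, on each open set of the form $\cA_{wx}\times\cR_{xy;z}\times[0,\eps)_s$, define a stable isomorphism
\[
st^{\tilde\cP}_{wy;z}: V_w\oplus I^{\tilde\cP}_{wy;z}\;\xrightarrow{\psi^{-1}}\;V_w\oplus I^{\cA}_{wx}\oplus I^{\cR}_{xy;z}\;\xrightarrow{st^{\cR}\circ st^{\cA}}\;\bR^3\oplus V_z,
\]
extended along the canonical embedding of the composite stabilising bundle into $\bT^{\tilde\cP}_{wy;z}$. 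The same diagrammatic argument used in Lemma \ref{lem:fram Q} — comparing the two gluing maps $\alpha,\beta$ via the associativity of the abstract gluing isomorphisms $\psi$, the compatibility of the framings on $\cA,\cF,\cR$, and the compatibility of the embeddings $\iota$ — shows that these locally defined stable isomorphisms agree on overlaps. Checking diagrams (\ref{eq:broken comp}) and (\ref{eq:unbroken comp}) on each boundary face of $\tilde{\cP}_{wy;z}$ is again a diagram-chase: the broken faces are handled as in Lemma \ref{lem:fram Q}, while for the unbroken faces $\tilde{\cP}_{wy;z}(\cA,\cW)$ and $\tilde{\cP}_{wy;z}(\cA,\cV)$, (\ref{eq:unbroken comp}) is exactly the compatibility of the framings of $\cR$ with those of $\cW$ and $\cV$ (with the usual sign convention that the inwards normal contributes $+\sigma$ on the incoming and $-\sigma$ on the outgoing side, which matches the definition of $\overline{(-)}$).

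Third, choose compatible framed functions $(f_{wy;z},s_{wy;z})$ on each $\tilde{\cP}_{wy;z}(\cA,\cR)$, extending framed functions already chosen on the unbroken boundary faces $\tilde{\cP}_{wy;z}(\cA,\cW)$ and $\tilde{\cP}_{wy;z}(\cA,\cV)$ that represent $\cW\circ\cA$ and $\cV\circ\cA$ respectively; existence is by Proposition \ref{prop: ext fram fn}. Setting $\cS_{wy;z}:=f_{wy;z}^{-1}\{1\}$, Lemma \ref{preimage theorem} produces a bilinear bordism (in the sense of Definition \ref{def:bilin bor}) from $\cW\circ\cA$ to $\cV\circ\cA$, and the framing on $\tilde{\cP}$ restricts to $\cS$ (via the splitting $S=di+s$ from Proposition \ref{prop: res of fram fn is fram}) as a framing of this bordism, proving $\cW\circ\cA$ and $\cV\circ\cA$ are framed bordant as bilinear maps $\cF'\times\cG\to\cH$. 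The statement $\cW_*=\cV_*$ then follows by applying this to $\cA=\cB$ and using the previously established well-definedness of $\cW_*$ under framed bordism of the composition.

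The main technical obstacle is purely bookkeeping: tracking the stabilising bundles and embeddings $\iota$ through the gluing construction so that all the compatibility diagrams for a \emph{system} of framings close up, and in particular matching the sign conventions on the two unbroken faces coming from $\cW$ and $\cV$ with the one contributed by $\cR$. Given Lemmas \ref{lem:fram Q} and \ref{lem:fram comp well-def}, no new idea is needed — only the careful observation that the extra unbroken boundary faces present in a bordism behave compatibly with the broken-face framing construction, which is precisely what the abstract gluing framework of Section \ref{sec:abstr ind bun} was set up to handle.
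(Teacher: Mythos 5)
Your proposal is correct and follows essentially the same route as the paper, which proves this lemma simply by citing the template of Lemma \ref{lem:fram comp well-def} applied to the unframed proof of Lemma \ref{lem:bordant bilin}; your write-up is just a more explicit expansion of that. (One trivial notational slip: the source of $st^{\tilde\cP}_{wy;z}$ should be $V_w\oplus V_y\oplus I^{\tilde\cP}_{wy;z}$, since it frames a bilinear map; the target $\bR^3\oplus V_z$ is correct.)
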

        We now move onto associators. Let $\cZ$ be an associator for the decuplet $(\cF, \cG, \cH, \cI, \cJ, \cK, \cW, \cV, \cX, \cY)$. Note all boundary faces of each $\cZ_{xyz;w}$ are broken. 
        \begin{defn}
            A \emph{framing} on the associator $\cZ$ consists of stable isomorphisms
            $$st_{xyz;w}: V_x \oplus V_y \oplus V_z \oplus I^\cZ_{xyz;w}  \to \bR^3 \oplus V_w$$
            over each $\cZ_{xyz;w}$ with stabilising bundle $\bT^\cZ_{xyz;w}$, along with, for each boundary face of $\cZ_{xyz;w}$, an embedding of the stabilising bundle of the induced framing on that face into $\bT^\cZ_{xyz;w}$.\par 
            We require that these, combined with the framings for all items in the decuplet and the framings for each lower-dimensional $\cZ_{x'y'z';w'}$, form systems of framings for each $\cZ_{xyz;w}$.
        \end{defn}
        Similarly to Lemma \ref{associators}, by incorporating framings in the usual way, we have the following lemma:
        \begin{lem}\label{lem:framed associators}
            If there exists an framed associator $\cZ$ for a framed decuplet $(\cF, \cG, \cH, \cI, \cJ, \cK, \cW, \cV, \cX, \cY)$ as above, then the two induced trilinear maps are equal:
            $$\cX_* \circ \cW_*= \cY_* \circ \cV_*: [*[i],\cF]^{fr} \otimes [*[j],\cG]^{fr} \otimes [*[k],\cH]^{fr} \to [*[i+j+k],\cK]^{fr}$$
        \end{lem}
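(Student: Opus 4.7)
The plan is to mirror the proof of Lemma \ref{associators} while carrying framings throughout, in the same spirit as the upgrades from Lemmas \ref{lem:fram Q} through \ref{framed composition is associative}. Fixing framed morphisms $\cA: *[i] \to \cF$, $\cB: *[j] \to \cG$ and $\cC: *[k] \to \cH$, I will construct a framed bordism between the bilinear maps $\cX \circ (\cW \circ \cB)$ and $\cY \circ (\cV \circ \cB)$, and then invoke the preceding framed analogue of Lemma \ref{lem:bordant bilin}.

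The first step is to frame the composition $\cZ \circ \cB$ produced by the construction preceding Lemma \ref{lem:bordant bilin}. For $x \in \cF$, $z \in \cH$, $w \in \cK$ one forms the coequaliser $\tilde{\cP}_{xz;w}$ built from the product pieces $\cB_{*y} \times \cZ_{xyz;w} \times [0,\eps)_s$, and equips each with a stable framing exactly as in Lemma \ref{lem:fram Q}: inductively choose stabilising bundles $\bT^{\tilde{\cP}}_{xz;w}$ together with compatible embeddings of the stabilising bundles from each boundary face, and over each product open set set $st^{\tilde{\cP}}$ to be the composite $\psi^{-1}$ followed by $st^{\cZ} \circ st^{\cB}$, extended along $\iota$. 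The same large commutative diagram used in Lemma \ref{lem:fram Q} shows that these local framings agree on overlaps, and the compatibility conditions $(\ref{eq:broken comp})$ and $(\ref{eq:unbroken comp})$ at the boundary faces follow from the associativity of the $\psi$'s and $\iota$'s together with the framed-associator hypothesis on $\cZ$. Crucially, the two unbroken families of boundary faces $\cW_{xy;u} \times \cX_{uz;w}$ and $\cV_{yz;v} \times \cY_{xv;w}$ of $\cZ$ induce framings matching those of $\cX \circ (\cW \circ \cB)$ and $\cY \circ (\cV \circ \cB)$ respectively.

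Picking compatible framed functions on the $\tilde{\cP}_{xz;w}$ and taking the preimage of $1$ then yields, via Lemma \ref{lem: fram comp detection} (the framed version of Lemma \ref{composition recognition}), a framed bordism between the framed bilinear maps $\cX \circ (\cW \circ \cB)$ and $\cY \circ (\cV \circ \cB)$. The framed analogue of Lemma \ref{lem:bordant bilin} stated just above then gives the equality $(\cX \circ (\cW \circ \cB))_* = (\cY \circ (\cV \circ \cB))_*$ on framed bordism classes, and evaluating on $[\cA] \otimes [\cC]$ and using associativity of framed composition (Lemma \ref{framed composition is associative}) produces the claimed equality $\cX_* \circ \cW_* = \cY_* \circ \cV_*$.

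The main obstacle is not conceptual but bookkeeping: one must verify that the assembled stabilising bundles, embeddings $\iota$, abstract gluing isomorphisms $\psi$, and local stable framings form a coherent system of framings over the intricate boundary structure of each $\tilde{\cP}_{xz;w}$ and its restriction to each codimension-two corner stratum (including the mixed strata where a broken face of $\cZ$ meets a gluing collar, and those where two unbroken families of $\cZ$ abut). This coherence is, however, a direct repetition of the diagrammatic verifications already carried out in Sections \ref{Sec:framings} and \ref{sec:bilin}, and introduces no essentially new ingredient beyond careful labelling.
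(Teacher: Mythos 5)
Your proposal is correct and follows essentially the same route as the paper: the paper's proof of Lemma \ref{lem:framed associators} is precisely "incorporate framings into the proof of Lemma \ref{associators} in the usual way," i.e.\ frame the glued pieces as in Lemma \ref{lem:fram Q}, apply the framed version of Lemma \ref{composition recognition}, and conclude via the framed analogue of Lemma \ref{lem:bordant bilin}. You have simply spelled out the bookkeeping that the paper leaves implicit.
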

\subsection{Morse complex of a flow category}
\label{sec:Morse}
Let $\cM$ be a flow category (at this point not necessarily framed).  We define
\[
CM_i(\cM) = \oplus_{|x|=i} \bZ/2 \langle x \rangle.
\]
We define a differential $\partial: CM_i(\cM) \to CM_{i-1}(\cM)$ by setting
\[
\partial(x) = \sum_{y : |y|=|x|-1} |\cM_{xy}|\cdot y
\]
where $\cM_{xy}$ is a manifold of dimension $|x|-|y|-1=0$ and $|\cM_{xy}| \in \bZ/2$ denotes its cardinality.

\begin{lem}
    $\partial$ is a differential.
\end{lem}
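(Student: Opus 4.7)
The plan is straightforward and uses only the axioms of a flow category together with the classical fact that a compact $1$-manifold has boundary of even cardinality. Fix $x, z \in \cM$ with $|x| - |z| = 2$; the coefficient of $z$ in $\partial^2(x)$ is
\[
\sum_{y \,:\, |y| = |x|-1} |\cM_{xy}| \cdot |\cM_{yz}| \pmod{2},
\]
so it suffices to show this sum is even for each such $(x,z)$.

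By definition, $\cM_{xz}$ is a compact smooth manifold with faces of dimension $|x| - |z| - 1 = 1$. Since no codimension $\geq 2$ strata are possible in a $1$-manifold with faces, $\cM_{xz}$ is simply a compact $1$-manifold with boundary. The system of faces axiom gives that $\partial \cM_{xz}$ is the disjoint union (over $y \in \cM$) of the images of the concatenation maps $c^\cM \colon \cM_{xy} \times \cM_{yz} \hookrightarrow \cM_{xz}$.

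Now I count dimensions. The product $\cM_{xy} \times \cM_{yz}$ has dimension $(|x|-|y|-1) + (|y|-|z|-1) = |x|-|z|-2 = 0$, so it must be empty unless both factors have non-negative dimension, which forces $|z| + 1 \leq |y| \leq |x| - 1$, i.e. $|y| = |x| - 1$. For such $y$ the product is a finite set of cardinality $|\cM_{xy}| \cdot |\cM_{yz}|$, and these products are disjoint subsets of $\partial \cM_{xz}$ by the definition of a system of faces.

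Hence
\[
|\partial \cM_{xz}| \;=\; \sum_{y \,:\, |y| = |x| - 1} |\cM_{xy}| \cdot |\cM_{yz}|.
\]
Since the boundary of a compact $1$-manifold consists of an even number of points, the left-hand side is $0 \pmod 2$, which is exactly the required vanishing of the coefficient of $z$ in $\partial^2(x)$. The only subtlety worth checking carefully is that no cancellations or identifications among the boundary components have been missed — but this is immediate from the definition of a system of faces, which precisely asserts that the concatenation faces are embedded and disjoint in the appropriate sense. No analytic input is needed.
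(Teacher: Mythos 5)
Your proof is correct and is essentially the paper's argument: the paper defers to the framed case (Lemma \ref{lem:differential}), where the coefficient of $z$ in $\partial^2(x)$ is identified with $\Gamma(\partial\cM_{xz})$ for the compact $1$-manifold $\cM_{xz}$, and your mod-$2$ version is exactly the advertised simplification, using that a compact $1$-manifold with faces has an even number of boundary points and that these are partitioned by the concatenation faces $\cM_{xy}\times\cM_{yz}$ with $|y|=|x|-1$. No gaps.
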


The proof is a simplification of the case of the Morse complex of a framed flow category given below in Lemma \ref{lem:differential}.

There is a canonical isomorphism of rings $\Gamma: \Omega^{fr}_0 \xrightarrow{\cong} \bZ$ between the 0-dimensional framed bordism group and $\bZ$.
    \begin{defn}\label{morse chains definition}
        Let $\cM$ be a framed flow category.
        For each object $x$, there are two homotopy classes of stable isomorphism $V_x \cong \bR^{-|x|}$. We let $F(x)$ be the set of such homotopy classes of isomorphisms.\par  
        We define the \emph{Morse chain complex} $CM_*(\cM)$ to be the chain complex with underlying graded abelian groups given by
        $$CM_i(\cM) := \left(\bigoplus_{|x| = i, \phi \in F(x)} \bZ \phi \right) / A_i $$ where $A_i$ is the subgroup generated by expressions
        $$\sum_{\phi \in F(x)} \phi$$
        for all $x$. This has differential $\partial$, which is defined as follows. We first note that each $\cM_{xy}$ is not naturally a stably framed manifold, but does acquire a natural stable framing if we fix stable isomorphisms between $V_x$ and $V_y$ and $\bR^N$ for appropriate $N$.\par 
        Let $x$ be an object in $\cM$, with index $i = \mu(x)$. For a generator $\phi$ in $F(x)$, 
        $$\partial \phi := \sum_{\mu(y) = i - 1} \Gamma(\cM_{xy}) \psi_y$$
        where we choose some $\psi_y$ in $F(y)$ for each $y$, and $\cM_{xy}$ is equipped with the stable framing it acquires from $\phi$ and $\psi_y$. Then $\partial$ is a linear map which sends $A_i$ to $A_{i-1}$ and so descends to a well-defined linear map $CM_i(\cM) \rightarrow CM_{i-1}(\cM)$, independent of the choices of $\psi_y$.
        \end{defn}
        \begin{rmk}
            Note that an evidently equivalent definition to this is to initially pick a (stable) isomorphism $\phi_x: V_x \cong \bR^{N_x + \mu(x)}$ for each $x$, and only take one generator for each $x$, and then to not quotient by the subgroups $A_*$, observing that the resulting complex is independent of the choices of isomorphisms $\phi_x$. This is in some sense a nicer definition but requires us to keep track of more choices along the way.
        \end{rmk}

    \begin{lem} \label{lem:differential}
        $\partial$ is a differential. Equivalently,
        $$\partial \circ \partial = 0.$$
    \end{lem}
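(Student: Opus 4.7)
The plan is to reduce the identity $\partial \circ \partial = 0$ to the classical fact that the signed count of boundary points of a compact framed $1$-manifold vanishes, where the input $1$-manifolds are the length-two morphism spaces $\cM_{xz}$ with $|x|-|z|=2$.

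First, I would fix, for each object $x$, a choice of stable isomorphism $\phi_x: V_x \to \bR^{-|x|}$ (so $\phi_x \in F(x)$), as in the remark following Definition \ref{morse chains definition}; this reduces the computation to showing that for every fixed $x$ and every $z$ with $|z|=|x|-2$, the integer
\[
N_{xz} := \sum_{|y|=|x|-1} \Gamma(\cM_{xy}) \cdot \Gamma(\cM_{yz})
\]
vanishes. Here each $\cM_{xy}$ and $\cM_{yz}$ is a compact $0$-manifold and $\cM_{xz}$ is a compact $1$-manifold with faces; the system of faces in the flow category axioms shows that the boundary of $\cM_{xz}$ is precisely $\bigsqcup_{|y|=|x|-1} \cM_{xy} \times \cM_{yz}$.

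Next, I would argue that the framings of $\cM_{xy}$, $\cM_{yz}$ and $\cM_{xz}$ induced from $\phi_x, \phi_y, \phi_z$ via the stable isomorphisms $st^\cM$ fit together coherently at the boundary. Explicitly, the product framing on $\cM_{xy} \times \cM_{yz}$ arises from the composite $st^\cM_{yz}\circ st^\cM_{xy}$ acting on $V_x \oplus I^\cM_{xy} \oplus I^\cM_{yz}$, whereas the restriction of the framing on $\cM_{xz}$ arises from $st^\cM_{xz}$ precomposed with the abstract gluing isomorphism $\psi$, which sends $\tau^{\cM}_y$ to the inward normal $\nu_y$. The compatibility diagram (\ref{eq: fram comp flow}) in the definition of a framed flow category is exactly the statement that, after extension along $\iota_{xyz}$, these two framings agree. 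Consequently, the framing on $\cM_{xz}$ restricts on the boundary stratum $\cM_{xy}\times \cM_{yz}$ to the product framing, twisted only by identifying $\tau_y$ with $\nu_y$. Passing to signed point counts, this identification gives
\[
\Gamma(\cM_{xy})\cdot \Gamma(\cM_{yz}) = \Gamma\bigl(\cM_{xy}\times \cM_{yz}\bigr),
\]
where the right-hand side is the signed count of the boundary face of $\cM_{xz}$ with its induced boundary framing.

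Finally, since $\cM_{xz}$ is a compact framed $1$-manifold with faces whose boundary is the disjoint union of the $\cM_{xy}\times \cM_{yz}$, and since the signed count of the boundary of any compact framed $1$-manifold is zero (each arc component contributes a cancelling pair and each circle component contributes nothing), we conclude $N_{xz}=0$, whence $\partial^2=0$. The main technical obstacle is the second step, namely keeping careful track of the abstract gluing isomorphism $\psi$ and the stabilising-bundle embeddings $\iota_{xyz}$ so that the induced boundary framing genuinely equals the product framing (rather than merely being homotopic to it up to associators, cf.\ Remark \ref{rmk:isbell}); once that bookkeeping is in place, the proof is the usual Morse-theoretic cancellation.
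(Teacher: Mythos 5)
Your proposal is correct and follows essentially the same route as the paper: reduce to the coefficient $\sum_{y}\Gamma(\cM_{xy})\Gamma(\cM_{yz})$, identify it via multiplicativity of $\Gamma$ and the framing-compatibility diagram with $\Gamma(\partial\cM_{xz})$, and conclude since the boundary of a compact framed $1$-manifold is framed nullbordant. The extra bookkeeping you flag (that the boundary framing on $\cM_{xy}\times\cM_{yz}$ is the product framing, via diagram~(\ref{eq: fram comp flow}) and the gluing isomorphism $\psi$) is exactly what the paper's terse phrase ``since $\Gamma$ is a ring map'' is implicitly relying on.
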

    \begin{proof}
        Let $x$ be an object of $\cM$, of index $i = \mu(x)$. Choose $\phi$ in $F(x)$. Then
        $$\partial \circ \partial(x) = \sum\limits_{\mu(y) = i - 1} \sum\limits_{\mu(z) = i - 2} \Gamma(\cM_{xy}) \Gamma(\cM_{yz})\, \theta_z$$
        where we choose $\psi_y$ in $F(y)$ and $\theta_z$ in $F(z)$ for each $y$, $z$, which induces framings on $\cM_{xy}$ and $\cM_{yz}$. We will show that the coefficient of each $\theta_z$ is 0, which implies the whole sum is 0.\par 
        The coefficient of each $\theta_z$ is 
        $$\sum\limits_{\mu(y) = i-1} \Gamma(\cM_{xy}) \Gamma(\cM_{yz})$$
        which, since $\Gamma$ is a ring map, is equal to
        $$\Gamma\left(\bigsqcup\limits_{\mu(y) = i-1} \cM_{xy} \times \cM_{yz}\right)$$
        which is equal to 
        $$\Gamma\left(\partial \cM_{xz}\right)$$
        and hence vanishes.
    \end{proof}
    \begin{defn}
        The \emph{Morse homology} $HM_*(\cM)$ of $\cM$ is defined to be the homology of the chain complex $CM_*(\cM)$.
    \end{defn}

     \begin{defn}
        Given a morphism $\cW$ of framed flow categories from $\cM^0$ to $\cM^1$, there is an induced map
        $$CM_*\left(\cW\right): CM_*\left(\cM^0\right) \rightarrow CM_*\left(\cM^1\right)$$
        defined as follows. Let $x$ be an object in $\cM^0$, with index $i = |x|$. For a generator $\phi$ in $F(x)$, 
        $$CM_*\left(\cW\right) \phi := \sum\limits_{|y| = i} \Gamma\left(\cW_{xy}\right)\, \psi_y$$
        where we choose some $\psi_y$ in $F(y)$ for each $y$, and $\cW_{xy}$ is equipped with the stable framing it acquires from $\phi$ and $\psi$. Then $CM_*\left(\cW\right)$ is a linear map which sends subgroups $A_i^0$ to $A_i^1$ and so descends to a well-defined linear map $CM_*\left(\cM^0\right)$ to $CM_*\left(\cM^1\right)$, independent of the choices of $\psi_y$.
    \end{defn}
    \begin{lem}
        $CM_*\left(\cW\right)$ is a chain map. Equivalently,
        $$\partial^1 \circ CM_*\left(\cW\right) = CM_*\left(\cW\right) \circ \partial^0.$$
    \end{lem}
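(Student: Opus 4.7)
The plan is to reduce the chain map identity to the fact that a compact framed $1$-manifold nullbords its framed boundary, combined with multiplicativity of the ring map $\Gamma:\Omega^{fr}_0\xrightarrow{\cong}\bZ$ over products and disjoint unions of framed manifolds.

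\emph{Setup.} Both sides of the identity are $\bZ$-linear, so I would fix $\phi\in F(x)$ and $\theta_w\in F(w)$ with $|w|=|x|-1$, and reduce to checking equality of the coefficients of $\theta_w$ on the two sides. Unwinding the definitions, and using that the intermediate auxiliary choices $\psi_z\in F(z)$, $\psi_{x'}\in F(x')$ cancel in the products because $\Gamma$ is multiplicative, the desired identity reduces to
\[
\sum_{z:\,|z|=|x|}\Gamma(\cW_{xz})\,\Gamma(\cM^1_{zw}) \;=\; \sum_{x':\,|x'|=|w|}\Gamma(\cM^0_{xx'})\,\Gamma(\cW_{x'w}),
\]
with each $0$-manifold equipped with the natural framing induced from $\phi$, $\theta_w$, and the relevant auxiliary class.

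\emph{Geometric core.} The key object is the compact framed $1$-manifold $\cW_{xw}$ (of dimension $|x|-|w|=1$), equipped with the stable framing $st^\cW_{xw}$ induced by $\phi$ and $\theta_w$. By the definition of a morphism of flow categories, its system of boundary faces is exactly
\[
\{\cM^0_{xx'}\times\cW_{x'w}\}_{|x'|=|w|} \;\sqcup\; \{\cW_{xz}\times\cM^1_{zw}\}_{|z|=|x|},
\]
every piece being $0$-dimensional. The compatibility axioms in the definition of a framed morphism (the two squares displayed in Example \ref{ex:fram mor}) identify the induced framing on each such boundary face, taken via the abstract gluing isomorphism $\psi$, with the composition framings $st^\cW_{x'w}\circ st^{\cM^0}_{xx'}$ and $st^{\cM^1}_{zw}\circ st^\cW_{xz}$ respectively. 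Because $\cW_{xw}$ is a compact framed $1$-manifold with framed boundary, $\Gamma$ of that boundary vanishes in $\Omega^{fr}_0$, and multiplicativity of $\Gamma$ on products converts this vanishing into the identity above, up to a relative sign between the two sums.

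\emph{Main obstacle.} The principal technical point is the sign bookkeeping that produces the equality rather than a cancellation. Each of the two types of boundary face presents its induced framing as an ordered composition of two stable isomorphisms targeting $\bR\oplus\bR\oplus V_w$, but the ordering of the two $\bR$-factors differs: in the $\cM^0$-broken face the first $\bR$ arises from $st^{\cM^0}$ and the second from $st^\cW$, while in the $\cM^1$-broken face the first $\bR$ arises from $st^\cW$ and the second from $st^{\cM^1}$. Placing both framings in a common codomain ordering introduces the Koszul sign of Remark \ref{rmk:fram sgn} for the transposition of two $1$-dimensional factors, and it is exactly this sign that converts the naive consequence of $\Gamma(\partial\cW_{xw})=0$ into the required identity $\partial^1\circ CM_*(\cW) = CM_*(\cW)\circ\partial^0$. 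Once this Koszul sign is reconciled against the universal convention $\psi:\tau_y\mapsto +\nu_y$ used at all broken boundary faces, the proof is complete.
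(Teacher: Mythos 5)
Your proof is correct and is essentially the paper's argument: both reduce the coefficient of each generator in the difference of the two sides to $\Gamma$ of the boundary of the compact framed $1$-manifold $\cW_{xw}$, using multiplicativity of $\Gamma$ together with the framing-reversal sign of Remark \ref{rmk:fram sgn} on one of the two families of boundary faces. One small imprecision: each composite boundary framing lands in $\bR\oplus V_w$ with a single $\bR$-factor (only $st^{\cW}$ contributes a copy of $\bR$; the flow-category framings $st^{\cM^i}$ contribute none), so the relative sign is the reversal of Remark \ref{rmk:fram sgn} arising from the position of the inward normal $\nu_y$ rather than a transposition of two $\bR$-factors --- but this does not affect the structure or validity of the argument.
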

    \begin{proof}
        Let $x$ be an object of $\cM^0$, of index $i = |x|$. Choose $\phi$ in $F(x)$. Then 
        $$\partial^1 \circ CM_*\left(\cW\right)(x) = \sum\limits_{\substack{y \in \cM^1\\|y| = i}} \sum\limits_{\substack{z \in \cM^1\\|z| = i - 1}}
        \Gamma\left(\cW_{xy}\right) \Gamma\left(\cM^1_{yz}(y, z)\right) \theta_z$$
        where we choose $\psi_y$ in $F(y)$ and $\theta_z$ in $F(z)$ for each $y, z$, similarly to the proof of Lemma \ref{lem:differential}. Similarly,
        $$CM_*\left(\cW\right) \circ \partial^0 (x) = \sum\limits_{\substack{y \in \cM^0\\ |y| = i-1}}\sum\limits_{\substack{z \in \cM^1\\ |z| = i-1}} \Gamma\left(\cM^0_{xy}\right) \Gamma \left(\cW_{yz}\right) \theta_z.$$
        The coefficient of each $\theta_z$ in the difference between these expressions is
        $$\sum\limits_{\substack{y \in \cM^1\\ |y| = i}} \Gamma\left(\cW_{xy}\right) \Gamma\left(\cM^1_{yz}\right) - \sum\limits_{\substack{y \in \cM^0\\  |y| = i-1}} \Gamma\left(\cM^0_{xy}\right)\Gamma\left(\cM^{01}_{yz}\right)$$
        which, since $\Gamma$ is a ring map, is
        $$\Gamma \left(\bigsqcup\limits_{\substack{y \in \cM^1\\ |y| = i}} \cW_{xy} \times \cM^1_{yz}\sqcup \bigsqcup\limits_{\substack{y \in \cM^0\\ |y| = i - 1}} \overline{\cM^0_{xy} \times \cW_{yz}}\right)$$
        which is equal to 
        $$\Gamma\left(\partial \cW_{xy}\right)$$
        and hence vanishes. Note that $\overline{\cM^0_{xy} \times \cW_{yz}}$ appears with the framing reversed; see Remark \ref{rmk:fram sgn}.
    \end{proof}

    Now let $\cM^i$ be framed flow categories, for $i\in \{0,1,2\}$, and $\cR: \cM^0 \times \cM^1 \to \cM^2$ be a framed bilinear map. 

    \begin{lem}
        The framed bilinear map $\cR$ induces a degree zero product
        \[
        \mu_{\cR}: CM_*(\cM^0) \otimes CM_*(\cM^1) \longrightarrow CM_*(\cM^2)
        \]
        which descends to homology. 
        If $\cR$ admits a framed associator, then the product on homology is associative.
    \end{lem}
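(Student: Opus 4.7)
The plan is to imitate the proof of Lemma \ref{lem:differential} and of the chain-map property for morphisms, one moduli-space dimension higher in each case. For objects $x\in\cM^0$, $y\in\cM^1$, $z\in\cM^2$ with $|z|=|x|+|y|$, the space $\cR_{xy;z}$ is a compact $0$-manifold, stably framed via $st^\cR_{xyz}$ and choices of representatives $\phi\in F(x)$, $\psi\in F(y)$, $\theta_z\in F(z)$. I set
$$\mu_\cR(\phi\otimes\psi):=\sum_{|z|=|x|+|y|}\Gamma(\cR_{xy;z})\cdot\theta_z,$$
a $\bZ$-linear map of degree zero. Exactly as in Definition \ref{morse chains definition}, replacing any of $\phi$, $\psi$, $\theta_z$ by the other element of $F(\cdot)$ reverses the framing on $\cR_{xy;z}$ and hence flips the sign of $\Gamma$, so the formula descends to the quotients defining $CM_*$ to give a degree zero map $\mu_\cR: CM_*(\cM^0)\otimes CM_*(\cM^1)\to CM_*(\cM^2)$.

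Next I verify that $\mu_\cR$ is a chain map. For $|w|=|x|+|y|-1$ the manifold $\cR_{xy;w}$ is a compact stably framed $1$-manifold, whose system of boundary faces, by the definition of a framed bilinear map, consists of products of $0$-dimensional stably framed manifolds of the shapes
$$\cM^0_{xx'}\times\cR_{x'y;w},\qquad \cM^1_{yy'}\times\cR_{xy';w},\qquad \cR_{xy;w'}\times\cM^2_{w'w}.$$
Multiplicativity of $\Gamma$ together with $\Gamma(\partial\cR_{xy;w})=0$ yield the Leibniz rule
$$\mu_\cR(\partial\phi\otimes\psi)+(-1)^{|x|}\mu_\cR(\phi\otimes\partial\psi)+\partial\,\mu_\cR(\phi\otimes\psi)=0,$$
with the sign dictated by the Koszul convention of Remark \ref{rmk:fram sgn}. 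Hence $\mu_\cR$ descends to an induced product on $HM_*$.

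For associativity, I specialise to $\cM^0=\cM^1=\cM^2=\cM$ and assume a framed associator $\cZ$ for the decuplet $(\cM,\cM,\cM,\cM,\cM,\cM,\cR,\cR,\cR,\cR)$, so that $\dim\cZ_{xyz;w}=|x|+|y|+|z|-|w|+1$. Counting $0$-dimensional $\cZ_{xyz;w}$ (when $|w|=|x|+|y|+|z|+1$) defines a degree $+1$ trilinear operation $h:CM_*(\cM)^{\otimes 3}\to CM_*(\cM)$. For $|w|=|x|+|y|+|z|$, $\cZ_{xyz;w}$ is a framed $1$-manifold whose six types of boundary strata contribute, respectively,
$$h\circ(\partial\otimes 1\otimes 1),\ h\circ(1\otimes\partial\otimes 1),\ h\circ(1\otimes 1\otimes\partial),\ \mu_\cR\circ(\mu_\cR\otimes\id),\ \mu_\cR\circ(\id\otimes\mu_\cR),\ \partial\circ h,$$
again with signs determined by Remark \ref{rmk:fram sgn}. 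The identity $\Gamma(\partial\cZ_{xyz;w})=0$ exhibits $h$ as a chain homotopy between $\mu_\cR\circ(\mu_\cR\otimes\id)$ and $\mu_\cR\circ(\id\otimes\mu_\cR)$, whence associativity on $HM_*$. The main technical burden throughout is sign bookkeeping: checking that the framing each boundary stratum inherits from $st^\cR$, respectively $st^\cZ$, via the abstract gluing isomorphisms of Section \ref{sec:abstr ind bun} agrees, up to a Koszul sign, with the product framing built from the lower-dimensional $st^{\cM^k}$ and $st^\cR$ (or $st^\cZ$). These compatibilities are precisely diagrams (\ref{eq:broken comp}) and (\ref{eq:unbroken comp}), so no new geometric input is required beyond the framing data already in hand.
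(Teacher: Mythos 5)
Your proof is correct and follows essentially the same route as the paper: define $\mu_{\cR}$ by counting the framed $0$-dimensional $\cR_{xy;z}$, check independence of the choices in $F(\cdot)$, derive the Leibniz rule from the boundary decomposition of the $1$-dimensional $\cR_{xy;w}$, and extract a chain homotopy for associativity from the $1$-dimensional strata of the framed associator. The paper leaves the associativity step as "analogous"; your spelled-out version is exactly the intended argument.
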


    \begin{proof}
        Fix objects $x$ and $y$ of $\cM^0$ and $\cM^1$ of degrees $|x|=i$ and $|y|=j$. For each $z \in \cM^2$ with $|z| = i+j$ we have a zero-dimensional manifold $\cW_{xy;z}$ and we set 
        \[
        \mu_{\cR}(\phi_x\otimes \phi_y) = \sum_{|z|=i+j} \Gamma(\cR_{xy;z}) \, \phi_z
        \]
        where $\phi_{\bullet} \in F(\bullet)$ for $\bullet=x,y,z$ and the choices of $\phi_{\bullet}$ equip $\cW_{xy;z}$ with a stable framing. As in the construction of the differential on $CM_i(\cM)$ above, the map does not depend on the choices of the $\phi_{\bullet}$. The fact that $\cW_{xy;z}$,  with $|z|=|x|+|y|-1$, has a  system of boundary faces indexed by
        \[
        \cM^0_{xx'} \times \cR_{x'y;z}, \, \cM^1_{yy'} \times \cR_{xy';z}, \, \cR_{xy;z'}\times \cM^2_{z'z}
        \]
        for appropriate $x',y',z'$ 
        shows that $\mu_{\cR}$ satisfies the Leibniz rule and hence descends to a map on homology. The proof of the final claim is analogous.
            \end{proof}
        \subsection{Truncations}
        \begin{defn}
            Let $\cF$ and $\cG$ be flow categories. A \emph{pre-$\tau_{\leq n}$-morphism} $\cW$ from $\cF$ to $\cG$ consists of compact smooth manifolds with faces $\cW_{xy}$ (of dimension $|x|-|y|$) for all $x$ in $\cF$ and $y$ in $\cG$ such that $|x|-|y| \leq n$, along with maps
            $$c = c^\cW: \cF_{xz} \times \cW_{zy} \rightarrow \cW_{xy}$$
            and
            $$c = c^\cW: \cW_{xz} \times \cG_{zy} \rightarrow \cW_{xy}$$
            which are compatible with $c^\cF$ and $c^\cG$, which define a system of faces
            $$\cW_{x_0 \ldots x_i; y_0 \ldots y_j}$$
            for $x_0 = x, \ldots x_i$ in $\cF$ and $y_0, \ldots, y_j = y$ in $\cG$.\par 
            A \emph{pre-$\tau_{\leq n}$-bordism} $\mathcal{R}$ between two pre-$\tau_{\leq n-1}$-morphisms $\mathcal{W}$ and $\mathcal{V}$ consists of compact smooth manifolds with faces $\mathcal{R}_{xy}$ (of dimension $|x|-|y|$ + 1), for all $x$ in $\mathcal{F}$ and $y$ in $\mathcal{G}$ such that $|x|-|y|+1 \leq n$, along with maps
            $$c=c^\cR: \cW_{xy}, \cV_{xy} \rightarrow \cR_{xy}$$
            and 
            $$c = c^\cR: \cF_{xz} \times \cR_{zy} \rightarrow \cR_{xy}$$
            and
            $$c = c^\cR: \cR_{xz} \times \cG_{zy} \rightarrow \cR_{xy}$$
            compatible with $c^\cF$, $c^\cG$, $c^\cW$, $c^\cV$, such that the images of the faces in the domains defines a system of faces for $\cR_{xy}$ (after deleting repeats). More precisely, there is a system of faces for $\cR_{xy}$ given by the following:
            $$\cW_{x_0 \ldots x_i; y_0 \ldots y_j}$$
            and
            $$\cV_{x_0 \ldots x_i; y_0 \ldots y_j}$$
            and
            $$\cR_{x_0 \ldots x_i; y_0 \ldots y_j}$$
            for $x_0=x, \ldots, x_i$ in $\cF$ and $y_0, \ldots, y_j$ in $\cG$. 
        \end{defn}
        \begin{rmk}\label{explanation for truncated premorphisms}
            A more intuitive description is the following. A pre-$\tau_{\leq n}$-morphism $\cW$ is like a morphism of flow categories $\cW$ except we only require the manifolds with faces $\cW_{xy}$ to exist when their dimension is $\leq n$, but when they do exist they have the same concatenation maps, defining a system of faces in the same way as before. A similar description applies to pre-bordisms.
        \end{rmk}
        \begin{rmk}
            Similarly to with morphisms and bordisms, we equip any pre-$\tau_{\leq n}$-morphism or pre-$\tau_{\leq n}$-bordism we encounter with a system of collars, compatible with those on $\cF$ and $\cG$ (and $\cW$ and $\cV$ in the case of a pre-$\tau_{\leq n}$-bordism).
        \end{rmk}
        \begin{ex}\label{truncating pre-morphisms}
            Any morphism of flow categories $\cW: \cF \rightarrow \cG$ determines a pre-$\tau_{\leq n}$-morphism $\tau_{\leq n} \cW: \cF \rightarrow \cG$ by forgetting the manifolds $\cW_{xy}$ whenever $|x|-|y| > n$.   \par 
            Similarly, whenever $m \geq n$, any pre-$\tau_{\leq m}$-morphism $\cW$ determines a pre-$\tau_{\leq n}$-morphism $\tau_{\leq n}\cW$ in the same way.\par 
            Clearly composing these truncation operations is associative, i.e. $\tau_{\leq n}\tau_{\leq m}\cW = \tau_{\leq n}\cW$, for $n \leq m$.\par 
            Note that these operations are not necessarily surjective:  not every pre-$\tau_{\leq n}$-morphism $\cF \rightarrow \cG$ is $\tau_{\leq n} \cW$ for $\cW$ some pre-$\tau_{\leq n+1}$-morphism.\par
            There are similar truncation operations for pre-$\tau_{\leq n}$-bordisms. 
            \end{ex}
        \begin{defn}\label{truncated morphisms}
            Let $\cF$ and $\cG$ be flow categories. A \emph{$\tau_{\leq n}$-morphism} $\cW$ from $\cF$ to $\cG$ is a pre-$\tau_{\leq n}$-morphism $\cW: \cF \rightarrow \cG$ such that $\cW = \tau_{\leq n} \cW'$ for some pre-$\tau_{\leq n+1}$-morphism $\cW'$. We call $\cW'$ an \emph{extension} of $\cF$.\par 
            A \emph{$\tau_{\leq n}$-bordism} between two $\tau_{\leq n}$-morphisms $\cW$ and $\cV$ is a pre-$\tau_{\leq n}$-bordism $\cR$ between $\tau_{\leq n-1}\cW$ and $\tau_{\leq n-1}\cV$ such that $\cR = \tau_{\leq n} \cR'$ for $\cR'$ some pre-$\tau_{\leq n+1}$-bordism between $\cW$ and $\cV$. We call $\cR'$ an \emph{extension} of $\cR$. \par 
            We emphasise that in these definitions, the extensions $\cW'$ and $\cR'$ are assumed to exist, but are not kept track of as part of the data.
        \end{defn}
        \begin{rmk}
            Similarly to Remark \ref{explanation for truncated premorphisms}, we think of these as morphisms (or bordisms of morphisms) of flow categories, where we only specify the manifolds with faces $\cW_{xy}$ when their dimension is $\leq n$ (again with the appropriate system of faces), and such that there exist manifolds with faces $\cW_{xy}$ when the dimension is $n+1$ with the appropriate system of faces, but without keeping track of these $(n+1)$-dimensional manifolds with faces as part of the data.
        \end{rmk}
        \begin{rmk}
            We may think of a morphism of flow categories as a $\tau_{\leq \infty}$-morphism, or equivalently as a pre-$\tau_{\leq \infty}$-morphism.
        \end{rmk}
        \begin{ex}\label{truncating morphisms}
            Similarly to Example \ref{truncating pre-morphisms}, any morphism of flow categories $\cW: \cF \rightarrow \cG$ determines a $\tau_{\leq n}$-morphism $\tau_{\leq n} \cW$ by forgetting all manifolds of dimension $\geq n$.
        \end{ex}
        \begin{ex}\label{0-truncation and chains}
            A pre-$\tau_{\leq 0}$-morphism $\cW: \cF \rightarrow \cG$ determines a linear map $CM_*(\cW): CM_*(\cF; \bZ/2) \rightarrow CM_*(\cG; \bZ/2)$, defined by
            $$CM_*(\cW)(x) := \sum\limits_{y \in \cG\,,\, |y|=|x|} |\cW_{xy}| \cdot y$$
            for $x$ in $\cF$. Moreover, 
            $\cW$ is a $\tau_{\leq 0}$-morphism if and only if $CM_*(\cW)$ is a chain map.\par 
            If $\cW = \tau_{\leq 0} \cV$ for some morphism $\cV: \cF \to \cG$, then this is the same as the construction in Section \ref{sec:Morse}.
        \end{ex}
        \begin{ex}\label{cone chain cone}
            Given a morphism $\cW: \cF \rightarrow \cG$, there is a natural isomorphism
            $$CM_*(\Cone(\cW)) \cong \Cone(CM_*(\cW): CM_*(\cF) \to CM_*(\cG))$$
        \end{ex}
        \begin{rmk}
            Example \ref{0-truncation and chains} is one of the main reasons we will define a category built out of $\tau_{\leq n}$-morphisms, rather that pre-$\tau_{\leq n}$-morphisms.
        \end{rmk}
        Similarly to Definition \ref{flow morphism def}, we give the following definition.
        \begin{defn}
            Let $\cF$ and $\cG$ be flow categories. Define $[\cF, \cG]_{\tau_{\leq n}}$ to be the set of $\tau_{\leq n}$-morphisms $\cF \rightarrow \cG$, modulo the equivalence relation generated by
            \begin{enumerate}
                \item $\cW \sim \cV$ if $\cW$ and $\cV$ are $\tau_{\leq n}$-bordant.
                \item $\cW \sim \cV$ if $\cW$ and $\cV$ are diffeomorphic.
            \end{enumerate}
            This is an abelian group under disjoint union, with unit given by the empty morphism.
        \end{defn}
        \begin{ex}
            Truncation as in Example \ref{truncating morphisms} determines morphisms of abelian groups
            $$[\cF, \cG] \xrightarrow{\tau_{\leq m}} [\cF, \cG]_{\tau_{\leq m}} \xrightarrow{\tau_{\leq n}} [\cF, \cG]_{\tau_{\leq n}}$$
            for $m \geq n$.
        \end{ex}
        \begin{lem}\label{0-truncation is chain}
            There is a natural isomorphism $\Xi: [\cF, \cG]_{\tau_{\leq 0}} \rightarrow \Hom(CM_*(\cF), CM_*(\cG))$ (where the target is the set of chain maps $CM_*(\cF) \rightarrow CM_*(\cG)$ up to homotopy).
        \end{lem}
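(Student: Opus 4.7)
The plan is to verify that the map $\cW \mapsto CM_*(\cW)$ of Example \ref{0-truncation and chains} descends to the required isomorphism $\Xi$ on equivalence classes. By that example, a pre-$\tau_{\leq 0}$-morphism gives a chain map precisely when it is a $\tau_{\leq 0}$-morphism, so we already have a map from $\tau_{\leq 0}$-morphisms to chain maps; what remains is to check it is well-defined up to $\tau_{\leq 0}$-bordism and diffeomorphism, and then to establish bijectivity.

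For well-definedness, diffeomorphic $\cW, \cV$ plainly satisfy $|\cW_{xy}| = |\cV_{xy}|$ in $\bZ/2$ and so $CM_*(\cW) = CM_*(\cV)$ on the nose. If $\cR$ is a $\tau_{\leq 0}$-bordism between $\cW$ and $\cV$, I would define a candidate chain homotopy $h\colon CM_*(\cF) \to CM_{*+1}(\cG)$ by $h(x) = \sum_{|y|=|x|+1} |\cR_{xy}|\cdot y$, using the 0-dimensional pieces of $\cR$ (which exist because $\cR$ extends to a pre-$\tau_{\leq 1}$-bordism, and the boundary condition of a pre-$\tau_{\leq 0}$-bordism provides exactly these 0-manifolds when $|x|-|y|+1=0$). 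The chain-homotopy relation $CM_*(\cW) - CM_*(\cV) = \partial h + h\partial$ then follows from counting the boundary of the 1-manifolds $\cR_{xy}$ for $|x|=|y|$: their system of boundary faces is $\cW_{xy} \sqcup \cV_{xy} \sqcup \bigsqcup_{x'} \cF_{xx'}\times \cR_{x'y} \sqcup \bigsqcup_{y'}\cR_{xy'}\times\cG_{y'y}$, and mod~$2$ the total boundary count of a compact $1$-manifold vanishes.

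For injectivity, I would reverse the construction: given chain maps $f = CM_*(\cW)$, $g = CM_*(\cV)$ and a chain homotopy $h$, pick $\cR_{xy}$ (for $|y| = |x|+1$) to be any finite set of the appropriate cardinality mod $2$. The chain-homotopy identity is exactly the statement that for each $x,y$ with $|x|=|y|$ the prescribed boundary
\[
\cW_{xy} \sqcup \cV_{xy} \sqcup \bigsqcup_{x'} \cF_{xx'}\times \cR_{x'y} \sqcup \bigsqcup_{y'}\cR_{xy'}\times\cG_{y'y}
\]
has even cardinality, so one can fill it in by a disjoint union of arcs to get a compact $1$-manifold $\cR_{xy}$ with the required boundary; together these form a pre-$\tau_{\leq 1}$-bordism, whose $\tau_{\leq 0}$-truncation is the sought $\tau_{\leq 0}$-bordism. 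For surjectivity, given any chain map $f$, define a pre-$\tau_{\leq 0}$-morphism by letting $\cW_{xy}$ for $|y|=|x|$ be a finite set of cardinality congruent to the coefficient of $y$ in $f(x)$; since $CM_*(\cW) = f$ is already a chain map, Example~\ref{0-truncation and chains} upgrades $\cW$ to a genuine $\tau_{\leq 0}$-morphism. Compatibility with the group structure (disjoint union corresponding to sum of chain maps, empty morphism to zero map) is immediate from the definitions.

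The one technical point to attend to is that in both the injectivity and surjectivity steps, the filled-in 1-manifolds must be equipped with collar neighbourhoods and concatenation maps compatible with the prescribed boundary strata. This is unproblematic in dimension $\leq 1$: the broken boundary faces are finite sets of points, and the compatibility diagrams reduce to specifying, for each prescribed boundary point, a half-open collar $[0,\varepsilon)$ inside the 1-manifold, which can be chosen freely and extended by Lemma~\ref{systems of collars exist}. With these choices made, naturality of $\Xi$ in both arguments follows formally from the fact that the constructions only use the cardinalities of $0$-manifolds and the combinatorics of the concatenation maps.
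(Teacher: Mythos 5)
Your proof is correct and follows essentially the same route as the paper's: $0$-dimensional strata give matrix coefficients, the $0$-dimensional part of a bordism gives the chain homotopy (with the relation read off from the boundary of the $1$-dimensional pieces of its pre-$\tau_{\leq 1}$ extension), and the converse directions use that an even collection of points bounds a disjoint union of arcs. In fact you are slightly more careful than the paper on injectivity — the paper's proof only spells out ``bordant $\Rightarrow$ chain homotopic'' (well-definedness), whereas you also construct the bordism from a given chain homotopy — and your remark about collars on the filled-in $1$-manifolds is a harmless but legitimate point the paper leaves implicit.
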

        \begin{proof}
        We first show that $[\cF,\cG]_{\tau_{\leq 0}} \to \Hom(CM_*(\cF), CM_*(\cG))$ is injective.  Suppose $\cW$ and $\cV$ are flow $\tau_{\leq 0}$-morphisms which are bordant, via a bordism $\cR$. Recall that the bordism relation means that we have compact smooth manifolds with faces $\cR_{xy}$ of dimensions $|x|-|y|+1$ for $x\in \cF$ and $y\in \cG$ (with appropriate faces). In particular we can define a  linear map $CM_*(\cF) \to CM_{*+1}(\cG)$ via 
        \[ 
        x \mapsto \sum_{|y| = |x|+1} \cR_{xy} \, y
        \]
        The bordism relation implies this defines a chain homotopy between the chain maps $x\mapsto \sum_{|y|=|x|} \cW_{xy} \, y$ and $x\mapsto \sum_{|y|=|x|} \cV_{xy} \, y$. A corresponding simpler argument when $\cW$ and $\cV$ are related by diffeomorphism then shows injectivity of $\Xi$. Analogously, any linear map $CM_*(\cF) \to CM_*(\cG)$, say $x \mapsto \sum_{|y|=|x|} n_{xy} y$ for integers $n_{xy}$, can be viewed as a collection of  framed $0$-manifolds $n_{xy}$  which define a pre-$\tau_{\leq 0}$-morphism. The condition that the linear map is a chain map exactly says this lifts to a $\tau_{\leq 0}$-morphism, which gives surjectivity of $\Xi$.
        \end{proof}
        Given pre-{$\tau_{\leq n}$-morphisms $\cW: \cF \rightarrow \cG$ and $\cV: \cG \rightarrow \cH$, we can form their composition, $\cV \circ \cW$, identically to the case of morphisms of flow categories. \par 
        We first define $\tilde{\cQ}_{xy} = \tilde{\cQ}_{xy}(\cW, \cV)$ as in Equation \ref{tilde Q} whenever $|x|-|y| \leq n$. The $\tilde{\cQ}_{xy}$ are manifolds with faces by the same argument as before, with a system of faces given by the same description. Choosing compatible framed functions $(f_{xy}, s_{xy})$ as before and defining $\cU_{xy} = f_{xy}^{-1}\{p\}$ whenever $|x|-|y| \leq n$. defines a pre-$\tau_{\leq n}$ morphism $\cU: \cF \to \cH$, which we define $\cV \circ \cG$ to be.
        Composition of this form is compatible with further truncation: if $n > k$ and we apply the above construction to $\tau_{\leq k}\cW$ and $\tau_{\leq k}\cV$ and take the restrictions of the same framed functions functions, the output will be exactly $\tau_{\leq k}\cU$.\par 
        \begin{defn}
            If $\cW$ and $\cV$ are $\tau_{\leq n}$-morphisms, we choose extensions $\cW'$ and $\cV'$ and compose these as above to get a pre-$\tau_{\leq n+1}$-morphism $\cU'$. Then we define the \emph{composition} of $\cW$ and $\cV$ to be the $\tau_{\leq n}$-morphism $\tau_{\leq n}\cU'$.
        \end{defn}
        \begin{rmk}
            As a pre-$\tau_{\leq n}$-morphism, $\tau_{\leq n}\cU'$ is the composition of $\cW$ and $\cV$. However if we composed $\cW$ and $\cV$ as pre-$\tau_{\leq n}$-morphisms and used different choices of framed functions, we may produce a pre-$\tau_{\leq n}$-morphism that does not admit an extension, which is why we instead take the truncation of the composition of the extensions $\cW'$ and $\cV'$.\par 
        \end{rmk}
        \begin{lem}
            Composition of $\tau_{\leq n}$-morphisms is compatible with the equivalence relation defined in Definition \ref{truncated morphisms}, and induces a well-defined bilinear map
            $$\circ: [\cG, \cH]_{\tau_{\leq n}} \otimes [\cF, \cG]_{\tau_{\leq n}} \rightarrow [\cF, \cH]_{\tau_{\leq n}}.$$
        \end{lem}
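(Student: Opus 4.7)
The plan is to follow closely the templates of Lemmas \ref{composition well-defined} and \ref{composition is well-defined}, while keeping careful track of the truncation level at every step. The underlying observation is a simple dimension count: for pre-$\tau_{\leq n+1}$-extensions $\cW'$ and $\cV'$ of $\cW$ and $\cV$, any gluing region $\tilde{\cQ}_{xz}(\cW', \cV')$ of dimension $|x|-|z|+1 \leq n+1$ involves only products $\cW_{xy} \times \cV_{yz}$ with both $|x|-|y|$ and $|y|-|z|$ at most $n$, which lie in the un-extended $\tau_{\leq n}$-parts of the inputs. Hence the $\tau_{\leq n}$-truncation of the composite $\cU'$ depends only on $\cW$ and $\cV$, not on the chosen extensions; the role of the extensions is purely to witness that $\tau_{\leq n}\cU'$ lifts to pre-$\tau_{\leq n+1}$, so that it really is a $\tau_{\leq n}$-morphism.

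For independence of the choice of framed functions, I would apply the interpolation trick from the proof of Lemma \ref{composition well-defined} verbatim. Given two systems of framed functions $(f_{xz}, s_{xz})$ and $(f'_{xz}, s'_{xz})$, the interpolating function $g_{xz}$ used there produces a bordism $g_{xz}^{-1}[p,q]$ of dimension $|x|-|z|+1$, defined whenever $|x|-|z| \leq n$; this is precisely a pre-$\tau_{\leq n+1}$-bordism, whose $\tau_{\leq n}$-truncation is the required $\tau_{\leq n}$-bordism between the two resulting compositions.

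For compatibility with the bordism equivalence relation on the inputs, I would take a $\tau_{\leq n}$-bordism $\cR$ between, say, $\cW_1$ and $\cW_2$, lift it to a pre-$\tau_{\leq n+1}$-bordism $\cR'$, and then run the construction of Lemma \ref{composition is well-defined} to form the glued spaces $\tilde{\cS}_{xz}(\cR', \cV')$. These have dimension $|x|-|z|+2$ and are defined whenever $|x|-|z| \leq n$, so the level set of a compatible framed function on them is a pre-$\tau_{\leq n+1}$-bordism of dimension $\leq n+1$ between the two compositions, whose truncation furnishes the desired $\tau_{\leq n}$-bordism. Compatibility with the diffeomorphism relation is immediate since diffeomorphisms of the inputs pull back framed functions, and bilinearity with respect to disjoint union follows from the tautological identity $\tilde{\cQ}(\cW_1 \sqcup \cW_2, \cV) = \tilde{\cQ}(\cW_1, \cV) \sqcup \tilde{\cQ}(\cW_2, \cV)$ together with compatibility under taking level sets.

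The principal hurdle here is really bookkeeping rather than new mathematical content: at every stage of the arguments borrowed from the two cited lemmas one must check that the dimensions of every auxiliary manifold, gluing, and framed function land in the correct range, so that each output is an honest pre-$\tau_{\leq n+1}$-structure whose truncation lies in the $\tau_{\leq n}$-world. The dimension counts sketched above are the main tool, and given them, each step of the untruncated proofs adapts without incident.
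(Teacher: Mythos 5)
Your proposal is correct and follows the same route as the paper, which simply declares the proof "identical to that of Lemma \ref{composition is well-defined} but only considering manifolds of appropriate dimensions"; your dimension counts (each $\tilde{\cQ}_{xz}$ with $|x|-|z|\leq n$ only sees the un-extended parts of the inputs, and the interpolation and $\tilde{\cS}$ constructions land in pre-$\tau_{\leq n+1}$ range) are exactly the bookkeeping the paper leaves implicit. No gaps.
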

        \begin{proof}
            Identical to that of Lemma \ref{composition is well-defined} but only considering manifolds of appropriate dimensions.        
        \end{proof}
        \begin{lem}
            Composition of $\tau_{\leq n}$-morphisms is associative.
        \end{lem}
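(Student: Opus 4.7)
The plan is to upgrade the proof of Lemma \ref{composition is associative} to the truncated setting by performing the same construction in a suitably chosen pre-$\tau_{\leq n+1}$ ``window''. Fix $\tau_{\leq n}$-morphisms $\cW:\cF\to\cG$, $\cV:\cG\to\cH$, $\cX:\cH\to\cI$, and choose pre-$\tau_{\leq n+1}$-extensions $\cW', \cV', \cX'$. By Definition of composition,
$$\cV \circ \cW \ = \ \tau_{\leq n}\bigl(\cV' \circ \cW'\bigr), \qquad \cX \circ \cV \ = \ \tau_{\leq n}\bigl(\cX' \circ \cV'\bigr),$$
where each composition on the right is the pre-$\tau_{\leq n+1}$-composition built as in Section \ref{sec:unor flow cat}. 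Set $\cU' := \cV' \circ \cW'$ and $\cU'' := \cX' \circ \cV'$; these are pre-$\tau_{\leq n+1}$-morphisms, hence are themselves admissible choices of extensions for $\cV \circ \cW$ and $\cX \circ \cV$. Consequently we may compute $\cX \circ (\cV\circ\cW) = \tau_{\leq n}(\cX' \circ \cU')$ and $(\cX\circ\cV)\circ\cW = \tau_{\leq n}(\cU'' \circ \cW')$, where again both compositions on the right are pre-$\tau_{\leq n+1}$.

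The goal is therefore to produce a $\tau_{\leq n}$-bordism between $\tau_{\leq n}(\cX'\circ\cU')$ and $\tau_{\leq n}(\cU''\circ\cW')$. I will do this by replaying, verbatim up to dimension $n+1$, the construction of the bordism $\cL$ from the proof of Lemma \ref{composition is associative}. Explicitly, for each pair $x\in\cF$, $w\in\cI$ with $|x|-|w|\leq n$, form the glued spaces $\tilde\cP_{xw}$, $\tilde\cP'_{xw}$ and $\tilde\cZ_{xw}$ as before; choose compatible framed functions extending those used in the construction of $\cU'$, $\cU''$, $\cX'\circ\cU'$ and $\cU''\circ\cW'$; let $\cY_{xw} = g_{xw}^{-1}\{1\}$ and then $\cL_{xw} := \cY_{xw}\setminus (h_{xw}^{-1}(-\infty,1)\sqcup h'^{-1}_{xw}(-\infty,1))$. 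Since $\dim\cL_{xw} = |x|-|w|+1 \leq n+1$, and since the only input data required for $\cL_{xw}$ are the various factors $\cW_{xy}$, $\cV_{yz}$, $\cX_{zw}$ of dimensions $\leq |x|-|w|\leq n$, everything needed is contained in the pre-$\tau_{\leq n+1}$-extensions we started with.

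The resulting collection $\{\cL_{xw}\}_{|x|-|w|\leq n}$ then forms a pre-$\tau_{\leq n+1}$-bordism between $\cX'\circ\cU'$ and $\cU''\circ\cW'$, so its truncation $\tau_{\leq n}\cL$ is by definition a $\tau_{\leq n}$-bordism between $\tau_{\leq n}(\cX'\circ\cU')$ and $\tau_{\leq n}(\cU''\circ\cW')$, establishing associativity in $[\cF,\cI]_{\tau_{\leq n}}$. The main obstacle, in practice, is bookkeeping: one must verify that the framed functions, collars, and carapace retractions from Lemma \ref{composition is associative} all assemble consistently when only pre-$\tau_{\leq n+1}$-data is available, and that the truncated bordism does not secretly require information in dimension $n+2$. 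The essential observation that clears this is the dimension count above, namely that $\cL_{xw}$ has dimension $|x|-|w|+1$ and so never sees any input manifold of dimension exceeding $n$.
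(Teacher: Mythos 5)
Your proposal is correct and takes essentially the same route as the paper, which simply declares the proof "identical to that of Lemma \ref{composition is associative} but only considering manifolds of appropriate dimensions." Your additional care in tracking the pre-$\tau_{\leq n+1}$-extensions (and noting that $\cV'\circ\cW'$ and $\cX'\circ\cV'$ themselves serve as admissible extensions of the composites) together with the dimension count $\dim\cL_{xw}=|x|-|w|+1\leq n+1$ is exactly the bookkeeping the paper leaves implicit.
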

        \begin{proof}
            Identical to that of Lemma \ref{composition is associative} but only considering manifolds of appropriate dimensions.
        \end{proof}
        Similarly we may define $\tau_{\leq n}$ bilinear maps, bordisms of bilinear maps and associators, satisfying the same properties as we saw before; in particular they can be associatively composed.\par 
        We have thus shown that there is a category, $\Flow_{\tau_{\leq n}}$, with objects flow categories, and morphisms $[\cdot, \cdot]_{\tau_{\leq n}}$. Compatibility of truncations with composition defines functors
        \begin{equation}\label{eqn:truncation_functors}
            \Flow \rightarrow \Flow_{\tau_{\leq m}} \rightarrow \Flow_{\tau_{\leq n}}
        \end{equation}
        whenever $m \geq n$ which are suitably associative.\par 
        Generalising Lemma \ref{0-truncation is chain}, we conjecture the following.
    \begin{conj}\label{conj:trunc}
        The category $\Flow_{\tau_{\leq n}}$ is equivalent to the category with objects perfect $MO$-modules, and morphisms $M \rightarrow N$ given by homotopy classes of $\tau_{\leq n}MO$-linear maps $M \otimes_{MO} \tau_{\leq n}MO \rightarrow N \otimes_{MO} \tau_{\leq n} MO$, where $\tau_{\leq n}$ denotes the truncation of a spectrum with respect to the standard $t$-structure. \par 
        Furthermore this equivalence should be compatible (in an appropriate sense) with the equivalence of Conjecture \ref{Perf MO Conj}.
    \end{conj}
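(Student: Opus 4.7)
The plan is to factor the conjectural equivalence $\Phi: \Flow \to \mathrm{Ho}(\mathrm{Perf}_{MO})$ of Conjecture \ref{Perf MO Conj} through the truncation functor $\tau_{\leq n}: \Flow \to \Flow_{\tau_{\leq n}}$ on one side and the base change $(-) \otimes_{MO} \tau_{\leq n} MO$ on the other, producing a candidate functor $\bar\Phi_n: \Flow_{\tau_{\leq n}} \to \cC_n$, where $\cC_n$ denotes the target category of the conjecture; then show $\bar\Phi_n$ is an equivalence. Since every perfect $\tau_{\leq n}MO$-module lifts to a perfect $MO$-module (one can inductively attach cells through the Postnikov tower), $\cC_n$ is naturally identified with $\mathrm{Ho}(\mathrm{Perf}_{\tau_{\leq n}MO})$, and the compatibility statement with Conjecture \ref{Perf MO Conj} becomes the commutativity of the evident square of functors.

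For well-definedness of $\bar\Phi_n$, one needs two things. First, that two $\tau_{\leq n}$-morphisms which are isomorphic as truncations of honest morphisms go to the same morphism in $\cC_n$, which is immediate because only moduli spaces of dimension $\leq n$ contribute to the image after smashing with $\tau_{\leq n}MO$; morally, the $k$-dimensional moduli space data for $k > n$ lies in the kernel of base change. Second, a $\tau_{\leq n}$-bordism induces a nullhomotopy in $\cC_n$: the $(n+1)$-dimensional manifolds $\cR_{xy}$ assemble into a $\tau_{\leq n}MO$-linear chain homotopy of the associated cellular maps, again because any piece of $\cR$ of dimension greater than $n+1$ becomes null after base change.

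Essential surjectivity of $\bar\Phi_n$ is then inherited from essential surjectivity of $\Phi$, combined with the Postnikov lifting remark above. The main obstacle will be fully faithfulness, that is, constructing an inverse bijection
\[
\Xi_n: \mathrm{Hom}_{\cC_n}(\bar\Phi_n \cF, \bar\Phi_n \cG) \xrightarrow{\sim} [\cF,\cG]_{\tau_{\leq n}}
\]
generalising the case $n = 0$ of Lemma \ref{0-truncation is chain}. The strategy is a cellular obstruction argument in the spirit of \cite{CJS,Large}: realise $\bar\Phi_n \cF$ and $\bar\Phi_n \cG$ by cellular $\tau_{\leq n}MO$-spectra whose cells of dimension $\leq k$ correspond to critical points of index $\leq k$, represent a morphism in $\cC_n$ by cellular data up to degree $n+1$, and construct framed manifolds of dimension $\leq n$ with the correct system of boundary faces realising the cellular map via a Pontrjagin-Thom construction.

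The hard part is matching the existence of an extension to a pre-$\tau_{\leq n+1}$-morphism (which is what distinguishes a $\tau_{\leq n}$-morphism from a pre-$\tau_{\leq n}$-morphism) with the vanishing of the corresponding obstruction on the spectral side: the obstruction to extending a partial cellular map one more dimension takes values in a group that should, by an inductive application of Pontrjagin-Thom, agree with the condition that the collection $\{\cW_{xy}\}_{|x|-|y| \leq n}$ bound suitable $(n+1)$-dimensional moduli spaces with prescribed faces. Injectivity of $\Xi_n$ follows by running the same argument one dimension higher for bordisms. Assuming one can organise these obstruction-theoretic matchings coherently, both fully faithfulness and the compatibility with Conjecture \ref{Perf MO Conj} follow, since every step is by construction compatible with further truncation.
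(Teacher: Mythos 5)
The statement you are addressing is stated in the paper as a conjecture (it is introduced with ``we conjecture the following'' and depends on Conjecture \ref{Perf MO Conj}, itself attributed to Abouzaid--Blumberg and not proved here), so there is no proof in the paper to compare against. Your proposal does not close that gap: it is conditional on Conjecture \ref{Perf MO Conj} from the outset, and the two steps that carry all the mathematical content are precisely the ones you defer. The phrase ``assuming one can organise these obstruction-theoretic matchings coherently'' in your treatment of fully faithfulness is not a reduction to something known --- matching the geometric condition ``the $\cW_{xy}$ with $|x|-|y|\leq n$ extend to a pre-$\tau_{\leq n+1}$-morphism'' against the vanishing of a cellular obstruction class after base change to $\tau_{\leq n}MO$ is essentially the entire conjecture, and the paper's own obstruction theory (Theorem \ref{Obstruction Theorem}) is only established when one of the two flow categories is $\ast[i]$, so you cannot invoke it for general $\cF,\cG$.

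Two further concrete points. First, your identification of the target category $\cC_n$ with $\mathrm{Ho}(\mathrm{Perf}_{\tau_{\leq n}MO})$ via ``every perfect $\tau_{\leq n}MO$-module lifts through the Postnikov tower'' is not automatic: lifting a finite cell structure requires lifting attaching maps along $\pi_k(C)\to\pi_k(C\otimes_{MO}\tau_{\leq n}MO)$, which need not be surjective, so essential surjectivity of base change onto perfect $\tau_{\leq n}MO$-modules is itself an unproved claim (and in any case the conjecture as stated takes objects to be perfect $MO$-modules, so this identification is not needed). Second, your well-definedness argument --- that moduli data in dimensions $>n$ ``lies in the kernel of base change'' --- is the right heuristic but not an argument: the cellular realisation of a morphism under $\Phi$ a priori depends on choices in all dimensions, and one must show the resulting ambiguity dies after smashing with $\tau_{\leq n}MO$, which again is an obstruction computation you have not supplied. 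In short, the proposal is a plausible outline of why the conjecture should hold, consistent with how the authors frame it, but it is not a proof.
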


 Suppose $\cF$ and $\cG$ are framed flow categories, with framing stable vector spaces $V_x$ and $V_y$ for $x\in\cF$ and $y\in\cG$.  A pre- $\tau_{\leq n}$-framed morphism is a pre-$\tau_{\leq n}$ morphism for which the stable isomorphisms $st_{xy}$ from Definition \ref{def:farm} are defined and satisfy the same compatibility whenever they can i.e. whenever $|x|-|y|\leq n$.  This is a $\tau_{\leq n}$-framed morphism if it arises by truncation from a pre-$\tau_{\leq (n+1)}$-framed morphism as usual. All of the definitions and properties of $\Flow^{fr}$ are identical in this truncated setting, except that the various diagrams now only make sense when the degrees of the objects are constrained. In particular, we have the analogue
 \begin{equation}\label{eqn:framed_truncation_functors}
     {\Flow}^{fr} \rightarrow {\Flow}^{fr}_{\tau_{\leq m}} \rightarrow {\Flow}^{fr}_{\tau_{\leq n}}
 \end{equation}
for functors of truncations of framed flow categories, again associative. Entirely analogously to Conjecture \ref{conj:trunc}, we have:
    \begin{conj}\label{conj:fram trunk}
        The category $\Flow^{fr}_{\tau_{\leq n}}$ is equivalent to the category with objects perfect $\bS$-modules, and morphisms $M \rightarrow N$ given by homotopy classes of $\tau_{\leq n}\bS$-linear maps $M \otimes_{\bS} \tau_{\leq n}\bS \rightarrow N \otimes_{\bS} \tau_{\leq n} \bS$.
    \end{conj}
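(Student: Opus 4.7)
The plan is to deduce Conjecture \ref{conj:fram trunk} from Conjecture \ref{conj:AB} together with a compatibility between the geometric truncation $\tau_{\leq n}$ of flow categories and the algebraic base-change $(-) \otimes_\bS \tau_{\leq n}\bS$ on perfect $\bS$-modules. Let $R: \Flow^{fr} \xrightarrow{\sim} \mathrm{Perf}(\bS)$ denote the Abouzaid-Blumberg equivalence, let $T_n: \mathrm{Perf}(\bS) \to \mathrm{Perf}(\tau_{\leq n}\bS)$ be the base-change $M \mapsto M \otimes_\bS \tau_{\leq n}\bS$, and set $\Phi_n := T_n \circ R$. The goal is to factor $\Phi_n$ through the truncation functor $\Flow^{fr} \to \Flow^{fr}_{\tau_{\leq n}}$ of \eqref{eqn:framed_truncation_functors} and verify that the induced $\bar\Phi_n: \Flow^{fr}_{\tau_{\leq n}} \to \mathrm{Perf}(\tau_{\leq n}\bS)$ is an equivalence.

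For the factorization one must check $\Phi_n(\cW) = \Phi_n(\cV)$ whenever $\cW, \cV$ are $\tau_{\leq n}$-bordant (the diffeomorphism case being immediate). A $\tau_{\leq n}$-bordism admits an extension $\cR'$ agreeing with honest bordism data up to dimension $n+1$; the discrepancy between $\cR'$ and a full bordism lies in strata of dimension $>n+1$. Under $R$ these discrepancies parametrise obstructions to null-homotopy that live in $\pi_{>n}$ of a mapping spectrum, and so vanish after smashing the target with $\tau_{\leq n}\bS$. Making this precise amounts to running the Cohen-Jones-Segal/Pontryagin-Thom construction compatibly with the Postnikov tower of $\bS$, so that a dimension-$k$ moduli space records an obstruction in $\pi_k$, and verifying this is compatible with $R$.

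For fully faithfulness, I would use the cofiber sequence $Y\otimes\tau_{>n}\bS \to Y \to Y\otimes\tau_{\leq n}\bS$ applied to $Y = R\cG$. Taking mapping spectra out of $R\cF$ yields a long exact sequence whose map $\pi_0 \mathrm{Map}_\bS(R\cF, R\cG) \to \pi_0 \mathrm{Map}_{\tau_{\leq n}\bS}(\Phi_n\cF, \Phi_n\cG)$ should be identified, via Step (1) run in one-parameter families, with the truncation $[\cF,\cG]^{fr}\to [\cF,\cG]^{fr}_{\tau_{\leq n}}$. Surjectivity of $\bar\Phi_n$ on morphism sets then follows from the obstruction-theoretic lifting of $\tau_{\leq n}$-morphisms to pre-$\tau_{\leq n+1}$-morphisms (the definitional existence of extensions) together with essential surjectivity of $R$. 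For essential surjectivity of $\bar\Phi_n$, I would lift an arbitrary perfect $\tau_{\leq n}\bS$-module cell-by-cell to a perfect $\bS$-module (the cells $S^k$ obviously lift) and then apply essential surjectivity of $R$ to realise the lift by a framed flow category.

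The main obstacle is Step (1). The functor $R$ in Conjecture \ref{conj:AB} is not explicit in this paper; the heart of the conjecture is the assertion that the obvious grading-by-dimension of moduli spaces in a flow category matches the $t$-structure filtration on the associated spectrum. This compatibility is not formal: it requires a sufficiently explicit construction of $R$ (of the CJS/Large type) in which one can directly identify the contribution of a $k$-dimensional moduli space of flow lines with the $\pi_k$-level component of a map of spectra, and verify that replacing such a moduli space by a bordism modifies the corresponding map of spectra only through an element of $\pi_k$. Once such a construction is in hand, the rest of the proof should be essentially formal homotopy-theoretic bookkeeping.
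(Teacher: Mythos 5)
The statement you are trying to prove is stated in the paper as Conjecture \ref{conj:fram trunk}; the paper offers no proof of it, so there is no argument of the authors' to compare yours against. The paper treats it, like Conjectures \ref{Perf MO Conj}, \ref{conj:AB} and \ref{conj:trunc}, as an expected consequence of the Abouzaid--Blumberg foundations that were not yet available when the paper was written (the authors note only that a version of Conjecture \ref{conj:AB} has since appeared in \cite{Abouzaid-Blumberg:I}).

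Judged on its own terms, your proposal is not a proof but a conditional reduction, and you say as much yourself. It rests on two unproven inputs. The first is Conjecture \ref{conj:AB} itself, i.e.\ the existence of the equivalence $R:\Flow^{fr}\xrightarrow{\sim}\mathrm{Perf}(\bS)$. The second --- which you correctly isolate as ``Step (1)'' and as the main obstacle --- is the compatibility of the geometric truncation (discarding moduli spaces of dimension $>n$, remembering existence but not the data of the $(n+1)$-dimensional ones) with the Postnikov truncation $(-)\otimes_\bS\tau_{\leq n}\bS$. This is exactly the content of the conjecture: the assertion that a $k$-dimensional moduli space contributes precisely at the $\pi_k$-level of the associated map of spectra, and that replacing it by a bordant one changes the map only through $\pi_k$. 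Everything else in your outline (the cofiber sequence $Y\otimes\tau_{>n}\bS\to Y\to Y\otimes\tau_{\leq n}\bS$, essential surjectivity via cell-by-cell lifting, surjectivity on morphisms via the definitional existence of extensions of $\tau_{\leq n}$-morphisms) is plausible formal bookkeeping \emph{given} that compatibility, but without it the argument does not close. One smaller point: essential surjectivity of $\mathrm{Perf}(\bS)\to\mathrm{Perf}(\tau_{\leq n}\bS)$ is itself not automatic from ``the cells obviously lift'' --- lifting the attaching maps of a cell structure requires a surjectivity statement on homotopy groups of mapping spectra that deserves its own (standard, but nontrivial) argument. In short: you have correctly identified where the difficulty lies, but what you have written is a plan for a proof contingent on the foundational equivalence, not a proof.
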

        
    \subsection{Obstruction theory}
        In this section, we perform some more involved homological algebra in the category of flow categories. In each case, we provide the analogous result from (ordinary) homological algebra as motivation; we expect that these should be equivalent under Conjectures \ref{conj:trunc} and \ref{conj:fram trunk}.
        \begin{thm}\label{Obstruction Theorem}
            Let $\cF$ and $\cG$ be flow categories such that either $\cF = *[i]$ or $\cG = *[i]$ for some $i$. Let $\cW: \cF \rightarrow \cG$ be a $\tau_{\leq n}$-morphism. Then there is a class
            $$[\cO] = [\cO(\cW)] \in \Hom\left(CM_{*+n+2}(\cF;\bZ/2), CM_{*}(\cG; \Omega_{n+1})\right)$$
            such that $[\cO]$ vanishes if and only if $\cW = \tau_{\leq n} \cV$, for $\cV: \cF \rightarrow \cG$ some $\tau_{\leq n+1}$-morphism.\par 
            Similarly if $\cF, \cG$ and $\cW$ are all framed, there is a class
            $$[\cO^{fr}] = [\cO^{fr}(\cW)] \in \Hom\left(CM_{*+n+2}(\cF;\bZ), CM_{*}(\cG; \Omega_{n+1}^{fr})\right)$$
            such that $[\cO^{fr}]$ vanishes if and only if $\cW = \tau_{\leq n} \cV$, for $\cV: \cF \rightarrow \cG$ some framed $\tau_{\leq n+1}$-morphism.
        \end{thm}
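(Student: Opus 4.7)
The plan is to choose a pre-$\tau_{\leq n+1}$-extension $\cW'$ of $\cW$, which exists by the definition of a $\tau_{\leq n}$-morphism. For each pair $(x, y)$ with $|x|-|y| = n+2$, construct the ``virtual boundary'' $\partial_{\mathrm{virt}}\cW'_{xy}$ by applying the abstract gluing procedure of Lemma \ref{lem:abstract gluing} to the collection of faces $\cF_{x \cdots x_i} \times \cW'_{x_i y_0} \times \cG_{y_0 \cdots y}$, with gluing determined by the systems of collars on $\cF$, $\cG$ and $\cW'$. Associativity of the concatenation maps, combined with the inductive compatibility of collars, guarantees that $\partial_{\mathrm{virt}}\cW'_{xy}$ is a closed smooth $(n+1)$-manifold with faces. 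Setting $\cO(x) := \sum_{|y|=|x|-n-2}[\partial_{\mathrm{virt}}\cW'_{xy}]\cdot y$ gives a degree-$(n+2)$ graded linear map, interpreting the bracket as a framed bordism class in the framed case.

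The next step is to verify that $\cO$ is a cocycle. This is where the hypothesis that $\cF = *[i]$ (or symmetrically $\cG = *[i]$) is crucial: with $\cF = *[i]$, the chain-map condition reduces to checking that for each $y'$ with $|y'| = i-n-3$, the element $\sum_{|y|=|y'|+1}|\cG_{yy'}| \cdot [\partial_{\mathrm{virt}}\cW'_{*y}]$ vanishes in $\Omega_{n+1}$ (resp.\ $\Omega^{fr}_{n+1}$). To produce the required nullbordism, apply the abstract gluing procedure one dimension higher to assemble a ``virtual would-be $\cW'_{*y'}$'', which is an $(n+2)$-dimensional manifold with faces whose closed boundary is precisely this disjoint union. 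In the framed case, Lemmas \ref{lem:fram Q} and \ref{lem:fram comp well-def} supply the coherent framings needed to interpret this as a framed nullbordism, giving the cocycle condition in $\Omega^{fr}_{n+1}$.

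We next show that $[\cO]$ is independent of the chosen lift $\cW'$. Given two lifts $\cW'$, $\cW''$, both agreeing with $\cW$ in truncation $\tau_{\leq n}$, for each top-degree pair $(x,y)$ the disjoint union $\cW'_{xy} \sqcup \overline{\cW''_{xy}}$ is a closed $(n+1)$-manifold (resp.\ closed framed $(n+1)$-manifold) whose bordism class defines a degree-$(n+1)$ cochain $\Delta$; an argument parallel to step two produces an $(n+2)$-bordism witnessing $\cO(\cW')-\cO(\cW'') = d\Delta$. Finally, we establish the equivalence between the vanishing of $[\cO]$ and the existence of a $\tau_{\leq n+1}$-extension. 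A $\tau_{\leq n+1}$-extension $\cV$ admits by definition a pre-$\tau_{\leq n+2}$-lift $\cV'$, each of whose $(n+2)$-pieces $\cV'_{xy}$ provides an explicit nullbordism of $\partial_{\mathrm{virt}}\cV'_{xy}$, so $\cO(\cV)=0$. Conversely, a chain homotopy $\Delta$ witnessing $[\cO]=0$ gives $(n+2)$-bordisms which one glues onto $\cW'$ using Lemma \ref{systems of collars exist} to produce a modified pre-$\tau_{\leq n+1}$-lift $\tilde\cW'$ whose virtual boundaries all bound; filling in these nullbordisms extends $\tilde\cW'$ to a pre-$\tau_{\leq n+2}$-morphism, exhibiting $\tau_{\leq n+1}\tilde\cW'$ as the desired $\tau_{\leq n+1}$-extension.

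The principal obstacle is the cocycle verification and the coboundary calculation in the framed setting, where stable framings must propagate coherently through the nested abstract-gluing constructions, in exactly the way Lemma \ref{lem:fram Q} was designed to handle, and one must verify compatibility with the framings already present on the pieces $\partial_{\mathrm{virt}}\cW'_{*y}$. The hypothesis that one side is $*[i]$ is precisely what reduces the obstruction from a higher $\Ext^{n+2}$ class to a single class in the derived $\mathrm{Hom}$, by collapsing all the would-be compatibility conditions coming from varying the source object.
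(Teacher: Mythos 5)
Your proposal follows essentially the same strategy as the paper's proof: extend $\cW$ to a pre-$\tau_{\leq n+1}$-morphism, assemble the would-be boundary of the missing top-dimensional pieces, read off its bordism class as a cocycle, and show that killing the class by modifying the extension in degree $n+1$ lets you fill in the nullbordisms to produce the lift. Your explicit well-definedness step (independence of the chosen lift $\cW'$) is a welcome addition; the paper leaves this implicit even though it is needed to run the ``only if'' direction against an arbitrary extension.

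There is, however, one step that is false as written and needs the paper's machinery to repair. The output of the abstract gluing of Lemma \ref{lem:abstract gluing} applied to the faces $\cF_{x\cdots x_i}\times\cW'_{x_iy_0}\times\cG_{y_0\cdots y}$ is \emph{not} a closed smooth $(n+1)$-manifold: it is a non-compact manifold with faces (a collar thickening $\tilde\cY_{*y}$ of the would-be boundary), whose compact core --- the carapace $\bigcup M_i\times\{0\}$ --- is a closed topological space but fails to be smooth along the corner loci. Consequently the class $[\partial_{\mathrm{virt}}\cW'_{xy}]\in\Omega_{n+1}$ (and a fortiori its framed refinement) is not yet defined at the point you invoke it. The paper extracts an honest closed manifold as a regular level set $\cZ_{*y}=f_{*y}^{-1}\{1\}$ of a framed function on $\tilde\cY_{*y}$, and in the framed case the section $s_{*y}$ is precisely what splits $T\cZ_{*y}\oplus\bR\cong T\tilde\cY_{*y}|_{\cZ_{*y}}$ so that the coherent framings on the faces descend to a stable framing on $\cZ_{*y}$. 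The same device is needed again in your cocycle verification (the nullbordism is a level set $\cL_{*y}=g_{*y}^{-1}\{1\}$ in a larger glued space, not the glued space itself) and in the final filling-in step, where one glues a nullbordism of $\cZ_{*y}$ onto $f_{*y}^{-1}(-\infty,1]$ rather than capping off the carapace directly. With this correction the rest of your argument, including the role of the hypothesis $\cF=*[i]$ in reducing everything to a single chain-level obstruction, matches the paper's proof.
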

        We conjecture that the condition that either $\cF$ or $\cG$ is $*[i]$ is unnecessary.\par 
        We first prove the unframed case, then incorporate framings into the same proof.
        \begin{proof}[Proof without framings]
            We assume that $\cF = *[i]$ here, the case where $\cG = *[i]$ is similar.\par 
            Choose $\cW'$ an extension of $\cW$. We first construct a linear map
            $$\cO = \cO(\cW'): CM_{*+n+2}(\cF; \bZ/2) \rightarrow CM_{*}(\cG, \Omega_{n+1})$$
            We define
            $$\tilde{\cY}_{*y} = \tilde{\cY}_{*y}(\cW') := \left(\bigsqcup\limits_{w \in \cG} \cW'_{*w} \times \cG_{wy} \times [0, \eps)\right)/\sim$$
            for all $y$ in $\cG$ such that $i-|y| = n+2$, where $\sim$ is the equivalence relation defined so that as a topological space, $\tilde{\cY}_{*y}$ is the coequaliser of the diagram
            $$\bigsqcup\limits_{w, v \in \cG} \cW'_{*w} \times \cG_{wv} \times \cG_{vy} \times [0, \eps)^2_{u,v} \rightrightarrows \bigsqcup\limits_{w \in \cG} \cW'_{*w} \times \cG_{wy} \times [0, \eps)_s$$
            where the two maps send $(a,b,c,u,v)$ to $(a, \cC(b, c, u), v)$ and $(\cC(a, b, v), c, u)$ respectively. \par 
            $\tilde{\cY}_{*y}$ is a manifold with faces, with a system of faces given by
            $$\cW'_{*; y_0\ldots y_j}$$
            for $y_0,\ldots,y_j=y$ in $\cG$, and in particular a system of boundary faces given by $\cW'_{*w} \times \cG_{wy} \times \{0\}$ for $w \in \cG$. Note that all these faces are compact. \par 
            We choose framed functions $(f_{*y}, s_{*y})$ on each $\tilde{\cY}_{*y}$, noting that on all proper faces each $f_{*y}$ is negative.\par 
            Then define 
            $$\cZ_{*y} = \cZ_{*y}(\cW') := f_{*y}^{-1}\{1\}$$
            This is a closed smooth manifold of dimension $n+1$, and its bordism class $[\cZ_{*y}]$ in the bordism group $\Omega_{n+1}$ is independent of the choice of $f_{*y}$, though it does depend on the choice of extension $\cW'$ (we will use this dependence later).\par 
            We define the linear map $\cO$ by
            $$\cO(*) := \sum\limits_{i-|y|=n+2} [\cZ_{*y}] \cdot y$$
            We next show this is a chain map. This is equivalent to showing that the closed manifold
            $$\bigsqcup\limits_{s \in \cG, |s|-|y|=1} \cZ_{*s} \times \cG_{sy}$$
            represents the zero class in the bordism group $\Omega_{n+1}$ (i.e. it bounds a compact smooth manifold with boundary), for all $y$ in $\cG$ such that $i-|y|=n+3$. We note here that since a closed smooth manifold which bounds a topological manifold also bounds a smooth manifold, it would be sufficient (and easier) to show that this class is zero in the topological bordism group, but this would not adapt so well to the framed setting.\par 
            We define manifolds with faces
            $$\tilde{\cX}_{*y} := \left(\bigsqcup\limits_{w \in \cG,|w|-|y|>1} \cW'_{xw} \times \cG_{wy} \times [0, \eps)\right)/\sim$$
            for all $y$ in $\cG$ such that $i-|y|=n+3$, where $\sim$ is defined so that as a topological space, $\tilde{\cX}_{*y}$ is the coequaliser of the diagram
            $$\bigsqcup_{v, w \in \cG, |w|-|y|>1} \cW'_{*v} \times \cG_{vw} \times \cG_{wy} \times [0, \eps)^2_{u,v} \rightrightarrows \bigsqcup_{w\in \cG, |w|-|y|>1} \cW'_{xw} \times \cG_{wy} \times [0, \eps)$$
            where the two maps send $(a,b,c,u,v)$ to $(a, \cC(b, c, u), v)$ and $(\cC(a, b, v), c, u)$ respectively. Note that there are natural embeddings
            $$\tilde{\cY}_{*s} \times \cG_{sy} \hookrightarrow \tilde{\cX}_{*y}$$
            for all $s$ in $\cG$ such that $|s|-|y|=1$, and that these have disjoint images.\par 
            $\tilde{\cX}_{*y}$ has a system of faces given by
            $$\cW'_{*; y_0\ldots y_j}$$
            for $y_0, \ldots, y_j=y$ in $\cG$ such that $j > 1$ or $|y_{j-1}|-|y_j| > 1$, along with the codimension 1 faces
            $$\tilde{\cY}_{*s} \times \cG_{sy}$$
            for $s$ in $\cG$ such that $|s|-|y|=1$. Note that the only non-compact faces are the ones of the form $\tilde{\cY}_{*s} \times \cG_{sy}$.\par 
            Now choose framed functions $(g_{*y}, t_{*y})$ on each $\tilde{\cX}_{*y}$, compatible with the $(f_{*s}, s_{*s})$ on each $\tilde{\cY}_{*s} \times \cG_{sy}$. Let $\cL_{*y} = g_{*y}^{-1}\{1\}$. Then $\cL_{*y}$ is a compact smooth manifold with boundary
            $$\bigsqcup\limits_{s \in \cG, |s|-|y|=1} \cZ_{*s} \times \cG_{sy}$$
            which implies that $\cO$ is a chain map.\par 
            We now assume that $\cO$ is a null-homotopic chain map, i.e. its class in
            $$\Hom\left(CM_{*+n+2}(\cF; \bZ/2), CM_{*}(\cG, \Omega_{n+1})\right)$$
            is 0. We show that there is some other choice of extension $\cW''$ such that $\cO(\cW'') = 0$.\par 
            This assumption says that there is some linear map 
            $$\cP: CM_{*+n+1}(\cF; \bZ/2) \rightarrow CM_{*}(\cG, \Omega_{n+1})$$
            such that $\cO(*) = d \cP(*)$. More explicitly, $\cP$ sends $*$ to 
            $$\sum\limits_{y \in \cG, i-|y|=n+1} [\cP_{*y}] \cdot y$$
            for some closed manifolds $\cP_{*y}$, where $[\cP_{*y}]$ represents the bordism classes in the bordism group $\Omega_{n+1}$ of each $\cP_{*y}$, and such that
            $$\bigsqcup\limits_{s \in \cG, |s|-|y|=1} \cP_{*s} \times \cG_{sy}$$
            is cobordant to $\cZ_{*y}$ for all $y$ in $\cG$ such that $i-|y|=n+2$.\par 
            We define $\cW''_{*y} := \cW'_{*y}$ for all $y$ in $\cG$ such that $i - |y| \leq n$, and $\cW''_{*y} := \cW'_{*y} \sqcup \cP_{*y}$ for all $y$ in $\cG$ such that $i-|y|=n+1$. Then we see that 
            $$\tilde{\cY}_{*y}(\cW'') = \tilde{\cY}_{*y}(\cW') \bigsqcup\limits_{s \in \cG,|s|-|y|=1} \cP_{*s} \times \cG_{sy} \times [0, \eps)$$
            so for appropriate choices of framed functions $(f_{*x}, s_{*x})$, we see that 
            $$\cZ_{*y}(\cW'') = \cZ_{*y}(\cW')\bigsqcup\limits_{s \in \cG,|s|-|y|=1} \cP_{*s} \times \cG_{sy}$$ 
            Each of these manifolds is null-cobordant, so $\cO(\cW'') = 0$.\par 
            
            Replacing $\cW'$ with $\cW''$, we may now assume that $\cO = 0$. We build a pre-$\tau_{\leq n+2}$-morphism $\cU$ such that $\tau_{\leq n+1} \cU = \cW'$, which implies that $\cW'$ is the desired $\tau_{\leq n+1}$-morphism.\par 
            We define $\cU_{*y} = \cW'_{*y}$ for all $y$ in $\cG$ such that $i-|y| \leq n+1$. If we were to set $\cU_{*y}$ to be $\tilde{\cY}_{*y}$ for $y$ in $\cG$ such that $i-|y|=n+2$, it has the right system of faces for $\cU$ to be a pre-$\tau_{\leq n+2}$-morphism, but is non-compact. Instead consider the map $f_{*y}: \tilde{\cY}_{*y} \rightarrow \bR$. $f_{*y}^{-1}(-\infty, 1]$ is compact, and by assumption $\cZ_{*y} = f_{*y}^{-1}\{1\}$ is a closed smooth manifold which is nullcobordant, so we can glue such a nullcobordism onto $f_{*y}^{-1}(-\infty, 1]$, and set $\cU_{*y}$ to be this new manifold. Then by construction $\cU$ is a pre-$\tau_{\leq n+2}$-morphism such that $\tau_{n+1}\cU = \cW'$.\par 
            We now assume that $\cW = \tau_{\leq n}\cV$ for $\cV:\cF \rightarrow \cG$ some $\tau_{\leq n+1}$-morphism. Choose an extension $\cV'$ of $\cV$. Then when $*-|y| \leq n+2$, $\cV'_{*y}$ is a compact manifold with faces with a system of faces given by
            $$\cV'_{*;y_0\ldots y_j}$$
            for $y_0, \ldots, y_j = y$ in $\cG$. When $*-|y| = n+2$, the collar neighbourhood theorem for manifolds with faces implies that there is a neigbourhood of all the faces in $\cV'_{*y}$ diffeomorphic to $\tilde{\cY}_{*y}$. Choosing $f_{*y}$ as in the definition of $\cZ_{*y}$, 
            $$\cV'_{*y}\setminus f_{*y}^{-1}(-\infty, 1)$$
            is a compact manifold with boundary $\cZ_{*y}$. Therefore the linear map $\cO$ is 0.
        \end{proof}
        \begin{proof}[Incorporating framings]
            We incorporate framings into the unframed proof, assuming $\cF, \cG, \cW, \cW'$ are all framed.\par 
            We first note that all boundary faces of each $\tilde{\cY}_{*y}$ are broken. We frame each of these manifolds compatibly.\par 
            Inductively in $i-|*|$, we choose vector bundles $\bT^{\tilde \cR}_{*y}$ over each $\tilde \cR_{*y}$, along with, for each boundary face of $\tilde \cR_{*y}$, an embedding of the stabilising bundle over that face into $\bT^{\tilde \cR}_{*y}$, which agree on the image of the embeddings from the codimension two faces.\par 
            For each $y \in \cG$, we define the stable isomorphism
            $$st_{*y}: V_{*[i]} \oplus I^{\tilde{\cR}}_{*y} \to \bR \oplus V_y$$
            to be given by, on each $\cW'_{*w} \times \cG_{wy} \times [0, \eps)$, the composition
            $$V_{*[i]} \oplus I^{\tilde{\cR}}_{*y} \xrightarrow{\psi^{-1}} V_{*[i]} \oplus I^{\cW'}_{*w} \oplus I^\cG_{wy} \xrightarrow{st^\cG \circ st^{\cW'}} \bR \oplus V_y$$
            extended along the embedding $\iota$ into $\bT^{\cR}_{xz}$.\par 
            These glue on overlaps just as in Lemma \ref{lem:fram Q}, and similarly are compatible on boundary faces in the sense that they (combined with the other framings) form systems of framings on each $\tilde{\cY}_{*y}$.\par 
            As in Lemma \ref{lem:fram Q}, we use the sections $s_{*y}$ to then obtain stable framings 
            $$st^\cZ: V_{*[i]} \oplus I^\cZ_{*y} \to V_y$$
            on each closed manifold $\cZ_{*y}$. We choose stable isomorphisms $V_{q} \cong \bR^{|q|}$ for each $q \in \cG$ or $q=*[i]$, then this framing on each $\cZ_{*y}$ determines a class $[\cZ_{*y}]$ in $\Omega^{fr}_*$. \par 
            We then define $\cO^{fr}$ in the same way as the unframed case, but now using the fact that the manifolds in question are framed, so $\cO^{fr}$ is now a linear map 
            $$\cO^{fr} = \cO^{fr}(\cW'): CM_{*+n+2}(\cF; \bZ) \rightarrow CM_{*}(\cG, \Omega^{fr}_{n+1})$$
            Note all boundary faces of each $\tilde{\cX}_{*y}$ are broken. We frame them in exactly the same way as the $\tilde{\cY}_{*y}$, to give stable framings
            $$st^{\tilde{\cY}}_{*y}: V_{*[i]} \oplus I^{\tilde{\cX}}_{*y} \to \bR \oplus V_y$$
            which are compatible with the framings on the boundary faces in the sense that they form systems of faces over each $\tilde{\cY}_{*s}$; these framings induce framings on each $\cL_{*y}$, which implies that $\cO^{fr}$ is a chain map.\par
            The assumption says that there is some linear map 
            $$\cP^{fr}: CM_{*+n+1}(\cF; \bZ) \rightarrow CM_{*}(\cG, \Omega^{fr}_{n+1})$$
            such that $\cO^{fr}(*) = d \cP^{fr}(*)$. By replacing $\cW'$ with a different extension $\cW''$ of $\cW$ by attaching disjoint copies of $\overline{\cP}_{*y}$ to each $\cW'_{*y}$ when $i-|y|=n+1$ as in the unframed case, we may assume that $\cO^{fr} = 0$. \par 
            We take the same (unframed) pre-$\tau_{\leq n+2}$ morphism $\cU$ as in the unframed case, and by using the framings on the $\tilde{\cY}_{*y}$ and the framing of the nullbordism of each $\tilde{\cZ}_{*y}$, and gluing these framings together on the overlap, we obtain framings on $\cU$.\par 
            The claim that if $\cW = \tau_{\leq n} \cV$ for some $\tau_{\leq n+1}$ morphism $\cV: \cF \to \cG$ then the associated linear map $\cO^{fr}$ is trivial is essentially the same as in  the unframed case.
        \end{proof}
        Theorem \ref{Obstruction Theorem} should be considered an analogue in $\Flow$ (respectively $\Flow^{fr}$) of (a special case of) the following fact in homological algebra, applied to the ring spectra $MO$ (respectively $\bS$).
        \begin{prop}
            Let $R$ be a $(-1)$-connected dga or ring spectrum, and let $M$ and $N$ be objects in $R\modules$, the category of dg modules or module spectra over $R$. All tensor products we take are assumed to be derived. Let
            $$[\phi] \in \Hom_{\tau_{\leq n}R\modules }\left(M \otimes_R \tau_{\leq n}R, N \otimes_R \tau_{\leq n} R\right)$$ 
            be a morphism in the derived category of $\tau_{\leq n}R$ (here $\tau$ refers to truncation with respect to the standard $t$-structure), with some representative $\phi$ in $[\phi]$.\par 
            Then there is a class 
            $$[\cO] \in \Hom_{HR_0\modules }\left(\Sigma^{-n-2} M \otimes_R HR_0,  N \otimes_R HR_{n+1}\right)$$
            (here $HR_{i}$ is the appropriate Eilenberg-Maclane chain complex or spectrum) which vanishes if and only if $[\phi]$ is in the image of the natural map
            $$\tau_{\leq n}: \Hom_{\tau_{\leq n+1}R\modules }\left(M \otimes_R \tau_{\leq n+1}R, N \otimes_R \tau_{\leq n+1} R\right)
            \rightarrow 
            \Hom_{\tau_{\leq n}R\modules }\left(M \otimes_R \tau_{\leq n}R, N \otimes_R \tau_{\leq n} R\right)$$
        \end{prop}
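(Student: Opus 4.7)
The plan is to apply standard Postnikov-tower obstruction theory to the sequence of truncations $\tau_{\leq n+1}R \to \tau_{\leq n}R$. Since $R$ is $(-1)$-connected, the fibre of this truncation map is $(n+1)$-connected with homotopy concentrated in a single degree, so one has a cofibre sequence of $R$-bimodules
$$\Sigma^{n+1} HR_{n+1} \longrightarrow \tau_{\leq n+1}R \longrightarrow \tau_{\leq n}R,$$
with $HR_{n+1}$ denoting the Eilenberg--Mac Lane object on the abelian group $\pi_{n+1}R$, regarded via $R \to HR_0$ as an $HR_0$-bimodule (and hence as an $R$-bimodule).

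First I would tensor this cofibre sequence on the left with $M$ and $N$ (over $R$) to obtain cofibre sequences of $R$-modules; write $M_k := M \otimes_R \tau_{\leq k}R$ and analogously for $N_k$. Rotating the $N$-sequence one step yields $N_{n+1} \to N_n \to \Sigma^{n+2}(N \otimes_R HR_{n+1})$. Applying $\Hom_{\tau_{\leq n}R}(M_n, -)$ produces a long exact sequence whose connecting map $\delta$ carries $[\phi]$ to a class
$$\cO := \delta([\phi]) \in \Hom_{\tau_{\leq n}R}\bigl(M_n,\; \Sigma^{n+2}(N \otimes_R HR_{n+1})\bigr).$$
Exactness shows $\cO = 0$ if and only if $[\phi]$ is in the image of the truncation map from $\Hom_{\tau_{\leq n+1}R}(M_{n+1}, N_{n+1})$, which is precisely the lifting condition demanded in the proposition.

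Next I would rewrite the target. Because $HR_{n+1}$ is already an $HR_0$-module, the object $N \otimes_R HR_{n+1}$ is canonically an $HR_0$-module, so any map from a $\tau_{\leq n}R$-module $X$ into it factors uniquely through $X \otimes_{\tau_{\leq n}R} HR_0 \simeq X \otimes_R HR_0$. Applied to $X = M_n$, together with the identification $M_n \otimes_R HR_0 \simeq M \otimes_R HR_0$ (which uses the connectivity of $R$ to see $\tau_{\leq n}R \otimes_R HR_0 \simeq HR_0$), this rewrites the target as $\Hom_{HR_0}\bigl(\Sigma^{-n-2}(M \otimes_R HR_0),\; N \otimes_R HR_{n+1}\bigr)$, where the constructed $\cO$ naturally lives.

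The main conceptual step is the identification of the fibre of $\tau_{\leq n+1}R \to \tau_{\leq n}R$ as a shifted Eilenberg--Mac Lane object; this is standard once $R$ is connective, but in the ring spectrum case it requires that Postnikov truncations inherit an $\mathbb{E}_1$-algebra structure compatibly (as treated in Lurie's higher algebra). Modulo this structural input, the remainder is a direct invocation of the long exact sequence attached to a cofibre sequence, plus bookkeeping of suspensions and change-of-scalars. The whole picture is formally parallel to Theorem \ref{Obstruction Theorem}: the geometric moduli spaces $\cZ_{*y}$ and the nullbordisms of their boundaries there play exactly the role of the algebraic connecting map $\delta$ here.
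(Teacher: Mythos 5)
Your argument is correct and is essentially the paper's own proof: both use the (co)fibre sequence $\Sigma^{n+1}HR_{n+1} \to \tau_{\leq n+1}R \to \tau_{\leq n}R$ tensored with $N$, define $[\cO]$ as the image of $[\phi]$ under the resulting connecting map in the long exact sequence of $\Hom$ out of $M$, and identify the terms by the extension/restriction-of-scalars adjunction. The only cosmetic difference is that you apply $\Hom_{\tau_{\leq n}R}(M_n,-)$ where the paper works with $\Hom_{\tau_{\leq n+1}R}(M_{n+1},-)$; these agree via adjunction with $\Hom_R(M,-)$, though strictly speaking $N_{n+1}$ is only a $\tau_{\leq n+1}R$-module, so the paper's phrasing is the cleaner one.
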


        \begin{rmk} Formulating this at the level of $HR_0$-modules is analogous to working with chain complexes in Theorem \ref{Obstruction Theorem}.
        \end{rmk}
        \begin{proof}[Sketch proof]
            There is a fibration sequence in $\tau_{\leq n+1}R\modules$ (using the fact that $R$ is $(-1)$-connected here):
            $$N \otimes_R \tau_{\leq n+1} R\rightarrow N \otimes_R \tau_{\leq n} R \rightarrow N \otimes_R \Sigma^{n+2} HR_{n+1}$$
            and therefore an exact sequence
            \[\xymatrix{
                \Hom_{\tau_{\leq n+1}R\modules}\left(M \otimes_R \tau_{\leq n+1} R, N \otimes_R \tau_{\leq n+1} R\right) \ar[r] & 
                \Hom_{\tau_{\leq n+1}R\modules}\left(M \otimes_R \tau_{\leq n+1} R, N \otimes_R \tau_{\leq n} R\right) \ar[d]_{f} \\
                & \Hom_{\tau_{\leq n+1}R\modules}\left(M \otimes_R \tau_{\leq n+1} R, N \otimes_R \Sigma^{n+2} HR_{n+1}\right)
            }\]
            We identify the middle term with 
            $$\Hom_{\tau_{\leq n}R\modules}\left(M \otimes_R \tau_{\leq n}R, N \otimes_R \tau_{\leq n}R\right)$$
            the final term with
            $$\Hom_{HR_0\modules }\left(\Sigma^{-n-2} M \otimes_R HR_0,  N \otimes_R HR_{n+1}\right)$$
            and take $[\cO]$ to be $f([\phi])$.
        \end{proof}

        \begin{thm} \label{thm:onto}
            Let $\cW: \cF \to \cG$ be a morphism of flow categories such that the induced map $CM_*(\cW; \bZ/2): CM_*(\cF; \bZ/2) \to CM_*(\cG; \bZ/2)$ is a quasiisomorphism over $\bZ/2$.\par 
            Then the induced map $\cW_*: \left[*[u], \cF\right] \to \left[*[u], \cG\right]$ is surjective for all $u$.\par 
            Similarly if $\cF, \cG, \cW$ are all framed and $CM_*(\cW; \bZ): CM_*(\cF; \bZ) \to CM_*(\cG, \bZ)$ is a quasiisomorphism over $\bZ$, then the induced map $\cW_*: [*[u], \cF]^{fr} \to [*[u], \cG]^{fr}$ is surjective for all $u$.
        \end{thm}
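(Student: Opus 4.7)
The plan is to use the obstruction theory of Theorem \ref{Obstruction Theorem} to build a lift $\cA$ of $\cB$ inductively up the truncation tower, at each stage keeping track of both a $\tau_{\leq n}$-morphism $\cA^{(n)} : *[u] \to \cF$ and a $\tau_{\leq n}$-bordism $\cH^{(n)}$ from $\cW \circ \cA^{(n)}$ to $\tau_{\leq n}\cB$.

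First I would handle the base case $n=0$. By Lemma \ref{0-truncation is chain}, $[*[u],\cF]_{\tau_{\leq 0}}$ is identified with chain-homotopy classes of chain maps $CM_*(*[u];\bZ/2) \to CM_*(\cF;\bZ/2)$, and similarly for $\cG$. Since $CM_*(*[u];\bZ/2)$ is a one-step complex concentrated in degree $u$, the hypothesis that $CM_*(\cW;\bZ/2)$ is a quasi-isomorphism implies that post-composition by it gives a bijection $[*[u],\cF]_{\tau_{\leq 0}} \to [*[u],\cG]_{\tau_{\leq 0}}$. Pulling back the class of $\tau_{\leq 0}\cB$ yields $\cA^{(0)}$ together with the bordism $\cH^{(0)}$.

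For the inductive step, assume $\cA^{(n)}$ and $\cH^{(n)}$ are given. Because the construction of the obstruction class $\cO(-)$ in Theorem \ref{Obstruction Theorem} is natural under post-composition, the obstruction $[\cO(\cA^{(n)})] \in \Hom(CM_{*+n+2}(*[u];\bZ/2), CM_*(\cF;\Omega_{n+1}))$ maps under $CM_*(\cW;\Omega_{n+1})$ to $[\cO(\cW\circ \cA^{(n)})]$, and obstructions are invariant under $\tau_{\leq n}$-bordism, so the latter equals $[\cO(\tau_{\leq n}\cB)]$. But the latter vanishes because $\cB$ itself is a full morphism (so all its truncations extend). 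Since $CM_*(\cW;\bZ/2)$ is a quasi-isomorphism, so is $CM_*(\cW;\Omega_{n+1})$ (the obstruction coefficients form a group and one is tensoring a chain map of free $\bZ/2$-complexes by an abelian group), and hence the map on $\Hom$ groups out of the one-step complex $CM_*(*[u];\bZ/2)$ is an isomorphism, in particular injective. Therefore $[\cO(\cA^{(n)})]=0$, and $\cA^{(n)}$ extends to a $\tau_{\leq n+1}$-morphism $\cA^{(n+1)}$. A parallel obstruction argument --- applied to bordisms rather than morphisms, using the same type of exact sequence --- extends $\cH^{(n)}$ to a $\tau_{\leq n+1}$-bordism $\cH^{(n+1)}$ between $\cW\circ \cA^{(n+1)}$ and $\tau_{\leq n+1}\cB$. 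Since $\cF$ has finitely many objects, for $n$ larger than the maximum of $u - |x|$ there is no further data to specify, and $\cA^{(n)}$ assembles into an actual morphism $\cA:*[u]\to \cF$ with $\cW\circ \cA$ bordant to $\cB$.

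The framed case runs identically: $\tau_{\leq 0}$-framed-morphisms correspond to chain maps over $\bZ$, the framed obstruction classes live in $\Hom(CM_{*+n+2}(*[u];\bZ), CM_*(\cF;\Omega^{fr}_{n+1}))$, and the hypothesis that $CM_*(\cW;\bZ)$ is a quasi-isomorphism gives a quasi-isomorphism of the corresponding obstruction complexes, killing the obstruction at each stage. The main technical hurdle in both cases is verifying the naturality of the obstruction class under post-composition by $\cW$; this requires unwinding the construction in the proof of Theorem \ref{Obstruction Theorem}, observing that the auxiliary manifolds $\tilde{\cY}$ used to define $\cO$ for $\cW\circ \cA^{(n)}$ contain those for $\cA^{(n)}$ (glued to $\cW$) as a collar neighbourhood of a system of faces, and that the induced framed function matches the one used to build $CM_*(\cW)$ on coefficients. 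The secondary hurdle is the companion obstruction theory for bordisms used in Step~4, which is not explicitly stated in Theorem \ref{Obstruction Theorem} but follows from exactly the same construction applied to $\cH^{(n)}$ viewed as a pre-$\tau_{\leq n+1}$-bordism between the two $\tau_{\leq n+1}$-morphisms.
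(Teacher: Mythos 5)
Your overall strategy --- lift through the truncation tower, killing an obstruction at each stage --- is the same as the paper's, but you have repackaged the key step as ``naturality of $\cO$ under post-composition with $\cW$, plus injectivity of $CM_*(\cW)$'', whereas the paper constructs a single relative obstruction cocycle $(a,r)$ in the mapping cone of $CM_*(\cW)$ and kills it using acyclicity of the cone. The two formulations are closely related, but your version leaves the two places where the real work happens unaddressed. First, the naturality statement $CM_*(\cW)_*[\cO(\cA^{(n)})]=[\cO(\cW\circ\cA^{(n)})]$ is not a formal consequence of anything established: the coefficient of $y$ in $\cO(\cW\circ\cA')$ is the bordism class of a level set in a double gluing built from all the pieces $\cA'_{*x}\times\cW_{xy'}\times\cG_{y'y}$ with $|x|-|y|$ ranging over $0,\dots,n+2$, while the coefficient of $y$ in $CM_*(\cW)(\cO(\cA'))$ only sees the terms $\cZ_{*x}\times\cW_{xy}$ with $|x|=|y|$; identifying these up to bordism/chain homotopy requires exactly the gluing constructions ($\tilde\cK$, $\tilde\cJ$, $\tilde\cL$, $\tilde\cM$ and the excision along $\tilde\cJ$) that constitute the bulk of the paper's proof. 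The same goes for your assertion that the obstruction class is invariant under $\tau_{\leq n}$-bordism.

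Second, and more seriously, your Step for the bordism $\cH^{(n)}$ does not go through as stated. Killing $[\cO(\cA^{(n)})]$ produces \emph{some} extension $\cA^{(n+1)}$, but extensions are not unique (they differ by cycles in $CM_{u-n-1}(\cF;\Omega_{n+1})$), and the obstruction to extending the bordism $\cH^{(n)}$ to one between $\cW\circ\cA^{(n+1)}$ and $\tau_{\leq n+1}\cB$ depends on which extension you chose. After your first step the relative obstruction is a cycle $\tilde r\in CM_{u-n-1}(\cG;\Omega_{n+1})$ with no reason to be a boundary; to kill it you must go back and re-choose the extension of $\cA^{(n)}$ by a cycle $a''$ with $CM_*(\cW)(a'')$ homologous to $\tilde r$, which uses \emph{surjectivity} of $CM_*(\cW)$ on homology --- your argument only invokes injectivity. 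This interplay between the absolute and relative corrections is precisely what the paper's mapping-cone formulation encodes: $(a,r)$ is a cycle in $\mathrm{Cone}(CM_*(\cW))$, acyclicity gives $(a,r)=d(a',r')$, and one corrects the extensions of $\cA$ and of the bordism $\cR$ simultaneously by $a'$ and $r'$. You should either adopt that formulation or spell out the two-step correction explicitly; ``a parallel obstruction argument'' is hiding the point at which the quasi-isomorphism hypothesis is actually used in full.
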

        We conjecture (but do not require for our purposes) that the same statement holds for injectivity as well as surjectivity.
        \begin{proof}[Proof without framings]
            We show that $\cW_*: \left[*[u], \cF\right]_{\tau_{\leq v}} \to \left[*[u], \cG\right]_{\tau_{\leq v}}$ is surjective for each $v$ inductively; this holds for $v=0$ by assumption. Assume this holds for $v$; we will show it holds for $v+1$.\par 
            Let $\cB: *[u] \to \cG$ be a $\tau_{\leq v+1}$-morphism. By assumption, $\tau_{\leq v}\cB$ lifts to a $\tau_{\leq v}$-morphism $\cA: *[u] \to \cF$, and there is a bordism $\cR$ from $\cW \circ \cA$ to $\tau_{\leq v}\cB$.\par 
            Much more explicitly, we first choose an extension $\cB'$ of $\cB$, and assume that there are compact manifolds with faces $\cA'_{*x}$ for each $x$ in $\cF$ with $u-|x|\leq v+1$ of dimension $u-|x|$, each of which has a system of faces given by
            $$\cA'_{*;x_0 \ldots x_i}$$
            for $x_0, \ldots, x_i =x$ in $\cF$ (and in particular a system of boundary faces given by $\cA'_{*;x'x}$ for $x'$ in $\cF$), as well as compact manifolds with faces $\cR'_{*y}$ for each $y$ in $\cG$ with $u-|y|\leq v$ of dimension $u-|y|+1$, with a system of faces given by
            $$\cB_{*;y_0 \ldots y_j}$$
            for $y_0, \ldots, y_j = y$ in $\cG$,
            $$\cA'_{*; x_0 \ldots x_i} \times \cW_{x_i; y_0 \ldots y_j}$$
            for $x_0, \ldots, x_i$ in $\cF$ and $y_0, \ldots, y_j$ in $\cG$, and
            $$\cR'_{*;y_0 \ldots y_j}$$
            for $y_0, \ldots, y_j = y$ in $\cG$. In particular, $\cR'_{*y}$ has a system of boundary faces given by $\cB_{*y}$, $\cA_{*x} \times \cW_{xy}$ for $x$ in $\cF$, and $\cR'_{*; y'y}$ for $y'$ in $\cG$.\par 
            We next define an element $(a, r)$ in 
            $$CM_{u-v-2}\left(\cF; \Omega_{v+1} \right) \oplus CM_{u-v-1}\left(\cG; \Omega_{v+1}\right)$$
            For $x$ in $\cF$ with $u-|x|=v+2$, we define $\tilde{\cD}_{*x}$ to be the coequaliser of the following.
            $$\bigsqcup\limits_{x',x''\in\cF} \cA'_{*;x''x'x} \times [0, \eps)^2 \rightrightarrows \bigsqcup\limits_{x'\in\cF} \cA'_{*;x'x}\times [0, \eps)$$
            By Lemma \ref{lem:abstract gluing with faces}, this is a manifold with faces, with a system of faces given by
            $$\cA'_{*;x_0 \ldots x_i} \times \{0\}$$
            for $x_0, \ldots, x_i=x$ in $\cF$. In particular, all these faces are compact.\par 
            Choose framed functions $(f_{*x}, s_{*x})$ on each $\tilde{\cD}_{*x}$, and let $\cS_{*x} := f_{*x}^{-1}\{1\}$. Now define $a$ by
            $$a := \sum\limits_{\substack{x \in \cF\\
            u-|x|=v+2}} [\cS_{*x}] \cdot x \in CM_{u-v-2}\left(\cF; \Omega_{v+1}\right)$$
            Note that $a$ is a specific instance of the obstruction chain (written there as $\cO$) constructed in (the proof of) Theorem \ref{Obstruction Theorem}, and therefore has already been shown to be a closed chain.\par 
            For $y$ in $\cG$ with $u-|y|=v+1$, we define $\tilde{\cE}_{*y}$ to be the coequaliser of the following diagram.
            \[\xymatrix{
                \bigsqcup\limits_{y' \in \cG} \cB_{*y'} \times \cG_{y'y} \times [0, \eps)\times [0, -\eps) \ar[r] \ar[dr] &
                \cB_{*y} \times [0, -\eps)\\
                \bigsqcup\limits_{y',y'' \in \cG} \cR'_{*;y''y'y} \times [0, \eps)^2 \ar@<+.5ex>[r] \ar@<-.5ex>[r] &
                \bigsqcup\limits_{y'\in \cG} \cR'_{*;y'y} \times [0, \eps) \\
                \bigsqcup\limits_{\substack{x \in \cF\\ y' \in \cG}} \cA'_{*x} \times \cW_{x;y'y} \times [0, \eps)^2 \ar[ur] \ar[r] &
                \bigsqcup\limits_{x \in \cF} \cA'_{*x} \times \cW_{xy} \times [0, \eps) \\
                \bigsqcup\limits_{x, x' \in \cF} \cA_{*;x'x} \times \cW_{xy} \times [0, \eps)^2 \ar@<+.5ex>[ur] \ar@<-.5ex>[ur]  
            }
            \]
            Note the copies of $[0,-\eps)$ in some places instead of $[0, \eps)$; this will be relevant when we work in the framed setting. Here the two maps out of each $\cB_{*y'} \times \cG_{y'y} \times [0,\eps) \times [0,-\eps)$ send $(a,b,u,v)$ to $\alpha(a,b,u,v) := (\cC(a,b;u),v)$ and $\beta(a,b,u,v) := (\cC(a;-v),b,u)$ respectively.\par 
            Each $\tilde{\cE}_{*y}$ is a manifold with corners by Lemma \ref{lem:abstract gluing} (noting $[0,-\eps)$ is naturally diffeomorphic to $[0,\eps)$), and by applying Lemma \ref{lem:abstract gluing with faces} (using the indexing sets $I = \{*\} \cup \cG \cup \cF$ and $J = \emptyset$), $\tilde{\cE}_{*y}$ is a manifold with faces, with a system of faces given by
            $$\cB_{*; y_0 \ldots y_j} \times \{0\}$$
            for $y_0, \ldots, y_j = y$ in $\cG$,
            $$\cR'_{*; y_0 \ldots y_j} \times \{0\}$$
            for $j \geq 1$ and $y_0, \ldots, y_j = y \in \cG$, and
            $$\cA'_{*; x_0 \ldots x_i} \times \cW_{x_i; y_0 \ldots y_j} \times \{0\}$$
            for $x_0, \ldots, x_i$ in $\cF$ and $y_0, \ldots, y_j = y$ in $\cG$. In particular, all these faces are compact.\par 
            Choose framed functions $(f_{*y}, s_{*y})$ on each $\tilde{\cE}_{*y}$, and let $\cT_{*y} = f_{*y}^{-1}\{1\}$. Now define $r$ by 
            $$r := \sum\limits_{\substack{y \in \cG\\u-|y| = v+1}}\left[\cT_{*y}\right] \cdot y \in CM_{u-v-1}\left(\cG; \Omega_{v+1}\right)$$
            We next show that $(a, r)$ is closed with respect to the mapping cone differential: explicitly, that $da = 0$ and $CM_*(\cW)(a) + dr = 0$. We have already seen that $da = 0$.\par 
            We next define several auxiliary manifolds. For $y$ in $\cG$ with $u-|y|\leq v+1$, we define $\tilde{\cK}_{*y}$ to be the coequaliser of the following diagram.
            \[\xymatrix{
                \bigsqcup\limits_{x,x' \in \cF} \cA'_{*;x'x} \times \cW_{xy} \times [0, \eps)^2 \ar@<+.5ex>[r] \ar@<-.5ex>[r] &
                \bigsqcup\limits_{x \in \cF} \cA'_{*x} \times \cW_{xy} \times [0, \eps)
            }
            \]
            By Lemma \ref{lem:abstract gluing with faces} (with $I=\cF$ and $J = \cG \setminus\{y\}$), $\tilde{\cK}_{*y}$ is a manifold with faces, with a system of faces given by
            $$\cA_{*; x_0 \ldots x_i} \times \cW_{x_i; y_0 \ldots y_j} \times \{0\}$$
            for $x_0, \ldots, x_i$ in $\cF$ and $y_0, \ldots, y_j = y$ in $\cG$, and
            $$\tilde{\cK}_{*; y_0 \ldots y_j}$$
            for $y_0, \ldots, y_j=y$ in $\cG$.\par
            For $y$ in $\cG$ with $u-|y|=v+2$, we define $\tilde{\cJ}_{*y}$ to be the coequaliser of the following diagram.
            \[\xymatrix{
                \bigsqcup\limits_{y',y'' \in \cG} \tilde{\cK}_{*; y''y'y} \times [0, \eps)^2 \ar@<+.5ex>[r] \ar@<-.5ex>[r] &
                \bigsqcup\limits_{y'\in \cG}\tilde{\cK}_{*; y'y} \times [0, \eps)
            }
            \]
            By Lemma \ref{lem:abstract gluing}, this is a manifold with corners.\par 
            For $y$ in $\cG$ with $u-|y|=v+2$, we define $\tilde{\cL}_{*y}$ to be the coequaliser of the following diagram.
            \[\xymatrix{
                \bigsqcup\limits_{\substack{x,x' \in \cF\\
                |x|-|y|>0}} \cA'_{*; x'x} \times \cW_{xy} \times [0, \eps)^2 \ar@<+.5ex>[r] \ar@<-.5ex>[r] &
                \bigsqcup\limits_{\substack{x \in \cF\\
                |x|-|y|>0}} \cA'_{*x} \times \cW_{xy} \times [0, \eps)
            }
            \]
            By Lemma \ref{lem:abstract gluing with faces} (applied with $I = \left\{x \in \cF \,|\, |x|-|y|>0\right\}$ and $J = \left\{ x\in\cF \,|\, |x|-|y|=0\right\} \cup \left(\cG \setminus \{y\}\right)$), $\tilde{\cL}_{*y}$ is a manifold with faces, with a system of faces given by
            $$\cA_{*; x_0 \ldots x_i} \times \cW_{x_i; y_0 \ldots y_j} \times \{0\}$$
            when for $x_0, \ldots, x_i$ in $\cF$ and $y_0, \ldots, y_j$ in $\cG$, when either $i+j>0$ or $i=j=0$ and $|x_0|-|y_0|>0$, and
            $$\tilde{\cD}_{*x} \times \cW_{xy}$$
            for $x$ in $\cF$ with $|x|-|y|=0$, and
            $$\tilde{\cK}_{*;y_0 \ldots y_j}$$
            for $y_0, \ldots, y_j=y$ in $\cG$ when $j \geq 1$.\par
            For $y$ in $\cG$ with $u-|y|=v+2$, we define $\tilde{\cM}_{*y}$ to be the coequaliser of the following diagram.
            \[\xymatrix{
                \bigsqcup\limits_{\substack{y' \in \cG\\
                |y'|-|y|>1}} \cB_{*;y'y} \times [0, \eps)\times [0,-\eps) \ar[r] \ar[dr] &
                \cB'_{*y} \times [0, -\eps)\\
                \bigsqcup\limits_{\substack{y',y'' \in \cG\\
                |y'|-|y|>1}} \cR'_{*;y''y'y} \times [0, \eps)^2 \ar@<+.5ex>[r] \ar@<-.5ex>[r] &
                \bigsqcup\limits_{\substack{y'\in \cG\\
                |y'|-|y|>1}} \cR'_{*;y'y} \times [0, \eps)\\
                \bigsqcup\limits_{\substack{x\in \cF; y' \in \cG\\
                |y'|-|y|>1}} \cA'_{*x} \times \cW_{x;y'y} \times [0, \eps)^2 \ar[ur] \ar[r] &
                \bigsqcup\limits_{\substack{x \in \cF\\
                |x|-|y|>0}} \cA'_{*x} \times \cW_{xy} \times [0, \eps)\\
                \bigsqcup\limits_{\substack{x,x'\in \cF\\
                |x|-|y|>0}} \cA_{*;x'x} \times \cW_{xy} \times [0, \eps)^2 \ar@<+.5ex>[ur] \ar@<-.5ex>[ur] &
            }
            \]
            This is a manifold with corners by Lemma \ref{lem:abstract gluing}; we see (either by iteratively applying Lemma \ref{lem:abstract gluing with faces} or by direct inspection of the corner strata) that $\tilde{\cM}_{*y}$ is a manifold with faces, with a system of faces given by
            $$\cB_{*; y_0 \ldots y_j} \times \{0\}$$
            for $y_0, \ldots, y_j = y$ in $\cG$,
            $$\cR'_{*; y_0 \ldots y_j} \times \{0\}$$
            for $y_0, \ldots, y_j = y$ in $\cG$ when either $j>1$ or $j=0$ and $|y_0|-|y_1|>1$, and
            $$\cA'_{*; x_0 \ldots x_i} \times \cW_{x_i; y_0 \ldots y_j} \times \{0\}$$
            for $x_0, \ldots, x_i$ in $\cF$ and $y_0, \ldots, y_j$ in $\cG$, when either $i+j>0$ or $i=j=0$ and $|x_0|-|y_0|>0$, and
            $$\tilde{\cK}_{*; y_0 \ldots y_j}$$
            for $y_0, \ldots, y_j$ in $\cG$ with $j \geq 1$, and
            $$\tilde{\cE}_{*;y'y}$$
            for $y'$ in $\cG$ with $|y'|-|y|=1$.\par 
            Both $\tilde{\cL}_{*y}$ and $\tilde{\cM}_{*y}$ contain faces of the form $\tilde{\cK}_{*; y_0 \ldots y_j}$ (for $y_0, \ldots, y_j$ in $\cG$ with $j \geq 1$), and in both cases these faces are non-compact but their intersections with the other boundary faces not of this form are always compact. By the collar neighbourhood theorem for manifolds with faces, there are embeddings $\tilde{\cJ}_{*y} \hookrightarrow \tilde{\cL}_{*y}, \tilde{\cM}_{*y}$ onto neighbourhoods of all the faces of the form $\tilde{\cK}_{*; y_0 \ldots y_j}$.\par 
            Pick a framed function $(h_{*y}, w_{*y})$ on $\tilde{\cJ}_{*y}$, and remove $h_{*y}^{-1}(-\infty, 1)$ from $\tilde{\cL}_{*y}$ and $\tilde{\cM}_{*y}$ to obtain $\tilde{\cL}^r_{*y}$ and $\tilde{\cM}^r_{*y}$ respectively. These are still manifolds with faces, but now they both have a boundary face identified with $h_{*y}^{-1}\{1\}$, which is a compact smooth manifold with faces.\par 
            Now glue $\tilde{\cL}^r_{*y}$ to $\tilde{\cM}^r_{*y}$ along this common boundary face to obtain a new manifold with faces $\cV_{*y}$. Its non-compact boundary faces are given by $\tilde{\cD}_{*x} \times \cW_{xy}$ for $x$ in $\cF$ with $|x|-|y|=1$ (coming from $\tilde{\cL}_{*y}^r$), and also other ones which are diffeomorphic to $\tilde{\cE}_{*;y'y}$ away from a neighbourhood of the compact faces of this face, for $y'$ in $\cG$ with $|y'|-|y|=1$ (coming from $\tilde{\cM}^r_{*y}$).\par 
            For each $y$ in $\cG$ with $u-|y|=v+2$, pick framed functions $(g_{*y}, t_{*y})$ on $\cV_{*y}$, agreeing with the framed functions $(f_{*x}, s_{*x})$ and $(f_{*y}, s_{*y})$ on appropriate faces.\par 
            Then $g_{*y}^{-1}\{1\}$ is a nullbordism of the manifold representing the $y$-coefficient of $CM_*(\cW)(a) + dr$, so this chain must vanish, completing the proof that $(a, r)$ is closed.\par 
            We proceed similarly to the proof of Theorem \ref{Obstruction Theorem}. Since $CM_*(\cW)$ is a quasiisomorphism (over $\bZ/2\bZ$, which implies it is also one over $\Omega_{v+1}$), its cone is acyclic, so there is some $(a', r')$ in 
            $$CM_{u-v-1}\left(\cF; \Omega_{v+1}\right) \oplus CM_{u-v}\left(\cG; \Omega_{v+1}\right)$$
            such that $da' = a$ and $CM_*(\cW)(a') + dr' = r$.\par 
            Replacing each $\cA'_{*x}$ (for $u-|x|=v+1$) with $\cA'_{*x} \sqcup a'_x$ (where $a'_x$ is a choice of some representative of the $x$-coefficient of $a$) (so the new $\cA'$ is another extension of $\cA$) and $\cR'_{*y}$ (for $u-|y| = v$) with $\cR'_{*y} \sqcup r'_y$, we may assume that the chain $(a, r)$ is zero, i.e. that each $\cS_{*x}$ and each $\cT_{*y}$ is nullbordant.\par 
            We now define extensions of $\cA'$ and $\cR'$: explicitly, we define compact manifolds with faces $\cA'_{*x}$ for $x$ in $\cF$ with $u-|x| = v+2$ and $\cR'_{*y}$ for $y$ in $\cG$ with $u-|y| = v+1$, with systems of faces as described earlier.\par 
            For $x$ in $\cF$ with $u-|x|=v+2$, let $\cA''_{*x} = f_{*x}^{-1}(-\infty, 1]$. Since $\cS_{*x} = f_{*x}^{-1}\{1\}$ is closed and nullbordant, we glue such a nullbordism onto this boundary face to obtain $\cA'_{*x}$.\par 
            For $y$ in $\cG$ with $u-|y| = v+1$, we let $\cR''_{*y} = f_{*y}^{-1}(-\infty, 1]$. Since $\cT_{*y} = f_{*y}^{-1}\{1\}$ is closed and nullbordant, we glue such a nullbordism onto this boundary face to obtain $\cR'_{*y}$, completing the inductive step.
        \end{proof}
        \begin{proof}[Incorporating framings]
            Similarly to the proof of Theorem \ref{thm:onto}, we coherently frame all manifolds arising in the unframed case to prove the framed case. We assume $\cA', \cB',\cR'$ are all framed, and we recall that each $\cB'_{*y}$ is an \emph{outgoing} boundary face of each $\cR'_{*y}$. We first equip each $\tilde{\cD}_{*x}$, $\tilde{\cK}_{*y}$, $\tilde{\cJ}_{*y}$, $\tilde{\cL}_{*y}$ with compatible systems of framings in the same way as we did with $\tilde{\cQ}(\ldots)$ in the proof of Lemma \ref{lem:fram Q}; these then induce compatible systems of framings on each $\cS_{*x}$ and $\tilde{\cL}^r_{*y}$ in the usual way. \par 
            We next frame each $\tilde{\cE}_{*y}$. The only unbroken boundary face of $\tilde{\cE}_{*y}$ is $\cB_{*y} \times \{0\}$; we declare this to be outgoing. \par 
            We first choose, inductively in $i-|y|$, stabilising bundles $\bT^{\tilde \cE}_{*y}$ over each $\tilde \cE_{*y}$, along with embeddings of the stabilising bundles of the stable framing of each boundary face of each $\tilde \cE_{*y}$ into $\bT^{\tilde \cE}_{*y}$, which agree on their overlaps.\par 
            
            We now define $st_{*y}^{\tilde{\cE}}: V_{*[u]} \oplus I^{\tilde{\cE}}_{*y} \to \bR^2 \oplus V_y$ as follows. Over each $\cB'_{*y} \times [0,-\eps)$, we define $st^{\tilde{\cE}}$ to be the composition
            $$V_{*[u]} \oplus I^{\tilde{\cE}}_{*y} \xrightarrow{\psi^{-1}} V_{*[u]} \oplus I^{\cB'}_{*y} \oplus \bR \sigma \xrightarrow{st^{\cB'}} \bR \oplus \bR\sigma \oplus V_y$$
            extended to have stabilising bundle $\bT^{\tilde \cE}_{*y}$, where the first copy of $\bR$ comes from $st^{\cB}$. On $\cR'_{*y'} \times \cG_{y'y} \times [0, \eps)$, we define $st^{\tilde{\cE}}$ to be the composition
            $$V_{*[u]} \oplus I^{\tilde{\cE}} \xrightarrow{\psi^{-1}} V_{*[u]} \oplus I^{\cR'}_{*y'} \oplus I^\cG_{y'y} \xrightarrow{st^\cG \circ st^{\cR'}} \bR^2 \oplus V_y$$
            and on $\cA'_{*x} \times \cW_{xy} \times [0,\eps)$, we define $st^{\tilde{\cE}}$ to be the composition
            $$V_{*[u]} \oplus I^{\tilde{\cE}}_{*y} \xrightarrow{\psi^{-1}} V_{*[u]} \oplus I^{\cA'}_{*x} \oplus I^\cW_{xy} \xrightarrow{st^\cW \circ st^{\cA'}} \bR \oplus \bR \oplus V_y$$
            again extended to have stabilising bundle $\bT^{\tilde \cE}_{*y}$, where the first copy of $\bR$ in the final term comes from $st^\cW$ and the second from $st^{\cA'}$.\par 
            We need these to agree on the regions where they overlap. Over all regions except those of the form $\cB_{*y'} \times \cG_{y'y} \times [0,\eps)_u \times [0,-\eps)_v$, this is identical to checking the gluing on overlaps in Lemma \ref{lem:fram Q}; on the region $\cB_{*y'} \times \cG_{y'y} \times [0,\eps)_u \times [0,-\eps)_v$, it follows from a similar argument, using the following diagram.
            \[
            \centerline{\xymatrix{
                V_{*[u]} \oplus T \cB'_{*y} \oplus \bR \partial_s \oplus \bR \tau_y \ar[d]_{\bar{\psi}} &
                V_{*[u]} \oplus \bR \partial_u \oplus \bR \partial_v \oplus T\cB'_{*y'} \oplus T\cG_{y'y} \oplus \bR\tau_y \ar[l]_{d\alpha} \ar[r]_{d\beta} \ar[d]_{=} & 
                V_{*[u]} \oplus T\cR'_{*y'} \oplus \partial_s \oplus T\cG_{y'y} \oplus \bR \tau_y \ar[d]_{\bar{\psi}}\\
                V_{*[u]} \oplus I^{\cB'}_{*y} \oplus \bR \sigma \ar[r]_\psi &
                V_{*[u]} \oplus I^{\tilde{\cE}}_{*y} &
                V_{*[u]} \oplus I^{\cR'}_{*y'} \oplus I^\cG_{y'y} \ar[l]_\psi
            }
            }
            \]
            Here the $\bar{\psi}$ on the left is defined to send $\partial_s$ to $\sigma$ and is the identity on all other factors, and the $\bar{\psi}$ on the right is defined to send $\partial_s$ to $\tau_{y'}$ and is the identity on all other factors. Then we can see directly that this diagram commutes since all three ways of going from the top middle entry to the bottom middle entry send $\tau_y$ to $\tau_y$, $\partial_u$ to $\nu_{y'}$ and $\partial_v$ to $-\nu^\cB$ (and are the identity on all other factors).\par 
            The $st^{\tilde{\cE}}$ (along with the other framings) assemble to form systems of framings for all $\tilde{\cE}_{*y}$, and similarly we may frame all $\tilde{\cM}_{*y}$ in exactly the same way. These framings then induce framings on each $\cT_{*y}$ and each $\tilde{\cM}^r_{*y}$ in the usual way.\par 
            The framings on $\cS_{*x}$ and $\cT_{*y}$ gives us a well-defined element 
            $$(a^{fr}, r^{fr}) \in CM_{u-v-2}\left(\cF; \Omega_{v+1}^{fr}\right) \oplus CM_{u-v-1}\left(\cG; \Omega^{fr}_{v+1} \right)$$
            We have already seen that $da^{fr}=0$. We would like to show that the pair $(a^{fr}, r^{fr})$ is closed; in this case, an identical argument to before allows us to complete the induction step.\par 
            It therefore remains to show that $\cW_*(a^{fr}) - d(r^{fr}) = 0$. To do this we next equip $\cV_{*y}$ with appropriate framings. Note that both $\tilde{\cL}^r_{*y}$ and $\tilde{\cM}^r_{*y}$ have $h^{-1}_{*y}\{1\}$ as an incoming face, so we can frame $\cV_{*y}$ by equipping it with the framing of $\tilde{\cL}^r_{*y}$ over $\tilde{\cL}^r_{*y}$ and with the opposite framing of $\tilde{\cM}^r_{*y}$ over $\tilde{\cM}^r_{*y}$. Then $g^{-1}$ is a framed nullbordism of
            $$\bigsqcup\limits_{x \in \cF} \cS_{*x} \times \cW_{xy} \bigsqcup_{y' \in \cG} \overline{\cT}_{*;y'y}$$
            from which the required closedness follows.
        \end{proof}
        The analogous algebraic result is the following corollary of Hurewicz' and Whitehead's theorems:
        \begin{prop}
            Let $R$ be a $(-1)$-connected dga or ring spectrum, and $f: M \to N$ a morphism in $R\modules$. If the induced map
            $$f: M \otimes_R HR_0 \to N \otimes_R HR_0$$
            is an equivalence of $HR_0$-modules, then $f$ is an equivalence of $R$-modules.
        \end{prop}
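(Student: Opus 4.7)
The plan is to reduce the statement to showing that the cofiber $C := \operatorname{Cone}(f)$ is contractible, i.e.\ zero in the derived category of $R$-modules. Since the functor $- \otimes_R HR_0$ preserves cofiber sequences, the hypothesis that $f \otimes_R HR_0$ is an equivalence translates to $C \otimes_R HR_0 \simeq 0$. Thus it suffices to prove the following Nakayama-type lemma: if $C$ is a bounded-below $R$-module with $C \otimes_R HR_0 \simeq 0$, then $C \simeq 0$. Note that the perfectness of $M$ and $N$ implies that $C$ is bounded below, so a shift reduces us to the case where $C$ is connective.

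First I would invoke the Whitehead theorem in $R\modules$, which tells us it suffices to show $\pi_i(C) = 0$ for all $i$. The key input is the Postnikov cofiber sequence $R_{>0} \to R \to HR_0$ of $R$-bimodules, which exists precisely because $R$ is $(-1)$-connected; here $R_{>0}$ is $1$-connective. Tensoring on the right with any connective $R$-module preserves connectivity (this uses connectivity of $R$, via the usual Tor spectral sequence / filtration argument), so $R_{>0} \otimes_R C$ is $1$-connective whenever $C$ is connective.

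Applying this to our $C$, the resulting cofiber sequence $R_{>0} \otimes_R C \to C \to HR_0 \otimes_R C$ and its long exact sequence in homotopy yield $\pi_0(C) \cong \pi_0(HR_0 \otimes_R C) = 0$. Hence $C$ is $1$-connective. Iterating — replacing $C$ by its appropriate shift or, equivalently, applying the same argument to the $n$-connective cover — we obtain that $C$ is $n$-connective for every $n \geq 0$, and since $C$ is bounded above in no sense that matters (it was bounded below and we have pushed connectivity to $+\infty$), this forces $C \simeq 0$.

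I do not expect any serious obstacle; the argument is a standard Hurewicz/Whitehead package for connective ring spectra (or connective dgas), and the only step requiring care is the preservation-of-connectivity claim in the previous paragraph, which in the spectral setting follows from the convergence of the K\"unneth / Tor spectral sequence $\operatorname{Tor}^{\pi_*R}_{p,q}(\pi_*R_{>0}, \pi_*C) \Rightarrow \pi_{p+q}(R_{>0} \otimes_R C)$, whose $E_2$-page is supported in total degrees $\geq 1$ once $R_{>0}$ is $1$-connective and $C$ is connective. The dga case is entirely analogous, using the standard $t$-structure on the derived category.
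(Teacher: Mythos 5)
Your proposal is correct and is exactly the standard unwinding of the Hurewicz/Whitehead argument that the paper invokes without writing out: pass to the cofibre, use the Postnikov cofibre sequence $R_{>0}\to R\to HR_0$ together with the connectivity of $R_{>0}\otimes_R(-)$ on connective modules, and induct on connectivity. The one point worth flagging is that you correctly supply the bounded-below hypothesis (via perfectness of $M$ and $N$), which is genuinely needed --- the statement fails for unbounded modules, e.g.\ nontrivial $H\bZ$-acyclic spectra over $\bS$ --- and which the paper leaves implicit from the surrounding context of perfect modules.
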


\section{Baas-Sullivan singularities}\label{Sec:BS}
    For each $k>0$, we fix a set $\bM^{k}$ of closed $k$-manifolds $P$, each equipped with a stable framing $st^P: TP \to \bR^k$ with stabilising bundle $\bT^P$, containing a representative of each isomorphism (i.e. diffeomorphism respecting stable framings) class of stably framed $k$-manifolds. 
    \subsection{Singularities}
        \begin{defn}\label{kBS sing def}
            A \emph{manifold with faces with $k$-Baas-Sullivan singularities} (abbreviated to \emph{with $\kBS$-singularities}, or just \emph{$\kBS$-manifold (with faces)}) $(M, N_P, \iota)$ (often just abbreviated to $M$) consists of a manifold with faces $M$, along with manifolds with faces $N_P$ for each $P \in \bM^k$, such that only finitely many $N_P$ are non-empty, and $\iota$ is a proper embedding of a boundary face
            $$\iota: \bigsqcup\limits_{P \in \bM^k} N_P \times P \hookrightarrow M$$
            We call faces of $M$ which have no components lying in the image of $\iota$ \emph{essential faces} and faces of $M$ which lie in the image of $\iota$ \emph{singular faces}. We write $D_M$ for the set of essential faces of $M$, and if $F \subseteq M$ is an essential face, we write $D_{F \subseteq M}$ for the set of essential faces containing $F$. Note that any essential face $F$ of $M$ naturally inherits the structure of a $\kBS$-manifold with faces. In particular, if $(M',N'_P,\iota')$ is another $\kBS$-manifold with faces, for $M' \hookrightarrow M$ to be an inclusion of a face, there are inclusions of faces of non-singular manifolds $N_P'\hookrightarrow N_P$ such that the following diagram commutes:
            \[\xymatrix{
                \bigsqcup\limits_P N_P'\times P \ar[r] \ar[d]_{\iota'} &
                \bigsqcup\limits_P N_P \times P \ar[d]_\iota \\
                M' \ar[r] &
                M
            }
            \]
            Note that in particular, even if $P,P' \in \bM^k$ are diffeomorphic, we do not treat them as the same singularity type.\par
            A \emph{closed $\kBS$-manifold} is a $\kBS$-manifold with faces which has no essential faces.\par
            A \emph{non-singular} $\kBS$-manifold is one where all $N_P$ are empty; this is just an ordinary manifold.\par 
            A \emph{system of essential faces} for $M$ is a collection of faces $F_i^j$ is a collection of essential faces $F_i^j$ of $M$ such that each $F_i^j$ has codimension $j$, all $F_i^j$ have disjoint smooth loci, and any (positive codimension) essential face of $M$ is a disjoint union of components of the $F_i^j$.\par 
            Let $X$ be some other space. We say a continuous function $f: M \to X$ is \emph{compatible with the singularities} if, on each $\iota(N_P \times P)$, $f$ is constant in the $P$ direction, i.e. it only depends on the $N_P$-co-ordinate.
        \end{defn}
        \begin{rmk}
            One should think of such a manifold as a singular manifold with singular loci locally modelled on cones over $k$-manifolds. Indeed taking $(M, N_P, \iota)$ a $\kBS$-manifold with faces and attaching cells $N_P \times C_{P}$ to each $N_P \times P$ (where $C_{P}$ is the cone over $P$) gives such a singular manifold. Definition \ref{kBS sing def} can be thought of as removing neighbourhoods of the singular loci but keeping track of how to glue them back in; this definition is easier to work with in practice.\par 
            In particular, we think of the singular faces as part of a ``singular locus'' rather than actual faces of the manifold, whose role the essential faces play.
        \end{rmk}
        \begin{lem}
            Let $M$ be a $\kBS$-manifold with faces, $G$ a face of $M$, and $F$ a face of $G$. Then if $G$ is a singular face, $F$ is too. \par 
            It follows from this that if $F$ is an essential face, $G$ is too.
        \end{lem}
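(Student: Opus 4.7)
The plan is to deduce both claims by a direct set-theoretic containment argument, using the previously established lemma that a face of a face of $M$ is itself a face of $M$, and to identify $G$ being singular with the condition that the embedding $G \hookrightarrow M$ factors through $\mathrm{im}(\iota) = \iota\bigl(\bigsqcup_{P \in \bM^k} N_P \times P\bigr) \subseteq M$.

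First I would establish the main claim. Let $F$ be a face of $G$. By the earlier lemma on compositions of face inclusions, $F$ is a face of $M$ via the composite embedding $F \hookrightarrow G \hookrightarrow M$. Since $G$ is singular, the image of $G \hookrightarrow M$ lies entirely in $\mathrm{im}(\iota)$. Composing, the image of $F \hookrightarrow M$ also lies in $\mathrm{im}(\iota)$, so every component of $F$ lies in $\mathrm{im}(\iota)$. By Definition \ref{kBS sing def}, this is exactly the statement that $F$ is a singular face.

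For the contrapositive, suppose $F$ is an essential face of $M$, so (by definition) $F$ is nonempty and no component of $F$ lies in $\mathrm{im}(\iota)$. If $G$ were singular, the main claim would force $F$ to be singular as well, i.e.\ every component of $F$ would lie in $\mathrm{im}(\iota)$; since $F$ has at least one component this directly contradicts $F$ being essential. Therefore $G$ has no component in $\mathrm{im}(\iota)$ — it could a priori be a mixed face, but since $F$ is a face of it and forces non-singularity on every component containing $F$, one deduces essentiality of $G$ in the sense used in the paper (where ``essential'' means no component is singular).

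The argument is really just unpacking definitions together with functoriality of face embeddings; the only mildly delicate point is checking that the embedding of $F$ as a face of $G$ composes compatibly with the given embedding $G \hookrightarrow M$ for the purposes of the singular-locus condition, and this is immediate because $\iota$ is itself an embedding of a face (so everything takes place inside $M$). I do not anticipate any genuine obstacle.
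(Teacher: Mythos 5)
Your argument is correct and is essentially identical to the paper's proof, which simply observes that $F \subseteq G \subseteq \mathrm{im}(\iota)$ implies $F \subseteq \mathrm{im}(\iota)$ and then passes to the contrapositive. Your extra care about components and mixed faces is a reasonable (if slightly more verbose) unpacking of the same one-line observation.
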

        \begin{proof}
            If $G$ lies in the image of $\iota$ and $F \subset G$ then $F$ lies in the image of $\iota$.
        \end{proof}
        \begin{rmk}\label{dim nonsing}
            A $\kBS$-manifold with faces of dimension $\leq k$ is the same as a manifold with faces.
        \end{rmk}
        \begin{defn}
            A framed function $(f, s)$ on a $\kBS$-manifold $(M, N_P, \iota)$ is a framed function on the manifold with corners $M$ such that over each $N_P \times P$, $f$ is constant in the $P$-direction (so $f$ is compatible with the singularities as a continuous function), and $s$ is also constant in the $P$-direction (noting that since $f$ is constant in the $P$-direction, $f^{-1}\{1\} \cap (N_P \times P)$ naturally splits into a product of something with $P$).
        \end{defn}
        The same proofs show that Lemma \ref{prop: res of fram fn is fram} (respectively Lemma \ref{prop: ext fram fn}) holds for framed functions on $\kBS$-manifolds, where we restrict to (respectively extend from) only essential faces.\par 
        From the preimage theorem for manifolds with faces, we see that
        \begin{prop}\label{preimage thm sing}
            Let $(M, N_P, \iota)$ be a $\kBS$-manifold with face with a system of essential faces $\{F_i\}$, and $(f, s)$ a framed function on $M$. Then $f^{-1}\{1\}$ and $f^{-1}(-\infty, 1]$ are naturally $\kBS$-manifolds with faces.\par 
            There is a system of essential faces for $f^{-1}\{1\}$ given by the $F_i \cap f^{-1}\{1\}$, and one for the $f^{-1}(-\infty,1]$ given by the $F_i \cap f^{-1}\{1\}$ and the $F_i \cap f^{-1}(-\infty, 1]$.
        \end{prop}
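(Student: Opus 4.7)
The plan is to deduce the statement from Lemma \ref{preimage theorem} (the preimage theorem for manifolds with faces), together with the compatibility of $(f, s)$ with the singularities, which ensures that the $\kBS$-structure descends cleanly to the level sets.

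First I would observe that since $(f, s)$ is a framed function on the $\kBS$-manifold $M$, the value $1$ is a regular value of $f$ and of its restriction to every face of the underlying manifold with faces, including every singular face $\iota(N_P \times P)$. Applying Lemma \ref{preimage theorem} therefore produces $f^{-1}\{1\}$ and $f^{-1}(-\infty, 1]$ as (non-singular) manifolds with faces, whose faces arise as (components of) the intersections $F \cap f^{-1}\{1\}$, and additionally $F \cap f^{-1}(-\infty, 1]$ in the sublevel case, for $F$ a face of $M$.

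Next I would equip each of these with a $\kBS$-structure. For each $P \in \bM^k$, the compatibility condition furnishes a (unique, smooth) function $f_P: N_P \to \bR$ with $f \circ \iota(q, p) = f_P(q)$; likewise $s$ on $\iota(N_P \times P) \cap f^{-1}\{1\}$ is pulled back from a bundle map over $f_P^{-1}\{1\}$. The regularity of $1$ for $f$ restricted to every face of $\iota(N_P \times P)$ forces $1$ to be a regular value of $f_P$ and of its restriction to every face of $N_P$, so by the preimage theorem in the non-singular setting, $N'_P := f_P^{-1}\{1\}$ and $N''_P := f_P^{-1}(-\infty, 1]$ are manifolds with faces. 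Declaring the singular locus of $f^{-1}\{1\}$ to be the image of $\iota$ restricted to $\bigsqcup_P N'_P \times P$, and that of $f^{-1}(-\infty, 1]$ to be the image of $\iota$ restricted to $\bigsqcup_P N''_P \times P$, gives the desired $\kBS$-structures. The key geometric input is the product decomposition $f^{-1}\{1\} \cap \iota(N_P \times P) = \iota(N'_P \times P)$, and similarly for the sublevel set, which is immediate from the constancy of $f$ in the $P$-direction.

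Finally I would identify the claimed system of essential faces. An essential face $F_i$ of $M$ has no component in the singular locus, so the intersection $F_i \cap f^{-1}\{1\}$ (respectively $F_i \cap f^{-1}(-\infty, 1]$) has no component in the new singular locus, and so is essential; conversely, any face of $f^{-1}\{1\}$ arising from Lemma \ref{preimage theorem} as the intersection with a singular face of $M$ is singular by construction, so every essential face of $f^{-1}\{1\}$ comes from an essential face of $M$. The verification for $f^{-1}(-\infty, 1]$ is identical, noting additionally that the new ``top'' face $F_i \cap f^{-1}\{1\}$ of $F_i \cap f^{-1}(-\infty, 1]$ is essential for the same reason. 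The only point requiring genuine attention is the product decomposition of the singular locus noted above; everything else amounts to unpacking Lemma \ref{preimage theorem} in the presence of the extra bookkeeping data $(N_P, \iota)$.
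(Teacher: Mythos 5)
Your proposal is correct and follows exactly the route the paper intends: the paper states this proposition as an immediate consequence of Lemma \ref{preimage theorem} with no further argument, and your write-up is a correct elaboration of that deduction, with the key point (that constancy of $f$ in the $P$-direction gives the product decomposition $f^{-1}\{1\}\cap\iota(N_P\times P)=\iota(f_P^{-1}\{1\}\times P)$, so the singular locus descends) correctly identified. The remaining bookkeeping about essential versus singular faces is handled as the paper would expect.
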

        
        \begin{lem} \label{prod sing}
            Let $M$ be a $\kBS$-manifold with faces, and $N$ a manifold with faces. Then $M \times N$ and $N \times M$ each canonically admit the structure of a manifold with faces with $\kBS$ singularities.
        \end{lem}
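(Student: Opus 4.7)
The plan is to give $M \times N$ the $\kBS$-structure with singular manifolds $(M \times N)_P := N_P \times N$ (with the product manifold-with-faces structure from the earlier product lemma), and with singular inclusion given by the composition
$$
\bigsqcup_{P \in \bM^k} (N_P \times N) \times P
\xrightarrow{\mathrm{id} \times \tau}
\bigsqcup_{P \in \bM^k} N_P \times P \times N
\xrightarrow{\iota \times \mathrm{id}_N}
M \times N,
$$
where $\tau$ swaps the $N$ and $P$ factors. This is the ``obvious'' construction, and canonicity will amount to the observation that no other choice is compatible with the restriction to $N_P \times N$.

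First I would verify that this composition is a proper embedding onto a boundary face of $M \times N$: properness follows because $\iota$ is a proper embedding and the diffeomorphism $\tau$ is proper, and being a boundary face follows from the product description of faces of $M \times N$ (from the earlier product lemma), combined with the fact that $\iota(N_P \times P)$ is a boundary face of $M$. Only finitely many of the $N_P \times N$ are non-empty since only finitely many $N_P$ were. The construction for $N \times M$ is identical after the obvious twist, using $(N \times M)_P := N \times N_P$.

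Next I would identify the essential and singular faces of $M \times N$. Using the product lemma for manifolds with faces, the connected faces of $M \times N$ are products $F \times G$ where $F$ is a face of $M$ and $G$ is a face of $N$. I would then declare $F \times G$ to be essential iff $F$ is essential in $M$, and observe that this matches the definition: a face is singular iff it lies in the image of the new $\iota$, which (by the product description above) happens iff it is a component of some $\iota(N_P \times P) \times G'$ for $G'$ a face of $N$, i.e. iff its projection to $M$ is a singular face. The essential faces of $M \times N$ thus inherit the structure of $\kBS$-manifolds with faces in a way consistent with Definition \ref{kBS sing def}, since restriction of the new $\iota$ to an essential face $F \times G$ factors through the restriction of the original $\iota$ to $F$ crossed with $G$.

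The only mildly subtle point is checking that the inclusions of essential faces are compatible with the commutative square in Definition \ref{kBS sing def} relating the singular data of a face to that of the ambient manifold; this is immediate from the fact that both $\iota$ and $\tau$ commute with passage to faces in the evident way. Canonicity follows because any $\kBS$-structure on $M\times N$ whose projection to $M$ recovers the given one on $M$ and whose $P$-factor acts trivially on the $N$-direction must agree with the above. The case $N \times M$ is verbatim, modulo the swap; alternatively one can invoke the diffeomorphism $M \times N \cong N \times M$ and transport the structure. This completes the construction; the main (mild) obstacle is just bookkeeping the face decomposition so that one does not confuse the $P$-factor (the singular cone direction, which must come last) with the $N$-factor.
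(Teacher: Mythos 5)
Your construction is exactly the paper's: the paper takes the new singular data to be the $N_P \times N$ with embedding $\iota \times \id_N$ (implicitly reordered so the $P$-factor comes last, which is the swap $\tau$ you make explicit), and notes the $N \times M$ case is identical. Your additional verifications of properness, the face decomposition, and the essential/singular dichotomy are correct elaborations of what the paper leaves implicit.
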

        \begin{proof}
            $M \times N$ is a manifold with faces, and admits a canonical proper embedding of a codimension 1 face
            $$\iota \times \id_N: \bigsqcup\limits_P (N_P\times N) \times P \hookrightarrow M \times N$$
            where the $N_P$, $\iota$ are as in Definition \ref{kBS sing def}. The case of $N \times M$ is the same.
        \end{proof}
        \begin{cor} \label{prod sing2}
            Let $M$ and $N$ be $\kBS$-manifolds with faces, of dimensions $m$ and $n$ respectively. If either $m \leq k$ or $n \leq k$, in particular if $2k > m + n$, then $M \times N$ is naturally a $\kBS$-manifold with faces.
        \end{cor}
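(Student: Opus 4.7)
The plan is to reduce the corollary directly to the preceding Lemma \ref{prod sing} using Remark \ref{dim nonsing}. First I would observe that by Remark \ref{dim nonsing}, a $\kBS$-manifold with faces of dimension $\leq k$ has no singularities at all: it is simply a manifold with faces (equivalently, all the $N_P$ are empty, since $N_P \times P$ would have dimension $\geq k$ and would need to embed as a codimension one face of something of dimension $\leq k$, forcing $N_P = \emptyset$ unless $\dim M = k$, in which case $N_P$ is zero-dimensional and $P$ has dimension $k$ but the embedding is as a codimension one face, again forcing emptiness).

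The main case is the hypothesis $m \leq k$ or $n \leq k$. Without loss of generality assume $n \leq k$, so that by the previous observation $N$ is a (non-singular) manifold with faces. Then Lemma \ref{prod sing} applies directly to equip $M \times N$ with the structure of a $\kBS$-manifold with faces, with the singular boundary face given by the embedding
\[
\iota \times \id_N: \bigsqcup_{P \in \bM^k} (N_P \times N) \times P \hookrightarrow M \times N
\]
inherited from the singular structure on $M$. The case $m \leq k$ is symmetric, using the other half of Lemma \ref{prod sing}.

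Finally, I would deduce the ``in particular'' clause: if $2k > m+n$, then at least one of $m,n$ is strictly less than $k$, since otherwise both would be $\geq k+1$ (as dimensions are non-negative integers, and if one equals $k$ we are already done), forcing $m+n \geq 2k+2 > 2k$, contradicting the hypothesis. Hence the hypothesis of the main case is satisfied. There is no real obstacle here: the corollary is essentially a dimension-counting consequence of Lemma \ref{prod sing} combined with Remark \ref{dim nonsing}, and the only thing to check carefully is that the product structure inherited from Lemma \ref{prod sing} is independent of whether we view $M$ or $N$ as the singular factor when both happen to satisfy $\dim \leq k$, which is immediate since in that degenerate overlap both factors are non-singular and the product is just a non-singular manifold with faces.
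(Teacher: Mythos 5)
Your proof is correct and follows exactly the paper's argument: by Remark \ref{dim nonsing} one of the factors is non-singular, so Lemma \ref{prod sing} applies, and the clause $2k>m+n$ forces one of $m\leq k$ or $n\leq k$ by a trivial count. The extra details you supply (why the $N_P$ are forced to be empty in low dimension, and the well-definedness when both factors are non-singular) are fine but not needed beyond what the remark and lemma already give.
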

        \begin{proof}
            By Remark \ref{dim nonsing} one of $M$ and $N$ is non-singular, and so by Lemma \ref{prod sing}, their product is a $\kBS$-manifold with faces. 
        \end{proof}
        We have the following abstract gluing lemma, similarly to Lemma \ref{lem:abstract gluing} (but now with essential faces playing the role of faces).\par 
        Let $(M_i, Q_{i,P}, \iota_i)$ be $\kBS$-manifolds (so each $\iota_i: \sqcup_P Q_{i,P} \times P \hookrightarrow M_i$ is the inclusion of a boundary face). Let $\alpha_w: N_w \hookrightarrow M_{i_w}$ and $\beta_w: N_w \hookrightarrow M_{j_w}$ be inclusions of essential boundary faces; each of these induces the structure of a manifold with $\kBS$-singularities on $N_w$, and we assume these structures agree. We write $(N_w, R_{w,P}, \iota_w)$ for this structure.\par
        In particular, we have diffeomorphisms $R_{w,P} \cong \partial Q_{i_w,P} \cong \partial Q_{j_w,P}$.\par  
        We now have inclusions of boundary faces $\alpha_w: R_{w,P} \hookrightarrow Q_{i,P}$ and $\beta_w: R_{w,P} \hookrightarrow Q_{j,P}$ as in the hypothesis of Lemma \ref{lem:abstract gluing}; for each $P$ we let $Y_P$ be the coequaliser of the diagram
        $$\bigsqcup\limits_w R_{w,P} \times [0, \eps)^2 \rightrightarrows \bigsqcup\limits_i Q_{i,P} \times [0, \eps)$$
        which is a manifold with corners by Lemma \ref{lem:abstract gluing}.\par 
        Let $Z$ be constructed as in Lemma \ref{lem:abstract gluing}. There are natural disjoint embeddings $\iota_j: Y_P \times P \hookrightarrow Z$ given by gluing together the $\iota_i$.
        \begin{lem}
            $(Z, Y_P, \iota:= \sqcup_P \iota_P)$ is a manifold with corners with $\kBS$-singularities.
        \end{lem}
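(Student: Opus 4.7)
The plan is to build $\iota = \sqcup_P \iota_P$ directly from the $\iota_i$ by showing they are compatible with the equivalence relation defining $Z$, and then check that the resulting map is a proper embedding onto a boundary face with mutually disjoint images for distinct $P$.

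First I would note that $Z$ is already a manifold with corners by Lemma \ref{lem:abstract gluing}, and that $Y_P$ is a manifold with corners by the same Lemma applied to the collection $\{Q_{i,P}\}$ with the essential-face inclusions $\alpha_w, \beta_w: R_{w,P} \hookrightarrow Q_{i_w,P}, Q_{j_w,P}$. Next I would construct $\iota_P$ chart-by-chart: on each summand $Q_{i,P} \times [0,\eps) \subset \bigsqcup_i Q_{i,P} \times [0, \eps)$ (whose image in $Y_P \times [0,\eps)$, after quotient, gives a neighbourhood of $Y_P$'s carapace), use the map
$$
(\iota_i \times \id_{[0,\eps)}) \times \id_P : Q_{i,P} \times P \times [0,\eps) \longrightarrow M_i \times [0,\eps).
$$
By the hypothesis that the $\kBS$ structures of $N_w$ induced from $M_{i_w}$ and $M_{j_w}$ agree (so in particular the diffeomorphisms $R_{w,P} \cong \partial Q_{i_w,P} \cong \partial Q_{j_w,P}$ intertwine the $\iota$'s), these charts identify the two sides of the relation $\sim$ defining $Y_P$ with the two sides of $\sim$ defining $Z$, giving a well-defined map $\iota_P: Y_P \times P \to Z$ on the coequalisers. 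Since each $\iota_i$ was a proper embedding of a boundary face and the chart maps are local diffeomorphisms onto open neighbourhoods, $\iota_P$ is a proper embedding of a boundary face of $Z$.

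The main technical step — and where I expect the only real work to lie — is showing that the images of $\iota_P$ and $\iota_{P'}$ are disjoint in $Z$ for $P \neq P'$, and more generally that the union over $P$ lands in the boundary of $Z$ in a way that makes it a single proper embedded face. For this I would use that the $\iota_i$ already have disjoint images in $M_i$ (for different $P$), and that the equivalence relation $\sim$ only glues points of $M_i \times [0,\eps)$ to points of $M_j \times [0,\eps)$ via the $\alpha_w,\beta_w$ applied to points of $N_w$; since $\alpha_w, \beta_w$ were assumed to be essential faces, they avoid the singular locus, so the relation $\sim$ does not create new identifications between $\iota_P(Y_P \times P)$ and $\iota_{P'}(Y_{P'} \times P')$. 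This disjointness together with properness of each $\iota_i$ and compactness of $P$ then yields properness of $\iota$.

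Finally I would conclude that $(Z, Y_P, \iota)$ satisfies Definition \ref{kBS sing def}: the data $(Y_P, \iota_P)$ assemble into a single proper embedding $\sqcup_P Y_P \times P \hookrightarrow Z$ of a boundary face, so $Z$ is naturally a manifold with corners carrying $\kBS$-singularities. The essential faces of $Z$ are then exactly those described in Lemma \ref{lem:abstract gluing with faces} with all tilde's replaced appropriately, which one reads off from the construction; in particular each essential face of $Z$ inherits a $\kBS$ structure as required, so no further verification is needed beyond the disjointness/properness step above.
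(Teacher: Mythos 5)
Your proposal is correct and follows essentially the same route as the paper, which simply invokes Lemma \ref{lem:abstract gluing} for the corner structure of $Z$ and observes that $\iota$ equips it with a system of singular faces. Your additional verifications (compatibility of the $\iota_i$ with the coequaliser relations, and the fact that gluing along \emph{essential} faces creates no new identifications between the singular loci for distinct $P$) are exactly the details the paper's terse proof leaves implicit, and they check out.
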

        \begin{proof}
            Lemma \ref{lem:abstract gluing} shows that $Z$ is a manifold with corners. Equipping this with the map $\iota$ defines a system of singular faces which makes this a manifold with corners with $\kBS$-singularities. 
        \end{proof}
        Furthermore if we can index the $M_i$ and $N_w$ as in Lemma \ref{lem:abstract gluing with faces}, the statement of Lemma \ref{lem:abstract gluing with faces} still holds for a system of essential faces, with the same proof.
        \begin{defn}\label{morphism with kBS sing}
            Let $\cF$ and $\cG$ be flow categories. A \emph{$\kBS$-pre-$\tau_{\leq n}$-morphism} $\cW$ from $\cF$ to $\cG$ is the same as a $pre$-$\tau_{\leq n}$-morphism except all manifolds are allowed to have $\kBS$-singularities, and all systems of faces are essential systems of faces.\par 
            More explicitly, this consists of compact $\kBS$-manifolds with faces, say  $\cW_{xy}=(\cW_{xy},N_{xy,P},\iota_{xy})$ (of dimension $|x|-|y|$) for all $x$ in $\cF$ and $y$ in $\cG$ such that $|x|-|y| \leq n$, along with maps
            $$c = c^\cW: \cF_{xz} \times \cW_{zy} \rightarrow \cW_{xy}$$
            and
            $$c = c^\cW: \cW_{xz} \times \cG_{zy} \rightarrow \cW_{xy}$$
            which are compatible with $c^\cF$ and $c^\cG$, which define a system of essential faces
            $$\cW_{x_0 \ldots x_i; y_0 \ldots y_j}$$
            for $x_0=x, \ldots, x_i$ in $\cF$ and $y_0, \ldots, y_j = y$ in $\cG$.\par 
            We define \emph{$\kBS$-$\tau_{\leq n}$-pre-bordisms} and \emph{$\kBS$-$\tau_{\leq n}$-bordisms} analogously.\par 
            There are similar definitions of flow categories with $\kBS$-singularities, bilinear maps, associators etc., but we will not use them.
        \end{defn}
        \begin{rmk}\label{rmk:dimension_constraint_2}
            Note that since the flow categories $\cF$ and $\cG$ don't have any singularities, the products $\cW_{x_0 \ldots x_i;y_0 \ldots y_j}$ admit the structure of manifolds with $\kBS$ singularities, by Lemma \ref{prod sing2}. In practice we will only use these definitions when $2k-1 > n$, so that we can compose these: if we did not have this inequality, then when one attempts to compose such things, eventually one will have to take a product of two manifolds with non-empty singular faces, which does not have $\kBS$ singularities in the sense of the definition we've given. Compare to Remark \ref{rmk:dimension_constraint}.
        \end{rmk}
        Composition of pre-$\tau_{\leq n}$-morphisms with $\kBS$-singularities can be defined the same as in the non-singular case, with exactly the same proofs of associativity and well-definedness, except all framed functions which are chosen must be chosen to be compatible with the singularities. Composing pre-$\tau_{\leq n}$-morphisms with $\kBS$-singularities with bilinear maps is entirely analogous.
        \begin{rmk}
            In order to avoid the above truncation issue, we could amend our definition to allow boundary faces of the form $N \times P_1 \times \ldots \times P_k$ where each $P_i \in \bM^k$, in which case products of manifolds with this structure still have this structure; this however is not necessary for the current application.\par 
            One can make an analogous construction for Baas-Sullivan singularities modelled after any set of closed manifolds $\bM$; we conjecture that the resulting category is equivalent to (the homotopy category of) the category of perfect modules over $R$, where $R$ is the $\bE_1$-quotient of $MO$ by each $[M] \in \Omega_* \cong \pi_*MO$ for $M \in \bM$.
        \end{rmk}
    \subsection{Bordism with singularities}
        Let $X$ be a space.
        \begin{defn}
            Let $M$ and $N$ be two closed manifolds with $\kBS$-singularities. A \emph{$\kBS$-bordism} from $M$ to $N$ is a compact manifold $W$ with faces with $\kBS$-singularities, with a system of essential faces given by $\{M, N\}$.\par 
            The \emph{$i$th $\kBS$-bordism group of $X$}, $\Omega^{\kBS}_i(X)$, is the set of pairs $(M, f)$, where $M$ is a closed manifold with $\kBS$-singularities and $f:M \rightarrow X$ is a continuous map which is compatible with the singularities, modulo the equivalence relation $\sim$, where we say $(M, f) \sim (M', f')$ if there is a $\kBS$-bordism $W$ from $M$ to $M'$, along with a map $F: W \rightarrow X$ extending $f$ and $f'$ which is again compatible with the singularities.
        \end{defn}
        We write $\Omega_i^{\kBS}$ and $\Omega_i$ for $\Omega_i^{\kBS}(*)$ and $\Omega_i(*)$ respectively. Recall that $\Omega_*(X)$ is naturally an $\Omega_*$-module, though $\Omega^{\kBS}_*$ is not necessarily a ring.
        \begin{prop}\label{prop:kBS k}
            Let $X$ be any space. The group $\Omega_k^{\kBS}(X)$ is zero.
        \end{prop}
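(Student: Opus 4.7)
The plan: given $(M, f)$ representing a class in $\Omega_k^{\kBS}(X)$, observe first that by Remark~\ref{dim nonsing}, $M$ has no $\kBS$-singularities: a would-be singular face $\iota(N_P\times P)$ of codimension one in $M$ with $\dim P = k$ requires $\dim N_P = -1$. So $M$ is an ordinary closed $k$-manifold and the compatibility condition on $f:M\to X$ is vacuous.

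The nullbordism will be the cone-type construction $W := M \times [0,1]$, equipped with a $\kBS$-structure in which the unique essential face is $M \times \{1\}$ and the unique singular face is $M \times \{0\}$, modelled on $M$ itself. Explicitly, identify $M$ with its representative in $\bM^k$, set $N_M := \{*\}$ and let $\iota:\{*\}\times M \hookrightarrow W$ be the inclusion onto $M\times\{0\}$, with $N_P = \emptyset$ for all other $P$. This produces a $(k+1)$-dimensional $\kBS$-manifold with the required essential boundary $M$.

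The remaining step is to extend $f$ to a continuous $F:W\to X$ that is compatible with the singular face, i.e., constant in the $M$-direction on $M\times\{0\}$. I would fix an inward collar $M\times[0,\varepsilon)\subset W$, pick a basepoint $x_0\in X$, and define $F$ to equal $f$ away from the collar, be constant at $x_0$ on $M\times\{0\}$, and interpolate continuously across the collar via a homotopy from the constant map at $x_0$ (at $t=0$) to $f$ (at $t=\varepsilon$). For this to make sense, $f$ must be null-homotopic in $X$.

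The hard part is therefore the reduction of an arbitrary $f$ to a null-homotopic representative: the plan is to first perform an auxiliary $\kBS$-bordism that absorbs the non-trivial homotopy class of $f$ into additional singular faces of appropriate types from $\bM^k$, so that after this preliminary bordism the map is null-homotopic and the cone construction above applies. The main obstacle is arranging this reduction for arbitrary $f:M\to X$, and I expect it to go through iteratively by exploiting the flexibility in choosing the singularity types: decompose $M$ by cutting along suitable codimension-one submanifolds whose pieces are diffeomorphic to elements of $\bM^k$ with maps that are individually null-homotopic, then cone off each piece in turn.
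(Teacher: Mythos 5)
Your first two paragraphs coincide with the paper's proof: $M$ is non-singular for dimension reasons (Remark \ref{dim nonsing}), and the candidate nullbordism is the cylinder $W=M\times[0,1]$ with one essential face a copy of $M$ and the other boundary face declared singular, of type $P\cong M$ with $N_P$ a single point. You have also put your finger on something the paper's two-line proof passes over: because $N_P$ is a point, compatibility of $F\colon W\to X$ with the singularities forces $F$ to be \emph{constant} on the singular face, whereas the paper simply takes $F=f\circ\mathrm{pr}_1$. Your collar interpolation repairs this exactly when $f$ is null-homotopic; in particular the argument is complete when $X$ is a point, which is the only case the paper actually uses (the computation of $[*[i],*]^{fr}_{\tau_{\leq n},(n-1)\mathrm{-BS}}$ and the vanishing of the degree-$(n-1)$ obstruction in the proof of Theorem \ref{thm:main} both concern bordism groups of a point).

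The ``hard part'' of your plan, however, is a genuine gap and cannot be filled: for arbitrary $X$ the statement is false, so no reduction to the null-homotopic case can exist. Indeed, if $W$ is a $\kBS$-nullbordism of $(M,f)$ then, forgetting the singularity structure, $W$ is an ordinary compact $(k+1)$-manifold with boundary $M\sqcup\bigsqcup_P N_P\times P$ with each $N_P$ a finite set, and $F$ is constant on each slice $\{n\}\times P$; pushing forward the fundamental class of $\partial W$ (which dies in $H_k(W;\bZ/2)$) gives $f_*[M]=\sum_{P,n}(\mathrm{const})_*[P]=0$ in $H_k(X;\bZ/2)$, since each constant map factors through $H_k(\mathrm{pt};\bZ/2)=0$ for $k>0$. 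Taking $k=1$ and $X=S^1$, the identity map of $S^1$ therefore represents a non-zero class in $\Omega_1^{\kBS}(S^1)$. (Two further problems with the proposed reduction: cutting a closed manifold along codimension-one submanifolds produces pieces with boundary, which are not admissible cone types; and identifying $M$ with an element of $\bM^k$ requires $M$ to admit a stable framing, which is automatic only in the framed variant of the proposition.) The correct conclusion is that the statement should be read with $X$ a point, or at least with $f$ constant up to homotopy on each component of $M$; in that setting your cylinder-plus-collar construction is a complete proof.
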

        \begin{proof}
            Let $f:M \to X$ represent a class in $\Omega_k^{\kBS}(X)$. $M$ is a closed $k$-dimensional $\kBS$-manifold, and so is non-singular.\par 
            Let $W = M \times [0,1]$, viewed as a $\kBS$-manifold with singular boundary face $M \times \{1\}$, and $g: W \to X$ given by $f$ on the first factor. This is a bounding $\kBS$-manifold witnessing the vanishing of $[(f,M)]$.
        \end{proof}
        \begin{prop}\label{prop:kBS-bordism}
            Let $X$ be any space. The natural map $\theta: \Omega_{k+1}(X) \rightarrow \Omega_{k+1}^{\kBS}(X)$ which sends $(M, f: M \to X)$ to itself, has kernel $\Omega_k \cdot \Omega_1(X)$.
        \end{prop}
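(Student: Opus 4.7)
The plan is to prove both inclusions of the claimed equality of subgroups of $\Omega_{k+1}(X)$ by direct geometric arguments, with the harder step being to analyze the structure of an arbitrary $\kBS$-nullbordism.

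For the easy inclusion $\Omega_k \cdot \Omega_1(X) \subseteq \ker\theta$, I would exhibit an explicit $\kBS$-nullbordism of $(N \times P, f\circ\pi_1)$ for each $P \in \bM^k$ and each $(N,f) \in \Omega_1(X)$. Namely, set $W := N \times P \times [0,1]$, equipped with the $\kBS$-structure in which $N_P := N$ and $\iota: N_P \times P \hookrightarrow W$ is the inclusion onto $N \times P \times \{0\}$; the unique essential face is $N \times P \times \{1\}$. The map $(n,p,t) \mapsto f(n)$ is constant in the $P$-direction, hence compatible with the singularity, and restricts to $f\circ\pi_1$ on the essential face.

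For the harder inclusion $\ker\theta \subseteq \Omega_k \cdot \Omega_1(X)$, suppose $\theta([M,f]) = 0$ and pick a $\kBS$-nullbordism $(W, \{N_P\}, \iota)$ with extension $F: W \to X$ compatible with the singularities; by hypothesis $W$ has $M$ as its unique essential face. The key structural observation is that the singular faces $\iota(N_P \times P)$ are pairwise disjoint (because $\iota$ is defined on a disjoint union) and disjoint from $M$ (because $M$ is non-singular, so the $\kBS$-structure it inherits from $W$ is empty). Consequently each codimension-one face $N_P \times P$ of $W$ has no lower faces of its own; as $P$ is closed, this forces each $N_P$ to be a closed $1$-manifold. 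Compatibility of $F$ with the singularities forces $F|_{N_P \times P} = f_P \circ \pi_{N_P}$ for some $f_P : N_P \to X$. Viewing $W$ as an ordinary compact $(k+2)$-manifold with boundary
\[
\partial W \;=\; M \sqcup \bigsqcup_P N_P \times P,
\]
and with $X$-map $F$ extending all boundary data, the usual bordism relation in $\Omega_{k+1}(X)$ yields
\[
[M,f] \;=\; \sum_P [N_P \times P,\, f_P \circ \pi_{N_P}] \;=\; \sum_P [N_P,f_P] \cdot [P] \;\in\; \Omega_1(X) \cdot \Omega_k.
\]

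The main (and only nontrivial) obstacle is the structural observation in the previous paragraph: verifying that each $N_P$ is genuinely a closed $1$-manifold rather than a $1$-manifold with boundary. This uses crucially both that $M$ is non-singular (so singular boundary components avoid $M$) and that $\iota$ embeds a \emph{disjoint} union (so distinct singular faces do not touch). Without these two inputs, the $N_P$ would carry their own boundary coming from intersections with other faces of $W$, and one would need to perform an inductive reduction over the codimension of face intersections. Once the structural observation is in hand, the remainder of the argument is a direct unpacking of the definitions of $\kBS$-manifold and $\kBS$-bordism together with the standard bordism relation over $X$.
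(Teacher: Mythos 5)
Your proposal is correct and follows essentially the same route as the paper: the easy inclusion is shown by the same explicit nullbordism $N\times P\times[0,1]$ with the singular face at one end, and the hard inclusion by reading a $\kBS$-nullbordism of $(M,f)$ as an ordinary bordism from $M$ to $\bigsqcup_P N_P\times P$ and using constancy of $F$ in the $P$-direction. The only difference is that you spell out why each $N_P$ is a closed $1$-manifold (disjointness of the singular faces from each other and from $M$), a point the paper's proof leaves implicit.
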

        \begin{proof}
            Choose a class in $\ker \theta$, represented by a pair $(M, f: M \to X)$, where $M$ is a closed $k$-manifold and $f$ is a continuous map. Then there is some $\kBS$-manifold $(W, N_P, \iota)$ with faces, such that its only essential face is $M$ (and itself), and a map $F: W \to X$ which is compatible with the singularities. This means that, viewed as a smooth manifold (i.e. without $\kBS$-singularities), $(W, F)$ is a bordism from $(M, f)$ to 
            $$\left(\bigsqcup\limits_P N_P \times P, F\right)$$
            and so we must have that 
            $$[(M, f)] = \sum\limits_P [(N_P, F)] \cdot [P]$$
            in $\Omega_{k+1}(X)$, using the fact that on each $N_P \times P$, $F$ is constant in the $P$ direction. Clearly the right hand side lives in $\Omega_k \cdot \Omega_1(X)$.\par 
            Conversely, any product $N \times Q$ of a 1-manifold $N$ and some $Q \in \bM^k$, along with a map $f: N \times Q \to X$ which is constant in the $Q$-direction, represents a class in $\ker \theta$, since it bounds the $\kBS$-manifold with faces $N \times Q \times [0,1]$ where $\iota$ is the inclusion $N \times Q \times \{1\} \hookrightarrow N \times Q \times [0,1]$, along with the map to $X$ given by $f$.
        \end{proof}
    \subsection{Framings and singularities}\label{sec:BS fram}
        We would like to equip our manifolds with singularities with framings which are suitably compatible with the singularities. Roughly speaking, a stable framing on a manifold with $\kBS$-singularities consists of a stable framing on the underlying manifold with faces, which respects the product structure over each singular face. Note however that ``respecting the product structure'' will in fact consist of more data.
        \begin{defn}\label{def:kBS-framed}
            Let $M=(M,N_P, \iota)$ be a $\kBS$-manifold with faces. We treat each singular boundary face $N_P \times P$ as an unbroken face.\par 
            Given a stable isomorphism $E \oplus TM \to F$ with stabilising bundle $\bT$, we say it's \emph{compatible with the singularities} if we are also given embeddings $\bT^P \to \bT$ over each $N_P \times P$, along with stable isomorphisms $st^N_P: E \oplus TN_P \oplus \bR \sigma_P \to F-\bR^k$ with stabilising bundle the orthogonal complement of $\bT^P$,  such that the following diagram commutes over each $N_P \times P$:
            \[\xymatrix{
                E \oplus TN_P \oplus \bR\sigma_P \oplus TP \ar[d]_\psi \ar[r]_-{st^N_P \oplus st^P} & 
                F-\bR^k\oplus \bR^k \ar[d]\\
                E \oplus TM \ar[r] & 
                F
            }
            \]
            where the right vertical arrow uses the natural stable isomorphism $-\bR^k \oplus \bR^k \to 0$ with stabilising bundle 0. Note that this definition requires the extra data of the $st^N_P$.
        \end{defn}
        From this, we can define the framed bordism groups with $\kBS$-singularities, which we denote by $\Omega^{fr; \kBS}_*(X)$ for any space $X$. As before, there is a natural map $\Omega^{fr}_*(X) \to \Omega^{fr;\kBS}_*(X)$, though $\Omega^{fr; \kBS}_*:=\Omega^{fr;\kBS}_*(*)$ is not a ring.

        Similarly to Proposition \ref{prop:kBS k} and by the same proof with framings incorporated, we have
        \begin{prop}\label{prop:fram kBS k}
            Let $X$ be any space. The group $\Omega_k^{fr;\kBS}(X)$ is zero.
        \end{prop}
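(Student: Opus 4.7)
The plan is to follow the argument of Proposition \ref{prop:kBS k} essentially verbatim, with the only new ingredient being the verification that the product framing on the bordism respects the compatibility condition of Definition \ref{def:kBS-framed}. Given a class represented by $(M,f)$ in $\Omega_k^{fr;\kBS}(X)$, Remark \ref{dim nonsing} says $M$ is a closed stably framed $k$-manifold with no singularities, together with a continuous map $f:M\to X$. I would set $W=M\times[0,1]$ and declare $M\times\{0\}$ to be the essential face and $M\times\{1\}$ the singular face. To express the singular face as $\bigsqcup_P N_P\times P$, decompose $M=\bigsqcup_j M_j$ into connected components, pick for each $j$ the representative $P_j\in\bM^k$ in the framed diffeomorphism class of $M_j$ with a framed diffeomorphism $\phi_j:M_j\xrightarrow{\sim}P_j$, and set $N_{P}=\{j:P_j=P\}$ (a finite discrete set), with the embedding $\iota:(j,p)\mapsto(\phi_j^{-1}(p),1)$. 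The map $F:W\to X$ is defined by $F(m,t)=f(m)$.

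The framing on $W$ is the obvious product framing: writing the framing of $M$ as $st^M:V\oplus TM\to\bR^k$ with stabilising bundle $\bT^M$, take $st^W:=st^M\oplus\mathrm{id}_{\bR}:V\oplus TW=V\oplus TM\oplus\bR\partial_t\to\bR^k\oplus\bR=\bR^{k+1}$, with the same stabilising bundle $\bT^W=\bT^M$. The new step, which does not appear in the proof of Proposition \ref{prop:kBS k}, is to promote this to a framing on $W$ compatible with the singular structure in the sense of Definition \ref{def:kBS-framed}. For each $P$ with $N_P\neq\emptyset$ and each $j\in N_P$, the inward normal along the singular face at $\{j\}\times P$ is $-\partial_t$. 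I would specify the embedding $\bT^P\hookrightarrow\bT^W$ to be the one induced by the framed diffeomorphism $\phi_j$ pulling $st^P$ back to the restriction of $st^M$ to $M_j$, and take $st^{N_P}:V\oplus\bR\sigma_P\to\bR$ (noting $TN_P=0$) to be the canonical identification sending $\sigma_P$ to the factor of $\bR$ coming from the $\partial_t$-direction in $st^W$; its stabilising bundle is naturally the orthogonal complement of $\bT^P$ in $\bT^W$.

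The commutativity of the compatibility diagram in Definition \ref{def:kBS-framed} over each $\{j\}\times P_j$ then asserts that the product framing $st^M\oplus\mathrm{id}_{\bR}$ agrees, over $M_j\times\{1\}$, with $(st^{N_P}\oplus st^P)$ precomposed with the abstract gluing isomorphism $\psi$ (and post-composed with the trivialisation $-\bR^k\oplus\bR^k\to 0$). Both sides send the $\partial_t$-direction to the last factor of $\bR^{k+1}$, and on the remaining factors both agree with $st^M|_{M_j}$ via the identification $\phi_j$. Thus the framing compatibility is essentially a bookkeeping statement about how $\psi$ swaps the inward normal $-\partial_t$ with $\sigma_P$. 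The main (and really the only) obstacle is ensuring this bookkeeping goes through with the correct stabilising-bundle conventions, which is entirely analogous to the product framing arguments already performed in Lemma \ref{products and normals}, Lemma \ref{products and collars}, and the framing of $\tilde{\cQ}$ in Lemma \ref{lem:fram Q}. Once the above data is assembled, $(W,F)$ is a framed $\kBS$-null-bordism of $(M,f)$, proving $\Omega_k^{fr;\kBS}(X)=0$.
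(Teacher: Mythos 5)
Your write-up follows the paper's proof exactly: the paper's argument for Proposition \ref{prop:fram kBS k} is literally ``the same proof as Proposition \ref{prop:kBS k} with framings incorporated'', and the framing bookkeeping you supply (the product framing on $M\times[0,1]$, $st^{N_P}$ sending $\sigma_P$ to the $\partial_t$-factor, stabilising-bundle embeddings induced by the framed diffeomorphisms $\phi_j$ onto representatives in $\bM^k$) is precisely the content the paper leaves implicit, and it is correct up to the sign/ordering conventions you already flag as routine.

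There is, however, one condition for which you write down all the data but never perform the check, and it is the one place the argument can genuinely fail: the map $F$ on a $\kBS$-bordism is required to be \emph{compatible with the singularities} in the sense of Definition \ref{kBS sing def}, i.e.\ constant in the $P$-direction on each singular face $N_P\times P$. With your decomposition of $M\times\{1\}$ as $\bigsqcup_P N_P\times P$ (each $N_P$ a finite set of indices $j$), the restriction of $F(m,t)=f(m)$ to $\{j\}\times P_j$ is $f\circ\phi_j^{-1}$, which is constant in the $P$-direction only when $f$ is constant on the component $M_j$. For general $X$ and $f$ this fails and cannot be repaired by choosing a cleverer $W$: forgetting the singular structure, any compatible framed $\kBS$-null-bordism of $(M,f)$ exhibits $[M,f]$ as a sum of classes $[P,\mathrm{const}]$, exactly as in the proof of Proposition \ref{prop:kBS-framed-bordism} one degree down, so as the definitions are literally written one only obtains $\Omega_k^{fr;\kBS}(X)\cong\Omega_k^{fr}(X)/\bigl(\Omega_0^{fr}(X)\cdot\Omega_k^{fr}(\ast)\bigr)$, which is nonzero for, e.g., $X=S^k$ (the identity class survives, since constant maps kill the image in $H_k(S^k)$). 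This gap is inherited from the paper's own two-line proof of Proposition \ref{prop:kBS k}; it is harmless in context because the proposition is only ever applied with $X=\ast$, where compatibility is vacuous and your argument is complete, but your proof as written needs either that restriction or the hypothesis that $f$ is homotopic to a locally constant map.
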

         \begin{prop}\label{prop:kBS-framed-bordism}
            The natural map $\Omega_{k+1}^{fr}(X) \to \Omega_{k+1}^{fr; \kBS}(X)$ has kernel $\Omega_k^{fr}(\ast)\cdot\Omega_1^{fr}(X)$.
        \end{prop}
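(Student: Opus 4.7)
The plan is to imitate the proof of Proposition \ref{prop:kBS-bordism}, tracking framings via Definition \ref{def:kBS-framed} and using Proposition \ref{prop:fram kBS k} implicitly for degree reasons (so we may choose $1$-dimensional representatives).  First I verify the easy inclusion $\Omega_k^{fr}\cdot \Omega_1^{fr}(X)\subseteq \ker\theta^{fr}$.  By bilinearity it suffices to nullbord a product $(N\times P,\, f\circ \pi_N)$, where $P\in \bM^k$ is a fixed representative (with framing $st^P$) and $(N,f)$ represents a class in $\Omega_1^{fr}(X)$.  The bounding $\kBS$-manifold is $W=(N\times[0,1])\times P$ with the singular face $\iota\colon N\times\{1\}\times P \hookrightarrow W$ and all other $N_{P'}$ empty, equipped with the product stable framing of $N\times[0,1]$ and $P$.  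The compatibility data required by Definition \ref{def:kBS-framed} is tautological here: one takes $st^{N\times[0,1]}_P$ to be the framing of $N\times[0,1]$, the embedding $\bT^P\hookrightarrow \bT^W$ to be the natural inclusion of stabilising bundles, and identifies the inward normal $\sigma_P$ with the relevant $\bR$-factor. The map $F=f\circ \pi_N\colon W\to X$ is constant in the $P$-direction, hence compatible with singularities.

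For the reverse inclusion, suppose $[M,f]\in \ker\theta^{fr}$.  By definition there is a compact $\kBS$-manifold with faces $(W,N_P,\iota)$ of dimension $k+2$ whose only essential face is $M$, a stable framing on $W$ compatible with its singularities, and a continuous map $F\colon W\to X$ extending $f$ and compatible with the singularities.  Forgetting the $\kBS$-structure on $W$ yields an honest stably framed compact manifold with corners whose boundary is $M\sqcup \bigsqcup_P (N_P\times P)$, so it constitutes a framed bordism in the usual sense.  For each $P$ with $N_P\neq \emptyset$, the singular face data in Definition \ref{def:kBS-framed} furnishes a stable framing $st^N_P$ on $N_P$ such that the framing on $N_P\times P$ inherited from $W$ agrees (up to the Koszul sign of Remark \ref{rmk:fram sgn}) with the product framing of $(N_P,st^N_P)$ and $(P,st^P)$. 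Since $F$ is constant in the $P$-direction, in $\Omega_{k+1}^{fr}(X)$ we obtain
$$[M,f] \;=\; \sum_P\, [N_P\times P,\, F|_{N_P\times P}] \;=\; \pm \sum_P\, [N_P,\, F|_{N_P}]\cdot [P],$$
where each summand on the right lies in $\Omega_1^{fr}(X)\cdot\Omega_k^{fr}$; so does $[M,f]$.

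The only real obstacle is pedantic bookkeeping for the framing compatibility: one must check that the product framing prescribed by Definition \ref{def:kBS-framed} (built via $st^N_P\oplus st^P$ together with the canonical trivialisation $-\bR^k\oplus \bR^k\cong 0$) coincides, as an element of the stable framed bordism group, with the product framing used in Remark \ref{rmk:fram sgn} to endow $\Omega_*^{fr}$ with its ring structure.  These two conventions differ by the swap isomorphism permuting $\bR^k$-factors past the stabilising bundle of $st^N_P$, and hence by a Koszul sign.  Since $\Omega_1^{fr}(X)\cdot \Omega_k^{fr}$ is a subgroup closed under negation, this discrepancy is harmless.
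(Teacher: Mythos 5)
Your proposal is correct and follows essentially the same route as the paper: the paper's proof is explicitly a "minor variation" of Proposition \ref{prop:kBS-bordism}, forgetting the $\kBS$-structure to get an honest framed bordism from $M$ to $\bigsqcup_P N_P\times P$, using the product-like framing on the singular faces and the constancy of $F$ in the $P$-direction to land in $\Omega_k^{fr}(\ast)\cdot\Omega_1^{fr}(X)$, with the converse given by the cylinder-with-one-singular-end construction. Your extra remark about the Koszul-sign discrepancy between the two product-framing conventions of Remark \ref{rmk:fram sgn} is a reasonable piece of bookkeeping that the paper elides, and your observation that it is absorbed by the subgroup being closed under negation is correct.
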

        \begin{proof}
            The proof is a minor variation on that of Proposition \ref{prop:kBS-bordism}. If $M$ is a framed closed $\kBS$-manifold and a map $f: M \to X$, such that $(M,f)$ vanishes in $\Omega_{k+1}^{fr; \kBS}(X)$, then there is some framed $\kBS$-manifold $(W, N_P, \iota)$ mapping to $X$, with $M$ as its only essential face, and such that the framing on $W$ and the map $W \to X$ are both compatible with the singularities. Thus, since the framing is product-like on each $N_P\times P$ and the map to $X$ is constant on $P$, it follows that $(M,f)$ lies in $\Omega_k^{fr}(\ast)\cdot\Omega_1^{fr}(X)$. The other direction is exactly as in \emph{op.cit.}  
        \end{proof} 
        \begin{defn}
            Let $\cS: \cF \to \cG$ be a $\kBS$-$\tau_{\leq n}$-morphism (respectively bordism). A \emph{framing} on $\cS$ consists of the same data as a framing on a non-singular morphism (resp. bordism), along with the extra data that each stable framing $V_x \oplus I^\cS_{xy} \to \bR^i \oplus V_y$, where $i=1$ (resp. $i=2$),  is compatible with the singularities, and that the embeddings $\bT^P \to \bT^\cS_{xy}$ over each singular face are compatible with the embeddings $\bT^\cS_{xx'y},\bT^\cS_{xy'y} \to \bT^\cS_{xy}$.\par 
            For concreteness, we spell out what this definition says explicitly.\par 
            Write $\cS_{xy} = (\cS_{xy}, N^\cS_{P;xy}, \iota^\cS_{xy})$ for the structure of a $\kBS$-manifold on each $\cS_{xy}$ and let $I^{N^\cS}_{P;xy}:= TN^\cS_{P;xy} \oplus \bR \tau_y$.\par 
            Then the new extra data consists of embeddings $\bT^P \to \bT^\cS_{xy}$ along each $N_{P;xy} \times P$ such that the following diagrams commute over appropriate singular faces:
            \begin{equation*}
                \xymatrix{
                    \bT^P \ar[r] \ar[dr] & 
                    \bT^\cS_{xx'y} \ar[d] \\
                    & \bT^\cS_{xy}
                }
                \,\,\,\,
                \xymatrix{
                    \bT^P \ar[r] \ar[dr] &
                    \bT^{\cS}_{xy'y} \ar[d]\\
                    & \bT^{\cS}_{xy}
                }
            \end{equation*}
            along with stable isomorphisms
            $$st^{N^\cS}_{P;xy}: V_x \oplus I^{N^{\cS}}_{P;xy} \oplus \bR\sigma_P \to \bR^i \oplus V_y - \bR^k$$
            with stabilising bundles the orthogonal complements of $\bT^P$ in $\bT^\cS_{xy}$ (where $i=1$ (resp. $i=2$)), such that the following diagram commutes over each $N^\cS_{P;xy} \times P$:
            \[\xymatrix{
                V_x \oplus I^{N^\cS}_{P;xy} \oplus \bR \sigma_P \oplus TP \ar[r]_{st^{N^\cS}_{P;xy} \oplus st^P}\ar[d]_\psi &
                \bR^i \oplus V_y - \bR^k \oplus \bR^k \ar[d]\\
                V_x \oplus I^\cS_{xy} \ar[r]_{st^\cS}&
                \bR^i \oplus V_y
            }
            \]
            where $i=1$ (resp. $i=2$) and the right vertical arrow uses the natural stable isomorphism $-\bR^k \oplus \bR^k \to 0$.\par 
            Note that this definition implies that the following diagrams commute. Over a broken face $\cS_{xx';y} \subseteq \cS_{xy}$:
            \[\xymatrix{
                V_x \oplus I^\cF_{xx'} \oplus I^{N^\cS}_{P;x'y} \oplus \bR\sigma_P \ar[r]_{st^\cF} \ar[d]_\psi &
                V_{x'} \oplus I^{N^\cS}_{P;x'y} \oplus \bR \sigma_P \ar[d]_{st^{N^\cS}}\\
                V_x \oplus I^{N^\cS}_{P;xy} \ar[r]_{st^{N^\cS}} &
                \bR^i \oplus V_y - \bR^k
            }
            \]
            after extending along along the relevant embedding of stabilising bundle (where $i=1$ (resp. $i=2$)) as well as a similar diagram over each $\cS_{x;y'y}$.\par 
            If $\cS$ is a bordism, then over an unbroken face $\cW_{xy} \subseteq \cS_{xy}$:
            \[\xymatrix{
                V_x \oplus I^{N^\cS}_{xy} \oplus \bR \sigma^\cW \oplus \bR \sigma_P \ar[d]_\psi \ar[r]_{st^{N^\cW}} &
                \bR \oplus \bR \sigma^\cW \oplus V_y \ar[d]_{=}\\
                V_x \oplus I^{N^\cS}_{xy} \oplus \bR \sigma_P \ar[r]_{st^{N^\cS}} &
                \bR^2 \oplus V_y
            }
            \]
        
        \end{defn}
        Note that if $W$ is a framed manifold with $\kBS$ singularities, and $(f,s)$ a framed function on $W$, then since $f$ and $s$ are constant in the $P$-direction of any singular face, the diagram in Definition \ref{def:kBS-framed} restricts directly to a corresponding diagram involving $T(f^{-1}(1))$ and $T(f|_{N\times P})^{-1}(1) = T((f|_N)^{-1}(1) \times P)$; thus we needn't impose any further condition for $(f,s)$ to be compatible with the framing.\par 
        Now all of the results and definitions in Sections \ref{sec:flow cat} and \ref{sec: flow alg} hold for morphisms and bordisms with $\kBS$-singularities (possibly with framings), as long as one truncates appropriately, i.e. works with $\tau_n$-morphisms everywhere where $2k-1>n$, with identical proofs. We illustrate this in the following case.\par 
        Let $\cW: \cF \to \cG$ and $\cV: \cG \to \cH$ be framed $\kBS$-pre-$\tau_{\leq n}$ morphisms, with $2k>n$. We define $\tilde{\cQ}_{xz}=\tilde{\cQ}_{xz}(\cW,\cV)$ as in Equation (\ref{tilde Q}). We first analyse the structure of $\tilde{\cQ}_{xz}$ as a $\kBS$-manifold with faces more closely.\par 
        For each $x \in \cF$, $y \in \cG$ and $z \in \cH$ with $|x|-|z| \leq n$, for dimension reasons at most one of $N^\cW_{P;xy}$ and $N^\cV_{P;yz}$ can be non-empty. If $N^\cW_{P;xy}$ is non-empty, we let $N^{\cW \times \cV}_{P;xyz} = N^\cW_{P;xy} \times \cV_{yz}$, if $N^\cV_{P;yz}$ is non-empty we let $N^{\cW \times \cV}_{P;xyz} = \cW_{xy} \times N^\cV_{P;yz}$, and if neither is non-empty we set $N^{\cW \times \cV}_{P;xyz}$ to also be empty. Similarly we let $N^{\cW \times \cV}_{P;xyy'z}$ be $N^\cW_{P;xy} \times \cV_{yy';z}$ or $\cW_{x;yy'} \times N^\cV_{P;y'z}$ (or empty) as appropriate. Then each $\cW_{xy} \times \cV_{yz}$ is a $\kBS$-manifold with faces, with singular boundary faces given by the $N^{\cW \times \cV}_{P;xyz} \times P$.\par 
        Let $N^{\tilde{\cQ}}_{P;xz}$ be the coequaliser of the following diagram
        $$\bigsqcup\limits_{y,y' \in \cG} N^{\cW\times \cV}_{P;xyy'z} \times [0,\eps)^2 \rightrightarrows \bigsqcup\limits_{y \in \cG} N^{\cW \times \cV}_{P;xyz}\times [0,\eps)$$
        where the maps are the restrictions of the maps in (\ref{tilde Q}). Then $\tilde{\cQ}_{xz}$ is a $\kBS$-manifold with faces, with singular boundary faces given by the $N^{\tilde{\cQ}}_{P;xz}$.\par 
        Choosing framed functions compatible on overlaps as in Section \ref{sec:unor flow cat} lets us define the composition of $\cW$ and $\cV$ as unoriented $\kBS$-pre-$\tau_{\leq n}$-morphisms. To equip this composition with framings, we frame each $N^{\tilde{\cQ}}_{P;xz}$ by defining stable isomorphisms
        $$st^{N^{\tilde{\cQ}}}: V_x \oplus I^{N^{\tilde{\cQ}}}_{P;xy} \oplus \bR \sigma_P \to \bR^2 \oplus V_y - \bR^k$$
        as follows. We first choose, inductively in $|x|-|z|$, stabilising bundles $\bT^{\tilde \cQ}_{xz}$ over each $\tilde \cQ_{xz}$, along with embeddings of the stabilising bundle over each face (this is $\bT^P$ for some $P$ over each singular face, and as in Lemma \ref{lem:fram Q} over each essential face) into $\bT^{\tilde \cQ}_{xz}$ which are compatible on overlaps.\par 
        Over each $N^\cW_{P;xy} \times \cV_{yz} \times [0,\eps)$, we define $st^{N^{\tilde{\cQ}}}$ to be the following composition:
        $$V_x \oplus I^{N^{\tilde{\cQ}}}_{P;xz} \oplus \bR \sigma_P \xrightarrow{\psi^{-1}} V_x \oplus I^{N^\cW}_{P;xy} \oplus I^\cV_{yz} \oplus \bR \sigma_P \xrightarrow{st^\cV \circ st^{N^\cW}} \bR \oplus \bR \oplus V_z - \bR^k$$
        extended to have stabilising bundle $\bT^{\tilde \cQ}_{xz}$, where the first copy of $\bR$ on the target comes from $st^{N^\cW}$ and the second from $st^\cV$. \par 
        These glue together to compatibly frame each $\tilde \cQ_{xz}$ compatibly with the singularities, just as in Lemma \ref{lem:fram Q}; we then frame the composed morphism in the usual way.\par 
        In particular, this shows there is a category $\Flow^{fr}_{\tau_{\leq n}, \kBS}$ whenever $2k-1>n$ with objects framed flow categories; we write morphisms in this category as $[\cdot, \cdot]^{fr}_{\tau_{\leq n}, \kBS}$. We may also compose morphisms with singularities with non-singular bilinear maps in exactly the same way. \par 
        From Propositions \ref{prop:fram kBS k} and \ref{prop:kBS-framed-bordism}, it follows that
        $$[*[i], *]^{fr}_{\tau_{\leq n},{(n-1)}\mathrm{-BS}} \cong \begin{cases}
            \Omega^{fr}_i & \textrm{ if } i \leq n-2\\
            0 & \textrm{ if } i = n-1\\
            \Omega^{fr,(n-1)\textrm{-BS}}_n\textrm{, which }\Omega^{fr}_{n-1} \cdot \Omega^{fr}_1 \textrm{ dies in} & \textrm{ if } i=n\\
            0 &\textrm{ if } i > n
        \end{cases}$$ 
    \section{Spectral Donaldson-Fukaya category}\label{sec: Sp Don Fuk}
        In this section we prove Theorem \ref{thm:exists}, by defining a unital associative category $\scrF(X;\bS)$ enriched in graded abelian groups, which we call a \emph{spectral Donaldson-Fukaya category}, and  furthermore  give the proof of Theorem \ref{thm:main}. The morphism spaces of $\scrF(X;\bS)$ are modules over the  framed bordism ring $\Omega_*^{fr}(\ast)$ of a point. Note however that any direct interpretation in terms of the category of  spectra is at this point conjectural.\par 
       
       The category $\scrF(X;\bS)$ is defined when $X$ admits a  stable trivialisation of $TX$, as a complex vector bundle, i.e. for some $k$ we have:
        \begin{equation} \label{eqn:polarisation}
        \Psi: TX \oplus \bC^k \xrightarrow{\cong} \bC^{n+k}
        \end{equation}
        The construction of $\scrF(X;\bS)$ appeals to the existence  of smooth structures on the relevant moduli spaces of holomorphic curves, and of stable framings of those moduli spaces, due to Fukaya, Oh, Ohta and Ono and Large. This material is subsequently reviewed in Section \ref{Sec:technical}. 
        
        \subsection{Objects}
            Suppose $X$ is stably framed, with a fixed trivialisation \eqref{eqn:polarisation}. 
                Consider a closed exact Lagrangian $L \subseteq X$, equipped with a nullhomotopy $h$ of the stable Gauss map determined by $\Psi$:
                $$\Psi_L: L \rightarrow U(n+k)/O(n+k) \rightarrow U/O$$
                where the first arrow sends $p$ in $L$ to the totally real subspace $\Psi(T_pL \oplus \bR^k) \subseteq \bC^{n+k}$ and the second map is the stabilisation map.

                 \begin{lem}
                Given $L$ as above,  the nullhomotopy $h$ of the stable trivialisation $\Psi_L$ defines a brane structure on $L$,  in the sense of \cite{Seidel:book}, canonical up to an even shift.
            \end{lem}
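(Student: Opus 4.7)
The plan is to unpack the definition of brane structure in Seidel's sense and produce each of its ingredients directly from the nullhomotopy $h$.

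Recall that a brane structure on $L$ (in the sense of \cite{Seidel:book}) consists of a grading $\tilde{\alpha}_L \colon L \to \bR$ lifting the phase map $\alpha_L \colon L \to S^1$ (the latter well-defined up to homotopy from the stable polarisation of $TX$), together with a spin structure on $L$. So the plan is to extract both of these pieces of data from $h$.

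First I would produce the grading. The stable Gauss map $\Psi_L \colon L \to U/O$ sits in a commutative diagram with the unstable Lagrangian Grassmannian map $L \to U(n+k)/O(n+k)$, and post-composing with the determinant-squared map $U(n+k)/O(n+k) \to U(1)/\{\pm 1\} \cong S^1$ recovers the phase $\alpha_L$ (compatibly with the stabilisation map on $U/O$). The nullhomotopy $h$ therefore pushes forward to a nullhomotopy of $\alpha_L \colon L \to S^1$, i.e.\ a specific lift $\tilde{\alpha}_L \colon L \to \bR$; this is the required grading.

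Next I would produce the spin structure. Interpreting $U(n+k)/O(n+k)$ as the (stabilised) Lagrangian Grassmannian of $\bC^{n+k}$, the map $\Psi_L$ records the stable totally real subspace $TL \oplus \bR^k \subset \bC^{n+k}$, and a nullhomotopy of $\Psi_L$ provides (after possibly further stabilisation) a stable framing of $TL$, i.e.\ a trivialisation $TL \oplus \bR^N \cong \bR^{n+N}$ well-defined up to homotopy. Since any stable framing canonically refines to a spin structure via the factorisation $B\mathrm{triv} \to B\mathrm{Spin} \to BO$, this yields the required spin structure. The compatibility of the grading and spin structure produced this way is built in, because both arise by projection from the single nullhomotopy~$h$.

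Finally I would address the canonicity statement. The only freedom left in the construction is the homotopy class of $h$ itself (within the space of nullhomotopies compatible with $\Psi$), which differs by elements of $[L,\Omega(U/O)]$; the $\pi_0$ of the relevant component is $\pi_1(U/O) = \bZ$, generated by the Maslov loop. Altering $h$ by a class of degree $k$ shifts the grading by $k$ and modifies the induced stable framing (hence the spin structure) by the reduction $k \bmod 2$. Thus shifts by even $k$ preserve the spin structure while translating the grading by an even integer, which is precisely the even-shift ambiguity. The main thing to check carefully here is the identification of the degree of a loop in $\pi_1(U/O)$ with its mod-2 effect on the induced spin structure; I expect this to be the only nontrivial point in the argument, everything else being a diagram chase in standard classifying-space fibrations.
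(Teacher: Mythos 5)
Your construction of the brane data is essentially the paper's: the grading comes from pushing the nullhomotopy forward under the (squared) determinant to get a lift of the phase, and the spin structure comes from the stable framing of $TL$ induced by $h$. Where you diverge is in the source of the ``canonical up to even shift'' clause, and there your argument is somewhat off-target. The lemma fixes $h$; the paper's even-shift ambiguity does not come from varying $h$ but from the passage between the \emph{honest} phase and the \emph{quadratic} phase: the trivialisation $\Psi$ gives an actual complex volume form on $X$ (not just its square), and the framing orients $L$, so the non-squared phase $\alpha\colon L\to S^1$ is defined and nullhomotopic; a lift of $\alpha$ to $\bR$ is ambiguous up to an integer, hence the induced lift of $\alpha^2$ (which is what Seidel's grading is) is ambiguous up to an \emph{even} integer. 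Your route instead produces a completely canonical grading from the fixed $h$ (which a fortiori proves the statement), and then manufactures the even-shift ambiguity from the action of $\pi_1(U/O)\cong\bZ$ on the choice of $h$. The step you yourself flag as needing care is indeed the weak one: changing $h$ by the generator of $\pi_1(U/O)$ alters the induced stable framing by a \emph{constant} map $L\to O$ landing in the non-identity component (via the boundary map $\pi_1(U/O)\to\pi_0(O)=\bZ/2$), which reverses the induced orientation but does not change the spin structure viewed as an element of the $H^1(L;\bZ/2)$-torsor; so ``modifies the spin structure by $k\bmod 2$'' is only correct if you count the orientation as part of that datum, and in any case the even-shift clause of the lemma is not really about this. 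None of this affects the existence of the brane structure, which your argument does establish.
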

            \begin{proof}
            The isomorphism $\psi: TX \oplus \bC^k \to \bC^{n+k}$ gives a preferred trivialisation of $\Lambda^{top}_{\bC}(TX)$ and hence a preferred volume form $\eta$ with associated phase function
            \[
            \alpha: \mathrm{Gr}_{\mathrm{Lag}}(TX) \to S^1.
            \]
            Since a stabilisation of $\psi_L$ is null, 
            there is some $K>k$ for which the map $x \mapsto \psi_L(T_pL \oplus \bR^K)$ is homotopically trivial $L \to U(n+K)/O(n+K)$, which implies that the induced map $\alpha: L \to S^1$ is homotopic to a constant map. It therefore admits a lift to $\bR$; since the phase function is defined by a volume form and not a quadratic volume form, this is well-defined up to translation by an even integer (equivalently, the stable framing canonically orients the Lagrangian). 
          Finally, the stable framing $\psi_L$ induces a spin structure on $L$.  This completes the brane data in the sense of \cite{Seidel:book}.
            \end{proof}

                \begin{defn}\label{defn:spectral_brane}
                    A \emph{spectral Lagrangian brane} is a triple $(L,h,\eta)$ where $L$ is a Lagrangian submanifold with stable framing $\Psi_L$, $h$ is a nullhomotopy of $\Psi_L$ and $\eta$ is an associated grading.
                \end{defn}
                
            The objects of $\scrF(X; \bS)$ are defined to be spectral Lagrangian branes. Given a spectral Lagrangian brane $(L, h, \eta)$, we will often drop $\{h,\eta\}$ from the notation.
            
        \subsection{Morphisms}\label{sec:morphisms}
            For every pair $L$, $K$ of spectral Lagrangian branes, we choose regular Floer data $(H_t, J_t)$, for $t$ in $[0, 1]$, as in \cite[Section 5.2]{Large}. We define a framed flow category $\overline{\cM}^{LK}$ as follows.\par 
            The objects of $\overline{\cM}^{LK}$ are Hamiltonian chords from $L$ to $K$, meaning maps $x: [0, 1] \to X$, such that
            \begin{enumerate}
                \item $x(0)$ lies in $L$.
                \item $x(1)$ lies in $K$.
                \item $dH_t = \omega(\cdot, \dot{x}(t))$
            \end{enumerate}
            The grading $\overline{\cM}^{LK} \to \bZ$ is given by the Conley-Zehnder index, as in \cite{Seidel:book}.\par 
            For $x$, $y$ in $\overline{\cM}^{LK}$, we define the (uncompactified) moduli space $\cM^{LK}_{xy}$ to be the space of maps $u: \bR_s \times [0,1]_t \to X$, such that
            \begin{enumerate}
                \item $u(\bR \times \{0\})$ lies in $L$.
                \item $u(\bR \times \{1\})$ lies in $K$.
                \item $u(-\infty, \cdot) \to x$ and $u(+\infty, \cdot) \to y$ exponentially fast in any $C^k$ norm.
                \item $\partial_s u + J_t(\partial_t u - X_t(u)) = 0$
            \end{enumerate}
            quotiented by the free $\bR$-action by translation. For $x_0, \ldots, x_i$ in $\overline{\cM}^{LK}$, we write $\cM^{LK}_{x_0 \ldots x_i}$ for $\cM^{LK}_{x_0 x_1} \times \ldots \times \cM^{LK}_{x_{i-1} x_i}$.\par 
            We define 
            $$\overline{\cM}^{LK}_{xy} :=
            \bigsqcup\limits_{\substack{i \geq 1 \\
            x_0 = x, \ldots, x_i = y \in \overline{\cM}^{LK}}} \cM^{LK}_{x_0 \ldots x_i}$$
            equipped with the Gromov topology. The concatenation maps are given by inclusions of the compactification strata. \par
For each Hamiltonian chord $x$, we have a stable vector space $V(x)$ as constructed in Section \ref{Sec:technical} coming as a fibre of the index bundle over a space of abstract Floer caps for the chord.  The linearisations of the $\overline{\partial}$-operators on Floer solutions assemble to define an index bundle $\Ind_{xy} \to \overline{\cM}^{LK}_{xy}$, which glue associatively over boundary faces.
            
            \begin{thm} 
                The $\overline{\cM}^{LK}_{xy}$ admit the structures of compact smooth manifolds with corners, such that $\overline{\cM}^{LK}$ forms a framed flow category.  Furthermore, the stable vector spaces $V_x$ satisfy $I(x,y) \oplus V(y) \cong V(x)$ compatibly with gluing stable  isomorphisms, hence define a framing on this flow category.
            \end{thm}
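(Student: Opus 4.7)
The plan is to assemble the theorem from three essentially independent ingredients: (i) smoothness of the compactified moduli spaces as manifolds with corners, (ii) the flow-categorical combinatorics of gluing/concatenation, and (iii) the stable framings on the index bundles coming from the spectral brane data. Step (i) will be imported wholesale from the work of Fukaya--Oh--Ohta--Ono \cite{FO3:smoothness}, Li--Sheng, and especially Large \cite{Large}, which asserts that for regular Floer data the compactification $\overline{\cM}^{LK}_{xy}$ carries the structure of a smooth manifold with corners whose codimension-$k$ stratum is precisely the union of products $\cM^{LK}_{x_0\ldots x_k}$ with $x_0=x,\, x_k=y$, and in which the concatenation maps are smooth inclusions. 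Since these strata are indexed by unordered partitions and any fixed broken trajectory lies in exactly one open stratum, the corners are in fact faces in the sense of Section~3, and the product/associativity structure of concatenation is immediate from the definition of Gromov convergence. The dimension formula $\dim \overline{\cM}^{LK}_{xy} = |x|-|y|-1$ follows from the index formula once the grading is identified with the Conley--Zehnder index.

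Next I would verify the flow-category axioms. The manifolds $\overline{\cM}^{LK}_{xy}$ are compact by Gromov compactness (using exactness of $L,K$ to rule out disc and sphere bubbling), the concatenation maps $c: \overline{\cM}^{LK}_{xz}\times \overline{\cM}^{LK}_{zy}\hookrightarrow \overline{\cM}^{LK}_{xy}$ are strict embeddings of faces by the stratification description above, and the system-of-faces property is the statement that each boundary point corresponds to a unique maximal broken configuration. Associativity of $c$ is tautological. A system of collars then exists by Lemma~\ref{systems of collars exist}, giving a collared flow category.

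For the framing, the input is the spectral brane structure. Each Hamiltonian chord $x$ from $L$ to $K$ has a space of abstract Floer caps, on which the linearised $\bar\partial$-operator is a Fredholm family whose index bundle $V(x)$ is a stable vector space; this is well-defined (independent of the cap up to canonical stable isomorphism) because the nullhomotopies $h_L, h_K$ of the stable Gauss maps of $L,K$ compatible with $\Psi$ trivialise the difference between any two caps stably. The same analysis gives an index bundle $\Ind_{xy}$ over each $\cM^{LK}_{xy}$ with a canonical stable isomorphism $\Ind_{xy}\oplus V(y)\simeq V(x)$. Identifying $\Ind_{xy}$ with $T\cM^{LK}_{xy}$ (after the usual shift of one for the $\bR$-translation) and then with $I^{\cF}_{xy} = T\overline{\cM}^{LK}_{xy}\oplus \bR\tau_y$ by adding the outgoing normal gives the required stable isomorphisms $st_{xy}: V_x \oplus I^{\cF}_{xy} \to V_y$, with some stabilising bundle $\bT_{xy}$.

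The main obstacle, and the step where Large's technology is crucial, is the compatibility of these framings on boundary faces: one must check that the diagrams (\ref{eq: emb comp}) and (\ref{eq: fram comp flow}) commute, i.e.\ that gluing of index bundles along broken strips is compatible with the stable isomorphisms $st_{xx'}$ and $st_{x'x''}$ under the identification between the tangent space to the boundary stratum and the product of tangent spaces of the factors. This is precisely the statement that the gluing map on index bundles agrees, up to the canonical associator, with the composition of stable isomorphisms, and is the content of Large's compatibility/coherence results for stably framed moduli spaces of Floer strips; the required embeddings $\iota_{xx'x''}$ of stabilising bundles are extracted from the same analysis by choosing compatible representatives of the stable isomorphisms inductively on the dimension of the strata, in the manner of Lemma~\ref{lem:fram Q}. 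Once these diagrams are checked in the lowest non-trivial dimension (broken strips), higher-codimension coherence follows from associativity of analytic gluing, giving the framed flow category structure and the advertised identification $V(x) \simeq I(x,y)\oplus V(y)$.
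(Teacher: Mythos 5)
Your proposal is correct and follows essentially the same route as the paper: smoothness of $\overline{\cM}^{LK}_{xy}$ is imported from Fukaya--Oh--Ohta--Ono and Large (the paper's Theorem \ref{thm:smooth_structure}), and the framing is assembled from the caps construction giving gluing-compatible stable isomorphisms $V(x)\oplus\Ind(x,y)\to V(y)$ (Proposition \ref{prop:framing_data}) together with an inductively constructed comparison between the analytic and abstract index bundles (Proposition \ref{prop: two types of index}), exactly as you describe in your final two paragraphs. The one subtlety worth keeping in mind is that the identification $\Ind(x,y)\cong I_{xy}$ cannot send the translation direction literally to $\bR\tau_{xy}$ if it is to respect gluing — it must be built stratum by stratum, as you indicate — but this is precisely how the paper handles it.
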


            \begin{proof}
                The first statement follows from Theorem \ref{thm:smooth_structure}. The second result follows from Propositions \ref{prop: two types of index} and \ref{prop:framing_data}.
            \end{proof}
            \begin{defn}
                For each $L,K$, we define the morphisms from $L$ to $K$ of degree $i$, written $\scrF_i(L, K)$, to be
                $[*[i], \overline{\cM}^{LK}]^{fr}$. 
            \end{defn}

\begin{lem}
    The morphism  groups of $\scrF(X;\bS)$ are naturally graded modules over $\Omega^{fr}_*(\ast)$, so there is a homomorphism
    \[
    \Omega^{fr}_i \otimes \scrF_j(L,K) \longrightarrow \scrF_{i+j}(L,K).
    \]
\end{lem}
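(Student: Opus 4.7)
The plan is to construct the $\Omega^{fr}_*(\ast)$-action directly by Cartesian product of flow-module data, then verify the module axioms using the elementary properties of $\times$. Using the identification $\Omega^{fr}_i = [*[i],*]^{fr}$ established in the preceding example, a class in $\Omega^{fr}_i$ is represented by a framed morphism $\alpha: *[i] \to *$, whose data is precisely a closed stably framed $i$-manifold $M_\alpha$. A class in $\scrF_j(L,K) = [*[j], \overline{\cM}^{LK}]^{fr}$ is represented by a framed flow module $\cB: *[j] \to \overline{\cM}^{LK}$ consisting of compatibly framed compact manifolds with faces $\cB_{*y}$ of dimension $j-|y|$ with concatenation maps $c^{\cB}: \cB_{*y'} \times \overline{\cM}^{LK}_{y'y} \to \cB_{*y}$.

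Given such $M_\alpha$ and $\cB$, I would define the product module $M_\alpha \cdot \cB : *[i+j] \to \overline{\cM}^{LK}$ by
\[ (M_\alpha \cdot \cB)_{*y} := M_\alpha \times \cB_{*y}, \]
with concatenation maps $\id_{M_\alpha} \times c^{\cB}$, and with stable framing given by the product of the framings on $M_\alpha$ and $\cB_{*y}$ (using the ``product'' convention recorded in Remark \ref{rmk:fram sgn}, which is the convention that endows $\Omega^{fr}_*(\ast)$ with its ring structure). This has the correct dimension $i+(j-|y|)=(i+j)-|y|$, inherits the required system of faces $M_\alpha \times \cB_{*;y_0\ldots y_k}$ from $\cB$, and the product framings are compatible with the abstract gluing isomorphisms by construction, so $M_\alpha \cdot \cB$ is a framed flow module.

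What remains is to check that this construction descends to framed bordism and satisfies the module axioms, all of which follow formally from the corresponding properties of $\times$. Well-definedness: a framed bordism $W$ from $M_\alpha$ to $M_{\alpha'}$ and a framed bordism of morphisms $\cR$ from $\cB$ to $\cB'$ glue along $M_{\alpha'}\cdot \cB$ to form a framed bordism of morphisms from $M_\alpha \cdot \cB$ to $M_{\alpha'} \cdot \cB'$. Bilinearity is immediate from distributivity of $\times$ over $\sqcup$. Associativity $(N \times M) \cdot \cB \cong N \cdot (M \cdot \cB)$ holds at the level of underlying flow-module data via the canonical associator of $\times$ (cf.\ Remark \ref{rmk:isbell}), and matches the convention under which $\Omega^{fr}_*(\ast)$ is a graded-commutative ring. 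Unitality holds on the nose: taking $M_\alpha=\{\mathrm{pt}\}$ with its canonical framing to represent $1 \in \Omega^{fr}_0=\bZ$, one has $(\{\mathrm{pt}\}\cdot \cB)_{*y}=\cB_{*y}$ with unchanged framings and concatenations.

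The main (and essentially only) subtlety, which is not a genuine obstacle but requires some care in the writeup, is the bookkeeping of stable framings: Remark \ref{rmk:fram sgn} records two inequivalent conventions for combining stable framings, and one must consistently use the ``product'' one throughout so that the resulting action genuinely realises the ring structure on $\Omega^{fr}_*(\ast)$ (rather than some shifted version of it). Beyond this sign bookkeeping, the proof is a purely formal manipulation of flow-module data and requires no new technology beyond what is developed in Sections \ref{sec:flow cat}--\ref{sec: flow alg}.
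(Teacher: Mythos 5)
Your proposal is correct and is essentially the same argument as the paper's (much terser) proof: the paper also defines the action by multiplying every manifold with faces in the flow module by the closed framed manifold, and notes that descent to framed bordism classes is straightforward. Your additional attention to the framing convention of Remark \ref{rmk:fram sgn} and the formal module axioms is a reasonable elaboration of what the paper leaves implicit.
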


\begin{proof}
    Given a framed flow module $\cW$, i.e. a framed morphism of flow categories $\ast$ to $\overline{\cM}^{LK}$, and a closed framed manifold $P$, we obtain another framed flow module by multiplying all the manifolds with faces in $\cW$ by $P$. It is straightforward that this operation descends to framed bordism classes. 
\end{proof}

        \subsection{Composition}\label{sec:composition}
            For each triple $L$, $K$, $M$ of spectral Lagrangian branes, we define a bilinear map 
            $$\overline{\cM}^{LKM}: \overline{\cM}^{KM} \times \overline{\cM}^{LK} \to \overline{\cM}^{LM}$$
            as follows.\par 
            We choose three boundary marked points $\zeta_0^+, \zeta_1^-, \zeta_2^-$ on the disc $D^2$, labelled anticlockwise. We label the components of $\partial D^2 \setminus\{\zeta_i^\pm\}_i$ by $L, K, M$ respectively, ordered clockwise, so that $\zeta^+_0$ is between the components labelled by $L$ and $M$.\par 
            We choose strip-like ends $\eps^+_0, \eps_1^-, \eps_2^-$ near $\zeta^+_0, \zeta^-_1, \zeta^-_2$ respectively, with domains depending on the sign. This means $\eps^\pm_i : \bR_\pm \times [0,1] \hookrightarrow D^2$ parametrises a holomorphic punctured neighbourhood of $\zeta^\pm_i$. We choose these neighbourhoods to be disjoint.\par 
            We choose regular perturbation data $(K, J_z)$, where $J_z$ is an appropriate almost complex structure for $z$ in $D^2 \setminus \{\zeta^\pm_i\}_i$, and $K$ is a 1-form in $\Omega^1(D^2 \setminus \{\zeta^\pm_i\}_i, C^\infty(X, \bR))$, as in \cite[Page 64]{Large}. These are in particular chosen to be appropriately compatible with the Floer data chosen for each pair of Lagrangians in Section \ref{sec:morphisms}. $K$ determines a 1-form $Y$ in $\Omega^1(D^2 \setminus \{\zeta^\pm_i\}_i, C^\infty(X, TX))$, as in \cite[Page 105]{Seidel:book}.\par 
            For $x$ in $\overline{\cM}^{LK}$, $y$ in $\overline{\cM}^{KM}$ and $z$ in $\overline{\cM}^{LM}$, we define the (uncompactified) moduli space ${\cM}^{LKM}_{yx;z}$ to be the space of maps $u: D^2 \setminus \{\zeta_i^\pm\}_i \to X$, such that
            \begin{enumerate}
                \item $u$ sends each component of $\partial D^2 \setminus \{\zeta^\pm_i\}_i$ to the Lagrangian it is labelled by.
                \item $u(\eps^\pm_i(\pm \infty, \cdot)) \to z, y \,\mathrm\,x$ exponentially fast in any $C^k$ norm, for $i = 0, 1 \,\mathrm{or}\, 2$, respectively.
                \item $(du-Y)^{0,1} = 0$
            \end{enumerate}
            We define
            $$\overline{\cM}^{LKM}_{yx;z} := \bigsqcup\limits_{\substack{ i,j,k \geq 1\\
            x_0 = x, \ldots, x_i \in \overline{\cM}^{LK}\\
            y_0 = y, \ldots, y_j \in \overline{\cM}^{KM}\\
            z_0, \ldots, z_k = z \in \overline{\cM}^{LM}}} \cM^{LK}_{x_0 \ldots x_i} \times \cM^{KM}_{y_0 \ldots y_j} \times \cM^{LKM}_{y_j x_i; z_0} \times \cM^{LM}_{z_0 \ldots z_k}$$
            to be the stable map compactification, 
            equipped with the Gromov topology.
            \begin{thm}\label{thm:floer_product}
                The $\overline{\cM}^{LKM}_{yx;z}$ admit the structures of compact smooth manifolds with corners, for which there are gluing-compatible stable isomorphisms 
            \[
            V_x \oplus V_y \oplus I^{\overline{\cM}^{LKM}}_{yx;z} \to V_z 
            \]
            such that $\overline{\cM}^{LKM}$ yields a framed  bilinear map $\overline{\cM}^{KM} \times \overline{\cM}^{LK} \to \overline{\cM}^{LM}$.
            \end{thm}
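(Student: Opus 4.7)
The plan is to reduce this to the smoothness-and-framing package for compactified Floer moduli spaces summarised in Section \ref{Sec:technical}, running in parallel with the construction of the framed flow category $\overline{\cM}^{LK}$ just completed. No new analytic input is needed beyond replacing the strip domain by the thrice-punctured disc. Throughout, regularity of the perturbation data $(K,J_z)$ is assumed, as chosen.

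\emph{Step 1: Smooth structure.} Applying Theorem \ref{thm:smooth_structure} to the thrice-punctured disc with Lagrangian boundary labels $L,K,M$ gives that each stratum $\cM^{LKM}_{yx;z}$ is a smooth manifold of the expected dimension $|x|+|y|-|z|$, and the stable-map compactification assembles into a compact smooth manifold with corners whose corner stratification is precisely the product stratification
$$\cM^{LK}_{x_0\ldots x_i}\times\cM^{KM}_{y_0\ldots y_j}\times\cM^{LKM}_{y_jx_i;z_0}\times\cM^{LM}_{z_0\ldots z_k}$$
displayed in the definition of $\overline{\cM}^{LKM}_{yx;z}$. Equipping these manifolds with a coherent choice of systems of collars (using Lemma \ref{systems of collars exist}) and taking the concatenation maps to be the inclusions of stable-map strata, the collection $\{\overline{\cM}^{LKM}_{yx;z}\}$ satisfies the axioms for a bilinear map of flow categories in the unframed sense.

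\emph{Step 2: Construction of the framing.} For each chord $w$ we have the stable vector space $V_w$ constructed from an abstract Floer cap (Proposition \ref{prop: two types of index}). Gluing abstract caps onto the three strip-like ends of a triangle $u\in\cM^{LKM}_{yx;z}$ produces a Cauchy-Riemann operator on a closed disc whose Lagrangian boundary labels are closed up by the caps; the stable polarisation $\Psi$ together with the spectral brane nullhomotopies for $L,K,M$ trivialises this closed-disc index bundle canonically up to stabilisation. By Proposition \ref{prop:framing_data}, Fredholm index additivity under this linear gluing yields a stable isomorphism
$$st^{\overline{\cM}^{LKM}}_{yx;z}:\ V_x\oplus V_y\oplus I^{\overline{\cM}^{LKM}}_{yx;z}\longrightarrow \bR\oplus V_z$$
with some stabilising bundle $\bT^{\overline{\cM}^{LKM}}_{yx;z}$, where $I^{\overline{\cM}^{LKM}}_{yx;z}=T\overline{\cM}^{LKM}_{yx;z}\oplus\bR\tau_z$ is the abstract index bundle of Definition \ref{def:abstr ind}.

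\emph{Step 3: Compatibility over boundary faces.} We choose $\bT^{\overline{\cM}^{LKM}}_{yx;z}$ together with the embeddings into it of the stabilising bundles over each broken boundary face inductively on $|x|+|y|-|z|$, exactly as in the framing of $\tilde\cQ$ in Lemma \ref{lem:fram Q}. Over each of the three types of broken boundary face
$$\cM^{LK}_{xx'}\times\overline{\cM}^{LKM}_{yx';z},\qquad \cM^{KM}_{yy'}\times\overline{\cM}^{LKM}_{y'x;z},\qquad \overline{\cM}^{LKM}_{yx;z'}\times\cM^{LM}_{z'z},$$
the linear gluing theorem of Section \ref{Sec:technical} identifies the framing built from the broken configuration (by composing the strip and triangle framings that share a common cap) with the framing on the glued domain, once extended along the canonical embedding of stabilising bundles. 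This yields the commutative squares required by Definition \ref{def:bil fram}, so the collection $\{st^{\overline{\cM}^{LKM}}_{yx;z}\}$ forms a system of framings.

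The main obstacle is the simultaneous coherence of Step 3 at codimension $\geq 2$ faces, where several strips break at once and the two possible orders of applying the associated linear-gluing stable isomorphisms must be compared after extending the stabilising bundles. This coherence is exactly what the abstract-caps formalism of Section \ref{Sec:technical} is designed to deliver, and once in hand the verification of Definition \ref{def:bil fram} becomes the same diagram chase already performed for the strip flow category $\overline{\cM}^{LK}$ and for the framed composition construction $\tilde\cQ$ in Lemma \ref{lem:fram Q}.
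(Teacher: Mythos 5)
Your proposal is correct and follows essentially the same route as the paper, whose proof is simply a citation of the smoothness result for moduli spaces with rigid domains (Proposition \ref{prop:smooth_structure_general}), the bilinear-maps analogue of the index-bundle comparison (Lemma \ref{lem:two types of index for morphisms}), and the caps framing data (Proposition \ref{prop:framing_data}); your Steps 1--3 just unpack those citations. The only small imprecision is that for the triangle the domain is rigid, so the smooth structure comes from the adaptation of Theorem \ref{thm:smooth_structure} described at the start of Section \ref{sec: smoth ext} (choosing a strip parametrisation of an open subset of the fixed domain) rather than from the strip theorem verbatim.
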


            \begin{proof}
                This follows from  Propositions \ref{prop:smooth_structure_general} and (the bilinear maps analogue of) Lemma \ref{lem:two types of index for morphisms} and  Proposition \ref{prop:framing_data}.
            \end{proof}
            \begin{defn}
                For each $L,K,M$, the composition map in $\scrF(X; \bS)$ is given by
                $$\overline{\cM}^{LKM}_*: \scrF_i(K,M) \otimes \scrF_j(L,K) \to \scrF_{i+j}(L,M)$$
            \end{defn}
        \subsection{Associativity}\label{sec: assoc}
            For each quadruple $L, K, M, N$ of spectral Lagrangian branes, we define an associator in order to show that composition for these objects is associative.\par 
            Let $\cR^\circ = (-\infty, \infty)$. This has a family of Riemann surfaces with 4 boundary marked points living over it 
            $\cS \to \cR$, defined as follows.\par 
            We define the fibre $\cR_r$ over $r$ in $(-\infty, \infty)$ to be the strip $\bR \times [0,1]$, with marked points at $\pm \infty$, $0$ and $(r, 1)$.\par 
            We extend this to a family over $\cR := [-\infty, \infty]$ by setting the fibres over $\pm \infty$ to be appropriate nodal discs. Choosing charts $[-\infty, -1) \cong (-1, 0]$ and $(1, \infty] \cong [0, 1)$ via $r \mapsto \frac1r$ in both cases endows $\cR$ with a smooth structure, and gluing at the nodes of the fibres over $\pm \infty$ allows us to endow this family with the structure of a degenerating family of Riemann surfaces with marked points, as in \cite[Page 68]{Large}.\par 
            We name the boundary marked points of each $\cS_r$ by $\zeta^+_0, \zeta^-_1, \zeta^-_2, \zeta^-_3$, ordered anticlockwise. We label the components of each $\partial \cS_r \setminus \{\zeta^\pm_i\}_i$ by $L,K,M,N$ respectively, ordered clockwise, so that $\zeta^+_0$ is between the components labelled by $L$ and $M$.\par 
            We choose disjoint strip-like ends $\eps^\pm_i$ near $\zeta^\pm_i$ on each $\cS_r$, along with strip-like ends near the nodes in each nodal fibre $\cS_{\pm \infty}$, and we choose consistent perturbation data $(K_r, Z_{z,r})$ for each $r$ in $\cR$ extending the Floer data chosen in Section \ref{sec:morphisms}, and agreeing with the perturbation data chosen in Section \ref{sec:composition} over the nodal fibres (see \cite[Page 68]{Large} for further details).\par 
            For $x$ in $\overline{\cM}^{LK}$, $y$ in $\overline{\cM}^{KM}$, $z$ in $\overline{\cM}^{MN}$ and $w$ in $\overline{\cM}^{LN}$, we define the (uncompactified) moduli space
            ${\cM}^{LKMN}_{zyx;w}$ to be the space of pairs $(u, r)$, where $u:\cS_r \setminus \{\zeta^\pm_i\}_i \to X$, such that
            \begin{enumerate}
                \item $u$ sends each component of $\partial \cS_r\setminus \{\zeta^\pm_i\}_i$ to the Lagrangian it is labelled by.
                \item $u(\eps^\pm_i(\pm, \cdot) \to z, y, x, w)$ exponentially fast in any $C^k$ norm, for $i=0,1,2\,\mathrm{or}\,3$ respectively.
                \item $(du-Y)^{0,1}=0$
            \end{enumerate}
            We define the compactification
            $$\overline{\cM}^{LKMN}_{zyx;w} := \bigsqcup\limits_{\substack{i,j,k,l \geq 1\\
            x_0=x,\ldots,x_i \in \overline{\cM}^{LK}\\
            y_0=y,\ldots,y_j \in \overline{\cM}^{KM}\\
            z_0=z,\ldots,z_k \in \overline{\cM}^{MN}\\
            w_0,\ldots,w_l=w \in \overline{\cM}^{LN}}} 
            \cM^{LK}_{x_0 \ldots x_i} \times \cM^{KM}_{y_0 \ldots y_j} \times
            \cM^{MN}_{z_0 \ldots z_k} \times
            \cM^{LKMN}_{z_k y_j x_i; w_0} \times 
            \cM^{LN}_{w_0 \ldots w_l}$$
            equipped with the Gromov topology.
            \begin{thm}\label{sm:associators}
                The $\overline{\cM}^{LKMN}_{zyx;w}$ admit the structures of compact smooth manifolds with corners, such that $\overline{\cM}^{LKMN}$ forms an associator for the tuple
                \[
                (\overline{\cM}^{MN}, \overline{\cM}^{KM}, \overline{\cM}^{LK}, \overline{\cM}^{KN}, \overline{\cM}^{LM}, \overline{\cM}^{LN}, \overline{\cM}^{KMN}, \overline{\cM}^{LKM}, \overline{\cM}^{LKN}, \overline{\cM}^{LKN}, \overline{\cM}^{LMN})
                \]
                which admits the structure of a framed associator.
                \end{thm}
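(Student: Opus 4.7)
The plan is to follow essentially the same template used for Theorem \ref{thm:floer_product}, with the additional complication that the domains now vary in a one-parameter family whose endpoints $\pm \infty \in \cR$ produce two distinct types of nodal degenerations. First I would invoke the general smooth-structure results summarised in Section \ref{Sec:technical} (specifically the analogue of Proposition \ref{prop:smooth_structure_general}) for one-parameter families of Floer-type moduli problems to conclude that each $\overline{\cM}^{LKMN}_{zyx;w}$ carries the structure of a compact smooth manifold with corners, together with an exhaustive decomposition of its boundary by the strata appearing in the Gromov compactification. The compactness follows from standard Gromov compactness together with the exactness of $L,K,M,N$, which rules out disc/sphere bubbling; the smoothness and corner structure are consequences of the FOOO/Large gluing theorem applied to the degenerating family $\cS \to \cR$.

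Next I would identify these boundary strata with the three families of faces listed in the definition of an associator. The broken-by-Floer-strip faces $\cF_{x_0\ldots x_i} \times \cG_{y_0\ldots y_j} \times \cH_{z_0\ldots z_k} \times \cZ_{x_i y_j z_k; w_0} \times \cK_{w_0\ldots w_l}$ arise from strip breaking at the four strip-like ends $\eps_i^\pm$ in the interior of $\cR^\circ$; these are the generic codimension-one degenerations where $r$ stays in the interior. The two remaining families of boundary faces, namely
\[
\cW_{x_i y_j;u_0}\times \cI_{u_0\ldots u_r}\times \cX_{u_r z_k;w_0}
\quad\text{and}\quad
\cV_{y_j z_k;v_0}\times\cJ_{v_0\ldots v_s}\times \cY_{x_i v_j;w_0},
\]
correspond precisely to the two endpoints $r\to +\infty$ and $r\to -\infty$ of the parameter interval, where the domain $\cS_r$ degenerates to the two different nodal discs: in one limit the marked point at $(r,1)$ collides with $\zeta_0^+$ (giving a composition of an $\overline{\cM}^{LKM}$-type disc with an $\overline{\cM}^{LMN}$-type disc through a chord in $\overline{\cM}^{LM}$), and in the other it collides with $\zeta_3^-$ (giving a composition of an $\overline{\cM}^{KMN}$-type disc with an $\overline{\cM}^{LKN}$-type disc through a chord in $\overline{\cM}^{KN}$). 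The consistency of the perturbation data near the nodal fibres (arranged in the setup above) ensures that these nodal boundaries match the compositions defined in Sections \ref{sec:composition} on the nose.

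To promote this to a framed associator, I would appeal to Proposition \ref{prop:framing_data}: the linearised $\overline{\partial}$-operators along the family assemble into an index bundle over each $\overline{\cM}^{LKMN}_{zyx;w}$, and the chosen trivialisation $\Psi$ of $TX$ together with the brane structures on $L,K,M,N$ give stable isomorphisms
\[
V_x \oplus V_y \oplus V_z \oplus I^{\overline{\cM}^{LKMN}}_{zyx;w}\;\longrightarrow\;\bR^3\oplus V_w
\]
compatible with linear gluing at every boundary stratum. The $\bR^3$ factor records the three gluing parameters: two of them come from the two Floer-strip ends contributing to the $\overline{\cM}^{LKMN}$ direction (as in Theorem \ref{thm:floer_product}), and the third records the $\cR$-parameter itself. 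The compatibility of these stable framings with the bilinear-map framings of $\overline{\cM}^{LKM}$, $\overline{\cM}^{LMN}$, $\overline{\cM}^{KMN}$, $\overline{\cM}^{LKN}$ at the two endpoints of $\cR$ is precisely the content of the gluing isomorphism for the index bundle, and the compatibility at the interior broken strata is inherited from the framings on $\overline{\cM}^{LK},\overline{\cM}^{KM},\overline{\cM}^{MN},\overline{\cM}^{LN}$.

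The main obstacle will be the second step: carefully matching the two nodal-fibre boundary strata over $\pm\infty\in\cR$ with the two composite faces in the definition of an associator, and verifying that the signs/orientations of the abstract gluing isomorphisms at these two endpoints agree with those produced by the compositions in Section \ref{sec:composition}. This amounts to a careful bookkeeping of strip-like end conventions and of the direction of the $\cR$-parameter at the two endpoints; once performed, the remainder of the framing compatibility is a direct application of Proposition \ref{prop:framing_data} and the index-gluing identities.
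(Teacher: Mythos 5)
Your proposal is correct and follows essentially the same route as the paper, which does not spell out a proof of Theorem \ref{sm:associators} but relies on exactly the ingredients you cite: Proposition \ref{prop:smooth_structure_general} for the smooth corner structure on moduli spaces fibred over $\cR\cong[0,1]$ (with the two endpoint degenerations giving the two composite faces and interior strip-breaking giving the broken faces), the comparison of abstract and extended analytic index bundles in Section \ref{sec:2ind}, and the cap-gluing maps $\cU(x)\times\cU(y)\times\cU(z)\times\cZ_{xyz;w}\to\cU(w)$ of Section \ref{Sec:framings_from_caps} together with Proposition \ref{prop:framing_data} for the framed structure.
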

            From Theorem \ref{sm:associators} and Lemma \ref{associators}, it follows that
            \begin{cor}
                The category $\scrF(X; \bS)$ is associative.
            \end{cor}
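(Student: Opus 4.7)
The plan is to derive this directly from the machinery already assembled. Composition in $\scrF(X;\bS)$ is defined for each triple $L,K,M$ by the induced map $\overline{\cM}^{LKM}_*$ of the framed bilinear map $\overline{\cM}^{LKM} \colon \overline{\cM}^{KM} \times \overline{\cM}^{LK} \to \overline{\cM}^{LM}$ on $[\ast[i],\,\cdot\,]^{fr}$-groups. Associativity is the statement that for every quadruple $L,K,M,N$ and every triple of morphisms $\alpha \in \scrF_i(L,K)$, $\beta \in \scrF_j(K,M)$, $\gamma \in \scrF_k(M,N)$,
\[
\overline{\cM}^{LMN}_*\!\bigl(\gamma,\, \overline{\cM}^{LKM}_*(\beta,\alpha)\bigr) \;=\; \overline{\cM}^{LKN}_*\!\bigl(\overline{\cM}^{KMN}_*(\gamma,\beta),\, \alpha\bigr)
\]
in $\scrF_{i+j+k}(L,N)$. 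This is precisely the conclusion of Lemma~\ref{lem:framed associators} applied to the decuplet
\[
(\overline{\cM}^{MN}, \overline{\cM}^{KM}, \overline{\cM}^{LK}, \overline{\cM}^{KN}, \overline{\cM}^{LM}, \overline{\cM}^{LN}, \overline{\cM}^{KMN}, \overline{\cM}^{LKM}, \overline{\cM}^{LKN}, \overline{\cM}^{LMN}),
\]
so it suffices to exhibit a framed associator for this data.

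Such an associator is exactly what Theorem~\ref{sm:associators} furnishes: the moduli spaces $\overline{\cM}^{LKMN}_{zyx;w}$ parametrising $J$-holomorphic maps from the one-parameter family $\cS \to [-\infty,\infty]$ of four-punctured discs carry the structure of compact smooth manifolds with corners, whose codimension-one boundary strata are exactly the products predicted by the associator axiom (interior breakings along the four ends produce the product faces with the various $\cM^{LK}$, $\cM^{KM}$, $\cM^{MN}$, $\cM^{LN}$ factors, and degeneration at $r=\pm\infty$ produces the two unbroken faces indexed by the compositions $\overline{\cM}^{KMN}\!\cdot\overline{\cM}^{LKN}$ and $\overline{\cM}^{LKM}\!\cdot\overline{\cM}^{LMN}$ respectively). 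The stable framings on these moduli spaces are compatible with those of the constituent pair-of-pants moduli spaces along all such boundary strata.

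Given this, the proof is then a one-line invocation: by Theorem~\ref{sm:associators}, $\overline{\cM}^{LKMN}$ is a framed associator, and then Lemma~\ref{lem:framed associators} gives the required equality of trilinear maps. The honest work has of course already been done upstream—constructing the smooth structures and gluing-compatible stable framings on the four-ended moduli spaces, which is the content of Theorem~\ref{sm:associators} and relies on the FOOO/Large smoothness results summarised in Section~\ref{Sec:technical}; at this point in the exposition the corollary is purely formal. The only place where one needs to be a little careful is in matching conventions: the sign and ordering of the factors of $\bR$ that appear in the framings on the two unbroken boundary faces must agree with the sign conventions built into Definition~\ref{def:bil fram} and the associator framing axiom, but this is exactly what Proposition~\ref{prop:framing_data} (applied to the family $\cS \to \cR$) guarantees.
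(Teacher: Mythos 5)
Your proof is correct and takes exactly the paper's route: the paper derives the corollary by combining Theorem \ref{sm:associators} with the associator lemma, which is precisely your argument (and you correctly cite the framed version, Lemma \ref{lem:framed associators}, where the paper's text points to the unframed Lemma \ref{associators} even though the framed one is what is actually needed).
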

        \subsection{Units}
            Let $L$ be a spectral Lagrangian brane. Choosing appropriate Floer data on a disc with a single marked point, Theorem \ref{thm:smooth_structure}  gives us a morphism of framed flow categories $\overline\cM^L: * \rightarrow \overline\cM^{LL}$, which determines a class $[\overline\cM^L]$ in $\scrF_0(L, L)$.
            \begin{lem}\label{idempotent}
                $\overline\cM^L$ is idempotent.
            \end{lem}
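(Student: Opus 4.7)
The plan is to construct a framed bordism of flow modules $\ast \to \overline{\cM}^{LL}$ between the composition $\overline{\cM}^{LLL}\circ(\overline{\cM}^L,\overline{\cM}^L)$ and the module $\overline{\cM}^L$ itself. By the detection Lemma \ref{lem: fram comp detection} and the definition of $\scrF_0(L,L)=[\ast,\overline{\cM}^{LL}]^{fr}$, this immediately yields $[\overline{\cM}^L]\cdot[\overline{\cM}^L]=[\overline{\cM}^L]$, i.e.\ idempotency.

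The geometric input is the standard `unit deformation' of Fukaya-type categories. Choose a one-parameter family of punctured discs $\pi\colon\cS\to\cR$, with $\cR=[-\infty,\infty]$, where the fibre $\cS_r$ for finite $r$ is a disc with one positive marked point $\zeta_0^+$ and two negative marked points $\zeta_1^-,\zeta_2^-$, all three boundary arcs labelled by $L$, and with conformal class determined by $r$. Compactify at $r=+\infty$ by the nodal configuration of $\overline{\cM}^{LLL}$ with a cap $\overline{\cM}^L$ disc attached at each of $\zeta_1^-,\zeta_2^-$, and at $r=-\infty$ by the collision of $\zeta_1^-$ and $\zeta_2^-$ into a single marked point, so that $\cS_{-\infty}$ becomes the one-marked disc used in the definition of $\overline{\cM}^L$. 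Equip the family with consistent Floer perturbation data extending those chosen in Sections \ref{sec:morphisms} and \ref{sec:composition} at $r=+\infty$, exactly as in the associator construction of Section \ref{sec: assoc}; near $r=-\infty$ require the data to be pulled back from the single-marked disc via the collision map. The resulting parametrized compactified moduli space $\overline{\cP}_y$ for each $y\in\overline{\cM}^{LL}$ is a compact smooth manifold with corners (of the appropriate dimension) by the technical input of Section \ref{Sec:technical} applied exactly as in Theorems \ref{thm:smooth_structure} and \ref{thm:floer_product}. Its essential codimension-one strata are: $\overline{\cM}^L_y$ at $r=-\infty$; the carapace of $\overline{\cM}^{LLL}\circ(\overline{\cM}^L,\overline{\cM}^L)$ at $r=+\infty$, recognized as such via Lemma \ref{composition recognition}; and the output strip-breakings $\overline{\cP}_{y_0}\times\overline{\cM}^{LL}_{y_0 y}$.

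The stable framing on $\overline{\cP}$ is produced from the parametrized linearised operator via the index-bundle formalism of Section \ref{Sec:framings} and Proposition \ref{prop:framing_data}. This yields stable isomorphisms $V\oplus I^{\overline{\cP}}_y \to \bR^2 \oplus V_y$ compatible on all boundary strata with the already-constructed framings of $\overline{\cM}^L$, $\overline{\cM}^{LLL}$ and $\overline{\cM}^{LL}$; the extra $\bR$-direction (compared with the framing of $\overline{\cM}^L$) is supplied by the parameter $r$, whose inwards normals at $r=\pm\infty$ match the $\sigma$-factor of the compatibility diagrams in Definition \ref{def:farm}. Together with Lemma \ref{lem: fram comp detection} this identifies the $r=+\infty$ face as a framed representative of $\overline{\cM}^{LLL}\circ(\overline{\cM}^L,\overline{\cM}^L)$, and exhibits $\overline{\cP}$ as the desired framed bordism.

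The main technical obstacle lies at the $r=-\infty$ collision: one must show that the parametrized Floer equation extends smoothly to $r=-\infty$ with fibre canonically framed-isomorphic to $\overline{\cM}^L$. This is the standard `forgetful marked point' device; it works because the boundary labels on either side of the colliding inputs agree (both are $L$), so no new Hamiltonian chord is created in the limit, and because exactness of $L$ rules out the sphere and disc bubbling that could otherwise intervene. Smoothness and framing compatibility across this degeneration then follow from the same parametrized gluing and index-bundle arguments that are used to prove Theorems \ref{thm:smooth_structure} and \ref{thm:floer_product}; in effect, one is carrying out a mild variant of the construction already performed for $\overline{\cM}^{LLL}$ in Section \ref{sec:composition}.
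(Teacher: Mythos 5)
Your overall strategy --- exhibit a framed bordism of flow modules between $\overline{\cM}^{LLL}\circ(\overline{\cM}^L,\overline{\cM}^L)$ and $\overline{\cM}^L$ by a parametrized moduli space over an interval of domains --- is the right kind of argument, and at the $r=+\infty$ end your identification of the face with the carapace of the composition via Lemma \ref{lem: fram comp detection} is fine. The gap is at the $r=-\infty$ end. The degeneration you describe, in which the two negative boundary punctures $\zeta_1^-,\zeta_2^-$ ``collide into a single marked point'' so that $\cS_{-\infty}$ becomes the one-punctured disc defining $\overline{\cM}^L$, is not a stratum of any compactification of the moduli of domains: when two boundary special points approach each other in the Deligne--Mumford/stable-map compactification, a disc bubble carrying both points and a node splits off, and the number of special points is preserved. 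The ``forgetful marked point'' device you invoke only applies to unconstrained marked points; here $\zeta_1^-$ and $\zeta_2^-$ are punctures with strip-like ends asymptotic to Hamiltonian chords of $(L,L)$ (nonconstant, since $L$ is not transverse to itself and a Hamiltonian term is required), and such punctures cannot be merged or forgotten. The observation that ``both boundary labels are $L$'' does not remove the asymptotic constraints. So the family $\overline{\cP}_y$ you describe does not exist as stated, and its putative $r=-\infty$ face is not $\overline{\cM}^L_y$.

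The way to repair this --- and the route the paper takes --- is to remove the input punctures by \emph{gluing} rather than by degenerating the domain: gluing the one-punctured cap disc into an input of the three-punctured disc produces a two-punctured disc (a strip) with deformed perturbation data, i.e.\ the morphism $\overline{\cM}^{LLL}\circ\overline{\cM}^L$ is framed bordant to a continuation morphism $\overline{\cM}^{LL}\to\overline{\cM}^{LL}$ (this uses Theorem \ref{thm:smooth_structure} and Proposition \ref{prop:smooth_structure_general}). A second gluing/homotopy-of-Floer-data argument, as in Seidel's book but upgraded to smooth structures and framings, then shows that this continuation morphism applied to the remaining cap returns the cap up to framed bordism. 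In other words, the interpolation has to happen in the perturbation data on a fixed one- or two-punctured domain after gluing, not in the conformal structure of a three-punctured domain before gluing. If you reorganize your one-parameter family along these lines the rest of your framing discussion goes through.
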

            \begin{proof}
                Gluing the disc with one marked point on the boundary to a disc with three marked points on the boundary (and using Theorem \ref{thm:smooth_structure} and Proposition \ref{prop:smooth_structure_general}) shows that the morphism of flow categories $\overline\cM^{LLL} \circ \overline\cM^L$ is framed bordant to the morphism of flow categories obtained by a continuation map from $\overline\cM^{LL}$ to itself. The same gluing argument as in \cite{Seidel:book}, but also appealing to  the smooth structures mentioned previously, implies that this morphism of flow categories is idempotent up to framed  bordism.
            \end{proof}
            \begin{lem}\label{unit is iso}
                Post-composition with $\overline\cM^L$ is surjective.\par 
                More precisely, for any other brane $K$, the linear map
                $$\overline\cM^{KLL}_*(\cdot, [\overline\cM^L]): \scrF_*(K,L) \rightarrow \scrF_*(K,L)$$
                is surjective. 
            \end{lem}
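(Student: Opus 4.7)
The plan is to realise the composition $\overline{\cM}^{KLL}(\cdot, \overline{\cM}^L)$, viewed as an endomorphism of framed flow categories $\overline{\cM}^{KL}\to \overline{\cM}^{KL}$, as framed bordant to the identity morphism, from which surjectivity (in fact bijectivity) on $\scrF_*(K,L)$ is immediate.

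First, I would spell out the geometry of the composition. Specialising the three-punctured disc of Section \ref{sec:composition} to $\overline{\cM}^{KLL}$, the input marked point $\zeta_1^-$ has both adjacent boundary components labelled $L$ and carries a chord in $\overline{\cM}^{LL}$, while $\zeta_0^+$ and $\zeta_2^-$ separate components labelled $K$ and $L$. By the construction of composition of a bilinear map with a flow module, $\overline{\cM}^{KLL}(\cdot, \overline{\cM}^L)$ is represented, up to framed bordism, by moduli spaces obtained by gluing the disc with one $L$-labelled boundary marked point (carrying the constant cap data underlying $\overline{\cM}^L$) into $\zeta_1^-$. Topologically the glued domain is a strip, and the moduli spaces are spaces of pseudoholomorphic strips into $X$ with $L,K$-boundary conditions and some perturbation datum inherited from the gluing; by the results of Section \ref{Sec:technical} (used in Theorem \ref{thm:floer_product}), these admit compatible smooth and framed structures in the sense of Definition \ref{def:farm}, and assemble into a framed morphism $\overline{\cM}^{KLL}\circ \overline{\cM}^L: \overline{\cM}^{KL}\to \overline{\cM}^{KL}$.

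Second, I would interpolate this glued configuration with the standard Floer strip defining $\overline{\cM}^{KL}$. Following the blueprint for the associator in Section \ref{sec: assoc}, one constructs a one-parameter family of Riemann surfaces (and Floer/perturbation data) over $[-\infty,\infty]$, with fibre at $-\infty$ the nodal disc obtained from gluing the single-marked disc and the three-marked disc at $\zeta_1^-$, and fibre at $+\infty$ the Floer strip for $(L,K)$ used in Section \ref{sec:morphisms}. Compactified parameterised moduli spaces give, by Theorem \ref{thm:smooth_structure} and Proposition \ref{prop:framing_data}, a framed bordism in the sense of Definition \ref{def:farm} from $\overline{\cM}^{KLL}\circ \overline{\cM}^L$ to a continuation morphism $\cK: \overline{\cM}^{KL}\to \overline{\cM}^{KL}$. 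Finally a standard contractibility argument on the space of perturbation data (again combined with the framing compatibility of the index bundle) exhibits $\cK$ as framed bordant to the identity morphism of $\overline{\cM}^{KL}$, using, for instance, the homotopy between the chosen continuation datum and the trivial one together with the associativity of gluing worked out in Lemma \ref{framed composition is associative}.

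Combining these two steps, $\overline{\cM}^{KLL}_*(\cdot,[\overline{\cM}^L])$ acts on $\scrF_*(K,L)=[\ast[*],\overline{\cM}^{KL}]^{fr}$ as the identity, so in particular surjectively. The main obstacle is not the algebraic skeleton, which is entirely analogous to the argument for associativity in Section \ref{sec: assoc} (and to Lemma \ref{idempotent}), but rather the careful construction of the interpolating family of perturbation data in a way that produces honest framed bordisms of framed flow categories. Concretely, one must choose the one-parameter family of Floer data and the stabilising bundles over the parameterised compactified moduli spaces so that, over each boundary face, the induced framings coincide with those on $\overline{\cM}^{KLL}\circ\overline{\cM}^L$, on the continuation morphism, and on the broken strata appearing from strip-breaking; this is exactly the kind of coherence established in Section \ref{Sec:technical} and already used to set up $\overline{\cM}^{LKMN}$ as a framed associator in Section \ref{sec: assoc}, and no new analytic input is required.
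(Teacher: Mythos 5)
The first half of your argument --- gluing the one-punctured disc into $\zeta_1^-$ of the three-punctured disc and deforming to exhibit $\overline\cM^{KLL}\circ\overline\cM^L$ as (framed bordant to) a continuation morphism $\cK:\overline\cM^{KL}\to\overline\cM^{KL}$ --- is exactly the first step of the paper's proof. The divergence, and the gap, is in your final step: you claim $\cK$ is framed bordant to ``the identity morphism of $\overline\cM^{KL}$''. No such morphism has been constructed in this framework: $\Flow^{fr}$ is explicitly non-unital (see the remark after the definition of $\Flow$: ``We do not require $\Flow$ to be unital and so we do not prove that it is''). The natural candidate --- the continuation morphism for trivial continuation data --- is not transversely cut out: the unperturbed continuation equation is translation-invariant, so its moduli spaces are Morse--Bott degenerate (each space of Floer trajectories contributes an $\bR$-family), and the ``contractibility of the space of perturbation data'' argument only relates two \emph{regular} continuation data to each other; it does not let you degenerate to the trivial datum. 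This is precisely the Morse--Bott gluing issue the paper flags as unresolved in the sketch proof of Proposition \ref{prop: pss} (``to make the final step rigorous requires some Morse-Bott gluing''), and which the actual proof of Lemma \ref{unit is iso} is designed to avoid.

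The paper's route after the gluing step is different and weaker in its intermediate claim, but sufficient: since $\cK$ is a continuation morphism, its truncation $\tau_{\leq 0}\cK$ is the classical continuation chain map, which is a quasi-isomorphism over $\bZ$ by \cite{Seidel:book}; Theorem \ref{thm:onto} (the flow-categorical Whitehead/Hurewicz statement) then upgrades a $\bZ$-level quasi-isomorphism to surjectivity of $\cK_*$ on $[\ast[u],\overline\cM^{KL}]^{fr}$. Note the conclusion obtained this way is only surjectivity, not that post-composition is the identity --- which is all the lemma asserts and all that is needed for the unitality corollary that follows. If you want to salvage your stronger claim, you would need to either construct an identity morphism in $\Flow^{fr}$ (carrying out the Morse--Bott gluing) or reroute through the obstruction theory as the paper does.
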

            \begin{proof}
                A gluing argument using Proposition \ref{prop:smooth_structure_general} shows that $[\overline\cM^{KLL} \circ \overline\cM^L]: \overline\cM^{KL} \rightarrow \overline\cM^{KL}$ can be represented by a continuation morphism coming from a homotopy of Floer data. By \cite{Seidel:book} these induce isomorphisms in the Fukaya category over any field and hence in the integral Fukaya category, so $[\tau_{\leq 0}(\overline\cM^{KLL} \circ \overline\cM^L)]$ is an isomorphism in $\Flow_{\tau_{\leq 0}}$. The result follows by Theorem \ref{thm:onto}. 
            \end{proof}
            \begin{cor}
                $\scrF(X; \bS)$ is a unital category, where the unit for each $L$ is $[\overline\cM^L]$.
            \end{cor}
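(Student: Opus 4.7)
The plan is to deduce the two-sided identity property from the two preceding lemmas by a standard trick: surjectivity of composition with an idempotent forces that idempotent to act as the identity.

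First I would write out Lemma \ref{unit is iso} more symmetrically. As stated it provides surjectivity of post-composition with $[\overline\cM^L]$ on $\scrF_*(K,L)$ for every brane $K$. The identical gluing argument, but now gluing the disc with one boundary marked point on the other side of a strip, produces a continuation morphism representing pre-composition with $[\overline\cM^L]$. Since continuation maps are quasi-isomorphisms on the integral Fukaya category, Theorem \ref{thm:onto} gives that pre-composition with $[\overline\cM^L]$ is likewise surjective as a map $\scrF_*(L,K) \to \scrF_*(L,K)$ for every $K$. The only thing to check here is that the gluing and smooth structure results of Section \ref{Sec:technical} apply symmetrically, which they do by construction.

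Next I would establish the left identity. Fix $f \in \scrF_*(K,L)$. By (the appropriate direction of) Lemma \ref{unit is iso}, there exists $h \in \scrF_*(K,L)$ with $f = [\overline\cM^L] \circ h$. Using associativity (already proved via Theorem \ref{sm:associators} and Lemma \ref{lem:framed associators}) and idempotence (Lemma \ref{idempotent}),
\[
[\overline\cM^L] \circ f \;=\; [\overline\cM^L] \circ \bigl([\overline\cM^L] \circ h\bigr) \;=\; \bigl([\overline\cM^L] \circ [\overline\cM^L]\bigr) \circ h \;=\; [\overline\cM^L] \circ h \;=\; f.
\]
The right identity is entirely parallel: for $g \in \scrF_*(L,K)$, write $g = h' \circ [\overline\cM^L]$ using surjectivity of pre-composition, and then $g \circ [\overline\cM^L] = h' \circ [\overline\cM^L] \circ [\overline\cM^L] = h' \circ [\overline\cM^L] = g$. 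Thus $[\overline\cM^L]$ is a two-sided identity at $L$, which is exactly what is needed for unitality.

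There is no serious obstacle here; all the geometric content sits in Lemmas \ref{idempotent} and \ref{unit is iso}, which in turn rely on the smooth structure and framing results of Section \ref{Sec:technical} together with the obstruction-theoretic Theorem \ref{thm:onto}. The only minor subtlety is spelling out that the surjectivity established in Lemma \ref{unit is iso} for one direction of composition is genuinely symmetric, so that the same idempotent-plus-surjectivity argument handles the other side. Given this, the proof is purely formal.
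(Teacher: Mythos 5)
Your proposal is correct and is essentially the paper's own argument: write each morphism as a composition with the idempotent $[\overline\cM^L]$ via the surjectivity lemma, then cancel one factor using idempotence and associativity. The only difference is cosmetic — you are more explicit than the paper about needing the mirror-image (pre-composition) version of Lemma \ref{unit is iso}, which the paper dispatches with ``a similar argument''; your observation that this follows from the same gluing and Theorem \ref{thm:onto} is exactly the intended justification.
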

            \begin{proof}
                Choose a class $a$ in $\scrF_*(L, K)$. By the previous result, there is some class $b$ with $a = b \circ [\overline\cM^L]$. By Lemma \ref{idempotent}, 
                \[
                a = b \circ [\overline\cM^L] = b \circ [\overline\cM^L] \circ [\overline\cM^L] = a \circ [\overline\cM^L]
                \] A similar argument shows $[\overline\cM^K] \circ a = a$.
            \end{proof}

            \begin{cor} \label{cor:projecting_units}
                An element $x\in \scrF(L,L)$ is invertible exactly when $\tau_{\leq 0}(x) \in CF(L,L;\bZ)$ is invertible.
            \end{cor}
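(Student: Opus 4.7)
The forward direction is immediate: $\tau_{\leq 0}\colon \scrF(X;\bS) \to H\scrF(X;\bZ)$ is a unital functor (cf.\ Remark \ref{rmk:forget}), and any unital functor sends two-sided inverses to two-sided inverses, so if $x$ is invertible in $\scrF(L,L)$ then $\tau_{\leq 0}(x)$ is invertible in $CF(L,L;\bZ)$.

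For the converse, the plan is to promote chain-level invertibility of $\tau_{\leq 0}(x)$ to invertibility in $\scrF(L,L)$ by applying Theorem \ref{thm:onto} to left and right multiplication by $x$, viewed as framed morphisms of framed flow categories. Concretely, write $x\colon *[i] \to \overline{\cM}^{LL}$ and consider the framed bilinear composition $\overline{\cM}^{LLL}\colon \overline{\cM}^{LL}\times \overline{\cM}^{LL}\to \overline{\cM}^{LL}$ from Theorem \ref{thm:floer_product}. Inserting $x$ into the first or second factor (Section \ref{sec:bilin}) produces framed morphisms of framed flow categories
\[
L_x,\ R_x\colon \overline{\cM}^{LL}[i] \longrightarrow \overline{\cM}^{LL}
\]
implementing left and right multiplication by $x$, respectively. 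By Example \ref{0-truncation and chains} together with the compatibility of $\tau_{\leq 0}$ with the Floer product, the induced chain maps $CM_*(L_x;\bZ)$ and $CM_*(R_x;\bZ)$ are (up to the usual degree shift) left and right multiplication by $\tau_{\leq 0}(x) \in CF(L,L;\bZ)$. Since $\tau_{\leq 0}(x)$ is invertible by hypothesis, both of these chain maps are quasi-isomorphisms over $\bZ$.

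The framed half of Theorem \ref{thm:onto} now applies and gives surjectivity of
\[
(L_x)_*,\ (R_x)_*\colon \scrF_{*-i}(L,L) \longrightarrow \scrF_*(L,L).
\]
Lifting the unit $[\overline{\cM}^L] \in \scrF_0(L,L)$ under each surjection produces classes $y_R, y_L \in \scrF_{-i}(L,L)$ satisfying $x \cdot y_R = [\overline{\cM}^L] = y_L \cdot x$. Associativity of composition (Section \ref{sec: assoc}) then forces
\[
y_L \;=\; y_L \cdot (x \cdot y_R) \;=\; (y_L \cdot x) \cdot y_R \;=\; y_R,
\]
so $y_L = y_R$ is a two-sided inverse of $x$.

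The only non-formal step is verifying that $L_x$ and $R_x$ really induce Floer multiplication by $\tau_{\leq 0}(x)$ on the integral Morse complex; this is essentially an unpacking of the bilinear-insertion construction of Section \ref{sec:bilin} against the identification of $[\,\cdot\,,\,\cdot\,]_{\tau_{\leq 0}}$ with chain maps (Lemma \ref{0-truncation is chain}), but it is the one place where actual compatibility between the flow-categorical and chain-level definitions has to be checked rather than quoted.
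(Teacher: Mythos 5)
Your proposal is correct and follows essentially the same route as the paper: the paper's proof also deduces the converse by applying Theorem \ref{thm:onto} to the morphism of flow categories given by composition with $x$ (via the framed bilinear map $\overline{\cM}^{LLL}$), using that $\tau_{\leq 0}(x)$ acts by a quasi-isomorphism on $CF(L,L;\bZ)$, and then lifts the unit $[\overline{\cM}^L]$ through the resulting surjection. Your explicit two-sided argument ($y_L = y_L\cdot(x\cdot y_R) = (y_L\cdot x)\cdot y_R = y_R$) spells out a step the paper leaves implicit, and the compatibility you flag at the end is exactly the content of Lemma \ref{lem:forgetful_respects_products} in the paper.
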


            \begin{proof}
                One direction is straightforward. The other follows from Theorem \ref{thm:onto}, which implies that if $a\in \scrF_i(L,L) = [\ast[i],\overline\cM^{LL}]^{fr}$ and composition with $\tau_{\leq 0}a$  induces a quasi-isomorphism on $CF(L,L;\bZ)$ then composition with $a$ is surjective. In particular the identity $[\overline\cM^L]$ then lies in the image of composition with $a$ so $a$ is invertible.
            \end{proof}

\subsection{Open-closed maps}\label{sec: OC}

Let $\cF$ and $\cG$ be flow categories and $\cW: \cF \to \cG$ be a morphism. Fix a background manifold $X$. We say that $\cW$ \emph{lives over $X$} (or is `equipped with an $X$-valued local system') if there are smooth evaluation maps 
\[
\ev_{xy}^X : \cW_{xy} \to X
\]
which are compatible with breaking, meaning that there are commutative diagrams
\[
\xymatrix{
\cF_{xy} \times \cW_{yz} \ar[r] \ar[d]_{\ev_{yz}^X \circ (\mathrm{proj})} & \cW_{xz} \ar[d]^{\ev_{xz}^X} \\ X \ar@{=}[r] & X
}
\]
and 
\[
\xymatrix{
\cW_{xy} \times \cG_{yz} \ar[r] \ar[d]_{\ev_{xy}^X \circ (\mathrm{proj})} & \cW_{xz} \ar[d]^{\ev_{xz}^X} \\ X \ar@{=}[r] & X
}
\]
where $(\mathrm{proj})$ denotes projection to the $\cW$-factor of the product.   
There is an analogous notion of a bilinear map $\cW: \cF\times \cG \to \cH$ of flow categories living over $X$ (i.e. equipped with evaluation maps to $X$ compatibly with all breaking).

A \emph{homotopy} $\cR$ between bilinear maps $\cW^i: \cF \times \cG \to \cH$, with $i=0,1$, comprises manifolds with faces $\cR_{xy;z}$ of dimension $|x|+|y|-|z|+1$ and living over $[0,1]$, together with identifications
\begin{equation} \label{eqn:homotopy}
\ev^{-1}(0) = \cW^0_{xy;z} \quad \ev^{-1}(1) = \cW^1_{xy;z}.    
\end{equation}
If $\cW^0$ and $\cW^1$ live over $X$ then the homotopy lives over $X$ if there are evaluation maps $\cR_{xy;z} \to X$ compatible with breaking and also with the identifications of \eqref{eqn:homotopy}.

The above definitions carry over unchanged to the case of framed morphisms of framed flow categories, and framed homotopies of framed bilinear maps (the evaluation maps do not need to be compatible with any particular data on the target manifold $X$).

Recall that if $L$ is a spectral Lagrangian brane then the unit $[\overline\cM^L]$ is defined via a morphism $e: * \to \overline\cM^{LL}$ which counts holomorphic discs with one boundary puncture (viewed as an output, at which we turn on a Hamiltonian term so the output is a chord). Choosing a negative rather than positive strip-like end at the puncture, we also obtain a morphism 
\[
\Phi^L: \overline\cM^{LL} \to \ast[-n].
\] 
Now fix an interior marked point in the disc. Evaluation at that point shows that the morphism $\Phi$ lives over $X$.  The composition
\[
\cC: \Phi^L\circ e: * \to *[-n]
\]
comprises a single closed framed manifold $\cC_{**}$ of dimension $n$, which inherits a map to $X$.  

\begin{lem}\label{lem:bordant_to_inclusion}
    The manifold $\cC_{**}$ is framed bordant over $X$ to the inclusion $L \hookrightarrow X$.
\end{lem}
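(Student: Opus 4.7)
The plan is to realise both $\cC_{**}$ and the inclusion $L \hookrightarrow X$ as boundary strata of a single one-parameter family of moduli spaces of (perturbed) holomorphic discs with one interior marked point and boundary on $L$, degenerating the Floer perturbation along the parameter to reach $L$ at one end.

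First, using Lemma \ref{lem: fram comp detection} I will replace $\cC_{**}$ by any compact framed $n$-manifold with a map to $X$ whose boundary decomposition agrees with that coming from the composition $\Phi^L \circ e$ (since source and target are both $\ast$'s, there is no nontrivial face structure to match). Concretely, I take $\cN(1)$ to be the compactified moduli of pairs $(u,z_0)$, where $u\colon D^2\to X$ is a perturbed pseudo-holomorphic disc with $u(\partial D^2)\subset L$, the perturbation data $(K,J)$ of strength $1$ is chosen to be supported in a collar of a fixed boundary arc (the ``equator''), and $z_0\in \mathrm{int}(D^2)$ is an interior marked point; the interior evaluation $(u,z_0)\mapsto u(z_0)$ provides the map to $X$. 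The breaking of the equatorial strip, together with Theorem \ref{thm:smooth_structure} and Proposition \ref{prop:smooth_structure_general}, exhibits $\cN(1)$ as $\Phi^L\circ e$ up to framed bordism over $X$.

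Next I consider the parameterised family $\cN([0,1])$ obtained by scaling the perturbation strength by $R\in[0,1]$. By the smooth structure and index-bundle results of Section \ref{Sec:technical}, and by the gluing-compatible framing statements of Propositions \ref{prop: two types of index} and \ref{prop:framing_data} (applied to this one-parameter family), $\cN([0,1])$ is a compact smooth manifold with corners of dimension $n+1$, stably framed over $X$, with $\partial \cN([0,1]) = \cN(1)\sqcup \cN(0)$. At $R=0$ the equation is the honest $J$-holomorphic equation for a disc with boundary on the exact Lagrangian $L$ and no punctures; by exactness and Stokes' theorem, every such disc has zero energy and is therefore constant. Hence $\cN(0)$ is canonically identified with $L$ via $(u,z_0)\mapsto u(z_0)$, and the interior evaluation becomes the inclusion $\iota\colon L\hookrightarrow X$. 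This provides the desired bordism over $X$.

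The main obstacle is checking that the stable framing of the index bundle on $\cN(0) \cong L$ agrees with the framing on $L$ given by its spectral brane structure. At $R=0$, the linearised operator over a constant disc at $p\in L$ is the standard real linear Cauchy-Riemann operator on the disc with totally real boundary condition $T_pL\subset T_pX$. Its index bundle over $L$ is stably isomorphic to $TL$ once one identifies $TX|_L$ with $TL\otimes_\bR \bC$ via $\Psi$, by the standard Dolbeault computation. The compatibility of this identification with the spectral brane data---namely with the stable trivialisation $\Psi$ and the nullhomotopy $h$ of the Lagrangian Gauss map, together with the grading $\eta$---is exactly the content of the constant-disc case of Proposition \ref{prop:framing_data}, and reduces to the definition of a spectral Lagrangian brane. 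Granting this identification, the framings glue across $\cN([0,1])$ and produce the required framed bordism over $X$ between $\cC_{**}$ and $L\hookrightarrow X$.
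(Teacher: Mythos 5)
Your proposal is correct and follows essentially the same route as the paper: gluing identifies $\cC_{**}$ with a compactified moduli space of perturbed holomorphic discs with boundary on $L$ and an interior evaluation point, and the one-parameter family obtained by turning off the Hamiltonian perturbation gives a framed bordism over $X$ to the space of constant discs, which is $L$ by exactness. The framing compatibility at the constant-disc end, which you rightly flag as the main point to check, is exactly what the paper handles via the space-of-caps discussion in Section \ref{Sec:framings_from_caps}.
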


\begin{proof}
    Gluing shows that the composition $\Phi_L \circ e$ is given by a framed morphism $* \to *[-n]$ of flow categories for which the unique moduli space is the space of  perturbed holomorphic discs with no boundary punctures and with boundary on $L$. Turning off the Hamiltonian term in the Floer equation, this morphism is framed bordant to one in which all such discs are constant by exactness.
\end{proof}

Now consider spectral branes $L, K$. We consider Floer data on the infinite strip $\bR \times [0,1]$ with Lagrangian boundary conditions $L,K$ and we fix an additional boundary marked point. Depending on which boundary  component the marked point lies on, we obtain two bilinear maps
\begin{equation} \label{eqn:two_bilinear_maps}
\cW^L: \overline\cM^{LK} \times \overline\cM^{KL} \to \ast \qquad \textrm{and} \qquad \cW^K: \overline\cM^{LK} \times \overline\cM^{KL} \to \ast.
\end{equation}
As usual these admit the structure of framed bilinear maps.
\begin{lem}
    The bilinear maps from \eqref{eqn:two_bilinear_maps} are homotopic over $X$.
\end{lem}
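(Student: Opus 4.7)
The plan is to define the homotopy as the moduli space $\cR_{xy;\ast}$ of Floer strips $u:\bR\times[0,1]\to X$ with the same boundary conditions and perturbation data as those used to define $\cW^L$ and $\cW^K$, asymptotic to $x \in \overline{\cM}^{LK}$ at $-\infty$ and $y\in \overline{\cM}^{KL}$ at $+\infty$, and carrying a single free \emph{interior} marked point $z_0\in \bR\times(0,1)$, all taken modulo the diagonal $\bR$-translation action. Evaluation at $z_0$ gives a natural map $\mathrm{ev}:\cR_{xy;\ast}\to X$. Since an interior marked point contributes two real parameters while translation removes one, the Gromov-compactified moduli space has dimension $|x|+|y|+1$, one more than the common dimension of $\cW^L_{xy;\ast}$ and $\cW^K_{xy;\ast}$, exactly as demanded of a homotopy of bilinear maps to $\ast$ in the sense of Definition \ref{def:bilin bor}.

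First I would invoke a mild extension of the technical framework of Section \ref{Sec:technical} (an interior marked point has no effect on asymptotic data or transversality, and twists the index bundle only by a trivial $\bR^2$-summand) to equip $\cR_{xy;\ast}$ with the structure of a compact framed smooth manifold with corners whose framing is compatible with breaking. Then I would identify the boundary strata. The strip-breaking strata $\overline{\cM}^{LK}_{xx'}\times\cR_{x'y;\ast}$ and $\cR_{xy';\ast}\times\overline{\cM}^{KL}_{y'y}$ take precisely the form required of a homotopy of bilinear maps, with $\mathrm{ev}$ factoring through the $\cR$-factor in each. The remaining codimension-one strata are those in which $z_0$ limits to the $L$-boundary $\bR\times\{0\}$ or to the $K$-boundary $\bR\times\{1\}$; by exactness of $X$ and the spectral brane hypothesis, the disc bubble that would generically form at such a limit is forced to be constant, so these strata are canonically identified, compatibly with $\mathrm{ev}$, with $\cW^L_{xy;\ast}$ and $\cW^K_{xy;\ast}$ respectively.

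The main obstacle is the compatibility of framings across the two `boundary-escape' strata with the framings already carried by $\cW^L$ and $\cW^K$. The argument will be a minor variant of that used for Theorem \ref{thm:floer_product}: a constant bubble has trivial linearised $\bar\partial$-operator, so its stable framing datum is the identity on the relevant summand, and hence the framing $\cR_{xy;\ast}$ inherits from the technical results (via Propositions \ref{prop: two types of index} and \ref{prop:framing_data}) restricts on each boundary stratum to the framing carried there by $\cW^L$ or $\cW^K$. The $X$-compatibility is then transparent, since the constant bubble sends $z_0$ to the attaching boundary point of the main component, so $\mathrm{ev}$ extends the corresponding evaluation maps on $\cW^L_{xy;\ast}$ and $\cW^K_{xy;\ast}$. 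Combined with the breaking identifications, this promotes $\cR$ to a framed homotopy of bilinear maps $\cW^L\simeq \cW^K$ over $X$.
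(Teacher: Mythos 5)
Your proposal is correct and is essentially the paper's argument: the paper takes the compactified moduli space of strips with a single marked point constrained to the fixed path $\{0\}\times[0,1]$ between the two boundary components, which (using the path to fix the translation) is the same moduli space as your free interior marked point modulo translation, with the $t$-coordinate of the marked point furnishing the map to $[0,1]$ required by the definition of a homotopy over $X$. The only cosmetic difference is that the paper includes the $t\in\{0,1\}$ configurations directly as the boundary-marked-point strips defining $\cW^K$ and $\cW^L$, rather than reaching them through constant (ghost) disc bubbles as you do.
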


\begin{proof}
    The bilinear maps live over $X$ by evaluation at the boundary marked point.  A homotopy of the given bilinear maps over $X$ is obtained by considering the Gromov-compactified moduli spaces of strips which have a single marked point which lies somewhere on a fixed path $\gamma(t)$ between the two boundary components (say $\{0\} \times [0,1] \subset \bR \times [0,1]$); cf. Figure \ref{Fig:CO_homology_relation}.  The moduli spaces defining the homotopy live over $[0,1]$ by recording the point $t$ on the path specifying the conformal structure of the domain, and the faces lying over $0,1$ are clearly those associated to $\cW^K$ and $\cW^L$.
\end{proof}

To relate the preceding two constructions, recall that we have a bilinear composition
\[
\mu^2_{KLK}: \scrF_i(L,K) \otimes \scrF_j(K,L) \to \scrF_{i+j}(K,K).
\]

\begin{lem}\label{lem:identity_shows_up}
   $\Phi^K \circ \mu^2_{KLK}(x,y) = \cW^K(x,y)$, and similarly for $L$.
\end{lem}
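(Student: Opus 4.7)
The plan is to realise both sides as bordism classes of moduli spaces of perturbed twice-punctured holomorphic discs with boundary on $L\cup K$ and asymptotes $x,y$, carrying a single additional marked point, and to interpolate the location of that marked point between the interior of the strip and a point of the $K$-boundary via a smooth family.

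First I would compute $\Phi^K \circ \mu^2_{KLK}(x,y)$ geometrically via a pre-gluing construction. By the framed analogue of the recognition lemma (Lemma \ref{lem: fram comp detection}) and its extension to compositions with bilinear maps, it suffices to exhibit any single compactified framed Floer moduli with the appropriate system of boundary faces and framing data. Pre-gluing the triangle perturbation data defining $\mu^2_{KLK}$ with the one-input disc perturbation data carrying an interior marked point (defining $\Phi^K$) produces a compactified moduli space $\overline{\cN}^{\mathrm{int}}_{xy}$ of perturbed holomorphic twice-punctured discs with inputs $x$ and $y$ and a free interior marked point evaluated to $X$; smoothness and framing compatibility of its boundary strata with the glued flow-modules follow from Theorem \ref{thm:smooth_structure}, Proposition \ref{prop:smooth_structure_general}, and Proposition \ref{prop:framing_data}.

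By definition, $\cW^K(x,y)$ is represented by the analogous compactified moduli $\overline{\cN}^{\mathrm{bdy}}_{xy}$ of twice-punctured discs with the marked point pinned to a fixed point on the $K$-boundary component. To produce a framed bordism over $X$ between $\overline{\cN}^{\mathrm{int}}_{xy}$ and $\overline{\cN}^{\mathrm{bdy}}_{xy}$, I would fix a smooth embedded path $\gamma:[0,1]\to \bR\times [0,1]$ starting at an interior point of the strip and ending at a point of $\bR\times\{1\}$, together with a smooth family of perturbation data interpolating between the two endpoints' choices, and consider the parametrised moduli $\overline{\cN}^{\gamma}_{xy}$ of pairs $(u,s)$ with $s\in[0,1]$ and $u$ a solution of the $s$-dependent perturbed equation whose marked point sits at $\gamma(s)$. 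Projection to $[0,1]$ makes $\overline{\cN}^{\gamma}_{xy}$ a compact framed manifold with faces living over $[0,1]\times X$, with fibres over $\{0\}$ and $\{1\}$ equal to $\overline{\cN}^{\mathrm{int}}_{xy}$ and $\overline{\cN}^{\mathrm{bdy}}_{xy}$ respectively; exactness of $L$ and $K$ rules out disc and sphere bubbling, so no spurious boundary strata appear as the marked point reaches the $K$-boundary. The stable framing extends coherently over the parameter by another application of Proposition \ref{prop:framing_data}. The analogous statement with $K$ replaced by $L$ follows by choosing $\gamma$ to end on $\bR\times\{0\}$ instead.

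The main obstacle in executing this plan is the framing-level compatibility in the first step: one must verify that the stable framing induced on $\overline{\cN}^{\mathrm{int}}_{xy}$ by the flow-categorical composition $\Phi^K\circ \mu^2_{KLK}$, via the abstract gluing isomorphisms of Section \ref{sec:abstr ind bun}, agrees with the one coming directly from Proposition \ref{prop:framing_data} applied to the pre-glued perturbation data. This is an instance of the standard index-gluing compatibility at nodal configurations and is built into the way the stable framings in Proposition \ref{prop:framing_data} are set up; once it is in hand, the remaining steps are routine applications of Gromov compactness and the moving marked point trick.
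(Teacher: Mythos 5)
Your proposal is correct and follows essentially the same route as the paper, whose proof is just the gluing argument depicted in Figure \ref{Fig:bilinear_gluing}: glue the one-input disc $\Phi^K$ onto the output of the triangle to obtain strips carrying the marked point, recognised as representing the composition via the system of boundary faces. Your extra interpolation moving the marked point from the interior to the $K$-boundary is the same "marked point on a path" device the paper uses to homotope $\cW^K$ to $\cW^L$, and it tidily handles the detail (elided in the paper, where the figure simply places the marked point of $\Phi_K$ on the boundary) that $\Phi^K$ carries an interior marked point while $\cW^K$ carries a boundary one.
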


\begin{proof}
    This follows from the gluing argument illustrated in Figure \ref{Fig:bilinear_gluing}.
\end{proof}

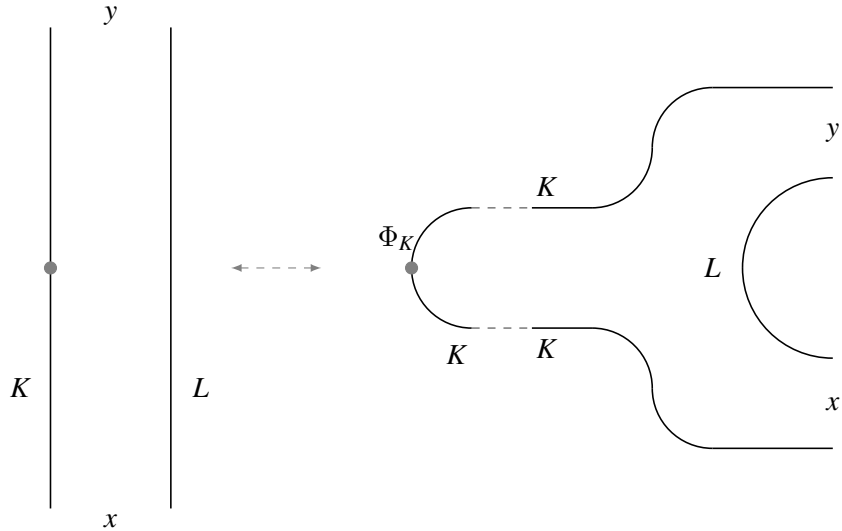
\begin{figure}[ht]
\begin{center} 
\begin{tikzpicture}[scale=0.8]

\draw[semithick] (-3,-4) -- (-3,4);
\draw[semithick] (-1,-4) -- (-1,4);
\draw[fill,gray] (-3,0) circle (0.1cm);
\draw (-2,-4.2) node {$x$};
\draw (-2,4.2) node {$y$};
\draw (-3.5,-2) node {$K$};
\draw (-0.5,-2) node {$L$};

\draw[dashed, color=gray][<->] (0,0) -- (1.5,0);

\draw[semithick] (4,1) arc (90:270:1);
\draw[fill,gray] (3,0) circle (0.1);
\draw (3.75,-1.45) node {$K$};
\draw (2.75,0.5) node {$\Phi_K$};
\draw[semithick] (5,1)--(6,1);
\draw[semithick] (5,-1)--(6,-1);
\draw[semithick,gray,dashed] (4,1)--(5,1);
\draw[semithick,gray,dashed] (4,-1)--(5,-1);

\draw[semithick] (6,1) arc (-90:0:1);
\draw[semithick] (7,2) arc (180:90:1);
\draw[semithick] (8,3) -- (10,3);
\draw[semithick] (6,-1) arc (90:0:1);
\draw[semithick] (8,-3) arc (270:180:1);
\draw[semithick] (8,-3) -- (10,-3);
\draw[semithick] (10,1.5) arc (90:180:1.5);
\draw[semithick] (8.5,0) arc (180:270:1.5);
\draw (5.25,-1.35) node {$K$};
\draw (5.25,1.35) node {$K$};
\draw (8,0) node {$L$};
\draw (10,-2.25) node {$x$};
\draw (10, 2.25) node {$y$};

\end{tikzpicture}
\end{center}
\caption{Gluing relates a bilinear map to the product composed with the unit{\label{Fig:bilinear_gluing}}}

\end{figure}

\begin{cor}\label{cor:fundamental class}
    If  spectral Lagrangian branes $L$ and $K$ are isomorphic in $\scrF(X; \bS)$ then they are framed bordant over $X$. In particular they define the same class in $\Omega_n^{fr}(X)$.
\end{cor}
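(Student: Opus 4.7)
The plan is to read off a framed bordism relation in $\Omega^{fr}_n(X)$ directly from the invertibility data provided by an isomorphism $L \cong K$ in $\scrF(X;\bS)$. First I would fix morphisms $x \in \scrF_0(L,K)$ and $y \in \scrF_0(K,L)$ that are mutual inverses: since $\scrF(X;\bS)$ is unital, this means
\[
\mu^2_{KLK}(x,y) = [\overline{\cM}^K] \qquad \text{and} \qquad \mu^2_{LKL}(y,x) = [\overline{\cM}^L].
\]

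Next I would evaluate the class $\cW^K(x,y) \in \Omega^{fr}_n(X)$ in two ways, noting that the morphism $\Phi^K$ (and hence the bilinear map $\cW^K$) lives over $X$ via evaluation at an interior marked point. By Lemma~\ref{lem:identity_shows_up} applied with endpoint $K$,
\[
\cW^K(x,y) = \Phi^K \circ \mu^2_{KLK}(x,y) = \Phi^K([\overline{\cM}^K]) = \cC^K,
\]
and the last element is framed bordant over $X$ to $[K \hookrightarrow X]$ by Lemma~\ref{lem:bordant_to_inclusion}. Applying the analogous identity for $L$ gives
\[
\cW^L(x,y) = \Phi^L \circ \mu^2_{LKL}(y,x) = \Phi^L([\overline{\cM}^L]) = \cC^L,
\]
which is framed bordant over $X$ to $[L \hookrightarrow X]$.

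To conclude, I would appeal to the fact recorded in Section~\ref{sec: OC} that $\cW^L$ and $\cW^K$ are homotopic as framed bilinear maps over $X$, via the 1-parameter family of domains in which the single boundary marked point slides along a path between the two boundary components of the infinite strip. Framed homotopic bilinear maps over $X$ induce equal pairings on bordism classes valued in $\Omega^{fr}_\ast(X)$, so $[\cW^L(x,y)] = [\cW^K(x,y)]$ in $\Omega^{fr}_n(X)$. Chaining the three identities then produces $[L] = [K]$ in $\Omega^{fr}_n(X)$, which in particular witnesses that $L$ and $K$ are framed bordant over $X$, as required.

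The only point needing verification is that each object in the chain (the units, the maps $\Phi^L, \Phi^K$, the bilinear maps $\cW^L, \cW^K$, and the homotopy between them) lives over $X$ in a way compatible with composition and with the induced pairings on framed bordism. This is precisely what the formalism of Section~\ref{sec: OC} is set up to guarantee: the evaluation at the interior marked point on the cap disc endows $\Phi^L$ and $\Phi^K$ with the structure of morphisms over $X$, the marked point on the bigon strip equips $\cW^L$ and $\cW^K$ with the structure of bilinear maps over $X$, and the path traversed by the marked point records the parameter of the homotopy. So no real obstacle arises; the argument is a formal assembly of the tools already developed in Section~\ref{sec: OC}.
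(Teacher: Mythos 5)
Your argument is correct and is exactly the proof the paper intends: the paper's one-line proof ("combine Lemma \ref{lem:bordant_to_inclusion} with Lemma \ref{lem:identity_shows_up}") unpacks to precisely your chain of identities, together with the homotopy over $X$ between $\cW^L$ and $\cW^K$ established in the unlabelled lemma of Section \ref{sec: OC}. Nothing further is needed.
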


\begin{proof}
    This follows from combining Lemma \ref{lem:bordant_to_inclusion} with Lemma \ref{lem:identity_shows_up}.
\end{proof}

\begin{rmk}
    A choice of Morse function $f: X \to \bR$ defines a framed flow category $\overline\cM^f$ as in Example \ref{ex:mor} (see \cite{Wehrheim} for a proof of smoothness and \cite{CJS} for the framings). Defining $CM_i(X; \Omega^{fr}) := [*[i], \overline\cM^f]^{fr}$, there are then natural morphisms $\mathcal{OC}_L: \scrF_i(L,L)\to CM_i(X; \Omega^{fr})$ obtained from taking fibre products of the moduli spaces defining $[\overline\cM^L]$ with unstable manifolds of critical points of $f$, where we evaluate the punctured discs at a fixed interior point. One can reformulate the previous argument in terms of these more classical open-closed maps (and their length two cousin), but the chosen formulation avoids the need to relate Morse theory on $X$ to $X$ itself.
\end{rmk}

        \subsection{Floer theory with Baas-Sullivan singularities}

Assume $2k>n+1$. We define a category $\tau_{\leq n}\scrF(X;\bS)_{\kBS}$ whose objects are spectral Lagrangian branes, but whose morphism groups are given by the groups
\[
[\ast[i], \overline{\cM}^{LK}]_{\tau \leq n, \kBS}^{fr}
\]
An associative composition then arises from Section \ref{sec:BS fram}, using that the spaces of holomorphic triangles and discs with four marked points gave a framed bilinear map and corresponding framed associator. Analogous to Corollary \ref{cor:fundamental class} we have

\begin{cor}\label{cor:class-in-kBS-framed}
    If $L$ and $K$ are spectral Lagrangian branes which are quasi-isomorphic in $\tau_{\leq n}\scrF(X;\bS)_{\kBS}$ then their fundamental classes agree in $\Omega_n^{fr;\kBS}(X)$.
\end{cor}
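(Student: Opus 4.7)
The plan is to follow the template of Corollary~\ref{cor:fundamental class} verbatim, simply re-interpreting every moduli-space-theoretic composition in the truncated Baas-Sullivan category $\tau_{\leq n}\scrF(X;\bS)_{\kBS}$ instead of $\scrF(X;\bS)$. By hypothesis, $L$ and $K$ being quasi-isomorphic furnishes classes
\[
x \in [\ast, \overline{\cM}^{LK}]^{fr}_{\tau_{\leq n},\kBS}, \qquad y \in [\ast, \overline{\cM}^{KL}]^{fr}_{\tau_{\leq n},\kBS}
\]
with $\mu^2_{KLK}(x,y) = [\overline{\cM}^K]$ and $\mu^2_{LKL}(y,x) = [\overline{\cM}^L]$ (the unit classes of Corollary~\ref{cor:projecting_units}, promoted into the truncated $\kBS$ category by viewing the original non-singular representatives as trivially singular).

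The argument is then the chain of identities in $[\ast, \ast[-n]]^{fr}_{\tau_{\leq n},\kBS}$ living over $X$:
\[
\Phi^K \circ [\overline{\cM}^K] = \Phi^K \circ \mu^2_{KLK}(x,y) = \cW^K(x,y) \simeq_X \cW^L(x,y) = \Phi^L \circ \mu^2_{LKL}(y,x) = \Phi^L \circ [\overline{\cM}^L].
\]
The outer terms represent $[K]$ and $[L]$ respectively in $\Omega_n^{fr;\kBS}(X)$ by Lemma~\ref{lem:bordant_to_inclusion}, whose proof (gluing and switching off the Hamiltonian term by exactness) transports verbatim to the $\kBS$ setting because the auxiliary moduli spaces are themselves non-singular. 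The two inner equalities are the two instances of Lemma~\ref{lem:identity_shows_up}, again proved by a gluing argument using only non-singular auxiliary spaces. The middle $\simeq_X$ is the homotopy of bilinear maps over $X$ from Section~\ref{sec: OC}; viewed as a framed bordism of framed bilinear maps living over $X$, it pre-composes with $(x,y)$ to yield a framed $\kBS$-bordism between $\cW^K(x,y)$ and $\cW^L(x,y)$ living over $X$.

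The only substantive point to verify — and what I expect to be the main (mild) obstacle — is the coherence of composing non-singular auxiliary morphisms/bilinear maps (the $\Phi^L$, $\cW^L$, $\cW^K$ and the homotopy between the latter two) against $\kBS$ classes $x$, $y$, $\mu^2(x,y)$ inside $\tau_{\leq n}\scrF(X;\bS)_{\kBS}$. All the relevant degrees appearing in these compositions are $\leq n$, so truncation at level $n$ poses no issue; the dimension bound $2k > n+1$ guarantees that Section~\ref{sec:BS fram}'s composition machinery applies, since at worst only one factor in each product carries non-empty singular strata. The evaluation maps to $X$ on the auxiliary spaces are transparently compatible with singularity strata because those strata live inside $x$ and $y$ and the evaluation in each case is pulled back from a component that does not meet the singular locus.

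Finally, that $[\ast, \ast[-n]]^{fr}_{\tau_{\leq n},\kBS}$ equipped with a reference map to $X$ computes $\Omega_n^{fr;\kBS}(X)$ follows immediately from unwinding definitions: a class here is represented by a closed $\kBS$-framed $n$-manifold with a map to $X$ compatible with the singularities, modulo $\kBS$-framed bordism with analogous maps, exactly as in Section~\ref{sec:BS fram}. Chasing the identifications above therefore gives $[L] = [K] \in \Omega_n^{fr;\kBS}(X)$, completing the proof.
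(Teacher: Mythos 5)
Your proposal is correct and takes exactly the route the paper intends: the paper's entire proof is the phrase ``Analogous to Corollary \ref{cor:fundamental class}'', and your write-up is precisely that analogy spelled out, with the right points flagged (the auxiliary moduli spaces $\Phi^L$, $\cW^L$, $\cW^K$ and the homotopy between them are non-singular, so the composition machinery of Section \ref{sec:BS fram} applies under the standing bound $2k>n+1$, and the evaluation maps to $X$ remain compatible with the singular strata). No gap.
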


\subsection{Comparisons}

We assume as usual that we have fixed a stable trivialisation of $TX$, and that $L$ and $K$ are spectral Lagrangian branes.  Since these carry gradings, a choice of regular Floer data $(H_t,J_t)$ for $(L,K)$ gives rise to a `classical' Floer cochain complex $CF^*(L,K)$ as constructed in \cite{Seidel:book}, as well as to the Floer flow category $\overline\cM^{LK}$.

\begin{lem}\label{lem:floer_is_floer}
    There is an identification of complexes 
    \[
    CM_*(\overline\cM^{LK}) = CF^{-*}(L,K).
    \]
\end{lem}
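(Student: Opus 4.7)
The plan is to compare the two complexes generator-by-generator and then on differentials, the key input being that the stable framings on the moduli spaces $\overline\cM^{LK}_{xy}$ constructed via the index theory of Section \ref{Sec:technical} recover precisely Seidel's orientation conventions for Floer cohomology.

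First I would unpack the two sides. By definition (Section \ref{sec:Morse}), $CM_i(\overline\cM^{LK})$ is the quotient of $\bigoplus_{|x|=i,\phi\in F(x)}\bZ\langle \phi\rangle$ by the relations $\sum_{\phi\in F(x)}\phi = 0$, where $F(x)$ is the two-element set of homotopy classes of stable isomorphism $V_x \cong \bR^{-|x|}$. Since $F(x)$ has two elements that are identified up to sign, each chord $x$ contributes a rank-one summand. On the other side, $CF^{-i}(L,K)$ has a $\bZ$-generator for each chord $x$ of CZ index $i = -(-i)$, canonically up to sign by Seidel's orientation line $o_x$. I would establish the identification on generators by showing that Seidel's two-element set of orientations of $o_x$ is canonically bijective with $F(x)$.

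The content of this bijection is the comparison of two descriptions of the ``orientation torsor'' of a chord. Seidel's $o_x$ is built from the determinant line of a Cauchy--Riemann operator on an abstract Floer cap (a half-plane with boundary on $L$ capped by $x$), which by the spin structure/grading data on $L,K$ gives a $\bZ$-line canonical up to sign. The stable vector space $V_x$ constructed in Section \ref{Sec:technical} is similarly the fibre of an index bundle over abstract Floer caps; by Propositions \ref{prop: two types of index} and \ref{prop:framing_data}, the two homotopy classes of stable trivialisation of $V_x$ by $\bR^{-|x|}$ correspond exactly to the two orientations of Seidel's determinant line. This gives the bijection on generators, compatible with grading by construction (CZ-index = $|x|$ matched to cohomological degree $-|x|$).

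Next I would compare differentials. When $|x|-|y|=1$, $\overline\cM^{LK}_{xy}$ is a zero-dimensional compact manifold, and the Morse differential counts its points weighted by $\Gamma \in \Omega^{fr}_0 \cong \bZ$, i.e.\ by the sign of its induced stable framing (once $\phi \in F(x)$ and $\psi \in F(y)$ are fixed). Seidel's Floer differential also counts points of this moduli space, with a sign arising from comparing the orientation of the determinant line of the strip to the product of the orientations of $o_x$ and $o_y$. Under the bijection established in the previous step, these two sign rules correspond: the framing on $\overline\cM^{LK}_{xy}$ comes precisely from the gluing-compatible stable isomorphism $I^{\overline\cM^{LK}}_{xy} \oplus V_y \cong V_x$, which is the translation into stable framings of Seidel's multiplicativity of determinant lines under linear gluing. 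Because the bordism ring map $\Omega^{fr}_0 \to \bZ$ is $\Gamma$, the two differentials agree on generators.

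The main obstacle is precisely this last compatibility: checking that the sign attached to a rigid strip via the stable framing of $\overline\cM^{LK}_{xy}$ coincides, under the canonical bijection between $F(x)$ and orientations of $o_x$, with Seidel's sign. This reduces to a compatibility of two normalisations of the index package (framings of linearised operators versus determinant-line orientations) on capping half-planes and linear gluing; both are standard, but the bookkeeping is delicate. Once done, it gives a chain-level identification $CM_*(\overline\cM^{LK}) = CF^{-*}(L,K)$, which is the claim. I expect no further difficulty, and would phrase the result as defining, rather than merely inducing, the classical Floer complex from the framed flow category.
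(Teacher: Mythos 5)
Your proposal is correct and takes the same approach as the paper: the paper's entire proof is the single remark that the identification is ``basically tautological'' once one notes that the homological grading convention (degree zero unit, graded product) gives $CM_i = CF^{-i}$, with the comparison of $F(x)$ to Seidel's orientation torsor and of the framing-induced signs on rigid strips to the Floer signs regarded as immediate from the constructions. You have simply made explicit the generator-by-generator and sign bookkeeping that the paper elides.
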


\begin{proof}
    Basically tautological. Note that our grading conventions are homological but still with a degree zero unit and graded product, which ensures that $CM_i = CF^{-i}$.
\end{proof}

\begin{lem}\label{lem:forgetful_respects_products}
    Under the equivalence of Lemma \ref{lem:floer_is_floer}, the product induced by the framed bilinear map $\overline\cM^{LKM}$ in Theorem \ref{thm:floer_product} agrees with the Floer triangle product.
\end{lem}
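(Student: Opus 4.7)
My plan is to reduce the statement to a sign check on zero-dimensional moduli spaces. Both products are bilinear in the generators of $CF^*(L,K)$ and $CF^*(K,M)$ (equivalently in the objects of $\overline{\cM}^{LK}$ and $\overline{\cM}^{KM}$ viewed as generators of the Morse complexes), and in both cases the coefficient of a generator $z$ in the product $\mu^2(y,x)$ is determined by a signed count of the points of the zero-dimensional moduli space $\overline{\cM}^{LKM}_{yx;z} = \cM^{LKM}_{yx;z}$ (when $|z| = |x|+|y|$, this is a compact $0$-manifold by Theorem \ref{thm:floer_product}). So the content is that the sign attached to each rigid triangle by the framing of Theorem \ref{thm:floer_product}, interpreted via the map $\Gamma \colon \Omega^{fr}_0 \xrightarrow{\cong} \bZ$, matches the classical Floer sign computed in \cite{Seidel:book}.

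First, I would recall how a spectral Lagrangian brane structure determines the classical brane data: the stable trivialisation $\Psi$ together with the nullhomotopy $h$ of $\Psi_L$ produces a spin structure and a grading on $L$, and these are exactly the data that enters Seidel's construction of the signed Floer differential and product. In particular, given generators $x \in \overline{\cM}^{LK}$, the stable vector space $V(x)$ built from the index bundle over abstract Floer caps (Section \ref{Sec:technical}) is canonically identified, up to a conventional shift by the Conley-Zehnder index, with the orientation line of $x$ used by Seidel; this identification is forced by the gluing-compatible stable isomorphisms $I(x,y) \oplus V(y) \cong V(x)$ from Proposition \ref{prop: two types of index} and \ref{prop:framing_data}.

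Next, for $|z|=|x|+|y|$, each isolated triangle $u \in \cM^{LKM}_{yx;z}$ gives a zero-dimensional framed manifold: its framing is a stable isomorphism $V_x \oplus V_y \cong V_z$ obtained from the linearised $\overline{\partial}$-operator, and $\Gamma$ of this point is the sign $\pm 1$ depending on whether this stable isomorphism agrees with the product of the canonical identifications of the $V_\bullet$ with $\bR^{|\bullet|}$ used in Definition \ref{morse chains definition}. On the other hand, Seidel's triangle sign is also computed from the linearised operator at $u$ relative to the chosen orientation lines of $x$, $y$, $z$. Since both signs are extracted from the same linearisation datum (the framing from Proposition \ref{prop:framing_data} is precisely the one induced by the determinant/orientation line of $D_u\overline{\partial}$ together with the spin structures), the two signs coincide.

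The main obstacle will be this bookkeeping of signs; this is the standard kind of check needed to show that a spectral refinement of a Floer complex recovers the usual signed Floer theory on $\pi_0$, and it requires tracking through (i) the Koszul-type conventions in Remark \ref{rmk:fram sgn} for combining stable isomorphisms, (ii) the sign conventions in \cite{Seidel:book} for the associators that enter the triangle product, and (iii) the convention converting the degree $|\cdot|$ shifts in $V_\bullet \cong \bR^{|\bullet|}$ into Koszul signs. Once these are aligned it is automatic that both products agree on generators, hence as chain maps $CM_*(\overline{\cM}^{LK}) \otimes CM_*(\overline{\cM}^{KM}) \to CM_*(\overline{\cM}^{LM})$, completing the proof under the identification of Lemma \ref{lem:floer_is_floer}.
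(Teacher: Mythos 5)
Your proposal is correct and takes the same view as the paper, which dismisses this lemma with the one-word proof ``Clear'': both products are signed counts of the same rigid triangles, and the framing-induced signs refine Seidel's orientation-line signs because both are read off from the same linearised operators and cap/determinant-line data. Your write-up simply makes explicit the sign bookkeeping that the paper treats as immediate.
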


\begin{proof}
    Clear.
\end{proof}

\begin{lem}
    Suppose $2k>n+1$. There are forgetful functors 
    \[
    \scrF(X;\bS) \to \tau_{\leq n}\scrF(X;\bS) \to \tau_{\leq n}\scrF(X;\bS)_{\kBS}. 
    \]
    Moreover, when $n=0$, there is an embedding 
    \[
    \tau_{\leq 0}\scrF(X;\bS) \hookrightarrow \scrF(X;\bZ)
    \]
    with image the full subcategory of exact Lagrangians which admit some spectral brane structure.
\end{lem}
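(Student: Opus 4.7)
The plan is to build the functors directly from the truncation and inclusion constructions already developed, then identify the resulting morphism groups at $n=0$ with Floer cohomology.

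For the truncation functor $\scrF(X;\bS) \to \tau_{\leq n}\scrF(X;\bS)$, I would appeal directly to the truncation functor \eqref{eqn:framed_truncation_functors} on framed flow categories: a framed flow module $\cW : \ast[i] \to \overline\cM^{LK}$ representing a class in $\scrF_i(L,K)$ is sent to $\tau_{\leq n}\cW$. The proof of Lemma \ref{framed composition is associative} together with the analogous statement for framed bilinear maps (Lemma \ref{lem:framed associators}) shows that the composition structure of $\scrF(X;\bS)$ (via $\overline\cM^{LKM}$ and the associator $\overline\cM^{LKMN}$) is compatible with truncation, because both composition and associator constructions use the preimage of a framed function of dimension determined by $|x|+|y|-|z|$, so discarding higher-dimensional data commutes with forming these preimages. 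For the further functor to $\tau_{\leq n}\scrF(X;\bS)_{\kBS}$, I would note that any non-singular framed $\tau_{\leq n}$-morphism is a framed $\tau_{\leq n}$-morphism with $\kBS$-singularities for which every $N_P$ is empty, and the composition recipes of Section \ref{sec:BS fram} reduce on this subcategory to the non-singular composition, so the inclusion is a functor.

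For the second claim, I would define the functor on objects by sending $(L,h,\eta)$ to the underlying graded Lagrangian brane, and on morphisms via the natural isomorphism
\[
\tau_{\leq 0}\scrF_i(L,K) \;=\; [\ast[i],\overline\cM^{LK}]^{fr}_{\tau_{\leq 0}} \;\xrightarrow{\;\Xi\;}\; H_i\bigl(CM_*(\overline\cM^{LK})\bigr),
\]
which is the framed analogue of Lemma \ref{0-truncation is chain}. The same proof applies: a $\tau_{\leq 0}$-framed morphism from $\ast[i]$ is exactly a collection of framed $0$-manifolds $\overline\cM^{LK}_{xy}$ with $|x|-|y|=i$ whose signed counts commute with the Morse differential, and a framed $\tau_{\leq 0}$-bordism gives precisely a chain homotopy. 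By Lemma \ref{lem:floer_is_floer} the right-hand side is $HF^{-i}(L,K;\bZ)$, which is the $\Hom$-group in $\scrF(X;\bZ)$.

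The key step is verifying compatibility with composition, and this is provided by Lemma \ref{lem:forgetful_respects_products}: the bilinear map $\overline\cM^{LKM}$ induces the classical Floer triangle product on $CF^{-*}(L,K)\otimes CF^{-*}(K,M) \to CF^{-*}(L,M)$, which on cohomology is precisely the composition in $\scrF(X;\bZ)$. Unitality follows from Lemma \ref{idempotent} together with the corresponding well-known fact in $\scrF(X;\bZ)$. Finally, the image is as claimed: every object of $\tau_{\leq 0}\scrF(X;\bS)$ is by definition a spectral Lagrangian brane, which in particular carries a grading and a spin structure and hence defines an object of $\scrF(X;\bZ)$, and conversely any exact Lagrangian appearing in the image is one admitting some spectral brane structure.

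The main obstacle is largely bookkeeping: one must be careful that the framed version of Lemma \ref{0-truncation is chain} uses genuinely integral counts (zero-dimensional framed manifolds are oriented, yielding $\bZ$-coefficients rather than $\bZ/2$), and that the identification $\Xi$ is natural with respect to the framed composition and bilinear constructions, so that it intertwines $\overline\cM^{LKM}_*$ on the left with the Floer product on the right. Both points reduce to direct inspection of the relevant moduli spaces in dimension zero and one, where framings are encoded by signs and the standard Floer-theoretic sign analysis of \cite{Seidel:book} applies verbatim.
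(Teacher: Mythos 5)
Your proposal follows essentially the same route as the paper: the forgetful functors come from the truncation constructions (which the paper declares ``clear''), and the $n=0$ identification is obtained exactly as the paper does, from Lemma \ref{lem:floer_is_floer} for the morphism groups (via the framed version of Lemma \ref{0-truncation is chain}) and Lemma \ref{lem:forgetful_respects_products} for multiplicativity. Your write-up simply supplies more of the bookkeeping that the paper leaves implicit.
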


\begin{proof}
    The forgetful functors are clear. That morphisms in the  $\tau\leq 0$-Fukaya category agree with morphisms in the integral Fukaya category follows from Lemma \ref{lem:floer_is_floer}, and that that identification is multiplicative follows from Lemma \ref{lem:forgetful_respects_products}. 
\end{proof}

\subsection{Proof of Theorem \ref{thm:main}}

We recall the statement:

\begin{thm} 
    Let $X$ be a Liouville manifold with $TX$ stably trivial as a complex vector bundle, and fix a stable trivialisation $\phi$. Let $L, K\subset X$ be closed exact Lagrangian integer homology spheres  whose stable Gauss maps are stably trivial compatibly with $\phi$. If $L$ and $K$ admit gradings such that they represent quasi-isomorphic objects of $\scrF(X;\bZ)$ then they represent the same class in the quotient $\Omega_n^{fr}(X)/(\Omega_1^{fr}(X) \cdot \Omega_{n-1}) = \Omega_n^{fr; (n-1)\mathrm{-BS}}(X)$. In particular, by Proposition \ref{prop:kBS-bordism}, $[L]$ and $[K]$ agree in $\Omega_n^{fr}(\ast) / \mathrm{image}(\eta)$.
\end{thm}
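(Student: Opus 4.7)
The plan is to show that $L$ and $K$ are isomorphic in the truncated Baas--Sullivan spectral Donaldson--Fukaya category $\tau_{\leq n}\scrF(X;\bS)_{(n-1)\mathrm{-BS}}$ and then invoke Corollary \ref{cor:class-in-kBS-framed} together with Proposition \ref{prop:kBS-framed-bordism} to extract the stated equality of framed bordism classes. The dimensional hypothesis $2n>6$ is exactly $2(n-1)-1>n$, so we work entirely within the regime where Baas--Sullivan composition is defined (Remark \ref{rmk:dimension_constraint_2}). After shifting $K$ so that the hypothesised quasi-isomorphism in $\scrF(X;\bZ)$ has degree zero, Lemma \ref{lem:floer_is_floer} represents this quasi-isomorphism as a class in the Morse homology of $\overline\cM^{LK}$, which via Lemma \ref{0-truncation is chain} lifts to a $\tau_{\leq 0}$ class $\alpha_0 \in [\ast[0],\overline\cM^{LK}]^{fr}_{\tau_{\leq 0},\,(n-1)\mathrm{-BS}}$.

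I would inductively lift $\alpha_0$ to a $\tau_{\leq v}$ class $\alpha_v$ for each $v=0,1,\ldots,n$, using the Baas--Sullivan analogue of Theorem \ref{Obstruction Theorem}, whose existence and proof are flagged at the end of Section \ref{sec:BS fram}. The obstruction at step $v\to v+1$ lies in
\[
\Hom\bigl(CM_{*+v+2}(\ast[0];\bZ),\, CM_*(\overline\cM^{LK};\,\Omega^{fr;(n-1)\mathrm{-BS}}_{v+1})\bigr) \;\cong\; HF^{v+2}\bigl(L,K;\,\Omega^{fr;(n-1)\mathrm{-BS}}_{v+1}\bigr).
\]
Because $L$ and $K$ are integer homology spheres and $HF^*(L,K;\bZ)$ is free, $HF^*(L,K;G)$ is concentrated in degrees $0$ and $n$ for any coefficient group $G$, so this obstruction vanishes automatically unless $v+2\in\{0,n\}$, i.e.\ unless $v=n-2$. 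At that one critical step the coefficient group is $\Omega^{fr;(n-1)\mathrm{-BS}}_{n-1}$, which vanishes identically by Proposition \ref{prop:fram kBS k}. Hence all obstructions are zero and the lift $\alpha_n$ exists.

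Lifting an inverse $\bar\beta$ of $\bar\alpha$ to $\beta_n$ in the same way, the $\tau_{\leq 0}$-truncations of $\beta_n\circ\alpha_n$ and $\alpha_n\circ\beta_n$ are the identity morphisms of $L$ and $K$ in $\scrF(X;\bZ)$, so the Baas--Sullivan analogue of Corollary \ref{cor:projecting_units} -- obtained by running the surjectivity argument of Theorem \ref{thm:onto} in the truncated Baas--Sullivan setting -- makes these endomorphisms invertible, hence $\alpha_n$ itself is an isomorphism. Corollary \ref{cor:class-in-kBS-framed} now gives $[L]=[K]$ in $\Omega_n^{fr;(n-1)\mathrm{-BS}}(X)$, Proposition \ref{prop:kBS-framed-bordism} identifies this quotient with $\Omega_n^{fr}(X)/(\Omega_1^{fr}(X)\cdot\Omega^{fr}_{n-1})$, and pushing forward along $X\to\ast$ sends the image of $\Omega_1^{fr}(X)\to\Omega_1^{fr}(\ast)=\bZ/2$ onto $\langle\eta\rangle$, yielding the final statement about $\Omega_n^{fr}/\mathrm{image}(\eta)$. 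The substantive obstacle is not the degree cancellation, which is essentially forced by the homology-sphere hypothesis, but rather verifying the clean analogues of Theorem \ref{Obstruction Theorem} and Corollary \ref{cor:projecting_units} in the Baas--Sullivan setting; I expect these to go through verbatim, because the essential faces of a Baas--Sullivan manifold behave exactly like ordinary faces in all of the relevant obstruction-theoretic arguments, and the only role of the singularities is to replace the coefficient $\Omega^{fr}_{n-1}$ by zero at precisely the step where it would otherwise produce a non-vanishing obstruction.
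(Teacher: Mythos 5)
Your proposal is correct and follows essentially the same route as the paper: iterate the obstruction theory of Theorem \ref{Obstruction Theorem} through the truncation tower, observe that the homology-sphere hypothesis kills every obstruction by degree except the one at $v=n-2$ valued in $\Omega^{fr}_{n-1}$, which dies in the Baas--Sullivan quotient by Proposition \ref{prop:fram kBS k}, then upgrade the lifted pair to inverse isomorphisms via Corollary \ref{cor:projecting_units} and conclude with Corollary \ref{cor:class-in-kBS-framed} and Proposition \ref{prop:kBS-framed-bordism}. The only differences are cosmetic (a degree-$0$ rather than degree-$n$ normalisation of the quasi-isomorphism, and asserting invertibility of $\alpha_n$ directly from invertibility of both composites where the paper runs a short idempotent argument).
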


The hypotheses imply that there is a spectral Donaldson-Fukaya category $\scrF(X;\bS)$ which is unital and associative, and which contains objects $L$ and $K$ whose brane structures lift classical gradings.  We assume those gradings are chosen so that $L$ and $K$ are quasi-isomorphic in $\scrF(X;\bZ) = \tau_{\leq 0}\scrF(X;\bS)$.

Let $\alpha \in \tau_{\leq 0}\scrF_n(L,K)$ and $\beta \in \tau_{\leq 0}\scrF_n(K,L)$ be inverse quasi-isomorphisms (with homological grading, so the product maps $CF_i \otimes CF_j \to CF_{i+j-n}$). 

\begin{lem}
    $\alpha$ and $\beta$ admit lifts $\tilde{\alpha} \in \scrF_n(L,K;\Omega^{fr;(n-1)-BS})$ and $\tilde{\beta}
     \in \scrF_n(K,L;\Omega^{fr;(n-1)-BS})$ respectively.
\end{lem}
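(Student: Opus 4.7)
The plan is to lift $\alpha$ (and, by the symmetric argument, $\beta$) inductively through the tower
\[ \tau_{\leq 0}\scrF(L,K)_{(n-1)\mathrm{-BS}} \leftarrow \tau_{\leq 1}\scrF(L,K)_{(n-1)\mathrm{-BS}} \leftarrow \cdots \leftarrow \tau_{\leq n}\scrF(L,K)_{(n-1)\mathrm{-BS}}, \]
by applying Theorem~\ref{Obstruction Theorem}, adapted to the framed $(n-1)$-Baas-Sullivan setting of Section~\ref{sec:BS fram}. The dimension hypothesis $2n>6$ forces $n>3$, hence $2(n-1)-1>n$, so the $(n-1)$-BS category supports composition up to truncation level $n$ (cf.\ Remark~\ref{rmk:dimension_constraint_2}).

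First I would check that Theorem~\ref{Obstruction Theorem} carries over essentially verbatim to the framed $(n-1)$-BS setting. The proof uses only coequaliser constructions (which preserve $(n-1)$-BS structure by Section~\ref{sec:BS fram}), framed functions chosen compatibly with the singular boundary faces, level sets of those framed functions (which remain compact framed $(n-1)$-BS manifolds by Proposition~\ref{preimage thm sing}), and the gluing of nullbordisms, all of which can be carried out in the framed $(n-1)$-BS category. The upshot is that the obstruction to lifting a framed $(n-1)$-BS $\tau_{\leq\ell}$-morphism $*[n]\to\overline\cM^{LK}$ to a $\tau_{\leq\ell+1}$-morphism is a class
\[ [\cO_\ell]\in HM_{n-\ell-2}\!\left(\overline\cM^{LK};\,\Omega^{fr;(n-1)\mathrm{-BS}}_{\ell+1}\right). \]

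Next I analyse the $n$ obstructions, for $\ell\in\{0,1,\ldots,n-1\}$. Since $L$ and $K$ are integer homology $n$-spheres quasi-isomorphic in $\scrF(X;\bZ)$, we have $HF^*(L,K;\bZ)\cong H^*(S^n;\bZ)$, concentrated in degrees $0$ and $n$; by Lemma~\ref{lem:floer_is_floer} and the universal coefficient theorem (applicable since $HF^*$ is $\bZ$-free), $HM_*(\overline\cM^{LK};R)\cong R$ in degrees $0$ and $-n$ and vanishes elsewhere, for any abelian group $R$. As $\ell$ varies, the obstruction degree $n-\ell-2$ runs through $\{-1,0,1,\ldots,n-2\}$; this intersects the support $\{0,-n\}$ of $HM_*$ precisely when $\ell=n-2$, at which stage the obstruction takes values in $\Omega^{fr;(n-1)\mathrm{-BS}}_{n-1}$, which vanishes by Proposition~\ref{prop:fram kBS k}. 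All other obstructions vanish for degree reasons, so inductively $\tilde\alpha$ exists in $\scrF_n(L,K;\Omega^{fr;(n-1)\mathrm{-BS}})$, and the same analysis produces $\tilde\beta$. The main obstacle is the initial step of adapting the obstruction theory to the Baas-Sullivan setting; this is essentially a bookkeeping exercise, since every construction in the proof of Theorem~\ref{Obstruction Theorem} respects the singular structure.
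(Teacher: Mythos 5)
Your proposal is correct and follows essentially the same route as the paper: iterate the (framed, Baas--Sullivan) version of Theorem \ref{Obstruction Theorem}, observe that the obstruction at stage $\ell$ lives in $HM_{n-\ell-2}(\overline\cM^{LK};\Omega^{fr;(n-1)\mathrm{-BS}}_{\ell+1})$, kill all but one by the homology-sphere degree argument, and kill the remaining one at $\ell=n-2$ because $\Omega^{fr;(n-1)\mathrm{-BS}}_{n-1}=0$. The only quibble is your placement of the support of $HM_*(\overline\cM^{LK})$ in degrees $\{0,-n\}$ rather than $\{0,n\}$ (the latter is forced by $\alpha$ having degree $n$), but this is immaterial since in either convention the obstruction degrees $\{-1,0,\dots,n-2\}$ meet the support only in degree $0$.
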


\begin{proof}
According to (the framed version of) Theorem \ref{Obstruction Theorem} the obstruction to lifting $\alpha$ to a $\tau_{\leq 1}$-morphism lies in
\[
\Hom (CM_{*+2}(\ast[n]), CM_*(\overline\cM^{LK}; \Omega_{1}^{fr}))
\] 
The domain vanishes unless $* = n-2$, in which case the target vanishes using that $HF_{n-2}(L,K)=0$. Iteratively, for degree reasons (using that $L$ and $K$ are integer homology spheres), the unique obstruction comes when lifting from a $\tau_{\leq n-2}$ to a $\tau_{\leq n-1}$ morphism, and is valued in $HM_0(\overline\cM^{LK}, \Omega_{n-1}^{fr})$. This vanishes in $\Omega^{fr;(n-1)-BS}_{n-1}$.
\end{proof}

\begin{lem}
    The compositions $\tilde{\alpha} \circ \tilde{\beta}$ and $\tilde{\beta} \circ \tilde{\alpha}$ are both units.
\end{lem}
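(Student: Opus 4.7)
The plan is to follow the strategy of Corollary \ref{cor:projecting_units}, transplanted to the Baas-Sullivan truncation $\tau_{\leq n}\scrF(X;\bS)_{(n-1)\text{-BS}}$. Functoriality of the forgetful truncation gives $\tau_{\leq 0}(\tilde\alpha\circ\tilde\beta) = \alpha\circ\beta = \id_K$ in $\scrF(X;\bZ)$, and symmetrically for $\tilde\beta\circ\tilde\alpha$; the task is then to bootstrap this $\tau_{\leq 0}$-invertibility to invertibility in the full $(n-1)$-BS truncated category.

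First I would formulate and prove a Baas-Sullivan analogue of Theorem \ref{thm:onto}: if $\cW$ is a framed $\tau_{\leq n}$-morphism whose image under $\tau_{\leq 0}$ is an integral quasi-isomorphism on Morse/Floer chains, then post-composition induces a surjection on $[\ast[u], -]^{fr}_{\tau_{\leq n},\,(n-1)\text{-BS}}$ for every $u$. The argument mirrors the proof of Theorem \ref{thm:onto}, inducting on the truncation level $v \leq n$. At each step the obstruction to extending a lift lives in the mapping cone of $CM_*(\cW)$ with coefficients in $\Omega^{fr;(n-1)\text{-BS}}_{v+1}$, and vanishes by acyclicity of $\mathrm{Cone}(CM_*(\cW))$. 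The dimension constraint from Remark \ref{rmk:dimension_constraint_2}, that $2(n-1)-1 > n$ i.e.\ $n>3$, is ensured by the hypothesis $2n>6$, so all products encountered during the inductive construction remain valid $\kBS$-manifolds (with $k = n-1$).

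Given this surjectivity, the rest of the argument is formal. Letting $\cW$ represent $\tilde\alpha\circ\tilde\beta \in \scrF(K,K)$, post-composition with $\tilde\alpha\circ\tilde\beta$ is a surjective endomorphism of $\scrF(K,K)$; in particular the unit $[\overline\cM^K]$ lifts to some $\gamma$ with $(\tilde\alpha\circ\tilde\beta)\circ\gamma = [\overline\cM^K]$. The same argument applied to precomposition (using the flow-morphism associated via the bilinear map $\overline\cM^{KKL}$) produces $\delta$ with $\delta\circ(\tilde\alpha\circ\tilde\beta) = [\overline\cM^K]$, and then the standard identity $\gamma = \delta\circ(\tilde\alpha\circ\tilde\beta)\circ\gamma = \delta$ exhibits $\gamma$ as a two-sided inverse of $\tilde\alpha\circ\tilde\beta$. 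The identical argument with the roles of $L$ and $K$ swapped handles $\tilde\beta\circ\tilde\alpha$.

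The main obstacle will be the geometric bookkeeping in establishing the Baas-Sullivan analogue of Theorem \ref{thm:onto}: each of the glued objects ($\tilde\cD$, $\tilde\cE$, $\tilde\cJ$, $\tilde\cK$, $\tilde\cL$, $\tilde\cM$) appearing in the original proof must be equipped with compatible systems of singular boundary faces indexed by $\bM^{n-1}$, and the inductive framings redefined to incorporate the additional stable isomorphisms over singular faces required by Definition \ref{def:kBS-framed}. These modifications follow the same pattern as the transfers already performed in Section \ref{sec:BS fram}, so the verification is essentially careful bookkeeping rather than genuinely new geometry.
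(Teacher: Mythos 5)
Your proposal is correct and takes essentially the same route as the paper, whose entire proof is a one-line appeal to Corollary \ref{cor:projecting_units} (itself resting on the surjectivity statement of Theorem \ref{thm:onto}), applied in the Baas-Sullivan truncated category where the paper asserts all such results carry over with identical proofs. Your unpacking --- reduce to $\tau_{\leq 0}$-invertibility, invoke the $(n-1)$-BS analogue of Theorem \ref{thm:onto} for post- and pre-composition, and assemble a two-sided inverse --- is exactly what that citation entails, including the check that $2n>6$ guarantees the dimension constraint $2(n-1)-1>n$.
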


\begin{proof}
    This follows from Corollary \ref{cor:projecting_units}.
\end{proof}

\begin{lem}
    There are inverse quasi-equivalences 
    \[
    \tilde{\alpha}' \in \scrF_n(L,K;\Omega^{fr;(n-1)-BS})\] and \[\tilde{\beta}' \in \scrF_n(K,L;\Omega^{fr;(n-1)-BS}).
    \]
\end{lem}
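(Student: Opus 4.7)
The plan is to twist $\tilde{\beta}$ by an invertible factor so that the resulting pair $(\tilde{\alpha}',\tilde{\beta}')$ becomes an honest pair of mutual inverses. By the previous lemma, the elements $u := \tilde{\alpha}\circ\tilde{\beta} \in \scrF_*(K,K;\Omega^{fr;(n-1)\mathrm{-BS}})$ and $v := \tilde{\beta}\circ\tilde{\alpha} \in \scrF_*(L,L;\Omega^{fr;(n-1)\mathrm{-BS}})$ are invertible. I would therefore set $\tilde{\alpha}' := \tilde{\alpha}$ and $\tilde{\beta}' := \tilde{\beta}\circ u^{-1}$. By associativity this gives $\tilde{\alpha}'\circ\tilde{\beta}' = (\tilde{\alpha}\circ\tilde{\beta})\circ u^{-1} = u\circ u^{-1} = \id_K$ on the nose, so one of the two required identities is immediate from the definition.

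It remains to show that the other composition $e := \tilde{\beta}'\circ\tilde{\alpha}' = \tilde{\beta}\circ u^{-1}\circ\tilde{\alpha}$ equals $\id_L$. The first observation is that $e$ is idempotent:
\[
e\circ e \;=\; \tilde{\beta}\circ u^{-1}\circ(\tilde{\alpha}\circ\tilde{\beta})\circ u^{-1}\circ\tilde{\alpha} \;=\; \tilde{\beta}\circ u^{-1}\circ u\circ u^{-1}\circ\tilde{\alpha} \;=\; e.
\]
The second observation is that, since $\tau_{\leq 0}$ is a functor carrying inverses to inverses, the image of $e$ in $\scrF(X;\bZ)$ equals $\beta\circ(\alpha\circ\beta)^{-1}\circ\alpha = \beta\circ\alpha = \id_L$, using that $\alpha$ and $\beta$ are mutually inverse quasi-isomorphisms in the integral Donaldson-Fukaya category. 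Invoking the Baas-Sullivan truncated analogue of Corollary \ref{cor:projecting_units} -- precisely the statement already applied in the preceding lemma -- we then conclude that $e$ itself is invertible.

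The hard part is now only formal: in any associative monoid an invertible idempotent must equal the identity, since multiplying $e\circ e = e$ on the right by $e^{-1}$ yields $e = \id_L$. Hence $\tilde{\beta}'\circ\tilde{\alpha}' = \id_L$, and combined with the earlier identity this shows that $\tilde{\alpha}'$ and $\tilde{\beta}'$ are mutually inverse quasi-equivalences. I do not anticipate any genuine obstacle at this point: all of the serious analytic and obstruction-theoretic content has already been absorbed into the construction of the lifts $\tilde{\alpha},\tilde{\beta}$ and into the invertibility of $u$ and $v$, so what remains is only a categorical twist-and-cancel argument.
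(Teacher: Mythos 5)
Your proposal is correct and follows essentially the same route as the paper: both define $\tilde{\beta}' = \tilde{\beta}\circ(\tilde{\alpha}\circ\tilde{\beta})^{-1}$, observe that the remaining composition $\tilde{\beta}'\circ\tilde{\alpha}$ is an idempotent which is invertible (via the truncation to $\scrF(X;\bZ)$ together with Corollary \ref{cor:projecting_units}), and conclude that an invertible idempotent is the identity. Your write-up merely makes explicit the invertibility step that the paper compresses into the phrase ``since $\gamma$ is by construction a unit.''
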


\begin{proof}
    Let
\[
\tilde{\beta}' = \tilde{\beta} (\tilde{\alpha}\tilde{\beta})^{-1}.
\]
Then $\tilde{\alpha} \tilde{\beta}' = \id$, but also if $\gamma := \tilde{\beta}' \tilde{\alpha}$ then $\gamma$ is idempotent. But
\[
\gamma(1-\gamma) = 0 \Rightarrow \gamma = 1
\]
since $\gamma$ is by construction a unit. Thus, we have found a quasi-inverse for $\tilde{\alpha}$, as required.
\end{proof}

The conclusion of Theorem \ref{thm:main} now follows from Corollary \ref{cor:class-in-kBS-framed}.
 
\subsection{Endomorphisms}
Though not required for the proof of Theorem \ref{thm:main}, in this section we identify the endomorphism groups in $\scrF$.
\begin{prop}\label{prop: pss}
    Let $L$ be a spectral Lagrangian brane. \par 
    Then there are isomorphisms of abelian groups 
    $$\scrF_{i-n}(L,L) \cong \Omega^{fr}_i(L)$$
\end{prop}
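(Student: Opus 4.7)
This is a Piunikhin--Salamon--Schwarz (PSS) isomorphism realised at the flow-module level. The plan is to reinterpret $\Omega^{fr}_i(L)$ as framed bordism classes of flow modules into a Morse flow category of $L$, and to construct an explicit PSS quasi-equivalence of framed flow categories identifying this with $\scrF_{i-n}(L,L)$.

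Since $L$ is a spectral Lagrangian brane, the nullhomotopy of its stable Gauss map induces a stable framing of $L$. A Morse--Smale pair on $L$ then yields a framed flow category $\overline\cM^{L,\mathrm{Morse}}$ of gradient trajectories (cf.\ Example \ref{ex:mor}), with framings arising from the stable framing of $L$ as in \cite{CJS}. A Pontrjagin--Thom type argument (a special case of Conjecture \ref{conj:AB} provable directly here, by gluing the $(k-|p|)$-dimensional framed flow-module data at each critical point $p$ along gradient trajectories to produce a framed closed $k$-manifold mapping to $L$) yields a natural isomorphism
\[
[\ast[k],\, \overline\cM^{L,\mathrm{Morse}}]^{fr} \;\cong\; \Omega^{fr}_k(L).
\]

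Next I would construct a PSS morphism of framed flow categories
\[
\mathrm{PSS}: \overline\cM^{L,\mathrm{Morse}}[-n] \longrightarrow \overline\cM^{LL}
\]
whose moduli spaces consist of perturbed holomorphic discs on $X$ with one output boundary chord puncture and an additional boundary marked point to which a finite-length gradient trajectory on $L$ is attached, asymptotic to a chosen critical point of the Morse function. Smoothness and coherent framings for this hybrid moduli follow from extensions of Propositions \ref{prop:smooth_structure_general} and \ref{prop:framing_data}. Post-composition with $\mathrm{PSS}$ then induces the required map
\[
\mathrm{PSS}_{\ast}: \Omega^{fr}_i(L) \longrightarrow \scrF_{i-n}(L,L).
\]

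Under the truncation functor $\tau_{\leq 0}$, $\mathrm{PSS}$ becomes the classical PSS chain map, a quasi-isomorphism between the Morse complex of $L$ and $CF^{\ast}(L,L;\bZ)$ up to the appropriate degree shift. Theorem \ref{thm:onto} then promotes this to surjectivity of $\mathrm{PSS}_{\ast}$ on spectral endomorphism groups. For injectivity I would construct a reverse open--closed morphism $\mathrm{OC}: \overline\cM^{LL} \to \overline\cM^{L,\mathrm{Morse}}[-n]$, whose moduli consist of discs with one input chord puncture together with a gradient half-trajectory ending at a prescribed critical point, and show by a standard PSS--OC gluing argument, analogous to that of Lemma \ref{lem:bordant_to_inclusion} but now parametrised by Morse trajectories on $L$, that $\mathrm{OC} \circ \mathrm{PSS}$ is framed bordant (as a morphism of framed flow categories) to the identity on $\overline\cM^{L,\mathrm{Morse}}[-n]$. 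This gives $\mathrm{OC}_{\ast} \circ \mathrm{PSS}_{\ast} = \mathrm{id}$, so $\mathrm{PSS}_{\ast}$ is injective and hence an isomorphism. The hard part is the PSS--OC gluing argument at the level of framed flow categories: it requires extending the classical cobordism proof of $\mathrm{OC}\circ\mathrm{PSS}=\mathrm{id}$ to the parametric smooth-and-framed setting of Section \ref{Sec:technical}, carrying through all coherent gluing framings between Morse trajectories on $L$ and Floer strips in $X$.
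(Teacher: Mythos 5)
Your proposal is a viable PSS-style argument, but it takes a genuinely different route from the paper. The paper never introduces a Morse flow category on $L$: it works directly with the unit and counit morphisms $\overline\cM^L:\ast\to\overline\cM^{LL}$ and $\Phi^L:\overline\cM^{LL}\to\ast[-n]$, equipped with evaluation maps to $L$, and defines the two maps by \emph{fibre products}: $\alpha(M\to L)$ is the flow module with pieces $M\times_L\overline\cM^L_{\ast x}$, and $\beta(\cA)=(\Phi^L\circ\cA)_{\ast\ast}$ pushed to $L$ via evaluation. The composite $\beta\circ\alpha$ is then handled by Lemma \ref{lem:bordant_to_inclusion} (the counit-glued-to-unit moduli is framed bordant over $X$ to $L\hookrightarrow X$), and $\alpha\circ\beta$ by a Morse--Bott gluing argument identifying $\Phi^L_{x\ast}\times_L\overline\cM^L_{\ast y}$ with a continuation morphism. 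The payoff of that route is that no new moduli spaces are needed: all transversality is achieved by perturbing the singular cycle $s:M\to L$, so the smoothness and framing results of Section \ref{Sec:technical} are used as a black box. Your route instead requires (i) the identification $[\ast[k],\overline\cM^{L,\mathrm{Morse}}]^{fr}\cong\Omega^{fr}_k(L)$, which the paper obtains only as a corollary of the same fibre-product argument rather than as an input, and (ii) smooth corner structures and coherent stable framings on hybrid disc-with-gradient-tail moduli spaces. Point (ii) is the place to be careful: it is not a routine extension of Propositions \ref{prop:smooth_structure_general} and \ref{prop:framing_data}, since those treat purely holomorphic domains and the complete-system-of-hypersurfaces charts would have to be adapted to Morse trajectory components and to Morse--Bott breaking at the disc/trajectory juncture; this is exactly the analysis the paper's formulation is engineered to avoid. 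Your use of Theorem \ref{thm:onto} for surjectivity and an explicit $\mathrm{OC}\circ\mathrm{PSS}\simeq\mathrm{id}$ for injectivity is logically sound (correctly avoiding the unproven injectivity half of Theorem \ref{thm:onto}), and is structurally parallel to the paper's $\beta\circ\alpha=\mathrm{id}$ step; but be aware that both your sketch and the paper's leave a nontrivial gluing statement unproved, and yours additionally leaves the foundational construction of the hybrid moduli unaddressed.
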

We provide a sketch proof, noting that to make the final step rigorous requires some Morse-Bott gluing.
\begin{proof}[Sketch proof]
    We consider the morphisms $\overline \cM^L:* \to \overline \cM^{LL}$ and $\Phi^L: \overline \cM^{LL} \to *[-n]$, which both live over $X$ by evaluating at a choice of boundary marked points. By choosing the regular Floer data and the boundary marked point for $\overline \cM^{L}$ appropriately, i.e. choosing a boundary point away from the support of the Hamiltonian term, we can assume that the evaluation map lands in $L$ (rather than some small Hamiltonian translate). \par 
    We define maps $\alpha: \Omega_i^{fr}(L) \to \scrF_{i-n}(L,L)$ and $\beta: \scrF_{i-n}(L,L) \to \Omega_i^{fr}(L)$ as follows.\par 
    Let $b=(s: M^i \to L, TM \simeq \bR^i)$ represent a class in $\Omega_i^{fr}$; generically perturbing $s$ if necessary lets us assume that it is transverse to all $\overline \cM^L_{*x}$.\par 
    We define $\alpha(b)$ to be represented by the morphism $*[i-n] \to \overline \cM^{LL}$ determined by the manifolds given by the (transverse) fibre products $M \times_L \overline \cM^L_{*x}$; this acquires a natural framing from those on $M$ and $\overline \cM^L$. A similar fibre product construction applied to bordisms shows that this descends to a well-defined map $\alpha$.\par 
    Let $(\cA: *[i-n] \to \cM^{LL})$ represent a class in $\scrF_{i-n}(L,L)$. Then we define $\beta(\cA)$ to be represented by $(\Phi^L \circ \cA)_**$, which lives over $L$ via the evaluation maps of $\Phi^L$.\par 
    Then $\beta \circ \alpha$ sends $b$ as above to $M \times_L \cC_{**}$, where $\cC = \Phi^L \circ \overline \cM^L$ as in Section \ref{sec: OC}, therefore by Lemma \ref{lem:bordant_to_inclusion} $\beta \circ \alpha$ is the identity.\par 
    To see that $\alpha \circ \beta$ is the identity, we observe that it sends $\cA$ to $\cA \circ \cW$, for the morphism $\cW: \overline\cM^{LL} \to \overline \cM^{LL}$ with $\cW_{xy} = \Phi^L_{x*} \times_L \overline \cM^L_{*y}$, generically perturbing so this is always a transverse fibre product; then a Morse-Bott argument shows that $\cW$ is bordant to a continuation map which is bordant to multiplication by the unit.
\end{proof}
An identical argument, with $\overline\cM^{LL}$ replaced by the framed flow category $\overline\cM^f$ of a Morse function $f:M\to \bR$ on a closed manifold, and further replacing $\overline\cM^L$ and $\Phi^L$ by the morphisms given by the compactified ascending and descending manifolds, shows that the groups $[*[i], \overline \cM^f]^{fr}$ agree with the framed bordism groups of $M$.

   \section{Floer flow category technicalities}\label{Sec:technical}

        \subsection{Smooth structures on spaces of strips} We summarise some results of Large \cite[Sections 6,7]{Large}, which build on work of Fukaya-Oh-Ohta-Ono \cite{FO3:smoothness}, on smooth structures on compactified moduli spaces of Floer solutions.
        
        Let $L,K$ be exact Lagrangians and fix Floer data $(H_t,J_t)$ for which $L$ and $\phi_{H_t}^1(K)$ meet transversely, and moduli spaces of Floer solutions are cut out transversely. We write $\cW_{xy}$ for the moduli space of Floer solutions, $\cM_{xy}$ for its quotient by the translation reparametrization action and $\overline{\cM}_{xy}$ for the Gromov compactification of $\cM_{xy}$. 
        
        Let $x_0,\ldots,x_n$ be Hamiltonian chords from $L$ to the time-one image of $K$, and 
        let $u \in \cW_{x_0x_n}$ be a Floer strip. Recall that a point $z=(s,t) \in \bR\times [0,1]$ is `regular' for $u$ if 
        \begin{equation} \label{eqn:regular_point}
        u^{-1}(u(\bR \times \{t\}) = z; \ \partial_s u|_z \neq 0; \ u(z) \neq x_i(t), \, i \in \{0,n\}.
        \end{equation}
        \cite{FHS} shows that the set of regular points is open and dense. Now suppose $u$ has Maslov index $d+1$, so that $\cW_{x_0x_n}$ has dimension $d+1$ and $\cM_{x_0,x_n}$ has dimension $d$. Pick $z_0, z_1,\ldots, z_d$ in $\bR\times [0,1]$ with the properties that they are all regular, and if $z_i = (s_i,t_i)$ then $s_0 < s_1 < \ldots < s_d$.

        \begin{defn}\label{defn:complete_system}
            A collection $\textbf{H} = \{H_0,\ldots,H_d\}$ of smooth codimension one open hypersurfaces of $X$ form a \emph{complete system} (of hypersurfaces) for $u$ if the following two conditions hold:
            \begin{enumerate}
                \item the 
            evaluation map $\cW_{x_0,x_n} \to X^{d+1}$ sending $v \mapsto (v(z_0), v(z_1),\ldots,v(z_d))$ is transverse to $H_0 \times H_1 \times \cdots \times H_d$ at $u$;
            \item  for each $0 \leq i \leq d$, $u^{-1}(H_i)$ and $\bR\times \{t_i\}$ intersect transversely at $z_i$ in $\bR\times [0,1]$, and moreover $z_i$ is the only intersection of $u^{-1}(\overline{H}_i)$ and $\bR\times \{t_i\}$.
    \end{enumerate}
        \end{defn}

        \begin{lem} \label{lem:complete_systems_exist}
        Every $u$ admits a complete system $\textbf{H}$.
        \end{lem}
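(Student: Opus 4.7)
The plan is to build each $H_i$ as a small codimension-one submanifold of $X$ near $u(z_i)$, using the regularity of $z_i$ to arrange the local conditions in (2), and then to adjust the tangent hyperplanes $T_{u(z_i)} H_i$ (together possibly with a slight repositioning of the $z_i$ within the regular locus) to achieve the global transversality (1).

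First, since $z_i$ is regular, $\partial_s u|_{z_i} \neq 0$ and $u(z_i) \notin \{x_0(t_i), x_n(t_i)\}$, so I may choose a small embedded codimension-one disc $H_i^0 \subset X$ through $u(z_i)$ whose tangent plane is transverse to $\partial_s u|_{z_i}$ and whose closure avoids $x_0(t_i), x_n(t_i)$. By the implicit function theorem, $u^{-1}(H_i^0) \cap (\bR \times \{t_i\})$ is a single transverse point in a neighbourhood of $z_i$; the regularity condition identifying $z_i$ as the unique preimage of $u(z_i)$ on the slice $\bR \times \{t_i\}$, combined with the exponential convergence of $u$ at $\pm\infty$ to chords disjoint from $H_i^0$, allows me to shrink $H_i^0$ until its preimage on the whole slice consists solely of $z_i$. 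This yields (2).

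For (1), let $\mathrm{ev}_u : \ker D_u \to \bigoplus_{i=0}^d T_{u(z_i)} X$ be the evaluation map on $\ker D_u = T_u \cW_{x_0x_n}$, which has dimension $d+1$ since $u$ is Floer-regular of Maslov index $d+1$. Condition (1) is equivalent to surjectivity of the composition
\[
\ker D_u \longrightarrow \bigoplus_i T_{u(z_i)}X \longrightarrow \bigoplus_i T_{u(z_i)} X / T_{u(z_i)} H_i,
\]
hence, by dimension count, to this composition being an isomorphism; in particular $\mathrm{ev}_u$ must be injective. The set of tuples in the (open and dense) regular locus on which $\mathrm{ev}_u$ fails to be injective is a proper analytic subset, by unique continuation for the linearised Cauchy-Riemann operator applied to the finite-dimensional space $\ker D_u$, so a generic perturbation of the chosen $z_i$ within the regular locus arranges injectivity. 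Once injectivity holds, the condition that the projection to $\bigoplus T_{u(z_i)} X / T_{u(z_i)} H_i$ is an isomorphism is open and dense in the product of Grassmannians of hyperplanes $T_{u(z_i)} H_i \subset T_{u(z_i)} X$ transverse to $\partial_s u|_{z_i}$; I achieve it by a small rotation of each $H_i^0$, followed by shrinking to preserve the conclusion of the previous paragraph.

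The main obstacle is the injectivity step: establishing that $\mathrm{ev}_u$ can be made injective on $\ker D_u$ for an appropriate choice of $z_i$ in the regular locus. This is a variant of the Floer-Hofer-Salamon argument \cite{FHS}, relying on unique continuation for a linear elliptic system; the remaining construction is then local hypersurface placement plus a perturbation in a finite-dimensional Grassmannian at each $u(z_i)$.
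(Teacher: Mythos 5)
Your proposal is correct and follows essentially the same route as the paper: first arrange that the differential of evaluation at the points $z_i$ is injective on the $(d+1)$-dimensional space $T_u\cW_{x_0,x_n}$ (an open condition, which you justify via unique continuation for the linearised operator — a step the paper asserts without elaboration), then choose the tangent hyperplanes $T_{u(z_i)}H_i$ generically so that their sum is transverse to the image, and finally shrink the $H_i$ to secure the local uniqueness condition (2). The only cosmetic differences are the order of the steps and that you should also record the ordering $s_0<\cdots<s_d$ of the (possibly perturbed) points, which the paper handles by re-indexing.
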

\begin{proof}
    Pick points $z_i = (s_i,t_i)$ in $\bR\times [0,1]$ so that the map $\ev_z$ given by evaluation at $z = (z_0,\ldots, z_d)$ satisfies
    \[
\Gamma(u^*TX) \supset T_u \cW_{x_0,x_n} \stackrel{D(ev_z)}{\longrightarrow} \oplus_{i=0}^d T_{u(z_i)}X
    \]
    is injective. This is an open condition in the $\{z_i\}$, so we can perturb and re-index to ensure that the points are also regular and ordered (in particular all the co-ordinates $s_i$ are distinct). We now pick real codimension one linear subspaces
    \[
    \tilde{H}_i \subset T_{u(z_i)}X
    \]
    so that $\oplus_i \tilde{H}_i \subset \oplus_i T_{u(z_i)}X $ is transverse to $\ev_z$.  We now pick arbitrary open hypersurfaces $H_i$ with tangent spaces $\tilde{H}_i$ at $u(z_i)$. This means that the first condition of Definition \ref{defn:complete_system} holds by construction.  Shrinking the $H_i$ to smaller open subsets if necessary, we can ensure the second condition also holds. \end{proof}

    A choice of complete system for $u$ gives rise to an open set $u \in V_{\textbf{H}} \subset \cW_{x_0,x_n}$ with the property that for any $v \in V_{\textbf{H}}$ there is a unique point $z_i(v) = (s_i(v),t_i) \in \bR\times [0,1]$ with $v(z_i(v)) \in H_i$, and so that the map 
    \[
    V_\textbf{H} \to \bR^{d+1}, \ v \mapsto \, (s_0(v),\ldots,s_d(v))
    \]
    is a diffeomorphism to its image. Taking $V_\textbf{H}$ to be translation invariant, one gets a a chart $U_{\textbf{H}}$ for $\cW(x_0,x_n)$ near $u$ with local diffeomorphism
    \begin{equation} \label{eqn:transit_times}
    U_{\textbf{H}} \to \bR^d, \ v \mapsto (s_0(v)-s_1(v),s_1(v)-s_2(v),\ldots, s_{d-1}(v)-s_d(v)).
    \end{equation}
The previous lemma implies that such charts cover.

    \begin{rmk}
        These co-ordinates are similar to the `transit time' co-ordinates used by Barraud and Cornea in \cite{Barraud-Cornea}, obtained from recording the $s$-co-ordinates of intersections with a collection of level sets of the action functional.
    \end{rmk}

Now let $(u^1,\ldots, u^n) \in \cM_{x_0,x_1}\times\cdots\times \cM_{x_{n-1},x_n}$ be a broken solution, and for each $1 \leq \ell \leq n$ fix a complete system $\textbf{H}^\ell = \{H^{\ell}_0,\ldots, H^{\ell}_{\mu(u_{\ell})-1}\}$ for $u^{\ell}$. We obtain an open set 
\[
U = U_{\textbf{H}^1} \times \cdots \times U_{\textbf{H}^n} \subset \cM_{x_0,x_1}\times\cdots\times \cM_{x_{n-1},x_n}
\]
A convenient feature of the charts defined by the open sets $U_\textbf{H}$ associated to complete systems is that they yield `associative gluing maps' in the following sense:

\begin{prop}[{\cite[Corollary 6.8 and Proposition 6.9]{Large}} ]
    For sufficiently large $T_0 \gg 0$ there is a gluing map 
    \[
    G: U \times [T_0,\infty)^{n-1} \to \cM_{x_0,x_n}
    \]
    with the following properties:
    \begin{enumerate}
        \item
    it is smooth onto a neighbourhood of the point (at infinity) $(u^1,\ldots,u^n)$;
    \item  $G(u,T)$ is the unique trajectory which admits a parametrization with $v(z_i^{\ell}(u,T)) \in H_i^{\ell}$ for every $i,\ell$;
    \item gluing is strictly associative in the sense that if we fix $T_m \in [T_0,\infty)$ for $m \not \in \{\ell_1,\ldots, \ell_r\}$ and further fix
    \[
    u^{\alpha} = (u^{l_{\alpha-1}+1}, \ldots, u^{l_{\alpha}}) \in \cM_{x_{l_{\alpha-1}}, x_{l_{\alpha}}}; \quad T^\alpha = (T_{l_{\alpha-1}+1},\ldots, T_{l_{\alpha}})
    \]
    then $G(u,T) = G(G(u^\alpha,T^\alpha)_{\alpha = 1,\ldots,r}, (T_{l_1},\ldots, T_{l_r}))$.
        \end{enumerate}
\end{prop}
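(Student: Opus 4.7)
The plan is to construct the gluing map $G$ by the standard pre-gluing plus Newton iteration argument, and then exploit the fact that the hypersurfaces $H_i^\ell$ pin down a unique parametrization to obtain the strict associativity (this is really the whole point of using complete systems rather than, say, action level sets for co-ordinates).

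First I would construct, for gluing parameters $T = (T_1,\ldots,T_{n-1})$ with $T_\ell \gg 0$, an approximate (pre-glued) solution $u_T^{\mathrm{app}} : \bR \times [0,1] \to X$ by choosing representatives of the $u^\ell$ (for now arbitrary), translating the $\ell$-th one by $\sum_{k<\ell}(2T_k+T_k^\ell)$ for suitable shifts $T_k^\ell$, and interpolating across the neck regions with a bump function that smoothly transitions between $u^\ell$ and the constant asymptote $x_\ell$ on a scale of length $\sim T_\ell$. By exactness and the exponential decay of Floer strips toward their asymptotes, the Cauchy-Riemann defect $\bar{\partial}_{H,J}(u^{\mathrm{app}}_T)$ has $W^{1,p}$-norm decaying exponentially in $\min_\ell T_\ell$. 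Standard Floer-theoretic analysis (uniform bounds on a right inverse to the linearisation $D_{u^{\mathrm{app}}_T}$, obtained by patching right inverses of the $D_{u^\ell}$, cf.\ the argument in \cite[Sections 6-7]{Large}) then gives a unique correction $\xi(T)$ in a small ball of $W^{1,p}$, so that $\exp_{u^{\mathrm{app}}_T}(\xi(T))$ is a genuine Floer strip $\tilde{G}(u,T)$.

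Next I would upgrade this to the map $G$ by fixing the parametrization using the complete systems. The moduli space $\cM_{x_0,x_n}$ is the translation-quotient of $\cW_{x_0,x_n}$; for any broken configuration sufficiently close to $(u^1,\ldots,u^n)$, each glued trajectory $\tilde{G}(u,T)$ meets each hypersurface $H_i^\ell$ in a single point near its respective $z_i^\ell$, by openness of the transversality condition in Definition~\ref{defn:complete_system} and continuity of the evaluation map. Modding out by translation, I define $G(u,T)$ to be the unique representative of $\tilde{G}(u,T)$ in $\cM_{x_0,x_n}$ whose canonical $\bR^{\mu-1}$ co-ordinates coming from \eqref{eqn:transit_times} (applied to the whole glued strip) record the shifts fixed by requiring hits on $H_i^\ell$ at $t_i^\ell$. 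The smoothness of $G$, and the description of the image as a neighbourhood of $(u^1,\ldots,u^n)$ in the Gromov compactification, follow from the implicit function theorem applied in weighted Sobolev spaces, together with the local diffeomorphism property of the transit-time chart \eqref{eqn:transit_times}.

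The uniqueness property (item 2) is essentially automatic from the construction: any Floer strip close enough to the broken configuration lies in the image of the IFT-produced section, and admits a unique parametrization making it hit each $H_i^\ell$ at $t=t_i^\ell$. The strict associativity in item 3 is then a formal consequence of this uniqueness. Given a partial grouping $(u^\alpha,T^\alpha)$, the intermediate glued strips $G(u^\alpha,T^\alpha)$ are themselves characterised by the hypersurface conditions indexed by $\ell \in \{l_{\alpha-1}+1,\ldots,l_\alpha\}$; gluing these with parameters $(T_{l_1},\ldots,T_{l_r})$ produces a strip characterised by all the hypersurface conditions simultaneously, which by the uniqueness clause must equal $G(u,T)$. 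The main obstacle in making this rigorous is the usual analytic technicality of establishing uniform estimates on the right inverse with constants independent of the gluing parameters and of the point in $U$, and showing that the hypersurface conditions cut out the IFT solution transversely; both are handled in \cite[Sections~6-7]{Large}, ultimately resting on the transversality built into Definition~\ref{defn:complete_system} and Lemma~\ref{lem:complete_systems_exist}.
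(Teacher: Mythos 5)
The paper does not actually prove this Proposition: it is imported verbatim from \cite[Corollary 6.8, Proposition 6.9]{Large}, and the surrounding text only explains how it is used. Your sketch follows the same route as the cited source -- pre-gluing, a Newton--Picard correction with uniform right inverses, and then the observation that the complete systems of hypersurfaces characterise the glued trajectory, from which associativity is formal -- so at the level of strategy there is nothing to object to.

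There is, however, one step where your write-up is imprecise in a way that matters for the \emph{strict} (rather than approximate) associativity. You define $G(u,T)$ as ``the unique representative of $\tilde G(u,T)$ in $\cM_{x_0,x_n}$ whose canonical co-ordinates \eqref{eqn:transit_times} record the shifts fixed by requiring hits on $H_i^\ell$ at $t_i^\ell$.'' But the co-ordinates \eqref{eqn:transit_times} are differences of $s$-co-ordinates and hence translation-invariant: every representative of the class of $\tilde G(u,T)$ has the same transit-time co-ordinates, so choosing a representative does not let you impose anything. As written, $G(u,T)$ is simply the class of the Newton output $\tilde G(u,T)$, whose transit times agree with the prescribed ones only up to the (exponentially small, nonzero) correction $\xi(T)$; with that definition item (2) fails on the nose and item (3) holds only up to exponentially small error. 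The correct move is to \emph{define} $G(u,T)$ as the unique point of the transit-time chart on $\cM_{x_0,x_n}$ whose co-ordinates take the prescribed values determined by $(u,T)$ (the internal transit times of each $u^\ell$ together with gaps governed by $T_\ell$); the Newton iteration is used only to verify that the chart covers a neighbourhood containing these values, so that such a point exists and is unique. Equivalently, one post-composes the analytic gluing with a change of gluing co-ordinates. With that definition item (2) holds by construction and item (3) follows because the prescribed transit-time data behave additively under two-stage gluing. Relatedly, in your uniqueness paragraph the condition ``hits each $H_i^\ell$ at $t=t_i^\ell$'' is automatic for any trajectory in the chart (this is part of Definition \ref{defn:complete_system}); the content of item (2) is the prescription of the $s$-positions, i.e.\ of the chart co-ordinates themselves.
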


It is classical that if $w$ is obtained from pregluing $(u^1,\ldots,u^n)$ with gluing lengths $T_1,\ldots, T_{n-1}$ then 
\[
\| \overline{\partial}(w)\|_{2,k} \leq C\cdot e^{-\delta \min(\textbf{T})}
\]
for some $C>0$ and $\delta>0$. The gluing, obtained from a Newton-Picard iteration, expresses the actual neraby Floer solution as the exponential of a vector field $\xi$ along $w$ and 
\begin{equation} \label{eqn:classical_estimate}
\overline{\partial} \exp_w(\xi_{\textbf{u,T}}) = 0, \qquad \|\xi\|_{2,k} \leq  C\cdot e^{-\delta \min(\textbf{T})}
\end{equation}

Let $\psi: (0,\varepsilon] \to [T_0,\infty)$ be any diffeomorphism, in this context called a `gluing profile'. We apply the map $\psi$ co-ordinatewise to vector inputs. The associativity of gluing and usual properties of the gluing map implies that the map
\begin{equation}\label{eqn:continuous_gluing}
\prod_{\ell} U_{\textbf{H}^{\ell}} \times (0,\varepsilon)^{n-1} \to \cM_{x_0,x_n}, \qquad (u,\textbf{r}) \mapsto G(u, \textbf{T}:=\psi(\textbf{r}))
\end{equation}
extends to a well-defined continuous map
\begin{equation} \label{eqn:smooth_gluing}
\prod_{\ell} U_{\textbf{H}^{\ell}} \times [0,\varepsilon)^{n-1} \to \overline{\cM}_{x_0,x_n}
\end{equation}
Building on exponential decay estimates for Floer solutions, \cite{FO3:smoothness} proved a smoothness theorem for compactified moduli spaces of holomorphic curves. 
In the simple case where the Lagrangians are transverse (so there is no Morse-Bott gluing and there are no weighted Sobolev spaces), \cite{Large} gives a simplified  proof of the following of their results, by directly analysing the transition functions associated to changing the hypersurfaces in a complete system:
\begin{thm}[{Fukaya-Oh-Ohta-Ono \cite{FO3:smoothness}, Large \cite[Theorem 6.12]{Large}}] \label{thm:smooth_structure}
    If we use the gluing profile $\psi(r) = 1/r$ then the maps from \eqref{eqn:smooth_gluing} (associated to charts defined by complete systems of hypersurfaces) give $\overline{\cM}_{x_0,x_n}$ the structure of a smooth manifold with corners.
\end{thm}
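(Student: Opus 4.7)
My plan is to construct a smooth atlas on $\overline{\cM}_{x_0,x_n}$ consisting of two types of charts: the interior charts $U_{\textbf{H}}$ of \eqref{eqn:transit_times} associated to complete systems of hypersurfaces for unbroken trajectories, and the boundary charts of the form \eqref{eqn:smooth_gluing} associated to complete systems on each component of a broken configuration. The proof then splits into (i) showing these charts cover, and (ii) verifying smoothness of transitions. The covering statement is immediate from Lemma \ref{lem:complete_systems_exist} in the interior; for a broken configuration $(u^1,\ldots,u^n)$, standard Gromov compactness together with the uniqueness clause in the gluing proposition shows that the corresponding chart \eqref{eqn:smooth_gluing} covers a neighborhood.

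For compatibility, I would treat three cases. Case (a): two interior charts $U_{\textbf{H}}, U_{\textbf{H}'}$ at the same unbroken trajectory $u$. The transition expresses the transit-times $(s_i^{\textbf{H}'}(v))$ as a function of $(s_i^{\textbf{H}}(v))$; applying the implicit function theorem to the defining conditions $v(z_i^{\textbf{H}'}(v)) \in H_i'$ and using condition (1) of Definition \ref{defn:complete_system} gives smoothness. Case (b): two boundary charts at the same stratum with different complete systems on each component. Here associativity of gluing (item (3) of the gluing proposition) pins down the gluing parameters $\textbf{T}$ and reduces the transition to a product of case-(a) transitions on the individual components, hence smooth.

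The main obstacle is Case (c): the transition between a boundary chart parametrised by $(\textbf{u},\textbf{r})$ with $\textbf{r} = \psi^{-1}(\textbf{T}) \in [0,\varepsilon)^{n-1}$ and an interior chart $U_{\textbf{K}}$ at a nearby unbroken trajectory. By the implicit function argument of case (a), it suffices to show that the evaluation of the glued solution at any fixed regular point, viewed as a function of $(\textbf{u}, \textbf{r})$, is $C^\infty$ up to $\textbf{r}=0$. Smoothness in the $\textbf{u}$-directions is standard from elliptic regularity. The content lies in smoothness at $\textbf{r}=0$, which I would establish by an inductive bound
\[
\bigl\|\partial_{\textbf{T}}^{N} \xi_{\textbf{u},\textbf{T}}\bigr\|_{2,k} \leq C_{N,k}\, e^{-\delta \min(\textbf{T})}
\]
on the derivatives of the vector field $\xi_{\textbf{u},\textbf{T}}$ appearing in \eqref{eqn:classical_estimate}. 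These follow by differentiating the Newton--Picard fixed-point equation defining $\xi$, using uniform bounds on the right-inverse of the linearised $\overline{\partial}$-operator at the preglued map together with the exponential decay of its error, and controlling each new derivative by a term of the same exponential order.

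Given such bounds, the specific gluing profile $\psi(r)=1/r$ enters decisively. Under $T = 1/r$, one has $\partial_r^N e^{-\delta T} = P_N(1/r)\, e^{-\delta/r}$ for polynomials $P_N$, so any function of $\textbf{T}$ whose derivatives are $O(e^{-\delta \min \textbf{T}})$ extends to a $C^\infty$ function of $\textbf{r}$ at $\textbf{r}=0$, with all derivatives vanishing there. Combined with the estimates above, this promotes the continuous extension \eqref{eqn:smooth_gluing} to a smooth one, the transition maps in Case (c) are $C^\infty$ up to the corners, and the chart structure on $\overline{\cM}_{x_0,x_n}$ becomes a smooth manifold-with-corners structure as claimed. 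The heart of the argument is thus the inductive derivative estimate on $\xi_{\textbf{u},\textbf{T}}$; everything else is either covered by the cited gluing proposition or reduces, via associativity and an implicit function theorem, to the same estimate.
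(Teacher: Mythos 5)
Your proposal follows essentially the same route as the paper's sketch proof: the heart of both arguments is the inductive derivative estimate on the Newton--Picard correction $\xi_{\textbf{u},\textbf{T}}$, converted into smoothness at $\textbf{r}=0$ by the specific profile $\psi(r)=1/r$ (exponential decay in $\textbf{T}$ beating any polynomial in $1/\textbf{r}$), with the remaining chart transitions handled by the implicit function theorem and strict associativity of gluing. One small correction: the decay estimate should be stated for the restriction of $(\nabla_{\textbf{u}})^p\partial_{\textbf{T}}^m\xi_{\textbf{u},\textbf{T}}$ to compact pieces $[-S,S]\times[0,1]$ of the glued strip, as in \eqref{eqn:FO3}, rather than for the global $W^{k,2}$-norm --- globally the $\textbf{T}$-derivatives involve translating entire components and need not decay --- but since your argument only uses evaluation at finitely many fixed regular points, this restricted version is exactly what is needed.
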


\begin{proof}[Sketch]
 The crucial decay estimate refines \eqref{eqn:classical_estimate} by showing that
\begin{equation} \label{eqn:FO3}
\| (\nabla_{\textbf{u}})^p \frac{\partial^m}{\partial \textbf{T}^m} (\xi_{\textbf{u,T}})|_{[-S,S] \times [0,1]} \|_{2,k-p-|m|} \leq C \cdot e^{-\delta \min(\textbf{T})} \leq C e^{-\delta' / \max(\textbf{r})}, \qquad p+|m| \leq k-1
\end{equation}
where the last inequality uses the choice of gluing profile and $\delta' < \delta$ is chosen appropriately. 

Given two complete systems of hypersurfaces $\textbf{H}$ and $\textbf{H'}$, the transition map between associated charts has the shape, cf. \eqref{eqn:transit_times}
\[
(u,\textbf{r}) \mapsto ((\tilde{s}_i(u,\textbf{r}) - \tilde{s}_{i-1}(u,\textbf{r})), \tilde{\textbf{r}})
\]
for the `new' transit lengths $\tilde{s}_j$ and functions schematically of the form $\tilde{r}_j = r_j / (1+\phi(r, \tilde{s}_i - s_i)$. Smoothness of gluing amounts to the fact that the functions $\tilde{s}_i$ extend smoothly over $\textbf{r} = (0,\ldots,0)$, which is inferred from \eqref{eqn:FO3}.
\end{proof}

\begin{rmk}
    The map $\psi(r) = -\ln(r)$ does not yield a smooth manifold-with-corners structure, even though this is a `natural' profile from the viewpoint of algebraic geometry (essentially because the modulus of the annulus $\{1<|z|<r\}$ is $-\ln(r)/2\pi$ and that modulus is closely related to the smoothing parameter $\{xy = \varepsilon\}$ for a node).
\end{rmk}

The discussion up to this point adapts directly to prove that compactified regular moduli spaces of Floer cylinders are smooth manifolds with corners, using local charts which again record differences of the $s$-co-ordinates of intersections with real hypersurfaces $H_i$ where the co-ordinates lie in the cylinder rather than the strip, in direct analogy with \eqref{eqn:transit_times}.

\subsection{Smooth structures on other moduli spaces}\label{sec: smoth ext}

For moduli spaces of holomorphic triangles, where the domain $\Sigma$ is rigid and all breaking is unstable, the previous analysis applies but one must first choose a parametrisation $\bR \times (a,b) \to \Sigma $ of an open subset of the domain and work with the corresponding local co-ordinates, which allow one to define a  regular point as in \eqref{eqn:regular_point}. 

The remaining step is to deal with moduli spaces where the underlying domain moves in a non-trivial moduli space $\cR$ of holomorphic curves. The only cases relevant to this paper are where $\cR = [0,1]$, namely a moduli space of four-marked discs, used in constructing the `associator' which proves associativity of the composition operation on bilinear flow morphisms (i.e. of the Floer product), and a moduli space of discs with two boundary marked points and an interior marked point, arising in the discussion of open-closed maps.  

Let $\cF(x_{\alpha})$ denote a compactified moduli space of solutions $(p,u)$ where $u: C_p \to X$ is defined on a Riemann surface $C_p$ depending on a point $p\in\cR \cong [0,1]$ (and the $x_{\alpha}$ denote the fixed asymptotics, i.e. corners of holomorphic polygons and/or an  interior Hamiltonian orbit). Recall that regularity of the moduli space (with varying $p$) then concerns surjectivity of an extended linearised operator 
\[
D_{C_p,u}^{\ext}: \, T_p\cR \oplus W^{1,2}(C_p,u^*TX) \to L^2(\Omega^{0,1}\otimes_J u^*TX)
\]
which takes account of variation of the domain parameter, and there is a natural map
\begin{equation} \label{eqn:forget}
\ker(D_{C_p,u}^{\ext}) = T_{C_p,u}\cF(x_{\alpha}) \to T_p\cR
\end{equation}
coming from the forgetful map $\forget: \cF(x_{\alpha}) \to \cR$. There are now two different possibilities: either \eqref{eqn:forget} is trivial, or it is surjective.  Accordingly, we introduce two kinds of charts on the moduli space. Suppose $\dim \cF(x_{\alpha}) = d$:
\begin{enumerate}
    \item Suppose \eqref{eqn:forget} vanishes. This means we are in a situation much as before, since the tangent space to $\cF(x_{\alpha})$ at $(p,u)$ is spanned by variations with fixed domain. Choose $d$ points $z_1,\ldots, z_d \subset C_p$ lying in a chart $\bR \times (a,b) \subset C_p$ (for instance in a the strip-like ends), regular in the sense of \eqref{eqn:regular_point} for the given co-ordinates, and open hypersurfaces $H_1,\ldots, H_d \subset X$ with $u(z_i) \in H_i$ so that 
     \[
    \partial_s u|_{z_i} \neq 0, \ u^{-1}(H_i) \pitchfork \partial_s \ \textrm{at} \ z_i.
    \]
    and 
    \[
    \ev_\textbf{z}: \cF(x_{\alpha}) \to X^d 
    \]
    is transverse to $H_1\times \cdots \times H_d$ at $(p,u)$. We obtain an open set $U_{\textbf{H}} \subset \cF(x_{\alpha})$ with co-ordinates so that for $v\in U_{\textbf{H}}$ there are points $z_i(v) = (s_i(v),t_i)$ in the strip-like-end co-ordinates with $v(z_i(v)) \in H_i$ and $v \mapsto (s_1(v),\ldots, s_d(v)) \in \bR^d$ defines a local chart.
    \item Suppose instead that \eqref{eqn:forget} is onto at $(p,u)$. Again fix a chart $\bR \times (a,b) \subset C_p$. We now pick points $z_1,\ldots, z_{d-1} \subset C_p$ in the chart,   pick codimension one open hypersurfaces $H_1,\ldots, H_{d-1}$ of $X$ containing the $u(z_i)$,  regular in the previous sense,  and finally pick a codimension one hypersurface $X \times \{p\} = H_{\cR} \subset X\times \cR$, and ask that
    \[
    \ev_\textbf{z} \times (\forget): \cF(x_{\alpha}) \to X^{d-1} \times \cR
    \]
    is transverse to $H_1 \times \cdots \times H_{d-1} \times H_{\cR}$ at $(p,u)$. This yields an open set $U_{(\beta,\textbf{H})} \subset \cF(x_{\alpha})$ where nearby solutions $v$ are uniquely determined by the $s$-co-ordinates $s_i(v)$ of the local intersections with the $H_i$ together with their intersection with $H_{\cR}$, which in particular records the conformal parameter of the domain at $v$. 
    \end{enumerate} 

\begin{rmk}
Both cases involve picking $(d-1)$ hypersurfaces in $X$ and one in $X\times \cR$ and asking for transversality of an evaluation map to $X^d \times \cR$;  the final hypersurface is $H_d \times \cR$ in the first case and $X \times \{p\}$ in the second. 
    For spaces $\cR$ of domains which are higher-dimensional manifolds with corners, the crucial information is the rank $r$ of \eqref{eqn:forget}, and one should construct charts which involve picking $r$ codimension one hypersurfaces in the product $X \times \cR$, but we will stick to the simpler case which suffices for our purposes.
\end{rmk}

   \begin{lem}
       Charts of the previous two kinds cover the moduli space $\cF(x_{\alpha})$.
   \end{lem}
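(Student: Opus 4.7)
The plan is to show that at every point $(p, u) \in \cF(x_\alpha)$, whether unbroken or in the stratified Gromov boundary, a chart of one of the two described types applies. The dichotomy between the two types is exhaustive because $\cR \cong [0,1]$ is one-dimensional, so the differential in \eqref{eqn:forget} has rank $0$ or $1$ at every $(p,u)$.

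First, at an unbroken $(p, u)$ for which $T_{(p,u)}\cF(x_\alpha) \to T_p \cR$ vanishes, every tangent vector to the moduli space is an infinitesimal deformation of $u$ for fixed domain; the proof of Lemma \ref{lem:complete_systems_exist} then carries over verbatim, producing $d$ regular points $z_i$ in a strip-like chart on $C_p$ and open hypersurfaces $H_i \subset X$ satisfying the analogue of Definition \ref{defn:complete_system}, which place $(p,u)$ inside a chart of type (1).

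When instead the differential is surjective, $K := \ker(d(\forget)) \subset T_{(p,u)}\cF(x_\alpha)$ has codimension one, and I would run the same argument restricted to $K$: choose $d-1$ regular points $z_1, \ldots, z_{d-1}$ in a strip-like chart on $C_p$ and real codimension-one linear subspaces $\tilde H_i \subset T_{u(z_i)} X$ so that $D(\mathrm{ev}_{\textbf{z}})|_K$ is transverse to $\bigoplus_i \tilde H_i$; upgrading these to open hypersurfaces $H_i \subset X$ and setting $H_\cR = X \times \{p\}$, a short diagram chase using surjectivity of $d(\forget)$ at $(p,u)$ shows that $\mathrm{ev}_{\textbf{z}} \times \forget$ is transverse to $H_1 \times \cdots \times H_{d-1} \times H_\cR$ at $(p,u)$, producing a chart of type (2).

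For compactification points $(p_0, u^1, \ldots, u^n)$ in the Gromov boundary I would combine the above with the associative gluing construction from Theorem \ref{thm:smooth_structure}. On each stable component $u^\ell$ which does not carry the conformal parameter, pick a chart of type (1) by the strip-case argument; on the component which does carry $p$, pick a chart of type (1) or (2) according to the rank of the forgetful differential there. The gluing maps then assemble these into a chart for $\cF(x_\alpha)$ near the broken configuration, with the extra corner directions parametrised by the gluing-profile variables $r_\alpha = 1/T_\alpha$. The main obstacle will be verifying that the transversality conditions on the evaluation maps are preserved under pregluing; this follows from the openness of the regular-point condition together with the exponential decay estimates of \eqref{eqn:FO3}, which ensure that hypersurfaces chosen for the individual components remain in generic position for the glued curve once the gluing parameters are sufficiently small.
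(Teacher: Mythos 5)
Your argument is correct and is essentially the paper's proof: both split into the cases where the forgetful differential \eqref{eqn:forget} vanishes or is surjective, and in each case choose linear hyperplanes $\tilde H_i$ at the $u(z_i)$ so that the evaluation conditions (together with the $T_p\cR$-condition in the second case) complement the kernel of $D^{\mathrm{ext}}$, then thicken to generic open hypersurfaces exactly as in Lemma \ref{lem:complete_systems_exist}. The paper's proof stops at this tangent-space argument and defers the behaviour near the Gromov boundary to the subsequent example and Proposition \ref{prop:smooth_structure_general}, so your final paragraph on broken configurations is additional (and consistent) rather than required for this lemma.
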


    \begin{proof}
        We have $T_{(p,u)}\cF(x_{\alpha}) \subset \Gamma(u^*TX)\oplus T_p\cR$. For linear hyperplanes $\tilde{H}_i \subset T_{u(z_i)}(X)$, let $W^2(\underline{\tilde{H}})$ denote those vector fields $\xi \in u^*TX$ with $\xi(z_i) \in \tilde{H}_i$, where $1\leq i\leq \{d-1,d\}$ so this has codimension $d$ respectively  $d-1$  depending on whether the chart has the first or second form. For suitably chosen hyperplanes, as in Lemma \ref{lem:complete_systems_exist}, a complement to the kernel $\ker(D^{ext}$ is given by $T_p\cR \oplus W^2(\underline{\tilde{H}})$  in the first case and $ W^2(\underline{\tilde{H}})$ in the second. The proof then follows as before, by taking generic hyperplanes $H_i$ in $X$ tangent to the $\tilde{H}_i$. 
    \end{proof}
     \begin{ex}
        Suppose $p$ lies in a collar neighbourhood of the boundary of $\cR$ covered by the image of a gluing map $\beta$. The proof of linear gluing shows that \eqref{eqn:forget} is onto, so a neighbourhood of the Gromov-Floer boundary is covered by charts of the second kind. The domain $C_p$ contains a distinguished `neck', coming from the gluing, and it is natural to take the $z_i$ in that neck. Nearby solutions $v$ are uniquely determined by the $s$-co-ordinates $s_i(v)$ together with the unique value $\beta(v)$ of the gluing parameter (i.e. collar co-ordinate) at $v$. \end{ex}

We use the same gluing profile $\psi(r) = 1/r$ to map the parameter space $[L_0,\infty)$ for neck length (equivalently conformal parameter near the boundary in $\cR \simeq [0,1]$) to an interval $(0,\varepsilon]$.  With this choice:

        \begin{prop} \label{prop:smooth_structure_general}
            The previous charts endow the compactified moduli space $\cF(x_{\alpha})$ with the structure of a smooth manifold with corners.
        \end{prop}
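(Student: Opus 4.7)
The plan is to reduce the proof to the case already treated in Theorem \ref{thm:smooth_structure}, by showing that the two kinds of charts patch together to a smooth manifold-with-corners structure and that the only essentially new feature — the appearance of the conformal parameter of $\cR$ as a co-ordinate — contributes no obstruction beyond what is already understood in the strip case. First, I would separate the argument into smoothness at interior points of $\overline{\cF(x_{\alpha})}$ (no breaking) and smoothness at the compactification strata, and in parallel, within each of these, into points where the forgetful map \eqref{eqn:forget} vanishes versus where it is surjective. At an interior point where \eqref{eqn:forget} vanishes, a chart of the first kind is defined exactly as in the strip case and the transition functions between two such charts are smooth by the same direct analysis of transit-time co-ordinates used in the proof of Theorem \ref{thm:smooth_structure}, the only new ingredient being that one now has to track that the solution genuinely has a fixed domain up to higher-order error, which follows from $\forget=0$ and the implicit function theorem.

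Second, at an interior point where \eqref{eqn:forget} is surjective, a chart of the second kind splits the co-ordinates as $(s_1(v),\ldots,s_{d-1}(v),p(v))$, where $p(v)$ is the conformal parameter recorded by the hypersurface $X \times \{p\} \subset X \times \cR$. The transition functions between two such charts, or between a first-kind and a second-kind chart at a point where both apply, then factor through a smooth map of $\cR$ (change of local slice) plus the familiar strip-case transit-time transitions at each fixed conformal parameter; smoothness in $p$ follows from smoothness of the ordinary Floer equation in one-parameter families, so once again one is reduced to Theorem \ref{thm:smooth_structure} fibrewise in the $\cR$-direction.

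Third, at a compactification point $(p_0,u^1,\ldots,u^n)$ one combines several gluing parameters. The Gromov boundary decomposes into (a) breaking of the map $u$ into several pieces at the punctures (strip-like-end gluing of exactly the type controlled by the proof of Theorem \ref{thm:smooth_structure}), and (b) degeneration of the domain $C_{p_0}$ when $p_0$ lies at the boundary of $\cR$, for which, near $p_0$, the conformal parameter in $\cR$ is itself a gluing length $T_{\cR}$. I use the chosen gluing profile $\psi(r) = 1/r$ on both sets of gluing lengths simultaneously. For each boundary/corner stratum the appropriate chart is then of the second kind built from hypersurfaces $H_1,\ldots,H_{d-1}\subset X$ adapted to a complete system on each irreducible component, together with the hypersurface $H_{\cR} = X\times\{p_0\}$; the $H_{\cR}$-co-ordinate plays the role of the $r$-co-ordinate associated to the neck of $\cR$, while the $s_i$-co-ordinates encode both the internal positions on each component and, via differences, the usual strip-breaking gluing parameters $r_j = 1/T_j$. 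The smoothness of the induced gluing map at $\mathbf{r}=0$ is the content of the family version of the Fukaya--Oh--Ohta--Ono estimate \eqref{eqn:FO3}, which holds verbatim because the variation in the $\cR$-direction contributes only a smooth perturbation of the Cauchy--Riemann operator that decays exponentially in the neck length together with all derivatives in $(\mathbf{u},\mathbf{T})$.

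The main obstacle is that at a corner point where strip-breaking gluing parameters and a domain-degeneration gluing parameter all converge simultaneously, one must check that the transition function between two such mixed charts is smooth across the corner. This is where a slight enhancement of the argument in the proof of Theorem \ref{thm:smooth_structure} is required: one must show that the transit-time co-ordinates $\tilde{s}_i$ defined by a second complete system of hypersurfaces extend smoothly over the corner stratum in both the strip-breaking $r_j$'s \emph{and} the conformal $r_{\cR}$, and moreover that the conformal co-ordinate itself transitions smoothly between charts chosen with different slice hypersurfaces $H_{\cR}$ and $H'_{\cR}$. Both follow from applying the parametric version of the estimate \eqref{eqn:FO3} to each gluing direction separately and then using associativity of gluing, already built into the FOOO/Large framework, to conclude that the joint transition is smooth of all orders. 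With this in hand, the atlas of first- and second-kind charts is smoothly compatible and gives $\overline{\cF(x_\alpha)}$ a smooth manifold-with-corners structure.
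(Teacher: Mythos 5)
Your proposal is correct and follows essentially the same route as the paper's sketch: reduce to Theorem \ref{thm:smooth_structure}, cover the boundary with the two kinds of charts, and use the estimate \eqref{eqn:FO3} together with the strict associativity of gluing to control transitions at the (possibly mixed) corner strata. The one place where the paper is lighter is the comparison of two charts that both carry a neck-length co-ordinate: there is nothing to estimate there, since that co-ordinate is \emph{intrinsic} to $\cF(x_{\alpha})$ (it is determined by the conformal structure of the domain), so the transition in it is automatic rather than a consequence of a parametric version of \eqref{eqn:FO3} as you propose.
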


\begin{proof}[Sketch]
    The proof follows the lines of Theorem \ref{thm:smooth_structure}. The interior of the moduli space is canonically smooth, so we need only consider smoothness near the boundary. For unstable breaking the situation is exactly as in \emph{op. cit}. Also for stable breaking, the transition functions associated to nested strata in the boundary, with charts defined by a fixed choice of hyperplanes (and perhaps a neck length), are smooth because of the strict associativity of gluing, which follows from the geometric characterisation of the co-ordinates in the two kinds of chart. The new phenomenon arises when considering charts near a boundary stratum corresponding to stable breaking, when one either compares a chart involving only hypersurfaces $\{H_i\}$ and one with a neck length $\{H_i', \beta\}$, or compares two charts involving neck lengths $\beta,\beta'$.  In the first case, the estimate \eqref{eqn:FO3} again shows that the length $\beta$ depends smoothly on the co-ordinates associated to the $\{H_i\}$. In the second case, the transition function is smooth in the neck length parameter since that is \emph{intrinsic} to $\cF(x_{\alpha})$, being determined by the conformal structure of the domain.
\end{proof}

\subsection{Index bundles}

At a regular Floer solution $u: \bR \times [0,1] \to X$ with transverse Lagrangian boundary conditions $L,K$ there is a linearised operator $D_u = D(\overline{\partial}_J|_u)$, whose kernel $\ker(D_u)$ is the fibre of the tangent  bundle $T\cW_{xy} \to \cW_{xy}$ to the space of Floer solutions (where we have not quotiented by the re-parametrisation action of $\bR$).  Over the open locus of unbroken solutions, the map $\cW_{xy} \to \cM_{xy} = \cW_{xy} / \bR$ admits a unique section up to contractible choice, which induces a bundle $\Ind(x,y) \to \cM_{xy}$ called the \emph{index bundle}.

\begin{lem}
    The index bundle $\Ind(x,y) \to \cM_{xy}$ extends as a topological vector bundle to the compactification $\overline{\cM}_{xy}$. Moreover, one can construct this extension in such a way that there are  gluing isomorphisms $\psi = \psi_{xyz}: \Ind(x, y) \oplus \Ind(y, z) \to \Ind(x, z)$ over $\overline\cM_{xyz}$ which are associative, meaning the following diagram commutes over $\overline\cM_{xyzw}$.
    \[\xymatrix{
        \Ind(x, y) \oplus \Ind(y, z) \oplus \Ind(z, w) \ar[r] \ar[d] &
        \Ind(x, y) \oplus \Ind(y, w) \ar[d] \\
        \Ind(x, z) \oplus \Ind(z, w) \ar[r] &
        \Ind(x, w)
    }
    \]
\end{lem}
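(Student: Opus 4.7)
My plan is to construct both the extension and the gluing isomorphisms using the standard linear gluing theorem for families of Fredholm operators, leveraging the strictly associative nonlinear gluing maps $G$ from \cite{Large} built out of complete systems of hypersurfaces. The idea is to replace the genuine kernel bundle (which is only a bundle in the interior of $\overline{\cM}_{xy}$, where the Fredholm operators have locally constant kernel dimension) by a finite-dimensional reduction in which the relevant stabilised operators are surjective on a whole neighbourhood of the boundary.

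First, I would produce the topological extension of $\Ind(x,y)$ over $\overline{\cM}_{xy}$ by stabilised reduction. Given a broken trajectory $\mathbf u=(u_1,\ldots,u_n)\in \cM_{x_0 x_1}\times\cdots\times\cM_{x_{n-1}x_n}$, pick compactly supported finite-dimensional subspaces $W_i\subset C^{\infty}_c(C_{x_{i-1}x_i},\Omega^{0,1}\otimes u_i^*TX)$ such that the stabilised operator $D_{u_i}\oplus (W_i\hookrightarrow \Omega^{0,1})$ is surjective. For points $G(\mathbf u,\mathbf T)$ nearby in a gluing chart coming from complete systems of hypersurfaces, cutoff functions transfer each $W_i$ onto the preglued domain, giving a stabilisation $W_1\oplus\cdots\oplus W_n$ of the operator at the nearby solution. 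The kernel then forms an honest finite-rank topological vector bundle over the chart, whose restriction to the interior of the top stratum is canonically stably isomorphic to $\Ind(x,y)$. Patching over a finite atlas of such charts produces the claimed extension as a stable vector bundle (equivalently a virtual bundle represented by a genuine bundle after sufficient stabilisation).

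Second, for the gluing isomorphism $\psi_{xyz}$ over $\overline{\cM}_{xz}\times\overline{\cM}_{zy}$, I would choose the stabilisations along this boundary face to be of product form $W_1\oplus W_2$. With this choice the fibre of the extended $\Ind(x,y)$ at $(u_1,u_2)$ is tautologically $\ker(D_{u_1}|_{W_1})\oplus \ker(D_{u_2}|_{W_2})$, which is exactly $\Ind(x,z)|_{u_1}\oplus \Ind(z,y)|_{u_2}$, and this identification defines $\psi_{xyz}$. Associativity of $\psi$ over $\overline{\cM}_{xyzw}$ then reduces to the statement that the triple product stabilisation $W_1\oplus W_2\oplus W_3$ is compatible with both orderings of gluing, which is a direct consequence of the strict associativity of $G$ together with the fact that the linear gluing isomorphism is built from the same cutoff functions used to transfer $W_i$.

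The main obstacle I foresee is the coherence of the various local stabilisation choices across all compactification strata simultaneously: the $W_i$'s picked near one broken configuration must match, on overlaps, with those picked near another, and they must combine associatively at deeper strata. I would address this by fixing a finite atlas of gluing charts covering $\overline{\cM}_{xy}$ and enlarging all chosen $W_i$'s into a single large ambient finite-dimensional space, using that enlarging a stabilisation changes the kernel bundle only by a canonical trivial summand; the cocycle conditions for the transitions and the associativity of $\psi$ then reduce to statements about the associative pregluing map $G$ which are already in place.
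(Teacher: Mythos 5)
Your proposal is essentially correct: the paper's own ``proof'' of this lemma is a one-line citation to \cite[Proposition 7.3]{Large}, and the argument you sketch — extending the kernel bundle across the boundary via finite-dimensional stabilisations transferred by cutoff along the preglued necks, defining $\psi$ by choosing product-form stabilisations over broken configurations, and deducing associativity from the strict associativity of the gluing maps $G$ — is exactly the content of that reference. The only remark worth adding is that in the present transversely-cut-out setting the operators $D_{u_i}$ are already surjective at every broken configuration, so the extension can be obtained directly from linear gluing of honest kernels without stabilising; your stabilised version is the more robust variant and is harmless, provided (as you note) the stabilisations are built inductively over strata so that they are of product form on every boundary face simultaneously.
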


\begin{proof}
    See \cite[Proposition 7.3]{Large}.
\end{proof}

Suppose we now consider a space of holomorphic curves which are maps from a family $\cR$ of domains; then the tangent bundle to the moduli space at $(C_p,u)$ with $p\in \cR$ and $u: C_p \to X$ is given by the kernel of the extended operator $\ker(D_{(C_p,u)}^{\mathrm{ext}})$ introduced previously.

\subsection{Index bundles and abstract index bundles}\label{sec:2ind}
    The index bundles $\Ind(x, y)$ do not canonically contain $T\overline\cM_{xy}$. What we instead have are short exact sequences 
    $$0 \rightarrow \bR\tau_{xy} \xrightarrow{\sigma_{xy}} \Ind(x, y) \rightarrow T\overline\cM_{xy} \to 0$$
    where the first arrow comes from the $\bR$-symmetry on the set of Cauchy-Riemann sections.\par 
    
    At a regular point $u$ of a moduli space of Floer solutions with asymptotics $x,y$, the index bundle $\Ind(x,y)$ is the tangent space to the space of Floer solutions, $\overline\cM_{xy}$ is the compactification of the space of solutions modulo reparametrization, and the translation vector field $\partial / \partial s$ generates the $\bR$-factor of $\Ind(x,y)$.  If $u_1 \in \overline\cM_{xy}$  and $u_2 \in \overline\cM_{yz}$ are Floer solutions, then (after suitable translation and cutting-off) there is a parameter-dependent gluing $u_R = u_1 \#_R u_2$ ($R$ being the gluing parameter). Linear gluing (cf. \cite[Chapter II.12.f]{Seidel:book}) shows that under the gluing map, 
 \[
 ((\partial/\partial s)_{u_1}, (\partial/\partial s)_{u_2}) \mapsto (\partial / \partial s)_{u_R}, \quad (-(\partial/\partial s)_{u_1}, (\partial/\partial s)_{u_2}) \mapsto v
 \]
 where $v \in \Ind(x,z)/\bR$ has non-trivial inward-pointing component to the boundary face (i.e. in the collar direction of the embedding $\cC: \overline\cM_{xy} \times \overline\cM_{yz} \times  [0,\varepsilon) \to \overline\cM_{xz}$). 
    It follows that the index bundles further satisfy that the following diagram (cf. \cite[(7.17)]{Large}) commutes
    \[
    \xymatrix{
        0 \ar[r] &
        \bR \tau_{xz} \ar[r]_{\sigma_{xz}} \ar[d]_\Delta &
        \Ind(x, z) \ar[r] \ar[d]_\psi &
        T\overline\cM_{xz} \ar[r] \ar[d] &
        0 \\
        0 \ar[r] &
        \bR \tau_{xy} \oplus \bR \tau_{yz} \ar[r]_{(\sigma_{xy}, \sigma_{yz})} &
        \Ind(x, y) \oplus \Ind(y, z) \ar[r] & 
        T\overline\cM_{xy} \oplus T\overline\cM_{yz} \ar[r] &
        0
    }
    \]
    where $\Delta$ is the diagonal map sending $\tau_{xz}$ to $\tau_{xy}+\tau_{yz}$, $\psi$ is the (analytically defined) linear gluing map and the rightmost vertical map is a choice of splitting for the natural map 
    $$dc: T\overline\cM_{xy} \oplus T\overline\cM_{yz} \rightarrow T\overline\cM_{xz}$$
    The composition $\bR\tau_{xy} \oplus \bR \tau_{yz} \to T\overline\cM_{xz}$ also sends $\tau_{xyz}$ to an inwards-pointing normal vector in $T\overline\cM_{xz}$ for the boundary component $\overline\cM_{xy} \times \overline\cM_{yz}$.\par 
    By splitting the short exact sequence above, there are isomorphisms $\Ind(x, y) \cong T\overline\cM_{xy} \oplus \bR\tau_{xy}$. We would like to choose such isomorphisms which are compatible with the abstract gluing isomorphisms for the right hand side.

    \begin{prop}\label{prop: two types of index}
        Let $\overline\cM=\overline\cM^{LK}$ be the flow category associated to a pair of spectral Lagrangian branes.  Then there are isomorphisms of vector bundles
        $$\phi = \phi_{xy}: \Ind(x, y) \to I_{xy}$$
        where $I_{xy} = I_{xy}^\cM$ is the abstract index bundle of $\overline\cM$, and the $\phi_{xy}$ respect the abstract and analytic gluing isomorphisms, in the sense that the following diagram commutes:
        \[ \xymatrix{
            \Ind(x, y) \oplus \Ind(y, z) \ar[r]_\psi \ar[d]_\phi&
            \Ind(x, z) \ar[d]_\phi \\
            I_{xy} \oplus I_{yz} \ar[r]_\psi &
            I_{xz}
        }
        \]
        and are compatible with projection to tangent spaces, in the sense that the following diagram commutes
        \[\xymatrix{
            \Ind(x,y) \ar[d]_\phi \ar[dr] & \\
            I_{xy} = T\overline\cM_{xy} \oplus \bR \tau_{xy} \ar[r] & T\overline\cM_{xy}
        }
        \]
        where the horizontal arrow is projection onto the first factor.\par 
        Furthermore, this $\phi$ is canonical up to contractible choice.
    \end{prop}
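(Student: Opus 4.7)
The plan is to construct $\phi_{xy}$ by induction on the index difference $d(x,y) := |x|-|y|$, using that for each $(x,y)$ the space of splittings of the short exact sequence
$$0 \to \bR\tau_{xy} \xrightarrow{\sigma_{xy}} \Ind(x,y) \to T\overline\cM_{xy} \to 0$$
is an affine space over $\Hom(T\overline\cM_{xy}, \bR\tau_{xy})$, hence contractible. A splitting is exactly the data of an isomorphism $\phi_{xy}: \Ind(x,y) \to T\overline\cM_{xy} \oplus \bR\tau_{xy} = I_{xy}$ which is compatible with projection to tangent spaces, so the third compatibility in the proposition is automatic; all of the content is in the gluing compatibility and the inductive matching on boundary strata.

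For the base case $d(x,y) = 1$, the space $\overline\cM_{xy}$ is $0$-dimensional and $T\overline\cM_{xy}=0$, so the short exact sequence forces $\phi_{xy} = \sigma_{xy}^{-1}$ tautologically. For the inductive step, assume $\phi_{x'y'}$ has been defined for all pairs with $d(x',y') < d(x,y)$, satisfying the gluing compatibility and canonical up to contractible choice. Over each codimension one face $\overline\cM_{xz}\times\overline\cM_{zy} \subset \partial\overline\cM_{xy}$, the gluing square in the statement of the proposition forces
$$\phi_{xy}|_{\overline\cM_{xz}\times\overline\cM_{zy}} := \psi^{\mathrm{abs}} \circ (\phi_{xz}\oplus\phi_{zy}) \circ (\psi^{\mathrm{an}})^{-1}.$$
Using that the abstract gluing sends $\tau_y \mapsto \nu_y$ and $\tau_z \mapsto \tau_z$, that the analytic gluing sends $\tau_{xy}+\tau_{yz} \mapsto \tau_{xz}$ (i.e.~the classical linear-gluing behavior of the translation vector fields summarized in Section \ref{sec:2ind}), and that $(\partial/\partial s)_{u_1} - (\partial/\partial s)_{u_2}$ glues to a vector with strictly positive inward-normal component, the resulting $\phi_{xy}$ is a bona fide splitting of the short exact sequence over this face.

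The main step is then to verify consistency over codimension $\geq 2$ corners, i.e.\ over a triple product $\overline\cM_{xw}\times\overline\cM_{wz}\times\overline\cM_{zy}$ where two different codimension one faces meet. There the two candidate definitions of $\phi_{xy}$ agree precisely because both the analytic gluing (by associativity of linear gluing, cf.~the coherence diagram in Section \ref{sec:2ind}) and the abstract gluing (by the codimension-two associativity of $\psi$ proved earlier in the paper) fit into compatible associativity squares, and the inductive hypothesis says $\phi_{xw}, \phi_{wz}, \phi_{zy}$ already intertwine them. This is the one genuinely content-carrying check; it amounts to chasing the two associativity squares around the hexagon
$$\Ind(x,w)\oplus\Ind(w,z)\oplus\Ind(z,y)\longrightarrow \Ind(x,y)$$
and observing that both routes send $\tau_w\mapsto\nu_w$, $\tau_z\mapsto\nu_z$, $\tau_y\mapsto\tau_y$ under $\phi_{xy}$ and are determined by $dc$ on the remaining factors.

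Having produced $\phi_{xy}$ on $\partial\overline\cM_{xy}$ coherently, I extend to the interior by choosing any smooth section of the affine bundle of splittings extending the boundary values; such an extension exists and any two are connected by a straight-line homotopy rel boundary, because the space of sections of a contractible affine bundle with fixed boundary values is itself contractible. The same inductive argument, run with the cylinder $\overline\cM_{xy} \times [0,1]$ in place of $\overline\cM_{xy}$, produces the homotopy witnessing canonicity: given two systems $\phi^0, \phi^1$, the boundary data on $\overline\cM_{xy}\times\{0,1\}\cup \partial\overline\cM_{xy}\times[0,1]$ are coherent by induction, and extend to the interior by contractibility of the fibre, completing the inductive construction up to contractible choice.
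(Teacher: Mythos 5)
Your overall strategy --- induct on $|x|-|y|$, force $\phi_{xz}$ over each boundary face $\overline\cM_{xy}\times\overline\cM_{yz}$ by demanding that the gluing square commute, check codimension-two coherence via associativity of both the abstract and analytic gluing maps, extend into the interior by convexity of the remaining choices, and run the relative version on $\overline\cM_{xy}\times[0,1]$ for canonicity --- is exactly the structure of the paper's proof.

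There is, however, a genuine error in how you parametrise the choices. You assert that the forced boundary value of $\phi_{xz}$ ``is a bona fide splitting of the short exact sequence,'' and you then extend inside ``the affine bundle of splittings.'' This is false, and the facts you yourself cite refute it: over the face $\overline\cM_{xy}\times\overline\cM_{yz}$ the forced definition gives
\[
\phi_{xz}\bigl(\sigma_{xz}(\tau_{xz})\bigr) \;=\; \psi^{\mathrm{abs}}\Bigl((\phi_{xy}\oplus\phi_{yz})\bigl(\sigma_{xy}(\tau_{xy})+\sigma_{yz}(\tau_{yz})\bigr)\Bigr),
\]
and since $\psi^{\mathrm{abs}}$ sends $\tau_{xy}\mapsto\nu_y$ and $\tau_{yz}\mapsto\tau_{xz}$, the right-hand side has a non-zero component along the inward normal $\nu_y\in T\overline\cM_{xz}$. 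In particular $\phi_{xz}$ does \emph{not} carry the distinguished $\bR$-sub-line $\sigma_{xz}(\bR\tau_{xz})\subset\Ind(x,z)$ into the $\bR\tau_{xz}$ summand of $I_{xz}$; this is precisely the content of the Remark immediately following the proposition in the paper, which says that requiring $\phi\circ\sigma_{xy}$ to be the inclusion of $\bR\tau_{xy}$ would make the proposition false. Consequently your boundary data are not points of the bundle of splittings of the short exact sequence, the claim that ``the third compatibility is automatic'' is unjustified, and the interior-extension step (``choose a section of the affine bundle of splittings extending the boundary values'') cannot be performed as stated. The repair is the one the paper makes: decouple the two pieces of data, choosing (i) a section of $\Ind(x,z)\to T\overline\cM_{xz}$ extending the one induced on the boundary, and (ii) independently, a vector $\phi_{xz}^{-1}(\tau_{xz})$ constrained only to be transverse to the image of that section and to lie in the same component as the analytic generator $\sigma_{xz}(\tau_{xz})$. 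Both conditions are convex, so existence, boundary-compatibility, and contractibility of the space of choices go through; the rest of your argument (codimension-two coherence and the cylinder argument for canonicity) is then fine.
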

    \begin{proof}
        We construct the $\phi_{xy}$ inductively in the dimension of the $\overline\cM_{xy}$. Assume we have constructed suitably associative $\phi_{xy}$ when $|x|-|y| - 1 < k$ and pick $x, z$ such that $|x|-|z| - 1 = k$. Then we want to choose $\phi_z$ such that the following diagram commutes over $\overline\cM_{xyz}$ for all $y$.
        \[ \xymatrix{
            \Ind(x, y) \oplus \Ind(y, z) \ar[r]_\psi &
            \Ind(x, z) \\
            I_{xy} \oplus I_{yz} = \bR\tau_{xy} \oplus T\overline\cM_{xy} \oplus \bR \tau_{yz} \oplus T\overline\cM_{yz} \ar[u]_{(\phi^{-1}_{xy}, \phi^{-1}_{yz})} \ar[r]_\psi &
            I_{xz} = \bR \tau_{xz} \oplus T\overline\cM_{xz} \ar[u]_{\phi^{-1}_{xz}}
        }
        \]
        This determines $\phi_{xz}$ over all $\overline\cM_{xyz}$ (and these are compatible over all overlaps $\overline\cM_{xyy'z}$). Note that this diagram is compatible with the projection maps to $T\overline\cM_{xy} \oplus T\overline\cM_{yz}$ and $T\overline\cM_{xz}$, by \cite[(7.17)]{Large}. We then choose a splitting $T\overline\cM_{xz} \to \Ind_{xz}$ extending the one on the boundary to define $\phi_{xz}^{-1}$ over $T\overline\cM_{xz} \subseteq I_{xz}$ (which we can do since the space of such splittings is convex), and choose $\phi_{xz}^{-1}(\tau_{xz})$ to be transverse to the subspace $\phi_{xz}^{-1}T\overline\cM_{xz}$ and in the same component of such vectors as the image of the analytically-defined map $\bR \to \Ind_{xz}$ (again, we can do this since the space of such transverse vectors is convex; note the reason we don't take $\phi^{-1}(\tau_{xz})$ to be equal to the analytically-defined map $\bR \to \Ind_{xz}$ is because this would be incompatible with commutativity of the required diagrams). \par 
        The choices we made above are canonical up to contractible choice so it follows that $\phi$ is too.
    \end{proof}
    \begin{rmk}
        Proposition \ref{prop: two types of index} would be false if we further required that the following diagram commutes:
        \[ \xymatrix{
            \bR \tau_{xy} \ar[r]_{\sigma_{xy}} \ar[dr] &
            \Ind(x, y) \ar[d]_\phi \\
            & 
            I_{xy}
        } 
        \]
        where the map $\bR \tau_{xy} \to I_{xy} = \bR \tau_{xy} \oplus T\overline\cM_{xy}$ is the inclusion into the first factor.
    \end{rmk}

We have an analogue of the previous result for continuation maps rather than morphisms. Let $\cF$, $\cG$ be two flow categories from Floer theory. We already have isomorphisms $\phi: \Ind^\cF_{xx'} \cong I^\cF_{xx'}$ compatible with gluing on both sides; there is an analogous version for morphisms of flow categories from Floer theory:
\begin{lem} \label{lem:two types of index for morphisms}
    Let $\cW: \cF \to \cG$ be a morphism of Floer flow categories $\cF = \overline\cM^{LK}$, $\cG = \overline\cM^{LK'}$ associated to a continuation map of spectral Lagrangian branes from $K$ to $K'$. Then there are isomorphisms $\phi: I^\cW_{xy} \to \Ind(x,y) \oplus \bR$ such that the following diagrams commute:
    \[\xymatrix{I^\cW_{xy} \ar[r]^-\phi \ar[dr] &
    \Ind^\cW(x,y) \oplus \bR \ar[d] \\
    & T\cW_{xy}} \]
    \[\xymatrix{I_{xx'}^\cF \oplus I^\cW_{x'y} \ar[r]_\psi \ar[d]_{\phi\oplus \phi} &
    I^\cW_{xy} \ar[d]_\phi \\
    \Ind^\cF(x,x') \oplus \Ind(x',y) \oplus \bR \sigma_{x'y} \ar[r]_-\psi &
    \Ind(x,y) \oplus \bR \sigma_{xy}}\]
    and 
    similar for breaking on the right, where the $\sigma$s are generators for the extra copies of $\bR$ and $\psi$ acts as the identity on them.
\end{lem}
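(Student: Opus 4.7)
The plan is to adapt the inductive construction from Proposition \ref{prop: two types of index}, exploiting a simplification specific to continuation maps. Because continuation Floer data breaks the $\bR$-translation symmetry, the moduli space $\cW_{xy}$ is not quotiented by any $\bR$-action, so $\Ind^\cW(x,y) = T\cW_{xy}$ tautologically. The naive map
$$I^\cW_{xy} = T\cW_{xy} \oplus \bR \tau_y \longrightarrow T\cW_{xy} \oplus \bR \sigma_{xy} = \Ind(x,y) \oplus \bR \sigma_{xy},$$
given by the identity on the first factor and $\tau_y \mapsto \sigma_{xy}$ on the second, is an isomorphism of vector bundles of the right rank and satisfies the projection compatibility of the first displayed diagram automatically. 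The nontrivial content of the lemma is that one can choose $\phi$ so that it also intertwines the abstract and analytic gluing over broken boundary faces.

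Proceed by induction on $|x|-|y|$. The isomorphisms on the Floer flow categories $\cF$ and $\cG$ are provided (by inversion) from Proposition \ref{prop: two types of index}. Suppose $\phi$ has been built on all $\cW_{x'y'}$ with $|x'|-|y'| < |x|-|y|$. The left-breaking square then forces $\phi$ over the boundary face $\cF_{xx'} \times \cW_{x'y}$ to be the composite $\psi^{\mathrm{an}} \circ (\phi \oplus \phi) \circ (\psi^{\mathrm{abs}})^{-1}$, and similarly over right-breaking faces $\cW_{xy'} \times \cG_{y'y}$. These two prescriptions glue to a well-defined $\phi$ on $\partial \cW_{xy}$ provided they agree on codimension-two corner strata of the form $\cF_{xx'} \times \cW_{x'y'} \times \cG_{y'y}$, which reduces to compatibility of the abstract gluing maps from Section \ref{sec:abstr ind bun} with the analytic ones, holding by the associativity of both (the analytic side being recorded in the diagram surrounding \cite[(7.17)]{Large} and its evident extension to continuation maps).

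With $\phi$ determined on $\partial\cW_{xy}$, extend it over the interior. In the interior one has the tautological identification $\Ind^\cW(x,y) = T\cW_{xy}$, and a similar identification holds in a collar neighbourhood of each broken face after linear gluing; use these to extend the first-factor component of $\phi$. Then choose a nowhere-zero section of the rank-one quotient $(\Ind(x,y)\oplus \bR)/T\cW_{xy}$ to play the role of $\phi(\tau_y)$, lying in the component determined on the boundary by the analytic gluing orientation. At each stage the space of choices (splittings; sections transverse to a codimension-one subbundle with prescribed boundary behaviour) is parametrised by a convex set, so the extension exists and is unique up to contractible choice, yielding the claimed canonicity.

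The main obstacle is the codimension-two compatibility check for the boundary prescription. However, this is not genuinely new: it is exactly the content of the associativity of analytic gluing combined with the associativity of the abstract gluing $\psi$, both of which are already in place. Beyond this, the proof is a direct transcription of the argument of Proposition \ref{prop: two types of index}, with the simplification that the interior extension of $\phi$ becomes essentially tautological in the continuation setting.
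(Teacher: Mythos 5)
Your proposal is correct and follows essentially the same route as the paper's proof: induction on $|x|-|y|$, with the boundary value of $\phi$ forced by the two gluing squares, codimension-two consistency supplied by associativity of the abstract and analytic gluing maps, and the interior extension obtained from convex (hence contractible) spaces of splittings and of transverse sections lying in the component singled out by the analytic gluing. The observation that $\Ind^{\cW}(x,y)=T\cW_{xy}$ for continuation data is consistent with the paper's setup and only streamlines the interior step; it does not change the argument.
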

\begin{proof}
    Assume we've chosen $\phi$ whenever $|x|-|y|<k$. Pick $x,y$ such that $|x|-|y|=k$. We want to choose $\phi_{xy}$ such that the diagrams commute on appropriate boundary faces. (Note that by associativity, where this compatibility is enforced twice (i.e. over the codimension 2 boundary faces), these are compatible with each other). Therefore $\phi_{xy}$ is already specified over the boundary; these come from some splittings $\bar{\phi}:T\cW_{xy} \to \bR\sigma_{xy}$. Choose $\bar{\phi}$ extending this over the boundary; choose some map $\bR \tau_{xy} \to \Ind^\cW(x,y) \oplus \bR$ which is transverse to $\phi_{xy}(T\cW_{xy})$ and in the same component of such things as the identity $\bR \to \bR \sigma_{xy}$ extending that on the boundary, and take $\phi_{xy}$ to be given by $\bar{\phi}$ on the subspace $\bR \tau_{xy}$.
\end{proof}
Let $\cZ=\cM^{LKMN}$ be an associator from Section \ref{sec: assoc}. The moduli space of domains is $\cR \cong [0,1]$, and recall there is an analytic index bundle
\begin{equation}\label{eqn:extended}
  \Ind^{\cZ}(x,y,z;w) 
\end{equation}
over each $\cZ_{xyz;w}$, whose fibre at a map $u: C_p \to X$ is given by the kernel of (a virtual perturbation of) the (pointwise) Cauchy-Riemann operator over $C_p$, as in \cite[Section 7.3]{Large}. \par 
In this case, there is also an extended Cauchy-Riemann operator $D^{\mathrm{Ext}}$ (see Section \ref{sec: smoth ext} and \cite[(5.3)]{Large}); which is surjective (by the assumption that the moduli spaces are transversely cut out) with kernel $T\cZ_{xyz;w}$.\par  
The same proof as in the cases of Floer flow categories and continuation maps shows that there are isomorphisms of vector bundles:
\[
\phi:I^\cZ_{xyz;w} \to \ker(D^{\mathrm{Ext}}) \oplus \bR
\]
which are compatible with the abstract and analytic gluing isomorphisms on each side over each boundary face, such that the following diagram commutes:
\[ 
\xymatrix{
    I^\cZ_{xyz;w} \ar[r]^-\phi \ar[dr] &
    \ker(D^{\mathrm{Ext}}) \oplus \bR \ar[d]\\
    & T\cZ_{xyz;w}
}
\]
where both downwards arrows are projection maps.\par 
Using consistent surjective perturbations $D_u + f$ of the pointwise operators (adding finite-dimensional spaces to the domains of the operators), Large \cite[Section 8.5]{Large} constructs stable isomorphisms
\[
\ker(D_{(C_p,u)}^{\mathrm{Ext}}) \cong \ker(D_u + f) \oplus \bR
\]
which are compatible with breaking, and where the $\bR$ comes from a choice of trivialisation of $T\cR$. 

\subsection{Stable framings from spaces of caps\label{Sec:framings_from_caps}}

We outline Large's approach to constructing the data used to frame a Floer flow category. 
Let $X$ be an exact symplectic manifold with a stable trivialisation 
\[
\Psi: TX \oplus \bC^{N-n} \stackrel{\sim}{\longrightarrow} \bC^N
\]
and let $L$ and $K$ be transverse graded Lagrangian branes which are more explicitly equipped with Lagrangian subbundles 
\[
\Gamma_L \subset L \times [0,1] \times \bC^N
\]
with $(\Gamma_L)|_{L \times \{0\}} = \Psi(TL \oplus \bR^{N-n})$ and $(\Gamma_L)|_{L \times \{1\}} = \bR^N$, and similarly for $K$. Let $x\in \cM^{LK}$ be an intersection point of $L$ and $K$. We will associate a stable vector space $V(x)$ to $x$ as follows.  In the case where $L,K$ are not transverse, one can incorporate a Hamiltonian $H_t$ into the Floer data so that $L\pitchfork \phi_{H_t}^1(K)$ and take $x(t)$ a Hamiltonian chord; since the analogous case of Hamiltonian orbits involves this $t$-dependence we will incorporate it into the notation.

Let $Z = \bR\times[0,1]$ with co-ordinates $(s,t)$. 
The linearisation of the $\overline{\partial}$-operator at the constant Floer solution $u(s,t) = x=x(t)$ has the form
\begin{equation}\label{eqn:linearise_at_constant}
\xi \mapsto \overline{\partial}_{J_t}(\xi) + Y_t(\xi)
\end{equation}
for a translation-invariant one-form $Y_t \in \Omega^{0,1}(Z)\otimes_{\bC} x^*TX$ (this is usually written explicitly with respect to a choice of symplectic connection on $TX$). By taking the direct sum with a trivial $\overline{\partial}$-operator on $\bC^{N-n}$ (not changing the kernel or cokernel of the linearisation), we get an operator
\begin{equation} 
D_x : W^{1,2}(Z,\bC^N, T_xL \oplus \bR^{N-n}, T_xK \oplus \bR^{N-n}) \longrightarrow L^2(Z, \Omega^{0,1}\otimes_J \bC^N).
\end{equation}
Let $D_+ = D^2\backslash \{-1\}$. We now fix the following data: 
\begin{enumerate}
\item a strip-like end $\varepsilon: (-\infty,0] \times [0,1] \hookrightarrow D_+$ at the puncture; 
\item a real $L>0$ and an almost complex structure $J$ on $\bC^N \times D_+$ (taming the standard symplectic structure) and one-form $Y\in \Omega^{0,1}(D_+)\otimes_{J} \mathrm{End}(\bC^N)$ which agree with $J_t, Y_t$ from \eqref{eqn:linearise_at_constant} on the image of $\varepsilon|_{(-\infty,-L)\times[0,1])}$;
    \item a Lagrangian subbundle $\Gamma \subset \partial D_+ \times [0,1] \times \bC^N$ which interpolates between the pairs $(T_xL \oplus \bR^{N-n}, T_xK \oplus \bR^{N-n})$ on $\varepsilon(-\infty,-L)\times\{0,1\})$ at $\{0\}$ and the constant $\bR^N$ at $\{1\}$, and which agrees with $\Gamma_L$ respectively $\Gamma_K$ on $\varepsilon((-\infty,-L)\times\{0\}) \times [0,1]$ respectively $\varepsilon((-\infty,-L)\times\{1\}) \times [0,1]$.
\end{enumerate}
The data specifies an abstract real Cauchy-Riemann operator
\begin{equation} \label{eqn:abstract_operator}
D: W^{1,2}(D_+, \bC^N, \Lambda|_{D_+ \times \{0\}}) \to L^2(D_+,\Omega^{0,1}(D_+) \otimes_J \bC^N)
\end{equation}
We can fix a $q\geq 0$ and a linear map $f:\bR^q \to C^{\infty}(Z,\Omega^{0,1}(D_+) \otimes_J \bC^N)$ (with image in the space of sections vanishing over the image of $\varepsilon$) so that $D+f$ is surjective, and then define
\[
V_q(x) := \ker(D+f)-\bR^q.
\]
Clearly stabilising by increasing $q$ and replacing $f$ by $\bR^{q+1} \to \bR^q \stackrel{f}{\longrightarrow}C^{\infty}(Z,\Omega^{0,1}(D_+) \otimes_J \bC^N)$  yields an isomorphic stable vector space.

Since the flow category $\cM^{LK}$ is finite, we can pick a sufficiently large $q$ to make the operator $D=D_x$ surjective for all $x\in L\pitchfork \phi_{H_t}^1(K)$ simultaneously. 

\begin{prop}[{\cite[Section 8.4]{Large}}] \label{prop:framing_data}
    The association $x \mapsto V_q(x)$ has the following features:
    \begin{enumerate}
        \item Up to homotopy, the space of choices made in constructing $V_q(x)$ is given by the data of a sufficiently large $q$;
        \item for $q \gg 0$ there are stable isomorphisms $V_q(x) \oplus \Ind(x,y)  \to V_q(y)$ which are compatible with gluing, in the sense that the following diagram of vector bundles over $\cM_{xy} \times \cM_{yz}$ commutes:
        \[
        \xymatrix{
            V_q(x) \oplus \Ind(x, y) \oplus \Ind(y, z)  \ar[r] \ar[d] &
            V_q(y) \oplus \Ind(y,z)  \ar[d] \\
            V_q(x) \oplus \Ind(x, z) \ar[r] &
            V_q(z) 
        }
        \]
    \end{enumerate}
\end{prop}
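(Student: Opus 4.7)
The plan is to prove the two assertions separately, beginning with the contractibility claim (1) and then the gluing isomorphism (2).

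For (1), I would observe that each piece of data entering the construction of $V_q(x)$ lives in a contractible space once $q$ is sufficiently large. The strip-like end $\varepsilon$, the pair $(J, Y)$ with fixed asymptotic behavior, and the interpolating Lagrangian subbundle $\Gamma$ (whose endpoints are determined by the spectral brane data, specifically the grading/nullhomotopy of the Gauss map) all form contractible spaces of sections of bundles with convex fibers, or paths in appropriate Grassmannians with fixed endpoints, so these choices form a contractible space. With these fixed, the condition that $D + f$ is surjective is open in the space of linear maps $f: \bR^q \to C^\infty(D_+, \Omega^{0,1} \otimes \bC^N)$, and is non-empty for $q$ large by standard Fredholm theory; since any two surjective stabilizations are connected by a path of (possibly further stabilized) surjective operators whose kernels form a vector bundle over the path, the stable vector space $\ker(D+f) - \bR^q$ is canonical up to contractible choice. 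Stabilizing $q \to q+1$ by extending $f$ trivially does not change the stable isomorphism class, so $V_q(x)$ is well-defined as a stable vector space.

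For (2), this is the content of linear gluing for Cauchy-Riemann operators. Given a Floer solution $u \in \cM_{xy}$, one glues the cap domain $D_+$ for $x$ to the strip domain $Z$ using a gluing parameter $R$ applied at the strip-like end $\varepsilon$ at the negative puncture of the cap and the negative end of the Floer strip at $x$. Extending the stabilization $f_x$ trivially to the glued domain, for $R$ sufficiently large one obtains a surjective Fredholm operator $(D_x + f_x) \#_R D_u$ whose domain is a cap domain with puncture asymptotic to $y$. The standard linear gluing lemma (used e.g.\ in \cite[Chapter II.12.f]{Seidel:book} and reproved in \cite[Section 8]{Large}) produces a canonical isomorphism
\[
\ker\bigl((D_x + f_x) \#_R D_u\bigr) \;\cong\; \ker(D_x + f_x) \oplus \ker(D_u)
\]
up to corrections of order $e^{-\delta R}$; after further trivial stabilization and comparison with the reference cap operator $D_y + f_y$, this yields the required stable isomorphism $V_q(x) \oplus \Ind(x,y) \to V_q(y)$. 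That the $\bR$-factor in $\Ind(x,y)$ coming from translation maps to an inward-pointing direction under this identification is part of the standard content of linear gluing, as exploited in Section~\ref{sec:2ind}.

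The compatibility over $\cM_{xy} \times \cM_{yz}$ follows from the strict associativity of linear gluing in the presence of a complete system of hypersurfaces, analogous to the strict associativity of the nonlinear gluing maps used in Theorem~\ref{thm:smooth_structure}: gluing a cap $D_x$ first to $u_{xy}$ and then to $u_{yz}$ produces the same cap operator at $z$, up to canonical homotopy, as gluing $D_x$ directly to the broken configuration $(u_{xy}, u_{yz})$. The main technical obstacle is making these gluing isomorphisms genuinely canonical (independent of the auxiliary gluing parameter $R$) and compatible in parametrized families over the moduli space $\cM_{xy}$; this requires fixing a single gluing profile $\psi(r) = 1/r$ as in Theorem~\ref{thm:smooth_structure} and checking that the exponential decay estimates \eqref{eqn:FO3} ensure the homotopy class of the linear gluing isomorphism extends continuously across the strata of $\overline{\cM}_{xy}$. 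Given these inputs, which are carried out in \cite[Section 8.4]{Large}, the commuting diagram then follows from the corresponding associativity diagram for the analytic index bundles in Proposition~\ref{prop: two types of index}.
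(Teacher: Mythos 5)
The paper does not prove this proposition itself — it cites \cite[Section 8.4]{Large} and only sketches the mechanism after the statement (a contractible space $\cU(x)$ of abstract caps carrying the index bundle of \eqref{eqn:abstract_operator}, together with a gluing operation $\cU(x)\times\cM^{LK}_{xy}\to\cU(y)$ that is associative up to coherent homotopy) — and your proposal fleshes out exactly that argument: contractibility of the auxiliary choices once $q$ is large, plus linear gluing of the stabilised cap operator with the linearised Floer operator. The one imprecision is your appeal to \emph{strict} associativity ``in the presence of a complete system of hypersurfaces'': those hypersurfaces govern the nonlinear moduli spaces, not the abstract cap operators, and the paper (following Large) only claims the cap gluing is associative up to coherent homotopy, which suffices precisely because $\cU(x)$ is contractible.
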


Assuming we have taken a sufficiently large such $q$, we will suppress the $q$ and just write $V_x$ for the stable framing space associated to $x$.
\begin{rmk}
    Since in the proof of the theorem we only need to consider moduli spaces of holomorphic curves of dimension $\leq n+1$, for our purposes it would suffice to choose a sufficiently large $q$ such that all the spaces of choices involved are sufficiently highly connected, but not necessarily contractible.
\end{rmk}
The choices enumerated previously yield a contractible space $\cU(x)$ of `abstract half-planes' carrying a (trivial) stable bundle given by the index of \eqref{eqn:abstract_operator}. The proof of the Proposition relies on a gluing operation $\cU(x) \times \cM^{LK}_{xy}  \to \cU(y)$ (strictly defined on a relatively compact open subset of the domain determined by the collar structure) which is associative up to coherent homotopy.

If $\cW: \cF \to \cG$ is a continuation morphism of Floer flow categories associated to a Hamiltonian isotopy, there is a  parallel story, except the gluing theory that underlies the isomorphism $\Ind(x,y) \oplus \cV(y) \simeq \cU(x)$ with $\Ind(x,y) = T\cW(x,y)$ relates spaces of caps $\cU(x)$ and $\cV(y)$ for $\cF$ respectively $\cG$ with a space $\cW(x,y)$ of Floer continuation solutions.  

We briefly comment on the analogous results for other moduli spaces:
\begin{enumerate}
    \item For a framed bilinear map $\cR: \cF \times \cG \to \cH$ there is a (suitably associative) gluing map
    \[
    \cU(x) \times \cU(y) \times \cR_{xy;z} \longrightarrow \cU(z)
    \]
    which covers an isomorphism of index bundles
    \[
    V_x \oplus V_y \oplus I^{\cR}_{xy;z} \longrightarrow V_z \oplus \bR;
    \]

    \item for a space of discs $\cM_x$ with one incoming puncture (as used in constructing units), there is an inclusion $\cM_x \hookrightarrow \cU(x)$ which identifies the index bundle $I_x$ over $\cM_x$ with the restriction of $V_x$;

    \item for a space of discs $\cM_{\bar{x}}$ with a unique \emph{outgoing} puncture (as arise in constructing co-units), there is a gluing map $\cU(x) \times \cM_{\bar{x}} \to \cD$ where $\cD$ denotes a space of perturbed holomorphic discs on $L$ with no boundary punctures. By turning off all Hamiltonian perturbation data, and using exactness, the inclusion of constant discs $L \to \cD$ is a deformation retraction. The stable framing on $L$ then gives a stable equivalence between $I_{\bar{x}}$ and $-V_x$.

    \item For an associator $\cZ$ defined by a space $\scrR \cong [0,1]$ of holomorphic discs with four boundary punctures, there are gluing maps
\[
\cU(x) \times \cU(y) \times \cU(z) \times \cZ_{xyz;w} \longrightarrow \cU(w).
\]
\end{enumerate}
Then the stable isomorphisms from Proposition \ref{prop:framing_data}, combined with the comparisons between the analytic and abstract index bundles from Section \ref{sec:2ind}, provide the data required for the framings in Section \ref{sec: Sp Don Fuk}.

\begin{rmk}
In Proposition \ref{prop:framing_data}, the stable isomorphisms exist with trivial stabilising bundle, coming from the choice of a sufficiently large $q$ when constructing the stable vector space $V(x) = \ker(D+f)-\bR^q$. That is, once all operators have been stabilised to be surjective, the gluing maps $\cU(x) \times \cM_{xy} \to \cU(y)$ are covered by isomorphisms of actual vector bundles which on fibres are given by $L^2$-orthogonal projections on pregluings, schematically $\ker(D_u+f) + \ker(D_v) \to \ker(D_{u\#v} + g)$ for appropriate stabilisations $f,g$. In general, in all the situations we encounter, by making the original stabilisations sufficiently large (for all $x$ and over all relatively compact subsets of spaces of operators $\cU(x)$ and $\cD$ encountered in homotopies from gluing, or between compositions of continuation maps and the identity, etc), we can ensure that no stabilising bundles appear in the analytic construction. However, the general abstract framework we developed in Section \ref{Sec:framings} does require the flexibility of allowing non-trivial stabilising bundles, for instance in constructing inverses in $\Flow$.
\end{rmk}

\bibliographystyle{amsalpha}
\bibliography{Refs.bib}{}

\end{document}